\date{} 
\title{Imaginary geometry IV:\\ interior rays, whole-plane reversibility, and space-filling trees}
\author{Jason Miller and Scott Sheffield}
\def\@rst #1 #2other{#1}
\newcommand\MR[1]{\relax\ifhmode\unskip\spacefactor3000 \space\fi
  \MRhref{\expandafter\@rst #1 other}{#1}}
\newcommand{\MRhref}[2]{\href{http://www.ams.org/mathscinet-getitem?mr=#1}{MR#2}}
\newcommand{\giv}{\,|\,}
\newif\ifdraft
\def\note#1/{\ifdraft {\bf [#1]}\fi}
\long\def\comment#1{}
\numberwithin{equation}{section}
\numberwithin{figure}{section}
\newtheorem{theorem}{Theorem}
\numberwithin{theorem}{section}
\newtheorem{lemma}[theorem]{Lemma}
\newtheorem{proposition}[theorem]{Proposition}
\theoremstyle{remark}
\theoremstyle{remark}\newtheorem{remark}[theorem]{Remark}
\newcommand{\C}{\mathbf{C}}
\newcommand{\D}{\mathbf{D}}
\newcommand{\E}{\mathbf{E}}
\newcommand{\N}{\mathbf{N}}
\newcommand{\Z}{\mathbf{Z}}
\newcommand{\p}{\mathbf{P}}
\newcommand{\Q}{\mathbf{Q}}
\newcommand{\R}{\mathbf{R}}
\newcommand{\s}{\mathbf{S}}
\newcommand{\h}{\mathbf{H}}
\newcommand{\Fh}{\mathfrak {h}}
\newcommand{\CA}{\mathcal {A}}
\newcommand{\CC}{\mathcal {C}}
\newcommand{\CD}{\mathcal {D}}
\newcommand{\CF}{\mathcal {F}}
\newcommand{\CI}{\mathcal {I}}
\newcommand{\CJ}{\mathcal {J}}
\newcommand{\CP}{\mathcal {P}}
\newcommand{\CR}{\mathcal {R}}
\newcommand{\CS}{\mathcal {S}}
\newcommand{\CV}{\mathcal {V}}
\newcommand{\CZ}{\mathcal {Z}}
\newcommand{\CG}{\mathcal {G}}
\newcommand{\CH}{\mathcal {H}}
\newcommand{\dist}{{\rm dist}}
\newcommand{\var}{{\rm Var}}
\newcommand{\im}{{\rm Im}}
\newcommand{\re}{{\rm Re}}
\newcommand{\cov}{{\rm Cov}}
\newcommand{\SLE}{{\rm SLE}}
\newcommand{\CLE}{{\rm CLE}}
\newcommand{\strip}{\CS}
\newcommand{\striptop}{\partial_U \CS}
\newcommand{\stripbot}{\partial_L \CS}
\newcommand{\vstrip}{\CV}
\newcommand{\wh}{\widehat}
\newcommand{\wt}{\widetilde}
\newcommand{\ol}{\overline}
\newcommand{\ul}{\underline}
\newcommand{\one}{{\mathbf 1}}
\def\dist{\mathop{\mathrm{dist}}}
\def\Ito/{It\^o}
\def \E {{\bf E}}
\begin{document}
\maketitle

\begin{abstract}
We establish existence and uniqueness for Gaussian free field flow lines started at {\em interior} points of a planar domain.  We interpret these as rays of a random geometry with imaginary curvature and describe the way distinct rays intersect each other and the boundary.

Previous works in this series treat rays started at {\em boundary} points and use Gaussian free field machinery to determine which chordal $\SLE_\kappa(\rho_1; \rho_2)$ processes are time-reversible when $\kappa < 8$.  Here we extend these results to whole-plane $\SLE_\kappa(\rho)$ and establish continuity and transience of these paths.  In particular, we extend ordinary whole-plane SLE reversibility (established by Zhan for $\kappa \in [0,4]$) to all $\kappa \in [0,8]$.

We also show that the rays of a given angle (with variable starting point) form a space-filling planar tree.  Each branch is a form of $\SLE_\kappa$ for some $\kappa \in (0, 4)$, and the curve that traces the tree in the natural order (hitting $x$ before $y$ if the branch from $x$ is left of the branch from $y$) is a space-filling form of $\SLE_{\kappa'}$ where $\kappa':= 16/\kappa \in (4, \infty)$.  By varying the boundary data we obtain, for each $\kappa'>4$, a family of space-filling variants of $\SLE_{\kappa'}(\rho)$ whose time reversals belong to the same family.  When $\kappa' \geq 8$, ordinary $\SLE_{\kappa'}$ belongs to this family, and our result shows that its time-reversal is $\SLE_{\kappa'}(\kappa'/2 - 4; \kappa'/2 - 4)$.

As applications of this theory, we obtain the local finiteness of $\CLE_{\kappa'}$, for $\kappa' \in (4,8)$, and describe the laws of the boundaries of $\SLE_{\kappa'}$ processes stopped at stopping times.
\end{abstract}

\newpage
\setlength{\parskip}{0.01cm plus1mm minus1mm}
\tableofcontents
\newpage

\medbreak {\noindent\bf Acknowledgments.}  We thank Bertrand Duplantier, Oded Schramm, Wendelin Werner, David Wilson, and Dapeng Zhan for helpful discussions.  We also thank several anonymous referees for many helpful comments on an earlier version of this article.

\parindent 0 pt
\setlength{\parskip}{0.25cm plus1mm minus1mm}



\section{Introduction}
\subsection{Overview}
\label{ss::overview}

This is the fourth in a series of papers that also includes \cite{MS_IMAG, MS_IMAG2, MS_IMAG3}.  Given a real-valued function $h$ defined on a subdomain $D$ of the complex plane $\C$, constants $\chi, \theta \in \R$ with $\chi \neq 0$, and an initial point $z \in \ol{D}$, one may construct a {\em flow line} of the complex vector field $e^{i(h/\chi+\theta)}$, i.e., a solution to the ODE
\begin{equation} \label{eqn::ode}
\frac{d}{dt} \eta(t) = e^{i \left(h(\eta(t))/\chi+\theta\right)}\quad\text{for}\quad t > 0, \,\,\,\eta(0) = z.
\end{equation}
In \cite{MS_IMAG, MS_IMAG2, MS_IMAG3} (following earlier works such as \cite{DUB_PART, She_SLE_lectures, SchrammShe10, SHE_WELD}) we fixed $\chi$ and interpreted these flow lines as the rays of a so-called {\em imaginary geometry}, where $z$ is the starting point of the ray and $\theta$ is the {\em angle}.  The ODE~\eqref{eqn::ode} has a unique solution when $h$ is smooth and $z \in D$.  However  \cite{MS_IMAG, MS_IMAG2, MS_IMAG3} deal with the case that $h$ is an instance of the {\em Gaussian free field} (GFF) on $D$, in which case $h$ is a random {\em generalized function} (or distribution) and~\eqref{eqn::ode} cannot be solved in the usual sense.  These works assume that the initial point $z$ lies on the boundary of $D$ and use tools from $\SLE$ theory to show that, in some generality, the solutions to~\eqref{eqn::ode} can be defined in a canonical way and exist almost surely.  By considering different initial points and different values for $\theta$ (which corresponds to the ``angle'' of the geodesic ray) one obtains an entire family of geodesic rays that interact with each other in interesting but comprehensible ways.

In this paper, we extend the constructions of \cite{MS_IMAG, MS_IMAG2, MS_IMAG3} to rays that start at points in the interior of $D$.  This provides a much more complete picture of the imaginary geometry.  Figure~\ref{fig::spokes} illustrates the rays (of different angles) that start at a single interior point when $h$ is a discrete approximation of the GFF.  Figure~\ref{fig::spokes2} illustrates the rays (of different angles) that start from each of two different interior points, and Figure~\ref{fig::spokes4} illustrates the rays (of different angles) starting at each of four different interior points.  We will prove several results which describe the way that rays of different angles interact with one another.  We will show in a precise sense that while rays of different angles can sometimes intersect and bounce off each other at multiple points (depending on $\chi$ and the angle difference), they can only ``cross'' each other at most once before they exit the domain.  (When $h$ is smooth, it is also the case that rays of different angles cross at most once; but if $h$ is smooth the rays cannot bounce off each other without crossing.)  Similar results were obtained in \cite{MS_IMAG} for paths started at boundary points of the domain.

It was also shown in \cite{MS_IMAG} that two paths with the same angle but different initial points can ``merge'' with one another.  Here we will describe the entire family of flow lines with a given angle (started at all points in some countable dense set).  This collection of merging paths can be understood as a kind of rooted space-filling tree; each branch of the tree is a variant of $\SLE_\kappa$, for $\kappa \in (0,4)$, that starts at an interior point of the domain.  These trees are illustrated for a range of $\kappa$ values in Figure~\ref{fig::tree_dual}.  It turns out that there is an a.s.\ continuous space-filling curve\footnote{We will in general write $\eta$ to denote an $\SLE_{\kappa}$ process (or variant) with $\kappa \in (0,4)$ and $\eta'$ an $\SLE_{\kappa'}$ process (or variant) with $\kappa' > 4$, except when making statements which apply to all $\kappa > 0$ values.} $\eta'$ that traces the entire tree and is a space-filling form of $\SLE_{\kappa'}$ where $\kappa'=16/\kappa > 4$ (see Figure~\ref{fig::space_filling} and Figure~\ref{fig::space_filling2} for an illustration of this construction for $\kappa' = 6$ as well as Figure~\ref{fig::space_filling3} and Figure~\ref{fig::bigger_than_8_reversible} for simulations when $\kappa' \in \{8,16,128 \}$).  In a certain sense, $\eta'$ traces the {\em boundary} of the tree in counterclockwise order.  The left boundary of $\eta'([0,t])$ is the branch of the tree started at $\eta'(t)$, and the right boundary is the branch of the dual tree started at $\eta'(t)$.  This construction generalizes the now well-known relationship between the GFF and uniform spanning tree scaling limits (whose branches are forms of $\SLE_2$ starting at interior domain points, and whose outer boundaries are forms of $\SLE_8$) \cite{KEN01, LSW04}.  Based on this idea, we define a new family of space-filling curves called {\bf space-filling $\SLE_{\kappa'}(\underline \rho)$} processes, defined for $\kappa' > 4$.

Finally, we will obtain new time-reversal symmetries, both for the new space-filling curves we introduce here and for a three-parameter family of whole-plane variants of $\SLE$ (which are random curves in $\C$ from $0$ to $\infty$) that generalizes the whole-plane $\SLE_\kappa(\rho)$ processes.

In summary, this is a long paper, but it contains a number of fundamental results about $\SLE$ and $\CLE$ that have not appeared elsewhere. These results include the following:
\begin{enumerate}
\item The first complete description of the collection of GFF flow lines. In particular, the first construction of the flow line rays emanating from interior points (including points with logarithmic singularities).
\item The first proof that, when $\kappa' > 8$, the time reversal of an $\SLE_{\kappa'}(\rho_1; \rho_2)$ process is a process that belongs to the same family. It has been known for some time \cite{RS_REVERSE} that $\SLE_{\kappa'}$ itself should {\em not} have time-reversal symmetry when $\kappa' > 8$. However the fact that its time reversal {\em can} be described by an $\SLE_{\kappa'}(\rho_1; \rho_2)$ process was not known, or even conjectured, before the current work.
\item The first proof that when $\kappa' \in (4,8)$ the {\em space-filling} $\SLE_{\kappa'}(\rho)$ processes are well-defined, are continuous, and have time-reversal symmetry. (The reversibility of chordal $\SLE$ was proved for $\kappa \in (0,4]$ in \cite{Z_R_KAPPA}, for the non-boundary intersecting $\SLE_\kappa(\rho)$ processes with $\kappa \in (0,4]$ in \cite{DUB_DUAL,Z_R_KAPPA_RHO}, for the entire class of $\SLE_\kappa(\rho_1;\rho_2)$ processes in \cite{MS_IMAG2}, and for the $\SLE_{\kappa'}(\rho_1;\rho_2)$ processes with $\kappa' \in (4,8]$ in \cite{MS_IMAG3}.)
\item The first proof of the time-reversal symmetry of whole-plane $\SLE_\kappa(\rho)$ processes that applies for general $\kappa$ and $\rho$.  This extends the main result of \cite{zha_whole_plane} (using very different techniques), which gives the reversibility of whole-plane $\SLE_\kappa$ for $\kappa \in (0,4]$, to the entire class of whole-plane $\SLE_\kappa(\rho)$ processes which have time-reversal symmetry.
\item The first complete development of $\SLE$ duality.  In particular, we give a complete description of the outer boundary of an $\SLE_{\kappa'}$ process stopped at an arbitrary stopping time.  ($\SLE$ duality was first proved in certain special cases in \cite{ZHAN_DUALITY_1,ZHAN_DUALITY_2,DUB_DUAL,MS_IMAG}.)
\item The first proof that the conformal loop ensembles $\CLE_{\kappa'}$, for $\kappa' \in (4,8)$, are actually well-defined as random collections of loops.  We also give the first proof that these random loop ensembles are locally finite and invariant under all conformal automorphisms of the domains on which they are defined.  (Similar results were proved in \cite{SHE_WER_CLE} in the case that $\kappa \in (8/3,4]$ using Brownian loop soups.)
\end{enumerate}

This paper is cited very heavily in works by the authors concerning Liouville quantum gravity, scaling limits of FK-decorated planar maps, the peanosphere, the Brownian map, and so forth. Basically, this is because there are many instances in which understanding what happens when Liouville quantum gravity surfaces are welded together turns out to be equivalent to understanding how GFF flow lines interact with each other. Moreover, the space-filling paths constructed and studied here for $\kappa' \in (4,8)$ are the foundation of several other constructions.

To elaborate on some of these points in more detail, let us first consider the program for relating FK weighted random planar maps to $\CLE$-decorated Liouville quantum gravity (LQG) \cite{DS08,SHE_WELD,SHE_INVENTORY}. It is shown in \cite{SHE_INVENTORY} that it is possible to encode such a random planar map in terms of a discrete tree/dual-tree pair which are glued together along a space-filling path and that these trees converge jointly to a pair of correlated continuum random trees (CRTs) \cite{ald1991crt1,ald1991crt2,ald1993crt3} as the size of the map tends to $\infty$.  It is then shown in \cite{LQG_MATING} that a certain type of LQG surface decorated with a {\em space-filling $\SLE$} of the sort introduced in this paper (which describes the interface between a tree/dual-tree pair constructed using GFF flow lines as described above) can be interpreted as a gluing of a pair of correlated CRTs.  This gives that LQG decorated with a space-filling $\SLE$ is the scaling limit of FK weighted random planar maps where two spaces are close when the contour functions of the associated tree/dual-tree pair are close. The duality between flow line trees and space-filling curves developed here is the basis for the proofs of the main results about mating trees in \cite{LQG_MATING}, and the results from this paper are extensively cited there. The results in this article (including results about reversibility and duality) also feature prominently in a program announced in \cite{qle2013} and carried out in \cite{quantum_spheres,map_making,qlebm,qle_continuity,qle_determined} to construct the metric space structure of $\sqrt{8/3}$-LQG and relate it to the Brownian map.

The results here will also be an important part of the proofs of several results in joint work by the authors and with Wendelin Werner \cite{cle_percolations,lightcones} about continuum analogs of FK models, conformal loop ensembles, and $\SLE_\kappa(\rho)$ processes with $\rho < -2$.  For example, the first proof that the $\SLE_\kappa(\rho)$ processes with $\rho < -2$ are continuous will be derived as a consequence of the continuity of the space-filling $\SLE$ processes introduced here.

\begin{figure}[ht!]
\begin{center}
\begin{ficomment}
\includegraphics[width=0.95\textwidth]{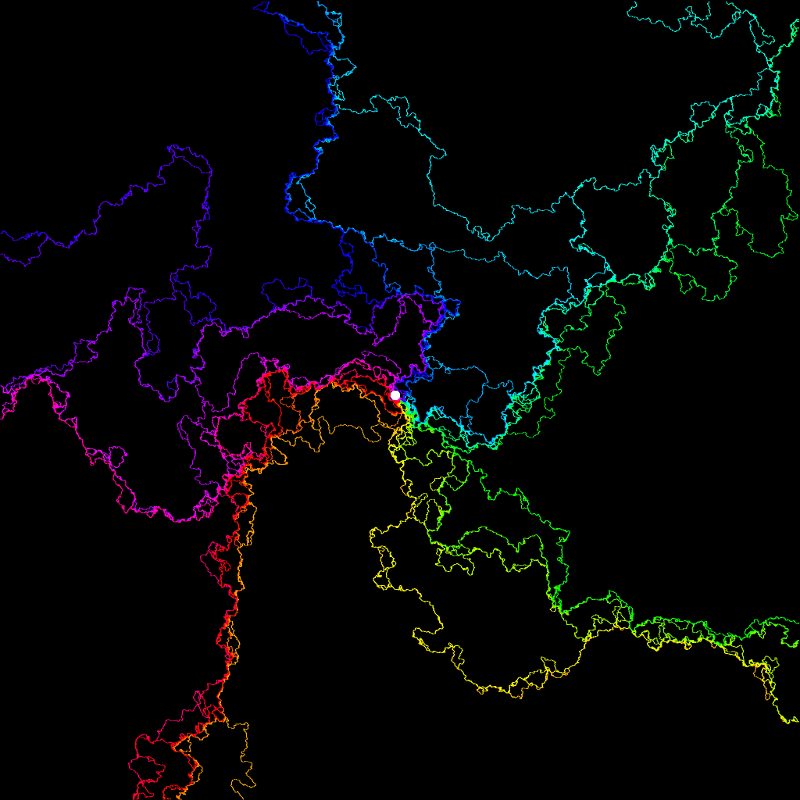}
\end{ficomment}
\end{center}
\caption{ \label{fig::spokes} Numerically generated flow lines, started at a common point, of $e^{i(h/\chi+\theta)}$ where $h$ is the projection of a GFF onto the space of functions piecewise linear on the triangles of an $800 \times 800$ grid; $\kappa=4/3$ and $\chi = 2/\sqrt{\kappa} - \sqrt{\kappa}/2 = \sqrt{4/3}$.  Different colors indicate different values of $\theta \in [0,2\pi)$.  We expect but do not prove that if one considers increasingly fine meshes (and the same instance of the GFF) the corresponding paths converge to limiting continuous paths.}
\end{figure}

\begin{figure}[ht!]
\begin{center}
\begin{ficomment}
\includegraphics[width=0.95\textwidth]{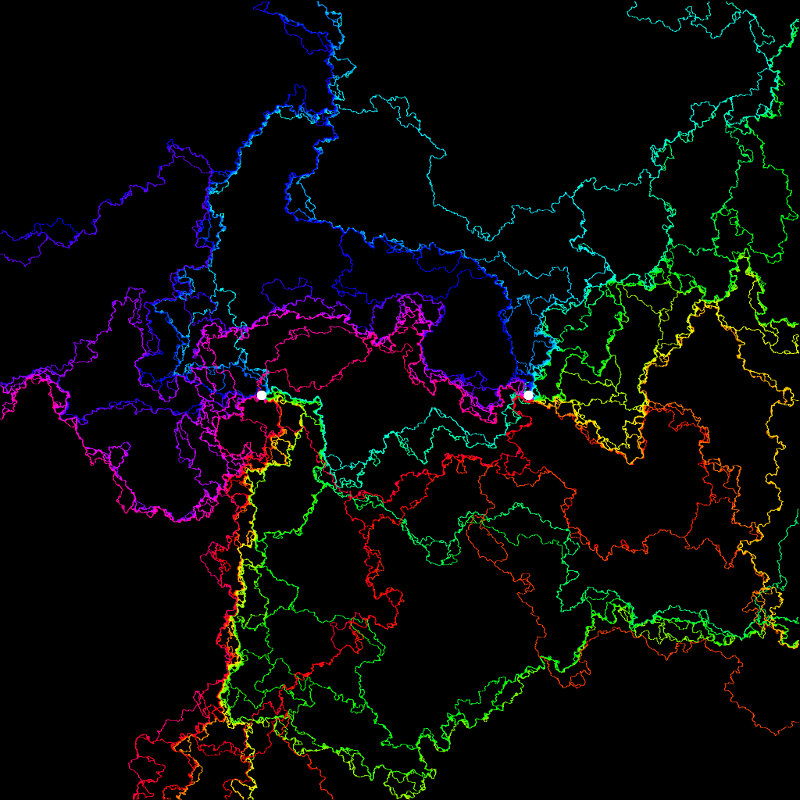}
\end{ficomment}
\end{center}
\caption{ \label{fig::spokes2} Numerically generated flow lines, emanating from two points, of $e^{i(h/\chi+\theta)}$ generated using the same discrete approximation $h$ of a GFF as in Figure~\ref{fig::spokes}; $\kappa=4/3$ and $\chi = 2/\sqrt{\kappa} - \sqrt{\kappa}/2 = \sqrt{4/3}$.  Flow lines with the same angle (indicated by the color) started at the two points appear to merge upon intersecting.}
\end{figure}

\begin{figure}[ht!]
\begin{center}
\begin{ficomment}
\includegraphics[width=0.95\textwidth]{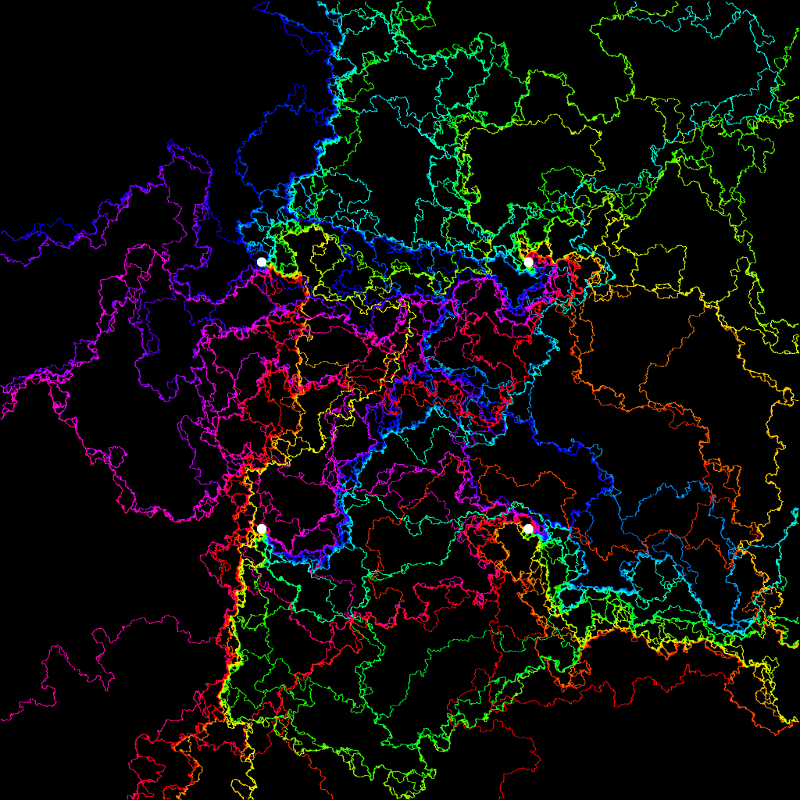}
\end{ficomment}
\end{center}
\caption{ \label{fig::spokes4} Numerically generated flow lines, emanating from four points, of $e^{i(h/\chi+\theta)}$ where $h$ is the same discrete approximation of the GFF used in Figure~\ref{fig::spokes} and Figure~\ref{fig::spokes2}; $\kappa=4/3$ and $\chi = 2/\sqrt{\kappa} - \sqrt{\kappa}/2 = \sqrt{4/3}$.}
\end{figure}



\begin{figure}[ht!]
\begin{center}
\begin{ficomment}
\subfigure[$\kappa=1/2$]{\includegraphics[width=0.48\textwidth]{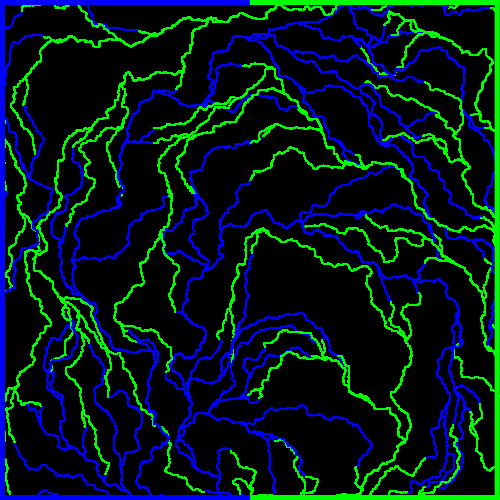}}
\hspace{0.02\textwidth}
\subfigure[$\kappa=1$]{\includegraphics[width=0.48\textwidth]{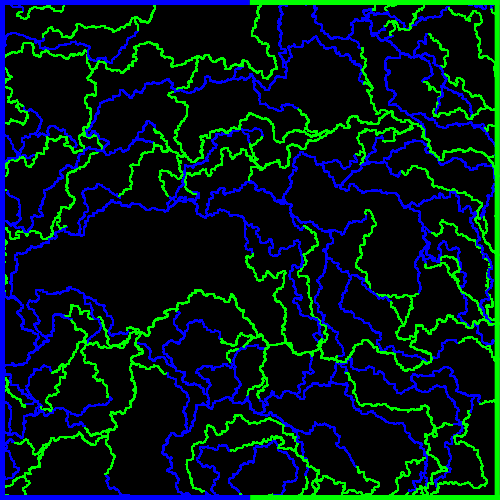}}
\subfigure[$\kappa=2$]{\includegraphics[width=0.48\textwidth]{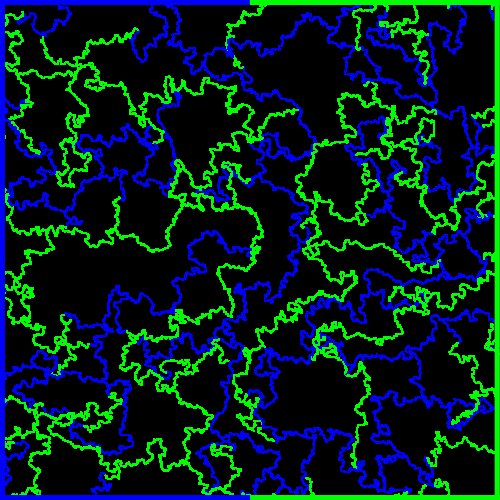}}
\hspace{0.02\textwidth}
\subfigure[\label{fig::tree_dual_8_3}$\kappa=8/3$]{\includegraphics[width=0.48\textwidth]{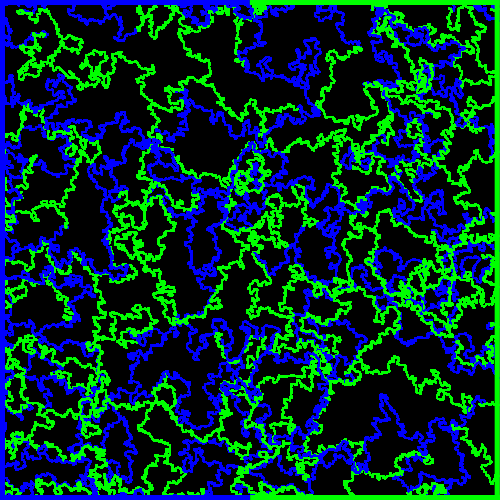}}
\end{ficomment}
\end{center}
\caption{\label{fig::tree_dual} \small{ Numerically generated flow lines of $e^{i(h/\chi+\theta)}$ where $h$ is the projection of a GFF onto the space of functions piecewise linear on the triangles of a $800 \times 800$ grid with various $\kappa$ values.  The flow lines start at $100$ uniformly chosen random points in $[-1,1]^2$.  The same points and approximation of the free field are used in each of the simulations.  The blue paths have angle $\tfrac{\pi}{2}$ while the green paths have angle $-\tfrac{\pi}{2}$.  The collection of blue and green paths form a pair of intertwined trees.  We will refer to the green tree as the ``dual tree'' and likewise the green branches as ``dual branches.''}}
\end{figure}


\begin{figure}[ht!]
\begin{center}
\begin{ficomment}
\subfigure{
\includegraphics[width=0.7\textwidth]{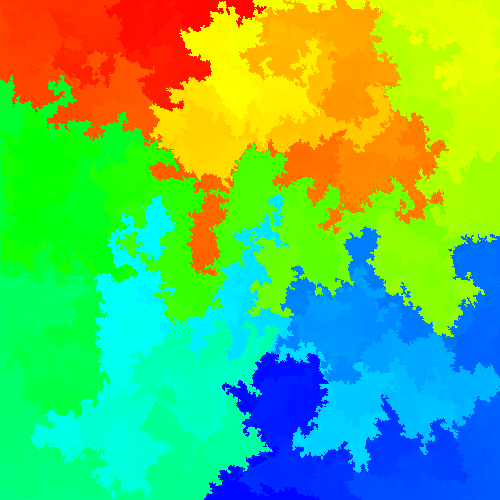}}

\subfigure{\includegraphics[scale=.9]{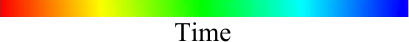}}

\subfigure{
\includegraphics[width=0.29\textwidth]{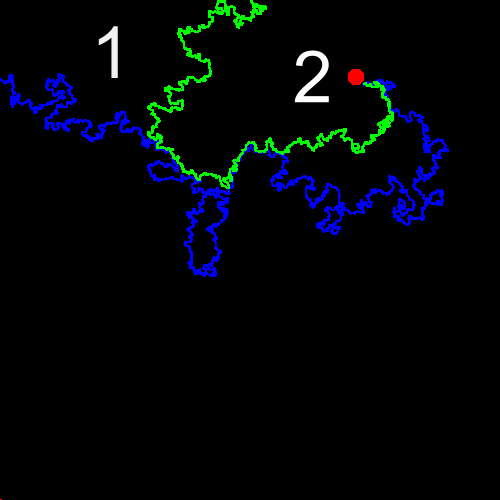}}
\hspace{0.01\textwidth}
\subfigure{
\includegraphics[width=0.29\textwidth]{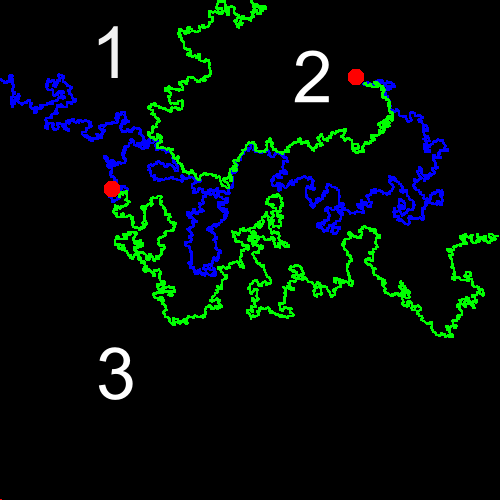}}
\hspace{0.01\textwidth}
\subfigure{
\includegraphics[width=0.29\textwidth]{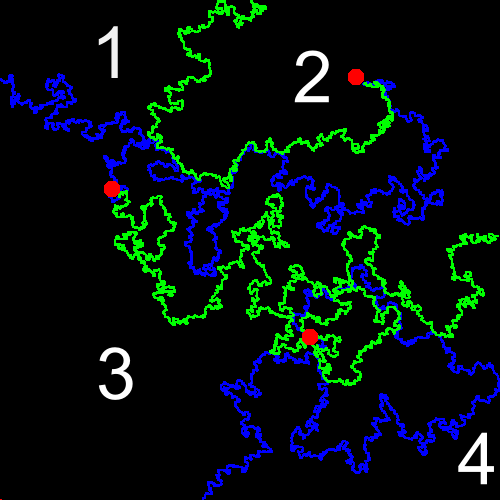}}
\end{ficomment}
\end{center}
\vspace{-0.027\textheight}
\caption{\label{fig::space_filling} \small{The intertwined trees of Figure~\ref{fig::tree_dual} can be used to generate space-filling $\SLE_{\kappa'}(\ul{\rho})$ for $\kappa'=16/\kappa$.  The branch and dual branch from each point divide space into those components whose boundary consists of part of the right (resp.\ left) side of the branch (resp.\ dual branch) and vice-versa.  The space-filling $\SLE$ visits the former first, as is indicated by the numbers in the lower illustrations after three successive subdivisions.  The top contains a simulation of a space-filling $\SLE_6$ in $[-1,1]^2$ from $i$ to $-i$. The colors indicate the time at which the path visits different points.  This was generated from the  same approximation of the GFF used to make Figure~\ref{fig::tree_dual_8_3}.}}
\end{figure}

\begin{figure}[ht!]
\begin{center}
\begin{ficomment}
\subfigure[$25\%$]{\includegraphics[width=0.48\textwidth]{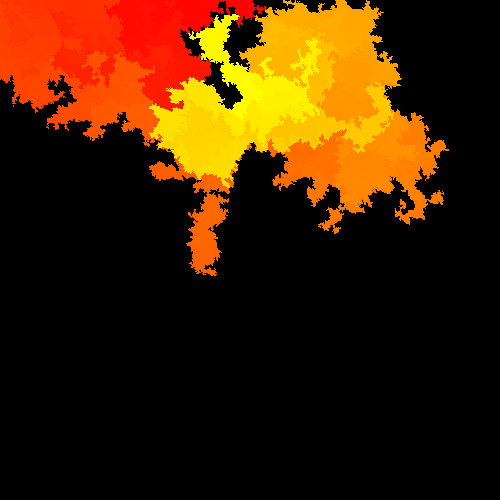}}
\hspace{0.02\textwidth}
\subfigure[$50\%$]{\includegraphics[width=0.48\textwidth]{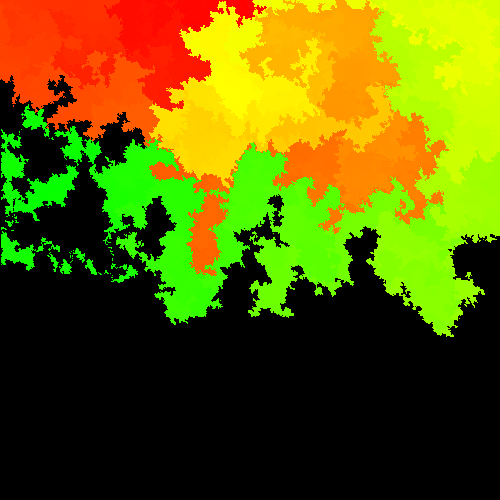}}
\subfigure[$75\%$]{\includegraphics[width=0.48\textwidth]{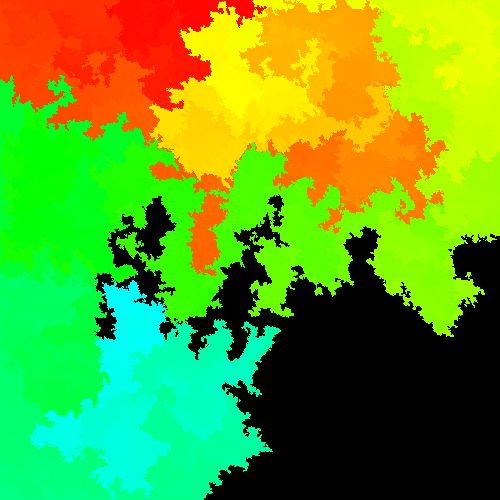}}
\hspace{0.02\textwidth}
\subfigure[$100\%$]{\includegraphics[width=0.48\textwidth]{figures/gff_pictures/space_filling_sle/flowline_animation2_0000000500.png}}

\subfigure{\includegraphics[scale=.9]{figures/hue_scale_time.pdf}}
\end{ficomment}
\end{center}
\vspace{-0.025\textheight}
\caption{\label{fig::space_filling2} \small{The space-filling $\SLE_6$ from Figure~\ref{fig::space_filling} parameterized according to area drawn up to different times.  Thousands of shades are used in the figure.  The visible interfaces between colors (separating green from orange, for example) correspond to points that are hit by the space-filling curve at two very different times.  (The orange side of the interface is filled in first, the green side on a second pass much later.)  See also Figure~\ref{fig::space_filling3} and Figure~\ref{fig::bigger_than_8_reversible} for related simulations with $\kappa'=8,16,128$.}}
\end{figure}

\subsection{Statements of main results}

\subsubsection{Constructing rays started at interior points}
A brief overview of imaginary geometry (as defined for general functions $h$) appears in \cite{SHE_WELD}, where the rays are interpreted as geodesics of an ``imaginary'' variant of the Levi-Civita connection associated with Liouville quantum gravity.  One can interpret the $e^{ih/\chi}$ direction as ``north'' and the $e^{i(h/\chi + \tfrac{\pi}{2})}$ direction as ``west'', etc.  Then $h$ determines a way of assigning a set of compass directions to every point in the domain, and a ray is determined by an initial point and a direction.  When $h$ is constant, the rays correspond to rays in ordinary Euclidean geometry.  For more general smooth functions $h$, one can still show that when three rays form a triangle, the sum of the angles is always $\pi$ \cite{SHE_WELD}.

\begin{figure}[ht!]
\begin{center}
\includegraphics[scale=0.85]{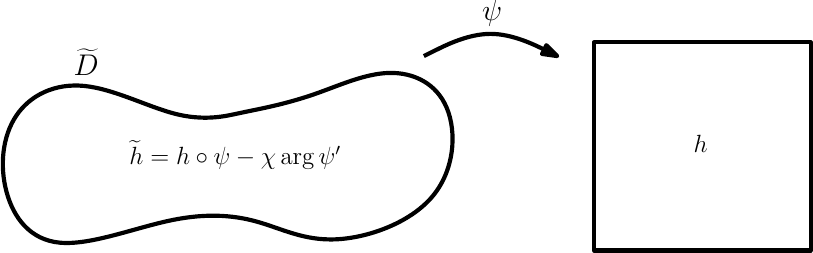}
\caption{\label{fig::coordinatechange} The set of flow lines in $\wt{D}$ will be the pullback via a conformal map $\psi$ of the set of flow lines in $D$ provided $h$ is transformed to a new function $\wt{h}$ in the manner shown.}
\end{center}
\end{figure}

If $h$ is a smooth function, $\eta$ a flow line of $e^{ih/\chi}$, and $\psi \colon \wt D \to D$ a conformal transformation, then by the chain rule, $\psi^{-1}(\eta)$ is a flow line of $h \circ \psi - \chi \arg \psi'$, as in Figure~\ref{fig::coordinatechange}. With this in mind, we define an {\bf imaginary surface}\footnote{We remark that, for readers familiar with this terminology, an imaginary surface can also be understood as a simply connected domain together with a section of its {\em orthonormal frame bundle}.} to be an equivalence class of pairs $(D,h)$ under the equivalence relation
\begin{equation}
\label{eqn::ac_eq_rel}
 (D,h) \rightarrow (\psi^{-1}(D), h \circ \psi - \chi \arg \psi') = (\wt{D},\wt{h}).
\end{equation}
We interpret $\psi$ as a (conformal) {\em coordinate change} of the imaginary surface.
In what follows, we will generally take $D$ to be the upper half-plane, but one can map the flow lines defined there to other domains using~\eqref{eqn::ac_eq_rel}.

Although~\eqref{eqn::ode} does not make sense as written (since $h$ is an instance of the GFF, not a function), one can construct these rays precisely by solving~\eqref{eqn::ode} in a rather indirect way: one begins by constructing explicit couplings of $h$ with variants of $\SLE$ and showing that these couplings have certain properties.  Namely, if one conditions on part of the curve, then the conditional law of $h$ is that of a GFF in the complement of the curve with certain boundary conditions.  Examples of these couplings appear in \cite{She_SLE_lectures, SchrammShe10, DUB_PART, SHE_WELD} as well as variants in \cite{MakarovSmirnov09,HagendorfBauerBernard10,IzyurovKytola10}.  This step is carried out in some generality in \cite{DUB_PART, SHE_WELD,MS_IMAG}.  The next step is to show that in these couplings the path is almost surely {\em determined by the field} so that we really can interpret the ray as a path-valued function of the field.  This step is carried out for certain boundary conditions in \cite{DUB_PART} and in more generality in \cite{MS_IMAG}.  Theorem~\ref{thm::existence} and Theorem~\ref{thm::uniqueness} describe analogs of these steps that apply in the setting of this paper.

\begin{figure}[ht!]
\begin{center}
\subfigure{\includegraphics[scale=0.85]{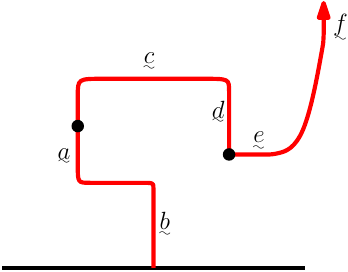}}
\hspace{0.05\textwidth}
\subfigure{\includegraphics[scale=0.85]{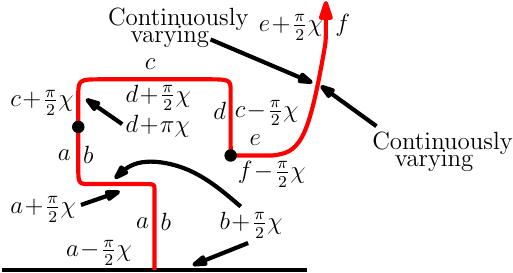}}
\caption{\label{fig::winding}  The notation on the left is a shorthand for the boundary data indicated on the right.  We often use this shorthand to indicate GFF boundary data.  In the figure, we have placed some black dots on the boundary $\partial D$ of a domain $D$.  On each arc $L$ of $\partial D$ that lies between a pair of black dots, we will draw either a horizontal or vertical segment $L_0$ and label it with $\uwave{x}$.  This means that the boundary data on $L_0$ is given by $x$, and that whenever $L$ makes a quarter turn to the right, the height goes down by $\tfrac{\pi}{2} \chi$ and whenever $L$ makes a quarter turn to the left, the height goes up by $\tfrac{\pi}{2} \chi$.  More generally, if $L$ makes a turn which is not necessarily at a right angle, the boundary data is given by $\chi$ times the winding of $L$ relative to $L_0$.  If we just write $x$ next to a horizontal or vertical segment, we mean just to indicate the boundary data at that segment and nowhere else.  The right side above has exactly the same meaning as the left side, but the boundary data is spelled out explicitly everywhere.  Even when the curve has a fractal, non-smooth structure, the {\em harmonic extension} of the boundary values still makes sense, since one can transform the figure via the rule in Figure~\ref{fig::coordinatechange} to a half-plane with piecewise constant boundary conditions. The notation above is simply a convenient way of describing what the constants are.  We will often include horizontal or vertical segments on curves in our figures (even if the whole curve is known to be fractal) so that we can label them this way.  This notation makes sense even for multiply connected domains.
}
\end{center}
\end{figure}

\begin{figure}[ht!]
\begin{center}
\includegraphics[scale=0.85]{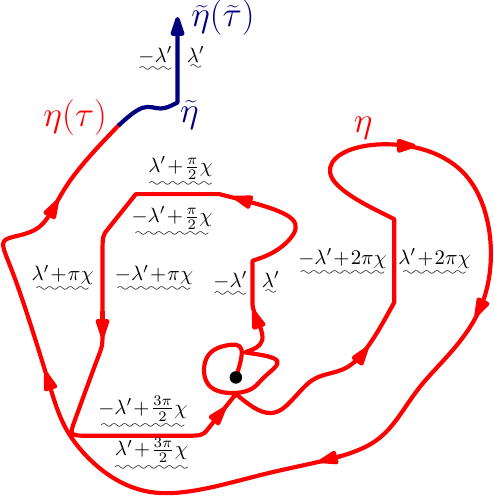}
\end{center}
\vspace{-0.2in}
\caption{\label{fig::interior_path_bd} {\footnotesize Suppose that $\eta$ is a non-self-crossing and non-self-tracing path in $\C$ starting from $0$ with the property that for all $t > 0$, the point $\eta(t)$ is not equal to the origin and lies on the boundary of the infinite component of $\C \setminus \eta([0,t])$, and $\eta$ has a continuous whole-plane Loewner driving function. Let us assume further $\eta$ has the property that for all $t$, a Brownian motion started at a point off $\eta([0,t])$ does not hit $\eta([0,t])$ for the first time at a {\em double} point of the path $\eta$. This implies that $\eta([0,t])$ has a well-defined ``left side'' and ``right side'' in the harmonic sense ---i.e., if one runs a Brownian motion from a point in $\C \setminus \eta([0,t])$, stopped at the first time it hits $\eta([0,t])$, one can a.s.\ make sense of whether it first hits $\eta([0,t])$ from the left or from the right. 
Let $\tau \in (0,\infty)$. We let $\wt{\eta}$ be a non-self-crossing path which agrees with $\eta$ until time $\tau$ and parameterizes a north-going vertical line segment in the time interval $[\tau+\tfrac{1}{2},\wt{\tau}]$ which is disjoint from $\wt{\eta}([0,\tau+\tfrac{1}{2}])$, as illustrated, where $\wt{\tau}=\tau+1$.  We then take $f$ to be the function which is harmonic in $\C \setminus \wt{\eta}([0,\wt{\tau}])$ whose boundary conditions are $-\lambda'$ (resp.\ $\lambda'$) on the left (resp.\ right) side of the vertical segment $\wt{\eta}([\tau+\tfrac{1}{2},\wt{\tau}])$.  The boundary data of $f$ on the left and right sides of $\wt{\eta}([0,\wt{\tau}])$ then changes by $\chi$ times the winding of $\wt{\eta}$, as explained in Figure~\ref{fig::winding} and indicated in the illustration above.  Explicitly, if $\varphi$ is a conformal map from the unbounded component $U$ of $\C \setminus \wt{\eta}([0,\wt{\tau}])$ to $\h$ which takes the left (resp.\ right) side of $\wt{\eta}|_{[0,\wt{\tau}]}$ which forms part of $\partial U$ to $\R_-$ (resp.\ $\R_+$) with $\varphi(\wt{\eta}(\wt{\tau})) = 0$ and $\Fh$ is the function which harmonic in $\h$ with boundary values given by $-\lambda$ (resp.\ $\lambda$) in $\R_-$ (resp.\ $\R_+$) then $f|_U$ has the same boundary data on $\partial U$ as $\Fh \circ \varphi - \chi \arg \varphi'$.  We define $f$ similarly in the other components of $\C \setminus \wt{\eta}([0,\wt{\tau}])$.  Note that $f$ is only defined up to a global additive constant in $2\pi \chi \Z$ since one has to choose the branch of $\arg$.  Given a domain $D$ in $\C$, we say that a GFF on $D \setminus \eta([0,\tau])$ has {\bf flow line boundary conditions} on $\eta([0,\tau])$ up to a global additive constant in $2\pi \chi \Z$ if the boundary data of $h$ agrees with $f$ along $\eta([0,\tau])$, up to a global additive constant in $2\pi \chi \Z$ (this specifies the boundary data up to a harmonic function which is $0$ on $\partial D$ and a multiple of $2\pi \chi$ on $\eta([0,\tau])$).  
This definition does not depend on the choice of $\wt{\eta}$.  More generally, we say that $h$ has flow line boundary conditions on $\eta([0,\tau])$ with angle $\theta$ if the boundary data of $h+\theta \chi$ agrees with $f$ on $\eta([0,\tau])$, up to a global additive constant in $2\pi \chi \Z$.
}}
\end{figure}

Before we state these theorems, we recall the notion of boundary data that tracks the ``winding'' of a curve, as illustrated in Figure~\ref{fig::winding}.  For $\kappa \in (0,4)$ fixed, we let
\begin{equation}
\label{eqn::constants}
 \lambda = \frac{\pi}{\sqrt{\kappa}},\quad \lambda' =\frac{\pi}{\sqrt{16/\kappa}} = \frac{\pi \sqrt{\kappa}}{4},\quad\text{and}\quad\chi = \frac{2}{\sqrt{\kappa}} - \frac{\sqrt{\kappa}}{2}.
\end{equation}
Note that $\chi > 0$ for this range of $\kappa$ values.  Given a path starting in the interior of the domain, we use the term {\bf flow line boundary conditions} to describe the boundary conditions that would be given by $-\lambda'$ (resp.\ $\lambda'$) on the left (resp.\ right) side of a north-going vertical segment of the curve and then changes according to $\chi$ times the winding of the path, up to an additive constant in $2\pi\chi \Z$.  We will indicate this using the notation of Figure~\ref{fig::winding}.  See the caption of Figure~\ref{fig::interior_path_bd} for further explanation.

Note that if $\eta$ solves~\eqref{eqn::ode} when $h$ is smooth, then this will remain the case if we replace $h$ by $h + 2 \pi \chi$.  This will turn out to be true for the flow lines defined from the GFF as well, and this idea becomes important when we let $h$ be an instance of the whole-plane GFF on $\C$.  Typically, an instance $h$ of the whole-plane GFF is defined modulo a global additive constant in $\R$, but it turns out that it is also easy and natural to define $h$ modulo a global additive multiple of $2\pi \chi$ (see Section~\ref{subsec::gff} for a precise construction).  When we know $h$ modulo an additive multiple of $2 \pi \chi$, we will be able to define its flow lines.  Before we show that $\eta$ is a path-valued function of $h$, we will establish a preliminary theorem that shows that there is a unique coupling between $h$ and $\eta$ with certain properties.  Throughout, we say that a domain $D \subseteq \C$ has {\bf harmonically non-trivial boundary} if a Brownian motion started at a point in $D$ hits $\partial D$ almost surely.

\begin{theorem}
\label{thm::existence}
Fix a connected domain $D \subsetneq \C$ with harmonically non-trivial boundary and let $h$ be a GFF on $D$ with some boundary data.  Fix a point $z \in D$.  There exists a unique coupling between $h$ and a random path $\eta$ (defined up to monotone parameterization) started at $z$ (and stopped when it first hits $\partial D$) such that the following is true.  For any $\eta$-stopping time $\tau$, the conditional law of $h$ given $\eta|_{[0,\tau]}$ is given by that of  the sum of a GFF $\wt{h}$ on $D \setminus \eta([0,\tau])$ with zero boundary conditions and a random\footnote{We recall that flow line boundary conditions are only defined up to a global additive constant in $2\pi \chi \Z$.  We thus emphasize that saying that the boundary data along $\eta([0,\tau])$ itself is given by flow line boundary conditions only specifies the boundary data along $\eta([0,\tau])$ up to a global additive constant in $2\pi \chi \Z$.  In the case that $D = \C$, flow line boundary conditions specify the boundary data up to a global additive constant in $2\pi \chi \Z$.  In the case that $D$ has harmonically non-trivial boundary, flow line boundary conditions along $\eta([0,\tau])$ specify the boundary data up to a harmonic function which is $0$ on $\partial D$ and a multiple of $2\pi \chi$ on $\eta([0,\tau])$.} harmonic function $\Fh$ on $D \setminus \eta([0,\tau])$ whose boundary data agrees with the boundary data of $h$ on $\partial D$ and is given by flow line boundary conditions on $\eta([0,\tau])$ itself.  Moreover, $\wt{h}$ and $\Fh$ are conditionally independent given $\eta|_{[0,\tau]}$.  The path is simple when $\kappa \in (0,8/3]$ and is self-touching for $\kappa \in (8/3,4)$.  Similarly, if $D = \C$ and $h$ is a whole-plane GFF (defined modulo a global additive multiple of $2 \pi \chi$) there is also a unique coupling of a random path $\eta$ and $h$ satisfying the property described in the $D \subsetneq \C$ case above.  In this case, the law of $\eta$ is that of a whole-plane $\SLE_\kappa(2-\kappa)$ started at $z$.  Finally, in all cases the set $\eta([0,\tau])$ is local for $h$ in the sense of \cite{SchrammShe10}.
\end{theorem}

We emphasize that the harmonic function $\Fh$ in the statement of Theorem~\ref{thm::existence} is \emph{not} determined by $\eta|_{[0,\tau]}$ in the case that $D \neq \C$ and $\tau$ occurs before $\eta$ first hits $\partial D$.  However, $\Fh$ is determined by $\eta|_{[0,\tau]}$ and the $\sigma$-algebra $\CF$ which is given by $\cap_{\epsilon > 0} \sigma( h|_{B(z,\epsilon)})$.  The uniform spanning tree (UST) height function provides a discrete analogy of this statement.  Namely, if one picks a lattice point $z$ and starts to explore a branch of the UST starting from $z$ back to the boundary, then the UST height function along the path is not determined by the path before the path has hit the boundary.  However, if one conditions on both the path and the height function at one point along the path, then the heights are determined along the entire path even before it has hit the boundary.

In this article, we will often use the term ``self-touching'' to describe a curve which is both self-intersecting and non-crossing.

We will give an overview of the whole-plane $\SLE_\kappa(\rho)$ and related processes in Section~\ref{subsec::SLEoverview} and, in particular, show that these processes are almost surely generated by continuous curves.  This extends the corresponding result for chordal $\SLE_\kappa(\ul{\rho})$ processes established in \cite{MS_IMAG}.  In Section~\ref{subsec::gff}, we will explain how to make sense of the GFF modulo a global additive multiple of a constant $r > 0$.  The construction of the coupling in Theorem~\ref{thm::existence} is first to sample the path $\eta$ according to its marginal distribution and then, given $\eta$, to pick $h$ as a GFF with the boundary data as described in the statement.  Theorem~\ref{thm::existence} implies that when one integrates over the randomness of the path, the marginal law of $h$ on the whole domain is a GFF with the given boundary data.  Our next result is that $\eta$ is in fact determined by the resulting field, which is not obvious from the construction.  Similar results for ``boundary emanating'' GFF flow lines (i.e., flow lines started at points on the boundary of $D$) appear in \cite{DUB_PART, MS_IMAG,SchrammShe10}.

\begin{theorem}
\label{thm::uniqueness}
In the coupling of a GFF $h$ and a random path $\eta$ as in Theorem~\ref{thm::existence}, the path $\eta$ is almost surely determined by $h$ viewed as a distribution modulo a global multiple of $2\pi \chi$.  (In particular, the path does not change if one adds a global additive multiple of $2\pi \chi$ to $h$.)
\end{theorem}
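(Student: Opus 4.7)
The plan is to bootstrap from the boundary-emanating uniqueness theorem of \cite{MS_IMAG} by conditioning on an initial segment of $\eta$, and then to close the residual gap near the starting point $z$ via a limiting argument. The $2\pi\chi$ quotient in the statement is built in to match the invariance of the formal flow-line ODE $\dot\eta = \exp(i(h/\chi+\theta))$ under $h \mapsto h + 2\pi\chi$, so the object to construct is really a measurable map from $h$ modulo $2\pi\chi$ to $\eta$.

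For the main reduction, fix $\epsilon > 0$ and let $\tau_\epsilon$ be the first time $\eta$ exits $B(z,\epsilon)$. By Theorem~\ref{thm::existence}, conditionally on $\eta|_{[0,\tau_\epsilon]}$ the field $h$ restricted to $D_\epsilon := D \setminus \eta([0,\tau_\epsilon])$ is a GFF with the prescribed data on $\partial D$ and flow-line boundary conditions along $\eta([0,\tau_\epsilon])$. The continuation $\eta|_{[\tau_\epsilon,\,\cdot\,)}$ is then a boundary-emanating flow line in $D_\epsilon$ starting from $\eta(\tau_\epsilon)$, exactly of the form treated in \cite{MS_IMAG}. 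The main uniqueness theorem there therefore gives that $\eta|_{[\tau_\epsilon,\,\cdot\,)}$ is almost surely a measurable function of $(h,\eta|_{[0,\tau_\epsilon]})$. Iterating along $\epsilon = 1/n \downarrow 0$ and using continuity of $\eta$ at $t=0$, the problem reduces to showing that the initial segments $\eta|_{[0,\tau_{1/n}]}$ are themselves determined by $h$ modulo $2\pi\chi$, up to their common shrinking tip at $z$.

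To close this last gap I would take two conditionally-independent samples $(\eta^1,h)$ and $(\eta^2,h)$ of the coupling of Theorem~\ref{thm::existence} and show $\eta^1=\eta^2$ almost surely. By the previous step, outside every ball $B(z,\epsilon)$ the two paths agree as soon as their initial exits from $\partial B(z,\epsilon)$ agree, so it is enough to identify these exits in the limit $\epsilon \to 0$. The natural route is an interior analogue of the same-angle merging theorem for boundary flow lines from \cite{MS_IMAG}: two flow lines of the same field started at the same interior point with compatible directions must coalesce. One pathway to such a statement is to pinch off small neighborhoods of $z$ by boundary-emanating flow lines at rational angles from a countable dense set of boundary points, apply the boundary merging results of \cite{MS_IMAG} inside each pocket so obtained, and pass to the limit. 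The main obstacle is precisely this interior merging: there is no common boundary data forcing agreement near $z$, so one must leverage the whole-plane $\SLE_\kappa(2-\kappa)$ marginal description from Theorem~\ref{thm::existence} to argue that the Loewner driving function near time $0$ is controlled by the local behavior of $h$ at $z$, which itself is well-defined modulo $2\pi\chi$. I expect this interior-merging step, together with the extension of the boundary resampling arguments of \cite{MS_IMAG} to paths whose starting points lie in $D$, to be the technical heart of the proof.
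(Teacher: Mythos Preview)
Your high-level framework is right: couple two conditionally independent copies $\eta,\wt\eta$ with the same $h$ and show they coincide, and you are correct that once an initial segment is fixed the continuation is pinned down by the boundary-emanating uniqueness of \cite{MS_IMAG}. But your plan for closing the gap near $z$ is where you diverge from the paper, and it is also where your argument is incomplete.

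The paper does \emph{not} try to control the Loewner driving function near time $0$ or pinch neighborhoods of $z$ using boundary-emanating flow lines from $\partial D$ (indeed, in the whole-plane case there is no boundary to start from). Instead, uniqueness is proved only \emph{after} the full interior interaction theory (Theorem~\ref{thm::flow_line_interaction}) has been established in Section~\ref{subsec::interaction} via the tail decomposition (Propositions~\ref{prop::tail_interaction} and~\ref{prop::tail_decomposition}). With that in hand, Lemma~\ref{lem::do_not_cross} shows the two copies $\eta,\wt\eta$ almost surely do not cross. Then one argues that $\wt\eta$ must \emph{merge} into $\eta$: in the self-intersecting regime $\kappa\in(8/3,4)$, $\eta$ surrounds $z$ so $\wt\eta$ is trapped in a bounded pocket and, by the interaction rules, can only exit by merging; in the simple regime $\kappa\in(0,8/3]$, one draws auxiliary flow lines $\eta_1,\dots,\eta_n$ of the conditional field $h$ given $\eta$ at evenly spaced angles to manufacture bounded pockets, and the same exit analysis forces merging.

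The step you are missing entirely is how the merging, which a priori happens at some finite random modulus $R$, gets upgraded to $\eta=\wt\eta$. The paper's device is \emph{scale invariance}: the joint law of $(h_{\alpha\beta},\eta,\wt\eta)$ is scale invariant, so the law of $R$ is scale invariant, and since $R<\infty$ almost surely this forces $R=0$ almost surely. This replaces your proposed limiting argument on $\epsilon\to 0$ and is what actually closes the argument. Your sketch of ``controlling the driving function near $0$ via the local behavior of $h$'' does not lead anywhere concrete: the field is not a function near $z$, and the whole-plane $\SLE_\kappa(2-\kappa)$ marginal does not by itself pin down the path from local data.
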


Theorem~\ref{thm::existence} describes a coupling between a whole-plane $\SLE_\kappa(2-\kappa)$ process for $\kappa \in (0,4)$ and the whole-plane GFF.  In our next result, we will describe a coupling between a whole-plane $\SLE_\kappa(\rho)$ process for general $\rho > -2$ (this is the full range of~$\rho$ values for which ordinary $\SLE_\kappa(\rho)$ makes sense) and the whole-plane GFF plus an appropriate multiple of the argument function.  We motivate this construction with the following.  Suppose that $h$ is a smooth function on the cone $\CC_{\overline{\theta}}$ obtained by identifying the two boundary rays of the wedge $\{z : \arg z \in [0,\overline{\theta}] \}$.  (When defining this wedge, we consider $z$ to belong to the universal cover of $\C \setminus \{0\}$, on which $\arg$ is continuous and single-valued; thus the cone $\CC_{\overline{\theta}}$ is defined even when $\overline{\theta} > 2\pi$.)  Note that there is a $\overline{\theta}$ range of angles of flow lines of $e^{ih/\chi}$ in $\CC_{\overline{\theta}}$ starting from $0$ and a $2\pi$ range of angles of flow lines starting from any point $z \in \CC_{\overline{\theta}} \setminus \{0\}$.  We can map $\CC_{\overline{\theta}}$ to $\C$ with the conformal transformation $z \mapsto \psi_{\overline{\theta}}(z) \equiv z^{2\pi/\overline{\theta}}$.  Applying the change of variables formula~\eqref{eqn::ac_eq_rel}, we see that $\eta$ is a flow line of $h$ if and only if $\psi_{\overline{\theta}}(\eta)$ is a flow line of $h \circ \psi_{\overline{\theta}}^{-1} - \chi \big(\overline{\theta} / 2\pi  - 1\big) \arg(\cdot)$.  Therefore we should think of $h-\alpha \arg(\cdot)$ (where $h$ is a GFF) as the conformal coordinate change of a GFF with a conical singularity.  The value of $\alpha$ determines the range $\overline{\theta}$ of angles for flow lines started at $0$: indeed, by solving $\chi(\overline{\theta}/2\pi - 1) = \alpha$, we obtain
\begin{equation}
\label{eqn::alphatheta}
\overline{\theta} = 2\pi\left(1+\frac{\alpha}{\chi}\right),
\end{equation}
which exceeds zero as long as $\alpha > - \chi$.  See Figure~\ref{fig::varying_alpha} for numerical simulations.

Theorem~\ref{thm::alphabeta}, stated just below, implies that analogs of Theorem~\ref{thm::existence} and Theorem~\ref{thm::uniqueness} apply in the even more general setting in which we replace $h$ with $h_{\alpha\beta} \equiv h-\alpha \arg(\cdot-z) - \beta \log|\cdot-z|$, $\alpha >- \chi$ (where $\chi$ is as in~\eqref{eqn::constants}), $\beta \in \R$, and $z$ is fixed.  When $\beta = 0$ and $h$ is a whole-plane GFF, the flow line of $h_{\alpha} \equiv h_{\alpha 0}$ starting from  $z$ is a whole-plane $\SLE_\kappa(\rho)$ process where the value of $\rho$ depends on $\alpha$.  Non-zero values of $\beta$ cause the flow lines to spiral either in the clockwise $(\beta < 0)$ or counterclockwise ($\beta > 0$) direction;  see Figure~\ref{fig::spiral_fan}.  In this case, the flow line is a variant of whole-plane $\SLE_\kappa(\rho)$ in which one adds a constant drift whose speed depends on $\beta$.  As will be shown in Section~\ref{sec::timereversal}, the case that $\beta \neq 0$ will arise in our proof of the reversibility of whole-plane $\SLE_\kappa(\rho)$.

\begin{figure}[ht!]
\begin{center}
\includegraphics[scale=0.85]{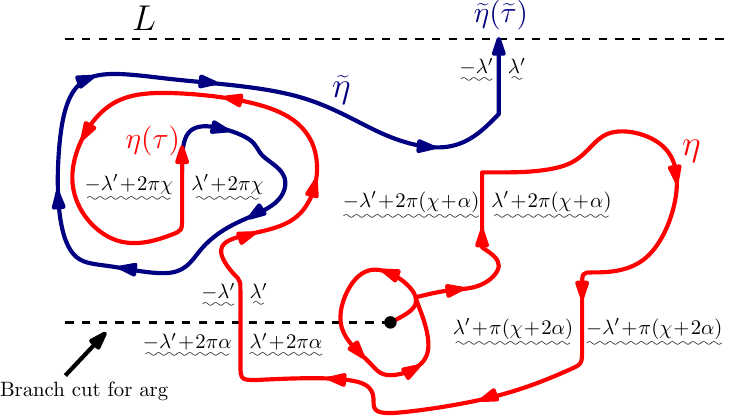}
\end{center}
\caption{\label{fig::interior_path_bd2}  Suppose that $\eta$ is a non-self-crossing path in $\C$ starting from~$0$ and let $\tau \in (0,\infty)$.  Fix $\alpha \in \R$ and a horizontal line $L$ which lies above $\eta([0,\tau])$.  We let $\wt{\eta}$ be a non-self-crossing path whose range lies below $L$, and which agrees with $\eta$ up to time $\tau$, terminates in $L$ at time $\wt{\tau}=\tau+1$, and parameterizes an up-directed vertical line segment in the time interval $[\tau+\tfrac{1}{2},\wt{\tau}]$.  Let $f$ be the harmonic function on $\C \setminus \wt{\eta}([0,\wt{\tau}])$ which is $-\lambda'$ (resp.\ $\lambda'$) on the left (resp.\ right) side of $\wt{\eta}([\tau+\tfrac{1}{2},\wt{\tau}])$ and changes by $\chi$ times the winding of $\wt{\eta}$ as in Figure~\ref{fig::winding}, except jumps by $2\pi \alpha$ (resp.\ $-2\pi\alpha$) if $\eta$ passes though $(-\infty,0)$ from above (resp.\ below).  Whenever $\eta$ wraps around $0$ in the counterclockwise (resp.\ clockwise) direction, the boundary data of $f$ increases (resp.\ decreases) by $2\pi (\chi+\alpha)$.  If $\eta$ winds around a point $z \neq 0$ in the counterclockwise (resp.\ clockwise) direction, then the boundary data of $f$ increases (resp.\ decreases) by $2\pi \chi$.  We say that a GFF $h$ on $D \setminus \eta([0,\tau])$, $D \subseteq \C$ a domain, has {\bf $\alpha$-flow line boundary conditions} along $\eta([0,\tau])$ (modulo $2\pi(\chi+\alpha)$) if the boundary data of $h$ agrees with $f$ on $\eta([0,\tau])$, up to a global additive constant in $2\pi(\chi+\alpha) \Z$.  This definition does not depend on the choice of $\wt{\eta}$.  More generally, we say that $h$ has $\alpha$-flow line boundary conditions on $\eta$ with angle $\theta$ if the boundary data of $h+\theta \chi$ agrees with $f$ on $\eta([0,\tau])$, up to a global additive constant in $2\pi(\chi+\alpha) \Z$.  The boundary conditions are defined in an analogous manner in the case that $\eta$ starts from $z \neq 0$.}
\end{figure}

Before stating Theorem~\ref{thm::alphabeta}, we will first need to generalize the notion of flow line boundary conditions; see Figure~\ref{fig::interior_path_bd2}.  We will assume without loss of generality that the starting point for $\eta$ is given by $z = 0$ for simplicity; the definition that we will give easily extends to the case $z \neq 0$.  We will define a function $f$ that describes the boundary behavior of the conditional expectation of $h_{\alpha \beta}$ along $\eta([0,\tau])$ where $\eta$ is a flow line and $\tau$ is a stopping time for $\eta$.  To avoid ambiguity, we will focus throughout on the branch of $\arg$ given by taking $\arg(\cdot) \in (-\pi, \pi]$ and we place the branch cut on $(-\infty,0)$.  In the case that $\alpha = \beta = 0$, the $f$ we defined (recall Figure~\ref{fig::interior_path_bd}) was only determined modulo a global additive multiple of $2 \pi \chi$ since in this setting, each time the path winds around $0$, the height of $h$ changes by $\pm 2\pi \chi$.  For general values of $\alpha, \beta \in \R$, each time the path winds around $0$, the height of $h_{\alpha \beta}$ changes by $\pm 2\pi (\chi+\alpha) = \pm \overline{\theta} \chi$, for the $\overline{\theta}$ defined in~\eqref{eqn::alphatheta}.  Therefore it is natural to describe the values of $f$ modulo global multiple of $2\pi(\chi+\alpha) = \overline{\theta} \chi$.  As we will explain in more detail later, adding a global additive constant that changes the values of $f$ modulo $2\pi(\chi+\alpha)$ amounts to changing the ``angle'' of $\eta$.  Since $h_{\alpha\beta}$ has a $2 \pi \alpha$ size ``jump'' along $(-\infty, 0)$ (coming from the discontinuity in $-\alpha \arg$), the boundary data for $f$ will have an analogous jump.

 In order to describe the boundary data for $f$, we fix a horizontal line $L$ which lies above $\eta([0,\tau])$, we let $\wt{\tau} = \tau+1$, and $\wt{\eta} \colon [0,\wt{\tau}] \to \C$ be a non-self-crossing path contained in the half-space which lies below $L$ with $\wt{\eta}|_{[0,\tau]} = \eta$ and $\wt{\eta}(\wt{\tau}) \in L$.  We moreover assume that the final segment of $\wt{\eta}$ is a north-going vertical line.  We set the value of $f$ to be $-\lambda'$ (resp.\ $\lambda'$) on the left (resp.\ right) side of the terminal part of $\wt{\eta}$ and then extend to the rest of $\wt{\eta}$ as in Figure~\ref{fig::winding} except with discontinuities each time the path crosses $(-\infty,0)$.  Namely, if $\wt{\eta}$ crosses $(-\infty,0)$ from above (resp.\ below), the height increases (resp.\ decreases) by $2\pi \alpha$.  Note that these discontinuities are added in such a way that the boundary data of $f+\alpha \arg(\cdot) + \beta \log|\cdot|$ changes continuously across the branch discontinuity.  We say that a GFF $h$ on $D \setminus \eta([0,\tau])$ has {\bf $\alpha$-flow line boundary conditions} (modulo $2\pi(\chi+\alpha)$) along $\eta([0,\tau])$ if the boundary data of $h$ agrees with $f$ along $\eta([0,\tau])$, up to a global additive constant in $2\pi(\chi+\alpha) \Z$.  We emphasize that this definition does not depend on the particular choice of $\wt{\eta}$.  The reason is that although two different choices may wind around $0$ a different number of times before hitting $L$, the difference only changes the boundary data of $f$ along $\eta([0,\tau])$ by an integer multiple of $2\pi(\chi+\alpha)$.

 \begin{remark}
\label{rem::discontinuous}
The boundary data for the $f$ that we have defined jumps by $2\pi \alpha$ when $\eta$ passes through $(-\infty,0)$ due to the branch cut of the argument function.  If we treated $\arg$ and $h_{\alpha \beta}$ as multi-valued (generalized) functions on the universal cover of $\C \setminus \{0\}$, then we could define $f$ in a continuous way on the universal cover of $\C \setminus \wt{\eta}([0,\wt{\tau}])$.  However, we find that this approach causes some confusion in our later arguments (as it is easy to lose track of which branch one is working in when one considers various paths that wind around the origin in different ways). We will therefore consider $h_{\alpha \beta}$ to be a single-valued generalized function with a discontinuity along $(-\infty, 0)$, and we accept that the boundary data for $f$ has discontinuities.
\end{remark}

\begin{theorem}
\label{thm::alphabeta}
Suppose that $\kappa \in (0,4)$, $\alpha > -\chi$ with $\chi$ as in~\eqref{eqn::constants}, and $\beta \in \R$.  Let $h$ be a GFF on a domain $D \subseteq \C$.  If $D \not = \C$, we assume that some fixed boundary data for $h$ on $\partial D$ is given.  If $D = \C$, then we let $h$ be a GFF on $\C$ defined modulo a global additive multiple of $2\pi(\chi+\alpha)$.    Let $h_{\alpha \beta} = h-\alpha \arg(\cdot - z) - \beta\log|\cdot - z|$.  Then there exists a unique coupling between $h_{\alpha \beta}$ and a random path $\eta$ starting from $z$ so that for every $\eta$-stopping time $\tau$ the following is true.  The conditional law of $h_{\alpha \beta}$ given $\eta|_{[0,\tau]}$ is given by that of the sum of a GFF $\wt{h}$ on $D \setminus \eta([0,\tau])$ with zero boundary conditions and a harmonic function $\Fh$ on $D \setminus \eta([0,\tau])$ with $\alpha$-flow line boundary conditions\footnote{We recall that $\alpha$-flow line boundary conditions are only defined up to a global additive constant in $2\pi(\chi+\alpha) \Z$.  So, saying that the boundary data along $\eta([0,\tau])$ itself is given by $\alpha$-flow line boundary conditions only specifies the boundary data along $\eta([0,\tau])$ up to a global additive constant in $2\pi (\chi+\alpha) \Z$.  In the case that $D = \C$, $\alpha$-flow line boundary conditions specify the boundary data up to a global additive constant in $2\pi(\chi+\alpha) \Z$.  In the case that $D$ has harmonically non-trivial boundary, $\alpha$-flow line boundary conditions along $\eta([0,\tau])$ specify the boundary data up to a harmonic function which is $0$ on $\partial D$ and a multiple of $2\pi(\chi+\alpha)$ on $\eta([0,\tau])$.} along $\eta([0,\tau])$, the same boundary conditions as $h_{\alpha \beta}$ on $\partial D$, and a $2\pi \alpha$ discontinuity along $(-\infty,0)+z$, as described in Figure~\ref{fig::interior_path_bd2}.  Given $\eta([0,\tau])$, $\wt{h}$ and $\Fh$ are conditionally independent.  Moreover, if $\beta = 0$, $D = \C$, and $h_\alpha=h_{\alpha 0}$, then the corresponding path $\eta$ is a whole-plane $\SLE_\kappa(\rho)$ process with $\rho = 2-\kappa + 2\pi\alpha/\lambda$.  Regardless of the values of $\alpha$ and $\beta$, $\eta$ is a.s.\ locally self-avoiding in the sense that its lifting to the universal cover of $D \setminus \{z\}$ is self-avoiding.  Finally, the random path $\eta$ is almost surely determined by the distribution~$h_{\alpha \beta}$ modulo a global additive multiple of $2\pi(\chi+\alpha)$.  (In particular, even when $D \neq \C$, the path~$\eta$ does not change if one adds a global additive multiple of $2\pi(\chi+\alpha)$ to $h_{\alpha\beta}$.)  In all cases the set $\eta([0,\tau])$ is local for $h$ in the sense of \cite{SchrammShe10}.
\end{theorem}

As explained just after the statement of Theorem~\ref{thm::existence}, the harmonic function $\Fh$ is not determined by $\eta|_{[0,\tau]}$ if $\tau$ occurs before $\eta$ has hit $\partial D$ for the first time.  However, it is determined if one conditions on both $\eta|_{[0,\tau]}$ and the $\sigma$-algebra $\CF$ which is given by $\cap_{\epsilon > 0} \sigma(h_{\alpha \beta}|_{B(z,\epsilon)})$.

In the statement of Theorem~\ref{thm::alphabeta}, in the case that $D = \C$ we interpret the statement that the conditional law of $h_{\alpha \beta}$ given $\eta|_{[0,\tau]}$ has the same boundary conditions as $h_{\alpha \beta}$ on $\partial D$ as saying that the behavior of the two fields at $\infty$ is the same.  By this, we mean that the total variation distance of the laws of the two fields (as distributions modulo a global additive multiple of $2\pi(\chi + \alpha)$) restricted to the complement of $B(0,R)$ tends to $0$ as $R \to \infty$.

Using Theorem~\ref{thm::alphabeta}, for each $\theta \in [0, \overline{\theta}) = [0,2\pi(1+\alpha/\chi))$ we can generate the ray $\eta_\theta$ of $h_{\alpha \beta}$ starting from $z$ by taking $\eta_\theta$ to be the flow line of $h_{\alpha \beta} + \theta \chi$.  The boundary data for the conditional law of $h_{\alpha \beta}$ given $\eta_\theta$ up to some stopping time $\tau$ is given by $\alpha$-flow line boundary conditions along $\eta([0,\tau])$ with angle $\theta$ (i.e., $h+\theta \chi$ has $\alpha$-flow line boundary conditions, as described in Figure~\ref{fig::interior_path_bd2}).  Note that we can determine the angle $\theta$ from these boundary conditions along $\eta([0,\tau])$ since the boundary data along a north-going vertical segment of $\eta$ takes the form $\pm \lambda' - \theta \chi$, up to an additive constant in $2\pi(\chi+\alpha)\Z$.  This is the fact that we need in order to prove that the path is determined by the field in Theorem~\ref{thm::alphabeta}.  If we had taken the field modulo a (global) constant other than $2\pi(\chi+\alpha)$, then the boundary values along the path would not determine its angle since the path winds around its starting point an infinite number of times (see Remark~\ref{rem::alpha_beta_determined} below).  The range of possible angles starting from $z$ is determined by $\alpha$.  If $\alpha > 0$, then the range of angles is larger than $2\pi$ and if $\alpha < 0$, then the range of angles is less than $2\pi$; see Figure~\ref{fig::varying_alpha}.  If $\alpha < -\chi$ then we can draw a ray from $\infty$ to $z$ instead of from $z$ to $\infty$.  (This follows from Theorem~\ref{thm::alphabeta} itself and a $w \to 1/w$ coordinate change using the rule of Figure~\ref{fig::coordinatechange}.)  If $\alpha = -\chi$ then a ray started away from $z$ can wrap around $z$ and merge with itself.  In this case, one can construct loops around $z$ in a natural way, but not flow lines connecting $z$ and $\infty$.  A non-zero value for $\beta$ causes the flow lines starting at $z$ to spiral in the counterclockwise (if $\beta > 0$) or  clockwise (if $\beta < 0$) direction, as illustrated in Figure~\ref{fig::spiral_fan} and Figure~\ref{fig::spiral_lightcone}.

\begin{remark}
\label{rem::alpha_beta_determined}
(This remark contains a technical point which should be skipped on a first reading.)
In the context of Figure~\ref{fig::interior_path_bd2}, Theorem~\ref{thm::alphabeta} implies that it is possible to specify a particular flow line starting from $z$ (something like a ``north-going flow line'') provided that the values of the field are known up to a global multiple of $2\pi(\chi + \alpha)$.  What happens if we try to start a flow line from a different point $w \neq z$?  In this case, in order to specify a ``north-going'' flow line starting from $w$, we need to know the values of the field modulo a global multiple of $2\pi\chi$, not modulo $2\pi(\chi+\alpha)$.  If $\alpha = 0$ and $\beta \in \R$ and we know the field modulo a global multiple of $2\pi\chi$, then there is no problem in defining a north-going line started at $w$.

But what happens if $\alpha \neq 0$ and we only know the field modulo a global multiple of $2\pi(\chi+\alpha)$?  In this case, we do not have a way to single out a {\em specific} flow line started from $w$ (since changing the multiple of $2\pi(\chi+\alpha)$ changes the angle of the north-going flow line started at $w$).  On the other hand, suppose we let $U$ be a random variable (independent of $h_{\alpha\beta}$) in $[0,2\pi (\chi + \alpha))$, chosen uniformly from the set $A$ of multiples of $2\pi\chi$ taken modulo $2\pi(\chi+\alpha)$ (or chosen uniformly on all of $[0,2\pi (\chi + \alpha))$ if this set is dense, which happens if $\alpha/\chi$ is irrational).  Then we consider the law of a flow line, started at $w$, of the field $h_{\alpha \beta}+U$.  The conditional law of such a flow line (given $h_{\alpha\beta}$ but not $U$) does not change when we add a multiple of $2\pi(\chi+\alpha)$ to $h_{\alpha\beta}$.  So this {\em random} flow line from $w$ can be defined canonically even if $h_{\alpha\beta}$ is only known modulo an integer multiple of $2\pi(\chi+\alpha)$.

Similarly, if we only know $h_{\alpha \beta}$ modulo $2\pi(\chi+\alpha)$, then the collection of all possible flow lines of $h_{\alpha \beta}+U$ (where $U$ ranges over all the values in its support) starting from $w$ is a.s.\ well-defined.
\end{remark}

It is also possible to extend Theorem~\ref{thm::alphabeta} to the setting that $\kappa' > 4$.

\begin{theorem}
\label{thm::alphabeta_counterflow}
Suppose that $\kappa' > 4$, $\alpha < -\chi$, and $\beta \in \R$.  Let $h$ be a GFF on a domain $D \subseteq \C$ and let $h_{\alpha \beta} = h-\alpha \arg(\cdot - z) - \beta\log|\cdot - z|$.  If $D = \C$, we view $h_{\alpha \beta}$ as a distribution defined up to a global multiple of $2\pi(\chi+\alpha)$.  There exists a unique coupling between $h_{\alpha \beta}$ and a random path $\eta'$ starting from $z$ so that for every $\eta'$-stopping time $\tau$ the following is true.  The conditional law of $h_{\alpha \beta}$ given $\eta'|_{[0,\tau]}$ is a GFF on $D \setminus \eta'([0,\tau])$ with $\alpha$-flow line boundary conditions with angle $\tfrac{\pi}{2}$ (resp.\ $-\tfrac{\pi}{2}$) on the left (resp.\ right) side of $\eta'([0,\tau])$, the same boundary conditions as $h_{\alpha \beta}$ on $\partial D$, and a $2\pi \alpha$ discontinuity along $(-\infty,0)+z$, as described in Figure~\ref{fig::interior_path_bd2}.   Moreover, if $\beta = 0$, $D = \C$, and $h_\alpha=h_{\alpha 0}$ is a whole-plane GFF viewed as a distribution defined up to a global multiple of $2\pi(\chi+\alpha)$, then the corresponding path $\eta'$ is a whole-plane $\SLE_{\kappa'}(\rho)$ process with $\rho = 2-\kappa' - 2\pi\alpha/\lambda'$.  Finally, the random path $\eta'$ is almost surely determined by $h_{\alpha \beta}$ provided $\alpha \leq -\tfrac{3}{2}\chi$ and we know its values up to a global multiple of $2\pi(\chi+\alpha)$ if $D = \C$.
\end{theorem}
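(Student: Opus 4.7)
The plan is to follow the architecture of Theorem~\ref{thm::alphabeta}, but with counterflow boundary conditions (angles $\pm\tfrac{\pi}{2}$ on left and right) replacing flow line boundary conditions, drawing on the $\SLE_{\kappa'}(\ul\rho)$--GFF coupling machinery developed for boundary-emanating paths in \cite{MS_IMAG, MS_IMAG2, MS_IMAG3}.  The sign condition $\alpha < -\chi$ mirrors the condition $\alpha > -\chi$ of Theorem~\ref{thm::alphabeta}: a counterflow line travels outward from $z$, so relative to the flow line picture the direction of travel is reversed and the sign of the admissible $\alpha$ range flips accordingly.

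First I would handle existence and uniqueness of the coupling in the case $D\subsetneq\C$.  Sample $\eta'$ as a radial $\SLE_{\kappa'}(\rho)$ from $z$, targeted at an appropriate interior point, and compute via \Ito/ calculus the evolution of $h_{\alpha\beta}$ tested against a smooth compactly supported function on $D\setminus\eta'([0,\tau])$.  Requiring the drift to match the derivative of the prescribed $\alpha$ flow line boundary data (with angles $\pm\tfrac{\pi}{2}$) against the same test function pins down $\rho = 2-\kappa'-2\pi\alpha/\lambda'$; the parameter $\beta$ enters as a constant angular drift in the radial driving function (causing the spiraling behavior described after Theorem~\ref{thm::alphabeta}) but does not affect $\rho$.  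For $D=\C$ I would build the whole-plane coupling as a limit of couplings on $B(z,R)$ as $R\to\infty$, as in the proof of Theorem~\ref{thm::alphabeta}, using that $h_{\alpha\beta}$ (modulo $2\pi(\chi+\alpha)$) yields a tight family of restrictions, and then identify the limiting driving process as whole-plane $\SLE_{\kappa'}(\rho)$.  Uniqueness of the coupling is the standard martingale characterization of the driving process given the conditional law property.

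The main obstacle is the final measurability assertion: that $\eta'$ is almost surely determined by $h_{\alpha\beta}$ (modulo $2\pi(\chi+\alpha)$ when $D=\C$) under the stronger hypothesis $\alpha \leq -\tfrac{3}{2}\chi$.  The approach is to adapt the light cone construction from \cite{MS_IMAG}: show that the range of $\eta'$ coincides with the closure of the union of flow lines of $h_{\alpha\beta}+\theta\chi$ for $\theta$ ranging over a subinterval of $[-\tfrac{\pi}{2},\tfrac{\pi}{2}]$, each started at a point of a countable dense set.  Since Theorem~\ref{thm::alphabeta} tells us that each such flow line is a measurable function of $h_{\alpha\beta}$, this union (and hence $\eta'$) is as well.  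The role of the extra hypothesis $\alpha\leq -\tfrac{3}{2}\chi$ is precisely to ensure that the full angular range $[-\tfrac{\pi}{2},\tfrac{\pi}{2}]$ required to fill in the light cone from points approaching $z$ sits strictly inside the range of admissible angles at $z$ with a buffer of at least $\tfrac{\pi}{2}\chi$ on each side, which is what pushes the threshold from $-\chi$ down to $-\tfrac{3}{2}\chi$.

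Equality of the light cone with the range of $\eta'$ then follows from two complementary inclusions familiar from \cite{MS_IMAG}: containment of the light cone in $\eta'$ comes from the flow line / counterflow line interaction rules (flow lines at angles in $[-\tfrac{\pi}{2},\tfrac{\pi}{2}]$ cannot cross $\eta'$ and are absorbed by its boundary at the correct angle), while the reverse inclusion uses that the outer left and right boundaries of $\eta'$ stopped at any capacity time are themselves flow lines of angles $\pm\tfrac{\pi}{2}$, so nothing of the trace of $\eta'$ is missed by the light cone.
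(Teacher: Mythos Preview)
Your existence argument is broadly in line with the paper's: the coupling is built exactly as in Theorem~\ref{thm::existence}/Theorem~\ref{thm::alphabeta} (Remark~\ref{rem::alpha_coupling}), with the radial $\SLE_{\kappa'}(\rho)$/GFF coupling of Proposition~\ref{prop::interior_force_point_coupling} (Figure~\ref{fig::radial_bd_cfl}) replacing the $\SLE_\kappa$ version and a limiting procedure yielding the whole-plane statement.

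The measurability argument, however, has a genuine gap. Your light cone approach, as written, only recovers the \emph{range} of $\eta'$ as a closed subset of $D$; it does not recover $\eta'$ as an ordered curve. For $\kappa'>4$ the path is non-simple, so knowing which points it hits is strictly weaker than knowing the path. The paper (see the combined proof of Theorem~\ref{thm::alphabeta_counterflow} and Theorem~\ref{thm::whole_plane_duality}) closes this gap differently: it first shows that the \emph{left and right boundaries} $\eta^L,\eta^R$ of $\eta'$ are the flow lines of angles $\pm\tfrac{\pi}{2}$ starting from the target point, and these are determined by $h_{\alpha\beta}$ via Theorem~\ref{thm::uniqueness}/Theorem~\ref{thm::alphabeta}. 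Conditioning on $\eta^L\cup\eta^R$ then breaks the problem into a countable family of \emph{chordal} counterflow lines in the pockets of $\C\setminus(\eta^L\cup\eta^R)$, each an $\SLE_{\kappa'}(\tfrac{\kappa'}{2}-4;\tfrac{\kappa'}{2}-4)$, and these are determined by the boundary-emanating uniqueness of \cite[Theorem~1.2]{MS_IMAG}. This two-step decomposition (boundaries first, then chordal fill-in) is what reconstructs the ordered curve, not merely its trace.

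Your explanation of the threshold $\alpha\le -\tfrac{3}{2}\chi$ is also off. It is not about angular ``buffers'' for a light cone from interior points; indeed, since $\alpha<-\chi$ here one \emph{cannot} draw flow lines from the singularity $z$ at all (the admissible angle range $2\pi(1+\alpha/\chi)$ is negative). The threshold $\alpha=-\tfrac{3}{2}\chi$ corresponds exactly to $\rho=\tfrac{\kappa'}{2}-4$, the value at or above which $\eta'$ does \emph{not} fill its own outer boundary. Under the inversion $w\mapsto 1/w$ this becomes the $\alpha\ge -\tfrac{\chi}{2}$ regime of Theorem~\ref{thm::whole_plane_duality}, where the outer-boundary flow lines $\eta^L,\eta^R$ from the target are genuine (non-degenerate) flow lines to which Theorem~\ref{thm::alphabeta} applies. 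For $\alpha\in(-\tfrac{3}{2}\chi,-\chi)$ the path fills its own outer boundary, the would-be $\eta^L,\eta^R$ collapse (cf.\ the discussion after Theorem~\ref{thm::alphabeta_counterflow} and Remark~\ref{rem::bigger_than_8_whole_plane}), and the duality decomposition is unavailable---which is exactly why the paper does not claim determinism in that range.
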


The value $\alpha = -\tfrac{3}{2}\chi$ is the critical threshold at or above which $\eta'$ almost surely fills its own outer boundary.  While we believe that $\eta'$ is still almost surely determined by $h_{\alpha \beta}$ for $\alpha \in (-\tfrac{3}{2}\chi,-\chi)$, establishing this falls out of our general framework so we will not treat this case here.  (See also Remark~\ref{rem::bigger_than_8_whole_plane} below.)  By making a $w \mapsto 1/w$ coordinate change, we can grow a path from~$\infty$ rather than from $0$.  For this to make sense, we need $\alpha > -\chi$ --- this makes the coupling compatible with the setup of Theorem~\ref{thm::alphabeta}.  In this case, $\eta'$ is a whole-plane $\SLE_{\kappa'}(\kappa'-6+2\pi \alpha / \lambda')$ process from $\infty$ to $0$ provided $\beta = 0$.  Moreover, the critical threshold at or below which the process fills its own outer boundary is $-\tfrac{\chi}{2}$.  That is, the process almost surely fills its own outer boundary if $\alpha \leq -\tfrac{\chi}{2}$ and does not if $\alpha > -\tfrac{\chi}{2}$.


\begin{figure}[h!]
\begin{center}
\begin{ficomment}
\subfigure[$\alpha=-\tfrac{1}{2} \chi$; $\pi$ range of angles]{\includegraphics[width=0.48\textwidth]{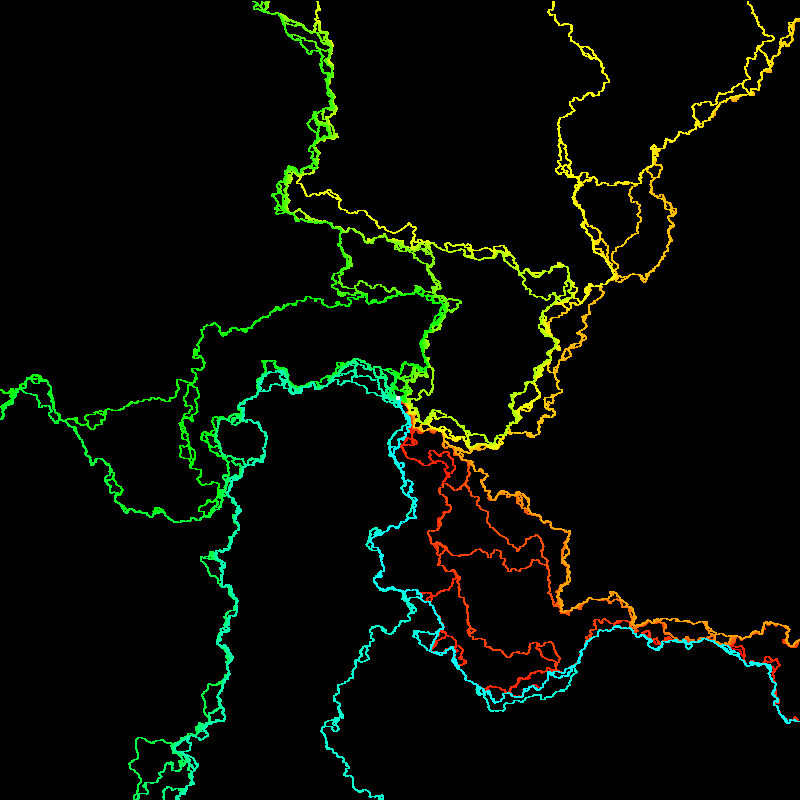}}
\hspace{0.02\textwidth}
\subfigure[$\alpha=0$; $2\pi$ range of angles]{\includegraphics[width=0.48\textwidth]{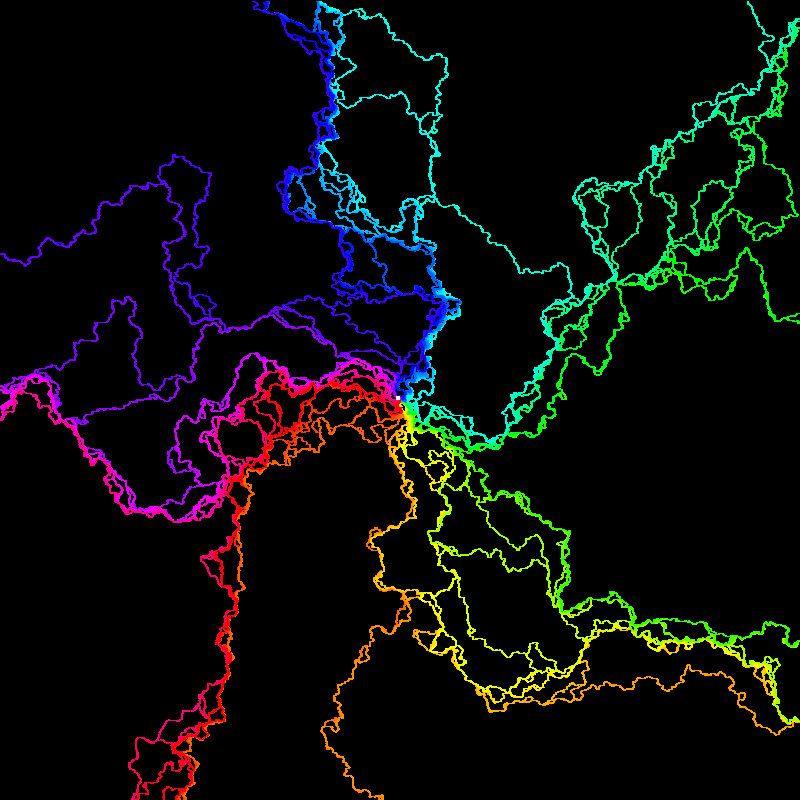}}
\subfigure[$\alpha=\chi$; $4\pi$ range of angles]{\includegraphics[width=0.48\textwidth]{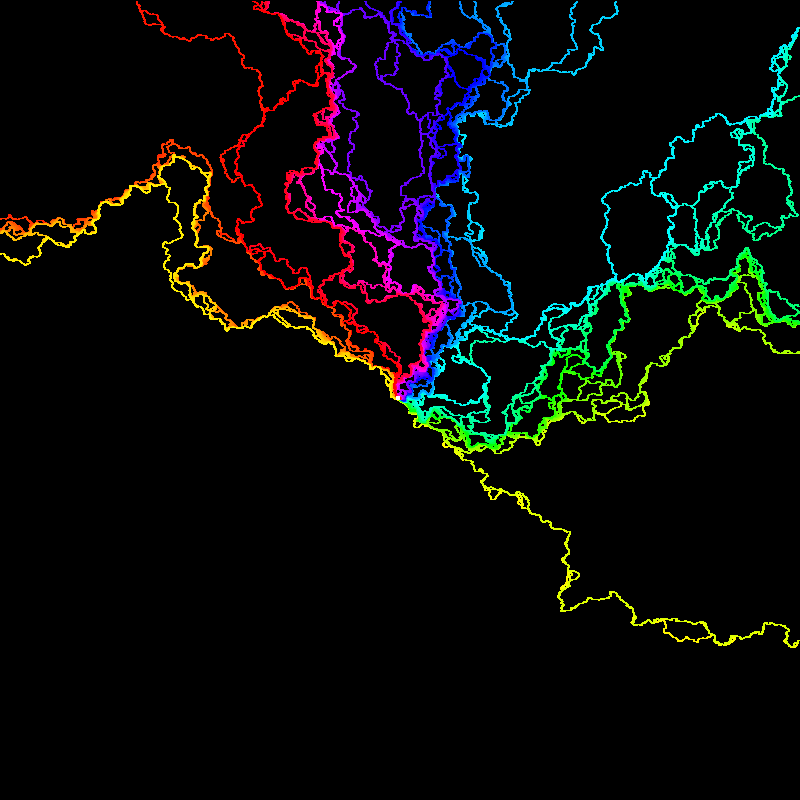}}
\hspace{0.02\textwidth}
\subfigure[$\alpha=2\chi$; $6\pi$ range of angles]{\includegraphics[width=0.48\textwidth]{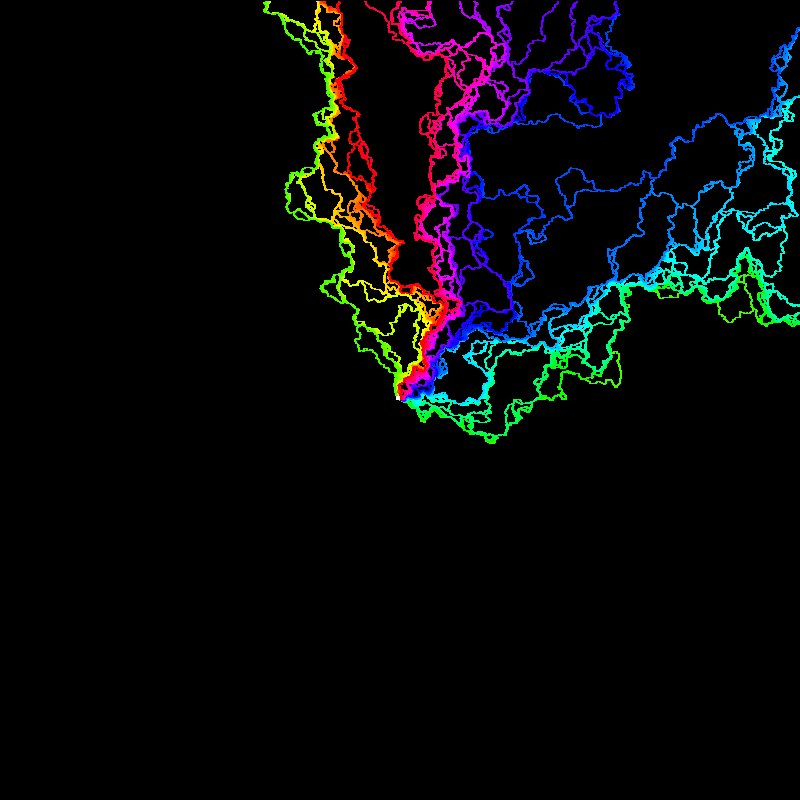}}
\end{ficomment}
\end{center}
\caption{\label{fig::varying_alpha} Numerical simulations of the set of points accessible by traveling along the flow lines of $h-\alpha\arg(\cdot)$ starting from the origin with equally spaced angles ranging from $0$ to $2\pi$ with varying values of $\alpha$; $\kappa=1$.  Different colors indicate paths with different angles.  For a given value of $\alpha > -\chi$, there is a $2\pi(1+\alpha/\chi)$ range of angles.  This is why the entire range of colors is not visible for $\alpha < 0$ and the paths shown represent only a fraction of the possible different possible directions for $\alpha > 0$.}
\end{figure}



\begin{figure}[h!]
\begin{center}
\begin{ficomment}
\includegraphics[width=0.95\textwidth]{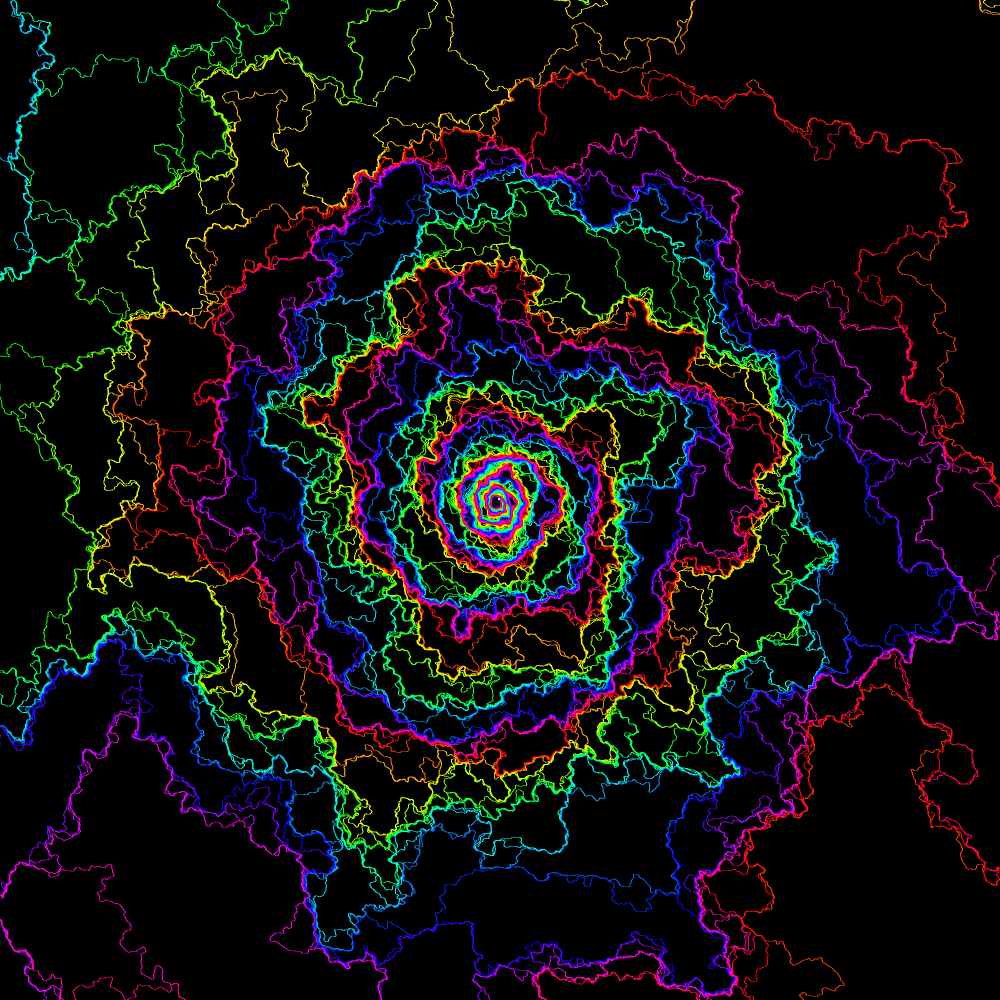}
\end{ficomment}
\end{center}
\caption{ \label{fig::spiral_fan}  Numerically generated flow lines, started at a common point, of $e^{i(h/\chi+\theta)}$ where $h$ is the sum of the projection of a GFF onto the space of functions piecewise linear on the triangles of a $800 \times 800$ grid and $\beta \log|\cdot|$; $\beta=-5$, $\kappa=4/3$ and $\chi = 2/\sqrt{\kappa} - \sqrt{\kappa}/2 = \sqrt{4/3}$.  Different colors indicate different values of $\theta \in [0,2\pi)$.  Paths tend to wind clockwise around the origin.  If we instead took $\beta > 0$, then the paths would wind counterclockwise around the origin.}
\end{figure}

\subsubsection{Flow line interaction}

\begin{figure}[ht!]
\begin{center}
\includegraphics[scale=0.80]{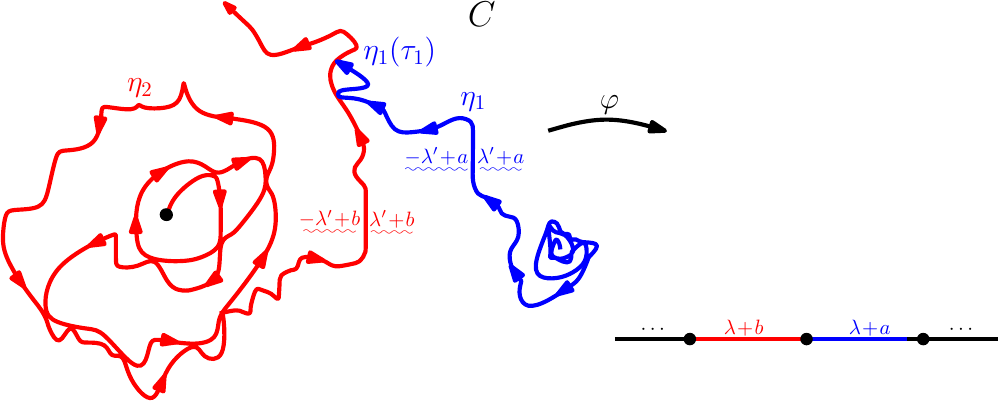}
\end{center}
\caption{\label{fig::angle_difference}  Let $h$ be a GFF on a domain $D \subseteq \C$; we view $h$ as a distribution defined up to a global multiple of $2\pi \chi$ if $D = \C$.  Suppose that $z_1,z_2 \in \ol{D}$ (in particular, we could have $z_i \in \partial D$) and $\theta_1,\theta_2 \in \R$ and, for $i=1,2$, we let $\eta_i$ be the flow line of $h$ starting at $z_i$ with angle $\theta_i$.  Fix $\eta_2$ and let $\tau_1$ be a stopping time for $\eta_1$ given $\eta_2$ and assume that we are working on the event that $\eta_1$ hits $\eta_2$ at time $\tau_1$ on its right side.  Let $C$ be the connected component of $\C \setminus (\eta_1([0,\tau_1]) \cup \eta_2)$ part of whose boundary is traced by the right side of $\eta_1|_{[\tau_1-\epsilon,\tau_1]}$ for some $\epsilon > 0$ and let $\varphi \colon C \to \h$ be a conformal map which takes $\eta_1(\tau_1)$ to $0$ and $\eta_1(\tau_1-\epsilon)$ to $1$.  Let $\wt{h} = h \circ \varphi^{-1} - \chi \arg (\varphi^{-1})'$ and let $\CD$ be the difference between the values of $h|_{\partial \h}$ immediately to the right and left of $0$ (the images of $\eta_1$ and $\eta_2$ near $\eta_1(\tau_1)$).  Although in some cases $h$ hence also $\wt{h}$ will be defined only up to an additive constant, $\CD$ is nevertheless a well-defined constant.  Then $\CD / \chi$ gives the {\bf angle difference} between $\eta_1$ and $\eta_2$ upon intersecting at $\eta_1(\tau_1)$ and $\CD$ gives the {\bf height difference}.  In the illustration, $\CD = a-b$.    In general, the height and angle difference (modulo $2\pi \chi$) can be easily read off using our notation for indicating boundary data of GFFs.  It is given (modulo $2\pi \chi$) by running backwards along both paths until finding a segment with the same orientation for both paths (typically, this will be north, as in the illustration) and then subtracting the height on the right side of $\eta_2$ from the height on the right side of $\eta_1$.  (In practice, we will in fact only indicate boundary data so that the height difference made be read off exactly --- and not just modulo $2\pi \chi$.)   The height and angle difference when $\eta_1$ hits $\eta_2$ on the left is defined analogously.  We can similar define the height and angle difference when a path hits a segment of the boundary.
}
\end{figure}

While proving Theorem~\ref{thm::existence}, Theorem~\ref{thm::uniqueness}, and Theorem~\ref{thm::alphabeta}, we also obtain information regarding the interaction between distinct paths with each other as well as with the boundary.  In \cite[Theorem~1.5]{MS_IMAG}, we described the interaction of boundary emanating flow lines in terms of their relative angle (this result is restated in Section~\ref{subsec::imaginary}).  When flow lines start from a point in the interior of a domain, their relative angle at a point where they intersect depends on how many times the two paths have wound around their initial point before reaching the point of intersection.  (This is an informal statement since paths started from interior points a.s.\ wind around their starting point an infinite number of times.)  Thus before we state our flow line interaction result in this setting, we need to describe what it means for two paths to intersect each other at a given height or angle; this is made precise in terms of conformal mapping in Figure~\ref{fig::angle_difference}.

\begin{theorem}
\label{thm::flow_line_interaction}
Assume that we have the same setup as described in the caption of Figure~\ref{fig::angle_difference}.  On the event that $\eta_1$ hits $\eta_2$ on its right side at the stopping time $\tau_1$ for $\eta_1$ given $\eta_2$ we have that the height difference $\CD$ between the paths upon intersecting is a constant in $(-\pi \chi, 2\lambda-\pi \chi)$.  Moreover,
\begin{enumerate}[(i)]
\item\label{it::cross} If $\CD \in (-\pi \chi,0)$, then $\eta_1$ crosses $\eta_2$ upon intersecting but does not subsequently cross back,
\item\label{it::merge} If $\CD = 0$, then $\eta_1$ merges with $\eta_2$ at time $\tau_1$ and does not subsequently separate from $\eta_2$, and
\item If $\CD \in (0,2\lambda-\pi \chi)$, then $\eta_1$ bounces off $\eta_2$ at time $\tau_1$ but does not cross $\eta_2$.
\end{enumerate}
The conditional law of $h$ given $\eta_1|_{[0,\tau_1]}$ and $\eta_2$ is that of a GFF on $\C \setminus (\eta_1([0,\tau_1]) \cup \eta_2)$ with flow line boundary conditions with angle $\theta_i$, for $i=1,2$, on $\eta_1([0,\tau_1])$ and $\eta_2$, respectively.  If, instead, $\eta_1$ hits $\eta_2$ on its left side then the same statement holds but with $-\CD$ in place of $\CD$ (in particular, the range of height differences in which paths can hit is $(\pi \chi-2\lambda,\pi \chi)$).  If the $\eta_i$ for $i=1,2$ are instead flow lines of $h_{\alpha \beta}$ then the same result holds except the conditional field given the paths has $\alpha$-flow line boundary conditions and a $2\pi \alpha$ jump on $(-\infty,0)$.  Finally, the same result applies if we replace $\eta_2$ with a segment of the domain boundary except that in the case that either~\eqref{it::cross} or~\eqref{it::merge} occurs, we have that $\eta_2$ terminates upon hitting the boundary.
\end{theorem}

By dividing $\CD$ by $\chi$, it is possible to rephrase Theorem~\ref{thm::flow_line_interaction} in terms of angle rather than height differences.  The angle $\theta_c = 2\lambda/\chi - \pi = 2\lambda'/\chi$ is called the {\bf critical angle}, and the set of allowed angle gaps described in the first part Theorem~\ref{thm::flow_line_interaction} is then $(-\pi, \theta_c)$, with the intervals $(-\pi, 0)$, $\{0 \}$ and $(0,\theta_c)$ corresponding to flow line pairs that respectively bounce off each other, merge with each other, and cross each other at their first intersection point.  We will discuss the critical angle further in Section~\ref{subsec::critical_angle}.

We emphasize that Theorem~\ref{thm::flow_line_interaction} describes the interaction of both flow lines starting from interior points and flow lines starting from the boundary, or even one of each type.  It also describes the types of boundary data that a flow line can hit.  We also emphasize that it is not important in Theorem~\ref{thm::flow_line_interaction} to condition on the entire realization of $\eta_2$ before drawing $\eta_1$.  Indeed, a similar result holds if first draw an initial segment of $\eta_2$, and then draw $\eta_1$ until it hits that segment.  This also generalizes further to the setting in which we have many paths.  One example of a statement of this form is the following.

\begin{theorem}
\label{thm::commutation}
Suppose that $h$ is a GFF on a domain $D \subseteq \C$, where $h$ is defined up to a global multiple of $2\pi \chi$ if $D = \C$.  Fix points $z_1,\ldots,z_n \in \ol{D}$ and angles $\theta_1,\ldots,\theta_n \in \R$.  For each $1 \leq i \leq n$, let $\eta_i$ be the flow line of $h$ with angle $\theta_i$ starting from $z_i$.  Fix $N \in \N$.  For each $1 \leq j \leq N$, suppose that $\xi_j \in \{1,\ldots,N\}$ are non-random and that $\tau_j$ is a stopping time for the filtration
\[ \CF_t^j = \sigma( \eta_{\xi_j}(s) : s \leq t,\ \eta_{\xi_1}|_{[0,\tau_1]},\ldots,\eta_{\xi_{j-1}}|_{[0,\tau_{j-1}]}).\]
Then the conditional law of $h$ given $\CF = \sigma(\eta_{\xi_1}|_{[0,\tau_1]},\ldots,\eta_{\xi_N}|_{[0,\tau_N]})$ is that of a GFF on $D_N = D \setminus \cup_{j=1}^N \eta_{\xi_j}([0,\tau_j])$ with flow line boundary conditions with angle $\theta_{\xi_j}$ on each of $\eta_{\xi_j}([0,\tau_j])$ for $1 \leq j \leq N$ and the same boundary conditions as $h$ on $\partial D$.  For each $1 \leq i \leq n$, let $t_i = \max_{ j : \xi_j = i} \tau_j$.  On the event that $\eta_i(t_i)$ is disjoint from $\partial D_N \setminus \eta_i([0,t_i))$, the continuation of $\eta_i$ stopped upon hitting $\partial D_N \setminus \eta_i([0,t_i))$ is almost surely equal to the flow line of the \emph{conditional} GFF $h$ given $\CF$ starting at $\eta_i(t_i)$ with angle $\theta_i$ stopped upon hitting $\partial D_N \setminus \eta_i([0,t_i])$.
\end{theorem}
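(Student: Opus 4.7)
The plan is to induct on $N$, reducing each step to the single-path coupling theorems already established. For the base case $N=1$, the conditional-law clause is the defining property of the coupling in Theorem~\ref{thm::existence} when $z_{\xi_1}$ is interior, and its boundary-emanating analogue from \cite{MS_IMAG} when $z_{\xi_1} \in \partial D$; the continuation clause at $N=1$ follows from the uniqueness of the coupling, which forces the conditional joint law of $h$ and of the post-$\tau_1$ piece of $\eta_{\xi_1}$ given $\eta_{\xi_1}([0,\tau_1])$ to again be the unique flow line coupling, now on $D_1$.

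For the inductive step, write $\CF_{N-1} = \sigma(\eta_{\xi_1}|_{[0,\tau_1]},\ldots,\eta_{\xi_{N-1}}|_{[0,\tau_{N-1}]})$ and $t_i^{(N-1)} = \max\{\tau_j : j \leq N-1,\ \xi_j = i\}$ (with $t_i^{(N-1)} = 0$ and $\eta_i(t_i^{(N-1)}) = z_i$ when this set is empty). The inductive hypothesis provides that the conditional law of $h$ given $\CF_{N-1}$ is a GFF on $D_{N-1}$ with the claimed flow line boundary conditions, and that the continuation of $\eta_{\xi_N}$ beyond $t_{\xi_N}^{(N-1)}$ is almost surely the flow line of this conditional GFF starting from $\eta_{\xi_N}(t_{\xi_N}^{(N-1)})$ with angle $\theta_{\xi_N}$. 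Since $\tau_N$ is a stopping time for the filtration $\CF_t^N$, the segment $\eta_{\xi_N}|_{[t_{\xi_N}^{(N-1)},\tau_N]}$ is a stopped initial piece of that flow line, so Theorem~\ref{thm::existence} (or Theorem~\ref{thm::alphabeta} in the $h_{\alpha\beta}$ setting, or the boundary counterpart from \cite{MS_IMAG}) applies directly to the conditional GFF restricted to the connected component of $D_{N-1}$ containing $\eta_{\xi_N}(t_{\xi_N}^{(N-1)})$. This yields that the further conditional law of $h$ given $\CF_N = \CF_{N-1} \vee \sigma(\eta_{\xi_N}|_{[t_{\xi_N}^{(N-1)},\tau_N]})$ is a GFF on $D_N$ with the claimed flow line boundary data, which is the conditional-law assertion at level $N$. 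The continuation clause at level $N$ then follows by applying the uniqueness/characterization of the single-path coupling once more on $D_N$.

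The main obstacle is ensuring that the single-path coupling theorems are in fact applicable when invoked on the conditional GFF at each stage. Two points require care: first, one must verify that the relevant connected component of $D_{N-1}$ is a simply connected domain (or $\C$) with harmonically non-trivial boundary, on which the flow line boundary conditions of the statement are well-defined and compatible with the appropriate version (chordal, radial, or whole-plane) of Theorem~\ref{thm::existence}/\ref{thm::alphabeta}; this entails a geometric analysis of how the previously drawn flow line segments subdivide $D$, as well as verifying that the starting point $\eta_{\xi_N}(t_{\xi_N}^{(N-1)})$ is of a type (interior, or on a boundary arc with compatible boundary data and angle) to which that version of the theorem speaks. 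Second, one must keep track of the measurability and almost-sure assertions across the nested filtrations $\CF_t^j$, so that the iterated conditioning is well defined and the null sets combine correctly. These are technical but routine once the inductive framework is in place; the conceptual content is already supplied by the single-path coupling results from the earlier parts of this series.
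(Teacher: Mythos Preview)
Your inductive approach is natural but differs from the paper's proof and has a real gap.

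The paper does not induct on $N$. Instead it uses Theorem~\ref{thm::uniqueness} to know that each $\eta_i$ is $\sigma(h)$-measurable; combined with \cite[Proposition~6.2]{MS_IMAG} this makes $\bigcup_{j=1}^N \eta_{\xi_j}([0,\tau_j])$ a local set for $h$ in one stroke. The conditional law of $h$ given this local set is then read off from Proposition~\ref{prop::cond_union_mean} (the whole-plane analogue of \cite[Lemma~3.11]{SchrammShe10}), together with the interaction result Theorem~\ref{thm::flow_line_interaction}, which guarantees that the boundary data has no singularities at points where distinct flow line segments meet. The continuation clause is then just \cite[Theorem~1.2]{MS_IMAG}.

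The gap in your argument is at the inductive step when $\eta_{\xi_N}$ intersects $\partial D_{N-1}$ (i.e., one of the earlier segments) before time $\tau_N$. Your inductive hypothesis only identifies the continuation of $\eta_{\xi_N}$ with a flow line of the conditional GFF on $D_{N-1}$ \emph{up until the first time it hits $\partial D_{N-1}\setminus\eta_{\xi_N}([0,t_{\xi_N}^{(N-1)}))$}. Beyond that hit you have not shown that $\eta_{\xi_N}$ continues to agree with any flow line of the conditional field, nor that conditioning on the post-hit segment yields flow line boundary conditions without singularities at the intersection. So the assertion ``$\eta_{\xi_N}|_{[t_{\xi_N}^{(N-1)},\tau_N]}$ is a stopped initial piece of that flow line, so Theorem~\ref{thm::existence} applies directly to the conditional GFF restricted to the connected component of $D_{N-1}$ containing $\eta_{\xi_N}(t_{\xi_N}^{(N-1)})$'' is not justified in general: the segment need not stay in a single component. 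Closing this gap requires precisely the interaction analysis of Theorem~\ref{thm::flow_line_interaction} (or, equivalently, the local-set route the paper takes); it is not the routine bookkeeping you describe in your final paragraph but the actual content of the theorem.
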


In the context of the final part of Theorem~\ref{thm::commutation}, the manner in which $\eta_i$ interacts with the other paths or domain boundary after hitting $\partial D_N \setminus \eta_i([0,t_i])$ is as described in Theorem~\ref{thm::flow_line_interaction}.  Theorem~\ref{thm::commutation} (combined with Theorem~\ref{thm::flow_line_interaction}) says that it is possible to draw the flow lines $\eta_1,\ldots,\eta_n$ of $h$ starting from $z_1,\ldots,z_n$ in any order and the resulting path configuration is almost surely the same.  After drawing each of the paths as described in the statement, the conditional law of the continuation of any of the paths can be computed using conformal mapping and~\eqref{eqn::ac_eq_rel}.  One version of this that will be important for us is stated as Theorem~\ref{thm::conditional_law} below.

In \cite[Theorem~1.5]{MS_IMAG}, we showed that boundary emanating GFF flow lines can cross each other at most once.  The following result extends this to the setting of flow lines starting from interior points.  If we subtract a multiple $\alpha$ of the argument, then depending on its value, Theorem~\ref{thm::merge_cross} will imply that the GFF flow lines can cross each other and themselves more than once, but at most a finite, non-random constant number of times; the constant depends only on $\alpha$ and $\chi$. Moreover, a flow line starting from the location of the conical singularity cannot cross itself.  The maximal number of crossings does not change if we subtract a multiple of the $\log$.

\begin{theorem}
\label{thm::merge_cross}
Suppose that $D \subseteq \C$ is a domain, $z_1,z_2 \in \ol{D}$, and $\theta_1, \theta_2 \in \R$.  Let $h$ be a GFF on $D$, which is defined up to a global multiple of $2\pi \chi$ if $D = \C$.  For $i=1,2$, let $\eta_i$ be the flow line of $h$ with angle $\theta_i$, i.e.\ the flow line of $h + \theta_i \chi$, starting from $z_i$.  Then $\eta_1$ and $\eta_2$ cross at most once (but may bounce off each other after crossing).  If $D = \C$ and $\theta_1 = \theta_2$, then $\eta_1$ and $\eta_2$ almost surely merge.  More generally, suppose that $\alpha > -\chi$, $\beta \in \R$, $h$ is a GFF on $D$, and $h_{\alpha \beta} = h - \alpha \arg(\cdot) - \beta \log| \cdot|$, viewed as a distribution defined up to a global multiple of $2\pi (\chi + \alpha)$ if $D = \C$, and that $\eta_1,\eta_2$ are flow lines of $h_{\alpha \beta}$ starting from $z_1,z_2$ with $z_1=0$.  There exists a constant $C(\alpha) < \infty$ such that $\eta_1$ and $\eta_2$ cross each other at most $C(\alpha)$ times and $\eta_2$ can cross itself at most $C(\alpha)$ times ($\eta_1$ does not cross itself).
\end{theorem}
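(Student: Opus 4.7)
The plan is to reduce all three parts of the theorem to the interaction trichotomy of Theorem~\ref{thm::flow_line_interaction}, applied path by path via the conditioning provided by Theorem~\ref{thm::commutation}. The core reason this succeeds is that every intersection of two flow lines is classified by the height gap $\CD$, which must lie in a bounded interval, while winding of each path around its starting point (or around the conical singularity in the $\alpha$-case) shifts $\CD$ by discrete amounts $2\pi\chi$ and $2\pi(\chi+\alpha)$. Only finitely many such shifts are compatible with the allowed interval.

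For the first claim with $\alpha = \beta = 0$, first condition on the full path $\eta_2$. By Theorem~\ref{thm::commutation}, the conditional law of $h$ is that of a GFF on $D \setminus \eta_2$ with flow line boundary conditions of angle $\theta_2$ along $\eta_2$ and the original boundary data on $\partial D$, so $\eta_1$ is the flow line of this conditional field starting from $z_1$. At the first time $\eta_1$ hits $\eta_2$, Theorem~\ref{thm::flow_line_interaction} directly applies, and in the crossing regime $\CD \in (-\pi\chi, 0)$ it explicitly guarantees that $\eta_1$ does not cross $\eta_2$ again. Hence at most one crossing. For the merging claim when $D = \C$ and $\theta_1 = \theta_2$, use the same conditioning. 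Since $h$ is defined modulo $2\pi\chi$, the intersection height gap is naturally an element of $\R / 2\pi\chi \Z$ and vanishes for equal angles; after choosing a representative of the field so that $\CD$ lies in the allowed interval one has $\CD = 0$ exactly, which by Theorem~\ref{thm::flow_line_interaction} is the merging case. It remains to show the paths a.s.\ do intersect: by Theorem~\ref{thm::existence} each $\eta_i$ is a transient whole-plane $\SLE_\kappa(2-\kappa)$ from $z_i$ to $\infty$, and a planar topology argument combined with the ``crosses at most once'' dichotomy just proved rules out two such curves diverging to $\infty$ without ever meeting.

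For general $\alpha > -\chi$ and $\beta \in \R$, the same conditioning applies crossing by crossing. The new ingredient is arithmetic: if $\CD$ and $\CD'$ denote the gaps at two distinct intersections, then $\CD' - \CD$ is an integer combination of $2\pi\chi$ (coming from windings of $\eta_2$ around $z_2$) and $2\pi(\chi+\alpha)$ (coming from windings of $\eta_1$ around the conical singularity at $0$). Combined with the constraint that each such $\CD$ lies in the bounded crossing interval of Theorem~\ref{thm::flow_line_interaction}, together with an a priori bound on the number of effective windings of each path around $0$ or $z_2$ read from the whole-plane $\SLE_\kappa(\rho)$ description in Theorem~\ref{thm::alphabeta}, one concludes that the total number of crossings is bounded by a constant $C(\alpha)$ depending only on $\alpha$ (and $\chi$). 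The identical argument with the single generator $2\pi(\chi+\alpha)$ bounds the self-crossings of $\eta_2$, and the statement that $\eta_1$ itself does not self-cross is precisely the local self-avoidance on the universal cover of $D \setminus \{0\}$ asserted in Theorem~\ref{thm::alphabeta}.

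The main obstacle is the very last step: converting the lattice arithmetic for shifts of $\CD$ into a concrete finite bound $C(\alpha)$ requires ruling out the possibility that a path winds around $0$ arbitrarily often while keeping its height gap back inside the allowed interval. This uses the winding and transience properties of whole-plane $\SLE_\kappa(\rho)$ packaged in Theorem~\ref{thm::alphabeta}, together with the hitting classification of Theorem~\ref{thm::flow_line_interaction}, and is what distinguishes the interior-starting-point picture from the boundary-emanating case of \cite{MS_IMAG}.
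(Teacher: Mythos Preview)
Your reduction of the first claim to Theorem~\ref{thm::flow_line_interaction} is essentially right in spirit, but your invocation of Theorem~\ref{thm::commutation} is circular in this paper: commutation relies on the uniqueness theorem (Theorem~\ref{thm::uniqueness}), whose proof in Section~\ref{subsec::uniqueness} goes through Lemma~\ref{lem::do_not_cross}, which in turn rests on Proposition~\ref{prop::cross_finite} --- i.e., on the very crossing bound you are trying to prove. The paper avoids this by working at the level of \emph{tails} (Propositions~\ref{prop::tail_interaction} and~\ref{prop::tail_decomposition}) before any uniqueness is available.

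There are two further genuine gaps. First, your merging argument is not a proof. Saying that ``two transient curves to $\infty$ cannot avoid each other by planar topology'' is false in general, and the paper's actual argument (Proposition~\ref{prop::merge_almost_surely}) is substantive and splits into two cases: for $\kappa\in(8/3,4)$ one uses Lemma~\ref{lem::radial_loop} to show $\eta_1$ almost surely surrounds $z_2$, trapping $\eta_2$ in a bounded pocket; for $\kappa\in(0,8/3]$ the paths are simple and one must build auxiliary flow lines $\eta_1,\dots,\eta_n$ of the conditional field at evenly spaced angles to manufacture bounded pockets (Figure~\ref{fig::non_intersect_almost_surely_merge}). Neither of these is a ``planar topology argument.''

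Second, and more seriously, your lattice argument for the $\alpha$-case does not yield a finite bound. You correctly observe that successive height gaps differ by elements of $2\pi\chi\Z + 2\pi(\chi+\alpha)\Z$, but when $\alpha/\chi$ is irrational this set is dense in $\R$, so infinitely many elements land in the crossing window $(-\pi\chi,0)$. There is also no ``a priori bound on the number of effective windings'' in Theorem~\ref{thm::alphabeta}: the path winds infinitely often near its starting point. The paper's proof (Proposition~\ref{prop::cross_finite}, Figure~\ref{fig::alphacrossing_proof}) instead establishes a \emph{monotonicity}: between consecutive crossings of the fixed loop $\eta^z([0,\tau^z])$, the path $\eta$ must wind once around $0$, so the height gap shifts by exactly $\pm 2\pi(\chi+\alpha)$ in a fixed sign. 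This is a topological fact about non-self-crossing curves, and together with an induction on pockets (items (i)--(iii) in that proof, using Lemmas~\ref{lem::sle_kappa_rho_boundary_intersection}--\ref{lem::whole_plane_self_intersection_harmonic_measure} to rule out hitting self-intersection points) it forces $\CD_j \geq \CD_{j-1} + 2\pi(\chi+\alpha)$, giving the bound $\lfloor 1/(2(1+\alpha/\chi))\rfloor$. Your sketch identifies the right bookkeeping quantity but misses the mechanism that makes the sequence of gaps monotone rather than merely lattice-valued.
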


Flow lines emanating from an interior point are also able to intersect themselves even in the case that we do not subtract a multiple of the argument.  In~\eqref{eqn::flow_line_maximal_self_intersect},~\eqref{eqn::whole_plane_sle_maximal_self_intersect} of Proposition~\ref{prop::number_of_self_intersections} we compute the maximal number of times that such a path can hit any given point.

Theorem~\ref{thm::merge_cross} implies that if we pick a countable dense subset $(z_n)$ of $D$ then the collection of flow lines starting at these points with the same angle has the property that a.s.\ each pair of flow lines eventually merges (if $D = \C$) and no two flow lines ever cross each other.  We can view this collection of flow lines as a type of planar space filling tree (if $D = \C$) or a forest (if $D \not = \C$).  See Figure~\ref{fig::tree_dual} for simulations which show parts of the trees associated with flow lines of angle $\tfrac{\pi}{2}$ and $-\tfrac{\pi}{2}$.  By Theorem~\ref{thm::uniqueness}, we know that that this forest or tree is almost surely determined by the GFF.  Theorem~\ref{thm::field_determined_by_tree} states that the reverse is also true: the underlying GFF is a deterministic function of the realization of its flow lines started at a countable dense set.  When $\kappa=2$, this can be thought of as a continuum analog of the Temperley bijection that takes spanning trees to dimer configurations which in turn come with height functions (a similar observation was made in \cite{DUB_PART}) and this construction generalizes this to other $\kappa$ values. We note that the basic idea of using a continuum analog of Wilson's algorithm \cite{wilson1996lerw} to construct a planar tree of radial SLE curves (to be a scaling limit of the uniform spanning tree) appeared in Schramm's original paper on SLE \cite{S0}.

\begin{theorem}
\label{thm::field_determined_by_tree}
Suppose that $h$ is a GFF on a domain $D \subseteq \C$, viewed as a distribution defined up to a global multiple of $2\pi \chi$ if $D = \C$.  Fix $\theta \in [0,2\pi)$.  Suppose that $(z_n)$ is any countable dense set and, for each $n$, let $\eta_n$ be the flow line of $h$ starting at $z_n$ with angle $\theta$.
If $D = \C$, then $(\eta_n)$ almost surely forms a ``planar tree'' in the sense that a.s.\ each pair of paths merges eventually, and no two paths ever cross each other.  In both the case that $D = \C$ and $D \not = \C$, the collection $(\eta_n)$ almost surely determines $h$ and $h$ almost surely determines $(\eta_n)$.
\end{theorem}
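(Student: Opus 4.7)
The theorem breaks into three claims: when $D = \C$, the family $(\eta_n)$ a.s.\ forms a planar tree (each pair merges, no two cross); $h$ a.s.\ determines $(\eta_n)$; and $(\eta_n)$ a.s.\ determines $h$. The first two are short. The tree structure in $\C$ is an immediate consequence of Theorem~\ref{thm::merge_cross}: any two flow lines sharing angle $\theta$ almost surely merge and never cross, and the countable intersection of these almost-sure events is still almost sure; for $D \ne \C$ the tree becomes a forest if some paths first hit $\partial D$, but still none cross. For $h$ determining $(\eta_n)$, Theorem~\ref{thm::uniqueness} (or Theorem~\ref{thm::alphabeta}) says each $\eta_n$ is a.s.\ a measurable functional of $h$ (viewed modulo $2\pi\chi$ when $D = \C$), so the full countable collection is as well.

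The real content is that $(\eta_n)$ determines $h$. Let $\CG = \sigma(\eta_n : n\ge 1)$. By Theorem~\ref{thm::commutation}, for each finite $N$ the conditional law of $h$ given $\eta_1,\ldots,\eta_N$ is a GFF on $D_N := D \setminus \bigcup_{n\le N}\eta_n$ with flow line boundary conditions (angle $\theta$) along each $\eta_n$ and the original boundary data on $\partial D$. Decompose $h = h_N + \wt h_N$, where $h_N$ is the random harmonic extension of these boundary values and $\wt h_N$ is an independent zero-boundary GFF on $D_N$. For any fixed smooth compactly supported $\phi$, the Gaussian conditional law of $\int h\phi$ given $\eta_1,\ldots,\eta_N$ has variance $\iint G_{D_N}(x,y)\phi(x)\phi(y)\,dx\,dy$, while the conditional means $\int h_N\phi$ form an $L^2$-martingale converging to $\E[\int h\phi \mid \CG]$. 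It suffices to show the residual conditional variance vanishes in the limit, since then the Gaussian conditional law of $\int h\phi$ given $\CG$ is a Dirac mass, and running over a countable dense class of $\phi$ identifies $h$ as $\CG$-measurable.

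The main obstacle is the vanishing of the Green's function integral $\iint G_{D_N}\phi\phi$. Since all $\eta_n$ share the angle $\theta$, connected components of $D_N$ need not shrink in diameter --- they can be long thin strips aligned with the flow direction --- so a naive ``small diameter'' argument fails. My plan is to iterate Theorem~\ref{thm::commutation} within each connected component $C$ of $D_N$: those $z_n$ lying in $C$ form a countable dense set in $C$, and the restrictions of their flow lines to $C$ are the flow lines of the conditional GFF on $C$ (started at those $z_n$ and stopped upon exiting $C$). Thus the setup reproduces on each smaller subdomain, and recursion on subdivision depth, combined with the merging structure --- nearby flow lines merge after short traversals, pinching each component between pairs of branches --- should force the harmonic measure from any fixed $x \notin \bigcup_n \eta_n$ onto the union of drawn paths to tend to $1$, hence $G_{D_N}(x,\cdot) \to 0$ pointwise and dominated convergence gives the desired vanishing. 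When $D = \C$, one exhausts by large disks before passing to the modulo-$2\pi\chi$ limit. A parallel approach, which I would pursue if the direct geometric argument proves difficult, is to construct the dual tree (angle $\theta + \pi$ flow lines from the same $(z_n)$) as a measurable function of the primary tree, plausibly via Theorem~\ref{thm::flow_line_interaction} applied within each component of $D_N$, and then invoke the space-filling $\SLE_{\kappa'}$ tracing both trees --- whose trace a.s.\ determines $h$ --- to close the argument.
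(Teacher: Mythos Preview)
Your setup is exactly right and matches the paper: the tree structure follows from Theorem~\ref{thm::merge_cross}, the measurability of $(\eta_n)$ from Theorem~\ref{thm::uniqueness}, and for the converse you correctly reduce to showing that $\var[(h,\phi)\mid\CF_N]=\iint G_{D_N}(x,y)\phi(x)\phi(y)\,dx\,dy\to 0$ via martingale convergence. The gap is in how you finish. Your worry that components of $D_N$ may be long thin strips is a red herring, and the two routes you propose to get around it are either vague (the recursive pinching argument is not made precise) or circular (the statement that space-filling $\SLE_{\kappa'}$ determines $h$ is, in this paper, \emph{derived from} Theorem~\ref{thm::field_determined_by_tree}; see the remark following Theorem~\ref{thm::space_filling_sle_existence}).

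The missing observation is a one-line application of the Beurling estimate. By dominated convergence (domination by $G_{D_1}$, which is integrable on $\mathrm{supp}\,\phi\times\mathrm{supp}\,\phi$) it suffices to show $G_{D_N}(x,y)\to 0$ for each fixed pair $x\ne y$. Pick a subsequence $z_{n_k}\to x$. Each $\eta_{n_k}$ is a connected curve starting at $z_{n_k}$, so once $|z_{n_k}-x|$ is small relative to $|x-y|$, the Beurling estimate bounds the probability that a Brownian motion from $x$ reaches distance $|x-y|/2$ without hitting $\eta_{n_k}$ by a constant times $(|z_{n_k}-x|/|x-y|)^{1/2}$. Hence $G_{D_N}(x,y)\to 0$. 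No control on component diameters is needed: what matters is only that the union of paths comes arbitrarily close to every point, which is automatic since the starting points are dense.
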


 In our next result, we give the conditional law of one flow line given another if they are started at the same point.  In Section~\ref{sec::timereversal}, we will show that this resampling property essentially characterizes the joint law of the paths (which extends a similar result for boundary emanating flow lines established in \cite{MS_IMAG2}).

\begin{theorem}
\label{thm::conditional_law}
Suppose that $h$ is a whole-plane GFF, $\alpha > -\chi$, $\beta \in \R$, and $h_{\alpha \beta} = h - \alpha \arg(\cdot) - \beta \log|\cdot|$, viewed as a distribution defined up to a global multiple of $2\pi(\chi+\alpha)$.  Fix angles $\theta_1,\theta_2 \in [0,2\pi(1+\alpha/\chi))$ with $\theta_1 < \theta_2$ and, for $i \in \{1,2 \}$, let $\eta_i$ be the flow line of $h_{\alpha \beta}$ starting from $0$ with angle $\theta_i$.  Then the conditional law of $\eta_2$ given $\eta_1$ is that of a chordal $\SLE_\kappa(\rho^L;\rho^R)$ process with
\[ \rho^L = \frac{(2\pi +\theta_1 - \theta_2) \chi + 2\pi \alpha}{\lambda} - 2\quad\text{and}\quad\rho^R = \frac{(\theta_2-\theta_1) \chi}{\lambda} - 2.\]
independently in each of the connected components of $\C \setminus \eta_1$ starting from and with force points located immediately to the left and right of the first point on the boundary of such a component visited by $\eta_1$ and targeted at last.
\end{theorem}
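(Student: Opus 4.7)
The plan is to condition on $\eta_1$ and analyze $\eta_2$ component-by-component in $\C \setminus \eta_1$. By Theorem~\ref{thm::alphabeta}, the conditional law of $h_{\alpha \beta}$ given $\eta_1$ is that of a GFF on $\C \setminus \eta_1$ with $\alpha$ flow line boundary conditions with angle $\theta_1$ along $\eta_1$ and the usual $2\pi \alpha$ discontinuity across $(-\infty, 0)$. Since Theorem~\ref{thm::alphabeta} also asserts that $\eta_2$ is almost surely determined by $h_{\alpha \beta}$, the conditional law of $\eta_2$ given $\eta_1$ is that of the angle-$\theta_2$ flow line of the conditional GFF. Independence of $\eta_2$ across components of $\C \setminus \eta_1$ then follows from the spatial Markov property of the GFF: the restrictions of $h_{\alpha\beta}$ to distinct components are conditionally independent given $\eta_1$, and each $\eta_2 \cap C$ is a deterministic function of the restriction of the field to $C$.

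To identify the law of $\eta_2 \cap C$ in a fixed component $C$, conformally map $\psi \colon C \to \h$ sending the entry and exit points of $\eta_2$ in $C$ to $0$ and $\infty$, and set $\wt h = h_{\alpha \beta} \circ \psi^{-1} - \chi \arg(\psi^{-1})'$. By \eqref{eqn::ac_eq_rel}, the angle-$\theta_2$ flow line of $h_{\alpha\beta}$ in $C$ maps to the angle-$0$ flow line of $\wt h + \theta_2 \chi$ in $\h$. The $\alpha$ flow line boundary conditions along $\eta_1$ with angle $\theta_1$, together with the $-\chi \arg(\psi^{-1})'$ coordinate-change term, cancel the winding along $\eta_1$ and leave the boundary values of $\wt h$ constant on each of $(-\infty, 0)$ and $(0, \infty)$. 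Applying the chordal $\SLE_\kappa(\rho^L;\rho^R)/$GFF coupling from \cite{MS_IMAG} to $\wt h + \theta_2 \chi$ then identifies $\psi(\eta_2 \cap C)$ as a chordal $\SLE_\kappa(\rho^L;\rho^R)$, with $(\rho^L, \rho^R)$ determined by matching the boundary constants to $-\lambda(1+\rho^L)$ and $\lambda(1+\rho^R)$.

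Computing the two constants is the main obstacle. On the ``direct'' side of $\eta_2$ (the arc of $\partial C$ lying in the wedge between $\eta_1$ and $\eta_2$ near $0$), the $\eta_1$ contribution is $-\lambda' - \theta_1\chi$ plus $\chi$ times the winding of $\eta_1$; after this winding is absorbed into $-\chi\arg(\psi^{-1})'$ and the $\theta_2\chi$ shift is applied, the identity $\lambda-\lambda' = \pi\chi/2$ yields $\rho^R = (\theta_2-\theta_1)\chi/\lambda - 2$. On the ``wrap-around'' side, reached by going once around $0$ through $C$, the analogous calculation picks up two additional contributions: an extra $2\pi\chi$ coming from the fact that the boundary traversal of $\partial C$ wraps once around the origin, and a $2\pi\alpha$ jump from crossing the branch cut of $-\alpha\arg$ during this wrap-around. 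Combining these gives $\rho^L = ((2\pi + \theta_1 - \theta_2)\chi + 2\pi\alpha)/\lambda - 2$. The delicate point is that distinct components $C$ may correspond to $\eta_1$-segments with different total winding around $0$ and different positions relative to the branch cut; this is absorbed by the convention that $h_{\alpha\beta}$ is defined modulo $2\pi(\chi+\alpha) = 2\pi\chi+2\pi\alpha$, which makes the resulting $\rho^L, \rho^R$ independent of $C$. The hypothesis $\theta_1, \theta_2 \in [0, 2\pi(1+\alpha/\chi))$ covers exactly one period of this modular equivalence, and the $2\pi(\chi+\alpha)$ appearing in the numerator of $\rho^L$ is precisely the contribution of one such wrap-around.
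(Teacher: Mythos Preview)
Your overall shape is the same as the paper's: condition on $\eta_1$, conformally map each pocket $C$ of $\C\setminus\eta_1$ to $\h$, read off the (piecewise constant) boundary data of the transformed field, and invoke the chordal $\SLE_\kappa(\rho^L;\rho^R)$/GFF coupling from \cite{MS_IMAG}. But there is a real gap in how you pin down $\rho^L,\rho^R$. You assert that the ambiguity in the boundary data across pockets is ``absorbed by the convention that $h_{\alpha\beta}$ is defined modulo $2\pi(\chi+\alpha)$.'' This is not correct: shifting the field by $2\pi(\chi+\alpha)$ shifts \emph{both} boundary constants by the same amount, which changes each of $\rho^L$ and $\rho^R$ by $\pm 2\pi(\chi+\alpha)/\lambda$. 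The modular convention therefore leaves the $\rho$-values genuinely undetermined by the boundary data alone. What actually fixes them is the fact that $\eta_1$ and $\eta_2$ almost surely do not cross (Lemma~\ref{lem::do_not_cross} in the paper, proved from Proposition~\ref{prop::cross_finite} and scale invariance). Non-crossing forces the height difference at every $\eta_1$--$\eta_2$ intersection on the right side to be exactly $(\theta_1-\theta_2)\chi$ rather than $(\theta_1-\theta_2)\chi + 2\pi(\chi+\alpha)k$ for some $k\neq 0$: any other value would put the height difference in the crossing range of Theorem~\ref{thm::flow_line_interaction}. Your argument never invokes non-crossing, and without it both the $\rho$-computation and the claim that ``$\eta_2\cap C$ is a deterministic function of the restriction of the field to $C$'' (hence independence across pockets) break down, since $\eta_2$ could a priori leave and re-enter $C$.

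There are also two technical points the paper addresses that you skip. First, to invoke \cite[Theorem~2.4]{MS_IMAG} one needs that $\eta_2$, viewed as a path in $C$, has a continuous Loewner driving function; the paper gets this from the arguments of \cite[Section~6.2]{MS_IMAG}. Second, one needs that the conditional mean of $h$ given both $\eta_1$ and a segment of $\eta_2$ has no singularities at the $\eta_1\cap\eta_2$ intersection points; this comes from Theorem~\ref{thm::flow_line_interaction}. These are secondary to the non-crossing issue but are required to make the identification with chordal $\SLE_\kappa(\rho^L;\rho^R)$ rigorous.
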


A similar result also holds when one considers more than two paths starting from the same point; see Proposition~\ref{prop::conditional_law} as well as Figure~\ref{fig::conditional_law}.  A version of Theorem~\ref{thm::conditional_law} also holds in the case that $h$ is a GFF on a domain $D \subseteq \C$ with harmonically non-trivial boundary.  In this case, the conditional law of $\eta_2$ given $\eta_1$ is an $\SLE_\kappa(\rho^L;\rho^R)$ process with the same weights $\rho^L,\rho^R$ independently in each of the connected components of $D \setminus \eta_1$ whose boundary consists entirely of arcs of $\eta_1$.  In the connected components whose boundary consists of part of $\partial D$, the conditional law of $\eta_2$ is a chordal $\SLE_\kappa(\ul{\rho})$ process where the weights $\ul{\rho}$ depend on the boundary data of $h$ on $\partial D$ (but are nevertheless straightforward to read off from the boundary data).

A whole-plane $\SLE_\kappa(\rho)$ process $\eta$ for $\kappa > 0$ and $\rho > -2$ is almost surely unbounded by its construction (its capacity is unbounded), however it is not immediate from the definition of $\eta$ that it is transient: that is, $\lim_{t \to \infty} \eta(t)~=~\infty$ almost surely.  This was first proved by Lawler for ordinary whole-plane $\SLE_\kappa$ processes, i.e.\ $\rho=0$, in \cite{LAW_ENDPOINT}.  Using Theorem~\ref{thm::flow_line_interaction}, we are able to extend Lawler's result to the entire class of whole-plane $\SLE_\kappa(\rho)$ processes.

\begin{theorem}
\label{thm::transience}
Suppose that $\eta$ is a whole-plane $\SLE_\kappa(\rho)$ process for $\kappa > 0$ and $\rho > -2$.  Then $\lim_{t \to \infty} \eta(t)=\infty$ almost surely.  Likewise, if $\eta$ is a radial $\SLE_\kappa(\rho)$ process for $\kappa > 0$ and $\rho > -2$ in $\D$ targeted at $0$ then, almost surely, $\lim_{t \to \infty} \eta(t)=0$.
\end{theorem}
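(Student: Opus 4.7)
The plan is to use the GFF coupling of Theorem~\ref{thm::alphabeta} (for $\kappa \in (0,4)$) and Theorem~\ref{thm::alphabeta_counterflow} (for $\kappa' > 4$), together with the strong Markov property for flow lines they encode, to reduce whole-plane and radial transience to the transience of chordal $\SLE_\kappa(\underline\rho)$ with all weights strictly greater than $-2$, which is part of the boundary-emanating flow line theory of \cite{MS_IMAG}.

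For the whole-plane case with $\kappa \in (0,4)$, I first couple $\eta$ as the flow line from $0$ of $h_\alpha = h - \alpha \arg(\cdot)$ via Theorem~\ref{thm::alphabeta}, where $\alpha$ is chosen so that $\rho = 2-\kappa+2\pi\alpha/\lambda$ (so $\alpha > -\chi$ iff $\rho > -2$). Let $T > 0$ be deterministic; the continuity of whole-plane $\SLE_\kappa(\rho)$, asserted alongside Theorem~\ref{thm::existence} and proved in Section~\ref{subsec::SLEoverview}, ensures $\eta(T)$ is a.s.\ a well-defined point in $\C$. Applying Theorem~\ref{thm::alphabeta} at the stopping time $T$, the conditional law of $h_\alpha$ given $\eta|_{[0,T]}$ is a GFF on $\C \setminus \eta([0,T])$ with $\alpha$ flow line boundary conditions along $\eta([0,T])$ and the same behavior at $\infty$, and $\eta|_{[T,\infty)}$ is the flow line of this conditional field starting from the boundary prime end $\eta(T)$ of the unbounded component $U$ of $\C \setminus \eta([0,T])$.

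I then conformally map $U$ to $\H$ sending $\eta(T) \mapsto 0$ and $\infty \mapsto \infty$, transforming the field via \eqref{eqn::ac_eq_rel}. The image of $\eta|_{[T,\infty)}$ is then the flow line from $0$ of a GFF on $\H$ with piecewise constant boundary data once one unwinds via the rule of Figure~\ref{fig::winding}. By the boundary-emanating flow line theory of \cite{MS_IMAG}, this image is a chordal $\SLE_\kappa(\underline\rho)$ process from $0$ to $\infty$, and reading off the $\alpha$ flow line boundary conditions (via the height-difference rule of Figure~\ref{fig::angle_difference} applied to Figure~\ref{fig::interior_path_bd2}) shows that every component of $\underline\rho$ exceeds $-2$ precisely because $\alpha > -\chi$. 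The continuity and transience of chordal $\SLE_\kappa(\underline\rho)$ from \cite{MS_IMAG}, applied to the image and pulled back, yield $\eta(t) \to \infty$.

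The case $\kappa' > 4$ is handled in the same way using Theorem~\ref{thm::alphabeta_counterflow} and counterflow lines in place of flow lines. The radial statement follows by applying $w \mapsto 1/w$, which sends $(\D,0)$ to $(\C \setminus \overline{\D},\infty)$: radial $\SLE_\kappa(\rho)$ targeted at $0$ becomes a whole-plane-like process in the exterior of the unit disk targeted at $\infty$, and the same Markov-plus-chordal-reduction gives convergence to $\infty$ there, hence $\eta(t) \to 0$ in $\D$ upon pulling back. The main technical obstacle is the careful bookkeeping of the flow line boundary heights after conditioning on $\eta([0,T])$ and mapping to $\H$, in order to verify that the resulting chordal weights $\rho_i$ always satisfy $\rho_i > -2$ so that the chordal transience of \cite{MS_IMAG} applies without qualification; this bookkeeping is exactly where the hypothesis $\rho > -2$ (equivalently $\alpha > -\chi$) is needed.
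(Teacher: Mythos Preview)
Your proposal has a genuine gap at the conformal-mapping step. You want to map the unbounded component $U$ of $\C \setminus \eta([0,T])$ to $\H$ with $\eta(T)\mapsto 0$ and $\infty\mapsto\infty$. But $\eta([0,T])$ is a compact subset of $\C$, so in the Riemann sphere $\infty$ is an \emph{interior} point of $U$; no conformal isomorphism $U\to\H$ can send an interior point to the boundary point $\infty\in\partial\H$. Under any conformal map $U\to\H$ taking $\eta(T)\mapsto 0$, the point $\infty$ lands at some interior point of $\H$, and the continuation of $\eta$ is the flow line of the conditional field targeted at that interior point. That is a \emph{radial} $\SLE_\kappa(\rho)$ process (equivalently, by \cite[Theorem~3]{SCHRAMM_WILSON}, a chordal $\SLE_\kappa(\rho,\kappa-6-\rho)$ with an \emph{interior} force point at the image of $\infty$), and its endpoint continuity is exactly the radial statement you are trying to prove. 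The chordal transience results of \cite{MS_IMAG} are for boundary force points only and do not apply here, so the reduction is circular. The same circularity afflicts the $\kappa'>4$ case.

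The paper's argument is substantively different. For $\kappa\in(0,4)$ it does not reduce to a single chordal run; instead it builds, in each dyadic annulus $A_k=2^k\D\setminus 2^{k-1}\D$, an angle-varying flow line that winds once around and closes into a loop separating $0$ from $\infty$ while staying in $A_k$ (Lemma~\ref{lem::flow_line_ring}). By approximate independence of $h_{\alpha\beta}$ across well-separated annuli, infinitely many of these events occur a.s.; since $\eta$ can cross any such angle-varying loop at most a bounded number of times (Proposition~\ref{prop::cross_finite}) and $\limsup_t|\eta(t)|=\infty$ by the capacity parameterization, transience follows (Proposition~\ref{prop::endpoint_continuity}). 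For $\kappa'>4$ the paper splits on $\rho$: when $\rho\in(-2,\tfrac{\kappa'}{2}-2)$ one already knows from Lemma~\ref{lem::radial_loop} that the radial path a.s.\ makes a clean loop disconnecting the target from $\partial\D$, which forces endpoint continuity; when $\rho\ge\tfrac{\kappa'}{2}-2$ one invokes duality (Theorem~\ref{thm::lightcone_alpha}) to identify the left/right boundaries of $\eta'$ with flow lines emanating from the target point, whose $\kappa<4$ transience forces $\eta'$ to reach the target.
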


\subsubsection{Branching and space-filling $\SLE$ curves}
\label{subsubsec::branching_space_filling}

As mentioned in Section~\ref{ss::overview} (and illustrated in Figures~\ref{fig::space_filling},~\ref{fig::space_filling2},~\ref{fig::space_filling3}, and~\ref{fig::bigger_than_8_reversible})
there is a natural space-filling path that traces the flow line tree in a natural order (so that a generic point $y \in D$ is hit before a generic point $z \in D$ when the flow line with angle $\tfrac{\pi}{2}$ from $y$ merges into the right side of the flow from $z$ with angle $\tfrac{\pi}{2}$).  The full details of this construction (including rules for dealing with various boundary conditions, and the possibility of flow lines that merge into the boundary before hitting each other) appear in Section~\ref{sec::duality_space_filling}.  The result is a space-filling path that traces through a ``tree'' of flow lines, each of which is a form of $\SLE_\kappa$.

We will show that there is another way to construct the space-filling path and interpret it as a variant of $\SLE_{\kappa'}$, where $\kappa' = 16/\kappa > 4$.  Indeed, this is true even when $\kappa' \in (4,8)$, in which case ordinary $\SLE_{\kappa'}$ is not space-filling.



\begin{figure}[h!]
\begin{center}
\begin{ficomment}
\includegraphics[width=0.95\textwidth]{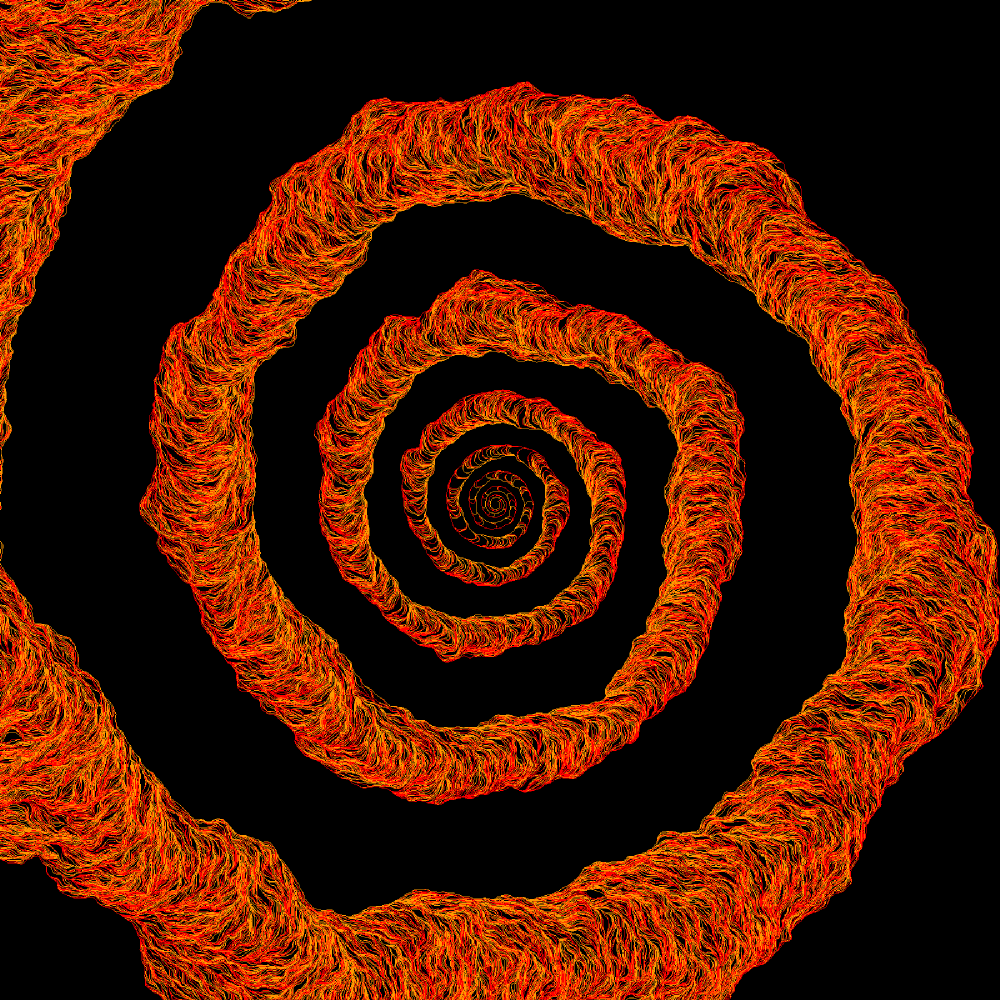}
\end{ficomment}
\end{center}
\caption{ \label{fig::spiral_lightcone}  Simulation of the light cone emanating from the origin of a whole-plane GFF plus $\beta \log |\cdot|$; $\beta = -5$.  The resulting counterflow line $\eta'$ from $\infty$ is a variant of whole-plane $\SLE_{64}$ targeted at $0$.  The $\log$ singularity causes the path to spiral around the origin.  Since $\beta < 0$, the angle-varying flow lines which generate the range of $\eta'$ spiral around $0$ in the clockwise direction, which corresponds to $\eta'$ spiraling around $0$ in the counterclockwise direction.  Changing the sign of $\beta$ would lead to the paths spiraling in the opposite direction.}
\end{figure}

\begin{figure}[h!]
\begin{center}
\begin{ficomment}
\subfigure[$\SLE_8$]{\includegraphics[width=0.48\textwidth]{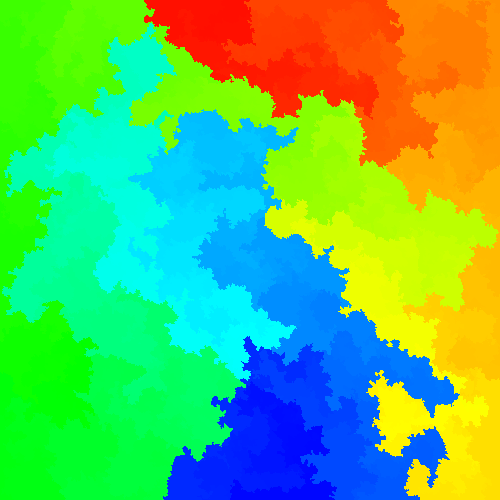}}
\hspace{0.02\textwidth}
\subfigure[$\SLE_{16}$]{\includegraphics[width=0.48\textwidth]{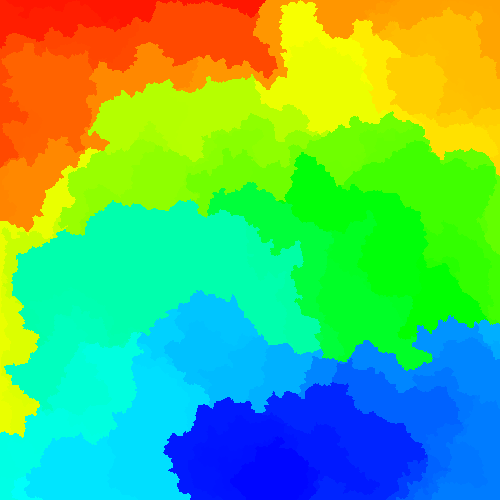}}

\subfigure{\includegraphics[scale=.9]{figures/hue_scale_time.pdf}}
\end{ficomment}
\end{center}
\caption{\label{fig::space_filling3} Simulations of $\SLE_{\kappa'}$ processes in $[-1,1]^2$ from $i$ to $-i$ for the indicated values of $\kappa'$.  These were generated from a discrete approximation of the GFF using the method described in Figure~\ref{fig::space_filling}.  The path on the left appears to be reversible while the path on the right appears not to be due to the asymmetry in its initial and terminal points.  This asymmetry is more apparent for larger values of $\kappa'$; see Figure~\ref{fig::bigger_than_8_reversible} for simulations with $\kappa'=128$.}
\end{figure}

Before we explain this, let us recall the principle of $\SLE$ {\em duality} (sometimes called {\em Duplantier duality}) which states that the outer boundary of an $\SLE_{\kappa'}$ process is a certain form of $\SLE_\kappa$.  This was first proved in various forms by Zhan \cite{ZHAN_DUALITY_1,ZHAN_DUALITY_2} and Dub\'edat \cite{DUB_DUAL}.  This duality naturally arises in the context of the GFF/$\SLE$ coupling and is explored in \cite{DUB_PART} and \cite{MS_IMAG}.  The simplest statement of this type is the following.  Suppose that $D \subseteq \C$ is a simply-connected domain with harmonically non-trivial boundary and fix $x,y \in \partial D$ distinct.  Let $\eta'$ be an $\SLE_{\kappa'}$ process coupled with a GFF $h$ on $D$ as a counterflow line from $y$ to $x$ (as defined just after the statement of \cite[Theorem~1.1]{MS_IMAG}).  Then the left (resp.\ right) side of the outer boundary of $\eta'$ is equal to the flow line of $h$ starting at $x$ with angle $\tfrac{\pi}{2}$ (resp.\ $-\tfrac{\pi}{2}$).  In the case that $\eta'$ is boundary filling, the flow lines starting from $x$ with these angles are taken to be equal to the segments of the domain boundary which connect $y$ to $x$ in the counterclockwise and clockwise directions.  More generally, the left (resp.\ right) side of the outer boundary of $\eta'$ stopped upon hitting any given boundary point $z$ is equal to the flow line of $h$ starting from $z$ with angle $\tfrac{\pi}{2}$ (resp.\ $-\tfrac{\pi}{2}$).  In \cite{MS_IMAG}, it is shown that it is possible to realize the entire trajectory of $\eta'$ stopped upon hitting $z$ as the closure of the union of a countable collection of angle varying flow lines starting from $z$ whose angle is restricted to be in $[-\tfrac{\pi}{2},\tfrac{\pi}{2}]$ and is allowed to change direction a finite number of times.  This path decomposition is the so-called {\bf $\SLE$ light cone}.

Our next theorem extends these results to describe the outer boundary and range of $\eta'$ when it is targeted at a given \emph{interior point} $z$ in terms of flow lines of $h$ starting from $z$.  A more explicit statement of the theorem hypotheses appears in Section~\ref{sec::duality_space_filling}, where the result is stated as Theorem~\ref{thm::lightcone}.

\begin{theorem}
\label{thm::duality_and_light_cones}
Suppose that $h$ is a GFF on a simply-connected domain $D \subseteq \C$ which is homeomorphic to $\D$.  Assume that the boundary data is such that it is possible to draw a counterflow line $\eta'$ from a fixed $y \in \partial D$ to a fixed point $z \in D$.  (See Section~\ref{sec::duality_space_filling}, Theorem~\ref{thm::lightcone}, for precise conditions.)  If we lift $\eta'$ to the universal cover of $D \setminus \{z\}$, then its left (resp.\ right) boundary is a.s.\ equivalent (when projected back to $D$) to the flow line $\eta^L$ (resp.\ $\eta^R$) of $h$ starting from $z$ with angle $\tfrac{\pi}{2}$ (resp.\ $-\tfrac{\pi}{2}$).  More generally, the range of $\eta'$ stopped upon hitting $z$ is almost surely equal to the closure of the union of the countable collection of all angle varying flow lines of $h$ starting at $z$ and targeted at $y$ which change angles at most a finite number of rational times and with rational angles contained in $[-\tfrac{\pi}{2},\tfrac{\pi}{2}]$.
\end{theorem}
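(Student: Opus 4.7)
The plan is to bootstrap from the boundary-to-boundary version of SLE duality and the light cone construction established in \cite{MS_IMAG}, using a stopping-time approximation together with a flow line merging argument to identify the limiting objects with the interior-initialized flow lines. First I would couple $\eta'$ with $h$ as in Theorem~\ref{thm::alphabeta_counterflow}, so that the conditional law of $h$ along $\eta'|_{[0,\tau]}$ is prescribed for every stopping time $\tau$, and introduce the flow lines $\eta^L, \eta^R$ of $h$ from $z$ with angles $\pm \tfrac{\pi}{2}$, which exist and are a.s.\ determined by $h$ by Theorem~\ref{thm::existence} and Theorem~\ref{thm::uniqueness}. For each $\epsilon > 0$, let $\tau_\epsilon$ denote the first time $\eta'$ enters $B(z,\epsilon)$. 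Since the tip $\eta'(\tau_\epsilon)$ lies on the boundary of the complement domain $D \setminus \eta'([0,\tau_\epsilon])$, the boundary-case SLE duality of \cite{MS_IMAG} applies and describes the outer boundary of $\eta'([0,\tau_\epsilon])$ as the pair of flow lines of the conditional field starting from $\eta'(\tau_\epsilon)$ with angles $\pm \tfrac{\pi}{2}$.

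To identify these tip-initialized flow lines with $\eta^L$ and $\eta^R$, I would invoke the merging property from Theorem~\ref{thm::merge_cross}: two flow lines with the same angle must merge upon first meeting, by Theorem~\ref{thm::flow_line_interaction}. The angle-$\tfrac{\pi}{2}$ flow line from $\eta'(\tau_\epsilon)$ and $\eta^L$ therefore agree past their first intersection, and since $\eta'(\tau_\epsilon) \to z$ as $\epsilon \to 0$ and the merging point must lie near $z$, the outer left boundary of $\eta'([0,\tau_\epsilon])$ coincides with $\eta^L$ outside an initial segment whose diameter tends to $0$. Combined with transience (Theorem~\ref{thm::transience}) to ensure that $\tau_\epsilon \to \tau_z$ where $\tau_z$ is the hitting time of $z$, and a lifting argument to the universal cover of $D \setminus z$ accounting for windings of $\eta'$ around $z$, this identifies the left and right boundaries of $\eta'$ stopped upon hitting $z$ with the lifts of $\eta^L$ and $\eta^R$, as required.

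For the light cone characterization, I would follow the strategy of \cite{MS_IMAG}. Containment is the easier direction: any angle-varying flow line from $z$ whose angle sequence takes rational values in $[-\tfrac{\pi}{2}, \tfrac{\pi}{2}]$ cannot cross $\eta^L$ or $\eta^R$ by Theorem~\ref{thm::flow_line_interaction} (the relevant angle differences fall within the regime of bouncing or merging without crossing), so such paths are trapped in the region bounded by $\eta^L \cup \eta^R$, which by the first part is exactly the closed range of $\eta'$. For the reverse inclusion, one proceeds inductively on the number of angle changes: intermediate-angle flow lines from $z$ fill a subregion, and after stopping at rational times and branching off at rational angles in $[-\tfrac{\pi}{2},\tfrac{\pi}{2}]$ one reaches further; countability of rational angles and times together with a density/continuity argument shows the closure of this countable collection exhausts the range.

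The main obstacle is the universal-cover bookkeeping in the second paragraph. Since $\eta'$ may wind around $z$ arbitrarily many times (infinitely often in the space-filling regime) and $\eta^L, \eta^R$ can do the same, when asserting that the lifted left and right boundaries of $\eta'$ coincide with lifts of $\eta^L$ and $\eta^R$ one must keep track of winding numbers consistently across the approximation $\epsilon \to 0$. The stopping at $\tau_\epsilon$ evades this on each finite piece, but taking $\epsilon \to 0$ requires a careful argument that the merging-based identification is stable in the appropriate topology on paths lifted to the universal cover. A secondary technical point is the density step in the light cone argument: one must verify that rational-angle, rational-time approximations are truly dense in the range of $\eta'$ and do not merely fill a strictly smaller subset, which requires exploiting the boundary-case light cone theorem of \cite{MS_IMAG} within each disk complement used in the stopping argument and taking a consistent limit.
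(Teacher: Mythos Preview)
Your overall strategy---reduce to the chordal duality of \cite{MS_IMAG} and then use merging to identify the tip-based boundaries with the interior flow lines $\eta^L,\eta^R$---is the same as the paper's. The implementation, however, differs in a way that leaves a real gap.

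The paper does not stop $\eta'$ at the Euclidean entrance time $\tau_\epsilon$ into $B(z,\epsilon)$ and look at flow lines from the tip. Instead it chooses a sequence of \emph{boundary} targets $x_k$ (conformally antipodal to the current tip) and stops at the separation times $\tau_k$. This lets \cite[Theorem~1.4]{MS_IMAG} be applied directly: the left/right boundaries of $\eta'([\tau_{k-1},\tau_k])$ are contained in the angle-$\pm\tfrac{\pi}{2}$ flow lines of the \emph{conditional} field from $x_k$. The paper then runs an induction (using the clean-merge argument of Lemma~\ref{lem::clean_tail_merge}) to upgrade these to flow lines of $h$ itself started from rational points, so that Theorem~\ref{thm::flow_line_interaction} applies to their interaction with $\eta_z^L,\eta_z^R$.

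The gap in your argument is the sentence ``since $\eta'(\tau_\epsilon)\to z$ and the merging point must lie near $z$.'' Nothing you have said forces the angle-$\tfrac{\pi}{2}$ flow line from $\eta'(\tau_\epsilon)$ to merge with $\eta^L$ near $z$ (or at all, in a bounded domain); flow lines do not depend continuously on their starting point, and the height difference at a first intersection could be a nonzero multiple of $2\pi\chi$ because of prior winding of $\eta'$ around $z$. You flag this as ``universal-cover bookkeeping,'' but it is the heart of the proof, not a technicality. The paper resolves it by a case split: for $\kappa'\in(4,8)$ one uses that $\eta'$ a.s.\ makes arbitrarily small clean loops around $z$ (Lemma~\ref{lem::radial_loop}), and Theorem~\ref{thm::flow_line_interaction} then forces $\eta_z^L$ to merge into the left boundary of $\eta'$ before exiting such a loop; for $\kappa'\ge 8$ one instead shows (Figure~\ref{fig::duality_no_loops}) that the interior flow line from $z$ cannot cross the left/right boundaries of $\eta'([0,\tau'])$ at any stopping time, hence $\eta'$ coincides with the counterflow line of the conditional field, and chordal duality finishes. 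Your $\epsilon\to 0$ limit would need one of these mechanisms to close.

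Your light-cone sketch is fine in outline and matches what the paper does (see Remark~\ref{rem::interior_light_cone}); once the boundary identification is established, the range description follows from the boundary-emanating light-cone theorem applied in each complementary pocket.
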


As in the boundary emanating setting, we can describe the conditional law of a counterflow line given its outer boundary:

\begin{theorem}
\label{thm::duality_conditional}
Suppose that we are in the setting of Theorem~\ref{thm::duality_and_light_cones}.
If we are given $\eta^L$ and $\eta^R$, then the conditional law of $\eta'$ restricted to the interior connected components of $D \setminus (\eta^L \cup \eta^R)$ that it passes through (i.e., those components whose boundaries include the right side of a directed $\eta^L$ segment, the left side of a directed segment $\eta^R$ segment, and {\em no} arc of $\partial D$) is given by independent chordal $\SLE_{\kappa'}(\tfrac{\kappa'}{2}-4;\tfrac{\kappa'}{2}-4)$ processes (one process in each component, starting at the terminal point of the component's directed $\eta^L$ and $\eta^R$ boundary segments, ending at the initial point of these directed segments).
\end{theorem}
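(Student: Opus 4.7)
The plan is to combine the commutation/merging framework developed earlier in the excerpt (Theorem~\ref{thm::commutation} and Theorem~\ref{thm::flow_line_interaction}) with the boundary-emanating GFF/counterflow line coupling from \cite{MS_IMAG} applied separately inside each enumerated interior component of $D \setminus (\eta^L \cup \eta^R)$. First, I would use Theorem~\ref{thm::duality_and_light_cones} to reinterpret $(\eta^L, \eta^R)$ as the (lift of the) left and right outer boundaries of $\eta'$ stopped upon hitting $z$, so that $\eta^L$ and $\eta^R$ are canonically measurable functions of $h$. Theorem~\ref{thm::commutation} then lets me draw $\eta^L$ and $\eta^R$ \emph{first} and read off the conditional law of $h$ given the pair: it is a GFF on $D \setminus (\eta^L \cup \eta^R)$ whose boundary values are the original values of $h$ on $\partial D$ together with flow line boundary conditions of angle $\tfrac{\pi}{2}$ on $\eta^L$ and angle $-\tfrac{\pi}{2}$ on $\eta^R$, as described in Figure~\ref{fig::interior_path_bd}. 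Conditional independence across the connected components of the complement is then immediate from the spatial Markov property of the GFF.

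Next I would fix a single interior component $C$ whose boundary is traced by a right-side arc of $\eta^L$ and a left-side arc of $\eta^R$ (with no arc of $\partial D$). Let $p_\mathrm{in}$ and $p_\mathrm{out}$ denote the terminal and initial points of these two directed arcs (so $\eta'$ enters $C$ at $p_\mathrm{in}$ and leaves at $p_\mathrm{out}$), and choose a conformal map $\psi \colon C \to \h$ sending $p_\mathrm{in} \mapsto 0$ and $p_\mathrm{out} \mapsto \infty$. Applying the coordinate change \eqref{eqn::ac_eq_rel}, the pullback $\widetilde h = h \circ \psi^{-1} - \chi \arg (\psi^{-1})'$ is a GFF on $\h$; the flow line boundary data on the two sides of $\partial C$ translates, once the $\chi$-times-winding contributions are unwound, into \emph{constant} boundary values on $(-\infty,0)$ and $(0,\infty)$. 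Comparing with the boundary-data formulas from \cite{MS_IMAG} for the GFF/counterflow line coupling of chordal $\SLE_{\kappa'}(\rho^L;\rho^R)$ from $0$ to $\infty$, these constants match exactly the case $\rho^L = \rho^R = \tfrac{\kappa'}{2}-4$; this is the precisely the weight for which the counterflow line is boundary-filling on both sides of its initial force point, which is consistent with the fact that $\eta'|_C$ must trace $\partial C$ on both sides.

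Having identified the conditional GFF inside $C$, I would then invoke the boundary-emanating GFF/$\SLE$ coupling and its uniqueness (the results of \cite{MS_IMAG} that are used repeatedly in proving Theorem~\ref{thm::existence}, Theorem~\ref{thm::uniqueness}, and Theorem~\ref{thm::alphabeta_counterflow}): conditional on $(\eta^L,\eta^R)$, the curve $\psi(\eta'|_C)$ is the counterflow line of $\widetilde h$ from $0$ to $\infty$ and hence, by the identification of weights above, is a chordal $\SLE_{\kappa'}(\tfrac{\kappa'}{2}-4;\tfrac{\kappa'}{2}-4)$. Pulling back by $\psi^{-1}$ yields the statement in the claimed component. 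Independence of the $\SLE_{\kappa'}(\tfrac{\kappa'}{2}-4;\tfrac{\kappa'}{2}-4)$'s across different components is inherited from the GFF Markov property established in the first step, because in each component the counterflow line is a measurable function of the restriction of the conditional field to that component.

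The main obstacle is the second step: carefully matching boundary data. One must correctly bookkeep the $\chi$-times-winding contributions along the fractal boundary arcs $\eta^L$ and $\eta^R$, verify that the sign conventions of ``flow line of angle $\pm \tfrac{\pi}{2}$'' on these arcs reproduce (under the coordinate change) the standard GFF boundary values associated to an $\SLE_{\kappa'}(\rho^L;\rho^R)$ counterflow line from $0$ to $\infty$, and extract the clean value $\rho^L=\rho^R=\tfrac{\kappa'}{2}-4$ (independent of which component $C$ we chose). A secondary subtlety is ruling out that $\eta'$ restricted to $C$ could be, for example, a radial or whole-plane variant rather than a chordal process from $p_\mathrm{in}$ to $p_\mathrm{out}$; this is resolved by the observation that both endpoints of $\eta'|_C$ lie on $\partial C$, together with the transience statement of Theorem~\ref{thm::transience} applied on appropriate scales.
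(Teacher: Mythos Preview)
Your proposal is correct and matches the paper's own argument, which appears at the end of the proof of Theorem~\ref{thm::lightcone}: identify the conditional GFF on $D\setminus(\eta^L\cup\eta^R)$ via the flow-line Markov property, observe that $\eta'$ restricted to each interior component satisfies the same conformal Markov property as the counterflow line of that conditional field and hence coincides with it by \cite[Theorem~1.2]{MS_IMAG}, then read off the boundary data to get $\SLE_{\kappa'}(\tfrac{\kappa'}{2}-4;\tfrac{\kappa'}{2}-4)$. One small remark: the appeal to Theorem~\ref{thm::transience} at the end is not really the right tool for ruling out a radial variant; the chordal nature of $\eta'|_C$ follows directly from the fact that $\eta^L,\eta^R$ are the outer boundaries of $\eta'$, so $\eta'$ enters and exits $C$ at the two marked boundary points and its Loewner chain in $C$ is chordal.
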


Various statements similar to Theorem~\ref{thm::duality_and_light_cones} and Theorem~\ref{thm::duality_conditional} hold if we start the counterflow line $\eta'$ from an interior point as in the setting of Theorem~\ref{thm::alphabeta_counterflow}.  One statement of this form which will be important for us is the following.

\begin{theorem}
\label{thm::whole_plane_duality}
Suppose that $h_{\alpha \beta} = h - \alpha \arg(\cdot) - \beta \log|\cdot|$ where $\alpha \geq -\tfrac{\chi}{2}$, $\beta \in \R$, $h$ is a whole-plane GFF, and $h_{\alpha \beta}$ is viewed as a distribution defined up to a global multiple of $2\pi(\chi + \alpha)$.  Let $\eta'$ be the counterflow line of $h_{\alpha \beta}$ starting from $\infty$ and targeted at $0$.  Then the left (resp.\ right) boundary of $\eta'$ is given by the flow line $\eta^L$ (resp.\ $\eta^R$) starting from $0$ with angle $\tfrac{\pi}{2}$ (resp.\ $-\tfrac{\pi}{2}$).  The conditional law of $\eta'$ given $\eta^L,\eta^R$ is independently that of a chordal $\SLE_{\kappa'}(\tfrac{\kappa'}{2}-4;\tfrac{\kappa'}{2}-4)$ process in each of the components of $\C \setminus (\eta^L \cup \eta^R)$ which are visited by $\eta'$.  The range of $\eta'$ is almost surely equal to the closure of the union of the set of points accessible by traveling along angle varying flow lines starting from $0$ which change direction a finite number of times and with rational angles contained in $[-\tfrac{\pi}{2},\tfrac{\pi}{2}]$.
\end{theorem}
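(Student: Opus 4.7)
The strategy is to reduce the whole-plane statement to the boundary-emanating case already handled by Theorem~\ref{thm::duality_and_light_cones} and Theorem~\ref{thm::duality_conditional}. The plan is to first grow the candidate flow-line boundaries $\eta^L$ and $\eta^R$ from $0$ and then analyze $\eta'$ in the simply-connected complementary components, where it becomes a boundary-to-boundary counterflow line with explicit boundary data.

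First I would use Theorem~\ref{thm::alphabeta} (with $\kappa = 16/\kappa' \in (0,4)$, $\alpha \geq -\chi/2 > -\chi$) to construct the flow lines $\eta^L$ and $\eta^R$ of $h_{\alpha\beta}$ starting from $0$ with angles $\pi/2$ and $-\pi/2$; both are a.s.\ locally self-avoiding, hence simple curves from $0$ to $\infty$, and they are determined by $h_{\alpha\beta}$ modulo $2\pi(\chi+\alpha)$. The multi-path coupling and interaction results (Theorem~\ref{thm::flow_line_interaction}, Theorem~\ref{thm::merge_cross}, Theorem~\ref{thm::commutation}) show that $\eta^L$ and $\eta^R$ a.s.\ do not cross, and that the joint conditional law of $h_{\alpha\beta}$ given $(\eta^L,\eta^R)$ is, in each connected component of $\C \setminus (\eta^L \cup \eta^R)$, a GFF with $\alpha$ flow line boundary conditions of angle $\pi/2$ on the $\eta^L$ arc and angle $-\pi/2$ on the $\eta^R$ arc (with the argument discontinuity arranged as in Figure~\ref{fig::interior_path_bd2}). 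Using target invariance for counterflow lines and Theorem~\ref{thm::alphabeta_counterflow}, the whole-plane $\eta'$ from $\infty$ to $0$ can be coupled with this configuration so that it agrees with a boundary-to-boundary counterflow line inside each component it enters.

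Next, inside each such component $C$ visited by $\eta'$, I apply a conformal map to a reference simply-connected domain; by the coordinate change rule \eqref{eqn::ac_eq_rel}, the transformed field has exactly the piecewise-constant-plus-winding boundary data needed for Theorem~\ref{thm::duality_and_light_cones} and Theorem~\ref{thm::duality_conditional} to apply. These give both the $\chordal\SLE_{\kappa'}(\tfrac{\kappa'}{2}-4;\tfrac{\kappa'}{2}-4)$ conditional law of $\eta'|_C$ and the boundary-emanating light cone description of its range as the closure of the union of angle-varying flow lines (from the terminal point of $C$) with rational angles in $[-\tfrac{\pi}{2},\tfrac{\pi}{2}]$ and finitely many direction changes. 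Pulling these angle-varying flow lines back to $\C$ via Theorem~\ref{thm::commutation} identifies them as the angle-varying flow lines of $h_{\alpha\beta}$ from $0$ itself, yielding the desired light cone description once one checks (via the flow line interaction rules) that no such path started at $0$ can exit into a component not visited by $\eta'$. Finally, the identification of $\eta^L$ and $\eta^R$ with the actual boundaries of $\eta'$ follows: Theorem~\ref{thm::duality_and_light_cones} guarantees that in $C$ the boundaries of $\eta'|_C$ are the flow lines from $\partial C$ with the corresponding angles, and uniqueness of flow lines (Theorem~\ref{thm::merge_cross}) forces these to be the relevant arcs of $\eta^L$ and $\eta^R$.

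The main obstacle is the initial step: working out the conditional law and topology in the whole-plane setting where $h_{\alpha\beta}$ is only defined modulo $2\pi(\chi+\alpha)$ and where the argument singularity forces a discontinuous boundary data prescription along $(-\infty,0)$. Verifying that $\eta^L$ and $\eta^R$ intersect only at $0$ and $\infty$ (equivalently, that their angle gap of $\pi$ places them in the correct range of Theorem~\ref{thm::flow_line_interaction} so that they avoid each other after leaving $0$), and that $\eta'$ fills precisely the components dictated by the light cone and does not leak across $\eta^L \cup \eta^R$, is the delicate point; once this is in place, each step above reduces to an application of a theorem already stated in the paper.
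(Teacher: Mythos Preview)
Your high-level idea---reduce to the bounded-domain duality results---is the right one, and indeed the paper's own proof invokes Theorem~\ref{thm::lightcone} and Theorem~\ref{thm::lightcone_alpha}.  But the reduction you describe has two genuine problems.

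First, the topological picture is wrong.  You assert that $\eta^L$ and $\eta^R$ are simple curves (since ``locally self-avoiding''), and later propose to verify that they meet only at $0$ and $\infty$.  Neither holds in general.  ``Locally self-avoiding'' in Theorem~\ref{thm::alphabeta} means only that the \emph{lift to the universal cover of} $D\setminus\{0\}$ is simple; the projected path can and does self-intersect whenever the associated $\rho$ is below $\tfrac{\kappa}{2}-2$ (Lemma~\ref{lem::radial_critical_for_hitting}, Remark~\ref{rem::interior_rho_value}).  Moreover the angle gap between $\eta^L$ and $\eta^R$ is $\pi$, while the critical angle $\theta_c=\pi\kappa/(4-\kappa)$ exceeds $\pi$ for $\kappa>2$ (equivalently $\kappa'<8$); by Theorem~\ref{thm::flow_line_interaction} the two paths then intersect each other infinitely often, so $\C\setminus(\eta^L\cup\eta^R)$ has infinitely many components arranged in a chain.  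The ``delicate point'' you flag is not delicate---it is false---and the argument has to be organized around this.

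Second, and more seriously, the step ``using target invariance \ldots\ the whole-plane $\eta'$ \ldots\ agrees with a boundary-to-boundary counterflow line inside each component it enters'' is essentially the conclusion you are trying to prove.  Nothing stated in the paper lets you simply declare that the whole-plane counterflow line respects the decomposition by $\eta^L\cup\eta^R$; you would first have to show that $\eta'$ never crosses $\eta^L$ or $\eta^R$, which already amounts to identifying them as its left and right boundaries.  The paper does not argue this way.  Instead (see the combined proof of Theorem~\ref{thm::alphabeta_counterflow} and Theorem~\ref{thm::whole_plane_duality}), it uses \emph{how the whole-plane coupling is constructed}: $\eta'$ is built as a limit of radial counterflow lines in bounded domains (Remark~\ref{rem::alpha_coupling} and the proof of Theorem~\ref{thm::existence}), and Theorem~\ref{thm::lightcone_alpha} applies directly to each approximant to produce the duality; the whole-plane statement is then inherited.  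Only \emph{after} the boundaries have been identified does the paper condition on $\eta^L,\eta^R$ and read off the chordal $\SLE_{\kappa'}(\tfrac{\kappa'}{2}-4;\tfrac{\kappa'}{2}-4)$ conditional law and the uniqueness, which is the part of your outline that works.
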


Recall from after the statement of Theorem~\ref{thm::alphabeta_counterflow} that $-\tfrac{\chi}{2}$ is the critical value of $\alpha$ at or below which $\eta'$ fills its own outer boundary.  At the critical value $\alpha=-\tfrac{\chi}{2}$, the left and right boundaries of $\eta'$ are the same.

We will now explain briefly how one constructs the so-called space-filling $\SLE_{\kappa'}$ or $\SLE_{\kappa'}(\rho)$ processes as {\em extensions} of ordinary $\SLE_{\kappa'}$ and $\SLE_{\kappa'}(\rho)$ processes.  (A more detailed explanation appears in Section~\ref{sec::duality_space_filling}.)  First of all, if $\kappa' \geq 8$ and $\rho$ is such that ordinary $\SLE_{\kappa'}(\rho)$ is space-filling (i.e., $\rho \in (-2,\tfrac{\kappa'}{2}-4]$), then the space-filling  $\SLE_{\kappa'}(\rho)$ is the same as ordinary $\SLE_{\kappa'}(\rho)$. More interestingly, we will also define space-filling $\SLE_{\kappa'}(\rho)$ when $\rho$ and $\kappa'$ are in the range for which an ordinary $\SLE_{\kappa'}(\rho)$ path $\eta'$ is {\em not} space-filling and the complement $D \setminus \eta'$ a.s.\ consists of a countable set of components $C_i$, each of which is swallowed by $\eta'$ at a finite time $t_i$.

The space-filling extension of $\eta'$ hits the points in the range of $\eta'$ in the same order that $\eta'$ does; however, it is ``extended'' by splicing into $\eta'$, at each time $t_i$, a certain $\overline {C_i}$-filling loop that starts and ends at $\eta'(t_i)$.

In other words, the difference between the ordinary and the space-filling path is that while the former ``swallows'' entire regions $C_i$ at once, the latter fills up $C_i$ gradually (immediately after it is swallowed) with a continuous loop, starting and ending at $\eta'(t_i)$.  We parameterize the extended path so that the time represents the area it has traversed thus far (and the path traverses a unit of area in a unit of time).  Then $\eta'$ can be obtained from the extended path by restricting the extended path to the set of times when its tip lies on the outer boundary of the region traversed thus far.  In some sense, the difference between the two paths is that $\eta'$ is parameterized by capacity viewed from the target point (which means that entire regions $C_i$ are absorbed in zero time and never subsequently revisited) and the space-filling extension is parameterized by area (which means that these regions are filled in gradually).

It remains to explain how the continuous $\overline{C_i}$-filling loops are constructed.  More details about this will appear in Section~\ref{sec::duality_space_filling}, but we can give some explanation here.  Consider a countable dense set $(z_n)$ of points in $D$.  For each $n$ we can define a counterflow line $\eta_n'$, starting at the same position as $\eta'$ but targeted at $z_n$.  For $m \not = n$, the paths $\eta_m'$ and $\eta_n'$ agree (up to monotone reparameterization) until the first time they separate $z_m$ from $z_n$ (i.e., the first time at which $z_m$ and $z_n$ lie in different components of the complement of the path traversed thus far).  The space-filling curve will turn out to be an extension of {\em each} $\eta_n'$ curve in the sense that $\eta_n'$ is obtained by restricting the space-filling curve to the set of times at which the tip is harmonically exposed to $z_n$.

To construct the curve, suppose that $D \subseteq \C$ is a simply-connected domain with harmonically non-trivial boundary and fix $x,y \in \partial D$ distinct.  Assume that the boundary data for a GFF $h$ on $D$ has been chosen so that its counterflow line $\eta'$ from $y$ to $x$ is an $\SLE_{\kappa'}(\rho_1;\rho_2)$ process with $\rho_1,\rho_2 \in (-2,\tfrac{\kappa'}{2}-2)$; this is the range of $\rho$ values in which $\eta'$ almost surely hits both sides of $\partial D$.  Explicitly, in the case that $D = \h$, $y=0$, and $x=\infty$, this corresponds to taking $h$ to be a GFF whose boundary data is given by $\lambda'(1+\rho_1)$ (resp.\ $-\lambda'(1+\rho_2)$) on $\R_-$ (resp.\ $\R_+$).  Suppose that $(z_n)$ is any countable, dense set of points in $D$.  For each $n$, let $\eta_n^L$ (resp.\ $\eta_n^R$) be the flow line of $h$ starting from $z_n$ with angle $\tfrac{\pi}{2}$ (resp.\ $-\tfrac{\pi}{2}$). As explained in Theorem~\ref{thm::duality_and_light_cones}, these paths can be interpreted as the left and right boundaries of $\eta_n'$.  For some boundary data choices, it is possible for these paths to hit the same boundary point many times; they wind around (adding a multiple of $2\pi \chi$ to their heights) between subsequent hits.  As explained in Section~\ref{sec::duality_space_filling}, we will assume that $\eta_n^L$ and $\eta_n^R$ are stopped at the first time they hit the appropriate left or right arcs of $\partial D \setminus \{x, y \}$ at the ``correct'' angles (i.e., with heights described by the multiple of $2\pi \chi$ that corresponds to the outer boundary of $\eta'$ itself).

\begin{figure}[htb!]
\begin{center}
\includegraphics[scale=0.85]{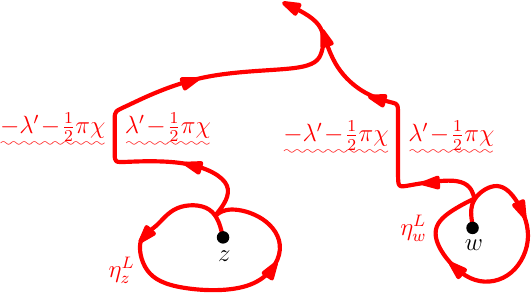}
\end{center}
\caption{\label{fig::space_filling_ordering}
Suppose that $h$ is a GFF on either a Jordan domain $D$ or $D = \C$; if $D = \C$ then $h$ is viewed as a distribution defined up to a global multiple of $2\pi \chi$.  Fix $z,w \in D$ distinct and let $\eta_z^L,\eta_w^L$ be the flow lines of $h$ starting from $z,w$, respectively, with angle $\tfrac{\pi}{2}$.  We order $z$ and $w$ by declaring that $w$ comes before $z$ if $\eta_w^L$ merges into $\eta_z^L$ on its right side, as shown.  Equivalently, if we let $\eta_z^R,\eta_w^R$ be the flow lines of $h$ starting from $z,w$, respectively, with angle $-\tfrac{\pi}{2}$, we declare that $w$ comes before $z$ if $\eta_w^R$ merges into the left side of $\eta_z^R$.  This is the ordering on points used to generate space-filling $\SLE_{\kappa'}$.
}
\end{figure}

For each $n$, $\eta_n^L \cup \eta_n^R$ divides $D$ into two parts:
\begin{enumerate}[(i)]
\item those points in complementary components whose boundary consists of a segment of either the right side of $\eta_n^L$ or the left side of $\eta_n^R$ (and possibly also an arc of $\partial D$) and
\item those points in complementary components whose boundary consists of a segment of either the left side of $\eta_n^L$ or the right side of $\eta_n^R$ (and possibly also an arc of $\partial D$).
\end{enumerate}
This in turn induces a total ordering on the $(z_n)$ where we say that $z_m$ comes before $z_n$ for $m \neq n$ if $z_m$ is on the side described by (i).  A {\bf space-filling $\SLE_{\kappa'}(\rho_1;\rho_2)$} is an almost surely non-self-crossing space-filling curve from $y$ to $x$ that visits the points $(z_n)$ according to this order and, when parameterized by $\log$ conformal radius (resp.\ capacity), as seen from any point $z \in D$ (resp.\ $z \in \partial D$) is almost surely equal to the counterflow line of $h$ starting from $y$ and targeted at $z$.

\begin{theorem}
\label{thm::space_filling_sle_existence}
Suppose that $h$ is a GFF on a Jordan domain $D \subseteq \C$ and $x,y \in \partial D$ are distinct.  Fix $\kappa' > 4$.  If the boundary data for $h$ is as described in the preceding paragraph, then space-filling $\SLE_{\kappa'}(\rho_1;\rho_2)$ from $y$ to $x$ exists and is well-defined: its law does not depend on the choice of countable dense set $(z_n)$.  Moreover, $h$ almost surely determines the path and the path almost surely determines $h$.
\end{theorem}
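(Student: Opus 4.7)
The plan is to construct the space-filling curve $\eta'$ as a refinement of the branching tree of counterflow lines targeted at the points of $(z_n)$, and then verify its defining properties. For each $n$, let $\eta_n^L$ (resp.\ $\eta_n^R$) be the flow line of $h$ from $z_n$ with angle $\tfrac{\pi}{2}$ (resp.\ $-\tfrac{\pi}{2}$), stopped upon its first hit of the appropriate boundary arc at the correct winding (i.e., the multiple of $2\pi\chi$ corresponding to the outer boundary of $\eta'$ itself, as in the discussion preceding the theorem). By Theorem~\ref{thm::merge_cross}, flow lines of the same angle from distinct points almost surely merge without crossing, and their relative positions are controlled by Theorem~\ref{thm::commutation} and Theorem~\ref{thm::flow_line_interaction}. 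This yields, almost surely, a total order on $(z_n)$ as in Figure~\ref{fig::space_filling_ordering}: $z_m$ precedes $z_n$ if and only if $\eta_m^L$ merges into the right side of $\eta_n^L$. Theorem~\ref{thm::duality_and_light_cones}, which identifies $\eta_n^L \cup \eta_n^R$ as the outer boundary of the counterflow line targeted at $z_n$, shows that this is equivalent to $\eta_m^R$ merging into the left side of $\eta_n^R$. Transitivity and antisymmetry follow by drawing, in any order, the flow lines associated with three points and invoking Theorem~\ref{thm::commutation}.

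Next I would construct $\eta'$ itself. For each $n$ let $\eta_n'$ be the counterflow line of $h$ from $y$ targeted at $z_n$; by the target-independence of the coupling of $\SLE_{\kappa'}(\rho_1;\rho_2)$ with the GFF from earlier papers in this series, the family $\{\eta_n'\}$ forms a branching tree in which $\eta_m'$ and $\eta_n'$ agree as parameterized curves until $z_m$ and $z_n$ are first separated, after which each branches into its respective complementary component. Theorem~\ref{thm::duality_and_light_cones} identifies the range of $\eta_n'$ (stopped on reaching $z_n$) as the closure of the set of points that precede $z_n$ in our order. To produce the space-filling curve, I would parameterize by area: on each branch in the tree, the area swept out is a continuous, strictly increasing function of the capacity parameterization, so the branches can be consistently reparameterized and concatenated in the order determined above. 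The resulting object $\eta'$ visits $(z_n)$ in that order, and when reparameterized by $\log$ conformal radius (resp.\ capacity) seen from any $z \in D$ (resp.\ $z\in\partial D$) it agrees with the counterflow line targeted at $z$.

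The main obstacle will be the continuity of $\eta'$ under the area parameterization, equivalently the construction of a continuous $\overline C$-filling loop for each complementary component $C$ swallowed by the ordinary counterflow line from $y$ to $x$. By Theorem~\ref{thm::duality_conditional}, the conditional law of $\eta'$ inside $C$ is an independent chordal $\SLE_{\kappa'}(\tfrac{\kappa'}{2}-4;\tfrac{\kappa'}{2}-4)$ process from the swallowing point to itself, which is precisely the boundary-filling critical variant. For $\kappa' \geq 8$ this is ordinary $\SLE_{\kappa'}$, continuous by standard results; for $\kappa' \in (4,8)$ one applies the same construction recursively inside $C$ (with the GFF boundary data inherited from the conditional field in $C$) to obtain a continuous space-filling curve there. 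To pass from pointwise consistency to a continuous curve on all of $\overline D$, I would use uniform tightness estimates --- in particular a bound on the maximal diameter of complementary components swallowed in a short time window, obtained from moment estimates on $\SLE_{\kappa'}(\rho)$ hitting probabilities as in \cite{MS_IMAG}. Independence of the countable dense set $(z_n)$ is then automatic, since the order and the recursive filling loops extend unambiguously to every point of $D$.

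Finally, the mutual measurability of $\eta'$ and $h$ follows from previously established results. Given $h$, each flow line $\eta_n^L, \eta_n^R$ is determined by Theorem~\ref{thm::uniqueness}, hence so is the order on $(z_n)$; each counterflow line $\eta_n'$ is likewise determined by $h$ via the light cone description in Theorem~\ref{thm::duality_and_light_cones}, and the recursive construction inside complementary components preserves measurability, so $\eta'$ is a deterministic function of $h$. Conversely, from $\eta'$ one reads off the pair $(\eta_z^L,\eta_z^R)$ at every $z \in D$ as the left and right outer boundaries of $\eta'$ at the moment it first reaches $z$, and Theorem~\ref{thm::field_determined_by_tree} then shows that the resulting collection of flow lines with angle $\tfrac{\pi}{2}$ from a countable dense set determines $h$.
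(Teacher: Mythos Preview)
Your setup of the total order and the mutual measurability argument are essentially what the paper does.  The real content of the theorem, however, is the continuity of the area-parameterized curve, and here your argument has a genuine gap.

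Your plan for continuity is to observe that inside each component $C$ swallowed by the ordinary counterflow line, the space-filling curve is (conditionally) a space-filling $\SLE_{\kappa'}(\tfrac{\kappa'}{2}-4;\tfrac{\kappa'}{2}-4)$, and then to apply ``the same construction recursively.''  For $\kappa' \in (4,8)$ this is circular: chordal $\SLE_{\kappa'}(\tfrac{\kappa'}{2}-4;\tfrac{\kappa'}{2}-4)$ is boundary-filling but \emph{not} space-filling, so inside $C$ you again have infinitely many swallowed sub-components that must themselves be filled, and so on through infinitely many levels.  Nothing in your outline controls the diameters of components across all levels simultaneously, and the appeal to ``uniform tightness estimates'' and ``moment estimates on $\SLE_{\kappa'}(\rho)$ hitting probabilities'' does not address this: the obstacle is not the counterflow line's hitting behavior but rather uniform control over how quickly the angle-$\pm\tfrac{\pi}{2}$ flow lines from nearby points merge.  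Without such control you cannot conclude that the piecewise-defined curve (or any approximating sequence) is Cauchy.

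The paper's proof takes a completely different route and avoids recursion altogether.  It approximates $\eta'$ by piecewise-linear paths through $z_{\sigma_n(0)},\ldots,z_{\sigma_n(n)}$ with segment durations equal to pocket areas, and reduces continuity to showing that the maximal pocket diameter $\max_k d_k^n \to 0$.  This in turn is bounded by the maximal diameter of the flow-line segments $\eta_k^L,\eta_k^R$ before they merge into some other $\eta_j^L,\eta_j^R$.  The key input is a quantitative merging estimate (Proposition~\ref{prop::merge_into_net}): for a whole-plane GFF, the probability that a flow line from $z_0$ travels through $n$ concentric annuli of scale $K\epsilon$ without merging into some flow line started from the $\epsilon$-grid is at most $e^{-Cn}$.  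This is proved by an iterative scheme (Lemmas~\ref{lem::make_good_pockets_ideal}--\ref{lem::good_pocket_merge}) showing that at each scale there is a uniformly positive conditional chance of merging.  The estimate is then transferred from the whole plane to the bounded domain $D$ via a conditioning argument: one realizes the interior of a pocket bounded by four whole-plane flow lines as (absolutely continuous with respect to) a GFF on a Jordan domain with the right boundary data, first for $\rho_1=\rho_2=\tfrac{\kappa'}{2}-4$, and then extends to general $\rho_i \in (-2,\tfrac{\kappa'}{2}-2)$ by further conditioning on auxiliary boundary flow lines and using the already-established reversibility to cover the full range.
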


The final statement of Theorem~\ref{thm::space_filling_sle_existence} is really a corollary of Theorem~\ref{thm::field_determined_by_tree} because $\eta'$ determines the tree of flow lines with angle $\tfrac{\pi}{2}$ and with angle $-\tfrac{\pi}{2}$ starting from the points $(z_n)$ and vice-versa.  The ordering of the points $(z_n)$ in the definition of space-filling $\SLE_{\kappa'}$ can be equivalently constructed as follows (see Figure~\ref{fig::space_filling_ordering}).  From points $z_m$ and $z_n$ for $m \neq n$, we send the flow lines of $h$ with the same angle $\tfrac{\pi}{2}$, say $\eta_m^L,\eta_n^L$.  If $\eta_m^L$ hits $\eta_n^L$ on its right side, then $z_m$ comes before $z_n$ and vice-versa otherwise.  The ordering can similarly be constructed with the angle $\tfrac{\pi}{2}$ replaced by $-\tfrac{\pi}{2}$ and the roles of left and right swapped.  The time change used to get an ordinary $\SLE_{\kappa'}$ process targeted at a given point, parameterized either by $\log$ conformal radius or capacity depending on whether the target point is in the interior or boundary, from a space-filling $\SLE_{\kappa'}$, parameterized by area, is a monotone function which changes values at a set of times which almost surely has Lebesgue measure zero.  When $\kappa=2$ so that $\kappa'=8$, the final statement of Theorem~\ref{thm::space_filling_sle_existence} can be thought of as describing the limit of the coupling of the uniform spanning tree with its corresponding Peano curve \cite{LSW04}.

The conformal loop ensembles $\CLE_\kappa$ for $\kappa \in (8/3,8)$ are the loop version of $\SLE$ \cite{SHE_CLE,SHE_WER_CLE}.  As a consequence of Theorem~\ref{thm::space_filling_sle_existence}, we obtain the local finiteness of the $\CLE_{\kappa'}$ processes for $\kappa' \in (4,8)$.  (The corresponding result for $\kappa \in (8/3,4]$ is proved in \cite{SHE_WER_CLE} using the relationship between $\CLE$s and loop-soups.)

\begin{theorem}
\label{thm::cle_locally_finite}
Fix $\kappa' \in (4,8)$ and let $D$ be a (bounded) Jordan domain.  Let $\Gamma$ be a $\CLE_{\kappa'}$ process in $D$.  Then $\Gamma$ is almost surely locally finite.  That is, for every $\epsilon > 0$, the number of loops of $\Gamma$ which have diameter at least $\epsilon$ is finite almost surely.
\end{theorem}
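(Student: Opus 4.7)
The plan is to realize the $\CLE_{\kappa'}$ loops as area-filling excursions of a continuous space-filling curve from Theorem~\ref{thm::space_filling_sle_existence}, so that local finiteness reduces to uniform continuity of that curve on a finite time interval.

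First, I would recall that for $\kappa' \in (4,8)$, the $\CLE_{\kappa'}$ in a Jordan domain $D$ can be constructed via a branching $\SLE_{\kappa'}(\kappa'-6)$ exploration, where each $\CLE_{\kappa'}$ loop arises as the outer boundary of a region swallowed by a branch of the exploration tree. Since $\kappa' \in (4,8)$ gives $\kappa'-6 \in (-2, \kappa'/2 - 2)$, there is a choice of piecewise constant GFF boundary data on $D$ for which the counterflow line from one boundary prime end to another is an $\SLE_{\kappa'}(\kappa'-6; \kappa'-6)$ process hitting both arcs of $\partial D$, and the branches of this exploration (targeted at various points) generate precisely the $\CLE_{\kappa'}$ loop collection.

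Next, I invoke Theorem~\ref{thm::space_filling_sle_existence} to obtain the associated space-filling $\SLE_{\kappa'}(\kappa'-6;\kappa'-6)$ curve $\wt{\eta}' \colon [0,\area(D)] \to \ol{D}$, parameterized so that $t$ equals the area traversed. By the construction described after the statement of Theorem~\ref{thm::space_filling_sle_existence}, $\wt{\eta}'$ is obtained from the branching exploration by splicing in, at each swallowing time of a complementary region $C$, an area-filling loop that traces $\ol{C}$. Consequently, each $\CLE_{\kappa'}$ loop $\ell$ corresponds to a unique time interval $[s_\ell, t_\ell] \subseteq [0,\area(D)]$ during which $\wt{\eta}'$ fills the closed region enclosed by $\ell$; in particular $\ell \subseteq \wt{\eta}'([s_\ell, t_\ell])$, and the intervals corresponding to distinct loops are pairwise disjoint.

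Granting this identification, the conclusion is a short continuity argument. The final assertion of Theorem~\ref{thm::space_filling_sle_existence} (combined with the continuity of space-filling $\SLE_{\kappa'}(\ul{\rho})$ established there) gives that $\wt{\eta}'$ is a.s.\ continuous, hence uniformly continuous on the compact interval $[0,\area(D)]$. Given $\epsilon > 0$, choose $\delta > 0$ so that $|\wt{\eta}'(t) - \wt{\eta}'(s)| < \epsilon/2$ whenever $|t-s| < \delta$. If $\ell$ is a loop with $\diam(\ell) \geq \epsilon$, then $\diam(\wt{\eta}'([s_\ell, t_\ell])) \geq \epsilon$, which forces $t_\ell - s_\ell \geq \delta$. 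Since the intervals $[s_\ell, t_\ell]$ are disjoint subsets of $[0,\area(D)]$, the number of such loops is at most $\area(D)/\delta < \infty$.

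The main obstacle is the identification step: one must verify carefully that Sheffield's branching exploration construction of $\CLE_{\kappa'}$ agrees with the branching structure encoded by the space-filling $\SLE_{\kappa'}(\kappa'-6;\kappa'-6)$ of Theorem~\ref{thm::space_filling_sle_existence}, so that every $\CLE_{\kappa'}$ loop does correspond to a positive-length area-interval of $\wt{\eta}'$. Concretely, one compares two ways of extracting the branch targeted at a given interior point $z \in D$: the radial $\SLE_{\kappa'}(\kappa'-6)$ branch in Sheffield's construction, and the branch obtained from $\wt{\eta}'$ by restricting to the times at which its tip is harmonically exposed to $z$ (and then time-changing to capacity as seen from $z$). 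Once these are shown to agree in law (and to generate the same collection of loops around $z$) --- which follows from the coupling of Theorem~\ref{thm::alphabeta_counterflow} together with Theorem~\ref{thm::duality_and_light_cones} applied to the target point $z$ --- the continuity argument above yields local finiteness.
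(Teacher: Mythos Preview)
Your approach is essentially the paper's: couple the $\CLE_{\kappa'}$ exploration tree with a GFF, identify its branches with the branches of the associated space-filling $\SLE_{\kappa'}(\rho)$ (via Theorem~\ref{thm::space_filling_sle_existence}), and then read off local finiteness from the continuity of the area-parameterized space-filling curve. The paper's argument is slightly terser---it simply observes that the space-filling curve traces each loop boundary in a disjoint time interval, so infinitely many loops of diameter $\geq \epsilon$ would contradict continuity---whereas you spell out the uniform-continuity/area-bound step explicitly; the substance is the same.

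One correction: the relevant process is space-filling $\SLE_{\kappa'}(\kappa'-6)$ with a \emph{single} boundary force point (located at $y^-$ or $y^+$), not $\SLE_{\kappa'}(\kappa'-6;\kappa'-6)$. The $\CLE_{\kappa'}$ exploration tree of \cite{SHE_CLE} corresponds to the boundary data listed just before Section~\ref{subsec::duality} (items~\eqref{item::cle_left}--\eqref{item::cle_right}), which gives an asymmetric one-force-point process; this still falls in the admissible range $(-2,\tfrac{\kappa'}{2}-2)$ for Theorem~\ref{thm::space_filling_sle_existence} (taking $\rho_1=\kappa'-6$, $\rho_2=0$). With that fix, your identification sketch matches what the paper invokes directly.
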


As we will explain in more detail in Section~\ref{sec::duality_space_filling}, Theorem~\ref{thm::cle_locally_finite} follows from the almost sure continuity of space-filling $\SLE_{\kappa'}(\kappa'-6)$ because this process traces the boundary of all of the loops in a $\CLE_{\kappa'}$ process.

\subsubsection{Time-reversals of ordinary/space-filling $\SLE_{\kappa}(\rho_1; \rho_2)$}


\begin{figure}[h!]
\begin{center}
\begin{ficomment}
\subfigure[$\SLE_{128}$]{\includegraphics[width=0.48\textwidth]{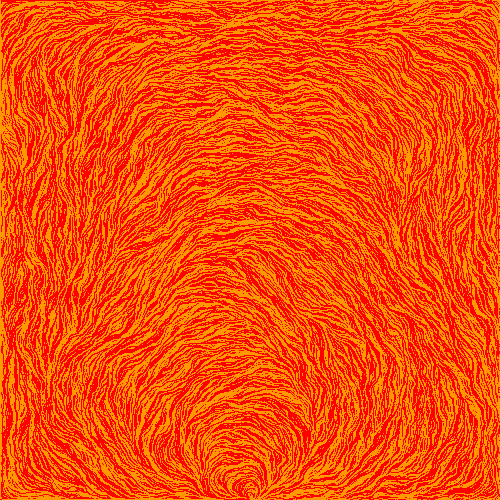}}
\hspace{0.02\textwidth}
\subfigure[$\SLE_{128}(30;30)$]{\includegraphics[width=0.48\textwidth]{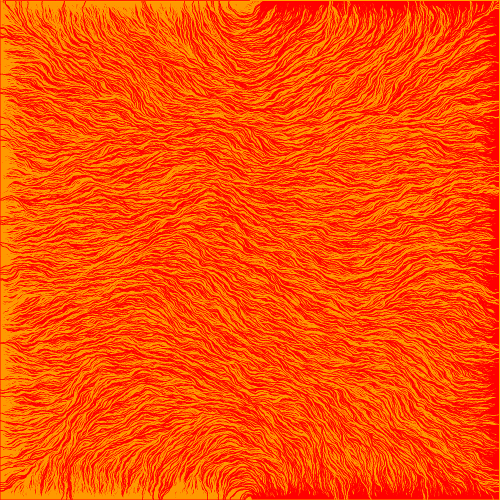}}
\subfigure[$\SLE_{128}$]{\includegraphics[width=0.48\textwidth]{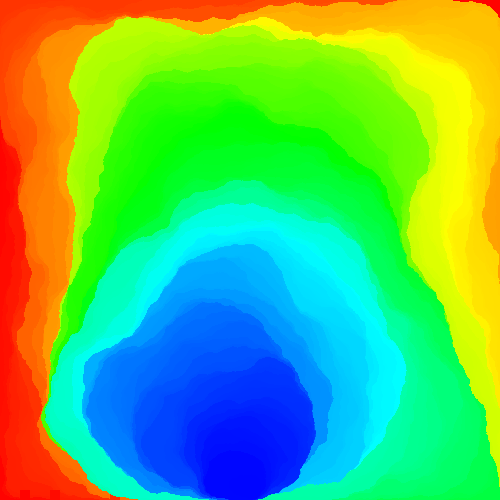}}
\hspace{0.02\textwidth}
\subfigure[$\SLE_{128}(30;30)$]{\includegraphics[width=0.48\textwidth]{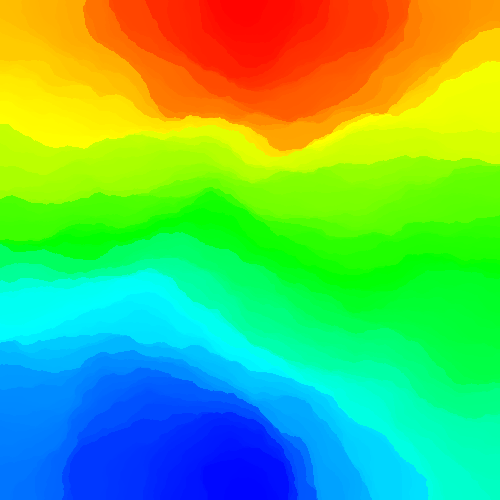}}
\end{ficomment}
\end{center}
\caption{\label{fig::bigger_than_8_reversible} \small{The simulations show the indicated $\SLE$ process in $[-1,1]^2$ from $i$ to $-i$.  The top two show the left and right boundaries in red and yellow of the process as it traverses $[-1,1]^2$ and the bottom two indicate the time parameterization of the path, as in Figure~\ref{fig::space_filling} and Figure~\ref{fig::space_filling2}.  The time-reversal of an $\SLE_\kappa$ process for $\kappa > 8$ is not an $\SLE_\kappa$ process \cite{RS_REVERSE}.  This can be seen in the simulation on the left side because the process is obviously asymmetric in its start and terminal points.  The process on the right side appears to have time-reversal symmetry and we prove this to be the case in Theorem~\ref{thm::bigger_than_8_reversibility}.}}
\end{figure}

\begin{figure}[h!]
\begin{center}
\includegraphics[scale=0.85]{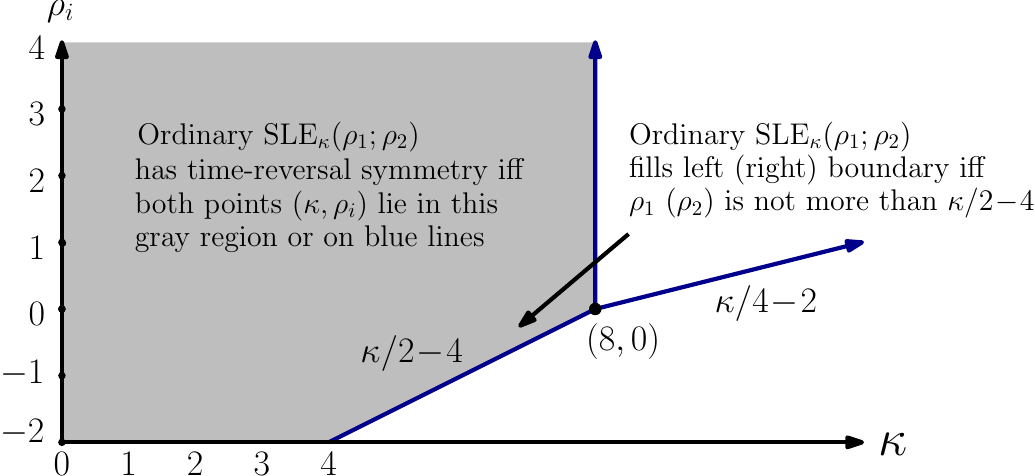}
\end{center}
\caption{ \label{fig::sfrd}  Ordinary chordal $\SLE_{\kappa}(\rho_1; \rho_2)$ is well-defined for all $\kappa \geq 0$ and $\rho_1, \rho_2 > -2$.  It has time-reversal symmetry if and only if both $\rho_1$ and $\rho_2$ belong to the region shaded in gray or lie along the blue lines.  That is, either $\kappa \leq 4$ and $\rho_i > -2$ or $\kappa \in (4,8]$ and $\rho_i \geq \tfrac{\kappa}{2}-4$ or $\kappa > 8$ and $\rho_i = \tfrac{\kappa}{4}-2$.  The threshold $\tfrac{\kappa}{2}-4$ is where the path becomes boundary filling: if $\rho_1 \leq \tfrac{\kappa}{2}-4$ (resp.\ $\rho_2 \leq \tfrac{\kappa}{2}-4$) then the path fills the left (resp.\ right) arc of the boundary which connects the initial and terminal points of the process.  If $\rho_1=\rho_2=\tfrac{\kappa}{4}-2$ then the law of the outer boundary of the path stopped upon hitting any fixed boundary point $w$ is invariant under the anti-conformal map of the domain which fixes $w$ and swaps the initial and terminal points of the path.  These reversibility results were shown for $\kappa \leq 8$ in \cite{MS_IMAG, MS_IMAG2, MS_IMAG3} (see also \cite{Z_R_KAPPA,Z_R_KAPPA_RHO,DUB_DUAL})  and will be established for $\kappa > 8$ here.  The time-reversal of an ordinary $\SLE_\kappa$ for $\kappa > 8$ is not an $\SLE_\kappa$ process, it is an $\SLE_\kappa(\tfrac{\kappa}{2}-4;\tfrac{\kappa}{2}-4)$ process.
}
\end{figure}

\begin{figure}[h!]
\begin{center}
\includegraphics[scale=0.85]{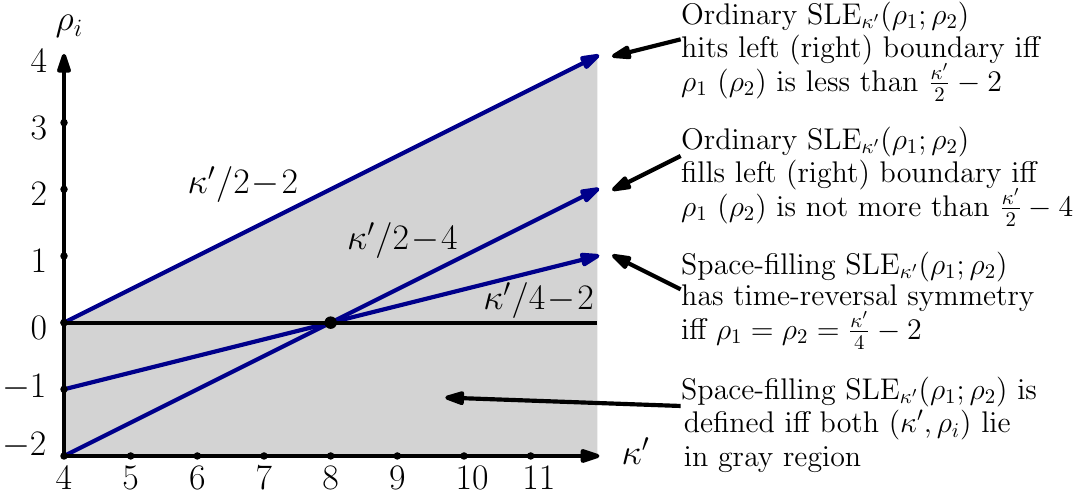}
\end{center}
\caption{\label{fig::sfrd2}  Fully space-filling $\SLE_{\kappa'}(\rho_1;\rho_2)$ can only be defined when $\kappa' > 4$ and $\rho_1, \rho_2 \in (-2, \tfrac{\kappa'}{2}-2)$ (gray region).  The upper boundary ($\tfrac{\kappa'}{2} - 2$ line) is necessary because ordinary $\SLE_{\kappa'}(\rho_1;\rho_2)$ only hits the left (resp.\ right) boundary when $\rho_1$ (resp.\ $\rho_2$) is strictly less than $\tfrac{\kappa'}{2}-2$.  Space-filling $\SLE_{\kappa'}(\rho_1; \rho_2)$ can be formed from an ordinary $\SLE_{\kappa'}(\rho_1; \rho_2)$ path $\eta$ by splicing in a space-filling curve that fills each component of $\h \setminus \eta$ after that component is swallowed by $\eta$; when $\rho_i \geq \tfrac{\kappa'}{2}-2$ some points are never swallowed by $\eta$ in finite time, so this extension does not reach these points before reaching $\infty$.  When $\rho_1 = \rho_2 = \tfrac{\kappa'}{4}-2$, the path has the same law as its time-reversal (up to parameterization).  Recall Figure~\ref{fig::sfrd} for the significance of these $\rho$ values.  In general, the time-reversal is a space-filling $\SLE_{\kappa'}(\wt \rho_2; \wt \rho_1)$ where $\wt \rho_i = \tfrac{\kappa'}{2}-4 - \rho_i$ is the reflection of $\rho_i$ about $\tfrac{\kappa'}{4}-2$.  The $\tfrac{\kappa'}{2} - 4$ line is the boundary filling threshold: ordinary $\SLE_{\kappa'}(\rho_1; \rho_2)$ fills the entire boundary if and only if $\rho_i \leq \tfrac{\kappa'}{2} - 4$ for $i \in \{1,2\}$.  When this is the case, space-filling  $\SLE_{\kappa'}(\rho_1; \rho_2)$ has the property that the {\em first} hitting times of boundary points occur in order; i.e., the path in $\h$ starting from $0$ never hits a point $x \in \R$ before filling the interval between $0$ and $x$.  The $\rho_i = 0$ line is the reflection of the $\tfrac{\kappa'}{2}-4$ line about the $\tfrac{\kappa'}{4}-2$ line: when $\rho_i \geq 0$ for $i \in \{1,2 \}$ the {\em last} hitting times of boundary points occur in order; i.e., the path never revisits a boundary point after visiting a later boundary point on the same axis.
}
\end{figure}

We say that a random path $\eta$ in a simply-connected Jordan domain $D$ from $x$ to $y$ for $x,y \in \partial D$ distinct has {\bf time-reversal symmetry} if its image under any anti-conformal map $D \to D$ which swaps $x$ and $y$ and run in the reverse direction has the same law as $\eta$ itself, up to reparameterization.  In this article, we complete the characterization of the chordal $\SLE_{\kappa}(\rho_1; \rho_2)$ processes that have time-reversal symmetry, as we explain in the caption of Figure~\ref{fig::sfrd}.  As explained earlier, throughout we generally use the symbol $\kappa$ for values less than $4$ and $\kappa' = 16/\kappa$ for values greater than $4$.  We violate this convention in a few places (such as Figure~\ref{fig::sfrd}) when we want to make a statement that applies to all $\kappa \geq 0$.  The portion of Figure~\ref{fig::sfrd} that is new to this article is the following:

\begin{theorem}
\label{thm::bigger_than_8_reversibility}
When $\kappa' > 8$, ordinary $\SLE_{\kappa'}(\rho_1;\rho_2)$ has time-reversal symmetry if and only if $\rho_1 = \rho_2 = \tfrac{\kappa'}{4}-2$.
\end{theorem}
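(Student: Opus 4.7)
The proof combines two inputs developed earlier in the paper. The first, corresponding to Figure~\ref{fig::sfrd2}, is the reversibility of the full space-filling process: the time-reversal of space-filling $\SLE_{\kappa'}(\rho_1;\rho_2)$ in $\h$ from $0$ to $\infty$ is space-filling $\SLE_{\kappa'}(\wt{\rho}_2;\wt{\rho}_1)$ from $\infty$ to $0$, where $\wt{\rho}_i = \tfrac{\kappa'}{2}-4-\rho_i$. This is obtained from the GFF coupling of Theorem~\ref{thm::space_filling_sle_existence} together with Theorem~\ref{thm::whole_plane_duality}, since the space-filling path is a deterministic function of the field (Theorem~\ref{thm::field_determined_by_tree}) and the field has an obvious symmetry under the anti-conformal involution swapping the endpoints. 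The second input is that when $\kappa' > 8$ and $\rho_1, \rho_2 \in (-2, \tfrac{\kappa'}{2}-4]$, ordinary and space-filling $\SLE_{\kappa'}(\rho_1;\rho_2)$ coincide up to monotone reparameterization: $\kappa' \geq 8$ forces unweighted $\SLE_{\kappa'}$ to be space-filling, while $\rho_i \leq \tfrac{\kappa'}{2}-4$ is the boundary-filling threshold, so no complementary component is ever swallowed with positive probability and the splicing step in the space-filling construction is vacuous.

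For the ``if'' direction, assume $\rho_1 = \rho_2 = \tfrac{\kappa'}{4}-2$. Since $\tfrac{\kappa'}{4}-2 < \tfrac{\kappa'}{2}-4$ whenever $\kappa' > 8$, these parameters lie strictly in the boundary-filling regime, so ordinary equals space-filling and the reversibility formula applies. Direct substitution gives $\wt{\rho}_i = \tfrac{\kappa'}{2}-4-(\tfrac{\kappa'}{4}-2) = \tfrac{\kappa'}{4}-2 = \rho_i$, so the reversed path has law $\SLE_{\kappa'}(\tfrac{\kappa'}{4}-2;\tfrac{\kappa'}{4}-2)$ from $\infty$ to $0$. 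Postcomposing with the anti-conformal involution $\phi(z) = -1/\bar z$, which swaps $0 \leftrightarrow \infty$ and the two sides of $\R$ (hence interchanges the two equal force points), produces a process with the same law as $\eta$, yielding time-reversal symmetry.

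For the ``only if'' direction, suppose $\eta$ is time-reversal symmetric. If both $\rho_i \leq \tfrac{\kappa'}{2}-4$, then ordinary equals space-filling, and a direct computation using $\phi(z) = -1/\bar z$ shows that $\infty^L$ maps to $0^+$ while $\infty^R$ maps to $0^-$; consequently $\phi$ applied to the reversed process $\SLE_{\kappa'}(\wt{\rho}_2;\wt{\rho}_1)$ yields $\SLE_{\kappa'}(\wt{\rho}_1;\wt{\rho}_2)$ from $0$ to $\infty$. Equating this in law with $\SLE_{\kappa'}(\rho_1;\rho_2)$ forces $\rho_i = \wt{\rho}_i$, i.e.\ $\rho_i = \tfrac{\kappa'}{4}-2$, for each $i$. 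In the remaining case that $\rho_i > \tfrac{\kappa'}{2}-4$ for some $i$, ordinary and space-filling $\SLE_{\kappa'}(\rho_1;\rho_2)$ genuinely differ. Here I would use the flow-line description of the left and right outer boundaries of $\eta$ stopped at the first time $\tau_w$ at which $\eta$ separates a fixed $w \in \R \setminus \{0\}$ from $\infty$; these boundaries are specific $\SLE_\kappa$-variants whose laws are explicit functions of $(\rho_1,\rho_2)$ via the GFF coupling of Theorem~\ref{thm::duality_and_light_cones} and the boundary-emanating analogues from \cite{MS_IMAG}. Time-reversal symmetry of $\eta$ forces these boundary laws to match the laws of the corresponding outer boundaries of the reversed process; combined with the reversibility of boundary-emanating flow lines established in \cite{MS_IMAG2, MS_IMAG3}, this produces an overdetermined system on the boundary data that again requires $\rho_i = \tfrac{\kappa'}{4}-2$, contradicting $\rho_i > \tfrac{\kappa'}{2}-4$.

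The principal difficulty is the space-filling reversibility input itself; once it is available, the argument above is largely a conformal-geometry bookkeeping exercise. The only other technical point is the verification that ordinary and space-filling $\SLE_{\kappa'}(\rho_1;\rho_2)$ truly agree as parameterized curves (not merely as set-valued processes) in the boundary-filling regime when $\kappa' > 8$, which follows from a standard Bessel-type analysis of the boundary force points together with the classical space-filling property of unweighted $\SLE_{\kappa'}$ for $\kappa' \geq 8$.
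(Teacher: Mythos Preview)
Your approach matches the paper's: Theorem~\ref{thm::bigger_than_8_reversibility} is presented there simply as a special case of Theorem~\ref{thm::space_filling_reversibility}, using that for $\kappa'>8$ and $\rho_i\le\tfrac{\kappa'}{2}-4$ the ordinary and space-filling processes coincide, together with the outer-boundary necessary condition discussed just before the theorem for the remaining parameter range. A couple of small points: your justification of the space-filling reversibility input misattributes it to Theorem~\ref{thm::whole_plane_duality}; in the paper it comes directly from the symmetry of the flow-line ordering in the strip (Figure~\ref{fig::spacefillingboundarydata}), and your treatment of the case $\rho_i>\tfrac{\kappa'}{2}-4$ is only a sketch (as is the paper's), so you should be aware that a complete argument there requires invoking the chordal duality of \cite{MS_IMAG} and the reversibility of \cite{MS_IMAG2} rather than Theorem~\ref{thm::duality_and_light_cones}, which concerns interior target points.
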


The $\rho$ values from Theorem~\ref{thm::bigger_than_8_reversibility} are significant because for any fixed boundary point $w$, the law of the outer boundary of an $\SLE_{\kappa'}(\rho_1;\rho_2)$ stopped upon hitting $w$ is invariant under the anti-conformal map which swaps the initial and terminal points of the path and fixes $w$ if and only if $\rho_1=\rho_2=\tfrac{\kappa'}{2}-4$.  This is a necessary condition for time-reversal symmetry and Theorem~\ref{thm::bigger_than_8_reversibility} implies that it is also sufficient.  The time-reversal of an ordinary $\SLE_{\kappa'}$ process for $\kappa' > 8$ is not itself an $\SLE_{\kappa'}$ process, though the time-reversal is a certain $\SLE_{\kappa'}(\rho_1;\rho_2)$ process.  In particular, the result stated just below implies that it is an $\SLE_{\kappa'}(\tfrac{\kappa'}{2}-4;\tfrac{\kappa'}{2}-4)$ process.  The value $\tfrac{\kappa'}{2}-4$ is significant because it is the critical threshold at or below which an $\SLE_{\kappa'}$ process is boundary filling.  It was shown in \cite{MS_IMAG3} that if $\kappa' \in (4,8)$ and at least one of the $\rho_i$ is strictly below $\tfrac{\kappa'}{2}-4$, then the time-reversal of ordinary $\SLE_{\kappa'}(\rho_1 ; \rho_2)$ is not an $\SLE_{\kappa'}(\wt \rho_1; \wt \rho_2)$ process for any values of $\wt \rho_1, \wt \rho_2$.  The story is quite different if one considers space-filling $\SLE_{\kappa'}(\rho_1; \rho_2)$, as illustrated in Figure~\ref{fig::sfrd2}, and formally stated as Theorem~\ref{thm::space_filling_reversibility} below.  (The other thresholds mentioned in the figure caption are standard Bessel process observations; see e.g.\ \cite{MS_IMAG3} for more discussion.)  Theorem~\ref{thm::bigger_than_8_reversibility} is a special case of Theorem~\ref{thm::space_filling_reversibility}.

\begin{theorem}
\label{thm::space_filling_reversibility}
The time reversal of a space-filling $\SLE_{\kappa'}(\rho_1;\rho_2)$ for $\rho_1,\rho_2 \in (-2,\tfrac{\kappa'}{2}-2)$ is a space-filling $\SLE_{\kappa'}(\wt \rho_2; \wt \rho_1)$ where $\wt \rho_i = \tfrac{\kappa'}{2}-4 - \rho_i$ is the reflection of $\rho_i$ about $\tfrac{\kappa'}{4}-2$.
\end{theorem}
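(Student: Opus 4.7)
The plan is to exploit the characterization of space-filling $\SLE_{\kappa'}(\rho_1;\rho_2)$ given by Theorem~\ref{thm::space_filling_sle_existence}: the path is (up to monotone reparameterization) a deterministic function of the GFF $h$ on $\h$ with boundary data $\lambda'(1+\rho_1)$ on $\R_-$ and $-\lambda'(1+\rho_2)$ on $\R_+$, via its flow line tree of angles $\pm\tfrac{\pi}{2}$.  I would fix an anti-conformal involution $\psi\colon\h\to\h$ swapping the endpoints $0$ and $\infty$, let $\wt h$ be obtained from $h$ by the version of the coordinate-change rule \eqref{eqn::ac_eq_rel} appropriate for an anti-conformal map (so that the flow line structure of $h$ is transported to a flow line structure of $\wt h$), and show that the image under $\psi$ of the time-reversal of $\eta'$ coincides, as a space-filling curve, with the one associated to $\wt h$.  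Identifying the boundary data of $\wt h$ then realizes this reversed path as space-filling $\SLE_{\kappa'}(\wt\rho_2;\wt\rho_1)$.

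The argument proceeds in four main steps.  First, by Theorem~\ref{thm::duality_and_light_cones} (and its interior-target variant), for every interior point $z$ the two arcs of the outer boundary of $\eta'$ stopped when it first hits $z$ are the flow lines of $h$ from $z$ of angles $\pm\tfrac{\pi}{2}$; this is what lets the flow line tree be reconstructed from $\eta'$ and vice versa.  Second, apply whole-plane reversibility for $\SLE_{\kappa}(\rho)$ processes with $\kappa\in(0,4)$, established earlier in the paper, to conclude that under time-reversal composed with $\psi$ each such flow line of $h$ becomes a flow line of $\wt h$ with an appropriately related angle (the $\tfrac{\pi}{2}$ and $-\tfrac{\pi}{2}$ angle trees get interchanged, which is consistent with the anti-conformal $\psi$ swapping left and right).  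Third, invoke Theorem~\ref{thm::conditional_law} and Theorem~\ref{thm::duality_conditional} to describe the conditional law of $\eta'$ inside each complementary pocket of its flow line tree as chordal $\SLE_{\kappa'}(\tfrac{\kappa'}{2}-4;\tfrac{\kappa'}{2}-4)$, which is time-reversal symmetric by \cite{MS_IMAG3}; this makes the pocket-level reversal automatic and compatible with the tree-level one.  Fourth, use the uniqueness part of Theorem~\ref{thm::space_filling_sle_existence} to conclude that the full reversed path is the space-filling curve associated to $\wt h$, and compute the boundary data of $\wt h$ by applying the transformation rule together with the identities $\lambda=\pi/\sqrt\kappa$, $\lambda'=\pi\sqrt\kappa/4$, $\chi=2/\sqrt\kappa-\sqrt\kappa/2$, and $\pi\chi=\lambda'(\tfrac{\kappa'}{2}-2)$; the heights on $\R_\mp$ shift by $\pm\pi\chi$ relative to what one naively gets from pulling back, so that the image boundary data is exactly $\lambda'(1+\wt\rho_2)$ on $\R_-$ and $-\lambda'(1+\wt\rho_1)$ on $\R_+$ with $\wt\rho_i=\tfrac{\kappa'}{2}-4-\rho_i$.

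The principal obstacle lies in the interaction between the tree-level and pocket-level reversibilities.  Tree-level reversibility is a global symmetry of the skeleton of flow lines, while pocket-level reversibility is local to each complementary component and depends on the coupling of the pocket-filling paths to the field $h$.  To combine them one must verify that reversing the order in which pockets are traversed (dictated by the flow line tree) together with independently reversing each pocket's internal $\SLE_{\kappa'}(\tfrac{\kappa'}{2}-4;\tfrac{\kappa'}{2}-4)$ reconstructs the entire time-reversal of $\eta'$ as a continuous space-filling curve.  This should follow from the fact, inherent in Theorem~\ref{thm::space_filling_sle_existence}, that the curve is determined by the flow line tree together with the conditional pocket-filling laws, but carrying this out rigorously requires careful handling of the measurable structure on the (countable but unordered) set of pockets, a coupling argument showing that the pocket-filling reversals can be performed simultaneously, and a continuity argument at the boundary of each pocket to ensure the spliced reversal is itself an a.s.\ continuous space-filling path.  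Once compatibility is established, the identification $\wt\rho_i=\tfrac{\kappa'}{2}-4-\rho_i$ follows from the height-shift computation described above.
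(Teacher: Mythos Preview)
Your approach is substantially more elaborate than the paper's, and it invokes machinery that is both unnecessary and not quite available in the form you need.

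The paper does not use an anti-conformal involution, does not use whole-plane reversibility of $\SLE_\kappa(\rho)$, and does not use the chordal reversibility of $\SLE_{\kappa'}(\tfrac{\kappa'}{2}-4;\tfrac{\kappa'}{2}-4)$ from \cite{MS_IMAG3}.  Instead it observes that the reversibility is already built into the \emph{definition} of the space-filling curve via the flow-line ordering.  Working on the strip $\vstrip=[-1,1]\times\R$ with constant boundary values $a$ on the left and $b$ on the right (Figure~\ref{fig::spacefillingboundarydata}), one can run a counterflow line from $-\infty$ to $+\infty$ (an $\SLE_{\kappa'}(\rho_1;\rho_2)$ with $\rho_1=a/\lambda'+(\tfrac{\kappa'}{4}-2)$, $\rho_2=-b/\lambda'+(\tfrac{\kappa'}{4}-2)$) or from $+\infty$ to $-\infty$ (an $\SLE_{\kappa'}(\wt\rho_1;\wt\rho_2)$ with $\wt\rho_i=\tfrac{\kappa'}{2}-4-\rho_i$).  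The key observation, spelled out in the caption of Figure~\ref{fig::spacefillingboundarydata}, is that the angles of the flow lines describing the outer boundaries of these two counterflow lines are the \emph{same} $\pm\tfrac{\pi}{2}$ flow lines of the \emph{same} field $h$.  Hence the two space-filling extensions are built from the identical flow-line tree; the reverse ordering (``$w$ before $z$'' becomes ``$z$ before $w$'') is precisely the ordering that defines the space-filling curve in the opposite direction.  Thus once existence and continuity (Theorem~\ref{thm::space_filling_sle_existence}) are proved, reversibility is immediate; the $\rho$-to-$\wt\rho$ relation is read off from \eqref{eqn::forward_counterflow_rho}--\eqref{eqn::reverse_counterflow_rho}.

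Your route has two concrete problems.  First, the ``whole-plane reversibility for $\SLE_\kappa(\rho)$ with $\kappa\in(0,4)$'' you want to invoke is Theorem~\ref{thm::whole_plane_reversibility}, which is proved in Section~\ref{sec::timereversal}, \emph{after} the space-filling results; it is not ``established earlier in the paper.''  Second, and more seriously, Theorem~\ref{thm::whole_plane_reversibility} concerns whole-plane processes from $0$ to $\infty$, whereas the flow lines $\eta_z^L,\eta_z^R$ in the space-filling construction start at interior points $z$ and terminate on $\partial D$; their time-reversals are boundary-to-interior paths, and the whole-plane statement does not directly apply.  The ``principal obstacle'' you identify---gluing tree-level and pocket-level reversibilities into a single continuous curve---is a genuine difficulty of your approach, but the paper's argument sidesteps it entirely: there is nothing to glue, because both directions of the space-filling curve are defined from the same tree and the same field, with only the ordering reversed.
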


\subsubsection{Whole-plane time-reversal symmetry}

The final result we state concerns the time-reversal symmetry of whole-plane $\SLE_\kappa(\rho)$ processes for $\kappa \in (0,8]$.

\begin{theorem}
\label{thm::whole_plane_reversibility}
Suppose that $\eta$ is a whole-plane $\SLE_\kappa(\rho)$ process from $0$ to $\infty$ for $\kappa \in (0,4)$ and $\rho > -2$.  Then the time-reversal of $\eta$ is a whole-plane $\SLE_\kappa(\rho)$ process from $\infty$ to $0$.  If $\eta$ is a whole-plane $\SLE_{\kappa'}(\rho)$ process for $\kappa' \in (4,8]$ and $\rho \geq \tfrac{\kappa'}{2}-4$, then the time-reversal of $\eta$ is a whole-plane $\SLE_{\kappa'}(\rho)$ process from $\infty$ to $0$.
\end{theorem}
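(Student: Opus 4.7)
The plan is to prove both parts of the theorem via the GFF couplings established in Theorem~\ref{thm::alphabeta} and Theorem~\ref{thm::alphabeta_counterflow}, combined with the chordal reversibility results of \cite{MS_IMAG, MS_IMAG2, MS_IMAG3}. For the first statement ($\kappa \in (0,4)$, $\rho > -2$), I realize $\eta$ as the flow line starting at $0$ of $h_\alpha = h - \alpha \arg(\cdot)$, with $\alpha$ chosen so that $\rho = 2 - \kappa + 2\pi \alpha/\lambda$ per Theorem~\ref{thm::alphabeta}, and $h$ a whole-plane GFF defined modulo $2\pi(\chi+\alpha)$. The aim is to show that the time-reversal $\wt \eta$, pulled back via $w \mapsto 1/w$ using the coordinate-change rule \eqref{eqn::ac_eq_rel}, is again a flow line of the transformed field of the same type, and hence has the law of whole-plane $\SLE_\kappa(\rho)$ from $\infty$ to $0$.

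The core technical step is a bi-resampling characterization in the spirit of \cite{MS_IMAG2}. For angles $\theta_1 < \theta_2$ in $[0, 2\pi(1 + \alpha/\chi))$, I draw both flow lines $\eta_1, \eta_2$ of $h_\alpha$ from $0$. By Theorem~\ref{thm::conditional_law}, the conditional law of $\eta_2$ given $\eta_1$ is chordal $\SLE_\kappa(\rho^L; \rho^R)$ in each complementary component of $\C \setminus \eta_1$, and symmetrically for $\eta_1$ given $\eta_2$, with weights determined by $\theta_2 - \theta_1$ and $\alpha$. Since $\kappa \in (0,4)$, all such chordal $\SLE_\kappa(\rho^L;\rho^R)$ processes with $\rho^L, \rho^R > -2$ are reversible \cite{MS_IMAG, MS_IMAG2}. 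I then argue that the joint law of $(\eta_1, \eta_2)$ is uniquely characterized by these two conditional laws, by adapting the bi-resampling/uniqueness argument of \cite{MS_IMAG2} to two interior-emanating paths in the whole plane, using Theorem~\ref{thm::transience} to control asymptotics at $\infty$ and Theorem~\ref{thm::merge_cross} to rule out anomalous crossings. Applying the $w \mapsto 1/w$ inversion moves the conical singularity from $0$ to $\infty$ and, since the whole-plane GFF (modulo $2\pi(\chi+\alpha)$) is invariant in law under the induced transformation, maps $(\eta_1, \eta_2)$ to a pair of paths from $\infty$ to $0$ whose reverses satisfy the same conditional-law identities, thanks to chordal reversibility within each complementary component. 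By the characterization, the joint laws agree; then taking $\theta_2 \to \theta_1$ so that $\eta_2$ merges into $\eta_1$ (Theorem~\ref{thm::flow_line_interaction}) transfers the symmetry to the single flow line $\eta$.

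For the second assertion, $\kappa' \in (4,8]$ and $\rho \geq \kappa'/2 - 4$, I would bootstrap using Theorem~\ref{thm::whole_plane_duality}. The outer boundary of the whole-plane $\SLE_{\kappa'}(\rho)$ counterflow line $\eta$ consists of two flow lines $\eta^L, \eta^R$ of angles $\pm\pi/2$ started at $0$ (or, after inversion, at $\infty$); by the first part and its pairwise extension proved as above, $(\eta^L, \eta^R)$ is time-reversal symmetric. Conditioned on $(\eta^L, \eta^R)$, the restriction of $\eta$ to each complementary pocket is an independent chordal $\SLE_{\kappa'}(\kappa'/2 - 4; \kappa'/2 - 4)$, which is reversible by \cite{MS_IMAG3}. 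Splicing together the reversed outer boundary with the reversed pocket fillings, one reconstructs the time-reverse of $\eta$ with the law of whole-plane $\SLE_{\kappa'}(\rho)$ from $\infty$ to $0$.

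The hardest step will be the uniqueness in the bi-resampling characterization used in the second paragraph. In the whole-plane setting with a $\arg$ singularity, there is no boundary to anchor the paths as in \cite{MS_IMAG2}, so uniqueness must be extracted from transience at $\infty$ together with the local pivoting supplied by Theorem~\ref{thm::flow_line_interaction}; additional care will be required for $\rho$ near the critical value $-2$ (equivalently $\alpha$ near $-\chi$), where the weights $\rho^L, \rho^R$ produced by Theorem~\ref{thm::conditional_law} approach the boundary of the reversible regime and some approximation/continuity argument will be needed to pass to the limit.
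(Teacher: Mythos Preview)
Your overall architecture matches the paper's, but there is a genuine gap in the $\kappa\in(0,4)$ step: the bi-resampling characterization you invoke is \emph{not} unique in the whole-plane setting. In \cite{MS_IMAG2} the two chordal paths share fixed boundary endpoints, which anchors the Gibbs-type argument; here there is no boundary, and the family $\mu_{\alpha\beta}$ (varying $\beta\in\R$, i.e.\ adding $-\beta\log|\cdot|$) all satisfy the same pair of conditional laws \eqref{eqn::conditional_rho_values}. The paper makes this explicit in Theorem~\ref{thm::unique_law_for_two_whole_plane_paths}: any law with the correct resampling properties is a mixture $\int \mu_{\alpha\beta}\,d\nu(\beta)$, not a single $\mu_{\alpha 0}$. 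So after you apply $z\mapsto 1/z$ and invoke chordal reversibility componentwise, you only learn that the reversed pair lies somewhere in this one-parameter family; you have not shown $\nu=\delta_0$. The paper closes this gap with a separate deterministic invariant (Proposition~\ref{prop::beta_determined}): the drift $\beta$ can be read off almost surely from the asymptotic winding number of the path around $0$, via a twisting estimate (Lemma~\ref{lem::winding_twisting}), and this winding count is manifestly preserved under time-reversal and inversion. Without an argument of this type your proof is incomplete.

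Two smaller remarks. First, the limit ``$\theta_2\to\theta_1$ so that $\eta_2$ merges into $\eta_1$'' is unnecessary and a bit muddled: once the \emph{joint} law of $(\eta_1,\eta_2)$ is reversal-invariant, the marginal of $\eta_1$ is automatically reversal-invariant --- no limiting is needed (the paper simply fixes $\theta=\pi(1+\alpha/\chi)$). Second, your worry about $\rho$ near $-2$ is misplaced: the weights $\rho^L,\rho^R$ in Theorem~\ref{thm::conditional_law} stay strictly above $-2$ for every $\alpha>-\chi$ and every $\theta\in(0,2\pi(1+\alpha/\chi))$, so chordal reversibility from \cite{MS_IMAG2} applies directly without approximation.

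Your $\kappa'\in(4,8]$ argument is essentially the paper's: duality (Theorem~\ref{thm::whole_plane_duality}) plus the $\kappa\in(0,4)$ case for the boundary pair $(\eta^L,\eta^R)$, then \cite{MS_IMAG3} for the $\SLE_{\kappa'}(\tfrac{\kappa'}{2}-4;\tfrac{\kappa'}{2}-4)$ fillings, with a limit for the endpoint $\rho=\tfrac{\kappa'}{2}-4$. That part is fine once the $\kappa\in(0,4)$ case is fixed.
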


Our proof of Theorem~\ref{thm::whole_plane_reversibility} also implies the time-reversal symmetry of a flow line $\eta$ of $h_{\alpha \beta}$ as in Theorem~\ref{thm::alphabeta} with $D = \C$ for any choice of $\beta \in \R$.  These processes are variants of whole-plane $\SLE_\kappa(\rho)$ for $\kappa \in (0,4)$ and $\rho > -2$ in which one adds a constant drift to the driving function, which leads to spiraling, as illustrated in Figure~\ref{fig::spiral_fan}.  The proof also implies the reversibility of similar variants of whole-plane $\SLE_{\kappa'}(\rho)$ for $\rho \geq \tfrac{\kappa'}{2}-4$ as in Theorem~\ref{thm::alphabeta_counterflow} and illustrated in Figure~\ref{fig::spiral_lightcone}.  See Remark~\ref{rem::reversible_beta} in Section~\ref{sec::timereversal}.

\begin{remark}
\label{rem::bigger_than_8_whole_plane}
When $\kappa' > 8$, the time-reversal of a whole-plane $\SLE_{\kappa'}(\rho)$ process $\eta'$ for $\rho \geq \tfrac{\kappa'}{2}-4$ is not an $\SLE_{\kappa'}(\wt{\rho})$ process for any value of $\wt{\rho}$ (what follows will explain why this is the case).  We can nevertheless describe its time-reversal in terms of whole-plane GFF flow lines and chordal $\SLE_{\kappa'}(\rho_1;\rho_2)$ processes.  Indeed, by Theorem~\ref{thm::duality_and_light_cones}, the left and right boundaries of $\eta'$ are given by a pair of GFF flow lines $\eta^L,\eta^R$ with angles $\tfrac{\pi}{2}$ and $-\tfrac{\pi}{2}$, respectively.  By Theorem~\ref{thm::whole_plane_reversibility}, we know that $\eta^L$ has time-reversal symmetry.  Moreover, by Theorem~\ref{thm::conditional_law}, we know that the law of $\eta^R$ given $\eta^L$ is a chordal $\SLE_\kappa(\rho_1;\rho_2)$ process independently in the connected components of $\C \setminus \eta^L$.  Consequently, we know that the time-reversal of $\eta^R$ given the time-reversal of $\eta^L$ is independently an $\SLE_\kappa(\rho_2;\rho_1)$ process in each of the connected components of $\C \setminus \eta^L$ by the main result of \cite{MS_IMAG2}.  Conditionally on $\eta^L,\eta^R$, Theorem~\ref{thm::duality_and_light_cones} gives us that the law of $\eta'$ is that of a chordal $\SLE_{\kappa'}(\tfrac{\kappa'}{2}-4;\tfrac{\kappa'}{2}-4)$ in each of the components of $\C \setminus (\eta^L \cup \eta^R)$ part of whose boundary is traced by the right side of $\eta^L$ and the left side of $\eta^R$.  By Theorem~\ref{thm::space_filling_reversibility}, we thus know that the time-reversal of $\eta'$ given the time-reversals of $\eta^L$ and $\eta^R$ is independently that of an ordinary chordal $\SLE_{\kappa'}$ process in each of the components of $\C \setminus (\eta^L \cup \eta^R$).  This last statement proves our claim that the time-reversal is not a whole-plane $\SLE_{\kappa'}(\wt{\rho})$ process for any value of $\wt{\rho}$ because the conditional law of the time-reversal of $\eta'$ given its outer boundary is not that of an $\SLE_{\kappa'}(\tfrac{\kappa'}{2}-4;\tfrac{\kappa'}{2}-4)$.
\end{remark}

\begin{remark}
We will not treat the case that $\eta'$ is a whole-plane $\SLE_{\kappa'}(\rho)$ with $\kappa' > 4$ and $\rho < \tfrac{\kappa'}{2}-4$, because it does not fit into the framework of this paper as naturally.  It is not hard to show that in this case the lifting of $\eta'$ to the universal cover of $\C \setminus \{0\}$ has left and right boundaries that (when projected back to $\C$) actually coincide with each other, so that $\eta^L = \eta^R$.  However (in contrast to the $\rho \geq \tfrac{\kappa'}{2}-4$ case described above) the $\eta^L = \eta^R$ path is {\em not} an ordinary flow line in $\C \setminus \{0\}$ from $\infty$ to $0$.  Rather, it is an angle-varying flow line, alternating between two different angles.  (Using the notation of~\eqref{eqn::sle_radial_equation}, the points on $\eta^L = \eta^R$ are the points hit at times when $O_t = W_t$. Those hit when $W_t$ collides with $O_t$ from the right lie on flow lines of one angle. Those hit when $W_t$ is collides with $O_t$ from the left lie on flow lines of a different angle.)  This angle-varying flow line is not a local set, and the conditional law of $\eta'$ given $\eta^L = \eta^R$ is somewhat complicated.
\end{remark}

\section{Preliminaries}
\label{sec::preliminaries}

This section has three parts.  First, in Section~\ref{subsec::SLEoverview}, we will give an overview of the different variants of $\SLE$ and $\SLE_\kappa(\rho)$ (chordal, radial, and whole-plane) that will be important throughout this article.  We will in particular show how the continuity of whole-plane and radial $\SLE_\kappa(\rho)$ processes for all $\kappa >0$ and $\rho > -2$ can be extracted from the results of \cite{MS_IMAG}.  Next, in Section~\ref{subsec::gff}, we will give a brief overview of the whole-plane GFF.  Finally, in Section~\ref{subsec::imaginary}, we will review the aspects of the theory of boundary emanating GFF flow lines developed in \cite{MS_IMAG} which will be relevant for this article.

\subsection{$\SLE_\kappa(\rho)$ processes}
\label{subsec::SLEoverview}

$\SLE_\kappa$ is a one-parameter family of conformally invariant random growth processes introduced by Oded Schramm in \cite{S0} (which were proved to be generated by random curves by Rohde and Schramm in \cite{RS05}).  In this subsection, we will give a brief overview of three types of $\SLE$: chordal, radial, and whole-plane.  More detailed introductions to $\SLE$ can be found in many excellent surveys of the subject, e.g., \cite{W03, LAW05}.

\subsubsection{Chordal $\SLE_\kappa(\rho)$}

Chordal $\SLE_\kappa(\ul{\rho})$ in $\h$ targeted at $\infty$ is the growth process $(K_t)$ associated with the random family of conformal maps $(g_t)$ obtained by solving the Loewner ODE
\begin{equation}
\label{eqn::loewner_ode}
\partial_t g_t(z) = \frac{2}{g_t(z) - W_t},\ \ \ g_0(z) = z
\end{equation}
where $W$ is taken to be the solution to the SDE
\begin{equation}
\label{eqn::sle_kappa_rho_eqn}
\begin{split}
dW_t &= \sqrt{\kappa} dB_t + \sum_{i} \re \left( \frac{\rho^{i}}{W_t- V_t^i} \right) dt \\
dV_t^i &= \frac{2}{V_t^i - W_t} dt,\ \ \ V_0^i = z^i.
\end{split}
\end{equation}
The compact set $K_t$ is given by the closure of the complement of the domain of $g_t$ in $\h$ and $g_t$ is the unique conformal transformation $\h \setminus K_t \to \h$ satisfying $g_t(z) = z+o(1)$ as $z \to \infty$.  The points $z^i \in \ol{\h}$ are the \emph{force points} of $W$ and the $\rho^i \in \R$ are the \emph{weights}.  When $z^i \in \R$ (resp.\ $z^i \in \h$), $z^i$ is said to be a boundary (resp.\ interior) force point.  It is often convenient to organize the $z^i$ into groups $z^{i,L}, z^{i,R}, z^{i,I}$ where the superscripts $L,R,I$ indicate whether the point is to the left or right of $0$ in $\R$ or in $\h$, respectively, and we take the $z^{i,L}$ (resp.\ $z^{i,R}$) to be given in decreasing (resp.\ increasing) order.  We also group the weights $\rho^{i,L},\rho^{i,R},\rho^{i,I}$ and time evolution of the force points $V^{i,L}, V^{i,R}, V^{i,I}$ under the Loewner flow accordingly.  The existence and uniqueness of solutions to~\eqref{eqn::sle_kappa_rho_eqn} with only boundary force points is discussed in \cite[Section~2]{MS_IMAG}.  It is shown that there is a unique solution to~\eqref{eqn::sle_kappa_rho_eqn} until the first time $t$ that $W_t = V_t^{j,q}$ where $\sum_{i=1}^j \rho^{i,q} \leq -2$ for either $q = L$ or $q = R$.  We call this time the {\bf continuation threshold}.  In particular, if $\sum_{i=1}^j \rho^{i,q} > -2$ for all $1 \leq j \leq |\ul{\rho}^q|$ for $q \in \{L,R\}$ (where we use the notation $|\ul{\rho}^q|$ for the number of elements in the vector $\ul{\rho}^q$), then~\eqref{eqn::sle_kappa_rho_eqn} has a unique solution for all times $t$.  This even holds when one or both of $z^{1,L} = 0^-$ or $z^{1,R} = 0^+$ hold.  The almost sure continuity of the $\SLE_\kappa(\ul{\rho})$ trace with only boundary force points is proved in \cite[Theorem~1.3]{MS_IMAG}.  It thus follows from the Girsanov theorem \cite{KS98} that~\eqref{eqn::sle_kappa_rho_eqn} has a unique solution and the growth process associated with the Loewner evolution in~\eqref{eqn::loewner_ode} is almost surely generated by a continuous path, even in the presence of interior force points, up until either the continuation threshold or when an interior force point is swallowed.

\subsubsection{Radial $\SLE_\kappa^\mu(\rho)$}
\label{subsec::radial_sle}

A radial $\SLE_\kappa$ in $\D$ targeted at $0$ is the random growth process $(K_t)$ in $\D$ starting from a point on $\partial \D$ growing towards $0$ which is described by the random family of conformal maps $(g_t)$ which solve the radial Loewner equation:
\begin{equation}
\label{eqn::radial_loewner} \partial_t g_t(z) = g_t(z) \frac{W_t + g_t(z)}{W_t-g_t(z)},\quad g_0(z) = z.
\end{equation}
Here, $W_t = e^{i \sqrt{\kappa} B_t}$ where $B_t$ is a standard Brownian motion; $W$ is referred to as the driving function for the radial Loewner evolution.  The set $K_t$ is the complement of the domain of $g_t$ in $\D$ and $g_t$ is the unique conformal transformation $\D \setminus K_t \to \D$ fixing $0$ with $g_t'(0) > 0$.  Time is parameterized by the logarithmic conformal radius as viewed from $0$ so that $\log g_t'(0) = t$ for all $t \geq 0$.  As in the chordal setting, radial $\SLE_\kappa^\mu(\rho)$ is a generalization of radial $\SLE$ in which one keeps track of one extra marked point.  To describe it, following \cite{SCHRAMM_WILSON} we let
\[ \Psi(w,z) = -z \frac{z+w}{z-w} \quad\text{and}\quad \wt{\Psi}(z,w) = \frac{\Psi(z,w) + \Psi(1/\ol{z},w)}{2}\]
and
\[ \CG^\mu(W_t,dB_t,dt) = \left( i \kappa \mu - \frac{\kappa}{2} \right)W_t dt + i\sqrt{\kappa} W_t dB_t.\]
We say that a pair of processes $(W,O)$, each of which takes values in $\s^1$, solves the radial $\SLE_\kappa^\mu(\rho)$ equation for $\rho,\mu \in \R$ with a single boundary force point of weight $\rho$ provided that
\begin{equation}
\label{eqn::sle_radial_equation}
\begin{split}
dW_t &= \CG^\mu(W_t,dB_t,dt) + \frac{\rho}{2} \wt{\Psi}(O_t,W_t) dt\\
dO_t &= \Psi(W_t,O_t) dt.
\end{split}
\end{equation}
A radial $\SLE_\kappa^\mu(\rho)$ is the growth process corresponding to the solution $(g_t)$ of~\eqref{eqn::radial_loewner} when $W$ is taken to be as in~\eqref{eqn::sle_radial_equation}.  We will refer to a radial $\SLE_\kappa^0(\rho)$ process simply a radial $\SLE_\kappa(\rho)$ process.  In this section, we are going to collect several facts about radial $\SLE_\kappa^\mu(\rho)$ processes which will be useful for us in Section~\ref{subsec::existence} when we prove the existence of the flow lines of the GFF emanating from an interior point.

Throughout, it will often be useful to consider the SDE
\begin{equation}
\label{eqn::theta_equation}
 d\theta_t = \left(\frac{\rho+2}{2} \cot(\theta_t/2) + \kappa \mu\right) dt + \sqrt{\kappa} dB_t, \quad \theta_t \in [0,2\pi]
\end{equation}
where $B$ is a standard Brownian motion.  This SDE can be derived by taking a solution $(W,O)$ to~\eqref{eqn::sle_radial_equation} and then setting $\theta_t = \arg W_t - \arg O_t$ (see \cite[Equation~(4.1)]{SHE_CLE} for the case $\rho=\kappa-6$ and $\mu=0$).  One can easily see that~\eqref{eqn::theta_equation} has a unique solution which takes values in $[0,2\pi]$ if $\rho > -2$.  Indeed, we first suppose that $\mu = 0$.  A straightforward expansion implies that $1/x - \cot(x)$ is bounded for $x$ in a neighborhood of zero (and in fact tends to zero as $x \to 0$).  When $\theta_t$ is close to zero, this fact and the Girsanov theorem \cite{KS98} imply that its evolution is absolutely continuous with respect to $\sqrt{\kappa}$ times a Bessel process of dimension $d(\rho,\kappa) = 1+\tfrac{2(\rho+2)}{\kappa} > 1$ and, similarly, when $\theta_t$ is close to $2\pi$, the evolution of $2\pi-\theta_t$ is absolutely continuous with respect to $\sqrt{\kappa}$ times a Bessel process, also of dimension $d(\rho,\kappa)$.  The existence for $\mu \neq 0$ follows by noting that, by the Girsanov theorem \cite{KS98}, its law is equal to that of $\theta$ with $\mu = 0$ reweighted by $\exp(\mu \sqrt{\kappa} B_t - \mu^2 \kappa t/2)$ where $B$ is the Brownian motion driving $\theta$.

The existence and uniqueness of solutions to~\eqref{eqn::sle_radial_equation} can be derived from the existence and uniqueness of solutions to~\eqref{eqn::theta_equation}.  Another approach to this for $\mu=0$ is to use \cite[Theorem~3]{SCHRAMM_WILSON} to relate~\eqref{eqn::sle_radial_equation} to a chordal $\SLE_\kappa(\ul{\rho})$ driving process with a single interior force point and then to invoke the results of \cite[Section~2]{MS_IMAG} and the Girsanov theorem \cite{KS98}.  Pathwise uniqueness can easily be seen by considering two solutions $\theta^1,\theta^2$ coupled together to be driven by the same Brownian motion and then analyzing the process $\ol{\theta} = \theta^1 - \theta^2$.  In particular, if $\theta^1 < \theta^2$, then $\ol{\theta}$ moves (deterministically) upwards and if $\theta^1 > \theta^2$ then $\ol{\theta}$ moves (deterministically) downwards.  So, it must be the case that $\theta^1$ and $\theta^2$ eventually meet and do not subsequently separate.  We remark that one can consider radial $\SLE_\kappa^\mu(\ul{\rho})$ processes with many boundary force points $\ul{\rho}$ as in \cite{SCHRAMM_WILSON}, though we will not consider the more general case in this article.  We now turn to prove the existence of a unique stationary solution to~\eqref{eqn::sle_radial_equation}.

\begin{proposition}
\label{prop::sle_kappa_rho_stationary}
Suppose that $\rho > -2$ and $\mu \in \R$.  There exists a unique stationary solution $(\wt{W}_t,\wt{O}_t)$ for $t \in \R$ to~\eqref{eqn::sle_radial_equation}.  If $(W_t,O_t)$ is any solution to~\eqref{eqn::sle_radial_equation} and $\Theta_t$ is the shift operator $\Theta_t f(s) = f(s+t)$, then the law of $(\Theta_T W,\Theta_T O)$ converges to $(\wt{W},\wt{O})$ weakly with respect to the topology of local uniform convergence on continuous functions $\R \to \s^1 \times \s^1$ as $T \to \infty$.
\end{proposition}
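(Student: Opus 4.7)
The plan is to reduce the two-dimensional SDE to the scalar diffusion \eqref{eqn::theta_equation} for the relative angle $\theta_t := \arg(W_t/O_t)\in[0,2\pi]$, analyze that 1D diffusion, and then reconstruct the joint law using the rotational symmetry of \eqref{eqn::sle_radial_equation}.

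First I would verify that, on $|z|=|w|=1$, one has $\Psi(W,O) = -i\,O\cot(\theta/2)$ and that $\wt\Psi(O,W)$ depends only on $\theta$, so the $O$--equation is the pathwise ODE $dO_t = -iO_t\cot(\theta_t/2)\,dt$ and $\theta$ satisfies \eqref{eqn::theta_equation}. Thus once $\theta$ and $O_0$ are specified, the pair $(W_t,O_t) = (O_t e^{i\theta_t}, O_t)$ is determined, and the system is equivariant under the diagonal $\s^1$-rotation $(W,O)\mapsto(e^{i\phi}W,e^{i\phi}O)$.

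Next, I would analyze \eqref{eqn::theta_equation} as a Feller diffusion on $[0,2\pi]$. The Bessel comparison noted just before the proposition shows that $0$ and $2\pi$ are entrance-only boundaries (since $d(\rho,\kappa)>1$), and solving the stationary Fokker--Planck equation gives the unique invariant probability density $\nu(d\theta)\propto \sin^{2(\rho+2)/\kappa}(\theta/2)\,e^{2\mu\theta}\,d\theta$, integrable precisely because $\rho>-2$. Irreducibility on $(0,2\pi)$ plus the Feller and strong Markov properties yield total variation convergence to $\nu$ from every starting point and, by Kolmogorov extension from the transition semigroup, a unique two-sided stationary process $\wt\theta$ with marginal $\nu$. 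Setting $\wt O_0$ uniform on $\s^1$ independent of $\wt\theta$, extending $\wt O$ to $t\in\R$ via the above ODE, and putting $\wt W_t = \wt O_t e^{i\wt\theta_t}$ produces a stationary solution of \eqref{eqn::sle_radial_equation}, since a time-shift rotates $\wt O_0$ by a measurable functional of $\wt\theta$ and this leaves the uniform law unchanged. For uniqueness, any stationary solution has $\theta$-marginal $\nu$, and averaging its joint law over the diagonal rotation group produces a stationary measure whose $O_0$-marginal must then be uniform on $\s^1$; the deterministic recipe $(W,O) = (Oe^{i\theta},O)$ now forces the full law to agree with the one built above.

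For convergence of shifts, ergodicity of $\theta$ gives $s_T\theta \to \wt\theta$ weakly on each compact time window (couple to a stationary $\wt\theta$ at time $-T$ and use the Markov property; tightness is automatic from the boundedness of coefficients). The hard part will be showing simultaneously that $\arg O_T$ equidistributes on $\s^1$ and becomes asymptotically independent of the surrounding $\theta$. I would handle this by viewing $(O,\theta)$ as a hypoelliptic diffusion on the compact manifold $\s^1\times[0,2\pi]$: H\"ormander's condition holds because the bracket $[\partial_\theta,\cot(\theta/2)\partial_{\arg O}]$ has a non-vanishing $\partial_{\arg O}$-component, so the semigroup is strong Feller; its unique invariant measure is $\mathrm{Haar}_{\s^1}\otimes\nu$ (any invariant measure must project to $\nu$ and be rotation-invariant by the symmetry). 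Convergence-to-equilibrium for strong-Feller irreducible Markov processes on compact state spaces then delivers $(s_TW,s_TO)\to(\wt W,\wt O)$ in the required topology.
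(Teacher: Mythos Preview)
Your approach has a real gap. The claim that $0$ and $2\pi$ are entrance-only for $\theta$ because $d(\rho,\kappa)>1$ is wrong: Bessel processes of dimension $d\in(1,2)$ hit the origin, so for $\rho\in(-2,\tfrac{\kappa}{2}-2)$ the process $\theta$ does reach the endpoints and is instantaneously reflected. More seriously, the H\"ormander argument for $(O,\theta)$ on $\s^1\times[0,2\pi]$ is not on firm ground: this state space has boundary, the drift vector field has a $\cot(\theta/2)$ singularity there, and in the range just mentioned the process actually visits the singular set, so the standard hypoellipticity theorem does not apply. Since your uniqueness paragraph only pins down the \emph{rotation-averaged} stationary law and defers full uniqueness to the H\"ormander step, this gap is load-bearing.

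The paper bypasses all of this with a short Doeblin-type coupling: for any initial condition and fixed $t>0$, the law of $(O_t,e^{i\theta_t})$ has a density on $\s^1\times\s^1$ that is bounded below by a positive constant on a fixed box, uniformly in the starting point. Hence any two solutions can be coupled to coalesce with uniformly positive probability in each unit time interval, yielding $\p[\,(W,O)|_{[T,\infty)}\neq(\wh W,\wh O)|_{[T,\infty)}\,]\leq c_1 e^{-c_2 T}$. Both uniqueness of the stationary law and weak convergence of shifts follow immediately from this exponential coupling, with no Fokker--Planck, no H\"ormander, and no boundary analysis required.
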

\begin{proof}
Suppose that $(W^i,O^i)$ for $i=1,2$ are two solutions to~\eqref{eqn::sle_radial_equation} starting from $W_0^i,O_0^i$ and let $\theta_t^i = \arg W_t^i - \arg O_t^i$.  Fix $\epsilon > 0$.  We will show that there exists $T > 0$ and a coupling of the laws of $(W^i,O^i)$ for $i=1,2$ so that the probability that
\[ \sup_{t \geq T} \left( | W_t^1 - W_t^2| + |O_t^1 - O_t^2| \right) \geq \epsilon\]
is at most $\epsilon$.  The result will follow upon showing that this is the case.  In order to prove this, it suffices to show that there is a coupling of the laws of $(W^i,O^i)$ for $i=1,2$ and $T \geq 0$ so that the probability of the event that
\[ \{\theta_T^1 \neq \theta_T^2\} \cup \left\{ |O_T^1 - O_T^2| \geq \epsilon \right\}\]
is at most $\epsilon$.  Indeed, on the complement of this event we can couple $\theta^1,\theta^2$ so that $\theta_t^1 = \theta_t^2$ for all $t \geq T$ by the uniqueness of solutions to~\eqref{eqn::theta_equation} established just above and we have that $|W_t^1 - W_t^2| = |O_t^1 - O_t^2| = |O_T^1 - O_T^2|< \epsilon$ for all $t \geq T$.

To show that this is true, we take $(W^i,O^i)$ for $i=1,2$ to be independent and fix $M > 0$ large.  Then in each time interval of the form $(2kM+1,(2k+1) M+1]$ for $k \in \N$ there is a positive chance that $\theta^1$ stays in $[\pi,\pi+\tfrac{\epsilon}{4}]$ and $\theta^2$ stays in $[\pi+\tfrac{\epsilon}{2},\pi+\epsilon]$ for the entire interval uniformly in the realization of the $(W^i,O^i)$ in the previous intervals.  If~$M$ is chosen sufficiently large relative to $\epsilon$ and this event occurs, then it follows from the evolution equation for $O_t^1,O_t^2$ that $O_t^1$ and $O_t^2$ will meet in such an interval and, at this time, $\theta_t^1$ and $\theta_t^2$ will differ by at most $\epsilon$.  Conditional on this happening, it is then a positive probability event that $\theta_t^1$ and $\theta_t^2$ will coalesce in a time which goes to zero in law as $\epsilon \to 0$ and, by this time, the distance between $O_t^1$ and $O_t^2$ will be bounded by a quantity that tends to zero in law as $\epsilon \to 0$.  It therefore follows that if $\tau$ is the first time $t$ that $\theta_t^1 = \theta_t^2$ and $|O_t^1 - O_t^2| \leq \epsilon$, then $\p[ \tau \geq t]$ decays exponentially fast in $t$ at a rate which depends only on $\epsilon$.  From this, the result follows.
\end{proof}

The following conformal Markov property is immediate from the definition of radial $\SLE_\kappa^\mu(\rho)$:

\begin{proposition}
\label{prop::radial_conf_markov} Suppose that $K_t$ is a radial $\SLE_\kappa^\mu(\rho)$ process, let $(g_t)$ be the corresponding family of conformal maps, and let $(W,O)$ be the driving process.  Let $\tau$ be any almost surely finite stopping time for $K_t$.  Then $g_\tau(K_t \setminus K_\tau)$ is a radial $\SLE_\kappa^\mu(\rho)$ process whose driving function $(\wt{W},\wt{O})$ has initial condition $(\wt{W}_0,\wt{O}_0) = (W_\tau,O_\tau)$.
\end{proposition}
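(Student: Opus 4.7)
\proofof{Proposition~\ref{prop::radial_conf_markov} (proposal)}
The plan is to read the assertion off from the time-homogeneity of the SDE system \eqref{eqn::sle_radial_equation} together with the strong Markov property of the driving Brownian motion $B$, and then transport this to the Loewner flow level via the chain rule.

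First, let $(\CF_t)$ denote the filtration generated by $B$ (equivalently, by $(W,O)$, since \eqref{eqn::sle_radial_equation} admits a unique strong solution for $\rho > -2$, as recorded after \eqref{eqn::theta_equation}). Since $\tau$ is an a.s.\ finite stopping time for $K_t$, it is a stopping time for $(W,O)$ and hence for $B$. The strong Markov property of Brownian motion gives that $\wt{B}_t := B_{\tau+t} - B_\tau$ is a standard Brownian motion independent of $\CF_\tau$. Because the drift coefficients in \eqref{eqn::sle_radial_equation}, namely $\CG^\mu$ and $\tfrac{\rho}{2}\wt{\Psi}(O,W)$ and $\Psi(W,O)$, have no explicit $t$-dependence, the shifted pair $(\wt{W}_t,\wt{O}_t):=(W_{\tau+t},O_{\tau+t})$ is the (pathwise unique) strong solution of \eqref{eqn::sle_radial_equation} driven by $\wt{B}$ with initial condition $(\wt{W}_0,\wt{O}_0)=(W_\tau,O_\tau)$. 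In particular, conditional on $\CF_\tau$, the law of $(\wt{W},\wt{O})$ is exactly that of a radial $\SLE_\kappa^\mu(\rho)$ driving process started from $(W_\tau,O_\tau)$.

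Next I would transfer this to the conformal maps. Define $\wt{g}_t := g_{\tau+t}\circ g_\tau^{-1}$ on $g_\tau(\D\setminus K_{\tau+t})$; by the chain rule applied to \eqref{eqn::radial_loewner},
\[
\partial_t \wt{g}_t(z) = \partial_t g_{\tau+t}(g_\tau^{-1}(z)) = g_{\tau+t}(g_\tau^{-1}(z))\,\frac{W_{\tau+t}+g_{\tau+t}(g_\tau^{-1}(z))}{W_{\tau+t}-g_{\tau+t}(g_\tau^{-1}(z))} = \wt{g}_t(z)\,\frac{\wt{W}_t+\wt{g}_t(z)}{\wt{W}_t-\wt{g}_t(z)},
\]
with $\wt{g}_0=\mathrm{id}$. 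Hence $(\wt{g}_t)$ is the radial Loewner chain with driving function $\wt{W}$, and its hull is precisely $g_\tau(K_{\tau+t}\setminus K_\tau)$ (this is the standard identification between a Loewner chain and its hulls). Combined with the previous paragraph, this shows that $g_\tau(K_{\tau+t}\setminus K_\tau)$ is a radial $\SLE_\kappa^\mu(\rho)$ process whose driving pair is $(\wt{W},\wt{O})$ with $(\wt{W}_0,\wt{O}_0)=(W_\tau,O_\tau)$, as claimed.

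There is no substantive obstacle here: the only thing one has to be slightly careful about is that the SDE \eqref{eqn::sle_radial_equation} is well-posed up to time $\tau+t$ for each $t$, i.e.\ that the radial $\SLE_\kappa^\mu(\rho)$ is defined for all $t\geq 0$ when $\rho>-2$, so that the shift is meaningful. This is exactly the existence statement referenced above \eqref{eqn::theta_equation} (absolute continuity with a Bessel process of dimension $d(\rho,\kappa)>1$), and for $\mu\neq 0$ it is obtained from the $\mu=0$ case by the Girsanov change of measure already used in the paper. Everything else is an immediate instance of the strong Markov property and the Loewner chain rule.
\QED
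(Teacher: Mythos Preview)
Your proof is correct and is exactly what the paper has in mind: the paper does not give a proof at all, declaring the result ``immediate from the definition of radial $\SLE_\kappa^\mu(\rho)$,'' and your argument simply spells out that immediacy via the strong Markov property for $B$, the time-homogeneity of \eqref{eqn::sle_radial_equation}, and the Loewner chain rule.
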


We will next show that radial $\SLE_\kappa^\mu(\rho)$ processes are almost surely generated by continuous curves by using \cite[Theorem~1.3]{MS_IMAG}, which gives the continuity of chordal $\SLE_\kappa(\rho)$ processes.

\begin{proposition}
\label{prop::radial_continuity}
Suppose that $K_t$ is a radial $\SLE_\kappa^\mu(\rho)$ process with $\rho > -2$ and $\mu \in \R$.  For each $T \in (0,\infty)$, we have that $K|_{[0,T]}$ is almost surely generated by a continuous curve.
\end{proposition}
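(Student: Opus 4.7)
The plan is to reduce to the chordal continuity result of \cite[Theorem 1.3]{MS_IMAG} in two Girsanov steps. First I would remove the drift parameter $\mu$: the SDE \eqref{eqn::sle_radial_equation} under $\SLE_\kappa^\mu(\rho)$ differs from the one under $\SLE_\kappa^0(\rho)$ only by a deterministic drift proportional to $\mu W_t\,dt$ in the Brownian driving term. The Girsanov theorem then gives that, on any finite interval $[0,T]$, the law of $(W,O)$ under $\SLE_\kappa^\mu(\rho)$ is absolutely continuous with respect to the law under $\SLE_\kappa^0(\rho)$, with Radon--Nikodym derivative $\exp(\mu\sqrt{\kappa} B_T - \tfrac{1}{2}\mu^2\kappa T)$. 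Since ``being generated by a continuous curve on $[0,T]$'' is a measurable event on path space, preserved under equivalence of laws, it suffices to treat the case $\mu = 0$.

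Second, I would use the Schramm--Wilson correspondence \cite[Theorem 3]{SCHRAMM_WILSON} to map radial $\SLE_\kappa(\rho)$ to a chordal problem. Fix a M\"obius transformation $\varphi : \D \to \h$ sending $W_0$ to $0$ and the target $0 \in \D$ to an interior point $z_0 \in \h$; after a time change by half-plane capacity, $(\varphi(K_t))$ is driven by a real-valued process $\wt{W}$ satisfying the chordal Loewner SDE \eqref{eqn::sle_kappa_rho_eqn} with a boundary force point at $\varphi(O_0)$ of weight $\rho$ together with an interior force point at $z_0$. For each $\epsilon > 0$, let $\tau_\epsilon$ denote the first time the conformal radius of the complement of $\varphi(K_\cdot)$ seen from $z_0$ drops below $\epsilon$. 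On $[0,\tau_\epsilon]$ the interior force point contribution $\re(\rho^I/(\wt W_t - V^{I}_t))$ is uniformly bounded by Koebe's $1/4$-theorem, so the Girsanov theorem (whose Novikov condition is then trivial) yields absolute continuity between the law of $\wt W$ stopped at $\tau_\epsilon$ and the law of a chordal $\SLE_\kappa(\rho)$ driving process with only the single boundary force point, stopped at the same time. By \cite[Theorem 1.3]{MS_IMAG} the latter is almost surely generated by a continuous curve, and transporting this back through $\varphi^{-1}$ gives continuity of $K_t$ on the radial pre-image of $[0,\tau_\epsilon]$.

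Finally, one must check that the given radial time $T$ lies in this pre-image for some $\epsilon > 0$ almost surely. This is where the choice of time parameterization helps: at radial time $T$ the conformal radius of $\D\setminus K_T$ seen from $0$ is exactly $e^{-T} > 0$, and applying $\varphi$ (a fixed conformal map, hence bi-Lipschitz away from $\partial\D$) converts this into a deterministic strictly positive lower bound on the conformal radius seen from $z_0$ in the chordal picture. Therefore $T$ corresponds under the capacity time change to a time almost surely strictly smaller than $\tau_\epsilon$ for some deterministic $\epsilon > 0$, and the previous step applies.

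The main obstacle I anticipate is the Schramm--Wilson reduction in the presence of the boundary force point: \cite[Theorem 3]{SCHRAMM_WILSON} as stated gives the translation between radial and chordal driving SDEs, but one must verify that the time change relating radial (log conformal radius) time to chordal (half-plane capacity) time is almost surely a homeomorphism on $[0,T]$ and does not introduce pathologies near $\partial\D$. This reduces to a Koebe-type distortion estimate, but the bookkeeping of force points and the $2\pi\chi$-type identifications across $\tau_\epsilon$ is the part where one has to be careful; once that is in place, everything is driven by the chordal continuity theorem and two standard Girsanov arguments.
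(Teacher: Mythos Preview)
Your two-step Girsanov reduction (first remove $\mu$, then remove the interior force point via Schramm--Wilson) is exactly the route the paper takes, and your argument is essentially complete when $\rho \geq \tfrac{\kappa}{2}-2$. The gap is in the remaining range $\rho \in (-2,\tfrac{\kappa}{2}-2)$, and it is precisely the obstacle you flag in your last paragraph --- but it does not reduce to a Koebe estimate.

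The issue is this. The Schramm--Wilson correspondence identifies radial $\SLE_\kappa(\rho)$ with chordal $\SLE_\kappa(\rho;\kappa-6-\rho)$ only up to the first time the interior force point $z_0$ is swallowed in the chordal picture, equivalently the first time the radial hull separates $0$ from the boundary point $\varphi^{-1}(\infty)$. In the paper's notation this is the time $\tau=\inf\{t:\theta_t=2\pi\}$. When $\rho<\tfrac{\kappa}{2}-2$ the reflecting diffusion $\theta_t$ hits $2\pi$ in finite time almost surely, and $\p[\tau<T]>0$ for every $T>0$. At $\tau$ the half-plane capacity of $\varphi(K_t)$ stops increasing (the radial process continues in the bounded component containing $z_0$, which is invisible to the chordal uniformizing map), so the time change from radial to chordal time is \emph{not} a homeomorphism on $[0,T]$. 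Past $\tau$ your quantity $V^I_t=\wt g_t(z_0)$ is simply undefined, so the bound ``$\Im V^I_t\gtrsim\epsilon$ by Koebe'' has no content there.

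The paper's fix is to work one loop at a time. It stops at the angle-based time $\tau_\epsilon=\inf\{t:\theta_t\geq 2\pi-\epsilon\}$ (together with a fixed time cutoff $r$ to control the conformal radius), does your Girsanov on $[0,\tau_\epsilon\wedge r]$, and then handles the short remaining interval $[\tau_\epsilon,\tau]$ by the symmetry $(\theta_t)\stackrel{d}{=}(2\pi-\theta_t)$ of \eqref{eqn::theta_equation}: after applying $g_{\tau_\epsilon}$ this segment looks like a fresh radial $\SLE_\kappa(\rho)$ run for a short time, to which the previous step applies. Finally the conformal Markov property (Proposition~\ref{prop::radial_conf_markov}) restarts the whole argument at $\tau$, and since the increments $\tau_{k+1}-\tau_k$ are i.i.d.\ and a.s.\ positive, only finitely many such restarts occur before any fixed $T$. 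Your proposal is missing this loop-by-loop iteration; once you add it, your argument coincides with the paper's.
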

\begin{proof}
It suffices to prove the result for $\mu = 0$ since, as we remarked just after~\eqref{eqn::theta_equation}, the law of the process for $\mu \neq 0$ up to any fixed and finite time is absolutely continuous with respect to the case when $\mu = 0$.  Let $(W,O)$ be the driving function of a radial $\SLE_\kappa(\rho)$ process with $\rho > -2$ and let $\theta_t = \arg W_t - \arg O_t$.  We assume without loss of generality that $\theta_0 = 0$.  Let $\tau = \inf\{t \geq 0 : \theta_t = 2\pi\}$.  By the conformal Markov property of radial $\SLE_\kappa(\rho)$ (Proposition~\ref{prop::radial_conf_markov}) and the symmetry of the setup, it suffices to prove that $K|_{[0,\tau]}$ is generated by a continuous curve.

For each $\epsilon > 0$, we let $\tau_\epsilon = \inf\{t \geq 0 : \theta_t \geq 2\pi-\epsilon\}$.  Observe from the form of~\eqref{eqn::theta_equation} that $\tau_\epsilon < \infty$ almost surely (when $\theta_t \in [0,2\pi-\epsilon]$, its evolution is absolutely continuous with respect to that of a positive multiple of a Bessel process of dimension larger than $1$).  It follows from \cite[Theorem~3]{SCHRAMM_WILSON} that the law of a radial $\SLE_\kappa(\rho)$ process in $\D$ is equal to that of a chordal $\SLE_\kappa(\rho,\kappa-6-\rho)$ process on $\D$ where the weight $\kappa-6-\rho$ corresponds to an interior force point located at $0$, stopped at time $\tau$.  Assume that $K_t$ is parameterized by logarithmic conformal radius as viewed from $0$.  The Girsanov theorem implies that the law of $K|_{[0,\tau_\epsilon \wedge r]}$, any fixed $\epsilon, r > 0$, is mutually absolutely continuous with respect to that of a chordal $\SLE_\kappa(\rho)$ process (without an interior force point).  We know from \cite[Theorem~1.3]{MS_IMAG} that such processes are almost surely continuous, which gives us the continuity up to time $\tau_\epsilon \wedge r$.  This completes the proof in the case that $\rho \geq \tfrac{\kappa}{2}-2$ since $\theta_t$ does not hit $\{0,2\pi\}$ at positive times because $\tau_\epsilon \to \infty$ as $\epsilon \to 0$.  For the rest of the proof, we shall assume that $\rho \in (-2,\tfrac{\kappa}{2}-2)$.  By sending $r \to \infty$ and using that $\tau_\epsilon < \infty$, we get the continuity up to time $\tau_\epsilon$.

To get the continuity in the time interval $[\tau_\epsilon,\tau]$, we fix $\delta > 0$ and assume that $\epsilon > 0$ is so small so the event $E$ that $\theta_t|_{[\tau_\epsilon,\tau]}$ hits $2\pi$ before hitting $0$ satisfies $\p[E] \geq 1-\delta$.  By applying the conformal transformation $g_{\tau_\epsilon}$, the symmetry of the setup (using that $(\theta_t : t \geq 0) \stackrel{d}{=} (2\pi-\theta_t : t \geq 0)$, recall~\eqref{eqn::theta_equation}) and the argument we have described just above implies that $g_{\tau_\epsilon}(K|_{[\tau_\epsilon,\tau]})$ is generated by a continuous path on $E$.  Sending $\delta \to 0$ implies the desired result.
\end{proof}

We will prove in Section~\ref{subsec::transience} and Section~\ref{sec::duality_space_filling} that if $\eta$ is a radial $\SLE_\kappa^\mu(\rho)$ process with $\rho > -2$ and $\mu \in \R$ then $\lim_{t \to \infty} \eta(t) = 0$ almost surely.  This is the so-called ``endpoint'' continuity of radial $\SLE_\kappa^\mu(\rho)$ (first established by Lawler for ordinary radial $\SLE_\kappa$ in \cite{LAW_ENDPOINT}).  We finish by recording the following fact, which follows from the discussion  after~\eqref{eqn::theta_equation}.

\begin{lemma}
\label{lem::radial_critical_for_hitting}
Suppose that $\eta$ is a radial $\SLE_\kappa^\mu(\rho)$ process with $\rho \geq \tfrac{\kappa}{2}-2$ and $\mu \in \R$.  Then $\eta$ almost surely does not intersect $\partial \D$ and is a simple path.
\end{lemma}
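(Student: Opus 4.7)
The plan is to reduce the claim to the Bessel-process analysis of the angular driving variable $\theta_t = \arg W_t - \arg O_t$ carried out in the discussion following \eqref{eqn::theta_equation}, and then transfer that analysis into the geometric statement via the Schramm--Wilson identity and the chordal theory of \cite{MS_IMAG}.

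First I will reduce to the case $\mu = 0$ by the Girsanov argument used in Proposition~\ref{prop::radial_continuity}: on any compact time interval, the law of the driving process for arbitrary $\mu \in \R$ is mutually absolutely continuous with respect to the $\mu = 0$ law, so properties that hold almost surely in one case hold in the other.

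Next I will show that $\theta_t \in (0,2\pi)$ for every $t > 0$, almost surely. By the local absolute continuity recorded after \eqref{eqn::theta_equation}, the process $\theta_t$ near $0$ (respectively $2\pi - \theta_t$ near $0$) is absolutely continuous with respect to $\sqrt{\kappa}$ times a Bessel process of dimension $d(\rho,\kappa) = 1 + 2(\rho+2)/\kappa$. The hypothesis $\rho \geq \tfrac{\kappa}{2} - 2$ is precisely the condition $d(\rho,\kappa) \geq 2$, and Bessel processes of dimension at least $2$ a.s.\ do not return to the origin in positive time; this yields the non-hitting claim for $\theta_t$.

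Finally I will invoke the Schramm--Wilson identity \cite[Theorem 3]{SCHRAMM_WILSON}, exactly as in the proof of Proposition~\ref{prop::radial_continuity}, to relate $\eta$ on any compact interval $[0,T]$ to a chordal $\SLE_\kappa(\rho)$ process from $W_0$ to $O_0$ in $\D$ with boundary force point of weight $\rho \geq \tfrac{\kappa}{2} - 2$ at $O_0$, modulo a Girsanov factor corresponding to the interior force point at $0$. By \cite[Theorem 1.3]{MS_IMAG} and the boundary-hitting analysis there, a chordal $\SLE_\kappa(\rho)$ process with $\rho \geq \tfrac{\kappa}{2} - 2$ is almost surely simple and does not touch $\partial\D$ outside its endpoints. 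Transferring these properties across the absolute continuity and sending $T \to \infty$, which is justified since the stopping time $\tau = \inf\{t : \theta_t \in \{0,2\pi\}\}$ is almost surely infinite by the previous step, completes the proof.

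The main obstacle I expect is the chordal comparison step: the boundary force point in the derived chordal process sits at the target endpoint $O_0$ of the chord rather than adjacent to the starting point $W_0$, so the standard simplicity and non-hitting results of \cite{MS_IMAG} must be invoked through a conformal change of coordinates that places the force point back into the canonical position near the initial point. One must also verify that the Girsanov removal of the interior force point at $0$ is valid on each compact time interval and that the non-hitting conclusion persists in the $T \to \infty$ limit using the global bound $\tau = \infty$.
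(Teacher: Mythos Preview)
Your approach is essentially the paper's: the Bessel comparison for $\theta_t$ is identical, and your Schramm--Wilson/Girsanov step linking to a chordal $\SLE_\kappa(\rho)$ is exactly what underlies the paper's terse ``from this, the result follows'' (it is the same absolute-continuity comparison already spelled out in the proof of Proposition~\ref{prop::radial_continuity}). One cosmetic point worth straightening out: after uniformizing with $O_0 \mapsto \infty$ the boundary force point of weight $\rho$ sits at the chordal target and contributes nothing to the drift, so what actually delivers simplicity and boundary-avoidance is the classical fact for ordinary chordal $\SLE_\kappa$ with $\kappa \leq 4$; the hypothesis $\rho \geq \tfrac{\kappa}{2}-2$ is used only in the Bessel step to guarantee $\tau = \infty$ so that the comparison is valid for all time.
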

\begin{proof}
Let $\theta_t = \arg W_t - \arg O_t$ where $(W,O)$ is the driving pair for $\eta$.  By the discussion after~\eqref{eqn::theta_equation}, we know that the evolution of $\theta_t$ (resp.\ $2\pi-\theta_t$) is absolutely continuous with respect to that of $\sqrt{\kappa}$ times a Bessel process of dimension $d(\rho,\kappa) \geq 2$ when it is near the singularity at $0$ (resp.\ $2\pi$).  Consequently, $\theta_t$ almost surely does not hit $0$ or $2\pi$ except possibly at time $0$.  From this, the result follows.
\end{proof}

\subsubsection{Whole-plane $\SLE_\kappa^\mu(\rho)$}
\label{subsec::whole_plane_sle}

Whole-plane $\SLE$ is a variant of $\SLE$ which describes a random growth process $K_t$ where, for each $t \in \R$, $K_t \subseteq \C$ is compact with $\C_t = \C \setminus K_t$ simply connected (viewed as a subset of the Riemann sphere).  For each $t$, we let $g_t \colon \C_t \to \C \setminus \D$ be the unique conformal transformation with $g_t(\infty) = \infty$ and $g_t'(\infty) > 0$.  Then $g_t$ solves the whole-plane Loewner equation
\begin{equation}
\label{eqn::whole_plane_loewner}
 \partial_t g_t = g_t(z) \frac{W_t + g_t(z)}{W_t - g_t(z)}.
\end{equation}
Here, $W_t = e^{i\sqrt{\kappa} B_t}$ where $B_t$ is a two-sided standard Brownian motion.  Equivalently, $W$ is given by the time-stationary solution to~\eqref{eqn::sle_radial_equation} with $\rho=\mu=0$.  Note that~\eqref{eqn::whole_plane_loewner} is the same as~\eqref{eqn::radial_loewner}.  In fact, for any $s \in \R$, the growth process $1/g_s(K_t \setminus K_s)$ for $s \geq t$ from $\partial \D$ to $0$ is a radial $\SLE_\kappa$ process in $\D$.  Thus, whole-plane $\SLE$ can be thought of as a bi-infinite time version of radial $\SLE$.  Whole-plane $\SLE_\kappa^\mu(\rho)$ is the growth process associated with~\eqref{eqn::whole_plane_loewner} where $W_t$ is taken to be the time-stationary solution of~\eqref{eqn::radial_loewner} described in Proposition~\ref{prop::sle_kappa_rho_stationary}.  As before, we will refer to a whole-plane $\SLE_\kappa^0(\rho)$ process as simply a whole-plane $\SLE_\kappa(\rho)$ process.

We are now going to prove the continuity of whole-plane $\SLE_\kappa^\mu(\rho)$ processes.  The idea is to deduce the result from the continuity of radial $\SLE_\kappa^\mu(\rho)$ proved in Proposition~\ref{prop::radial_continuity} and the relationship between radial and whole-plane $\SLE$ described just above.  This gives us that for any fixed $T \in \R$, a whole-plane $\SLE$ restricted to $[T,\infty)$ is generated by the conformal image of a continuous curve.  The technical issue that one has to worry about is whether pathological behavior in the way that the process gets started causes this conformal map to be discontinuous at the boundary.

\begin{proposition}
\label{prop::whole_plane_continuity}
Suppose that $K_t$ is a whole-plane $\SLE_\kappa^\mu(\rho)$ process with $\rho > -2$ and $\mu \in \R$.  Then $K_t$ is almost surely generated by a continuous curve $\eta$.
\end{proposition}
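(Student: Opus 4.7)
The plan is to bootstrap from the radial continuity established in Proposition~\ref{prop::radial_continuity}, using the identification of whole-plane $\SLE_\kappa^\mu(\rho)$ on $[S,\infty)$ with a radial $\SLE_\kappa^\mu(\rho)$ in $\D$ (via $1/g_S$) that was described just before the proposition.

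Fix any $S \in \R$. By the stationarity statement in Proposition~\ref{prop::sle_kappa_rho_stationary} together with the form of~\eqref{eqn::sle_radial_equation}, the process $t \mapsto 1/g_S(K_t \setminus K_S)$ for $t \geq S$ is a radial $\SLE_\kappa^\mu(\rho)$ in $\D$ started at $1/W_S \in \partial \D$ and targeted at $0$. By Proposition~\ref{prop::radial_continuity}, this process is generated by a continuous curve $\tilde\eta_S \colon [S,\infty) \to \ol{\D}$. Setting $\phi_S(z) = g_S^{-1}(1/z)$, which is a conformal map $\D \to \C \setminus K_S$ sending $0$ to $\infty$, I would define a candidate whole-plane curve by $\eta(t) := \phi_S(\tilde\eta_S(t))$ for $t > S$. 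That this definition does not depend on $S$ (so that $\eta$ is unambiguously defined on all of $\R$) follows from the conformal Markov property Proposition~\ref{prop::radial_conf_markov}.

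For continuity of $\eta$ at a given $t_0 \in \R$, pick any $S < t_0$: since $\tilde\eta_S$ is continuous into $\ol{\D}$ and $\phi_S$ is continuous on $\D$, continuity of $\eta$ at $t_0$ is immediate whenever $\tilde\eta_S(t_0)$ lies in the open disk. In the regime $\rho \geq \tfrac{\kappa}{2} - 2$, Lemma~\ref{lem::radial_critical_for_hitting} already ensures $\tilde\eta_S((S,\infty)) \subset \D$, so there is nothing more to prove in that case. For $\rho \in (-2,\tfrac{\kappa}{2}-2)$, the radial curve can occasionally touch $\partial\D$, and the technical step is to verify that $\phi_S$ extends continuously to each boundary point actually visited by $\tilde\eta_S$. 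I would handle this by exploiting that the prime end of $\C \setminus K_S$ corresponding to such a visit is single-valued --- because the side of the radial path pinched off in $\D$ maps under $\phi_S$ to a region bounded by a Jordan-like arc attached to a definite part of $\partial K_S$ --- so that $\partial K_S$ is locally a curve at that point. Alternatively, one can choose $S$ close enough to $t_0$ that, with positive probability, $\tilde\eta_S(t_0) \in \D$, and then iterate this observation using the conformal Markov property at nearby times to handle all $t_0$ simultaneously.

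Finally, to extend $\eta$ continuously to $t = -\infty$, I would use that the whole-plane Loewner equation~\eqref{eqn::whole_plane_loewner} gives $g_t'(\infty) = e^{-t}$ under the standard normalization, so that $K_t$ has conformal radius $e^t$ from $\infty$. Koebe's distortion theorem then forces $\diam(K_t) \leq C e^t \to 0$ as $t \to -\infty$; since $0 \in K_t$ and $\eta(t) \in K_t$ for every $t$, this yields $\eta(t) \to 0$, giving continuity at $-\infty$ with $\eta(-\infty) = 0$. I expect the main obstacle to be the prime-end analysis in the $\rho \in (-2,\tfrac{\kappa}{2}-2)$ regime: this is precisely the ``pathological behavior in the way that the process gets started'' flagged in the paragraph preceding the proposition, and the rest of the argument is essentially a straightforward assembly of the conformal Markov structure with the radial continuity already in hand.
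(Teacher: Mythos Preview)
Your overall structure is correct and matches the paper: bootstrap from radial continuity via the conformal map $1/g_S^{-1}$, and handle the $\rho \geq \tfrac{\kappa}{2}-2$ case directly using Lemma~\ref{lem::radial_critical_for_hitting}. The Koebe argument for continuity at $-\infty$ is also fine.

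The gap is in the $\rho \in (-2,\tfrac{\kappa}{2}-2)$ regime, and neither of your two proposed fixes works as stated. For approach~(a), you want the prime end of $\C \setminus K_S$ at a boundary-touching point of $\tilde\eta_S$ to be single-valued, and you justify this by saying ``$\partial K_S$ is locally a curve at that point.'' But that is exactly what is in question: $\partial K_S$ is the boundary of the whole-plane hull at time $S$, and its local connectedness is what the entire proposition is trying to establish, so the argument is circular. For approach~(b), it is indeed true that for each \emph{fixed} $t_0$ the radial curve is a.s.\ in the open disk at time $t_0$ (the boundary-touching times form a measure-zero set), and this does give a.s.\ continuity of $\eta$ at each fixed $t_0$. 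But this only yields continuity at a countable collection of times, not a.s.\ continuity of the curve as a whole: the boundary-touching times are uncountable, and the null sets on which continuity might fail depend on $t_0$, so no amount of ``iteration using the conformal Markov property'' bridges that gap.

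The paper resolves this with Lemma~\ref{lem::radial_loop}: for $\rho \in (-2,\tfrac{\kappa}{2}-2)$ the radial $\SLE_\kappa^\mu(\rho)$ process a.s.\ makes a loop separating $0$ from $\partial \D$ in finite time. Translated to the whole-plane picture, this says that there is an a.s.\ finite time $\tau \geq T$ with $\partial K_\tau \cap \partial K_T = \emptyset$; hence $\partial K_\tau$ is traced entirely by the continuous radial curve on $[T,\tau]$, so $K_\tau$ is locally connected and $1/g_\tau^{-1}$ extends continuously to all of $\partial \D$ by Carath\'eodory. This gives continuity of $\eta$ on $[\tau,\infty)$ in one stroke, with no pointwise prime-end analysis needed. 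Finally, since the driving pair is stationary, $\tau - T$ has a law not depending on $T$, so sending $T \to -\infty$ gives continuity on all of $\R$. Lemma~\ref{lem::radial_loop} is the missing ingredient that replaces your prime-end step.
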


\begin{figure}[ht!]
\begin{center}
\includegraphics[scale=0.6125]{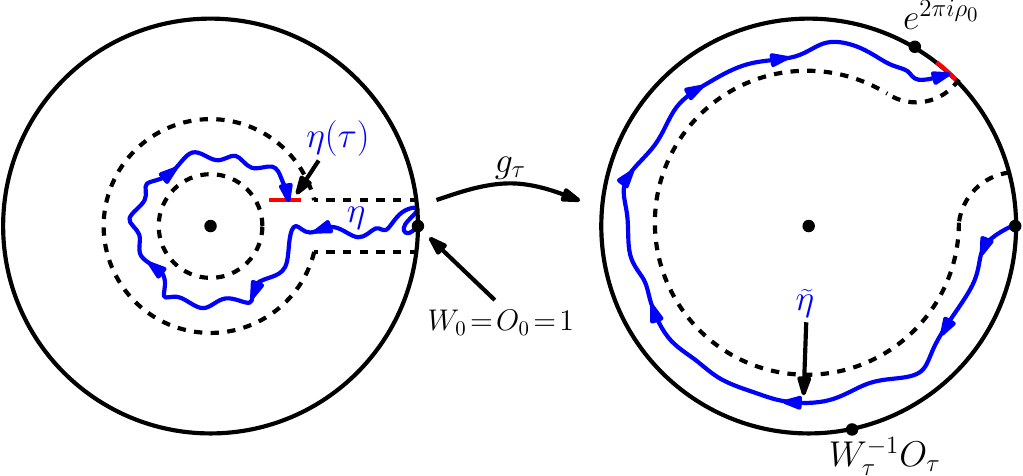}
\end{center}
\caption{\label{fig::radial_loop}
Fix $\kappa > 0$.  Suppose that $\eta$ is a radial $\SLE_\kappa(\rho)$ in $\D$ starting from $1$ with a single boundary force point of weight $\rho \in (-2 \vee (\tfrac{\kappa}{2}-4),\tfrac{\kappa}{2}-2)$, located at $1^+$ (immediately in the counterclockwise direction from $1$ on $\partial \D$).  For every $\delta \in (0,1)$, the first loop~$\eta$ makes around $0$ does not intersect $\partial \D$ and contains $\delta \D$ with positive probability.  To see this, we first note that the event~$E$ that $\eta$ wraps around the small disk with dashed boundary while staying inside of the dashed region and then hits the red line, say at time $\tau$, occurs with positive probability (see left side above).  This can be seen since a radial $\SLE_\kappa(\rho)$ process is equal in distribution to a chordal $\SLE_\kappa(\rho,\kappa-6-\rho)$ process up until the first time it swallows $0$ \cite[Theorem~3]{SCHRAMM_WILSON} and the latter is absolutely continuous with respect to a chordal $\SLE_\kappa(\rho)$ process up until just before swallowing~$0$ (see the proof of Proposition~\ref{prop::radial_continuity}).  The claim follows since~$E$ a positive probability event for the latter (see \cite[Section~4.7]{LAW05}; this can also be proved using the GFF).  Moreover, we note that conditional on~$E$, $\eta|_{[\tau,\infty)}$ surrounds~$0$ before hitting~$\partial \D$ with positive probability.  Indeed, the conformal invariance of Brownian motion implies that $\theta_\tau$ is equal to $2\pi$ times the probability that a Brownian motion starting at $0$ exits $\D \setminus \eta([0,\tau])$ in the right side of $\eta([0,\tau])$ and the Beurling estimate \cite{LAW05} implies that there exists $\rho_0 > 0$ such that the latter is at least $\rho_0$ on $E$.  Consequently, it suffices to show that $\wt{\eta} = W_\tau^{-1} g_\tau(\eta|_{[\tau,\infty)})$ wraps around $0$ and hits to the left of $W_\tau^{-1} O_\tau$ while staying inside of the dashed region indicated on the right side, with uniformly positive probability.  That this holds follows, as before, by using \cite[Theorem~3]{SCHRAMM_WILSON} and absolute continuity to compare to the case when $\wt{\eta}$ is a chordal $\SLE_\kappa(\rho)$ process and note that the latter stays inside of the dashed region and exits in the red segment with positive probability.}
\end{figure}

The main ingredient in the proof of Proposition~\ref{prop::whole_plane_continuity} is the following lemma (see Figure~\ref{fig::radial_loop}).

\begin{lemma}
\label{lem::radial_loop}
Suppose that $\eta$ is a radial $\SLE_\kappa^\mu(\rho)$ process with $\rho \in (-2,\tfrac{\kappa}{2}-2)$ and $\mu \in \R$.  For each $t \in [0,\infty)$, let $\D_t = \D \setminus K_t$.  Let
\[ \tau = \inf\{t \geq 0 : \partial \D_t \cap \partial \D = \emptyset\}.\]
Then $\p[\tau < \infty] = 1$.
\end{lemma}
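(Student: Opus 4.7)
The plan is to exhibit a constant $p_0 > 0$, depending only on $\kappa,\rho,\mu$, such that uniformly in the current driving pair $(W,O)$ the probability that $\eta$ traces a counterclockwise loop disconnecting $0$ from $\partial \D$ within a bounded amount of (logarithmic conformal radius) time is at least $p_0$. Combined with the conformal Markov property (Proposition~\ref{prop::radial_conf_markov}) applied on successive unit time intervals, this forces $\p[\tau = \infty] = 0$. Since the $\mu \neq 0$ driving law is mutually absolutely continuous on any compact time interval with the $\mu = 0$ law (by the Girsanov argument made after \eqref{eqn::theta_equation}), I first reduce to the case $\mu = 0$.

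The positive-probability estimate is obtained by splicing two steps, exactly as illustrated in Figure~\ref{fig::radial_loop}. First, by \cite[Theorem 3]{SCHRAMM_WILSON}, up until the first time $0$ is swallowed the radial $\SLE_\kappa(\rho)$ law coincides with that of a chordal $\SLE_\kappa(\rho, \kappa - 6 - \rho)$ in $\D$ with an interior force point at $0$, and by the Girsanov theorem \cite{KS98} this is mutually absolutely continuous with chordal $\SLE_\kappa(\rho)$ on any fixed initial time interval (before $0$ is swallowed). For the chordal process with $\rho \in (-2, \tfrac{\kappa}{2} - 2)$, a standard boundary-hitting estimate, which ultimately rests on the fact that the Bessel dimension $d(\rho,\kappa) = 1 + 2(\rho+2)/\kappa$ lies in $(1,2)$ (see \cite[Section 4.7]{LAW05}), gives positive probability to the event $E$ that the path winds counterclockwise around a small disk, remains inside a prescribed thin tubular neighborhood, and then hits a specified short arc at some time $\tau$. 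Transferring via absolute continuity yields $\p[E] > 0$ for the radial process.

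On $E$, the conformal invariance of Brownian motion and the Beurling estimate \cite{LAW05} together yield a deterministic lower bound $\rho_0 > 0$ on the angle $\theta_\tau = \arg W_\tau - \arg O_\tau$, since $\theta_\tau / (2\pi)$ equals the harmonic measure from $0$ in $\D \setminus \eta([0,\tau])$ of the right side of $\eta([0,\tau])$, and Beurling controls this below in terms of the geometry built into $E$. I then apply $g_\tau$ and repeat the Schramm--Wilson/Girsanov reduction for $\wt{\eta} = W_\tau^{-1} g_\tau(\eta|_{[\tau,\infty)})$, using the additional input $\theta_\tau \geq \rho_0$ to show that with a further uniformly positive probability $\wt{\eta}$ wraps once more around $0$, hitting the image of the left side of $\eta([0,\tau])$ while remaining inside a prescribed tube. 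On the intersection of the two events the component of $\D \setminus K_t$ containing $0$ is disjoint from $\partial \D$, so $\tau < \infty$; iterating via Proposition~\ref{prop::radial_conf_markov} finishes the proof.

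The main technical obstacle is ensuring the required \emph{uniformity} of these hitting probabilities in the current driving pair, so that the iteration on successive unit-length intervals produces an exponential tail on $\{\tau \geq T\}$. This is exactly where the hypothesis $\rho \in (-2, \tfrac{\kappa}{2} - 2)$ enters: $\rho > -2$ makes \eqref{eqn::theta_equation} well-posed with no sticking, and $\rho < \tfrac{\kappa}{2} - 2$ keeps $d(\rho,\kappa) < 2$ so that $\theta_t$ genuinely reaches the boundary of $[0,2\pi]$, which is what allows disconnection in finite time. Once this uniformity is checked, the rest of the argument is a routine Markov iteration.
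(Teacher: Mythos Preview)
Your argument follows essentially the same route as the paper's proof --- the conformal Markov iteration, the Girsanov reduction to $\mu=0$, the Schramm--Wilson coordinate change to chordal $\SLE_\kappa(\rho,\kappa-6-\rho)$, absolute continuity with chordal $\SLE_\kappa(\rho)$, and the two-step ``wrap-then-close'' construction of Figure~\ref{fig::radial_loop} --- so in that sense the approach is correct and matches the paper.

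There is, however, one genuine gap. Your event $E$ asks the chordal $\SLE_\kappa(\rho)$ path to remain inside a prescribed tubular neighborhood (in particular, bounded away from $\partial\D$) while it winds around a small disk. The lemma is stated for \emph{all} $\kappa>0$, and when $\kappa>4$ the subrange $\rho\in(-2,\tfrac{\kappa}{2}-4]\subseteq(-2,\tfrac{\kappa}{2}-2)$ is non-empty. For such $\rho$ the chordal $\SLE_\kappa(\rho)$ process is boundary-filling on the side of its force point, so the event that the path stays inside an interior tube has probability zero, and your first step yields $\p[E]=0$. The paper explicitly splits off this case: in the boundary-filling regime the \emph{first} loop $\eta$ makes around $0$ necessarily touches $\partial\D$, and one instead argues that the \emph{inner boundary of the second loop} has a positive chance of being disjoint from $\partial\D$ and of containing $\delta\D$. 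Your write-up needs this case distinction (or an alternative argument that does not rely on the tube-avoidance event) to cover the full stated range of $\rho$.
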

\begin{proof}
We first suppose that $\rho \in  (-2 \vee (\tfrac{\kappa}{2}-4),\tfrac{\kappa}{2}-2)$ so that $\eta$ almost surely does not fill the boundary.  By Proposition~\ref{prop::radial_conf_markov}, the conformal Markov property for radial $\SLE_\kappa^\mu(\rho)$, it suffices to show that for each $\delta \in (0,1)$,
with positive probability, the first loop that $\eta$ makes about $0$ does not intersect $\partial \D$ and contains $\delta \D$.  By the absolute continuity of radial $\SLE_\kappa^\mu(\rho)$ and radial $\SLE_\kappa(\rho)$ processes up to any fixed time $t \in [0,\infty)$, as explained in Section~\ref{subsec::radial_sle}, it in turn suffices to show that this holds for ordinary radial $\SLE_\kappa(\rho)$ processes.  The proof of this is explained the caption of Figure~\ref{fig::radial_loop}.  The proof for $\rho \in (-2,\tfrac{\kappa}{2}-4]$ is similar.  The difference is that, in this range of $\rho$ values, the path is almost surely boundary filling.  In particular, it is not possible for the first loop that $\eta$ makes around the origin to be disjoint from the boundary.  Nevertheless, a small modification of the argument described in Figure ~\ref{fig::radial_loop} implies that the inner boundary of the \emph{second} loop made by $\eta$ around $0$ has a positive chance of being disjoint from $\partial \D$ and contain $\delta \D$.
\end{proof}

\begin{proof}[Proof of Proposition~\ref{prop::whole_plane_continuity}]
We first suppose that $\rho \geq \tfrac{\kappa}{2}-2$.  Fix $T \in \R$.  By the discussion in the beginning of this subsection, we know that we can express $K|_{[T,\infty)}$ as the conformal image of a radial $\SLE_\kappa^\mu(\rho)$ process.  By Proposition~\ref{prop::radial_continuity}, we know that the latter is generated by a continuous curve which, since $\rho \geq \tfrac{\kappa}{2}-2$, is almost surely non-boundary intersecting by Lemma~\ref{lem::radial_critical_for_hitting}.  This implies that $K|_{[T,\infty)}$ is generated by the conformal image of a continuous non-boundary-intersecting curve.  This implies that for any $S > T$, the restriction of this image to $[S,\infty)$ is a continuous path.  The result follows since $S$ can be taken arbitrarily small.

We now suppose that $\rho \in (-2,\tfrac{\kappa}{2}-2)$ and again fix $T \in \R$.  As before, we know that we can express $K|_{[T,\infty)}$ as the image under the conformal map $1/g_T^{-1}$ of a radial $\SLE_\kappa^\mu(\rho)$ process $\eta \colon [T,\infty) \to \ol{\D}$.  More precisely, $K|_{[T,\infty)}$ is the complement of the unbounded connected component of $\C \setminus ( (1/g_T^{-1})(\eta([T,t])) \cup K_T)$.  Let $\tau$ be the first time $t \geq T$ that $\partial K_t \cap \partial K_T = \emptyset$.  Lemma~\ref{lem::radial_loop} implies that $\p[\tau < \infty] = 1$.  By the almost sure continuity of the radial $\SLE_\kappa^\mu(\rho)$ processes, it follows that $K_\tau$ is locally connected which implies that $1/g_\tau^{-1}$ extends continuously to the boundary, which implies that $1/g_\tau^{-1}$ applied to $\eta|_{[\tau,\infty)}$ is almost surely a continuous curve.  The result follows since the distribution of $\tau - T$ does not depend on $T$ since the driving function of a whole-plane $\SLE_\kappa^\mu(\rho)$ process is time-stationary and we can take $T$ to be as small as we like.
\end{proof}

We finish this section by recording the following simple fact, that the trace of a whole-plane $\SLE_\kappa(\rho)$ process is almost surely unbounded.

\begin{lemma}
\label{lem::whole_plane_unbounded}
Suppose that $\eta$ is a whole-plane $\SLE_\kappa(\rho)$ process with $\rho > -2$.  Then $\limsup_{t \to \infty} |\eta(t)| = \infty$ almost surely.
\end{lemma}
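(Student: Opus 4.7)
The plan is to show that the hulls $K_t$ have diameter tending to infinity, and then argue that this forces the generating curve $\eta$ to be unbounded. This is essentially a deterministic Loewner calculation, so the value of $\rho$ (or $\mu$) will not play a substantive role.

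First I would read off the behavior of $g_t$ at $\infty$ from \eqref{eqn::whole_plane_loewner}. Expanding $g_t(z) = a(t)z + b(t) + O(1/z)$ as $z \to \infty$ and using that $(W_t + g_t(z))/(W_t - g_t(z)) \to -1$, one finds $a'(t) = -a(t)$, so $g_t'(\infty) = a(t) = c\,e^{-t}$ for a constant $c > 0$. Note this is independent of $W$, hence of $\rho$ and $\mu$. Equivalently, if $\psi_t = g_t^{-1} \colon \C \setminus \ol{\D} \to \C \setminus K_t$, then $\psi_t'(\infty) = c^{-1}e^{t}$. Next I would invoke the classical distortion bound for connected compact hulls: $\diam(K_t) \geq 2\psi_t'(\infty)$ (the logarithmic capacity of a connected compact set is at most half its diameter, with equality for a disk; this follows from the area theorem applied to $\psi_t$). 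Thus $\diam(K_t) \to \infty$ as $t \to \infty$. Finally, by Proposition~\ref{prop::whole_plane_continuity} the hulls are generated by a continuous curve $\eta \colon \R \to \C$ with $\eta(t) \to 0$ as $t \to -\infty$, and $K_t$ is the complement of the unbounded component of $\C \setminus \eta((-\infty,t])$. If $\sup_{s \in \R} |\eta(s)| \leq R < \infty$, then $\C \setminus \ol{B}(0,R) \subseteq \C \setminus \eta((-\infty,t])$ is contained in the unbounded component, so $K_t \subseteq \ol{B}(0,R)$ for every $t$, contradicting $\diam(K_t) \to \infty$. Since $\eta$ is bounded on $(-\infty, T]$ for each fixed $T$ (using continuity and the limit at $-\infty$), the unboundedness must be realized along a sequence $t_n \to +\infty$, which gives $\limsup_{t \to \infty} |\eta(t)| = \infty$ almost surely.

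There is no real obstacle here: the capacity-to-diameter comparison is classical and the Loewner expansion at $\infty$ is a direct computation. The only point requiring a little care is ensuring that the continuous curve produced by Proposition~\ref{prop::whole_plane_continuity} is the generator of the Loewner hull in the strong sense (so that $\eta((-\infty,t])$ really does determine $K_t$ as its fill-in), but this is already part of what it means to say the process is generated by a continuous curve.
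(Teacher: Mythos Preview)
Your argument is correct and is essentially the paper's approach spelled out in detail: the paper's proof is a single sentence noting that the process is parameterized by capacity, exists for all time, and citing \cite[Proposition~3.27]{LAW05}. (One minor quibble: the sharp capacity--diameter bound you quote is not really a consequence of the area theorem, but any constant suffices here, and the simpler observation that $0 \in K_t$ together with monotonicity of capacity under inclusion --- so $K_t \subseteq \ol{B}(0,R)$ would force $\psi_t'(\infty) \leq R$ --- already gives what you need.)
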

\begin{proof}
This follows since whole-plane $\SLE_\kappa(\rho)$ is parameterized by capacity, exists for all time, and \cite[Proposition~3.27]{LAW05}.
\end{proof}

In Section~\ref{sec::interior_flowlines} and Section~\ref{sec::duality_space_filling}, we will establish the transience of whole-plane $\SLE_\kappa(\rho)$ processes: that $\lim_{t \to \infty} \eta(t) = \infty$ almost surely.

\subsection{Gaussian free fields}
\label{subsec::gff}

\newcommand{\slt}{H_s}
\newcommand{\sltc}{H}
\newcommand{\sltz}{H_{s,0}}

In this section, we will give an overview of the basic properties and construction of the whole-plane GFF.  For a domain $D \subseteq \C$, we let $\slt(D)$ denote the space of $C^\infty$ functions with compact support contained in $D$.  We will simply write $\slt$ for $\slt(\C)$.  We let $\sltz(D)$ consist of those $\phi \in \slt(D)$ with $\int \phi(z) dz = 0$ and write $\sltz$ for $\sltz(\C)$.

Any distribution (a.k.a.\ generalized function) $h$ describes a linear map $\phi \mapsto (h, \phi)$ from $\slt(\C)$ to $\R$.  The whole-plane GFF can be understood as a random distribution $h$ defined {\em modulo a global additive constant} in $\R$. One way to make this precise is to define an equivalence relation: two generalized functions $h_1$ and $h_2$ are equivalent {\em modulo global additive constant} if $h_1 - h_2 = a$ for some $a \in \R$ (i.e., $(h_1,\phi) - (h_2, \phi) = a \int \phi(z)dz$ for all test functions $\phi \in \slt$).  The whole-plane GFF modulo global additive constant is then a random equivalence class, which can be described by specifying a representative of the equivalence class.  Another way to say that $h$ is defined only modulo a global additive constant is to say that the quantities $(h,\phi)$ are defined only for test functions $\phi \in \sltz$.  It is not hard to see that this point of view is equivalent: the restriction of the map $\phi \mapsto (h, \phi)$ to $\sltz$ determines the equivalence class, and vice-versa.

If one fixes a constant $r > 0$, it is also possible to understand the whole-plane GFF as a random distribution defined modulo a global additive constant in $r \Z$.  This can also be understood as a random equivalence class of distributions (with $h_1$ and $h_2$ equivalent when $h_1 - h_2 = a \in r \Z$).  Another way to say that $h$ is defined only modulo a global additive constant in $r \Z$ is to say that \begin{enumerate}
 \item $(h, \phi)$ is well-defined for $\phi \in \sltz$ but
 \item for test functions $\phi \in \slt \setminus \sltz$, the value $(h, \phi)$ is defined only modulo an additive multiple of $r \int \phi(z)dz$.
\end{enumerate}
We can specify~$h$ modulo $r \Z$ by describing the map $\phi \mapsto (h, \phi)$ on $\sltz$ and also specifying the value $(h, \phi_0)$ modulo $r$ for {\em some fixed} test function~$\phi_0$ with $\int \phi_0(z)dz = 1$.  (A general test function is a linear combination of~$\phi_0$ and an element of~$\sltz$.)

In this subsection, we will explain how one can construct the whole-plane GFF (either modulo $\R$ or modulo $r \Z$) as an infinite volume limit of zero-boundary GFFs defined on an increasing sequence of bounded domains.  Finally, we will review the theory of local sets in the context of the whole-plane GFF.  We will assume that the reader is familiar with the ordinary GFF and the theory of local sets in this context \cite{SHE06,SchrammShe10,MS_IMAG}.  Since the whole-plane theory is parallel (statements and proofs are essentially the same), our treatment will be brief.

\subsubsection{Whole-plane GFF}
\label{subsec::gff_construction}

We begin by giving the definition of the whole-plane GFF defined modulo additive constant; see also \cite{SHE_WELD} for a similar (though somewhat more detailed) exposition.  We let~$\sltc$ denote the Hilbert space closure of $\slt$ modulo a global additive constant in~$\R$, equipped with the {\bf Dirichlet inner product}
\[ (f,g)_\nabla = \frac{1}{2\pi} \int \nabla f(z) \cdot \nabla g(z) dz.\]
(The normalization $(2\pi)^{-1}$ in the definition of the Dirichlet inner product is convenient because then, for example, the dominant term in the covariance function for the GFF is given by $-\log|x-y|$ rather than a multiple of $-\log|x-y|$.)  Let $(f_n)$ be an orthonormal basis of $\sltc$ and let $(\alpha_n)$ be a sequence of i.i.d.\ $N(0,1)$ random variables.
The {\bf whole-plane GFF} (modulo an additive constant in $\R$) is an equivalence class of distributions, a representative of which is given by the series expansion
\[ h = \sum_n \alpha_n f_n.\]
That is, for each $\phi \in \sltz$, we can write
\begin{equation}
\label{eqn::wholegfflimit} (h, \phi) := \lim_{n \to \infty} \left(\sum \alpha_n f_n, \phi\right),\end{equation}
where $(\cdot,\cdot)$ is the $L^2$ inner product.
As in the case of the GFF on a compact domain, the convergence almost surely holds for all such $\phi$,
and the law of $h$ turns out not to depend on the choice of $(f_n)$ \cite{SHE06}.  It is also possible to view $h$ as the standard Gaussian on $\sltc$, i.e.\ a collection of random variables $(h,f)_\nabla$ indexed by $f \in \sltc$ which respect linear combinations and have covariance
\[ \cov( (h,f)_\nabla, (h,g)_\nabla ) = (f,g)_\nabla \quad \text{for} \quad f,g \in \sltc.\]
Formally integrating by parts, we have that $(h,f)_\nabla = -2\pi (h,\Delta f)$ (where $(h,\Delta f)$ is, formally, the $L^2$ inner product of $h$ and $\Delta f$).  Thus, for any fixed function (or generalized function) $\phi$ with the property that $\Delta^{-1} \phi \in \sltc$, the limit defining $(h,\phi)$ in~\eqref{eqn::wholegfflimit} a.s.\ exists.  (Although the limit almost surely exists for any fixed $\phi$ with this property, it is a.s.\ not the case that the limit exists for {\em all} functions with this property.  However, the limit does exist a.s.\ for all $\phi \in \sltz$ simultaneously.)   We think of $h$ as being defined only up to an additive constant in $\R$ because
\[ (h+c,\phi) = (h,\phi) + (c,\phi) =  (h,\phi) \quad \text{for all}\quad  \phi \in \sltz \quad\text{and}\quad c \in \R.\]
We can fix the additive constant, for example, by setting $(h,\phi_0) = 0$ for some fixed $\phi_0 \in \slt$ with $\int \phi_0(z) dz =1$.

Suppose that $\phi_0,\phi_1 \in \slt$ are distinct with $\int \phi_j(z) dz = 1$ for $j = 0,1$.  Then we can use either $\phi_0$ or $\phi_1$ to fix the additive constant for $h$ by setting either $(h,\phi_0) = 0$ or $(h,\phi_1) = 0$.  Regardless of which choice we make, as $\phi_0 - \phi_1$ has mean zero, we have that $(h, \phi_0 - \phi_1)$ has the Gaussian distribution with mean zero.  Moreover, the variance of $(h,\phi_0 - \phi_1)$ does not depend on the choice of $\phi_0,\phi_1$ for fixing the additive constant.  Extending the definition of $h$ to all compactly supported test functions (not just those of mean zero) by requiring $(h, \phi_0) = 0$ is not exactly the same as extending the definition by requiring $(h,\phi_1)=0$.  Each of these two extension procedures can be understood as a different mapping from a space of equivalence classes (the space of distributions modulo additive constant) to the space of distributions (where this mapping sends an equivalence class to a representative element of itself).  However, note that for any {\em fixed} choice of $h$, these two extension procedures yield distributions $h_1$ and $h_2$ that differ from one another by an additive ``constant,'' namely the quantity $(h, \phi_1 - \phi_2)$.  This quantity is of course random (in the sense that it depends on $h$) but it is a constant in the sense that it does not depend on the spatial variable; that is, $(h_1 - h_2, \phi)  = (a,\phi)$, where the quantity $a$ does not depend on $\phi$.

In other words, while it is perfectly correct to say that $h$ is a random element of the space of ``distributions modulo additive constant,'' this does not mean that if you come up with {\em any} two different ways of {\em fixing} that additive constant (thereby producing two random distributions), the difference between the two distributions you produce will be deterministic.

Fix $r > 0$ and $\phi_0 \in \slt$ with $\int \phi_0(z) dz = 1$.  The {\bf whole-plane GFF modulo $r$} is a random equivalence class of distributions (where two distributions are equivalent when their difference is a constant in $r \Z$).  An instance can be generated by
\begin{enumerate}
\item sampling a whole-plane GFF $h$ modulo a global additive constant in $\R$, as described above, and then
\item choosing independently a uniform random variable $U \in [0,r)$ and fixing a global additive constant (modulo $r$) for $h$ by requiring that $(h,\phi_0) \in (U+r \Z)$.
\end{enumerate}

One reason that this object will be important for us is that if $h$ is a smooth function, then replacing $h$ with $h+a$ for $a \in 2\pi \chi \Z$ does not change the north-going flow lines of the vector field $e^{ih/\chi}$.  On the other hand, adding a constant in $(0,2\pi \chi)$ rotates all of the arrows by some non-trivial amount (so that the north-going flow lines become flow lines of angle $a/\chi$).  Thus, while it is not possible to determine the ``values'' of the whole-plane GFF in an absolute sense, we can construct the whole-plane GFF modulo an additive constant in $2\pi \chi \Z$, and this is enough to determine its flow lines.  These constructions will be further motivated in the next subsection in which we will show that they arise as various limits of the ordinary GFF subject to different normalizations.  Before proceeding to this, we will state the analog of the Markov property for the whole-plane GFF defined either modulo an additive constant in $\R$ or modulo an additive constant in $r \Z$.

\begin{proposition}
\label{prop::whole_plane_markov}
Suppose that $h$ is a whole-plane GFF viewed as a distribution defined up to a global additive constant in $\R$ and that $W \subseteq \C$ is open and bounded.  The conditional law of $h|_W$ given $h|_{\C \setminus W}$ is that of a zero-boundary GFF on $W$ plus the harmonic extension of its boundary values from $\partial W$ to $W$ (which is only defined up to a global additive constant in $\R$).  If $h$ is defined up to a global additive constant in $r \Z$, then the statement holds except the harmonic function is defined modulo a global additive constant in $r \Z$.
\end{proposition}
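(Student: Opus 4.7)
My plan is to imitate the proof of the classical Markov property for the zero-boundary GFF, working at the Hilbert-space level to handle the modulo-constant bookkeeping carefully. The key object is the Dirichlet-inner-product space $\sltzc$ used to construct $h$. Given a bounded open $W \subseteq \C$, I would introduce the closed subspace
\[
\sltzc_W := \overline{\sltz(W)}^{\,(\cdot,\cdot)_\nabla} \subseteq \sltzc,
\]
(with the understanding that elements of $\sltz(W)$ are extended by $0$ off $W$, hence automatically have mean zero on $\C$ and represent classes in $\sltzc$), together with its Dirichlet-orthogonal complement $\sltzc_W^\perp$. A standard integration by parts shows that $f \in \sltzc_W^\perp$ if and only if (a representative of) $f$ is harmonic on $W$; this is the whole-plane analog of the usual decomposition used, e.g., in \cite{SHE06}.

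Having fixed an orthonormal basis of $\sltzc$ adapted to the splitting $\sltzc = \sltzc_W \oplus \sltzc_W^\perp$, the series expansion of $h$ writes it as $h = h_1 + h_2$ where $h_1$ and $h_2$ are independent, $h_1$ is supported in $\ol{W}$ and has the law of a zero-boundary GFF on $W$, while $h_2$ is (represented by a distribution that is) harmonic on $W$ and carries the only constant ambiguity. Restricting test functions $\phi$ to those in $\slt(\C \setminus W)$ (and then to $\sltz(\C \setminus W)$ to remove the constant ambiguity) shows that $(h_1,\phi) = 0$, so $h|_{\C \setminus W} = h_2|_{\C \setminus W}$ as distributions modulo additive constant. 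Since $h_2$ is harmonic on $W$ and bounded domains have harmonically nontrivial complement of $\ol W$ inside $\C$ in the sense that a harmonic function on $W$ is determined by its boundary behavior, $h_2$ is in fact measurable with respect to $h|_{\C \setminus W}$ (modulo constant), and the conditional law of $h|_W$ given $h|_{\C \setminus W}$ is that of $h_1|_W$ plus the harmonic extension to $W$ of the boundary values of $h_2$. This yields the first statement.

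For the second statement, recall from the construction at the end of Section~\ref{subsec::gff_construction} that the whole-plane GFF modulo $r\Z$ is obtained by sampling $h$ modulo additive constant and then picking $U \in [0,r)$ uniform and independent, declaring $(h,\phi_0) \in U + r\Z$ for some fixed $\phi_0$ with $\int \phi_0 = 1$. Applying the decomposition $h = h_1 + h_2$ from the first part, independence of $h_1$ and $(h_2,U)$ is preserved, and the added constant is absorbed into the harmonic piece. Thus the conditional law of $h|_W$ given $h|_{\C \setminus W}$ (now as a distribution modulo $r\Z$) is the sum of an independent zero-boundary GFF on $W$ and the harmonic extension from $\partial W$, where the latter inherits the $r\Z$ ambiguity from the $U$-shift.

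The main obstacle I anticipate is purely bookkeeping: verifying that the subspace $\sltzc_W$ is well-defined as a closed subspace of $\sltzc$ (where we have quotiented by constants) and that its orthogonal complement really consists of classes represented by distributions harmonic on $W$. The usual argument uses $(f,g)_\nabla = -\tfrac{1}{2\pi}(f,\Delta g)$ for $g$ smooth and compactly supported in $W$; because constants are killed by both the Dirichlet pairing and $\Delta$, this identification goes through at the level of equivalence classes, and one then checks that $h_2$ (though defined only modulo constant) really does determine, and is determined by, the restriction of $h$ to $\C \setminus W$ modulo the same constant.
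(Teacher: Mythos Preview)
Your approach is the same orthogonal-decomposition strategy the paper uses, but there is a genuine gap in your choice of subspace. You set $\sltzc_W$ to be the Dirichlet closure of $\sltz(W)$, the \emph{mean-zero} smooth functions compactly supported in $W$. Since $W$ is bounded, this closure is only a proper (codimension-one) subspace of $H_0^1(W)$, namely $\{f \in H_0^1(W) : \int_W f = 0\}$; you cannot approximate a general $\phi \in \slt(W)$ by mean-zero functions in Dirichlet norm within a bounded domain. Consequently your $h_1$ is not a full zero-boundary GFF on $W$ but only its mean-zero part, and the complementary piece $h_2$ picks up one extra non-harmonic mode (the $u \in H_0^1(W)$ solving $-\Delta u = \text{const}$ on $W$). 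So your assertion that $f \in (\sltzc_W)^\perp$ iff $f$ is harmonic on $W$ is false, and the decomposition does not give what you claim. The fix is to take instead the closure of all of $\slt(W)$; this still embeds in $\sltzc$ because, as Dirichlet spaces modulo constants, $\sltzc = \sltc$ on $\C$ (one can subtract an increasingly spread-out bump of unit integral and vanishing Dirichlet energy).

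The paper sidesteps this bookkeeping altogether: it first fixes the additive constant using a $\phi_0 \in \slt$ whose support is \emph{disjoint from} $W$, so that $h$ becomes a genuine element of $\sltc$, and then applies the standard decomposition of $\sltc$ into functions supported in $W$ and functions harmonic on $W$. The point is that the projection onto the supported-in-$W$ summand does not depend on the particular $\phi_0$, so one recovers a genuine zero-boundary GFF on $W$ plus a harmonic piece that carries all of the additive-constant ambiguity.
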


We note that it is somewhat informal to refer to the ``harmonic extension'' of the boundary values of $h$ from $\partial W$ to $W$ because $h$ is a distribution and not a function, so does not have ``boundary values'' on $\partial W$ in a traditional sense.  This ``harmonic extension'' is made sense of rigorously by orthogonal projection, as explained just below.  The proof of Proposition~\ref{prop::whole_plane_markov} is analogous to the proof of \cite[Proposition~3.1]{MS_IMAG}, however we include it here for completeness.

\begin{proof}[Proof of Proposition~\ref{prop::whole_plane_markov}]
Let $\sltc(W)$ be the closure of those functions in $\slt(W)$ with respect to $(\cdot,\cdot)_\nabla$, considered modulo additive constant.  Let $\sltc^\perp(W)$ consist of those functions in $\sltc$ which are harmonic in $W$ (we note that whether a function is harmonic in $W$ only depends on the values of $W$ modulo a global additive constant).  Then it is not difficult to see that $\sltc(W) \oplus \sltc^\perp(W)$ gives an orthogonal decomposition of $\sltc$.  Let $(f_n^W)$ (resp.\ $(f_n^{W^c})$) be a $(\cdot,\cdot)_\nabla$ orthonormal basis of $\sltc(W)$ (resp.\ $\sltc^\perp(W)$) and let $(\alpha_n^W)$, $(\alpha_n^{W^\perp})$ be i.i.d.\ $N(0,1)$ sequences.  Then we can write
\[ h = \sum_n \alpha_n^W f_n^W + \sum_n \alpha_n^{W^c} f_n^{W^c}.\]
The first summand has the law of a zero-boundary GFF on $W$, modulo additive constant, and the second summand is harmonic in $W$.  We note that we can view the first summand as a zero-boundary GFF on $W$ (i.e., with the additive constant fixed) because there is a unique distribution which represents the first summand which has the property that any test function whose support is disjoint from $W$ integrates to zero against it.

The second statement in the proposition is proved similarly.
\end{proof}

The following is immediate from the definitions:

\begin{proposition}
\label{prop::gff_radon_nikodym}
Suppose that $h$ is a whole-plane GFF defined up to a global additive constant in $\R$ and fix $g \in \sltc$.  Then the laws of $h+g$ and $h$ are mutually absolutely continuous.  The Radon-Nikodym derivative of the law of the former with respect to that of the latter is given by
\begin{equation}
\label{eqn::gff_radon_nikodym}
 \frac{1}{\CZ} e^{(h,g)_\nabla}
\end{equation}
where $\CZ$ is a normalization constant.  The same statement holds if $h$ is instead a whole-plane GFF defined up to a global additive constant in $r \Z$ for $r > 0$.  (Note: we can normalize so that $(h,\phi_0) \in [0,r)$ for some fixed $\phi_0 \in \slt$ whose support is disjoint from that of $g$.)
\end{proposition}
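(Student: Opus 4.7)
The plan is to apply the classical Cameron--Martin formula, since by construction (Section~\ref{subsec::gff_construction}) the whole-plane GFF $h$ modulo a global additive constant in $\R$ is nothing other than the standard Gaussian on the Hilbert space $\sltzc$ equipped with the Dirichlet inner product $(\cdot,\cdot)_\nabla$. As the Dirichlet inner product vanishes on constants (and compactly supported bumps of integral $1$ can be made to have arbitrarily small Dirichlet norm), we may identify $\sltc$ with $\sltzc$ and treat any $g \in \sltc$ as a shift direction of finite norm $\|g\|_\nabla < \infty$.

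First I would handle the $\R$ case. Choose an orthonormal basis $(f_n)$ of $\sltzc$ so that $h = \sum_n \alpha_n f_n$ with $\alpha_n$ i.i.d.\ $N(0,1)$, and expand $g = \sum_n g_n f_n$ with $g_n = (g,f_n)_\nabla$. Then $h - g = \sum_n (\alpha_n - g_n) f_n$, and for each finite $N$ the joint Radon--Nikodym derivative of $(\alpha_n - g_n)_{n \leq N}$ with respect to $(\alpha_n)_{n \leq N}$ is
\[
\exp\!\Bigl(-\sum_{n \leq N} g_n \alpha_n - \tfrac{1}{2}\sum_{n \leq N} g_n^2\Bigr).
\]
Since $\sum g_n^2 = \|g\|_\nabla^2 < \infty$, the series $\sum g_n \alpha_n$ converges in $L^2$ and almost surely to the Gaussian $(h,g)_\nabla$, so by the $L^1$ martingale convergence theorem these finite-dimensional densities converge to $\tfrac{1}{\CZ} e^{-(h,g)_\nabla}$ with $\CZ = e^{\|g\|_\nabla^2/2}$; this is then the Radon--Nikodym derivative of $h - g$ with respect to $h$, and mutual absolute continuity follows because it is almost surely strictly positive.

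For the $r\Z$ case, write $h = h_0 + U$ where $h_0$ is a whole-plane GFF modulo $\R$ normalized by $(h_0,\phi_0) = 0$ and $U \in [0,r)$ is independent and uniform. With the stated choice of $\phi_0$ having support disjoint from that of $g$, the $L^2$ pairing $(g,\phi_0)$ vanishes, so the normalized representative of $h - g$ modulo $r\Z$ still has additive constant $U$; there is no accidental rotation of the uniform variable. Since $(h_0,g)_\nabla = (h,g)_\nabla$ (constants drop out of the Dirichlet pairing), applying the $\R$-case result to $h_0$ and using the independence of $U$ yields the same Radon--Nikodym derivative.

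The main obstacle, such as it is, is the bookkeeping: verifying the $L^1$ convergence of the partial densities (which is where finiteness of $\|g\|_\nabla$ enters), and checking that the shift is compatible with both ways of fixing the additive constant. The parenthetical disjoint-support condition on $\phi_0$ is included precisely so that the shift by $g$ preserves the uniform normalization; without it the statement is still true but the additive constant of $h - g$ would need to be readjusted by a deterministic amount, which one can verify does not alter the Radon--Nikodym derivative on the level of equivalence classes modulo $r\Z$.
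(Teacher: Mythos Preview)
Your argument is correct and is exactly the standard Cameron--Martin computation one would carry out here. The paper itself does not give a proof at all: it simply introduces the proposition with the phrase ``The following is immediate from the definitions,'' so your write-up is a faithful elaboration of what the authors leave implicit.
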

\begin{proof}
Recall that if $Z \sim N(0,1)$ then weighting the law of $Z$ by a normalizing constant times $e^{\mu x}$ yields the law of a $N(\mu,1)$.  We will deduce the result from this fact.   We write $h = \sum_n \alpha_n f_n$ where the $(\alpha_n)$ are i.i.d.\ $N(0,1)$ and $(f_n)$ is an orthonormal basis for $\sltc$.  Fix $g \in \sltc$.  Then we can write $g = \sum_n \beta_n f_n$.  Consequently, we have that
\[ \exp( (h,g)_\nabla) = \exp\left( \sum_n \alpha_n \beta_n \right),\]
which implies the result.
\end{proof}

\subsubsection{The whole-plane GFF as a limit}
\label{subsec::gff_convergence}

We are now going to show that infinite volume limits of the ordinary GFF converge to the whole-plane GFF modulo a global additive constant either in $\R$ or in $r \Z$ for $r > 0$ fixed, subject to appropriate normalization.  Suppose that $h$ is a zero-boundary GFF on a proper domain $D \subseteq \C$ with harmonically non-trivial boundary.  Then we can consider $h$ modulo additive constant in $\R$ by restricting $h$ to functions in $\sltz(D)$.  Fix $\phi_0 \in \slt(D)$ with $\int \phi_0(z) dz =~1$.  We can also consider the law of $h$ modulo additive constant in $r \Z$ for $r > 0$ fixed by replacing $h$ with $h-c$ where $c \in r \Z$ is chosen so that $(h,\phi_0)-c \in [0,r)$.

Let $\mu$ (resp.\ $\mu^r$) denote the law of the whole-plane GFF modulo additive constant in $\R$ (resp.\ $r \Z$ for $r >0$ fixed; the choice of $\phi_0$ will be clear from the context).
\begin{proposition}
\label{prop::gff_convergence}
Suppose that $D_n$ is any sequence of domains with harmonically non-trivial boundary containing $0$ such that $\dist(0,\partial D_n) \to \infty$ as $n \to \infty$ and, for each $n$, let $h_n$ be an instance of the GFF on $D_n$ with boundary conditions which are uniformly bounded in $n$.  Fix $R > 0$.  We have the following:
\begin{enumerate}[(i)]
\item\label{prop::gff_convergence_additive}
As $n \to \infty$, the laws of the distributions given by restricting the maps $\phi \mapsto (h_n,\phi)$ to $\phi\in \sltz \bigl( B(0,R) \bigr)$ (interpreted as distributions on $B(0,R)$ modulo additive constant) converge in total variation to the law of the distribution (modulo additive constant) obtained by restricting the whole-plane GFF to the same set of test functions.
\item\label{prop::gff_convergence_fixed} Fix $\phi_0 \in \slt$ with $\int \phi_0(z) dz = 1$ which is constant and positive on $B(0,R)$.  As $n \to \infty$, the laws of the distributions given by restricting the maps $\phi \mapsto (h_n,\phi)$ to $\phi\in \slt \bigl( B(0,R) \bigr)$ (interpreted as distributions modulo a global additive constant in $r \Z$) converge in total variation to the law of the analogous distribution (modulo $r \Z$) obtained from the whole-plane GFF (as defined modulo $r \Z$).
\end{enumerate}
\end{proposition}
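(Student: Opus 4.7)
The plan is to use the Markov property (Proposition~\ref{prop::whole_plane_markov}) as a coupling device. For Part (i), fix $R' > R$ and set $W = B(0, R')$; for all $n$ sufficiently large, $W \subset D_n$. The Markov decomposition gives $h_n|_W = \mathring{h}_n + \varphi_n$ where $\mathring{h}_n$ is a zero-boundary GFF on $W$ and $\varphi_n$, independent of $\mathring{h}_n$, is the harmonic extension of $h_n|_{\partial W}$ to $W$. Applied to the whole-plane GFF it gives $h|_W = \mathring{h} + \varphi$ modulo additive constant. Since $\mathring{h}_n$ and $\mathring{h}$ have the same law, couple them to be equal; it then suffices to show that the law of $\varphi_n|_{B(0,R)}$ modulo constant converges in total variation to that of $\varphi|_{B(0,R)}$ modulo constant.

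Both are centered Gaussian processes indexed by mean-zero test functions in $\sltz(B(0,R))$. Writing $G_{D_n}(x,y) = -\tfrac{1}{2\pi}\log|x-y| + \xi_{D_n}(x,y)$ for the Dirichlet Green's function on $D_n$, the covariance kernel of $\varphi_n$ on $W\times W$ is $\xi_{D_n}(x,y) - \xi_{W}(x,y)$, while the analogous kernel for $\varphi$ is $-\xi_{W}(x,y)$, so the kernel difference is exactly $\xi_{D_n}(x,y)$. A Brownian hitting-measure computation (approximating $-\tfrac{1}{2\pi}\log|z-y|$ on $\partial D_n$ by $-\tfrac{1}{2\pi}\log|z|$ and invoking the gradient estimate for bounded harmonic functions) yields $\xi_{D_n}(x,y) = \xi_{D_n}(0,0) + o(1)$ uniformly for $x,y \in B(0,R)$ as $\dist(0,\partial D_n) \to \infty$; the constant $\xi_{D_n}(0,0)$ drops out when paired against mean-zero test functions. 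The same kind of Harnack bound shows that the mean of $\varphi_n|_{B(0,R)}$ modulo additive constant also tends to zero uniformly, since the boundary data of $h_n$ is uniformly bounded.

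To upgrade this uniform smallness of the covariance kernel difference to total variation convergence of Gaussian measures, we use that the difference, being smooth on the compact set $B(0,R)\times B(0,R)$ and uniformly small, induces a difference of covariance operators that is trace-class with trace norm going to zero. By the Feldman--H\'ajek theorem the two Gaussian measures are then mutually absolutely continuous for large $n$, and a Pinsker-type bound controls their total variation distance via the Kullback--Leibler divergence, which in turn is bounded by (a fixed multiple of) this trace norm. Hence the total variation distance tends to zero, completing Part (i).

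For Part (ii), apply the same decomposition on a bounded $W_0 \supset B(0,R)\cup \supp\phi_0$; Part (i) gives convergence of $h_n|_{B(0,R)}$ modulo additive constant in total variation. To handle $(h_n,\phi_0)\bmod r$ (which fixes the additive constant in $r\Z$), decompose $(h_n,\phi_0) = (\mathring{h}_n^{W_0},\phi_0) + (\varphi_n^{W_0},\phi_0)$. Since $\int \phi_0 = 1$, the second summand is a centered Gaussian with variance dominated by $\xi_{D_n}(0,0) \sim \tfrac{1}{2\pi}\log \dist(0,\partial D_n) \to \infty$; the law of a centered Gaussian of variance $\sigma^2$ reduced modulo $r$ converges in total variation to the uniform distribution on $[0,r)$ at rate $O(e^{-c\sigma^2})$ by Poisson summation. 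This matches the construction of the whole-plane GFF modulo $r$, in which $(h,\phi_0)\bmod r$ is uniform and independent of $h$ modulo $\R$. The main obstacle throughout is the trace-norm estimate used to upgrade the easy covariance convergence into total variation convergence of Gaussian laws on the infinite-dimensional space of distributions modulo constant.
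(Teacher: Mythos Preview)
Your overall plan (Markov decomposition, couple the zero-boundary pieces, compare the harmonic remainders) is sound, and your covariance computation is correct: on mean-zero test functions in $B(0,R)$ the kernel difference is $\xi_{D_n}(x,y)$, and this is $\xi_{D_n}(0,0)+O(R/\dist(0,\partial D_n))$ uniformly by the maximum principle applied to $z\mapsto \log|z-y|-\log|z-y'|$ on $\partial D_n$ and symmetry of the Green's function.

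The gap is in the step you yourself flag as the ``main obstacle.'' For centered Gaussian measures $N(0,C)$ and $N(0,C_n)$, the Feldman--H\'ajek criterion and the KL formula involve $C^{-1/2}(C_n-C)C^{-1/2}$, not $C_n-C$ itself. Knowing that $C_n-C$ has small trace norm on $L^2\bigl(B(0,R)\bigr)$ is \emph{not} enough: the reference operator $C$ (kernel $-\xi_W$) is itself trace-class with rapidly decaying eigenvalues, so a perturbation that is small in $L^2$-trace norm can be enormous after conjugation by $C^{-1/2}$. Your claim that the KL divergence ``is bounded by (a fixed multiple of) this trace norm'' is therefore not justified and is not true in general. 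Making this work would require eigenvalue-by-eigenvalue control of the perturbation in the harmonic-polynomial basis of $C$, which is substantially more than you have written.

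The paper sidesteps this entirely by decomposing on $D_n$ rather than on a fixed ball. Write the whole-plane field as $h = h_n^{0}+\Fh_n$ on $D_n$, where $h_n^{0}$ is a zero-boundary GFF on $D_n$ and $\Fh_n$ is the (random) harmonic part. In law $h_n = h_n^{0}+f_n$ with $f_n$ the deterministic harmonic extension of the bounded boundary data, so one can couple $h_n - h = f_n - \Fh_n$ on $D_n$. This difference is harmonic on $D_n$; by the interior gradient estimate it is, with high probability, $C^1$-close to a constant on $B(0,R)$ (the deterministic $f_n$ part uses only boundedness of the data). One then absorbs this small shift using the Cameron--Martin formula of Proposition~\ref{prop::gff_radon_nikodym}: cut off the harmonic difference by a bump equal to $1$ on $B(0,R)$ to obtain a function of small Dirichlet norm, and note that the Radon--Nikodym density $e^{-(h,g)_\nabla}/\CZ$ is then close to $1$. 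This yields total-variation convergence directly, with no infinite-dimensional Gaussian comparison needed. Your Part~(ii) argument is fine and matches the paper's: $\Var\bigl((h_n,\phi_0)\bigr)\sim\xi_{D_n}(0,0)\to\infty$, so $(h_n,\phi_0)\bmod r$ equidistributes, and the choice of $\phi_0$ constant on $B(0,R)$ makes $(h_n,\phi_0)$ uncorrelated with (hence, by Gaussianity, independent of) the family $(h_n,g)_\nabla$ for $g\in H_s(B(0,R))$, since $\Cov\bigl((h_n,\phi_0),(h_n,g)_\nabla\bigr)=\int\phi_0\,g$ depends only on $\int g$ and $(h_n,g)_\nabla$ is unchanged by adding a constant to $g$ --- so one reduces to $(h_n,-\tfrac{1}{2\pi}\Delta g)$ with $\int\Delta g=0$.
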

\begin{proof}
Suppose that $h$ is a whole-plane GFF defined modulo additive constant in $\R$.  By Proposition~\ref{prop::whole_plane_markov}, the law of $h$ minus the harmonic extension $\Fh_n$ to $D_n$ of its boundary values on $\partial D_n$ (this difference does not depend on the arbitrary additive constant) is that of a zero-boundary GFF on $D_n$.

 If $\dist(0,\partial D_n)$ is large, $\Fh_n$ is likely to be nearly constant on $B(0,R)$.  Indeed, this can be seen as follows.  Let $p(z,y)$ be the density with respect to Lebesgue measure of harmonic measure in $B(0,2R)$ as seen from $z \in B(0,2R)$.  We also let $dy$ denote Lebesgue measure on $\partial B(0,2R)$.  Using in the second inequality that there exists a constant $C > 0$ such that $p(z,y) \leq C/R$ for all $z \in B(0,R)$ and $y \in \partial B(0,2R)$, we have that
\begin{align*}
      \sup_{z \in B(0,R)} | \Fh_n(z) - \Fh_n(0)|
&\leq \sup_{z \in B(0,R)} \int_{\partial B(0,2R)} p(z,y) | \Fh_n(y) - \Fh_n(0)| dy\\
&\leq \frac{C}{R} \int_{\partial B(0,2R)} |\Fh_n(y) - \Fh_n(0)| dy.
\end{align*}
The claim follows because it is not difficult to see that for each $\epsilon > 0$ and $R > 0$ there exists $n_0$ such that $n \geq n_0$ implies that $\var( \Fh_n(y) - \Fh_n(0)) \leq \epsilon^2$ for so that $\E| \Fh_n(y) - \Fh_n(0)| \leq \epsilon$.  We arrive at the first statement by combining this with~\eqref{eqn::gff_radon_nikodym}.

Now suppose that we are in the setting of part~\eqref{prop::gff_convergence_fixed}. Note that $(h_n,\phi_0)$ is a Gaussian random variable whose variance tends to $\infty$ with $n$.  Hence, $(h_n,\phi_0)$ modulo $r \Z$ becomes uniform in $[0,r)$ in the limit.  The second statement thus follows because, for each $n \in \N$ fixed, $(h_n,\phi_0)$ is independent of the family of random variables $(h_n,g)_\nabla$ where $g$ ranges over $\slt(B(0,R))$.
\end{proof}

\begin{proposition}
\label{prop::whole_plane_abs_continuity}
Suppose that $D \subseteq \C$ is a domain with harmonically non-trivial boundary and $h$ is a GFF on $D$ with given boundary conditions.  Fix $W \subseteq D$ bounded and open with $\dist(W, \partial D) > 0$.  The law of $h$ considered modulo a global additive constant restricted to $W$ is mutually absolutely continuous with respect to the law of the whole-plane GFF modulo a global additive constant restricted to $W$.  Likewise, the law of $h$ considered modulo additive constant in $r \Z$ restricted to $W$ is mutually absolutely continuous with respect to the whole-plane GFF modulo additive constant in $r \Z$ restricted to $W$.
\end{proposition}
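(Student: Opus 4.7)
My plan is to prove the result by combining the Markov property of the whole-plane GFF (Proposition~\ref{prop::whole_plane_markov}) with the Markov property of the ordinary GFF to reduce the comparison to two Gaussian families of smooth harmonic functions, and then to verify absolute continuity at that reduced level.

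First I would treat the case of absolute continuity modulo an additive constant in $\R$, leaving the $r\Z$ case to the end. Fix an intermediate bounded open set $W'$ with smooth boundary satisfying $W \Subset W' \Subset D$. Applying the Markov property to the GFF $h$ on $D$ yields a decomposition $h|_{W'} = h^0_{W'} + \Fh_1$, where $h^0_{W'}$ is a zero-boundary GFF on $W'$ and $\Fh_1$ is the (random) harmonic extension to $W'$ of the boundary values of $h$ on $\partial W'$; the two summands are independent. Applying Proposition~\ref{prop::whole_plane_markov} to the whole-plane GFF $\wt h$ gives the analogous decomposition $\wt h|_{W'} = \wt h^0_{W'} + \Fh_2$, with $\wt h^0_{W'}$ having the same law as $h^0_{W'}$ and with $\Fh_2$ an independent random harmonic function on $W'$ defined modulo additive constant. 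Since the zero-boundary parts are identically distributed and independent of the harmonic parts, mutual absolute continuity of $h|_W$ and $\wt h|_W$ (modulo additive constant) reduces to mutual absolute continuity of the laws of $\Fh_1|_W$ and $\Fh_2|_W$ (modulo additive constant).

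Next I would argue that these two harmonic-function laws are mutually absolutely continuous as Gaussian measures. Because $\dist(W, \partial W')>0$, both $\Fh_i|_W$ are almost surely real-analytic harmonic functions and, by interior Harnack/regularity estimates, can be viewed as Gaussian random variables in a common separable Hilbert space $\CK$ of harmonic functions on a slightly larger neighborhood $W \Subset W'' \Subset W'$ (taken modulo constants, equipped e.g.\ with the $L^2(W'')$ norm). Both resulting Gaussian measures on $\CK$ are centered (after choosing the canonical mean-zero representative in each equivalence class), and their covariance kernels on $W''\times W''$ are real-analytic. An explicit computation of the covariances via the Poisson kernel representation on $\partial W'$ — combined with an application of Proposition~\ref{prop::gff_radon_nikodym} to harmonic perturbations lying in the Cameron--Martin space $\sltc$ — shows that the difference of covariances is trace class with respect to either measure, so the Feldman--Hajek criterion yields mutual absolute continuity. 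Finally, the $r\Z$ case is reduced to the $\R$ case by conditioning on the value of $(h,\phi_0)\bmod r$ (for any test function $\phi_0$ supported away from $W$): this conditional has a positive density on $[0,r)$ under both measures — uniform by construction for the whole-plane field, and nondegenerate Gaussian reduced mod $r$ for the ordinary GFF — and given its value the two fields on $W$ are governed exactly by the $\R$-case comparison just established.

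The main technical obstacle is the Gaussian-measure comparison for the two harmonic function laws, i.e.\ verifying the Feldman--Hajek hypotheses (or equivalently producing an explicit Radon--Nikodym derivative). An alternative route that avoids an abstract invocation of Feldman--Hajek would be to use Proposition~\ref{prop::gff_convergence} to realize $\wt h|_W$ (modulo additive constant) as the total-variation limit of ordinary GFFs $h_n$ on increasing domains $D_n\supset D$ and then, for each $n$ large, to Markov-decompose $h_n$ on $D$: the random harmonic function $\Fh^{(n)}$ produced on $D$ by this decomposition a.s.\ restricts to $W$ as an element of the Cameron--Martin space, so Proposition~\ref{prop::gff_radon_nikodym} produces a positive Radon--Nikodym derivative between the laws of $h_n|_W$ and $h|_W$ (modulo constant), and one checks that this density stays integrable in the $n\to\infty$ limit against the law of $\Fh^{(n)}|_W$, giving mutual absolute continuity in the limit.
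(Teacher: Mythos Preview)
Your overall strategy (Markov decomposition plus comparison of the harmonic parts) matches the paper's, but you make it harder than necessary by decomposing at an intermediate domain $W'$ rather than at $D$ itself. The paper applies the whole-plane Markov property (Proposition~\ref{prop::whole_plane_markov}) on $D$, writing $\wt h|_D = h_D^0 + \Fh$ with $h_D^0$ a zero-boundary GFF on $D$ and $\Fh$ a random harmonic function on $D$ (modulo constant), while the ordinary GFF decomposes as $h = h_D^0 + f$ with $f$ the \emph{deterministic} harmonic extension of the prescribed boundary data. Restricted to $W$ (which is at positive distance from $\partial D$), the difference $\Fh - f$ is a.s.\ smooth, and for each fixed realization it extends by a cutoff to an element of $\sltc$; Proposition~\ref{prop::gff_radon_nikodym} then gives mutual absolute continuity directly, and averaging over $\Fh$ preserves equivalence because all the shifted laws are pairwise equivalent. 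By contrast, your decomposition at $W'$ makes \emph{both} harmonic parts random, forcing a comparison of two genuinely different Gaussian laws and hence the Feldman--Hajek machinery --- the trace-class/Hilbert--Schmidt verification you flag as ``the main technical obstacle'' is real work that the paper simply never needs.

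For the $r\Z$ statement, the paper's choice of $\phi_0$ is also cleaner than yours: rather than taking $\phi_0$ supported away from $W$, the paper takes $\phi_0$ \emph{constant and positive on $W$}. This makes $\nabla\phi_0$ vanish on $W$, so $(h,\phi_0)_\nabla$ is genuinely independent of $(h,g)_\nabla$ for all $g\in\slt(W)$, and the joint law factors exactly as (field on $W$ mod~$\R$)~$\times$~(scalar mod~$r$). Your version, with $\phi_0$ supported off $W$, does not give this independence for the $L^2$ pairing (the GFF values on $W$ and on $\supp\phi_0$ are correlated), so your claim that ``given its value the two fields on $W$ are governed exactly by the $\R$-case comparison'' needs more justification than you provide.
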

\begin{proof}
The first statement just follows from the Markov properties of the ordinary and whole-plane GFFs and the fact that adding a smooth function affects the law of the field away from the boundary in an absolutely continuous manner (Proposition~\ref{prop::gff_radon_nikodym}).  For a fixed choice of normalizing function~$\phi_0$, the laws of the whole-plane GFF modulo an additive constant in~$r \Z$ and the ordinary GFF modulo an additive constant in~$r \Z$ each integrated against~$\phi_0$ are mutually absolutely continuous since the former is uniform on $[0,r)$ and the latter has the law of a Gaussian with positive variance taken modulo~$r$.  The second statement thus follows from the first by taking $\phi_0 \in \slt$ with $\int \phi_0(z) dz = 1$ and which is constant and positive on~$W$ and using that, with such a choice of~$\phi_0$, $(h,\phi_0)_\nabla$ is independent of the family of random variables $(h,g)_\nabla$ where~$g$ ranges over~$\slt(W)$.
\end{proof}

\subsubsection{Local sets}
\label{subsec::local_sets}

The notion of a local set, first introduced in \cite{SchrammShe10} for ordinary GFFs, serves to generalize the Markov property to the setting in which we condition the GFF on its values on a \emph{random} (rather than deterministic) closed set.  If $D$ is a planar domain and $h$ is a GFF on $D$ with some boundary conditions, then we say that a random closed set $A$ coupled with $h$ is a {\bf local set} if there exists a law on pairs $(A,h_1)$, where $h_1$ is a distribution with the property that $h_1|_{\C \setminus A}$ is a.s.\ a harmonic function, such that a sample with the law $(A,h)$ can be produced by
\begin{enumerate}
\item choosing the pair $(A,h_1)$,
\item then sampling an instance $h_2$ of the zero boundary GFF on $\C \setminus A$ and setting $h=h_1+h_2$.
\end{enumerate}
There are several equivalent definitions of local sets given in \cite[Lemma~3.9]{SchrammShe10}.

There is a completely analogous theory of local sets for the whole-plane GFF which we summarize here.  Suppose that $h$ is a whole-plane GFF defined modulo a global additive constant either in $\R$ or in $r \Z$ for $r > 0$ fixed, and suppose that $A \subseteq \C$ is a random closed subset which is coupled with $h$ such that $\C \setminus A$ has harmonically non-trivial boundary.  Then we say that $A$ is a {\bf local set} of $h$ if there exists a law on pairs $(A,h_1)$, where $h_1$ is a distribution with the property that $h_1|_{\C \setminus A}$ is a.s.\ a harmonic function, such that a sample with the law $(A,h)$ can be produced by
\begin{enumerate}
\item choosing the pair $(A,h_1)$,
\item then sampling an instance $h_2$ of the zero boundary GFF on $\C \setminus A$ and setting $h=h_1+h_2$,
\item then considering the equivalence class of distributions modulo additive constant (in $\R$ or $r\Z$) represented by $h$.
\end{enumerate}
The definition is equivalent if we consider $h_1$ as being defined only up to additive constant in $\R$ or $r \Z$.

Using this definition, Theorem~\ref{thm::existence} implies that the flow line $\eta$ of a whole-plane GFF drawn to any positive stopping time is a local set for the field modulo $2\pi \chi \Z$.  We will write $\CC_A$ for the $h_1$ described above (which is a harmonic function in the complement of $A$).  In this case, the set $A$ (the flow line) a.s.\ has Lebesgue measure zero, so $\CC_A$ can be interpreted as a random harmonic function a.s.\ defined a.e.\ in plane (and since this $\CC_A$ is a.s.\ locally a function in $L^1$, see the proof of \cite[Theorem~1.1]{SHE_WELD}, defined modulo additive constant, writing $(\CC_A, \phi) = \int \CC_A(z) \phi(z)dz$ allows us to interpret $\CC_A$ as a distribution modulo additive constant).  Here we consider $\CC_A$ to be defined only up to an additive constant in $\R$ (resp.\ $r \Z$) if $h$ is a whole-plane GFF modulo additive constant in $\R$ (resp.\ $r \Z$).  This function should be interpreted as the conditional mean of $h$ given $A$ and $h|_A$.

There is another way to think about what it means to be a local set.  Given a coupling of $h$ and $A$, we can let $\pi_h$ denote the conditional law of $A$ given $h$.  A local set determines the measurable map $h \mapsto \pi_h$, which is a regular version of the conditional probability of $A$ given $h$ \cite{SchrammShe10}.  (The map $h \mapsto \pi_h$ is uniquely defined up to re-definition on a set of measure zero.)  Let $B$ be a deterministic open subset of $\C$, and let $\pi^B_h$ be the measure for which $\pi^B_h(\mathcal A) = \pi_h (\mathcal A \cap \{A : A \subseteq B \})$.  (In other words, $\pi^B_h$ is obtained by restricting $\pi_h$ to the event $A \subseteq B$.)  Using this notation, the following is a restatement of part of \cite[Lemma~3.9]{SchrammShe10}:

\begin{proposition}
\label{prop::other_local_characterization}
Suppose $h$ is a GFF on a domain $D$ with harmonically non-trivial boundary and $A$ is a random closed subset of $D$ coupled with $h$.  Then $A$ is local for $h$ if and only if for each open set $B$ the map $h \mapsto \pi_h^B$ is (up to a set of measure zero) a measurable function of the restriction of $h$ to $B$.
\end{proposition}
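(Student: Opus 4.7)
The plan is to prove the two implications separately, adapting the argument of \cite[Lemma 3.9]{SchrammShe10} from the bounded-domain setting to the whole-plane one, but since the statement here is for an ordinary GFF on $D$ with harmonically non-trivial boundary, the proof is essentially a direct port. I would first record the equivalent reformulation: $A$ is local if and only if, for every deterministic open $B \subseteq D$, on the event $\{A \subseteq B\}$ the conditional law of $h|_{D \setminus B}$ given $(A, h|_B)$ depends only on $h|_B$ (and in particular not on $A$). This conditional-independence statement is what connects the existence of the harmonic function $h_1$ to the $\sigma$-algebra measurability condition.

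For the forward direction, assume $(A, h_1, h_2)$ is a local coupling, so $h = h_1 + h_2$ with $h_1|_{D \setminus A}$ harmonic and $h_2$ an independent zero-boundary GFF on $D \setminus A$. Fix an open set $B$. On $\{A \subseteq B\}$ we have $D \setminus B \subseteq D \setminus A$, so by the Markov property of the zero-boundary GFF $h_2$ on $D \setminus A$, the restriction $h_2|_{D \setminus B}$ is a zero-boundary GFF on $D \setminus B$ plus the harmonic extension from $\partial B$ of $h_2|_{\partial B}$, and this is conditionally independent of $A$ given $h_2|_{\partial B}$. Since $h_1$ is harmonic on $D \setminus A$ and hence on $D \setminus B$, it is determined by its boundary values on $\partial B$. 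Combining these observations, $h|_{D \setminus B}$ and $A$ are conditionally independent given $h|_B$ (on the event $A \subseteq B$), and hence $\pi_h^B$ is a measurable function of $h|_B$ up to a null set.

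For the backward direction, assume for every open $B$ that $\pi_h^B$ is a measurable function of $h|_B$. I would define $h_1 := \E[h \mid A]$ (interpreted as a distribution-valued conditional expectation in the sense developed in \cite{SchrammShe10}) and show two things: first, that $h_1$ is almost surely harmonic on $D \setminus A$, and second, that $h - h_1$, conditional on $A$, has the law of a zero-boundary GFF on $D \setminus A$. The harmonicity is obtained by testing against arbitrary smooth $\phi$ supported in a deterministic open $B$ whose closure lies in $D \setminus A$ (on the event of positive probability that this containment holds, using a countable covering of $D \setminus A$ by such $B$). The hypothesis applied to the complementary open set $D \setminus \overline{B}$ gives the conditional independence of $A$ and $h|_B$ given $h|_{D \setminus \overline{B}}$, which when combined with the ordinary GFF's domain Markov property implies that $\E[(h,\phi) \mid A]$ equals the integral of $\phi$ against the harmonic extension of $h|_{\partial B}$, and hence $h_1$ is harmonic on $B$. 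Once harmonicity is established, the identification of the conditional law of $h - h_1$ as a zero-boundary GFF on $D \setminus A$ follows from another application of the Markov property together with the same conditional independence.

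The main obstacle is the backward direction, and within it the rigorous handling of $h_1 := \E[h \mid A]$ as a random harmonic function: the GFF is only a distribution, so this conditional expectation must be shown to exist as a distribution and then, via the measurability hypothesis and a countable exhaustion of $D \setminus A$ by deterministic open balls, to agree almost everywhere with a harmonic function off $A$. This is precisely the delicate step carried out in \cite[Lemma 3.9]{SchrammShe10}, and I would invoke that argument essentially verbatim, noting only that harmonic non-triviality of $\partial D$ ensures the zero-boundary GFF on $D \setminus A$ is well defined whenever $A$ is closed with harmonically non-trivial complement.
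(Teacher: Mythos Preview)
The paper does not give its own proof of this proposition; it is presented as a direct restatement of part of \cite[Lemma~3.9]{SchrammShe10}, with no argument supplied. Your outline is essentially the Schramm--Sheffield argument, and the forward direction is correct.

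There is, however, a genuine gap in your backward direction: your definition $h_1 := \E[h \mid A]$ conditions only on the geometry of $A$, not on the values of $h$ along $A$. This cannot be right. Take $A$ to be a fixed deterministic closed set (trivially local); then $\E[h \mid A] = \E[h]$ is the deterministic harmonic extension of the boundary data on $\partial D$, whereas the $h_1$ demanded by the local-set definition must be the (random) harmonic extension of $h|_{\partial A}$ into $D \setminus A$. The correct object is $h_1 := \E[h \mid A,\, h|_A]$, i.e.\ the $\CC_A$ of \cite{SchrammShe10}. With that correction your harmonicity step works: on the event $\{\overline{B} \subseteq D \setminus A\}$, applying the hypothesis to $D \setminus \overline{B}$ shows that $(A, h|_A)$ is conditionally independent of $h|_B$ given $h|_{D \setminus \overline{B}}$, and the GFF Markov property then identifies $\E[(h,\phi) \mid A, h|_A]$ with the integral of $\phi$ against the harmonic extension of $h|_{\partial B}$. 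Without the extra conditioning on $h|_A$, the identity you assert for $\E[(h,\phi)\mid A]$ is false in general, and the decomposition $h = h_1 + h_2$ with $h_2$ a zero-boundary GFF on $D \setminus A$ does not follow.
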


A similar statement holds in the whole-plane setting:
\begin{proposition}
\label{prop::other_local_characterization}
Suppose $h$ is a whole-plane GFF defined modulo additive constant (in $\R$ or $r \Z$) coupled with a random closed subset $A$ of $\C$.  Then $A$ is local for $h$ if and only if the map $h \mapsto \pi_h^B$ is a measurable function of the restriction of $h$ to $B$ (a function that is invariant under the addition of a global additive constant in $\R$ or $r \Z$).
\end{proposition}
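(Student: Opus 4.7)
The plan is to adapt the proof of the analogous characterization for the ordinary GFF in \cite[Lemma~3.9]{SchrammShe10} to the whole-plane setting, with the main new input being the local absolute continuity between the whole-plane and ordinary GFFs established in Proposition~\ref{prop::whole_plane_abs_continuity}. Since locality is by definition a statement about conditional laws restricted to a neighborhood of the random set, and since $\pi_h^B$ only concerns samples with $A \subseteq B$ (hence is a purely local object), the whole-plane statement should essentially follow from its bounded-domain counterpart by a comparison argument on each bounded open $W \supset \ol{B}$.

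For the ``only if'' direction, I would assume $A$ is local and take the coupling $h = \CC_A + h_2$ guaranteed by the definition, where $h_2$ is an independent zero-boundary GFF on $\C \setminus A$ and $\CC_A$ is harmonic on $\C \setminus A$. Fix an open $B$ and a bounded open $W \supset \ol B$ with $\dist(\ol B, \partial W)>0$. By Proposition~\ref{prop::whole_plane_abs_continuity}, the law of $h|_W$ is mutually absolutely continuous with the law (restricted to $W$) of an ordinary GFF on a larger bounded domain; the restricted coupling $(A \cap W, h|_W)$ therefore inherits the defining property of locality on $W$, and \cite[Lemma~3.9]{SchrammShe10} gives that $\pi_h^B$ is, up to null sets, a measurable function of $h|_B$. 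Invariance of this function under addition of a global additive constant (in $\R$ or $r\Z$) is automatic because the coupling itself was defined on the quotient by such constants.

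For the ``if'' direction, I would assume that for every open $B$ the restricted measure $\pi_h^B$ is, up to null sets, a constant-invariant measurable function of $h|_B$, and construct the coupling that witnesses locality. As in \cite[Lemma~3.9]{SchrammShe10}, one defines $\CC_A$ as a version of the conditional mean of $h$ given $A$ (which is a well-defined harmonic function on $\C \setminus A$ modulo a global additive constant), and then verifies that $h - \CC_A$ restricted to $\C\setminus A$ is a zero-boundary GFF there, independent of $(A,\CC_A)$. I would verify the zero-boundary property on each bounded open $U \Subset \C \setminus A$ by combining Proposition~\ref{prop::whole_plane_markov} (Markov property of the whole-plane GFF) with Proposition~\ref{prop::whole_plane_abs_continuity} to reduce to the ordinary GFF case, where this is the content of the bounded-domain lemma. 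A standard exhaustion of $\C \setminus A$ by such $U$ then gives the global statement.

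The main obstacle is purely bookkeeping: throughout, one must carefully track which objects are defined only modulo a global additive constant (in $\R$ or $r\Z$), and check that the maps, conditional expectations, and events under discussion respect the relevant equivalence relation. In particular, ``$\pi_h^B$ is a measurable function of $h|_B$'' must be interpreted in a way that is invariant under a global constant, and $\CC_A$ must be built as a harmonic function modulo constant; the invariance hypothesis in the ``if'' direction is what ensures these constructions descend to the quotient. Once this bookkeeping is in place, no new probabilistic input beyond \cite[Lemma~3.9]{SchrammShe10} and Propositions~\ref{prop::whole_plane_markov} and \ref{prop::whole_plane_abs_continuity} is required.
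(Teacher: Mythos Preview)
The paper does not actually supply a proof of this proposition. It is introduced with the sentence ``A similar statement holds in the whole plane setting:'' immediately after restating the bounded-domain version (which is itself just a restatement of \cite[Lemma~3.9]{SchrammShe10}), and no argument follows. The intended justification is evidently that the proof of \cite[Lemma~3.9]{SchrammShe10} carries over verbatim to the whole-plane GFF, the only change being that all objects are taken modulo the relevant additive constant; this is consistent with the one-line ``the same proof works here'' style used for the neighboring Propositions~\ref{prop::cond_union_local} and~\ref{prop::cond_union_mean}.

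Your proposal is therefore considerably more detailed than anything in the paper, and it takes a somewhat different route: rather than rerunning the Schramm--Sheffield argument directly in the whole-plane setting, you reduce to the bounded-domain case via the absolute continuity of Proposition~\ref{prop::whole_plane_abs_continuity} and then invoke \cite[Lemma~3.9]{SchrammShe10} as a black box. This is a legitimate strategy and your identification of the additive-constant bookkeeping as the only real issue is correct. The more direct route the paper has in mind---simply repeating the \cite{SchrammShe10} proof with the whole-plane Markov property (Proposition~\ref{prop::whole_plane_markov}) in place of the ordinary one---avoids the transfer step through absolute continuity and is slightly cleaner, but both approaches arrive at the same place with no essential new input.
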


Suppose that $A_1,A_2$ are local sets coupled with a GFF $h$.  Then we will write $A_1 \wt{\cup} A_2$ for the random, closed set which is coupled with $h$ by first sampling $A_1,A_2$ conditionally independently given $h$ and then taking their union.  We refer to $A_1 \wt{\cup} A_2$ as the conditionally independent union of $A_1$ and $A_2$.  The following is a restatement of \cite[Lemma~3.10]{SchrammShe10} for the whole-plane GFF. 

\begin{proposition}
\label{prop::cond_union_local}
Suppose that $A_1$ and $A_2$ are local sets coupled with a whole-plane GFF $h$, defined either modulo additive constant in $\R$ or in $r \Z$ for $r > 0$ fixed.  Then the conditionally independent union $A_1 \wt{\cup} A_2$ of $A_1$ and $A_2$ given $h$ is a local set for $h$.
\end{proposition}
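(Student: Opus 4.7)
The plan is to verify the characterization of locality provided by Proposition~\ref{prop::other_local_characterization}: for $A = A_1 \wt{\cup} A_2$ and each open $B \subseteq \C$, I must show that the restriction $\pi_h^B$ of the regular conditional law of $A$ given $h$ to the event $\{A \subseteq B\}$ is, up to a set of measure zero, a measurable function of $h|_B$ which is invariant under the addition of a global additive constant in $\R$ (respectively $r\Z$) to $h$. If this is established, then Proposition~\ref{prop::other_local_characterization} immediately yields that $A_1 \wt{\cup} A_2$ is local.

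By the very definition of the conditionally independent union, the regular conditional law of the pair $(A_1, A_2)$ given $h$ is the product $\pi_h^{A_1} \otimes \pi_h^{A_2}$ of the two individual conditional laws. Because the event $\{A_1 \cup A_2 \subseteq B\}$ coincides with $\{A_1 \subseteq B\} \cap \{A_2 \subseteq B\}$, the restriction of this product to $\{A \subseteq B\}$ factors as $\pi_h^{A_1, B} \otimes \pi_h^{A_2, B}$. Pushing forward under the (continuous) union map $(a_1, a_2) \mapsto a_1 \cup a_2$ yields
\[
\pi_h^B(\mathcal E) \;=\; \iint \mathbf{1}_{\mathcal E}(a_1 \cup a_2)\, \pi_h^{A_1, B}(da_1)\, \pi_h^{A_2, B}(da_2)
\]
for every Borel event $\mathcal E$ in the space of closed subsets of $\C$.

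By the locality of $A_1$ and $A_2$ and Proposition~\ref{prop::other_local_characterization}, each $\pi_h^{A_i, B}$ is a measurable function of $h|_B$ which is invariant under the relevant global additive constant. The displayed identity then exhibits $\pi_h^B$ as a measurable function of the pair $(\pi_h^{A_1, B}, \pi_h^{A_2, B})$, so it inherits both properties, and the converse direction of Proposition~\ref{prop::other_local_characterization} completes the argument. The only real obstacle is the bookkeeping around the modulo-constant ambiguity, but this is already subsumed in the characterization we are applying, so the proof reduces to a direct transcription of the argument for the ordinary GFF.
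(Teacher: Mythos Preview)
Your argument is correct and is exactly the Schramm--Sheffield proof that the paper defers to: the paper's own proof simply states that \cite[Lemma~3.10]{SchrammShe10} handles the bounded-domain case and that the identical argument carries over, and what you have written is precisely that argument, carried over via the whole-plane characterization of Proposition~\ref{prop::other_local_characterization}.
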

\begin{proof}
In \cite[Lemma~3.10]{SchrammShe10}, this statement was proved in the case that $h$ is a GFF on a domain $D$ with boundary, and the same proof works identically here.
\end{proof}

We note that in the case that $A_1$ and $A_2$ in Proposition~\ref{prop::cond_union_local} are $\sigma(h)$-measurable, the conditionally independent union of $A_1$ and $A_2$ is almost surely equal to the usual union.  By Theorem~\ref{thm::uniqueness}, we know that GFF flow lines are a.s.\ determined by the field.  Therefore, we will have \emph{a posteriori} that the conditionally independent union of flow lines is a.s.\ equal to the usual union, hence finite unions of flow lines are local.  The following is a restatement of \cite[Lemma~3.11]{SchrammShe10}; see also \cite[Proposition~3.8]{MS_IMAG}.

\begin{proposition}
\label{prop::cond_union_mean}
Let $A_1$ and $A_2$ be connected local sets of a whole-plane GFF $h$, defined either modulo additive constant in $\R$ or in $r \Z$ for $r > 0$ fixed.  Then $\CC_{A_1 \wt{\cup} A_2} - \CC_{A_2}$ is almost surely a harmonic function in $\C \setminus (A_1 \wt{\cup} A_2)$ that tends to zero along all sequences of points in $\C \setminus (A_1 \wt{\cup} A_2)$ that tend to a limit in a connected component of $A_2 \setminus A_1$ which consists of more than a single point.  The same is also true along all sequences of points that tend to a limit in a connected component of $A_1 \cap A_2$ which both consists of more than a single point and is at positive distance from either $A_1 \setminus A_2$ or $A_2 \setminus A_1$.
\end{proposition}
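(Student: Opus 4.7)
The plan is to mirror the proof of the analogous statement for the ordinary GFF (\cite[Proposition~3.6]{MS_IMAG}, following \cite[Lemma~3.11]{SchrammShe10}) and transfer it to the whole-plane setting using the Markov property (Proposition~\ref{prop::whole_plane_markov}) and the absolute continuity established in Proposition~\ref{prop::whole_plane_abs_continuity}. First I would observe that although each of $\CC_{A_1 \wt\cup A_2}$ and $\CC_{A_2}$ is individually defined only modulo the same global additive constant as $h$ (in $\R$ or in $r\Z$), their difference is an unambiguously well-defined function: shifting the chosen representative of $h$ by a constant shifts both conditional means by that same constant. That $A_1 \wt\cup A_2$ is itself local is given by Proposition~\ref{prop::cond_union_local}, so harmonicity of $\CC_{A_1 \wt\cup A_2} - \CC_{A_2}$ in $\C \setminus (A_1 \wt\cup A_2)$ is immediate from the definition of local sets applied to $A_2$ and to $A_1 \wt\cup A_2$.

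Next I would localize the claim about boundary values. Let $z$ be a point in a connected component $K$ either of $A_2 \setminus A_1$, or of $A_1 \cap A_2$ with $|K|>1$, such that some open ball $U \ni z$ is at positive distance from $A_1 \setminus A_2$ (or from both $A_1 \setminus A_2$ and $A_2 \setminus A_1$ in the intersection case). Choose a bounded open set $V$ with $\ol U \subset V$, and fix the additive constant for $h$ using a test function $\phi_0 \in \slt$ with $\int \phi_0 = 1$ whose support is disjoint from $V$. By Proposition~\ref{prop::whole_plane_abs_continuity}, the law of $h|_V$ under this normalization is mutually absolutely continuous with respect to the law of the restriction to $V$ of an ordinary GFF on a suitable larger domain with deterministic boundary data. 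Under this comparison the intersections $A_1 \cap V$ and $A_2 \cap V$ remain local sets, and the restrictions of $\CC_{A_1 \wt\cup A_2}$ and $\CC_{A_2}$ to $U$ agree (modulo the $V$-supported comparison) with the corresponding conditional means of the ordinary GFF.

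I would then invoke \cite[Proposition~3.6]{MS_IMAG} in the ordinary-GFF setting to conclude that along any sequence in $U \setminus (A_1 \wt\cup A_2)$ converging to $z$, the difference of conditional means tends to zero almost surely. Since this is an almost sure qualitative statement about $\CC_{A_1 \wt\cup A_2} - \CC_{A_2}$ restricted to $U$, and since it is measurable with respect to data that is mutually absolutely continuous across the two settings, it transfers to the whole-plane case. A standard countable exhaustion (over a countable dense collection of balls $U$ with rational centers and radii, and over a countable sequence of $V$'s) yields the statement simultaneously for all admissible limit points.

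The main obstacle is the bookkeeping for the additive constant in the modular setting: one must verify that the same representative of $h$ is used in defining both $\CC_{A_2}$ and $\CC_{A_1 \wt\cup A_2}$, and that the Radon--Nikodym comparison to an ordinary GFF is compatible with this normalization. The fix is to tie the normalization to a test function $\phi_0$ supported away from the region $V$ of interest (as done above), which makes the normalization independent of the field values near $z$ and hence compatible with the restriction/comparison procedure. Once this is in place, the whole-plane statement is a direct pullback of the ordinary-GFF statement.
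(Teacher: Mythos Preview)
Your localization-and-absolute-continuity strategy has a genuine gap. You assert that ``the intersections $A_1 \cap V$ and $A_2 \cap V$ remain local sets, and the restrictions of $\CC_{A_1 \wt\cup A_2}$ and $\CC_{A_2}$ to $U$ agree \ldots with the corresponding conditional means of the ordinary GFF.'' Neither claim is justified. The sets $A_1$ and $A_2$ need not be contained in $V$; their truncations $A_i \cap V$ are not in general local sets of the field restricted to $V$ (nor of the comparable ordinary GFF), and even if they were, the conditional mean $\CC_{A_2}|_U$ depends on \emph{all} of $A_2$ and $h|_{A_2}$, not merely on $h|_V$ and $A_2 \cap V$. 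Hence the mutual absolute continuity of $h|_V$ with an ordinary GFF on a larger domain does not let you identify $\CC_{A_2}|_U$ or $\CC_{A_1 \wt\cup A_2}|_U$ with anything intrinsic to that ordinary GFF, and the transfer step fails.

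The paper's route avoids localization entirely and simply notes that the proof of \cite[Lemma~3.11]{SchrammShe10} carries over verbatim. Concretely: condition on $A_2$ and $h|_{A_2}$. By the definition of a local set in the whole-plane setting, $h - \CC_{A_2}$ is then a zero-boundary GFF on $\C \setminus A_2$ (an honest, non-modular object on a domain with harmonically non-trivial boundary). For this conditional GFF, $A_1$ is again local, and $\CC_{A_1 \wt\cup A_2} - \CC_{A_2}$ is exactly its conditional mean given $A_1$ and the field on $A_1$. One is now in the ordinary bounded-domain situation, and the Schramm--Sheffield argument (that this conditional mean of a zero-boundary GFF vanishes at boundary points of $\C \setminus A_2$ that are a positive distance from $A_1 \setminus A_2$) applies without change. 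Your observation that the difference is additive-constant-free is correct and is the only new bookkeeping required; no absolute-continuity comparison is needed.
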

\begin{proof}
In \cite[Lemma~3.11]{SchrammShe10}, this statement was proved in the case that $h$ is a GFF on a domain $D$ with boundary, and the same proof works here.
\end{proof}

We emphasize that $\CC_{A_1 \wt{\cup} A_2} - \CC_{A_2}$ does not depend on the additive constant and hence is defined as a function on $\C \setminus (A_1 \wt{\cup} A_2)$.  We will prove our results regarding the existence, uniqueness, and interaction of GFF flow lines first in the context of whole-plane GFFs since the proofs are cleaner in this setting.  The following proposition provides the mechanism for converting these results into statements for the ordinary GFF.

\begin{proposition}
\label{prop::local_set_whole_plane_bounded_compare}
Suppose that $A$ is a local set for a whole-plane GFF $h$ defined up to a global additive constant in $\R$ or in $r \Z$ for $r > 0$ fixed.  Fix $W \subseteq \C$ open and bounded and assume that $A \subseteq W$ almost surely.  Let $D$ be a domain in $\C$ with harmonically non-trivial boundary such that $W \subseteq D$ with $\dist(W,\partial D) > 0$ and let $h_D$ be a GFF on $D$.  There exists a law on random closed sets $A_D$ which is mutually absolutely continuous with respect to the law of $A$ such that $A_D$ is a local set for $h_D$.  Let $\CC_{A_D}^\C$ be the function which is harmonic in $\C \setminus A$ which has the same boundary behavior as $\CC_{A_D}$ on $A_D$.  Then, moreover, the law of $\CC_{A_D}^\C$, up to the additive constant (taken in $\R$ or in $r \Z$, as above), and that of $\CC_A$ are mutually absolutely continuous.  Finally, if $A$ is almost surely determined by $h$ then $A_D$ is almost surely determined by $h_D$.
\end{proposition}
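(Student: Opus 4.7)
The plan is to transport the conditional law of $A$ given $h$ over to a conditional law of $A_D$ given $h_D$ using the mutual absolute continuity of the two restricted fields provided by Proposition~\ref{prop::whole_plane_abs_continuity}. I would first fix an open set $W'$ with $\overline{W} \subseteq W'$ and $\overline{W'} \subseteq D$. Since $A \subseteq W \subseteq W'$ almost surely and $A$ is local for $h$, applying Proposition~\ref{prop::other_local_characterization} with the open set $W'$ identifies the full conditional law $\pi_h$ of $A$ given $h$ with $\pi_h^{W'}$, which is itself a.s.\ a measurable function of $h|_{W'}$ viewed modulo additive constant in $\R$ (or $r\Z$). Denoting the resulting probability kernel by $\nu \colon \phi \mapsto \nu_\phi$, I would then define $A_D$ conditionally on $h_D$ by sampling from $\nu_{h_D|_{W'}}$; by construction $A_D \subseteq W$ almost surely.

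\medskip

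To verify that $A_D$ is local for $h_D$, I would invoke Proposition~\ref{prop::other_local_characterization} once more: for any open $B \subseteq D$ we have $\pi_{h_D}^B(\cdot) = \nu_{h_D|_{W'}}(\cdot \cap \{A' \subseteq B\})$, and by locality of $A$ the analogous identity (with $h$ in place of $h_D$) is $h$-a.s.\ a measurable function of $h|_B$. Since $A$ and $A_D$ are contained in $W$, it suffices to check this for bounded $B$, and for such $B$ Proposition~\ref{prop::whole_plane_abs_continuity} applied to $B \cup W'$ upgrades the $h$-a.s.\ identity to an $h_D$-a.s.\ identity, yielding locality of $A_D$. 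The marginal law of $A_D$ is the $\nu$-pushforward of the law of $h_D|_{W'}$ and is therefore mutually absolutely continuous with the marginal law of $A$ by the same absolute continuity result. If $A$ is further a.s.\ determined by $h$, then $\nu_{h|_{W'}}$ is a.s.\ a point mass, and absolute continuity transfers this to $\nu_{h_D|_{W'}}$, so $A_D$ is a.s.\ determined by $h_D$.

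\medskip

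For the conditional means, the plan is to express $\CC_A$ (respectively $\CC_{A_D}^\C$) modulo additive constant as the image of the pair $(A, h|_{W'})$ (respectively $(A_D, h_D|_{W'})$) under one and the same measurable functional. Both are harmonic functions on $\C \setminus A$ (respectively $\C \setminus A_D$) uniquely determined modulo constant by their boundary values along the random set, and those boundary values are a local property of the ambient field near the set. Concretely, combining the Markov property of Proposition~\ref{prop::whole_plane_markov} on $W'$ with the local-set decomposition $h = \CC_A + (\text{zero-boundary GFF on } \C \setminus A)$ allows one to extract $\CC_A|_{W'}$ modulo constant as a measurable function of $(A, h|_{W'})$; applying the same functional to $(A_D, h_D|_{W'})$ recovers the boundary values of $\CC_{A_D}$ along $A_D$, and hence $\CC_{A_D}^\C$. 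Mutual absolute continuity of the joint laws of $(A, h|_{W'})$ and $(A_D, h_D|_{W'})$ then gives mutual absolute continuity of $\CC_A$ and $\CC_{A_D}^\C$ modulo constant. The main obstacle is precisely the verification that this extraction procedure is local to $W'$; this is the reason $\CC_{A_D}^\C$ appears in the statement rather than $\CC_{A_D}$, since $\CC_{A_D}$ itself also encodes the boundary values of $h_D$ on $\partial D$, which the coupling does not control.
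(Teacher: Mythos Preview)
Your proposal is correct and follows exactly the route the paper intends: the paper's own proof is the single sentence ``This follows from Proposition~\ref{prop::whole_plane_abs_continuity},'' and you have unpacked this by transporting the conditional kernel $\pi_h$ via the mutual absolute continuity of the restricted fields and then verifying the local-set characterization on the $D$ side. One small technical point: when checking locality for an open $B\subseteq D$, you should replace $B$ by $B\cap W'$ (using $A_D\subseteq W$ a.s.) before invoking Proposition~\ref{prop::whole_plane_abs_continuity}, so that the set to which absolute continuity is applied is always $W'$ itself and stays at positive distance from $\partial D$; your phrasing ``applied to $B\cup W'$'' would fail if $B$ touches $\partial D$.
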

\begin{proof}
This follows from Proposition~\ref{prop::whole_plane_abs_continuity}.
\end{proof}

\subsection{Boundary emanating flow lines}
\label{subsec::imaginary}

\begin{figure}[ht!]
\begin{center}
\includegraphics[scale=0.85]{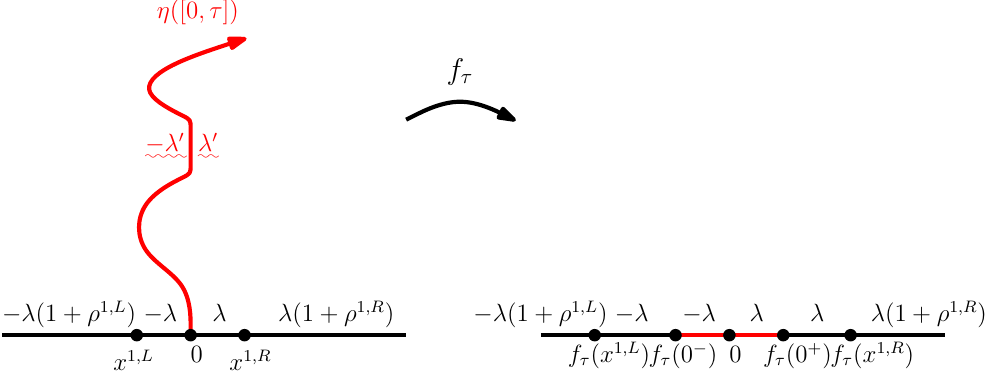}
\caption{\label{fig::conditional_boundary_data}  Suppose that $h$ is a GFF on $\h$ with the boundary data depicted above.  Then the flow line $\eta$ of $h$ starting from $0$ is an $\SLE_\kappa(\ul{\rho}^L;\ul{\rho}^R)$ curve in $\h$ where $|\ul{\rho}^L| = |\ul{\rho}^R| = 1$.  For any $\eta$-stopping time $\tau$, the law of $h$ given $\eta|_{[0,\tau]}$ is equal in distribution to a GFF on $\h \setminus \eta([0,\tau])$ with the boundary data depicted above (the notation $\uwave{a}$ is explained in Figure~\ref{fig::winding}).  It is also possible to couple $\eta' \sim\SLE_{\kappa'}(\ul{\rho}^L;\ul{\rho}^R)$ for $\kappa' > 4$ with $h$ and the boundary data takes on the same form (with $-\lambda' := \frac{\pi}{\sqrt \kappa'}$ in place of $\lambda := \frac{\pi}{\sqrt \kappa}$).  The difference is in the interpretation.  The (almost surely self-intersecting) path $\eta'$ is not a flow line of $h$, but for each $\eta'$-stopping time $\tau'$ the left and right {\em boundaries} of $\eta'([0,\tau'])$ are $\SLE_{\kappa}$ flow lines, where $\kappa=16/\kappa'$, angled in opposite directions.  The union of the left boundaries --- over a collection of $\tau'$ values --- is a tree of merging flow lines, while the union of the right boundaries is a corresponding dual tree whose branches do not cross those of the tree.}
\end{center}
\end{figure}

\begin{figure}[ht!]
\begin{center}
\subfigure[$\theta_1 \leq \theta_2-\tfrac{2\lambda}{\chi}+\pi$]{\includegraphics[scale=0.85]{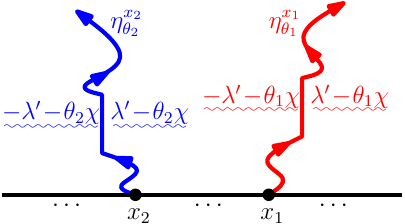}}
\hspace{0.05\textwidth}
\subfigure[$\theta_1 \in (\theta_2-\tfrac{2\lambda}{\chi}+\pi,\theta_2)$]{\includegraphics[scale=0.85]{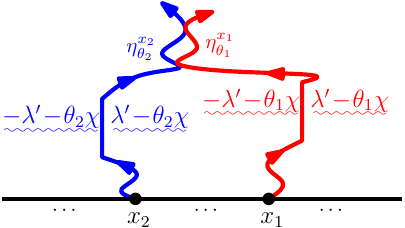}}
\subfigure[$\theta_1 = \theta_2$]{\includegraphics[scale=0.85]{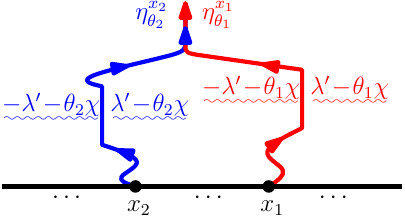}}
\end{center}
\caption{\label{fig::boundary_flowline_interaction1}  An illustration of the different types of flow line interaction, as proved in \cite[Theorem~1.5]{MS_IMAG} (continued in Figure~\ref{fig::boundary_flowline_interaction2}).  In each illustration, we suppose that $h$ is a GFF on $\h$ with piecewise constant boundary data with a finite number of changes and that $\eta_{\theta_i}^{x_i}$ is the flow line of $h$ starting at $x_i$ with angle $\theta_i$, i.e.\ a flow line of $h+\theta_i \chi$, for $i=1,2$, with $x_2 \leq x_1$.  If $\theta_1 \leq \theta_2-\tfrac{2\lambda}{\chi}+\pi$, then $\eta_{\theta_1}^{x_1}$ stays to the right of and does not intersect $\eta_{\theta_2}^{x_2}$.  If $\theta_1 \in (\theta_2-\tfrac{2\lambda}{\chi}+\pi,\theta_2)$, then $\eta_{\theta_1}^{x_1}$ stays to the right of but may bounce off $\eta_{\theta_2}^{x_2}$.  If $\theta_1 = \theta_2$, then $\eta_{\theta_1}^{x_1}$ merges with $\eta_{\theta_2}^{x_2}$ upon intersecting and the flow lines never separate thereafter.}
\end{figure}

\begin{figure}[ht!]
\begin{center}
\subfigure[$\theta_1 \in (\theta_2,\theta_2+\pi)$]{\includegraphics[scale=0.85]{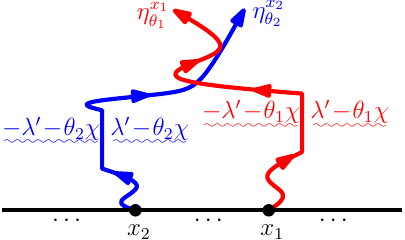}}
\hspace{0.05\textwidth}
\subfigure[$\theta_1 \geq \theta_2+\pi$]{\includegraphics[scale=0.85]{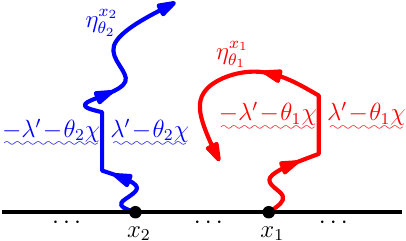}}
\end{center}
\caption{\label{fig::boundary_flowline_interaction2}  (Continuation of Figure~\ref{fig::boundary_flowline_interaction1}.)  If $\theta_1 \in (\theta_2,\theta_2+\pi)$, then $\eta_{\theta_1}^{x_1}$ crosses from the right to the left of $\eta_{\theta_2}^{x_2}$ upon intersecting.  After crossing, the flow lines can bounce off each other as illustrated but $\eta_{\theta_1}^{x_1}$ can never cross from the left back to the right.  Finally, if $\theta_1 \geq \theta_2+\pi$, then $\eta_{\theta_1}^{x_1}$ cannot hit the right side of $\eta_{\theta_2}^{x_2}$ except in $[x_2, x_1]$.
}
\end{figure}

We will now give a brief overview of the theory of boundary emanating GFF flow lines developed in \cite{MS_IMAG}.  We will explain just enough so that this article may be read and understood independently of \cite{MS_IMAG}, though we refer the reader to \cite{MS_IMAG} for proofs.  We assume throughout that $\kappa \in (0,4)$ so that $\kappa' := 16/\kappa \in (4,\infty)$.  We will often make use of the following definitions and identities:
\begin{align}
 \label{eqn::deflist} &\lambda := \frac{\pi}{\sqrt \kappa}, \,\,\,\,\,\,\,\,\lambda' := \frac{\pi}{\sqrt{16/\kappa}} = \frac{\pi \sqrt{\kappa}}{4} = \frac{\kappa}{4} \lambda < \lambda, \,\,\,\,\,\,\,\, \chi := \frac{2}{\sqrt \kappa} - \frac{\sqrt \kappa}{2}\\
 \label{eqn::fullrevolution} &\quad\quad\quad\quad\quad\quad\quad\quad 2 \pi \chi = 4(\lambda-\lambda'), \,\,\,\,\,\,\,\,\,\,\,\lambda' = \lambda - \frac{\pi}{2} \chi\\
\label{eqn::fullrevolutionrho} &\quad\quad\quad\quad\quad\quad\quad\quad\quad 2 \pi \chi = (4-\kappa)\lambda = (\kappa'-4)\lambda'.
\end{align}

\begin{figure}[ht!]
\begin{center}
\includegraphics[scale=0.85]{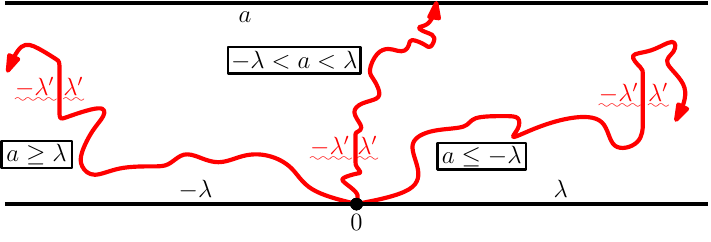}
\caption{\label{fig::hittingrange}  Suppose that $h$ is a GFF on the strip $\strip$ with the boundary data depicted above and let $\eta$ be the flow line of $h$ starting at $0$.  The interaction of $\eta$ with the upper boundary $\striptop$ of $\partial \strip$ depends on $a$, the boundary data of $h$ on $\striptop$.  Curves shown represent almost sure behaviors corresponding to the three different regimes of $a$ (indicated by the closed boxes).  The path hits $\striptop$ almost surely if and only if $a \in (-\lambda, \lambda)$.  When $a \geq \lambda$, it tends to $-\infty$ (left end of the strip) and when $a \leq - \lambda$ it tends to $+\infty$ (right end of the strip) without hitting $\striptop$.  If $\eta$ can hit the continuation threshold upon hitting some point on $\stripbot$, then $\eta$ only has the possibility of hitting $\striptop$ if $a \in (-\lambda,\lambda)$ (but does not necessarily do so); if $a \notin (-\lambda,\lambda)$ then $\eta$ almost surely does not hit $\striptop$.  By conformally mapping and applying~\eqref{eqn::ac_eq_rel}, we can similarly determine the ranges of boundary values that a flow line can hit for segments of the boundary with other orientations.}
\end{center}
\end{figure}

The boundary data one associates with the GFF on $\h$ so that its flow line $\eta$ from $0$ to $\infty$ is an $\SLE_\kappa(\ul{\rho}^L;\ul{\rho}^R)$ process with force points located at $\ul{x} = (\ul{x}^L;\ul{x}^R)$ for $\ul{x}^L = (x^{k,L} < \cdots < x^{1,L} \leq 0 ^-)$ and $\ul{x}^R = (0^+ \leq x^{1,R} < \cdots < x^{\ell,R})$ and with weights $(\ul{\rho}^L;\ul{\rho}^R)$ for $\ul{\rho}^L = (\rho^{1,L},\ldots,\rho^{k,L})$ and $\ul{\rho}^R = (\rho^{1,R},\ldots,\rho^{\ell,R})$ is
\begin{align}
 -&\lambda\left( 1 + \sum_{i=1}^j \rho^{i,L}\right) \quad\text{for}\quad x \in [x^{j+1,L},x^{j,L}) \quad\text{and}\\
 &\lambda\left( 1 + \sum_{i=1}^j \rho^{i,R}\right) \quad\text{for}\quad x \in [x^{j,R},x^{j+1,R})
\end{align}
This is depicted in Figure~\ref{fig::conditional_boundary_data} in the special case that $|\ul{\rho}^L| = |\ul{\rho}^R| = 1$.  For any $\eta$-stopping time $\tau$, the law of $h$ conditional on $\eta|_{[0,\tau]}$ is a GFF in $\h \setminus \eta([0,\tau])$.  The boundary data of the conditional field agrees with that of $h$ on $\partial \h$.  On the right side of $\eta([0,\tau])$, it is $\lambda' + \chi \cdot {\rm winding}$, where the terminology ``winding'' is explained in Figure~\ref{fig::winding}, and to the left it is $-\lambda' + \chi \cdot {\rm winding}$.  This is also depicted in Figure~\ref{fig::conditional_boundary_data}.

A complete description of the manner in which GFF flow lines interact with each other is given in \cite[Theorem~1.5]{MS_IMAG}.  In particular, we suppose that $h$ is a GFF on $\h$ with piecewise constant boundary data with a finite number of changes and $x_1,x_2 \in \R$, with $x_2 \leq x_1$.  Fix angles $\theta_1,\theta_2 \in \R$ and let $\eta_{\theta_i}^{x_i}$ be the flow line of $h$ with angle $\theta_i$ starting at $x_i$, i.e.\ a flow line of the field $h+ \theta_i \chi$, for $i=1,2$.  If $\theta_1 < \theta_2$, then $\eta_{\theta_1}^{x_1}$ almost surely stays to the right of $\eta_{\theta_2}^{x_2}$.  If, moreover, $\theta_1 \leq \theta_2 - \tfrac{2\lambda}{\chi} + \pi$ then $\eta_{\theta_1}^{x_1}$ almost surely does not hit $\eta_{\theta_2}^{x_2}$ and if $\theta_1 \in (\theta_2-\tfrac{2\lambda}{\chi} + \pi,\theta_2)$ then $\eta_{\theta_1}^{x_1}$ bounces off $\eta_{\theta_2}^{x_2}$ upon intersecting.  If $\theta_1 = \theta_2$, then $\eta_{\theta_1}^{x_1}$ merges with $\eta_{\theta_2}^{x_2}$ upon intersecting and the flow lines never separate thereafter.  If $\theta_1 \in (\theta_2,\theta_2+\pi)$, then $\eta_{\theta_1}^{x_1}$ crosses from the right to the left of $\eta_{\theta_2}^{x_2}$ upon intersecting.  After crossing, the flow lines may bounce off each other but $\eta_{\theta_1}^{x_1}$ can never cross back from the left to the right of $\eta_{\theta_2}^{x_2}$.  Finally, if $\theta_1 \geq \theta_2+\pi$, then $\eta_{\theta_1}^{x_1}$ cannot hit the right side of $\eta_{\theta_2}^{x_2}$ except in $\partial \h$.  Each of these possible behaviors is depicted in either Figure~\ref{fig::boundary_flowline_interaction1} or Figure~\ref{fig::boundary_flowline_interaction2}.

It is also possible to determine which segments of the boundary that a GFF flow line can hit and this is explained in the caption of Figure~\ref{fig::hittingrange}.  Using the transformation rule~\eqref{eqn::ac_eq_rel}, we can extract from Figure~\ref{fig::hittingrange} the values of the boundary data for the boundary segments that $\eta$ can hit with other orientations.  We can also rephrase this in terms of the weights $\ul{\rho}$: a chordal $\SLE_{\kappa}(\ul{\rho})$ process almost surely does not hit a boundary interval $(x^{i,R},x^{i+1,R})$ (resp.\ $(x^{i+1,L},x^i)$) if $\sum_{s=1}^i \rho^{s,R} \geq \tfrac{\kappa}{2}-2$ (resp.\ $\sum_{s=1}^i \rho^{s,L} \geq \tfrac{\kappa}{2}-2$).  See \cite[Lemma~5.2 and Remark~5.3]{MS_IMAG}.  These facts hold for all $\kappa > 0$.

\section{Interior flow lines}
\label{sec::interior_flowlines}

In this section, we will construct and develop the general theory of the flow lines of the GFF emanating from interior points.  We begin in Section~\ref{subsec::existence} by proving Theorem~\ref{thm::existence}, the existence of a unique coupling between a whole-plane $\SLE_\kappa(2-\kappa)$ process for $\kappa \in (0,4)$ and a whole-plane GFF $h$ so that $\eta$ may be thought of as the flow line of $h$ starting from $0$ and then use absolute continuity to extend this result to the case that $h$ is a GFF on a proper subdomain $D$ of $\C$.  Next, in Section~\ref{subsec::interaction}, we will give a description of the manner in which flow lines interact with each other and the domain boundary, thus proving Theorem~\ref{thm::flow_line_interaction} and Theorem~\ref{thm::merge_cross} for ordinary GFF flow lines.  In Section~\ref{subsec::conical}, we will explain how the proof of Theorem~\ref{thm::existence} can be extended in order to establish the existence component of Theorem~\ref{thm::alphabeta}, i.e.\ the existence of flow lines for the GFF plus a multiple of one or both of $\log| \cdot |$ and $\arg(\cdot)$.  We will also complete the proof of Theorem~\ref{thm::flow_line_interaction} and Theorem~\ref{thm::merge_cross} in their full generality.  Next, in Section~\ref{subsec::uniqueness}, we will prove that the flow lines are almost surely determined by the GFF, thus proving Theorem~\ref{thm::uniqueness} as well as completing the proof of Theorem~\ref{thm::alphabeta}.  This, in turn, will allow us to establish Theorem~\ref{thm::commutation}, Theorem~\ref{thm::field_determined_by_tree}, and Theorem~\ref{thm::conditional_law}.  In Section~\ref{subsec::transience} we will use the results of Section~\ref{subsec::interaction} and Section~\ref{subsec::conical} to prove the transience (resp.\ endpoint continuity) of whole-plane (resp.\ radial) $\SLE_\kappa^\mu(\rho)$ processes for $\kappa \in (0,4)$, $\rho > -2$, and $\mu \in \R$.  We finish in Section~\ref{subsec::critical_angle} with a discussion of the so-called critical angle as well as the self-intersections of GFF flow lines.  Throughout, we will make frequent use of the identities~\eqref{eqn::deflist}--\eqref{eqn::fullrevolutionrho}.

\subsection{Generating the coupling}
\label{subsec::existence}

\begin{figure}[ht!]
\begin{center}
\includegraphics[scale=0.85]{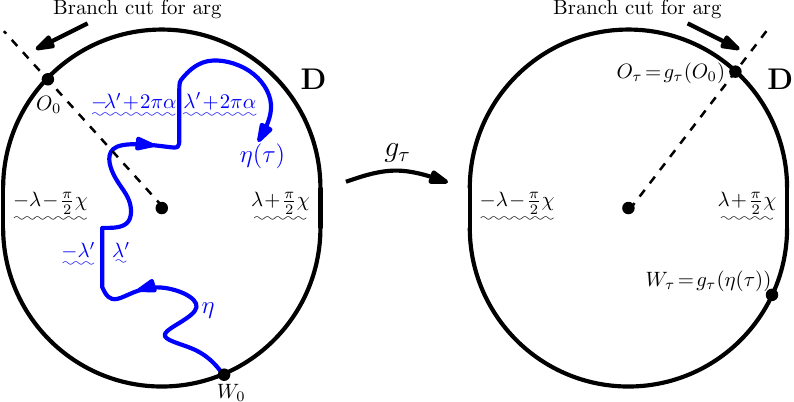}
\end{center}
\caption{\label{fig::radial_bd}  Fix $\alpha \in \R$, $\kappa \in (0,4)$, $W_0,O_0 \in \partial \D$, and suppose that $h$ is a GFF on $\D$ such that $h+\alpha \arg$ has the illustrated boundary values on $\partial \D$.   (The reason that $\D$ appears not to be perfectly round is to keep with our convention of labeling the boundary data only along vertical and horizontal segments.)  Let $\eta$ be the flow line of $h+\alpha \arg$ starting from $W_0$.  Then $\eta$ has the law of a radial $\SLE_\kappa(\rho)$ process with $\rho=\kappa-6+2\pi \alpha/\lambda$.  The conditional law of $h+\alpha \arg$ given $\eta|_{[0,\tau]}$, $\tau$ a stopping time for $\eta$, is that of a GFF on $\D \setminus \eta([0,\tau])$ plus $\alpha \arg$ so that the sum has the illustrated boundary data.  In particular, the boundary data is the same as that of $h+\alpha \arg$ on $\partial \D$ and is given by $-\alpha$-flow line boundary conditions on $\eta([0,\tau])$ (recall Figure~\ref{fig::interior_path_bd2}).  If one applies the change of coordinates $g_\tau$ as indicated and then moves the branch cut for $\arg(\cdot)$ so that it passes through $O_\tau$, then the boundary data for the field $h \circ g_\tau^{-1} + \alpha \arg(g_\tau^{-1}(0)) - \chi \arg (g_\tau^{-1})'$ is as illustrated on the right, up to an additive constant in $2\pi(\chi-\alpha) \Z$.}
\end{figure}

\begin{figure}[ht!]
\begin{center}
\includegraphics[scale=0.85]{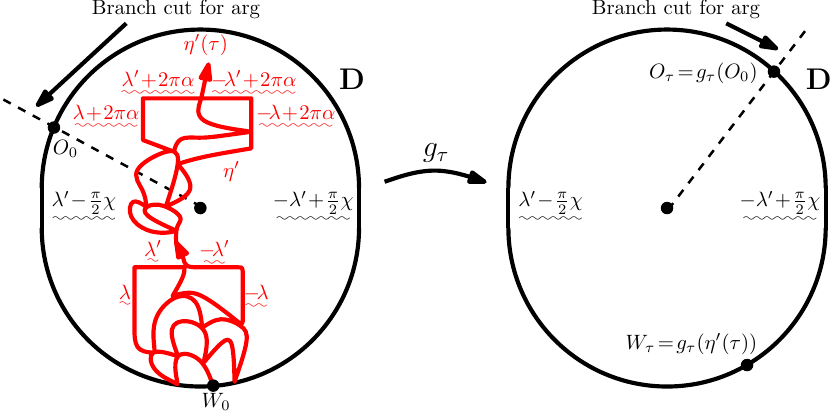}
\end{center}
\caption{\label{fig::radial_bd_cfl} Fix $\alpha \in \R$, $\kappa' \in (4,\infty)$, $W_0,O_0 \in \partial \D$, and suppose that $h$ is a GFF on $\D$ such that $h+\alpha \arg$ has the illustrated boundary values on $\partial \D$.  We take the branch cut for $\arg$ to be on the half-infinite line from $0$ through $O_0$.  Let $\eta'$ be the counterflow line of $h+\alpha \arg$ starting from $W_0$.  Then $\eta'$ has the law of a radial $\SLE_{\kappa'}(\rho)$ process with $\rho=\kappa'-6-2\pi \alpha / \lambda'$.  The conditional law of $h+\alpha \arg$ given $\eta'|_{[0,\tau]}$, $\tau$ a stopping time for $\eta'$, is that of a GFF on $\D \setminus \eta'([0,\tau])$ plus $\alpha \arg$ so that the sum has the illustrated boundary data.  In particular, the boundary data is the same as that of $h+\alpha \arg$ on $\partial \D$ and is given by $-\alpha$-flow line boundary conditions with angle $\tfrac{\pi}{2}$ (resp.\ $-\tfrac{\pi}{2}$) on the left (resp.\ right) side of $\eta'([0,\tau'])$ (recall Figure~\ref{fig::interior_path_bd2}).  If one applies the change of coordinates $g_\tau$ as indicated and then moves the branch cut for $\arg(\cdot)$ so that it passes through $O_\tau$, then the boundary data for the field $h \circ g_\tau^{-1} + \alpha \arg(g_\tau^{-1}(0)) - \chi \arg (g_\tau^{-1})'$ is as illustrated on the right, up to an additive constant in $2\pi(\chi-\alpha) \Z$.}
\end{figure}

\begin{figure}[h!]
\begin{center}
\includegraphics[scale=0.85]{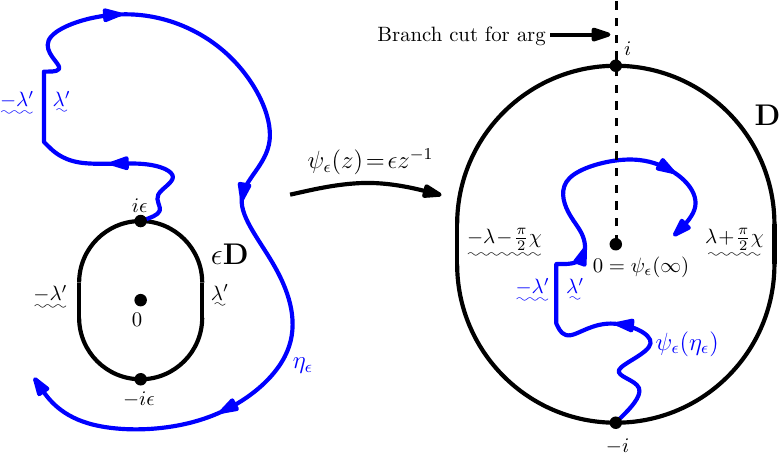}
\end{center}
\caption{\label{fig::diskstart} Suppose that $h_\epsilon$ is a GFF on $\C_\epsilon = \C \setminus (\epsilon \D)$ with the boundary data depicted on the left side.  Let $\eta_\epsilon$ be the flow line of $h_\epsilon$ starting at $i\epsilon$ and let $\psi_\epsilon \colon \C_\epsilon \to \D$ be the conformal map $\psi_\epsilon(z) = \epsilon/z$.  Then $\wt{h}_\epsilon = h_\epsilon \circ \psi_\epsilon^{-1} - \chi \arg(\psi_\epsilon^{-1})'$ is the sum of a GFF on $\D$ with the boundary data depicted on the right side plus $2\chi \arg(\cdot)$ minus the harmonic extension of its boundary values.  In particular, $\wt{h}_\epsilon$ has the boundary data as indicated on the right.  The branch cut for $\arg$ on the right is on the half-infinite vertical line from $0$ through $i$.  In particular, it follows from Proposition~\ref{prop::interior_force_point_coupling} (see also Figure~\ref{fig::radial_bd}) that $\psi_\epsilon(\eta_\epsilon)$ is a radial $\SLE_\kappa(\rho)$ process in $\D$ starting from $-i$ and targeted at $0$ with a single boundary force point of weight $\rho = \kappa-6+(2\pi)(2\chi)/\lambda = 2-\kappa$ located at~$i$.}
\end{figure}

In this section, we will establish the existence of a unique coupling between a whole-plane GFF $h$, defined modulo a global additive integer multiple of $2\pi \chi$, and a whole-plane $\SLE_\kappa(2-\kappa)$ process $\eta$ for $\kappa \in (0,4)$ emanating from $0$ which satisfies a certain Markov property.  Using absolute continuity (Proposition~\ref{prop::local_set_whole_plane_bounded_compare}), we will then deduce Theorem~\ref{thm::existence}.  The strategy of the proof is to consider, for each $\epsilon > 0$, the plane minus a small disk $\C_\epsilon \equiv \C \setminus (\epsilon \D)$ and then take $h_\epsilon$ to be a GFF on $\C_\epsilon$ with certain boundary conditions.  By the theory developed in \cite{MS_IMAG}, we know that there exists a flow line $\eta_\epsilon$ of $h_\epsilon$ emanating from $i\epsilon$ satisfying a certain Markov property.  Proposition~\ref{prop::interior_force_point_coupling}, stated and proved just below, will allow us to identify the law of this flow line as that of a radial $\SLE_\kappa(2-\kappa)$ process.  The results of Section~\ref{subsec::gff} imply that $h_\epsilon$ viewed as a distribution defined up to a global multiple of $2\pi \chi$ converges to a whole-plane GFF defined up to a global multiple of $2\pi \chi$ as $\epsilon \to 0$.   To complete the proof of the existence for the whole-plane coupling, we just need to show that $\eta_\epsilon$ converges to a whole-plane $\SLE_\kappa(2-\kappa)$ process as $\epsilon \to 0$ and that the pair $(h,\eta)$ satisfies the desired Markov property.  For proper subdomains $D$ in $\C$, the existence follows from the absolute continuity properties of the GFF (Proposition~\ref{prop::local_set_whole_plane_bounded_compare}).  We begin by recording the following proposition, which explains how to construct a coupling between radial $\SLE_\kappa(\rho)$ with a single boundary force point with the GFF.

\begin{proposition}
\label{prop::interior_force_point_coupling}
Fix $\alpha \in \R$ and $W_0,O_0 \in \partial \D$.  Suppose that $h$ is a GFF on $\D$ whose boundary conditions are chosen so that $h+\alpha \arg(\cdot)$ has the boundary data depicted in the left side of Figure~\ref{fig::radial_bd} if $\kappa \in (0,4)$ and in the left side of Figure~\ref{fig::radial_bd_cfl} if $\kappa' > 4$.  We take the branch cut for $\arg$ to be on the half-infinite line from $0$ through $O_0$.  That is, if $W_0 = -i$ and $\kappa \in (0,4)$ (resp.\ $\kappa' > 4$) then the boundary data for the field $h+\alpha \arg(\cdot)$ is equal to $-\lambda$ (resp.\ $\lambda'$) plus $\chi$ times the winding of $\partial \D$ on the clockwise segment of $\partial \D$ from $W_0=-i$ to $O_0$ and $\lambda$ (resp.\ $-\lambda'$) plus $\chi$ times the winding of $\partial \D$ on the counterclockwise segment of $\partial \D$ from $W_0=-i$ to $O_0$.  The boundary data is the same for other values of $W_0 \in \partial \D$ except it is shifted by the constant $\chi \big( \arg(W_0) - \tfrac{3 \pi}{2}\big)$ where here $\arg$ takes values in $[0,2\pi)$.

There exists a unique coupling between $h+\alpha \arg(\cdot)$ and a radial $\SLE_\kappa(\rho)$ process $\eta$ in $\D$ starting at $W_0$, targeted at $0$, and with a single boundary force point of weight $\rho = \kappa-6+2\pi \alpha/\lambda$ (resp.\ $\rho=\kappa'-6-2\pi\alpha/\lambda'$) if $\kappa \in (0,4)$ (resp.\ $\kappa' > 4$) located at $O_0$ satisfying the following Markov property.  For every $\eta$-stopping time $\tau$, the conditional law of $h+\alpha \arg(\cdot)$ given $\eta|_{[0,\tau]}$ is that of $\wt{h} + \alpha \arg(\cdot)$ where $\wt{h}$ is a GFF on $\D \setminus \eta([0,\tau])$ such that $\wt{h} + \alpha \arg(\cdot)$ has the same boundary conditions as $h+\alpha \arg(\cdot)$ on $\partial \D$.  If $\kappa \in (0,4)$, then $\wt{h}+\alpha \arg(\cdot)$ has $-\alpha$-flow line boundary conditions on $\eta([0,\tau])$.  If $\kappa' > 4$, then $\wt{h}+\alpha \arg(\cdot)$ has $-\alpha$-flow line boundary conditions with angle $\tfrac{\pi}{2}$ (resp.\ $-\tfrac{\pi}{2}$) on the left (resp.\ right) side of $\eta([0,\tau])$.

In this coupling, $\eta([0,\tau])$ is a local set for $h$.
\end{proposition}
In the $-\alpha$-flow line boundary conditions in the statement of Proposition~\ref{prop::interior_force_point_coupling}, the location of the branch cut in the argument function is the half-infinite line starting from $0$ through $O_0$.  This is in slight contrast to the flow line boundary conditions we introduced in Figure~\ref{fig::interior_path_bd2} in which the branch cut started from the initial point of the relevant path.  The reason that we take $-\alpha$ rather than $\alpha$-flow line boundary conditions along the path in Proposition~\ref{prop::interior_force_point_coupling} in contrast to Theorem~\ref{thm::alphabeta} is because in Proposition~\ref{prop::interior_force_point_coupling} we added $\alpha \arg(\cdot)$ to the GFF rather than subtracting it.  This sign difference arises because one transforms from the whole-plane setting to the radial setting by the inversion $z \mapsto 1/z$.

As in the case of \cite[Theorem~1.1]{MS_IMAG}, we note that Proposition~\ref{prop::interior_force_point_coupling} can be extracted from \cite{DUB_PART}.  See, in particular, \cite[Theorem~5.3 and Theorem~6.4]{DUB_PART}.  In order to have a proof which is independent of \cite{DUB_PART}, in what follows we will indicate the modifications that need to be made to the proof of \cite[Theorem~1.1]{MS_IMAG} in order to establish the result.

\begin{proof}[Proof of Proposition~\ref{prop::interior_force_point_coupling}]
Following the argument of the proof of \cite[Theorem~1.1]{MS_IMAG}, in order to prove the existence of the coupling it suffices to show that the analog of \cite[Lemma~3.11]{MS_IMAG} holds in the present setting.  By \cite[Theorem~3]{SCHRAMM_WILSON}, it suffices to prove that the analog of \cite[Theorem~1.1]{MS_IMAG} holds in the setting of chordal $\SLE_\kappa(\ul{\rho})$ with \emph{both} boundary and interior force points.  Indeed, this follows because \cite[Theorem~3]{SCHRAMM_WILSON} implies that a radial $\SLE_\kappa(\rho)$ process has the same law as a chordal $\SLE_\kappa(\ul{\rho})$ process with two force points: a boundary force point of weight $\rho$ with the same location as the force point of the radial process and an interior force point of weight $\kappa-6-\rho$ located at the target point of the radial process.  (We remark that it is possible to give a proof working purely in the radial setting, though the computations are simpler in the chordal setting with interior force points.  See \cite[Section~5]{qle2013} for a proof of the so-called reverse $\SLE$/GFF coupling in the radial setting, which contains similar computations.)

Recall from~\eqref{eqn::sle_kappa_rho_eqn} that the driving function for a chordal $\SLE_\kappa(\ul{\rho})$ process with force points starting from $z_1,\ldots,z_k \in \ol{\h}$ with weights $\ul{\rho} = (\rho_1,\ldots,\rho_k)$ is given by the solution to the SDE
\begin{align*}
   dW_t = \sqrt{\kappa} dB_t + \sum_{j=1}^k \re \left( \frac{\rho_j}{W_t - V_t^j} \right) dt \quad\text{and}\quad dV_t^j = \frac{2}{V_t^j - W_t} dt,\quad V_0^j = z_j,
\end{align*}
where $B$ is a standard Brownian motion.  We let $(g_t)$ be the chordal Loewner flow driven by $W$ and let $f_t = g_t - W_t$ be the associated centered chordal Loewner flow.  Let $\ol{\rho}$ be the sum of the weights of the force points contained in $\R_+$.  For each $t \geq 0$, we let
\begin{align*}
\Fh_t^*(z)
=& \frac{\pi i}{\sqrt{\kappa}} (\ol{\rho}+1) -\sum_{j=1}^k \frac{\rho_j}{2\sqrt{\kappa}} \left( \log( f_t(z) - f_t(z_j)) + \log( f_t(z) - \ol{f_t(z_j)} )\right)-\\
  & \frac{2}{\sqrt{\kappa}} \log f_t(z) - \chi \log f_t'(z)
\end{align*}
and we let
\begin{align*}
\Fh_t(z)
=& \im(\Fh_t^*(z))\\
=& \frac{\pi (\ol{\rho}+1)}{\sqrt{\kappa}} -\sum_{j=1}^k \frac{\rho_j}{2\sqrt{\kappa}} \left( \arg( f_t(z) - f_t(z_j)) + \arg( f_t(z) - \ol{f_t(z_j)} )\right)-\\
  & \frac{2}{\sqrt{\kappa}} \arg f_t(z) - \chi \arg f_t'(z).
\end{align*}
We note that if all of the~$z_j$ are in~$\R$ so that all of the~$V_t^j$ are in~$\R$ (i.e., we only have boundary force points) then~$\Fh_t^*$ and~$\Fh_t$ respectively agree with the corresponding definitions given in \cite[Equation~(2.12)]{MS_IMAG} and in the statement of \cite[Theorem~1.1]{MS_IMAG}.

If some of the~$z_j$ are in~$\h$, then $\Fh_t^*$ is a multi-valued function.  In order to make it single-valued (to justify our applications of It\^o's formula), we introduce branch cuts which start from each such $z_j \in \h$ given by a straight line to $\infty$.  We are now going to show that for each stopping time $\tau$ for $(W,V^j)$ which almost surely occurs before the continuation threshold is hit or one of the branch cuts is hit (in particular, before one of the interior force points is mapped to $\R$) we have that
\begin{equation}
\label{eqn::coupling_equation}
 \wh{h} \circ f_\tau + \Fh_\tau \stackrel{d}{=} \wh{h} + \Fh_0.
\end{equation}
This is the analog of \cite[Theorem~1.1]{MS_IMAG} in the setting of $\SLE_\kappa(\ul{\rho})$ with interior force points.  We will prove the result using It\^o calculus.

Suppose that $t < \tau$ (so that none of the branch cuts have been hit).  Applying It\^o's formula, we have that
\begin{align*}
     d f_t(z) &= \left( \frac{2}{f_t(z)} - \sum_{j=1}^k \re \left( \frac{\rho_j}{W_t - V_t^j} \right) \right) dt - \sqrt{\kappa} dB_t\\
          d \log f_t(z) &= \left( \frac{4-\kappa}{2 f_t^2(z)} - \sum_{j=1}^k \frac{1}{f_t(z)}\re\left( \frac{\rho_j}{W_t - V_t^j} \right) \right) dt - \frac{\sqrt{\kappa}}{f_t(z)} dB_t,\\
     d f_t'(z) &= -\frac{2 f_t'(z)}{f_t^2(z)} dt, \quad\text{and}\\
     d \log f_t'(z) &= \frac{-2}{f_t^2(z)} dt.
\end{align*}
Inserting these expressions into the explicit form of $\Fh_t^*$ and $\Fh_t$, we thus see that
\begin{equation}
\label{eqn::fh_equality}
d\Fh_t^*(z) = \frac{2}{f_t(z)} dB_t\quad\text{and}\quad d\Fh_t(z) = \im\left( \frac{2}{f_t(z)} \right) dB_t.	
\end{equation}
We recall that the Green's function for~$\Delta$ on~$\h$ with Dirichlet boundary conditions is given by
\[ G(z,w) = -\log|z-w| + \log|z-\ol{w}|\]
and that $G$ gives the covariance function for the GFF on $\h$ with Dirichlet boundary conditions.  We let $G_t(z,w) = G(f_t(z),f_t(w)) = G(g_t(z),g_t(w))$.  Then an elementary calculation implies that (see, e.g., \cite[Section~4.1]{SHE_WELD})
\begin{align}
\label{eqn::green_change}
 d G_t(z,w) &= -\im\left( \frac{2}{f_t(z)}\right) \im\left( \frac{2}{f_t(w)}\right) dt.
\end{align}
Combining~\eqref{eqn::fh_equality} and~\eqref{eqn::green_change}, we thus see that
\begin{align}
\label{eqn::fh_cov_eq}
 d \langle \Fh_t(z), \Fh_t(w) \rangle_t &= - dG_t(z,w).	
\end{align}
We recall from the proof of \cite[Lemma~3.11]{MS_IMAG} that~\eqref{eqn::fh_cov_eq} is the necessary equality to construct the coupling of $\SLE$ with the GFF as its flow or counterflow line.  The remainder of the existence of the coupling of $\SLE$ with the GFF thus follows from the same argument used to prove \cite[Theorem~1.1]{MS_IMAG}.

At this point, we proved the existence of the coupling of chordal $\SLE_\kappa(\ul{\rho})$ with the GFF with interior force points up until the first time that one of the branch cuts is hit or the continuation threshold is hit.  We note that, given the path up to a stopping time which occurs before this happens, the boundary conditions for the conditional law of the field along the path are $-\lambda'$ (resp.\ $\lambda'$) on the left (resp.\ right) side of the path plus $\chi$ times the winding.  That is, they are the same as in the usual chordal coupling \cite[Theorem~1.1]{MS_IMAG}.  Note that if we move one of the branch cuts so that it passes through the path, then there will be a discontinuity in the boundary data arising because of the discontinuity of the argument function along the branch cut.

We will now explain how to extend the coupling up until the first time that one of the interior force points is separated from $\infty$ or the continuation threshold is hit.  To this end, we let $\tau$ be the first time that one of the branch cuts is hit or the continuation threshold is hit.  Then we know that~\eqref{eqn::coupling_equation} holds up to time $\tau$.

We iterate this construction as follows.  We inductively define stopping times $(\tau_j)$ by taking $\tau_0 = \tau$.  For each $j \geq 1$, we take the branch cuts for the $\log$ singularities in $\Fh_t$ for $t = \tau_{j-1}$ to be given by vertical lines starting from each of the interior force points and through to $\infty$.  We then take $\tau_j$ to be the first time $t$ after $\tau_{j-1}$ that one of the branch cuts or the continuation threshold has been hit.  Iteratively applying~\eqref{eqn::coupling_equation} with these new branch cuts, we thus see that~\eqref{eqn::coupling_equation} holds for $t \leq T:= \sup_j \tau_j$.  We claim that $T$ is equal to the minimum of the first time that one of the force points is cut off from $\infty$ (equivalently, is mapped into $\R$) and when the continuation threshold is first hit.  To see this, we suppose that $T$ is strictly less than this time.  (In particular, $T < \infty$.)  It then follows that $\im(V_t^j)$ for each $j$ corresponding to an interior force point is bounded from below up to time $t \leq T$.    By the pigeon hole principle, there exists an index $j_0$ such that the number of times that the branch cut associated with $z_{j_0}$ is hit by time $T$ is infinite.  Elementary considerations for conformal mapping imply that there exists $c \in (0,1)$ such that $\im(V_{\tau_j}^{j_0}) \leq c \im(V_{\tau_{j-1}}^{j_0})$ for each $j \geq 1$  such that the path hits the branch cut associated with $z_{j_0}$ at time $\tau_j$.  This implies that $\im(V_t^{j_0})$ decreases to $0$ as $t \uparrow T$, which is a contradiction.

We have now proved the existence of the coupling of chordal $\SLE_\kappa(\ul{\rho})$ with the GFF, at least up until the first time that the process separates one of the interior force points from $\infty$ or the continuation threshold is hit.  Note that the path may pass through the branch cuts many times before this happens.  Due to the discontinuity of the argument function along each branch cut, the boundary data for the conditional law of the field given the path whenever it passes through such a branch cut jumps either up or down an amount which is equal to the corresponding jump discontinuity in the argument.

We will now explain how this implies the existence of the coupling of radial $\SLE_\kappa(\rho)$ with the GFF as in the statement of the proposition, at least up until the first time that the process separates its target point from a given marked boundary point.  By \cite[Theorem~3]{SCHRAMM_WILSON}, we know that a radial $\SLE_\kappa(\rho)$ on $\h$ with target point $i$ and force point located at $x \in \partial \h$ has the same law as a chordal $\SLE_\kappa(\rho,\kappa-6-\rho)$ process with a boundary force point of weight $\rho$ located at $x$ and an interior force point of weight $\kappa-6-\rho$ located at $i$.  Suppose that $x \geq 0$.  Then by the above argument, the latter is coupled with the field $h$ given by a GFF on $\h$ with boundary conditions $-\lambda$ on $\R_-$, $\lambda$ on $[0,x]$, and $\lambda(1+\rho)$ on $(x,\infty$), plus the function
\[ \alpha \left( \arg(z-i) + \arg(z+i) \right) \quad\text{where}\quad \alpha = -\frac{\kappa-6-\rho}{2\sqrt{\kappa}}.\]
(Note that this function vanishes on $\R$ and that $\alpha$ and $\rho$ are related as in the proposition statement.)  The boundary conditions are analogous in the case that $x \leq 0$.  Let $\varphi$ be the conformal map $\h \to \D$ with $\varphi(i) = 0$ and $\varphi(0) = -i$.  Consider the field $\wh{h} \circ \varphi^{-1} - \chi \arg (\varphi^{-1})'$ on $\D$.  Note that it has boundary conditions given by $-\lambda$ plus $\chi$ times the winding of the boundary on $\varphi((-\infty,0))$, $\lambda$ plus $\chi$ times the winding of the boundary on $\varphi((0,x))$, and $\lambda(1+\rho)$ plus $\chi$ times the winding of the boundary on $\varphi((x,\infty))$.  More generally, if we take $\varphi$ so that $\varphi(i) = 0$ and $\varphi(0) = W_0 \in \partial \D$, then the boundary data of the field is the same as in the case that $\varphi(0) = -i$ except it is shifted by the constant $\chi \left(\arg(W_0) - \tfrac{3\pi}{2}\right)$, where here the argument takes values in $[0,2\pi)$.  As
\[ 2\pi \alpha = \lambda (2+\rho) + 2\pi \chi,\]
we find that moving the branch cut so that it passes through $\varphi(x)$ yields the boundary data as indicated in the statement of the proposition.  (Note that the sign difference in the case that $\kappa' > 4$ is because counterflow lines are coupled with $-h$.)

This proves the existence of the coupling of radial $\SLE_\kappa(\rho)$ with the GFF as stated in the proposition, at least up until the first time that the process separates its target point from a given marked boundary point.  At this time, one can then ``continue'' the coupling by picking a new marked boundary point inside of the complementary component containing the target point and then repeating the above with this point as the target point.  Repeating this completes the proof of existence.

The uniqueness of the coupling follows from the same argument used to prove \cite[Theorem~2.4]{MS_IMAG}.  Namely, if $W,V^1,\ldots,V^k$ is a collection of continuous processes such that $\Fh_t(z)$ as defined just above evolves as a continuous local martingale then it follows that they form a solution to~\eqref{eqn::sle_kappa_rho_eqn}.
\end{proof}

We now have the ingredients to complete the proof of Theorem~\ref{thm::existence}.

\begin{figure}[h!]
\begin{center}
\includegraphics[scale=0.85]{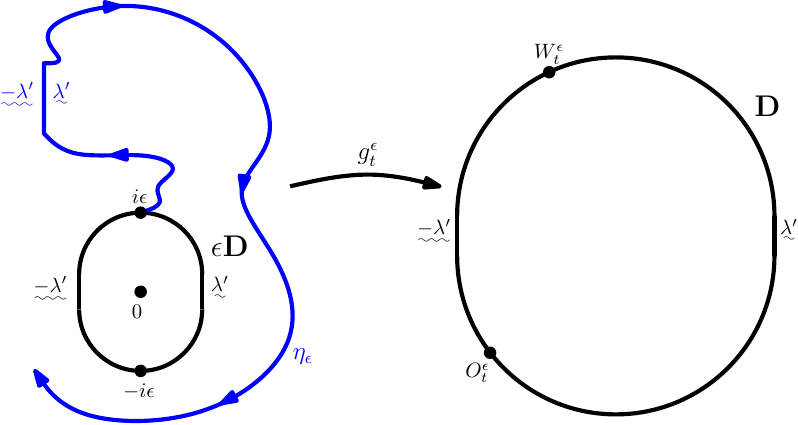}
\end{center}
\caption{\label{fig::disk_bd} Suppose that $h_\epsilon$ is a GFF on $\C_\epsilon = \C \setminus (\epsilon \D)$ with the boundary data depicted on the left side.  Let $\eta_\epsilon$ be the flow line of $h_\epsilon$ starting at $i\epsilon$.  Assume that $\eta_\epsilon \colon [T_\epsilon,\infty) \to \C$ is parameterized so that $t$ is the capacity of $(\epsilon \D) \cup \eta([T_\epsilon,t])$.  In particular, $T_\epsilon$ is the capacity of $\epsilon \D$.  For each $t$, let $g_t^\epsilon$ be the conformal map which takes the unbounded connected component $\C_{t,\epsilon}$ of $\C \setminus (\epsilon \D \cup \eta_\epsilon([T_\epsilon,t]))$ to $\C \setminus \D$ with $g_t^\epsilon(\infty) = \infty$ and $(g_t^\epsilon)'(\infty) > 0$.  Then the conditional law of $h_\epsilon$ given $\eta_\epsilon|_{[T_\epsilon,t]}$ in $\C_{t,\epsilon}$ is equal to the law of the sum $\wt{h} \circ g_t^\epsilon + F_t^{\epsilon} \circ g_t^\epsilon - \chi \arg (g_t^{\epsilon})'$ where $\wt{h}$ is a zero boundary GFF on $\C \setminus \D$ independent of $\eta|_{[T_\epsilon,t]}$ and $F_t^\epsilon$ is the harmonic function on $\C \setminus \D$ with the boundary data as indicated on the right side where $(W_t^\epsilon,O_t^\epsilon)$ is the whole-plane Loewner driving pair of $\eta_\epsilon$.
}
\end{figure}

\begin{proof}[Proof of Theorem~\ref{thm::existence}, whole-plane case]
We will first prove the existence of the coupling in the case that $D = \C$.  For each $\epsilon > 0$, let $h_\epsilon$ be a GFF on $\C_\epsilon = \C \setminus (\epsilon \D)$ whose boundary data is as depicted in the left side of Figure~\ref{fig::diskstart}.  By \cite[Theorem~1.1 and Proposition~3.4]{MS_IMAG}, it follows that we can uniquely generate the flow line $\eta_\epsilon$ of $h_\epsilon$ starting at $i \epsilon$.  In other words,~$\eta_\epsilon$ is an almost surely continuous path coupled with~$h_\epsilon$ such that for every $\eta_\epsilon$-stopping time~$\tau$, the conditional law of $h_\epsilon$ given $\eta_\epsilon|_{[0,\tau]}$ is that of a GFF on $\C_\epsilon \setminus \eta_\epsilon([0,\tau])$ whose boundary conditions agree with those of $h_\epsilon$ on $\partial \C_\epsilon$ and are given by flow line boundary conditions on $\eta_\epsilon([0,\tau])$.  Moreover, as explained in the caption of Figure~\ref{fig::diskstart}, we can read off the law of the path $\eta_\epsilon$: it is given by that of a radial $\SLE_\kappa(2-\kappa)$ process starting at $i \epsilon$ and targeted at $\infty$.  Let $T_\epsilon$ be the capacity of $\epsilon \D$ and assume that $\eta_\epsilon$ is defined on the time interval $[T_\epsilon,\infty)$; note that $T_\epsilon \to -\infty$ as $\epsilon \to 0$.  For each $t > T_\epsilon$, we let $g_t^\epsilon$ be the unique conformal map which takes the unbounded connected component $\C_{t,\epsilon}$ of $\C \setminus (\epsilon \D \cup \eta_\epsilon([T_\epsilon,t]))$ to $\C \setminus \D$ with $g_t^\epsilon(\infty) = \infty$ and $(g_t^\epsilon)'(\infty) > 0$.  We assume that $\eta_\epsilon \colon [T_\epsilon,\infty) \to \C$ is parameterized by capacity, i.e\ $-\log (g_t^\epsilon)'(\infty) = t$ for every $t \geq T_\epsilon$.  Let $W_t^\epsilon = g_t^\epsilon(\eta_\epsilon(t)) \in \partial \D$ be the image of the tip of $\eta_\epsilon$ in $\partial \D$.  This is the whole-plane Loewner driving function of $\eta_\epsilon$.  As explained in the caption of Figure~\ref{fig::diskstart}, we know that $W_t^\epsilon|_{[T_\epsilon,\infty)}$ solves the radial $\SLE_\kappa(\rho)$ SDE~\eqref{eqn::sle_radial_equation} with $\rho = 2-\kappa$; let $O_t^\epsilon$ denote the time evolution of the corresponding force point.  Proposition~\ref{prop::sle_kappa_rho_stationary} states that this SDE has a unique stationary solution $(W_t,O_t)$ for $t \in \R$ and that $(W^\epsilon,O^\epsilon)$ converges weakly to $(W,O)$ as $\epsilon \to 0$ with respect to the topology of local uniform convergence.  This implies that the family of conformal maps $(g_t^\epsilon)$ converge weakly to $(g_t)$, the whole-plane Loewner evolution driven by $W_t$, also with respect to the topology of local uniform convergence \cite[Section~4.7]{LAW05}.  The corresponding GFFs $h_\epsilon$ viewed as distributions defined up to a global multiple of $2\pi \chi$ converge to a whole-plane GFF $h$ which is also defined up to a global multiple of $2\pi\chi$ as $\epsilon \to 0$ by Proposition~\ref{prop::gff_convergence}.

For each $\epsilon > 0$ and each stopping time $\tau$ for $\eta_\epsilon$, we can write
\begin{equation}
\label{eqn::epsilon_markov_property}
 h_\epsilon|_{\C_{\tau,\epsilon}} \stackrel{d}{=} \wt{h} \circ g_\tau^\epsilon + F_\tau^\epsilon \circ g_\tau^\epsilon  - \chi \arg (g_\tau^\epsilon)'
\end{equation}
where $\wt{h}$ is a zero boundary GFF independent of $\eta_\epsilon|_{[T_\epsilon,\tau]}$ on $\C \setminus \D$ and $F_\tau^\epsilon$ is the function which is harmonic on $\C \setminus \D$ whose boundary values are $\lambda'+\chi \cdot {\rm winding}$ on the counterclockwise segment of $\partial \D$ from $O_\tau^\epsilon$ to $W_\tau^\epsilon$ and $-\lambda'+\chi \cdot {\rm winding}$ on the clockwise segment; see Figure~\ref{fig::disk_bd} for an illustration.  Fix an open set $U \subseteq \C$ and assume that~$\tau$ is a stopping time for~$\eta_\epsilon$ which almost surely occurs before the first time~$t$ that~$\eta_\epsilon$ hits~$U$.  Then~\eqref{eqn::epsilon_markov_property} implies that
\begin{equation}
\label{eqn::epsilon_markov_property_restricted}
h_\epsilon|_U \stackrel{d}{=} \big( \wt{h} \circ g_\tau^\epsilon + F_\tau^\epsilon \circ g_\tau^\epsilon  - \chi \arg (g_\tau^\epsilon)' \big)|_U.
\end{equation}
where the right hand side is viewed as distribution on $U$ with values modulo $2\pi\chi$.

The convergence of $(W^\epsilon,O^\epsilon)$ to $(W,O)$ implies that the functions $F_t^\epsilon$ converge locally uniformly to $F_t$, the harmonic function on $\C \setminus \D$ defined analogously to $F_t^\epsilon$ but with $(W_t^\epsilon,O_t^\epsilon)$ replaced by $(W_t,O_t)$.  Taking a limit as $\epsilon \to 0$ of both sides of~\eqref{eqn::epsilon_markov_property_restricted}, we see that if $\tau$ is any stopping time which almost surely occurs before $\eta$ hits $U$ then the law of
\[ \big( \wt{h} \circ g_\tau + F_\tau \circ g_\tau - \chi \arg g_\tau' \big)|_U,\]
viewed as a distribution on $U$ with values modulo $2\pi \chi$, is equal to that of a whole-plane GFF restricted to $U$.  The existence of the coupling then follows by applying the argument used to deduce \cite[Theorem~1.1]{MS_IMAG} from \cite[Lemma~3.11]{MS_IMAG}.

We will now prove the uniqueness of the coupling.  Suppose that $(h,\eta)$ is a coupling of a whole-plane GFF with values modulo $2\pi \chi$ and a path $\eta$ such that for each $\eta$-stopping time $\tau$ we have that the conditional law of $h$ given $\eta|_{(-\infty,\tau]}$ is that of a GFF on $\h \setminus \eta((-\infty,\tau])$ with flow line boundary conditions on $\eta((-\infty,\tau])$.  We will prove the uniqueness by showing that $\eta$ necessarily has the law of a whole-plane $\SLE_\kappa(2-\kappa)$ process from $0$ to $\infty$.  This then determines the joint law of the pair $(h,\eta)$ because the Markov property determines the conditional law of $h$ given $\eta$.  Fix $t \in \R$ and let $\varphi$ be the unique conformal map which takes the unbounded component of $\C \setminus \eta((-\infty,t])$ to $\D$ with $\infty$ sent to $0$ and with positive derivative at $\infty$.  Then we know that $\wt{h} = h \circ \varphi^{-1} - \chi \arg( (\varphi^{-1})')$ can be written as the sum of a GFF on $\D$ minus $2\chi \arg(\cdot)$ with boundary conditions on $\partial \D$ as in the statement of Proposition~\ref{prop::interior_force_point_coupling}.  Moreover, with $\wt{\eta} = \varphi(\eta|_{[t,\infty)})$, the Markov property for the pair $(h,\eta)$ implies that the pair $(\wt{h},\wt{\eta})$ satisfies the analogous Markov property (i.e., as described in the statement of Proposition~\ref{prop::interior_force_point_coupling}).  The uniqueness component of Proposition~\ref{prop::interior_force_point_coupling} then implies that $\wt{\eta}$ has the law of a radial $\SLE_{\kappa}(2-\kappa)$ process in $\D$ hence $\eta|_{[t,\infty)}$ has the law of a radial $\SLE_\kappa(2-\kappa)$ process in the unbounded component of $\C \setminus \eta((-\infty,t])$.  Sending $t \to -\infty$ implies that $\eta$ has the law of a whole-plane $\SLE_\kappa(2-\kappa)$ process from $0$ to $\infty$.
\end{proof}

\begin{proof}[Proof of Theorem~\ref{thm::existence}, existence for general domains]
We will now extend the existence of the coupling to general domains $D$; we will defer the proof of uniqueness of the coupling until we prove Theorem~\ref{thm::uniqueness} later on.  The key observation is that the law of~$h$ (modulo a global multiple of $2\pi \chi$) in a small neighborhood of zero changes in an absolutely continuous way when we replace $\C$ with $D$.  Hence, we can couple the path with the field {\em as if} the domain were $\C$, at least up until the first time the path exits this small neighborhood (see also the discussion just after \cite[Lemma~3.6]{MS_IMAG} regarding the Markov property for GFF flow lines when performing this type of change of measure in the context of boundary emanating flow lines).  The {\em actual} value of the field on the boundary of $\eta|_{[0,\tau]}$ (as opposed to just the value modulo a global multiple of $2 \pi \chi$) is then a Gaussian random variable restricted to a discrete set of possible values.  See Proposition~\ref{prop::local_set_whole_plane_bounded_compare}. Once the path has been drawn to a stopping time $\tau > - \infty$ (where the path is parameterized by capacity), the usual coupling rules \cite[Theorem~1.1 and Theorem~1.2]{MS_IMAG} allow us to extend it uniquely.
\end{proof}

So far we have shown that there is a unique coupling $(h,\eta)$ between a path $\eta$ and a whole-plane GFF $h$ with values modulo $2\pi \chi$ such that the conditional law of $h$ given~$\eta$ up to any $\eta$-stopping time $\tau$ is that of GFF in $\C \setminus \eta([0,\tau])$ with flow line boundary conditions on $\eta([0,\tau])$ and $\eta([0,\tau])$ is local for $h$.  We proved the existence of the coupling in the case of general domains $D$ by starting with the construction in the whole-plane case and then using absolute continuity.  One could worry that there are other possible laws.  This will be ruled out as a byproduct of our proof of Theorem~\ref{thm::uniqueness} below.  Unless explicitly stated otherwise, we shall assume throughout in what follows that a flow line on a bounded domain has the law as constructed just above (i.e., induced from the whole-plane coupling).

Suppose that $h$ is a GFF on $\C$ with values modulo $2\pi \chi$, $z_1,z_2 \in \C$ are distinct, and that $\eta_1,\eta_2$ are flow lines of $h$ starting from $z_1,z_2$, respectively, taken to be conditionally independent given $h$.  Suppose that $\tau_1$ is a stopping time for $\eta_1$.  Then we know that $\eta_2$ is a flow line for the GFF on $\C \setminus \eta_1((-\infty,\tau_1])$ given by $h$ given $\eta_1|_{(-\infty,\tau_1]}$.  Indeed, this follows because we know that for each $\eta_2$-stopping $\tau_2$ that the conditional law of $h$ given both $\eta_1|_{(-\infty,\tau_1]}$ and $\eta_2|_{(-\infty,\tau_2]}$ is that of a GFF on $\C \setminus (\eta_1((-\infty,\tau_1]) \cup \eta_2((-\infty,\tau_2]))$ with flow line boundary conditions on $\eta_1((-\infty,\tau_1])$ and $\eta_2((-\infty,\tau_2])$.  We claim that the coupling of $\eta_2$ with the GFF $h$ on $\C \setminus \eta_1((-\infty,\tau_1])$ given $\eta_1|_{(-\infty,\tau_1]}$ is the same as the one constructed in the proof of the existence component of Theorem~\ref{thm::existence} given just above.  (This will be important in some of our conditioning arguments in the proof of Theorem~\ref{thm::flow_line_interaction}, before we complete the proof of the uniqueness component of Theorem~\ref{thm::existence} for bounded domains given below.)  Suppose that $U \subseteq \C$ is an open set which contains $z_2$ but not $z_1$.  The locality of $\eta_2$ implies that the conditional law of $\eta_2$ up until exiting $U$ given $h$ is a function of the restriction of~$h$ to~$U$.  Moreover, our construction in the proof of the existence component of Theorem~\ref{thm::existence} given just above yields that the conditional law of the flow line given~$h$ up until exiting a subdomain~$U$ with boundary disjoint from $\partial D$ is given by the same function of the values of the restriction of~$h$ to~$U$.  This, in turn, implies the claim.

\begin{remark}
\label{rem::interior_rho_value}  The proof of Theorem~\ref{thm::existence} implies that in the special case that $D$ is a proper domain in $\C$ with harmonically non-trivial boundary, the law of $\eta$ stopped before hitting $\partial D$ is absolutely continuous with respect to that of a whole-plane $\SLE_\kappa(2-\kappa)$ process.  If $D = \C$, then $\eta$ is in fact a whole-plane $\SLE_\kappa(2-\kappa)$.  In particular, $\eta$ intersects itself if and only if $\kappa \in (8/3,4)$ by Lemma~\ref{lem::radial_critical_for_hitting}.
\end{remark}

\begin{remark}
\label{rem::alpha_coupling}
In the case that $D = \C$, if we replace the whole-plane GFF $h$ in the proof of Theorem~\ref{thm::existence} given just above with $h_\alpha = h- \alpha \arg(\cdot)$, viewed as a distribution defined modulo a global multiple of $2\pi(\chi+\alpha)$, the same argument yields the existence of a coupling $(h_\alpha,\eta)$ where $\eta$ is a whole-plane $\SLE_\kappa(\rho)$ for $\kappa \in (0,4)$ starting from $0$ with
\[ \rho = \rho(\alpha) = 2-\kappa+\frac{2\pi\alpha}{\lambda}\]
satisfying the analogous Markov property (the conditional law of the field given a segment of the path is a GFF off the path with $\alpha$-flow line boundary conditions; recall Figure~\ref{fig::interior_path_bd2}).  In order for this to make sense, we need to assume that $\alpha > -\chi$ so that $\rho > -2$.  The same proof also yields the existence of a coupling $(h_\alpha,\eta')$ where $\eta'$ is a whole-plane $\SLE_{\kappa'}(\rho)$ for $\kappa' > 4$ starting from $0$ with
\[ \rho = \rho(\alpha) = 2-\kappa'-\frac{2\pi\alpha}{\lambda'}\]
satisfying the analogous Markov property provided $\rho > -2$.

By applying the inversion $z \mapsto 1/z$, it is also possible to construct a coupling $(h_\alpha,\eta')$ where $\eta'$ is a whole-plane $\SLE_{\kappa'}(\rho)$ process starting from $\infty$ with
\[ \rho = \rho(\alpha) = \kappa'-6+\frac{2\pi\alpha}{\lambda'}.\]
This completes the proof of the existence component of Theorem~\ref{thm::alphabeta} for $\beta = 0$.  In Section~\ref{subsec::conical}, we will explain how to extend the existence statement to $\beta \neq 0$.
\end{remark}

\subsection{Interaction}
\label{subsec::interaction}

\begin{figure}[h!]
\begin{center}
\includegraphics[scale=0.85]{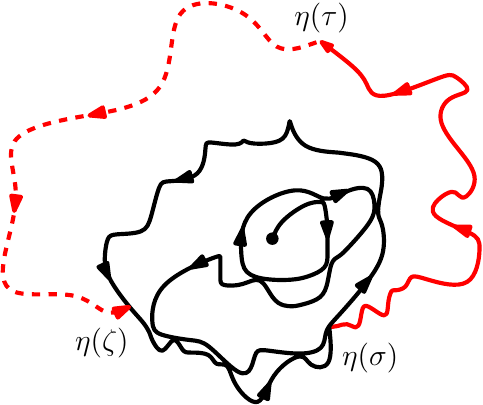}
\end{center}
\caption{\label{fig::tailsketch}
Suppose that $\eta$ is a flow line of a GFF and that $\tau$ is a finite stopping time for $\eta$ such that $\eta(\tau) \notin \eta((-\infty,\tau))$ almost surely.  The tail of $\eta$ associated with $\tau$, denoted by $\eta^\tau$, is $\eta|_{[\sigma,\zeta]}$ where $\sigma = \sup\{s \leq \tau : \eta(s) \in \eta((-\infty,s))\}$ and $\zeta = \inf\{s \geq \tau : \eta(s) \in \eta((-\infty,s))\}$.  The red segment above is a tail of the illustrated path.  In this subsection, we will describe the interaction of tails of flow lines using the boundary emanating theory from \cite[Theorem~1.5]{MS_IMAG} (see also Figure~\ref{fig::boundary_flowline_interaction1} and Figure~\ref{fig::boundary_flowline_interaction2}) by using absolute continuity.  We will then complete the proof of Theorem~\ref{thm::flow_line_interaction} by showing in Proposition~\ref{prop::tail_decomposition} that every flow line can be decomposed into a union of overlapping tails.}
\end{figure}

\begin{figure}[h!]
\begin{center}
\includegraphics[scale=0.7]{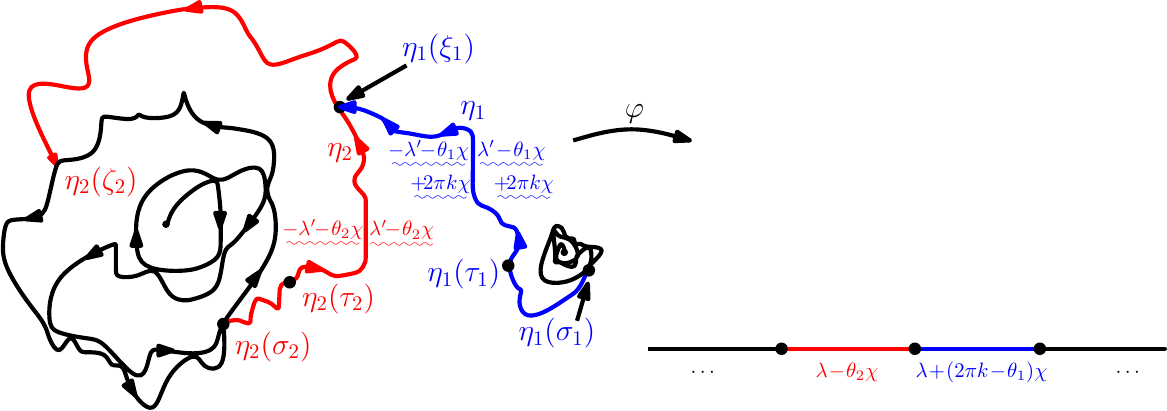}
\end{center}
\caption{\label{fig::doubletailsketch}  (Continuation of Figure~\ref{fig::tailsketch}.)  Let $h$ be a GFF on $\C$ defined up to a global multiple of $2\pi \chi$.  Fix $z_1,z_2$ distinct and $\theta_1,\theta_2 \in \R$.  For $i=1,2$, let $\eta_i$ be the flow line of $h$ starting at $z_i$ of angle $\theta_i$ and let $\tau_i$ be a stopping time for $\eta_i$ such that $\eta_i(\tau_i) \notin \eta_i((-\infty,\tau_i))$ almost surely.  Let $\sigma_i,\zeta_i$ be the start and end times, respectively, for the tail $\eta_i^{\tau_i}$ for $i=1,2$.  Let $\xi_1$ be the first time that $\eta_1$ hits $\eta_2((-\infty,\zeta_2])$.  Assume that we are working on the event that $\xi_1 \in (\tau_1,\zeta_1)$, $\eta_1(\xi_1) \in \eta_2((\tau_2,\zeta_2))$, and that $\eta_1^{\tau_1}$ hits $\eta_2^{\tau_2}$ on its right side at time $\xi_1$.  The boundary data for the conditional law of $h$ on $\eta_1((-\infty,\xi_1])$ and $\eta_2((-\infty,\zeta_2])$ is given by flow line boundary conditions with angle $\theta_i$, as in Figure~\ref{fig::interior_path_bd} where $k \in \Z$, up to an additive constant in $2\pi \chi \Z$.  Let $\CD = (2\pi k +\theta_2-\theta_1)\chi$ be the height difference of the tails upon intersecting, as described in Figure~\ref{fig::angle_difference}.  Proposition~\ref{prop::tail_interaction} states that $\CD \in (-\pi \chi,2\lambda-\pi \chi)$.  Moreover, if $\CD \in (-\pi \chi,0)$, then $\eta_1^{\tau_1}$ crosses $\eta_2^{\tau_2}$ upon intersecting but does not cross back.  If $\CD = 0$, then $\eta_1^{\tau_1}$ merges with $\eta_2^{\tau_2}$ upon intersecting and does not separate thereafter.  Finally, if $\CD \in (0,2\lambda-\pi \chi)$, then $\eta_1^{\tau_1}$ bounces off but does not cross $\eta_2^{\tau_2}$.  This describes the interaction of $\eta_1^{\tau_1}$ and $\eta_2^{\tau_2}$ up until any pair of times $\wt{\tau}_1$ and $\wt{\tau}_2$ such that $\eta_1((-\infty,\wt{\tau}_1]) \cup \eta_2((-\infty,\wt{\tau}_2])$ does not separate either $\eta_1(\sigma_1)$ or~$\eta_2(\sigma_2)$ from~$\infty$.  The idea is to use absolute continuity to reduce this to the interaction result for boundary emanating flow lines.  The case when~$\eta_1^{\tau_1}$ hits $\eta_2^{\tau_2}$ on its left side is analogous.  The same argument also shows that the conditional mean of~$h$ given the tails does not exhibit pathological behavior at the intersection points of~$\eta_1^{\tau_1}$ and $\eta_2^{\tau_2}$.
}
\end{figure}

\begin{figure}[ht!]
\begin{center}
\includegraphics[scale=0.7]{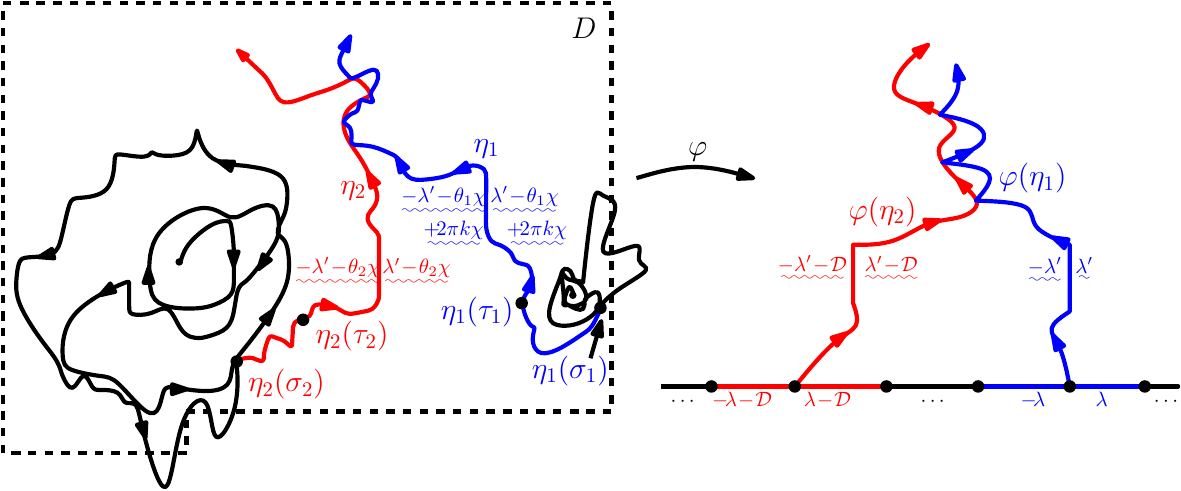}
\end{center}
\vspace{-0.035\textheight}
\caption{\label{fig::doubletailproof}  (Continuation of Figure~\ref{fig::doubletailsketch}.) To prove Proposition~\ref{prop::tail_interaction}, we let $D_0 \subseteq \C$ be a bounded domain with boundary which can be written as a finite union of linear segments which connect points with rational coordinates.  We assume that we are working on the event that $\eta_i(\sigma_i) \in \ol{D}_0$ for $i=1,2$ such that the intersection~$D$ of~$D_0$ and the unbounded complementary connected components of $\eta_1((-\infty,\tau_1])$ and $\eta_2((-\infty,\tau_2])$ is simply connected with $\eta_i(\sigma_i) \in \partial D$ for $i=1,2$.  In the illustration, $D_0$ is the region bounded by the dotted lines and $D$ is the region which is bounded by the dotted lines \emph{and} not disconnected from $\infty$ by $\eta_i|_{(-\infty,\tau_i]}$ for $i=1,2$.  Let $\varphi \colon D \to \h$ be a conformal map which takes $\eta_1(\tau_1)$ to~$1$ and $\eta_2(\tau_2)$ to $-1$.  Assume that the additive constant for $h$ in $2\pi \chi \Z$ has been chosen so that $\wt{h} = h \circ \varphi^{-1} - \chi \arg (\varphi^{-1})' +\theta_1 \chi$ is a GFF on $\h$ whose boundary data is $-\lambda$ (resp.\ $\lambda$) immediately to the left (resp.\ right) of~$1$.  Then $\varphi(\eta_1)$ (resp.\ $\varphi(\eta_2)$) is the zero (resp.\ $\CD/\chi$) angle  flow line of $\wt{h}$ starting at~$1$ (resp.\ $-1$).  Consequently, it follows from \cite[Theorem~1.5]{MS_IMAG} (see also Figure~\ref{fig::boundary_flowline_interaction1} and Figure~\ref{fig::boundary_flowline_interaction2}) and \cite[Proposition~3.4]{MS_IMAG} that $\varphi(\eta_1)$ and $\varphi(\eta_2)$ respect monotonicity if $\CD > 0$ but may bounce off each other if $\CD \in (0,2\lambda-\pi \chi)$, merge upon intersecting if $\CD = 0$, and cross exactly once upon intersecting if $\CD \in (-\pi \chi,0)$.  Moreover, $\varphi(\eta_1)$ can hit $\varphi(\eta_2)$ only if $\CD \in (-\pi \chi,2\lambda-\pi \chi)$.  These facts hold up until stopping times~$\wt{\tau}_i$ satisfying $\tau_i \leq \wt{\tau}_i \leq \zeta_i$ for $i=1,2$ on the event that~$\eta_i(\sigma_i)$ is not disconnected from~$\infty$ by $\eta_1((-\infty,\wt{\tau}_1]) \cap \eta_2((-\infty,\wt{\tau}_2])$ and $\eta_i([\tau_i,\wt{\tau}_i]) \subseteq D$ for $i=1,2$.  We note that the boundary data of the field $h \circ \varphi^{-1} - \chi \arg (\varphi^{-1})'$ is not piecewise constant, in particular on $\varphi(\partial D_0)$.  However, by using absolute continuity, the interaction of the paths up until hitting $\varphi(\partial D)$ can still be deduced from the piecewise constant case.  This describes the interaction of $\varphi(\eta_1)$ and $\varphi(\eta_2)$ up to any pair of stopping times before the paths hit~$\partial \h$ (which corresponds to describing the interaction of $\eta_1^{\tau_1}$ and~$\eta_2^{\tau_2}$ up until exiting~$D$).  Applying this result for all such domains~$D_0$ as described above completes the proof of Proposition~\ref{prop::tail_interaction}.
}
\end{figure}

In this subsection, we will study the manner in which flow lines interact with each other and the domain boundary in order to prove Theorem~\ref{thm::flow_line_interaction} and Theorem~\ref{thm::merge_cross} for ordinary GFF flow lines.  The strategy to establish the first result is to reduce it to the setting of boundary emanating flow lines, as described in Section~\ref{subsec::imaginary} (see Figure~\ref{fig::boundary_flowline_interaction1} and Figure~\ref{fig::boundary_flowline_interaction2} as well as \cite[Theorem~1.5]{MS_IMAG}).  This will require three steps.

\begin{enumerate}
\item In Section~\ref{subsubsec::tail_interaction}, we will show that the so-called \emph{tails} of flow lines --- path segments in between self-intersection times --- behave in the same manner as in the boundary emanating regime.  This is made precise in Proposition~\ref{prop::tail_interaction}; see Figure~\ref{fig::doubletailsketch} and Figure~\ref{fig::doubletailproof} for an illustration of the setup and proof of this result.
\item In Section~\ref{subsubsec::tail_decomposition}, we show that every flow line starting from an interior point can be decomposed into a union of overlapping tails (Proposition~\ref{prop::tail_decomposition}) and that it is possible to represent any segment of a tail with a further tail (Lemma~\ref{lem::tail_decomposition}).
\item In Section~\ref{subsubsec::flow_line_interaction}, we will explain how this completes the proof of our description of the manner in which flow lines interact.  At a high level, the result follows in the case of flow lines starting from interior points because in this case the interaction of any two flow lines reduces to the interaction of flow line tails.  The proof for the case in which a flow line starting from an interior point interacts with a flow line starting from the boundary follows from the same argument because, as it will not be hard to see from what follows, it is also possible to decompose a flow line starting from the boundary into a union of overlapping tails of flow lines starting from interior points.  This result is stated precisely as Proposition~\ref{prop::boundary_tail_decomposition}.
\end{enumerate}

\begin{remark}
\label{rem::multiple_paths_not_determined}
At this point in the article we have not proved Theorem~\ref{thm::uniqueness}, that flow lines of the GFF emanating from interior points are almost surely determined by the field, yet throughout this section we will work with more than one path coupled with the GFF.  We shall tacitly assume that the paths are conditionally independent given the field. That is, when we refer to {\em the} flow line of a given angle and starting point, and {\em the} flow line of another angle and starting point, we at this point assume only that the laws of these paths are conditionally independent given the field (since we have not yet shown that the flow line is a deterministic function of the field).  We also emphasize that, by the uniqueness theory for boundary emanating flow lines \cite[Theorem~1.2]{MS_IMAG} and absolute continuity \cite[Proposition~3.4]{MS_IMAG}, once we have drawn an infinitesimal segment of each path, the remainder of each of the paths \emph{is} almost surely determined by the field.  The only possible source of randomness is in how the path gets started.
\end{remark}

\subsubsection{Tail interaction}
\label{subsubsec::tail_interaction}

Suppose that $D \subseteq \C$ is a domain and let $h$ be a GFF on $D$ with given boundary conditions; if $D = \C$ then we take $h$ to be a whole-plane GFF defined up to a global multiple of $2\pi \chi$.  Fix $z \in D$, $\theta \in \R$, and let $\eta = \eta_\theta^z$ be the flow line of $h$ starting at $z$ with angle $\theta$.  This means that $\eta$ is the flow line of $h+\theta \chi$ starting at $z$.  Let $\tau$ be a stopping time for $\eta$ at which $\eta$ almost surely does not intersect its past, i.e.\ $\eta(\tau)$ is not contained in $\eta((-\infty,\tau))$.  We let $\sigma = \sup\{s \leq \tau : \eta(s) \in \eta((-\infty,s))\}$ be the largest time before $\tau$ at which $\eta$ intersects its past and $\zeta = \inf\{s \geq \tau : \eta(s) \in \eta((-\infty,s))\}$.  By Proposition~\ref{prop::whole_plane_continuity}, we know that $\eta$ is almost surely continuous, hence $\sigma \neq \tau \neq \zeta$.  We call the path segment $\eta|_{[\sigma,\zeta]}$ the {\bf tail} of~$\eta$ associated with the stopping time $\tau$ and will denote it by $\eta^\tau$ (see Figure~\ref{fig::tailsketch}).  The next proposition, which is illustrated in Figure~\ref{fig::doubletailsketch}, describes the manner in which the tails of flow lines interact with each other up until~$\infty$ is disconnected from the initial point of one of the tails.

\begin{proposition}
\label{prop::tail_interaction}
Let $h$ be a GFF on $\C$ defined up to a global multiple of $2\pi \chi$.  Suppose that $z_1,z_2 \in \C$ and $\theta_1,\theta_2 \in \R$.  For $i=1,2$, we let $\eta_i$ be the flow line of $h$ starting at $z_i$ with angle $\theta_i$ and let $\tau_i$ be a stopping time for $\eta_i$ such that $\eta_i(\tau_i) \notin \eta_i((-\infty,\tau_i))$ almost surely.  Let $\sigma_i,\zeta_i$ be the starting and ending times for the tail $\eta_i^{\tau_i}$ for $i=1,2$ as described above.  Let $\xi_1 = \inf\{t \in \R : \eta_1(t) \in \eta_2((-\infty,\zeta_2])\}$ and let $E_1$ be the event that $\xi_1 \in (\tau_1,\zeta_1)$, $\eta_1(\xi_1) \in \eta_2((\tau_2,\zeta_2))$, and that $\eta_1^{\tau_1}$ hits $\eta_2^{\tau_2}$ on its right side at time $\xi_1$.  Let $\CD$ be the height difference of the tails upon intersecting, as defined in Figure~\ref{fig::angle_difference}.  Then $\CD \in (-\pi \chi, 2\lambda-\pi \chi)$.

Let $\tau_i \leq \wt{\tau}_i \leq \zeta_i$ be a stopping time for $\eta_i$ for $i=1,2$ and let $E_2$ be the event that $\eta_i(\sigma_i)$ for $i=1,2$ is not disconnected from $\infty$ by $\eta_1((-\infty,\wt{\tau}_1]) \cup \eta_2((-\infty,\wt{\tau}_2])$.  Let $\wt{\eta}_i = \eta_i|_{(-\infty,\wt{\tau}_i]}$ for $i=1,2$.  On $E = E_1 \cap E_2$, we have that:
\begin{enumerate}[(i)]
\item If $\CD \in (-\pi \chi,0)$, then $\wt{\eta}_1$ crosses $\wt{\eta}_2$ upon intersecting and does not subsequently cross back,
\item If $\CD = 0$, then $\wt{\eta}_1$ merges with and does not subsequently separate from $\wt{\eta}_2$ upon intersecting, and
\item If $\CD \in (0,2\lambda-\pi \chi)$, then $\wt{\eta}_1$ bounces off but does not cross $\wt{\eta}_2$.
\end{enumerate}
Finally, the conditional law of $h$ given $\wt{\eta}_i|_{(-\infty,\wt{\tau}_i]}$ for $i=1,2$ on $E$ is that of a GFF on $\C \setminus (\wt{\eta}_1((-\infty,\wt{\tau}_1]) \cup \wt{\eta}_2((-\infty,\wt{\tau}_2]))$ whose boundary data on $\wt{\eta}_i((-\infty,\wt{\tau}_i])$ for $i=1,2$ is given by flow line boundary conditions with angle $\theta_i$.  If $\eta_1^{\tau_1}$ hits $\eta_2^{\tau_2}$ on its left rather than right side, the same result holds but with $-\CD$ in place of $\CD$ (so the range of values for $\CD$ where $\eta_1^{\tau_1}$ can hit $\eta_2^{\tau_2}$ is $(\pi \chi - 2\lambda,\pi \chi)$).
\end{proposition}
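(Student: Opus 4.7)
The plan is to reduce Proposition~\ref{prop::tail_interaction} to the boundary-emanating flow line interaction theorem \cite[Theorem~1.5]{MS_IMAG} (recalled in Figures~\ref{fig::boundary_flowline_interaction1} and \ref{fig::boundary_flowline_interaction2}) via a conformal change of coordinates, exactly as sketched in the caption of Figure~\ref{fig::doubletailproof}. First, conditioning on $\eta_1|_{[0,\tau_1]}$ and $\eta_2|_{[0,\tau_2]}$ and applying the Markov property of Theorem~\ref{thm::existence} to each path (this is legitimate because, per Remark~\ref{rem::multiple_paths_not_determined}, the two paths may be taken to be conditionally independent given $h$), the conditional law of $h$ on $\C \setminus (\eta_1([0,\tau_1]) \cup \eta_2([0,\tau_2]))$ is that of a GFF with flow line boundary conditions of angle $\theta_i$ along $\eta_i([0,\tau_i])$, specified up to a global multiple of $2\pi\chi$.

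Next, I fix a countable dense family of auxiliary simply connected test domains $\{D_0^{(n)}\}$ (for instance, bounded polygons with rational vertices). For each $n$, let $D = D(D_0^{(n)})$ be the intersection of $D_0^{(n)}$ with the unbounded connected components of $\C \setminus \eta_i([0,\tau_i])$ ($i=1,2$), and restrict attention to those $n$ for which $D$ is simply connected and contains the four marked points $\eta_i(\sigma_i), \eta_i(\tau_i)$ on its boundary. Let $\varphi \colon D \to \h$ be a conformal map with $\varphi(\eta_1(\tau_1)) = 1$ and $\varphi(\eta_2(\tau_2)) = -1$. The coordinate change rule \eqref{eqn::ac_eq_rel} transforms the conditional field to $\wt h := h \circ \varphi^{-1} - \chi \arg(\varphi^{-1})' + \theta_1\chi$; after choosing the global $2\pi\chi$ additive constant appropriately, $\wt h$ is a GFF on $\h$ with boundary data $\mp\lambda$ immediately to the left and right of $1$, so that $\varphi(\eta_1|_{[\tau_1,\cdot]})$ (stopped on exiting $D$) is the angle-zero boundary emanating flow line of $\wt h$ from $1$ and $\varphi(\eta_2|_{[\tau_2,\cdot]})$ is the angle-$\CD/\chi$ boundary emanating flow line of $\wt h$ from $-1$, where $\CD = (2\pi k + \theta_2 - \theta_1)\chi$ for the integer $k \in \Z$ encoding the net winding of $\eta_i|_{[\sigma_i,\tau_i]}$ around its interior starting point $z_i$.

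Now \cite[Theorem~1.5]{MS_IMAG} applied to the pair $\varphi(\eta_1), \varphi(\eta_2)$ yields exactly the three hitting regimes in terms of $\CD$ stated in Proposition~\ref{prop::tail_interaction}, together with the necessary condition $\CD \in (-\pi\chi, 2\lambda - \pi\chi)$ for hitting to be possible. Pulling back through $\varphi^{-1}$ transfers the behavior to $\wt\eta_1, \wt\eta_2$ as long as the paths remain inside $D$, and the conditional law of $h$ given the path segments follows from the analogous boundary-emanating statement combined with \cite[Proposition~3.2]{MS_IMAG} and the coordinate change rule. On the event $E_2$ the segments $\wt\eta_i$ together with small neighborhoods of $\eta_i(\sigma_i)$ lie inside $D(D_0^{(n)})$ for \emph{some} $n$ in our countable dense family, so a countable union over admissible $n$ yields the conclusion. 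I expect the main technical point to be verifying that the integer $k$ in the formula for $\CD$ is a well-defined \emph{constant} on the event $E$, independent of which approximating $D_0$ is chosen: once one fixes the $2\pi\chi$-ambiguity of $h$ by, say, pinning the value of $\wt h$ near $1$, the winding of $\eta_i|_{[\sigma_i,\tau_i]}$ around $z_i$ and Figure~\ref{fig::winding}'s winding-rule together force a single choice of $k$, making the case analysis in $\CD$ self-consistent and allowing the boundary-emanating dichotomy to be transferred globally.
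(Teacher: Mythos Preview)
Your overall strategy---condition on the two initial segments, conformally map a suitable simply connected piece of the complement to $\h$, and invoke \cite[Theorem~1.5]{MS_IMAG} for a countable dense family of test domains $D_0$---is exactly the paper's approach (this is precisely what Figures~\ref{fig::doubletailsketch} and~\ref{fig::doubletailproof} describe). So the proposal is on the right track.

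However, there is a genuine gap in your first step. You write that by ``applying the Markov property of Theorem~\ref{thm::existence} to each path'' the conditional law of $h$ given \emph{both} $\eta_1|_{[0,\tau_1]}$ and $\eta_2|_{[0,\tau_2]}$ is a GFF on the complement with flow line boundary conditions along each segment. Theorem~\ref{thm::existence} only gives the Markov property for a \emph{single} path; conditional independence of $\eta_1,\eta_2$ given $h$ does not by itself imply the iterated statement, and Remark~\ref{rem::multiple_paths_not_determined} does not supply it either. This is precisely one of the two technical points the paper's proof singles out as requiring justification beyond the figure captions. What is needed is the local set machinery: each $\eta_i([0,\tau_i])$ is local for $h$ by Theorem~\ref{thm::existence}, so by Proposition~\ref{prop::cond_union_local} their conditionally independent union $A$ is local (and one uses \cite[Lemma~6.2]{MS_IMAG} to extend to the relevant stopping times); then Proposition~\ref{prop::cond_union_mean}, together with the fact that on $E_1$ the segment $\eta_1([0,\xi_1))$ is disjoint from $\eta_2([0,\zeta_2])$, identifies the harmonic part $\CC_A$ as having flow line boundary conditions along each path separately. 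Only after this do you have a GFF on $D$ with the boundary data needed to recognize $\varphi(\eta_1),\varphi(\eta_2)$ as boundary-emanating flow lines of $\wt h$. Once you insert this local-set argument, the rest of your proposal (the conformal map, the identification of the angle $\CD/\chi$, and the countable union over polygonal $D_0^{(n)}$) matches the paper and goes through.
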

\begin{proof}
The proof is contained in the captions of Figure~\ref{fig::doubletailsketch} and Figure~\ref{fig::doubletailproof}, except for the following two points.  First, the reason that we know that $A = \eta_1((-\infty,\xi_1]) \cup \eta_2((-\infty,\zeta_2])$ is a local set for $h$ is that, if we draw each of the paths up until any fixed stopping time, then their union is local by Proposition~\ref{prop::cond_union_local} (recall that we took the paths to be conditionally independent given $h$).  Their continuations are local for and, moreover, almost surely determined by the conditional field given these initial segments (recall Remark~\ref{rem::multiple_paths_not_determined}).  Hence the claim follows from \cite[Lemma~6.2]{MS_IMAG}.  Second, the reason that the boundary data for $h$ given $A$ and $h|_A$ is given by flow line boundary conditions is that we were working on the event that $\eta_1((-\infty,\xi_1)) \cap \eta_2((-\infty,\zeta_2]) = \emptyset$.  Consequently, we can get the boundary data for $h$ given $A$ and $h|_A$ by using Proposition~\ref{prop::cond_union_mean} to compare to the conditional law of $h$ given $\eta_1$ and $h$ given $\eta_2$ separately.  The reason that the boundary data for the conditional law of $h$ given $\wt{\eta}_1$ and $\wt{\eta}_2$ has flow line boundary conditions (without singularities at intersection points) is that we can apply the boundary emanating theory from \cite{MS_IMAG}.
\end{proof}

\begin{figure}[h!]
\begin{center}
\includegraphics[scale=0.85]{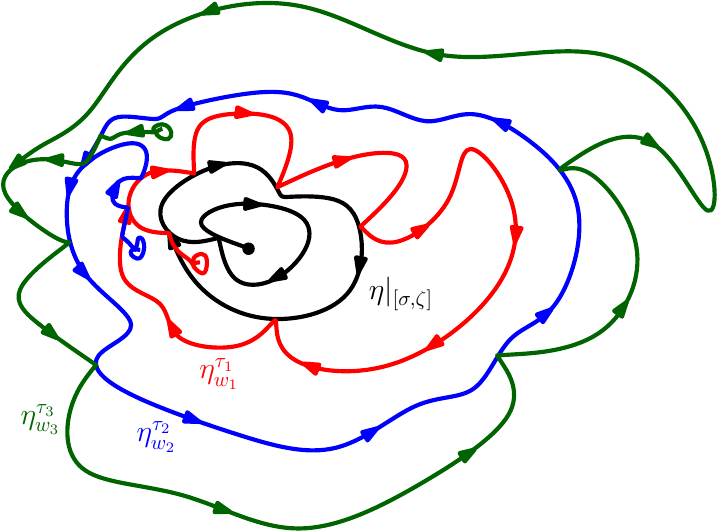}
\end{center}
\caption{\label{fig::overlappingtails} Suppose that $\kappa \in (8/3,4)$ --- this is the range of $\kappa$ values in which GFF flow lines started from interior points are self-intersecting --- and that $\eta$ is a flow line of a whole-plane GFF defined up to a global multiple of $2\pi \chi$ starting at $z \in \C$.  Let $\zeta$ be any stopping time for $\eta$ such that there exists $\sigma < \zeta$ so that $\eta|_{[\sigma,\zeta]}$ forms a clean loop around $z$, i.e.\ the loop does not intersect the past of the path except where the terminal point hits the initial point.  We prove in Proposition~\ref{prop::tail_decomposition} that it is possible to represent $\eta|_{[\zeta,\infty)}$ as a union of overlapping tails $(\eta_{w_i}^{\tau_i} : i \in \N)$.  These are depicted in the illustration above by different colors.  As shown, the decomposition has the property that the tails $\eta_{w_i}^{\tau_i}$ give the outer boundary of $\eta$ at successive times at which $\eta$ wraps around its starting point and intersects itself.  Moreover, for each $i$, the initial point $w_i$ has rational coordinates and is contained in a bounded complementary component of $\eta_{w_{i-1}}^{\tau_{i-1}}$.  Finally, conditional on $\eta|_{(-\infty,\zeta_i]}$, $\eta_{w_i}^{\tau_i}$ is independent of $h$ restricted to the unbounded complementary component of $\eta((-\infty,\zeta_i])$.  This allows us to reduce the interaction of flow lines to the interaction of tails, which we already described in Proposition~\ref{prop::tail_interaction}.}
\end{figure}

The reason that the statement of Proposition~\ref{prop::tail_interaction} is more complicated than the statement which describes the interaction of boundary emanating flow lines \cite[Theorem~1.5]{MS_IMAG} is that we needed a way to encode the height difference between $\eta_1$ and $\eta_2$ upon intersecting since the paths have the possibility of winding around their initial points many times after the stopping times $\tau_1$ and $\tau_2$.

\subsubsection{Decomposing flow lines into tails}
\label{subsubsec::tail_decomposition}

Now that we have described the interaction of tails of flow lines, we turn to show that it is possible to decompose a flow line into a union of overlapping tails.  This combined with Proposition~\ref{prop::tail_interaction} will lead to the proof of Theorem~\ref{thm::flow_line_interaction}.  Suppose that~$\eta$ is a non-crossing path starting at~$z$.  Then we say that~$\eta$ has made a {\bf clean loop} at time~$\zeta$ around~$w$ if the following is true.  With $\sigma = \sup\{t < \zeta : \eta(t) = \eta(\zeta)\}$, we have that $\eta|_{[\sigma,\zeta]}$ is a simple loop which surrounds $w$ and does not intersect $\eta((-\infty,\sigma))$.  Note that Lemma~\ref{lem::radial_loop} implies that GFF flow lines for $\kappa \in (8/3,4)$, which we recall are given by whole-plane $\SLE_\kappa(\rho)$ processes for $\rho = 2-\kappa \in (-2,\tfrac{\kappa}{2}-2)$ for $D = \C$ (and if $D \neq \C$, their law is absolutely continuous with respect to that of whole-plane $\SLE_\kappa(\rho)$ up until hitting $\partial D$), almost surely contain arbitrarily small clean loops.

\begin{proposition}[Tail Decomposition: Interior Regime]
\label{prop::tail_decomposition}
Assume that $\kappa \in (8/3,4)$.  Suppose that $h$ is a whole-plane GFF defined up to a global multiple of $2\pi \chi$.  Fix $z \in \C$, $\theta \in \R$, and let $\eta$ be the flow line of $h$ starting from $z$ with angle $\theta$.  Let $\zeta$ be any stopping time for $\eta$ such that $\eta$ has made a clean loop around $z$ at time $\zeta$, as described just above.  Then we can decompose $\eta|_{[\zeta,\infty)}$ into a union of overlapping tails $(\eta_{w_i}^{\tau_i} : i \in \N)$ of flow lines $(\eta_{w_i})$ where $\eta_{w_i}$ starts from $w_i$ and has angle $\theta$ with the following properties:
\begin{enumerate}[(i)]
\item For every $i \in \N$, the starting point $w_i$ of the flow line $\eta_{w_i}$ of $h$ with angle $\theta$ has rational coordinates and is contained in a bounded complementary component of $\eta_{w_{i-1}}^{\tau_{i-1}}$ (we take $\eta_{w_0}^{\tau_0} \equiv \eta|_{(-\infty,\zeta]}$),
\item There exists stopping times $\zeta_0 \equiv \zeta < \zeta_1 < \zeta_2 < \cdots$ for $\eta$ such that, for each $i \in \N$, the outer boundary of $\eta([\zeta_{i-1},\zeta_i])$ is almost surely equal to the outer boundary of $\eta_{w_i}^{\tau_i}$, and
\item For each $i \in \N$, conditional on $\eta|_{(-\infty,\zeta_i]}$, $\eta_{w_i}^{\tau_i}$ is independent of $h$ restricted to the unbounded complementary connected component of $\eta((-\infty,\zeta_i])$.
\end{enumerate}
\end{proposition}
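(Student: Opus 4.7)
The plan is to build the sequence $(w_i, \tau_i, \zeta_i)$ by induction, starting from $\zeta_0 = \zeta$ and $\eta_{w_0}^{\tau_0} = \eta|_{[0,\zeta]}$ (which is a tail by the clean loop hypothesis, since $\eta(\zeta) \in \eta([0,\zeta))$). Given $\zeta_{i-1}$, the flow line has just completed a clean loop around $z$, so its continuation is trapped in the bounded complementary component $U_i$ of $\eta([0, \zeta_{i-1}])$ that contains $z$. By the conditional Markov property and Remark~\ref{rem::interior_rho_value}, the law of $\eta|_{[\zeta_{i-1}, \infty)}$ is absolutely continuous with respect to a radial $\SLE_\kappa(2-\kappa)$ process targeted at $z$ in a conformal image of $U_i$, so Lemma~\ref{lem::radial_loop} produces a first time $\zeta_i > \zeta_{i-1}$ at which $\eta$ completes a further clean loop around $z$. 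Let $\Gamma_i$ denote the outer arc of $\eta([\zeta_{i-1}, \zeta_i])$ (the boundary of the newly enclosed component of $\eta([0, \zeta_i])$ containing $z$).

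To realize $\Gamma_i$ via a tail, pick a point $w_i$ with rational coordinates in $U_i$ close to $\Gamma_i$. Using Theorem~\ref{thm::existence} together with Proposition~\ref{prop::local_set_whole_plane_bounded_compare}, couple a flow line $\eta_{w_i}$ of $h$ starting from $w_i$ with angle $\theta$ to the conditional field so that, given $\eta|_{[0, \zeta_{i-1}]}$, it uses only $h|_{U_i}$ and is conditionally independent of $h|_{\C \setminus U_i}$. Now apply Proposition~\ref{prop::tail_interaction} with $\theta_1 = \theta_2 = \theta$: the paths $\eta_{w_i}$ and $\eta$ can meet only with height discrepancy $\CD \in (-\pi\chi, 2\lambda - \pi\chi)$, and since $\CD$ is always an integer multiple of $2\pi\chi$ in this situation, merging ($\CD = 0$) is the only option at a first intersection where the winding numbers of the two tails around $z$ are compatible. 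For $w_i$ sufficiently close to $\Gamma_i$, the flow line $\eta_{w_i}$ reaches $\Gamma_i$ before it has had the chance to loop around $w_i$; this intersection is forced to have matching winding. Take $\tau_i$ to be any stopping time strictly between this merging time and the next self-intersection of $\eta_{w_i}$, which by the merge coincides with $\eta$'s self-intersection at $\zeta_i$. The resulting tail $\eta_{w_i}^{\tau_i}$ has outer boundary exactly $\Gamma_i$, giving (ii); (i) is then immediate from the construction.

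The independence statement (iii) follows from Proposition~\ref{prop::whole_plane_markov}: conditional on $\eta|_{[0,\zeta_i]}$, the GFF decomposes as a deterministic harmonic function plus independent zero-boundary GFFs on the complementary components of $\eta([0,\zeta_i])$; since $\eta_{w_i}^{\tau_i}$ was constructed using only $h$ in the bounded components and lies in their closure, it is conditionally independent of $h$ restricted to the unbounded component. The principal obstacle is the merging step of the second paragraph, namely producing $w_i$ such that $\eta_{w_i}$ first meets $\eta$ on $\Gamma_i$ with $\CD = 0$. This is handled in two pieces: a winding-number bookkeeping, using that both angles are $\theta$ and that any discrepancy is $2\pi\chi \Z$-valued, combined with absolute continuity against the straightened picture in $\h$ where \cite[Theorem~1.5]{MS_IMAG} guarantees merging of same-angle flow lines; and a density argument that a positive fraction of rational $w_i \in U_i$ close to $\Gamma_i$ (a set with almost surely positive Lebesgue measure near every boundary point of $\Gamma_i$) land in the favorable configuration, so that at least one such choice exists almost surely.
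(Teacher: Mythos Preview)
Your inductive skeleton is right, and your identification of the merging step as ``the principal obstacle'' is correct, but the resolution you offer for that obstacle does not work.

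The claim ``for $w_i$ sufficiently close to $\Gamma_i$, the flow line $\eta_{w_i}$ reaches $\Gamma_i$ before it has had the chance to loop around $w_i$; this intersection is forced to have matching winding'' is false. A flow line from an interior point winds around its starting point infinitely many times in \emph{every} neighborhood of that point; no choice of $w_i$, however close to $\Gamma_i$, prevents this. Consequently the height difference at first contact with $\Gamma_i$ can perfectly well be $\pm 2\pi\chi$ rather than $0$, and for $\kappa \in (8/3,4)$ one has $2\pi\chi \in (0, 2\lambda - \pi\chi)$, so a bounce (not a merge) is a genuine possibility. Your fallback ``density argument'' does not close the gap either: saying that a positive fraction of rational $w_i$ near $\Gamma_i$ land in the favorable configuration is not the same as saying that almost surely at least one does. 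You would need either a uniform conditional lower bound on the clean-merge probability together with an approximate independence structure along a \emph{sequence} of candidate points, or some $0$--$1$ law; you have supplied neither.

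The paper handles exactly this via Lemmas~\ref{lem::clean_merge} and~\ref{lem::clean_tail_merge}: one fixes a sequence $(w_k)$ converging to a point on the \emph{existing} tail, conformally straightens the picture to $\h$ with constant boundary data $\pm\lambda$, and obtains a uniform lower bound $\rho_0>0$ on the probability that $\eta_{w_k}$ merges cleanly inside a small ball. One then shows that along the sequence these attempts are essentially fresh given the previous failures, so the first success index $N$ is almost surely finite. This is the content of Lemma~\ref{lem::clean_tail_merge}, and it is what your argument is missing. There is also a second, separate step you have not addressed at all: once $\eta_{w_N}$ has merged into the old outer boundary, one must still argue that it continues to agree with $\eta$ \emph{through} the next wrap-around (i.e.\ out to $\zeta_i$), which requires a non-crossing argument using Proposition~\ref{prop::tail_interaction} applied to $\eta_{w_N}$ against the old tail at height difference $\pm 2\pi\chi$; see the second half of the proof of Lemma~\ref{lem::tail_decomposition} and Figures~\ref{fig::outer_boundary_tail_proof}--\ref{fig::outer_boundary_tail_proof2}. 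Your sketch merges $\eta_{w_i}$ directly into $\Gamma_i$ after observing $\eta$ up to $\zeta_i$, which sidesteps this continuation issue but only by relying on the merging claim that fails.
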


We emphasize that in the statement of Proposition~\ref{prop::tail_decomposition}, the flow lines $(\eta_{w_i})$ are taken to be conditionally independent of $\eta$ given $h$.  The starting points $w_i$ of the flow lines $\eta_{w_i}$ are rational, as stated in the proposition statement, but are random and depend on $h$, $\eta$, and the collection of flow lines $\eta_w$ starting at $w \in \C$ with rational coordinates (as in the statement, all flow lines are taken to be conditionally independent given $h$).  We note that the tail of a flow line can either be a simple curve or a simple loop (i.e., homeomorphic to $\s^1$) depending on when the first intersection time of the path with itself occurs.  It is implicit in the second condition of Proposition~\ref{prop::tail_decomposition} that the tails $(\eta_{w_i}^{\tau_i} : i \in \N)$ are of the latter type.

The reason that we assume $\kappa \in (8/3,4)$ in the statement of Proposition~\ref{prop::tail_decomposition} is that if $\kappa \in (0,8/3]$ then $\eta$ is non-self-intersecting hence its entire trace is itself a tail.  Fix $T \in \R$ and let $\zeta$ be the first time after $T$ that $\zeta$ makes a clean loop around $z$.  It is a consequence of Lemma~\ref{lem::radial_loop} that $\zeta < \infty$ almost surely and it follows from Proposition~\ref{prop::sle_kappa_rho_stationary}, which gives the stationarity of the driving function of $\eta$, that the distribution of $\zeta - T$ does not depend on $T$.  Consequently, for every $\epsilon > 0$ and $S \in \R$ we can choose our stopping time $\zeta$ in Proposition~\ref{prop::tail_decomposition} such that $\p[\zeta > S] \leq \epsilon$.  The techniques we use to prove Proposition~\ref{prop::tail_decomposition} will also allow us to show that boundary emanating flow lines similarly admit a decomposition into a union of overlapping tails.  This will also us to describe the manner in which boundary emanating flow lines interact with flow lines starting from interior points using Proposition~\ref{prop::tail_interaction}.

\begin{proposition}[Tail Decomposition: Boundary Regime]
\label{prop::boundary_tail_decomposition}
Suppose that $h$ is a GFF on a domain $D \subseteq \C$ with harmonically non-trivial boundary and let $\eta$ be a flow line of $h$ starting from $x \in \partial D$ with angle $\theta \in \R$.  Then we can decompose $\eta$ into a union of overlapping tails $(\eta_{w_i}^{\tau_i} : i \in \N)$ with the following properties:
\begin{enumerate}[(i)]
\item For every $i \in \N$, the starting point $w_i$ of $\eta_{w_i}$ has rational coordinates and
\item The range of $\eta$ is contained in $\cup_{i} \eta_{w_i}^{\tau_i}$ almost surely.
\end{enumerate}
If, in addition, $\eta$ is self-intersecting then properties (ii) and (iii) as described in Proposition~\ref{prop::tail_decomposition} hold with $\zeta_0 = 0$.
\end{proposition}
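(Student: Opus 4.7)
The plan is to construct the decomposition by sampling, conditionally independently given $h$, a flow line $\eta_{w_i}$ of angle $\theta$ from each point $w_i$ in an enumeration $(w_i)_{i \in \N}$ of $D \cap (\Q + i\Q)$, using Theorem~\ref{thm::existence} together with the absolute-continuity arguments of Section~\ref{subsec::existence} to set up the coupling. By the zero-height-difference (merging) case of Theorem~\ref{thm::flow_line_interaction}, whenever one of the paths $\eta_{w_i}$ hits $\eta$, or hits another $\eta_{w_j}$ that has already merged into $\eta$, it merges and never subsequently separates. The strategy is then to show that for every point $y$ in the range of $\eta$ there is some $w_i$ close to $y$ whose flow line $\eta_{w_i}$ merges into $\eta$ at a point within $y$'s neighborhood; picking a stopping time $\tau_i$ just after the merge time produces a tail $\eta_{w_i}^{\tau_i}$ (as defined before Proposition~\ref{prop::tail_decomposition}) that contains a positive-length segment of $\eta$ around the merge point, after which density of the $(w_i)$ forces $\cup_i \eta_{w_i}^{\tau_i}$ to cover the entire range of $\eta$ almost surely.

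The first key step is to verify the merging-with-positive-probability statement: given $\eta$, the conditional law of $h$ on $D \setminus \eta$ is that of a GFF with flow line boundary conditions along $\eta$ (and the boundary conditions of $h$ on $\partial D$), so for $w_i$ close to $\eta$ the conditional law of the initial segment of $\eta_{w_i}$ is absolutely continuous, after a local conformal map, with the law of a boundary-emanating flow line of angle $\theta$ emanating from a boundary point of the relevant slit domain. For such boundary-emanating flow lines of equal angle, \cite[Theorem~1.5]{MS_IMAG} (recalled as Figure~\ref{fig::boundary_flowline_interaction1}) gives almost-sure merging. A Borel--Cantelli argument over a countable family of shrinking neighborhoods of $\eta$ then yields that almost surely every $y \in \eta$ lies within distance $1/n$ of a merge event between $\eta$ and some $\eta_{w_{i(n,y)}}$ for each $n$, proving (ii). Property (i) is immediate from the construction.

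For the self-intersecting case, I would imitate the scheme of Proposition~\ref{prop::tail_decomposition}: set $\zeta_0 = 0$ and iteratively produce stopping times $\zeta_0 < \zeta_1 < \zeta_2 < \cdots$ at which $\eta$ completes a clean loop creating a new bounded complementary component $C_i$ of $\eta([0,\zeta_i])$. Inside each $C_i$, pick a rational $w_i$, and let $\tau_i$ be a stopping time for $\eta_{w_i}$ chosen after it has merged with $\eta$ and just before $\eta_{w_i}$ re-intersects itself past the outer boundary of $\eta([\zeta_{i-1},\zeta_i])$; then $\eta_{w_i}^{\tau_i}$ coincides with this outer boundary. The conditional independence (iii) of $\eta_{w_i}^{\tau_i}$ from $h$ restricted to the unbounded complementary component of $\eta([0,\zeta_i])$, given $\eta|_{[0,\zeta_i]}$, follows from the Markov property for the GFF and the fact that the relevant tail is measurable with respect to the restriction of $h$ to the bounded component $C_i$ (together with $\eta|_{[0,\zeta_i]}$), by Theorem~\ref{thm::uniqueness} applied inside $C_i$.

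The main obstacle is the covering step, namely quantifying the probability that a flow line $\eta_{w_i}$ from a rational $w_i$ close to $\eta$ actually hits $\eta$ rather than escaping to $\partial D$ before it does so. Handling this uniformly along $\eta$ requires a Beurling-type lower bound on the hitting probability via the radial $\SLE_\kappa(\rho)$ description of $\eta_{w_i}$ from Remark~\ref{rem::interior_rho_value}, transferred to the present setting by Proposition~\ref{prop::local_set_whole_plane_bounded_compare}, together with the absolute-continuity comparison between the conditional law of $\eta_{w_i}$ given $\eta$ and a boundary-emanating flow line. A secondary technical point is arranging in the self-intersecting case that the $\tau_i$ can be chosen so that the tails $\eta_{w_i}^{\tau_i}$ match the outer boundaries of $\eta([\zeta_{i-1},\zeta_i])$ exactly, which amounts to picking $w_i$ deep enough inside $C_i$ so that the initial segment of $\eta_{w_i}$ (before merging) is contained in $C_i$ and contributes nothing outside of the targeted outer boundary.
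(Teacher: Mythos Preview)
Your overall strategy—sample interior flow lines of the same angle from a dense set of rational points, use merging to cover $\eta$ by their tails, and handle the self-intersecting case by the iterative scheme of Proposition~\ref{prop::tail_decomposition}—is essentially the paper's approach. The problem is in the results you invoke to justify the merging step: you cite Theorem~\ref{thm::flow_line_interaction} (and, in the self-intersecting part, Theorem~\ref{thm::uniqueness}), but both of these are proved \emph{after} the tail decomposition results, and their proofs rely on Proposition~\ref{prop::tail_decomposition} and Proposition~\ref{prop::boundary_tail_decomposition}. So as written, your argument is circular.

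The paper avoids this by working only with tools already in hand at this point in Section~\ref{subsec::interaction}. First, it observes that the proof of Proposition~\ref{prop::tail_interaction} generalizes verbatim to describe the interaction of an interior-flow-line tail with a tail of the boundary-emanating path $\eta$ (the absolute-continuity reduction to \cite[Theorem~1.5]{MS_IMAG} works equally well in that setting). Second, to get the covering, it uses Lemma~\ref{lem::path_close_hit} and the argument of Lemma~\ref{lem::clean_tail_merge}: for any point on $\eta$ and any $\epsilon>0$, a sequence of rational starting points converging to that point yields, almost surely, some $\eta_{w_i}$ that cleanly merges into $\eta$ without leaving the $\epsilon$-ball. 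This is the quantitative ``hitting before escaping'' statement you flagged as the main obstacle, and it is proved directly from the boundary-emanating theory of \cite{MS_IMAG} via absolute continuity—no appeal to Theorem~\ref{thm::flow_line_interaction} is needed. For the self-intersecting part, the paper simply reuses the argument of Lemma~\ref{lem::tail_decomposition} and Proposition~\ref{prop::tail_decomposition}; conditional independence (iii) comes from the Markov property and the fact (Remark~\ref{rem::multiple_paths_not_determined}) that after an infinitesimal initial segment the interior flow lines are determined by the field via the \emph{boundary}-emanating uniqueness theorem \cite[Theorem~1.2]{MS_IMAG}, not via Theorem~\ref{thm::uniqueness} of the present paper.

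So your plan is right, but you should replace the forward references by: (a) the extension of Proposition~\ref{prop::tail_interaction} to interior--boundary tail pairs, (b) Lemma~\ref{lem::path_close_hit} and the clean-merge argument of Lemma~\ref{lem::clean_tail_merge} for the covering, and (c) Remark~\ref{rem::multiple_paths_not_determined} and \cite[Theorem~1.2]{MS_IMAG} in place of Theorem~\ref{thm::uniqueness}.
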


In order to establish Proposition~\ref{prop::tail_decomposition} and Proposition~\ref{prop::boundary_tail_decomposition}, we need to collect first the following three lemmas.  The third, illustrated in Figure~\ref{fig::merge_into_boundary}, implies that if we start a flow line close to the tail of another flow line with the same angle, then with positive probability the former merges into the latter at their first intersection time and, moreover, this occurs without the path leaving a ball of fixed radius.

\begin{lemma}
\label{lem::path_close}
Suppose that $h$ is a whole-plane GFF defined up to a global multiple of $2\pi \chi$.  Let $\eta$ be the flow line of $h$ starting from $0$ and $\tau$ any almost surely finite stopping time for $\eta$ such that $\eta(\tau) \notin \eta((-\infty,\tau))$ almost surely.  Given $\eta|_{(-\infty,\tau]}$, let $\gamma \colon [0,1] \to \C$ be any simple path starting from $\eta(\tau)$ such that $\gamma((0,1])$ is contained in the unbounded connected component of $\C \setminus \eta((-\infty,\tau])$.  Fix $\epsilon > 0$ and let $A(\epsilon)$ be the $\epsilon$-neighborhood of $\gamma([0,1])$.  Finally, let
\[ \sigma_1 = \inf\{ t \geq \tau : \eta(t) \notin A(\epsilon)\}\quad\text{and}\quad
   \sigma_2 = \inf\{ t \geq \tau : |\eta(t) - \gamma(1)| \leq \epsilon\}.\]
Then $\p[ \sigma_2 < \sigma_1 \giv \eta|_{[0,\tau]}] > 0$.
\end{lemma}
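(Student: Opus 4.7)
The plan is to reduce Lemma~\ref{lem::path_close} to the positive-probability tube-following property of chordal $\SLE_\kappa(\ul{\rho})$ processes via the coordinate change rule \eqref{eqn::ac_eq_rel}. First, condition on $\eta|_{[0,\tau]}$. By Theorem~\ref{thm::existence}, the conditional law of $h$ on $\C \setminus \eta([0,\tau])$ is that of a GFF with flow line boundary conditions along $\eta([0,\tau])$, and $\eta|_{[\tau,\infty)}$ is the flow line of this conditional field starting at $\eta(\tau)$. Let $C$ denote the unbounded connected component of $\C \setminus \eta([0,\tau])$; then $C \cup \{\infty\}$ is simply connected in the Riemann sphere, $\eta(\tau) \in \partial C$, and $\gamma((0,1]) \subseteq C$. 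Choose a conformal map $\psi \colon C \to \h$ with $\psi(\eta(\tau)) = 0$ and $\psi(\infty) = \infty$ (extended continuously to $\eta(\tau)$ by Carath\'eodory). By \eqref{eqn::ac_eq_rel} and the boundary-emanating theory of Section~\ref{subsec::imaginary}, the image $\wt{\eta} := \psi \circ \eta|_{[\tau,\cdot\,]}$ has the law of a chordal $\SLE_\kappa(\ul{\rho})$ process from $0$ to $\infty$ in $\h$, where the weight vector $\ul{\rho}$ is determined by the flow line boundary conditions along the two sides of $\eta([0,\tau])$ at $\eta(\tau)$.

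Let $\wt{\gamma} := \psi \circ \gamma$, a simple curve in $\ol{\h}$ with $\wt{\gamma}(0) = 0$ and $\wt{\gamma}((0,1]) \subseteq \h$. Fix $\wt{\epsilon} > 0$ small enough that the open $\wt{\epsilon}$-neighborhood $\wt{A}$ of $\wt{\gamma}([0,1])$ in $\h$ is contained in $\psi(A(\epsilon) \cap C)$, is bounded in $\h$, and has closure in $\ol{\h}$ which meets $\partial \h$ only at $0$. The positive-probability tube-following property for chordal $\SLE_\kappa$ is classical (see, e.g., \cite[Section~4.7]{LAW05}): an $\SLE_\kappa$ from $0$ in $\h$ has strictly positive probability to stay in $\wt{A}$ until it first enters $B(\wt{\gamma}(1), \wt{\epsilon})$. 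The same bound extends to $\SLE_\kappa(\ul{\rho})$ by a Girsanov argument applied to \eqref{eqn::sle_kappa_rho_eqn}: the drifts coming from the force points away from $0$ are uniformly bounded during the time interval in which $\wt{\eta}$ remains in $\wt{A}$ (since $\ol{\wt{A}}$ is bounded away from these force points), while the two force points immediately adjacent to $0$ are incorporated by the standard comparison argument between $\SLE_\kappa(\rho_-;\rho_+)$ and $\SLE_\kappa$ of the type used throughout \cite{MS_IMAG}. Pulling the resulting event back through $\psi^{-1}$ gives $\p[\sigma_2 < \sigma_1] > 0$.

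The main subtlety is that the weight vector $\ul{\rho}$ is random, depending on how $\eta|_{[0,\tau]}$ winds near $\eta(\tau)$; in particular, the two weights at $0^\pm$ may be arbitrarily close to (but strictly larger than) $-2$, so one cannot obtain a positive lower bound on the tube-following probability that is uniform in $\eta|_{[0,\tau]}$. This is not a real obstacle, however: after conditioning on $\eta|_{[0,\tau]}$ the weights are deterministic, the tube-following property for $\SLE_\kappa(\rho_-;\rho_+)$ with $\rho_\pm > -2$ holds for each such realization with strictly positive (but random) probability, and taking expectations over $\eta|_{[0,\tau]}$ yields the claimed unconditional positive probability bound.
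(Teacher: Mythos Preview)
There is a genuine gap in the conformal setup. Since $\eta([0,\tau])$ is compact, the unbounded component $C$ contains a punctured neighborhood of $\infty$, so on the Riemann sphere $\infty$ is an \emph{interior} point of $C\cup\{\infty\}$; there is no conformal map $\psi\colon C\to\h$ with $\psi(\infty)=\infty$. Any uniformization of $C\cup\{\infty\}$ sends $\infty$ to an interior point $z_0\in\h$, and the image of $\eta|_{[\tau,\infty)}$ is then a chordal process with an \emph{interior} force point at $z_0$ (equivalently, a radial $\SLE_\kappa(2-\kappa)$ targeted at $z_0$; cf.\ \cite[Theorem~3]{SCHRAMM_WILSON} and the proof of Proposition~\ref{prop::radial_continuity}). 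Your description of the boundary force-point structure is also off: near $0$ the transformed boundary data is exactly $\pm\lambda$, so there are no nontrivial weights at $0^\pm$; the single boundary force point is the image $O_\tau$ of the other prime end, carrying the fixed weight $2-\kappa$, so the ``weights arbitrarily close to $-2$'' concern in your last paragraph does not arise. The approach is salvageable---once $z_0$ is correctly placed in the interior it lies away from the image tube (since $\gamma([0,1])$ is bounded in $\C$), and a Girsanov comparison to ordinary $\SLE_\kappa$ can absorb both the boundary and the interior force point---but that is not what you wrote.

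The paper's argument avoids uniformizing $C$ altogether. It builds a simply connected $U\subseteq A(\epsilon/2)$ whose boundary contains a short terminal arc $\eta([\tau-\delta,\tau])$, and places on $U$ an auxiliary GFF $\wt h$ whose boundary data matches $h$ along that arc but is chosen on $\partial U\setminus\eta([0,\tau])$ so that the flow line $\wt\eta$ of $\wt h$ from $\eta(\tau)$ is an \emph{ordinary} chordal $\SLE_\kappa$ in $U$ targeted at a point $x_0$ with $|x_0-\gamma(1)|\le\epsilon/2$. Since $\wt\eta$ a.s.\ reaches $x_0$ while staying a positive distance from $\partial U\setminus\eta([0,\tau])$, one picks $\zeta>0$ with $\p[\dist(\wt\eta([0,\wt\sigma_2]),\partial U\setminus\eta)\ge\zeta]\ge\tfrac12$ and then transfers this event to $\eta$ via GFF absolute continuity on $\{x\in U:\dist(x,\partial U\setminus\eta)\ge\zeta\}$ (\cite[Proposition~3.2]{MS_IMAG}). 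This is cleaner: it never identifies the exact $\SLE$ variant of the conditioned path and uses absolute continuity of fields rather than a Girsanov argument on driving functions.
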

\begin{proof}
Given $\eta|_{(-\infty,\tau]}$, we let $U \subseteq \C$ be a simply connected domain which contains $\gamma((-\infty,1])$, is contained in $A(\tfrac{\epsilon}{2})$, and such that there exists $\delta > 0$ with $\eta((\tau-\delta,\tau]) \subseteq \partial U$ and $\eta((-\infty,\tau-\delta]) \cap \partial U = \emptyset$.

We can construct $U$ explicitly as follows.  Let $\varphi$ be the unique conformal map from the unbounded component of $\C \setminus \eta([0,\tau])$ to $\C \setminus \ol{\D}$ with $\varphi(z)-z \to 0$ as $z \to \infty$.  As $\eta|_{[0,\tau]}$ is almost surely continuous, it follows that $\varphi$ extends to be a homeomorphism from the boundary of its domain (viewed as prime ends) to $\C \setminus \D$.  Therefore $\varphi(\gamma)$ is a continuous path in $\C \setminus \D$ and $\varphi(A(\epsilon))$ is a relatively open neighborhood of $\varphi(\gamma)$.  Moreover, as $\tau$ is almost surely a non-intersection time for $\eta$, we have that $\varphi(\eta(\tau)) \in \partial \D$ has positive distance from the image under $\varphi$ of an intersection point of $\eta$.  Therefore we can find $\wt{U} \subseteq \C \setminus \D$ which is relatively open which is contained in $\varphi(A(\epsilon))$, contains~$\varphi(\gamma)$, and which contains a neighborhood in $\partial \D$ of $\varphi(\eta(\tau))$ which is disjoint from images of self-intersection points of $\eta$.  By choosing $\wt{U}$ appropriately, we may further assume that $\partial \wt{U} \cap \partial \D = \varphi(\eta((\tau-\delta,\tau]))$ for some $\delta > 0$ small.  We can also take $\wt{U}$ to be the intersection with $\C \setminus \D$ of a domain with polygonal boundary with vertices with rational coordinates.  We then take $U = \varphi^{-1}(\wt{U})$ and note that $U$ satisfies the desired properties.

Fix $x_0 \in \partial U$ with $|x_0 - \gamma(1)| \leq \tfrac{\epsilon}{2}$.  Let $\wt{h}$ be a GFF on $U$ whose boundary data along $\eta((-\infty,\tau])$ agrees with that of $h$ (up to an additive constant in $2\pi \chi \Z$) and whose boundary data on $\partial U \setminus \eta((-\infty,\tau])$ is such that the flow line $\wt{\eta}$ of $\wt{h}$ starting from $\eta(\tau)$ is an ordinary chordal $\SLE_\kappa$ process in $U$ targeted at $x_0$.  Let $\wt{\sigma}_2 = \inf\{t   \geq 0 : |\wt{\eta}(t) - \gamma(1)| \leq \epsilon\}$ and note that $\wt{\sigma}_2 < \infty$ almost surely since $\wt{\eta}$ terminates at $x_0$.  Since $\wt{X} = \dist(\wt{\eta}((-\infty,\wt{\sigma}_2]),\partial U \setminus \eta((-\infty,\tau])) > 0$ almost surely, it follows that we can pick $\zeta > 0$ sufficiently small so that $\p[ \wt{X} \geq \zeta \giv\eta|_{[0,\tau]}] \geq \tfrac{1}{2}$.  The result follows since by \cite[Proposition~3.4]{MS_IMAG} the law of $\wt{h}$ restricted to $\{x \in U : \dist(x,\partial U \setminus \eta((-\infty,\tau])) \geq \zeta\}$ is absolutely continuous with respect to the law of $h$ given $\eta|_{(-\infty,\tau]}$ restricted to the same set, up to an additive constant in $2\pi \chi \Z$, and that $\wt{\eta}([0,\wt{\sigma}_2]) \subseteq A(\epsilon)$ almost surely.
\end{proof}

\begin{figure}[h!]
\begin{center}
\includegraphics[scale=0.85]{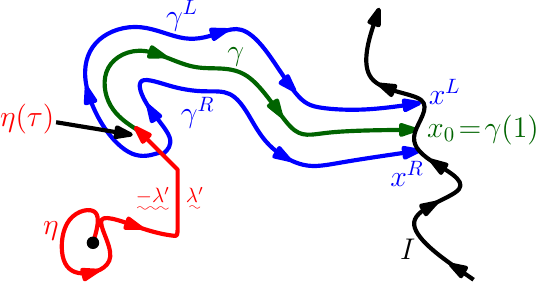}
\end{center}
\caption{\label{fig::path_close_hit}  An illustration of the proof of Lemma~\ref{lem::path_close_hit}.  We suppose that $h$ is a GFF on a proper domain $D \subseteq \C$ whose boundary consists of a finite, disjoint union of continuous paths each of which has flow line boundary conditions where each path has a given angle (which may vary from path to path).  We let $\tau$ be a stopping time for $\eta$ such that $\eta(\tau) \notin \eta((-\infty,\tau))$ and $\eta((-\infty,\tau]) \cap \partial D = \emptyset$ almost surely.  We assume that $\gamma \colon [0,1] \to D \setminus \eta((-\infty,\tau])$ is a simple curve connecting $\eta(\tau)$ to a boundary segment, say $I$, such that if we continued the boundary data of $h$ given $\eta|_{(-\infty,\tau]}$ along $\gamma$ as if it were a flow line then the height difference of $\gamma$ and $I$ upon intersecting is in the range for hitting (see Proposition~\ref{prop::tail_interaction}).  We let $\gamma^L$ (resp.\ $\gamma^R$) be a simple path contained in the $\epsilon$ neighborhood of $\gamma([0,1])$ which does not intersect $\gamma$, starts from the left (resp.\ right) side of $\eta((-\infty,\tau))$, and terminates at a point $x^L$ (resp.\ $x^R$) in $I$.  Assume, moreover, that $\gamma^L \cap \gamma^R = \emptyset$.  We take $U$ to be the region of $D \setminus \eta((-\infty,\tau])$ surrounded by $\gamma^L$ and $\gamma^R$ and let $\wt{h}$ be a GFF on $U$ whose boundary data agrees with $h$ on $I$ and $\eta((-\infty,\tau])$ and is given by flow line boundary conditions on $\gamma^L$ and $\gamma^R$.  We choose the angles on $\gamma^L,\gamma^R$ so that the flow line $\wt{\eta}$ of $\wt{h}$ starting from $\eta(\tau)$ almost surely hits $I$ and does not hit $\gamma^L$ and $\gamma^R$.  The result follows since the law of $\wt{\eta}$ is absolutely continuous with respect to the conditional law of $\eta$ given $\eta|_{(-\infty,\tau]}$ (since the law of $\wt{h}$ is absolutely continuous with respect to the law of $h$ given $\eta|_{(-\infty,\tau]}$ restricted to a subdomain of $U$ which stays away from $\gamma^L$ and $\gamma^R$).
}
\end{figure}

\begin{lemma}
\label{lem::path_close_hit} Suppose that $h$ is a GFF on a proper subdomain $D \subseteq \C$ whose boundary consists of a finite, disjoint union of continuous paths, each with flow line boundary conditions of a given angle (which can change from path to path), $z \in D$, and $\eta$ is the flow line of $h$ starting from $z$.  Fix any almost surely positive and finite stopping time $\tau$ for $\eta$ such that $\eta((-\infty,\tau]) \cap \partial D = \emptyset$ and $\eta(\tau) \notin \eta((-\infty,\tau))$ almost surely.  Given $\eta|_{(-\infty,\tau]}$, let $\gamma \colon [0,1] \to \ol{D}$ be any simple path in $\ol{D}$ starting from $\eta(\tau)$ such that $\gamma((0,1])$ is contained in the unbounded connected component of $\C \setminus \eta((-\infty,\tau])$, $\gamma([0,1)) \cap \partial D = \emptyset$, and $\gamma(1) \in \partial D$.  Moreover, assume that if we extended the boundary data of the conditional law of $h$ given $\eta|_{(-\infty,\tau]}$ along $\gamma$ as if it were a flow line then the height difference of $\gamma$ and $\partial D$ upon intersecting at time $1$ is in the admissible range for hitting.  Fix $\epsilon > 0$, let $A(\epsilon)$ be the $\epsilon$-neighborhood of $\gamma([0,1])$ in $D$, and let
\[ \tau_1 = \inf\{t \geq \tau : \eta(t) \notin A(\epsilon)\} \quad\text{and}\quad
   \tau_2 = \inf\{t \geq \tau : \eta(t) \in \partial D\}.\]
Then $\p[ \tau_2 < \tau_1 \giv \eta|_{[0,\tau]}] > 0$.
\end{lemma}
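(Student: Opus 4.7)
The plan is to follow the strategy sketched in the caption of Figure~\ref{fig::path_close_hit}: we build an auxiliary ``tube'' domain $U$ around $\gamma([0,1])$ contained in $A(\epsilon)$, equip it with a carefully chosen GFF $\wt h$ whose flow line from $\eta(\tau)$ is forced by its boundary data to exit $U$ through a prescribed boundary arc $I \subseteq \partial D$, and then transfer the event to $\eta$ itself using the absolute continuity properties of the GFF from \cite[Proposition~3.2]{MS_IMAG}.

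First I will work conditionally on $\eta|_{[0,\tau]}$ and choose two simple paths $\gamma^L,\gamma^R \colon [0,1] \to \ol{D}$, each contained in $A(\epsilon/2)$ and disjoint from each other and from $\gamma((0,1))$, starting from the left and right sides of $\eta([0,\tau])$ respectively (in small neighborhoods of $\eta(\tau)$ where the left/right sides are well defined by the hypothesis $\eta(\tau)\notin \eta([0,\tau))$), running parallel to $\gamma$, and terminating at points $x^L, x^R$ on $\partial D$ that lie on the same boundary arc $I$ as $\gamma(1)$ and flank $\gamma(1)$. Let $U$ be the bounded connected component of the complement of $\gamma^L \cup \gamma^R \cup \eta([0,\tau]) \cup \ol{I'}$ that contains $\gamma((0,1))$, where $I'$ is the arc of $I$ between $x^L$ and $x^R$; then $U \subseteq A(\epsilon)$, and $\partial U$ consists of a final segment of $\eta([0,\tau])$ near $\eta(\tau)$, the curves $\gamma^L,\gamma^R$, and the arc $I'$.

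Next, I will define $\wt h$ on $U$ by specifying its boundary data: on the segment of $\eta([0,\tau])$ and on $I'$ we copy the boundary data of $h$ (which, on $\eta([0,\tau])$, means flow line boundary conditions of angle $\theta$, possibly shifted by a multiple of $2\pi\chi$ fixed by our choice of additive constant); on $\gamma^L$ and $\gamma^R$ we assign flow line boundary conditions with angles $\theta + \theta^L$ and $\theta - \theta^R$, where $\theta^L,\theta^R>0$ are chosen large enough that, by Figure~\ref{fig::hittingrange} together with \cite[Theorem~1.5]{MS_IMAG}, the flow line $\wt \eta$ of $\wt h$ starting from $\eta(\tau)$ with angle $\theta$ almost surely avoids $\gamma^L$ and $\gamma^R$, and small enough that the height difference at the terminal arc $I'$ remains in the admissible hitting range dictated by the hypothesis on $\gamma(1)$. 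With these choices, the boundary emanating theory of Section~\ref{subsec::imaginary} guarantees that $\wt\eta$ is a.s.\ continuous, stays in $U$, and terminates at $I'$ in finite time. In particular, the event $\wt E = \{\wt\eta \text{ exits } U \text{ through } I'\}$ has probability one.

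Finally, I fix $\zeta > 0$ with $\p[\dist(\wt\eta([0,T]),\gamma^L\cup \gamma^R) \geq \zeta] \geq \tfrac12$, where $T$ is the exit time of $\wt\eta$ from $U$ through $I'$; such a $\zeta$ exists because this distance is a.s.\ positive. Restricted to the open set $U_\zeta = \{w\in U : \dist(w,\gamma^L\cup\gamma^R) > \zeta\}$, the law of $\wt h$ is mutually absolutely continuous with respect to the conditional law of $h$ given $\eta|_{[0,\tau]}$ (up to an additive constant in $2\pi\chi\Z$) by Proposition~\ref{prop::local_set_whole_plane_bounded_compare} and \cite[Proposition~3.2]{MS_IMAG}, applied in a bounded open neighborhood of $U_\zeta$ separated from $\partial U \setminus (\eta([0,\tau])\cup I')$. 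Since on the event $\{\dist(\wt\eta([0,T]),\gamma^L\cup\gamma^R)\geq \zeta\}$ the curve $\wt\eta$ is determined by $\wt h$ restricted to $U_\zeta$ (by Theorem~\ref{thm::uniqueness} applied to the boundary emanating flow line $\wt\eta$, which is justified by the boundary emanating theory of \cite{MS_IMAG}) and leads to $I' \subseteq \partial D$ while remaining inside $U \subseteq A(\epsilon)$, the absolute continuity transfers this to $\eta$: with positive probability $\eta|_{[\tau,\tau_2]}$ follows the same trajectory inside $A(\epsilon)$ and reaches $\partial D$, giving $\tau_2 < \tau_1$.

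The main obstacle is the selection of the angles on $\gamma^L$ and $\gamma^R$: one needs simultaneously (i) enough height difference to ensure $\wt \eta$ cannot hit $\gamma^L,\gamma^R$ (so that the stopping condition in Figure~\ref{fig::hittingrange} forces $\wt\eta$ to avoid them), and (ii) the height difference with $I'$ to remain in the hitting range inherited from the hypothesis on $\gamma(1)$. Both constraints can be satisfied because the hitting range at $I'$ depends only on the boundary data of $h$ there, which is fixed, while the angles on $\gamma^L,\gamma^R$ can be chosen freely.
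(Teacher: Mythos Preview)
Your proposal is correct and follows essentially the same strategy as the paper's proof: construct a tube domain $U \subseteq A(\epsilon)$ bounded by auxiliary curves $\gamma^L,\gamma^R$, put a GFF $\wt h$ on $U$ with flow line boundary conditions on the sides chosen so that the flow line $\wt\eta$ from $\eta(\tau)$ is forced to hit $I$ without touching $\gamma^L\cup\gamma^R$, and then use absolute continuity (\cite[Proposition~3.2]{MS_IMAG}) on the region $\zeta$-away from $\gamma^L\cup\gamma^R$ to transfer this event to $\eta$.

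The one notable difference is in the choice of angles on $\gamma^L,\gamma^R$. The paper picks them so that the boundary data matches continuously with that along $\eta([0,\tau])$ at the junction points; this makes $\wt\eta$ a clean $\SLE_\kappa(\rho^L;\rho^R)$ with force points \emph{only} at $x^L,x^R$, and the hypothesis on $\gamma(1)$ then translates directly into $\rho^L,\rho^R\in(\tfrac{\kappa}{2}-4,\tfrac{\kappa}{2}-2)$. Your more flexible choice introduces additional force points at the junctions with $\eta([0,\tau])$, but since the \emph{cumulative} weight beyond $x^L,x^R$ is determined by the fixed boundary data of $h$ on $I$ (as you correctly observe in your final paragraph), this does not affect whether $\wt\eta$ hits $I$ or reaches the continuation threshold. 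In particular, your ``small enough'' condition in the second paragraph is superfluous: the height difference between $\wt\eta$ and $I'$ depends only on the angle $\theta$ of $\wt\eta$ and the data of $h$ on $I'$, not on $\theta^L,\theta^R$. Two minor citation slips: the absolute continuity step wants only \cite[Proposition~3.2]{MS_IMAG} (Proposition~\ref{prop::local_set_whole_plane_bounded_compare} concerns the whole-plane/bounded-domain comparison and is not relevant here), and the determinism of $\wt\eta$ by the field is \cite[Theorem~1.2]{MS_IMAG}, not Theorem~\ref{thm::uniqueness}.
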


We recall that the admissible range of height differences for hitting is $(-\pi \chi,2\lambda-\pi \chi)$ if $\gamma$ is hitting on the right side and is $(\pi \chi - 2\lambda,\pi \chi)$ on the left side.  See Figure~\ref{fig::path_close_hit} for an illustration of the proof.

\begin{proof}[Proof of Lemma~\ref{lem::path_close_hit}]
Let $I$ be the connected component of $\partial D$ which contains $x_0 = \gamma(1)$.  Let $\gamma^L$ (resp.\ $\gamma^R$) be a simple path in $A(\epsilon) \setminus (\eta((-\infty,\tau]) \cup \gamma)$ which connects a point on the left (resp.\ right) side of $\eta((-\infty,\tau)) \cap A(\epsilon)$ to a point on the same side of $I$ hit by $\gamma$ at time $1$, say $x^L$ (resp.\ $x^R$), and does not intersect $\gamma$.  Assume that $\gamma^L \cap \gamma^R = \emptyset$.  Let $U$ be the region of $D \setminus \eta((-\infty,\tau])$ which is surrounded by $\gamma^L$ and $\gamma^R$.  Let $\wt{h}$ be a GFF on $U$ whose boundary data agrees with that of $h$ on $\eta((-\infty,\tau])$ and on $I$ and is otherwise given by flow line boundary conditions.  We choose the angles of the boundary data on $\gamma^L,\gamma^R$ so that the flow line $\wt{\eta}$ of $\wt{h}$ starting from $\eta(\tau)$ is an $\SLE_\kappa(\rho^L;\rho^R)$ process targeted at $x_0$ and the force points are located at $x^L$ and $x^R$.  Moreover, $\rho^L, \rho^R \in (\tfrac{\kappa}{2}-4,\tfrac{\kappa}{2}-2)$ since we assumed that if we continued the boundary data for $h$ given $\eta|_{(-\infty,\tau]}$ along $\gamma$ as if it were a flow line then it is in the admissible range for hitting (and by our construction, the same is true for both $\gamma^L$ and $\gamma^R$).  Let $\wt{\tau}_2 = \inf\{t \geq 0 : \wt{\eta}(t) \in \partial D\}$.  Since $\wt{X} = \dist(\wt{\eta}([0,\wt{\tau}_2]),\partial U \setminus (\eta((-\infty,\tau]) \cup I) ) > 0$ almost surely, it follows that we can pick $\zeta > 0$ sufficiently small so that $\p[ \wt{X} \geq \zeta\giv\eta|_{[0,\tau]}] \geq \tfrac{1}{2}$.  The result follows since the law of $\wt{h}$ restricted to $\{x \in U : \dist(x,\partial U \setminus (\eta((-\infty,\tau])) \cup I) \geq \zeta\}$ is absolutely continuous with respect to the law of $h$ given $\eta|_{(-\infty,\tau]}$ restricted to the same set and $\wt{\eta}((-\infty,\wt{\tau}_2]) \subseteq A(\epsilon)$ almost surely.
\end{proof}

Our first application of Lemma~\ref{lem::path_close_hit} is the following, which is an important ingredient in the proof of Proposition~\ref{prop::tail_decomposition}.

\begin{figure}[h!]
\begin{center}
\includegraphics[scale=0.85]{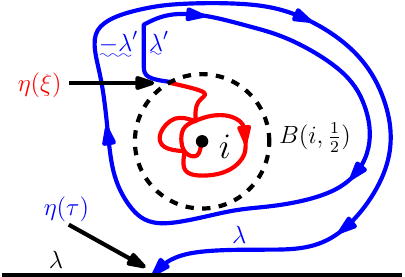}
\end{center}
\caption{\label{fig::merge_into_boundary} Suppose that $h$ is a GFF on $\h$ with constant boundary data $\lambda$ as depicted above and let $\eta$ be the flow line of $h$ starting at $i$.  We prove in Lemma~\ref{lem::clean_merge} that the following is true.  Let $\tau$ be the first time that $\eta$ hits $\partial \h$.  With positive probability, the height difference of $\eta$ and $\partial \h$ upon hitting at time $\tau$ is zero and $\eta((-\infty,\tau]) \subseteq B(0,2)$.  If $\partial \h$ were a flow line (rather than the boundary), then this can be rephrased as saying that the first time that $\eta$ hits $\partial \h$ is with positive probability the same as the first time that $\eta$ ``merges into'' $\partial \h$ and, moreover, this happens without $\eta$ leaving $B(0,2)$.  We call this a ``clean merge'' because the interaction of $\eta$ and $\partial \h$ upon hitting only involves a tail of $\eta$.  An analogous statement holds if $\lambda$ is replaced with $-\lambda$.}
\end{figure}

\begin{lemma}
\label{lem::clean_merge}
Suppose that $h$ is a GFF on $\h$ with constant boundary data $\lambda$ as depicted in Figure~\ref{fig::merge_into_boundary}.  Let $\eta$ be the flow line of $h$ starting at $i$ and let $\tau = \inf\{t \in \R : \eta(t) \in \partial \h\}$.  Let $E_1$ be the event that $\eta$ hits $\partial \h$ with a height difference of $0$ and let $E_2 = \{ \eta((-\infty,\tau]) \subseteq B(0,2)\}$.  There exists $\rho_0 > 0$ such that
\begin{equation}
\label{eqn::clean_merge}
\p[E_1 \cap E_2] \geq \rho_0.
\end{equation}
The same likewise holds when $\lambda$ is replaced with $-\lambda$.
\end{lemma}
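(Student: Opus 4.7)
The idea is to reduce to Lemma~\ref{lem::path_close_hit} by first running $\eta$ a short distance and then forcing the continuation to merge with $\partial\h$ along a prescribed route. First, choose a positive stopping time $\tau_0$ at which, with positive probability, $\eta([0,\tau_0])$ is a simple curve contained in $B(i,1/4)\cap\h$ with $\eta(\tau_0)\notin\eta([0,\tau_0))$; the existence of such a $\tau_0$ follows from the continuity of $\eta$ (Proposition~\ref{prop::whole_plane_continuity}) together with Remark~\ref{rem::interior_rho_value}, which gives absolute continuity of the law of $\eta$ up to hitting $\partial\h$ with respect to a whole-plane $\SLE_\kappa(2-\kappa)$. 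On this event the hypotheses $\eta([0,\tau_0])\cap\partial\h=\emptyset$ and $\eta(\tau_0)\notin\eta([0,\tau_0))$ of Lemma~\ref{lem::path_close_hit} (applied with $D=\h$ and $\tau=\tau_0$) are both available.

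Next, given such a realization of $\eta([0,\tau_0])$, construct a smooth simple arc $\gamma\colon[0,1]\to\ol\h$ with $\gamma(0)=\eta(\tau_0)$, $\gamma((0,1))$ in the unbounded connected component of $\h\setminus\eta([0,\tau_0])$, $\gamma(1)\in\partial\h\cap(-2,2)$, $\gamma([0,1])\subset\ol{B(0,2)}$, and whose terminal tangent at $\gamma(1)$ points west. The point of this terminal direction is that the boundary segment $\partial\h$ with boundary value $\lambda=\lambda'+\tfrac{\pi}{2}\chi$ plays the role of a flow line of angle $\pi$ (going west) in the sense of Figure~\ref{fig::hittingrange}, so a hypothetical flow line approaching $\gamma(1)$ along $\gamma$ matches the boundary data of $\partial\h$ there and hence has height difference $\mathcal D=0$. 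Since $\pi\chi<2\lambda$ for $\kappa\in(0,4)$, the value $\mathcal D=0$ lies in the admissible hitting range $(-\pi\chi,2\lambda-\pi\chi)$ appearing in Theorem~\ref{thm::flow_line_interaction} and in the hypothesis of Lemma~\ref{lem::path_close_hit}. The freedom to have $\gamma$ wrap around $i$ any integer number of times inside $B(0,2)\cap(\h\setminus\eta([0,\tau_0]))$ guarantees the existence of such a $\gamma$ irrespective of the (small) winding already accumulated by $\eta([0,\tau_0])$.

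Applying Lemma~\ref{lem::path_close_hit} with this $\gamma$ and with $\epsilon>0$ chosen so small that the $\epsilon$-tube $A(\epsilon)$ satisfies $B(i,1/4)\cup A(\epsilon)\subset B(0,2)$ yields that $\eta|_{[\tau_0,\infty)}$ hits $\partial\h$ before leaving $A(\epsilon)$ with uniformly positive probability. On this event one has $\eta([0,\tau])\subset B(i,1/4)\cup A(\epsilon)\subset B(0,2)$, which is $E_2$. Since $\h\setminus\eta([0,\tau_0])$ is simply connected and $\eta|_{[\tau_0,\tau]}$ stays in the tubular neighborhood $A(\epsilon)$, the continuation lies in the same relative homotopy class as $\gamma$, so its accumulated winding at the hitting time equals that of $\gamma$, giving $\mathcal D=0$; this is $E_1$. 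Combining produces $\p[E_1\cap E_2]\geq\rho_0>0$. The case where $\lambda$ is replaced by $-\lambda$ follows by the analogous construction, arranging the terminal tangent of $\gamma$ to point east instead of west (equivalently, applying the reflection $z\mapsto-\ol z$, which swaps the two boundary-data conventions and the sign of $\mathcal D$).

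\medbreak\noindent\textbf{Main obstacle.} The most delicate step is the geometric construction of $\gamma$: one must verify that a simple arc with the prescribed terminal tangent direction (westward) fits inside $B(0,2)\cap(\h\setminus\eta([0,\tau_0]))$. This is a purely topological matter handled by choosing how many times $\gamma$ wraps around $i$ so as to absorb whatever winding has already been accumulated along $\eta([0,\tau_0])$; because the initial segment is confined to the small disk $B(i,1/4)$, there is ample room in $B(0,2)\cap\h$ to insert the needed loops.
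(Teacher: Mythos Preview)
Your proof is correct and follows essentially the same approach as the paper: run $\eta$ up to a short stopping time to obtain an initial segment near $i$, then construct a simple arc $\gamma$ winding around $i$ the appropriate integer number of times so that the induced height difference at $\partial\h$ is zero, and apply Lemma~\ref{lem::path_close_hit}. The paper's version is slightly more economical in that it takes the stopping time to be the first exit of $B(i,\tfrac12)$ (which automatically satisfies $\eta(\xi)\notin\eta([0,\xi))$ without any conditioning), but your more detailed justification of why the westward terminal tangent yields $\mathcal D=0$ is a helpful elaboration of the same mechanism.
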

\begin{proof}
See Figure~\ref{fig::merge_into_boundary} for an illustration of the setup.  We let $\xi$ be the first time that $\eta$ hits $\partial B(i,\tfrac{1}{2})$.  We let $\gamma$ be a simple path in $(\ol{\h} \cap B(0,2)) \setminus B(i,\tfrac{1}{2})$ starting from $\eta(\xi)$ which winds around $i$ precisely $k \in \Z$ times until hitting $\partial \h$.  We choose $k$ so that if we continued the boundary data of $\eta$ along $\gamma$, the height difference of $\gamma$ upon intersecting $\partial \h$ is zero.  The lemma then follows from Lemma~\ref{lem::path_close_hit}.
\end{proof}

\begin{figure}[ht!]
\begin{center}
\includegraphics[scale=0.85]{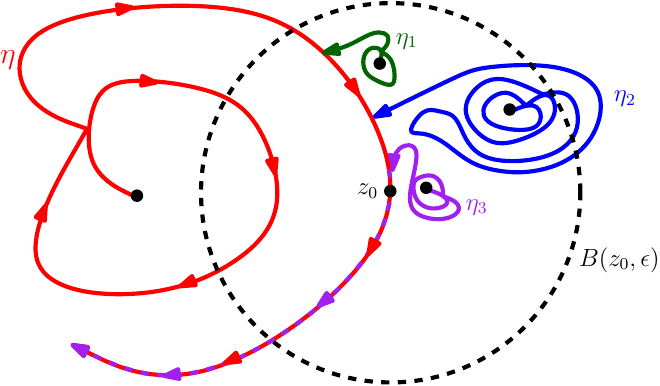}
\end{center}
\caption{\label{fig::clean_tail_merge}  Suppose that $\eta$ is a flow line of a whole-plane GFF defined up to a global multiple of $2\pi \chi$ starting at $z$ and that $\tau$ is a stopping time for $\eta$ such that $\eta(\tau) \notin \eta((-\infty,\tau))$ almost surely.  Let $\sigma$ (resp.\ $\zeta$) be the start (resp.\ end) time for the tail $\eta^\tau$.  Fix $\epsilon > 0$.  We prove in Lemma~\ref{lem::clean_tail_merge} that if $(w_k)$ is any sequence in $\C \setminus \eta((-\infty,\zeta])$ converging to a point $z_0 \in \eta((\sigma,\zeta))$ which is not a self-intersection point of $\eta|_{(-\infty,\zeta]}$, for each $k \in \N$, we let $\eta_k = \eta_{w_k}$ be the flow line starting at $w_k$ and let $N = N(\epsilon)$ be the smallest integer $k \geq 1$ such that $\eta_k$ merges cleanly (recall Figure~\ref{fig::merge_into_boundary}) into $\eta^\tau$ without leaving $B(z_0,\epsilon)$ then $N < \infty$ almost surely.  In the illustration, $N = 3$ since $\eta_1$ hits $\eta^\tau$ for the first time at the wrong height and $\eta_2$ leaves $B(z_0,\epsilon)$ before hitting $\eta^\tau$.  By applying this result to a time which occurs before $\tau$, we see that $\eta|_{[\tau,\zeta]}$ is almost surely represented as a tail.  We will use this fact in the proof of Theorem~\ref{thm::flow_line_interaction} in order to relax the restriction of Proposition~\ref{prop::tail_interaction} that the tails do not disconnect either of their initial points from $\infty$.}
\end{figure}

By repeated applications of Lemma~\ref{lem::clean_merge}, we are now going to prove that the restriction of the tail $\eta^\tau$ of a flow line $\eta$ associated with the stopping time $\tau$ and ending at $\zeta$ to the time interval $[\tau,\zeta]$ almost surely is represented as the tail of a flow line whose initial point is close to $\eta(\tau)$ and which merges into $\eta^\tau$ without leaving a small ball centered at $\eta(\tau)$.  See Figure~\ref{fig::clean_tail_merge} for an illustration of the setup of this result.

\begin{lemma}
\label{lem::clean_tail_merge}
Suppose that~$h$ is a GFF on~$\C$ defined up to a global multiple of $2\pi \chi$, let~$\eta$ be the flow line of~$h$ starting from~$0$, and let~$\tau$ be a stopping time for~$\eta$ such that $\eta(\tau) \notin \eta((-\infty,\tau))$ almost surely.  Let~$\sigma$ (resp.\ $\zeta$) be the start (resp.\ end) time of the tail~$\eta^\tau$.  Fix $\epsilon > 0$ and, given $\eta|_{(-\infty,\zeta]}$, a point $z_0 \in \eta((\sigma,\zeta))$ which is not a self-intersection point of $\eta|_{(-\infty,\zeta]}$.  Given $\eta|_{(-\infty,\zeta]}$, let $(w_k)$ be a sequence of points $\C \setminus \eta((-\infty,\zeta])$ with rational coordinates and which converges to $z_0$.  For each~$k$, let $\eta_k = \eta_{w_k}$ be the flow line of~$h$ starting from~$w_k$.  Finally, let $N = N(\epsilon)$ be the first index $k \in \N$ such that~$\eta_k$ cleanly merges, in the sense of Figure~\ref{fig::merge_into_boundary}, into~$\eta$ before leaving $B(z_0,\epsilon)$.  Then $\p[N < \infty] = 1$.
\end{lemma}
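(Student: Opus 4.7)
The strategy is twofold: (i) use Lemma~\ref{lem::clean_merge} together with absolute continuity of the GFF to obtain a uniform positive-probability lower bound for each $\eta_k$ to merge cleanly with $\eta$ near $z_0$, and (ii) upgrade this to an almost-sure conclusion by constructing conditionally independent events on a sequence of disjoint annuli about $z_0$ and applying the second Borel--Cantelli lemma.

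Since $z_0 \in \eta((\sigma,\zeta))$ is not a self-intersection point of $\eta|_{[0,\zeta]}$, the almost-sure continuity of $\eta$ (Proposition~\ref{prop::whole_plane_continuity}) lets us choose $\delta_0 \in (0,\epsilon/4)$ so that $\eta([0,\zeta]) \cap \ol{B(z_0,2\delta_0)}$ is a single simple sub-arc of $\eta^\tau$ passing through $z_0$. By the Markov property associated with the coupling of Theorem~\ref{thm::existence} (together with Proposition~\ref{prop::local_set_whole_plane_bounded_compare}), conditional on $\eta|_{[0,\zeta]}$ the field $h$ is a GFF on $\C \setminus \eta([0,\zeta])$ with flow line boundary conditions along $\eta$; inside $B(z_0,2\delta_0)$ these are $\pm\lambda' + \chi \cdot (\text{winding})$ along the arc, up to the global additive constant in $2\pi\chi\Z$. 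Passing to a subsequence, we may assume that all $w_k$ lie on the same (say, right) side of this arc.

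Now pass to a further subsequence $(w_{k_j})$ with $r_j := |w_{k_j}-z_0|$ satisfying $r_{j+1} < r_j/100$, and define the pairwise disjoint annuli $A_j = \{z : r_j/10 < |z-z_0| < 10\, r_j\}$, all contained in $B(z_0,\epsilon)$ for $j$ large. Let $F_j$ be the event that $\eta_{k_j}$ merges cleanly with the arc $\eta^\tau \cap A_j$ at some point of $A_j$ with its entire trajectory from $w_{k_j}$ up to the merging time remaining in $A_j$; on $F_j$ we have $N \le k_j < \infty$. We claim that there is $\rho_1 > 0$ and a good event $G_j$, depending only on $h$ outside $A_j$ and with $\p[G_j] \to 1$, such that $\p[F_j \mid \eta|_{[0,\zeta]},\ h|_{\C \setminus A_j}] \ge \rho_1$ on $G_j$. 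To see this, use the Markov property of the GFF to view $h|_{A_j}$ as a conditional Gaussian field with boundary values on $\partial A_j$ and on $\eta^\tau \cap A_j$; conformally map the component of $A_j \setminus \eta^\tau$ containing $w_{k_j}$ to a standard subdomain of $\h$ with the arc going to an interval of $\partial \h$, and use the $2\pi\chi\Z$ ambiguity to set the transported boundary data along this interval to the constant $\lambda$ of Lemma~\ref{lem::clean_merge}. The Koebe distortion theorem and scale invariance guarantee that the image $\wt w_{k_j}$ of $w_{k_j}$ lies at unit order distance from $\partial\h$ uniformly in $j$. Lemma~\ref{lem::clean_merge} yields the desired probability $\rho_0$ in the half-plane model, and Proposition~\ref{prop::gff_radon_nikodym} transfers the bound to $F_j$, with $G_j$ controlling the $L^2$ norm of the residual harmonic boundary perturbation coming from $\partial A_j \setminus \eta^\tau$.

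Since the annuli $A_j$ are pairwise disjoint, the Markov property of the GFF implies that, conditional on $\eta|_{[0,\zeta]}$ and on $h|_{\C \setminus \bigcup_j A_j}$, the events $F_j$ are independent; the uniform lower bound from the previous paragraph, combined with the conditional second Borel--Cantelli lemma, then gives that infinitely many $F_j$ occur almost surely, so $N < \infty$ almost surely. The main obstacle is the uniform lower bound $\p[F_j \mid \cdot] \ge \rho_1$: the winding of $\eta^\tau$ inside $A_j$ can be arbitrary for each $j$, so the absorption of $\chi \cdot (\text{winding})$ into the $2\pi\chi\Z$ ambiguity must be carried out carefully, and the residual Gaussian boundary fluctuations on $\partial A_j \setminus \eta^\tau$ must be bounded uniformly in $j$ on the good event $G_j$.
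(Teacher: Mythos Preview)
Your overall strategy---uniform positive probability of a clean merge plus a Borel--Cantelli argument---matches the paper's. However, your implementation through disjoint annuli $A_j$ has a real gap that the paper's approach avoids.

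The central problem is your uniform lower bound $\p[F_j \mid \cdot] \ge \rho_1$ on $G_j$. When you conformally map the component of $A_j \setminus \eta^\tau$ containing $w_{k_j}$ to (a subdomain of) $\h$ with the $\eta^\tau$-arc going to an interval of $\partial\h$ and $w_{k_j}$ to $i$, the image of $\partial A_j$ is \emph{not} pushed far from $i$: since $w_{k_j}$ sits at distance $r_j$ from $z_0$ while $\partial A_j$ is at distances $r_j/10$ and $10r_j$, the harmonic measure of $\partial A_j$ from $w_{k_j}$ is bounded away from zero uniformly in $j$. Consequently the ``residual harmonic boundary perturbation'' you hope to absorb via Proposition~\ref{prop::gff_radon_nikodym} is not small in Dirichlet norm, and the Radon--Nikodym comparison with the half-plane model of Lemma~\ref{lem::clean_merge} does not yield a uniform bound without substantial further work. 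You flag this as ``the main obstacle'' but do not resolve it. There is also a measurability mismatch: your $G_j$ depends on $h|_{\C\setminus A_j}$, which includes the other annuli $A_i$, so the conditional-independence structure you invoke (conditioning on $h|_{\C\setminus\bigcup_j A_j}$) is not compatible with simultaneously working on the $G_j$.

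The paper's proof sidesteps all of this by conformally mapping the \emph{entire} pocket $U$ of $\C\setminus\eta([0,\zeta])$---not an annulus---to $\h$, sending $z_0\mapsto 0$ and $w_k\mapsto i$. The only obstruction to applying Lemma~\ref{lem::clean_merge} is then the presence of the previously drawn paths $\eta_{j_1},\ldots,\eta_{j_k}$. The key step is to choose the next starting point $w_{j_{k+1}}$ \emph{adaptively} (measurably with respect to the filtration $\CF_{j_k}$ generated by the earlier paths) close enough to $z_0$ that, by the Beurling estimate, a Brownian motion from $w_{j_{k+1}}$ hits $\partial U$ before hitting the earlier paths with probability at least $1-\delta$. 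Conformal invariance of harmonic measure then forces the images of those earlier paths, and of $B(z_0,\epsilon)^c$, outside $B(i,100)$, so the conditional field near $i$ is within total-variation distance $\rho_0/2$ of the half-plane GFF with constant boundary data $\lambda$. Lemma~\ref{lem::clean_merge} then gives conditional probability at least $\rho_0/2$ of a clean merge at each stage, and a standard Borel--Cantelli argument along the filtration finishes. This approach uses conditional (not independent) estimates and never needs to control Gaussian boundary fluctuations on auxiliary annuli.
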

\begin{proof}
Let $\sigma,\zeta$ be the start and end times of the tail $\eta^\tau$.  By passing to a subsequence, we may assume without loss of generality that all of the elements of the sequence $(w_k)$ are contained in the same complementary connected component $U$ of $\eta((-\infty,\zeta])$.  Let $\varphi_k \colon U \to \h$ be the conformal transformation which takes $z_0$ to $0$ and $w_k$ to $i$.  Let $\wt{h}_k = h \circ \varphi_k^{-1} - \chi \arg (\varphi_k^{-1})'$ where we have chosen the additive constant for $h$ in $2\pi \chi \Z$ so that the boundary data for $\wt{h}_k$ in a neighborhood of $0$ is equal to either $\lambda$ or $-\lambda$ if $\partial U$ near $z_0$ is traced by the right or left, respectively, side of $\eta$.  In particular, the additive constant does not depend on $k$ and whether the boundary data of $\wt{h}_k$ near zero is $\lambda$ or $-\lambda$ also does not depend on $k$; we shall assume without loss of generality that we are in the former situation.

For each $k$, we let $\tau_k$ be the first time that $\eta_k$ exits $U$.  We also let
\[ A_k = \eta((-\infty,\zeta]) \cup \bigcup_{j=1}^k \eta_j((-\infty,\tau_j]).\]
We claim that $A_k$ is a local set for $h$.  We will prove this using the first characterization of local sets from \cite[Lemma~3.6]{MS_IMAG}.  Fix $W \subseteq \C$ open.  Then the event that $A_k \cap W \neq \emptyset$ is determined by the collection of all flow lines of $h$ starting from points with rational coordinates stopped upon hitting $W$.  Since the conditional law of the projection of $h$ onto those functions which are supported in $W$ given this collection and the projection of $h$ onto those functions which are harmonic in $W$ is a measurable function of the latter, we conclude that~$A_k$ is in fact local.  Let~$\CA_k$ be the $\sigma$-algebra generated by the values of~$h$ in an infinitesimal neighborhood of~$A_k$ and let~$\CF_k$ be the $\sigma$-algebra generated by $\CA_k$, $\eta|_{(-\infty,\zeta]}$, and $\eta_j|_{(-\infty,\tau_j]}$ for $1 \leq j \leq k$.

We claim that the conditional law of $h$ given~$\CF_k$ is that of a GFF on $\C \setminus A_k$.  We will explain this in the case that $k=1$.  The proof of this for general values of $k$ follows from the same argument.  For each $t > 0$, we let $\CF_{1,t}$ be the $\sigma$-algebra generated by $\eta((-\infty,\zeta])$, $\eta_1((-\infty,t])$, and the values of $h$ in an infinitesimal neighborhood of $\eta((-\infty,\zeta]) \cup \eta_1((-\infty,t])$.  For each $w \in \C$ with rational coordinates, we also let $\CG_{w,t}$ be the $\sigma$-algebra generated by $\eta((-\infty,\zeta])$, $\eta_w((-\infty,t])$, and the values of $h$ in an infinitesimal neighborhood of $\eta((-\infty,\zeta]) \cup \eta_w((-\infty,t])$.  For any event $A \in \sigma(h)$ we have that
\begin{align*}
	\p[ A \giv \CF_{1,t}]
&= \sum_{w \in \Q^2} \p[ A \giv \CF_{1,t}] \one_{\{w = w_1\}}
 = \sum_{w \in \Q^2} \p[ A \giv \CG_{w,t}] \one_{\{w = w_1\}}.
\end{align*}
\cite[Proposition~3.7]{MS_IMAG} implies that the conditional law of $h$ given $\CG_{w,t}$ is that of a GFF on $\C \setminus (\eta((-\infty,\zeta]) \cup \eta_w((-\infty,t]))$.  Therefore the conditional law of~$h$ given~$\CF_{1,t}$ is that of a GFF on $\C \setminus (\eta((-\infty,\zeta]) \cup \eta_1((-\infty,t]))$.  Note that~$\tau_1$ is a stopping time for the filtration $(\CF_{1,t})$.  Consequently, the martingale convergence theorem implies that the conditional law of~$h$ given~$\CF_1$ is a GFF on $\C \setminus A_1$, as desired.

We also observe that if $N$ is an almost surely finite stopping time for $(\CF_k)$, then the conditional law of $h$ given $\CF_N$ is that of a GFF in $\C \setminus A_N$.

We will now construct a further subsequence $(w_{j_k})$ where, for each $k$, $w_{j_k}$ is measurable with respect to $\CF_{j_{k-1}}$ and $j_k$ is a stopping time for $(\CF_k)$.  We take $j_1 = 1$ and inductively define $j_{k+1}$ for $k \geq 1$ as follows.  First, we note that the probability that $\eta_j|_{(-\infty,\tau_j]}$ hits $\partial U$ at a particular, fixed point near $z_0$ is zero for any $j \in \N$ \cite[Lemma~7.16]{MS_IMAG}.  Consequently, it follows from the Beurling estimate \cite[Theorem~3.69]{LAW05} that for every $\delta > 0$, there exists $j_{k+1} \geq j_k+1$ such that the probability that a Brownian motion starting from $w_{j_{k+1}}$ hits $\partial U$ before hitting $\cup_{i=1}^k \eta_{j_i}((-\infty,\tau_{j_i}])$ is at least $1-\delta$ given $\CF_{j_k}$ almost surely.  By taking $\delta > 0$ sufficiently small, the conformal invariance of Brownian motion then implies that we can arrange so that
\begin{enumerate}
\item $\varphi_{j_{k+1}}(\eta_{j_i}((-\infty,\tau_{j_i}])) \cap B(i,100) = \emptyset$ for every $1 \leq i \leq k$,
\item $\varphi_{j_{k+1}}(B^c(z_0,\epsilon)) \cap B(i,100) = \emptyset$, and
\item The total variation distance between the law of $\wt{h}_{j_{k+1}}|_{B(i,100)}$ given $\CF_{j_k}$ and that of a GFF on $\h$ with constant boundary data $\lambda$ restricted to $B(i,100)$ is almost surely at most $\rho_0/2$ where $\rho_0$ is the constant from Lemma~\ref{lem::clean_merge} (recall~\eqref{eqn::clean_merge}).
\end{enumerate}
Consequently, it follows from Lemma~\ref{lem::clean_merge} that the probability that $\wt{\eta}_{j_{k+1}}$ makes a clean merge into $\partial \h$ given $\CF_{j_k}$ without leaving $B(0,2)$ is almost surely at least $\rho_0/2$.  Combining this with Proposition~\ref{prop::tail_interaction} implies the assertion of the lemma.
\end{proof}

Recall that Proposition~\ref{prop::tail_interaction} only describes the interaction of tails of flow lines up until the base of one of the tails is disconnected from~$\infty$.  Lemma~\ref{lem::clean_tail_merge} allows us to strengthen this statement to give a complete description of the manner in which tails of flow lines interact.  We will not give a precise statement of this here since it will be part of our proof of Theorem~\ref{thm::flow_line_interaction} which we will give shortly.  Informally, this is the case because Lemma~\ref{lem::clean_tail_merge} implies that if the tails of flow lines $\eta_1,\eta_2$ are interacting \emph{after} one of their base points has been disconnected from $\infty$, we can represent each of the flow line tails in a neighborhood of where they are interacting by tails of another pair of flow lines whose base points have not yet been disconnected from~$\infty$.  Proposition~\ref{prop::tail_interaction} then applies to this second set of tails which, in turn, tells us how the first set of tails are interacting with each other.

\begin{figure}[h!]
\begin{center}
\subfigure[]{\includegraphics[scale=0.85]{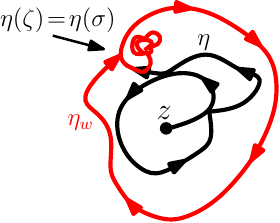}}
\hspace{0.1\textwidth}
\subfigure[]{\includegraphics[scale=0.85]{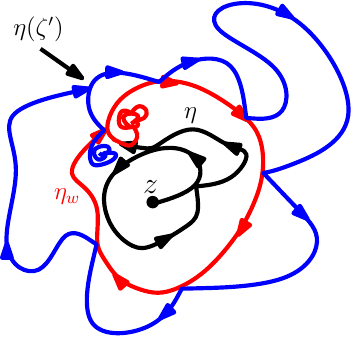}}
\end{center}
\caption{
\label{fig::outer_boundary_tail} In the left panel, $\eta$ is a GFF flow line and $\zeta$ is a stopping time so that $\eta(\zeta) \in \eta((-\infty,\zeta))$ almost surely and that with $\sigma$ the largest time before $\zeta$ with $\eta(\sigma) = \eta(\zeta)$, we have that $\eta|_{[\sigma,\zeta]}$ is given by the tail of a flow line $\eta_w$ starting at $w$.  Moreover, the starting point $w$ of $\eta_w$ has rational coordinates and is contained in one of the bounded complementary connected components of $\eta((-\infty,\zeta])$.  When this holds, we say that the outer boundary of $\eta((-\infty,\zeta])$ is represented by a tail.  Let $\zeta'$ be the first time $t$ after $\zeta$ that $\eta(t) \in  \eta([\zeta,t))$.  We prove in Lemma~\ref{lem::tail_decomposition} that if the outer boundary of $\eta((-\infty,\zeta])$ is represented by a tail, then the outer boundary of $\eta((-\infty,\zeta'])$ is almost surely also represented by a tail (right panel).}
\end{figure}

\begin{figure}[h!]
\begin{center}
\includegraphics[scale=0.85]{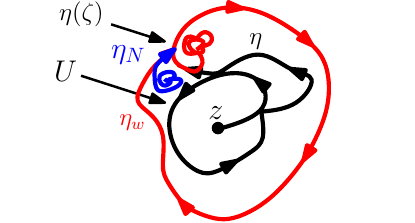}
\end{center}
\caption{\label{fig::outer_boundary_tail_proof} (Continuation of Figure~\ref{fig::outer_boundary_tail}.)  The idea to prove Lemma~\ref{lem::tail_decomposition} is to start flow lines of $h$ in a bounded complementary connected component $U$ of $\eta((-\infty,\zeta])$ whose boundary contains $\eta(\zeta-\epsilon)$ and such that $\eta|_{[\zeta-2\epsilon,\zeta]}$ traces part of $\partial U$ for some very small $\epsilon > 0$.  We choose the starting points $(w_k)$ of these flow lines to have rational coordinates and get progressively closer to the outer boundary of $\eta((-\infty,\zeta])$ near $\eta(\zeta-\epsilon)$.  By using Lemma~\ref{lem::clean_tail_merge}, we see that $\p[N< \infty] =1$ where $N$ is the first index $k$ such that $\eta_{w_k}$ merges into the tail of $\eta_w$ (hence also $\eta$) which generates the outer boundary of $\eta((-\infty,\zeta])$.  Let $\eta_N = \eta_{w_N}$ be the flow line which is the first to have a ``clean merge'' with $\eta_w$ (hence also $\eta$), in the sense that the merging time is the same as the first hitting time of $\eta_N$ and $\eta_w$ near $\eta(\zeta-\epsilon)$.  Proposition~\ref{prop::tail_interaction} implies that $\eta_w$ and $\eta_N$ (hence also $\eta_N$ and $\eta$) merge with each other and stay together at least until they hit $\eta(\zeta)$.}
\end{figure}

\begin{figure}[ht!]
\begin{center}
\includegraphics[scale=0.85]{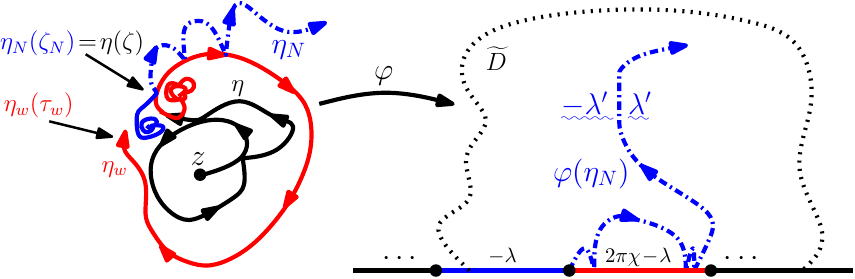}
\end{center}
\caption{\label{fig::outer_boundary_tail_proof2} (Continuation of Figure~\ref{fig::outer_boundary_tail_proof}) We then need to show that $\eta$ and $\eta_N$ continue to agree with each other after hitting $\eta(\zeta)$; call this time $\zeta_N$ for the latter.  The reason that this holds is that another application of Proposition~\ref{prop::tail_interaction} implies that $\eta_N$ may be able to bounce off but cannot cross the tail of $\eta_w$.  Indeed, this is accomplished by applying the proposition to $\eta_w$ stopped at a time $\tau_{w}$ which is slightly before the time it completes generating the outer boundary of $\eta((-\infty,\zeta])$ and does not merge with $\eta_N$.  Since $\eta_N$ cannot cross the tail $\eta_w^{\tau_w}$ stopped at time $\tau_w$, it consequently follows that $\eta_N|_{[\zeta_N,\infty)}$ is contained in the unbounded complementary connected component of $\eta((-\infty,\zeta])$.  The result follows since the conditional field of $h$ given $\eta|_{(-\infty,\zeta]}$ coupled with $\eta|_{[\zeta,\infty)}$ satisfies the same Markov property as when coupled with $\eta_N|_{[\zeta_N,\infty)}$ (see the right panel for this after applying a conformal mapping).  Thus by absolute continuity \cite[Proposition~3.4]{MS_IMAG}, the boundary emanating uniqueness theory of flow lines given in \cite[Theorem~1.2]{MS_IMAG} implies that the paths have to agree, at least until wrapping around $z$ and then intersecting themselves.}
\end{figure}

Suppose that $\eta$ is a GFF flow line and that $\zeta$ is an almost surely finite stopping time for $\eta$ such that $\eta(\zeta) \in \eta((-\infty,\zeta))$.  We say that the outer boundary of $\eta((-\infty,\zeta))$ can be represented as the tail of a flow line if the following is true.  Let $\sigma$ be the largest time before $\zeta$ that $\eta(\sigma) = \eta(\zeta)$ (note $\sigma \neq \zeta$).  Then there exists $w \in \C$ with rational coordinates which is contained in a bounded complementary component of $\eta((-\infty,\zeta])$ such that $\eta|_{[\sigma,\zeta]}$ is contained in a tail of the flow line $\eta_w$ starting at $w$.  See Figure~\ref{fig::outer_boundary_tail} for an illustration.  The main ingredient in our proof of the existence of a decomposition of a flow line into overlapping tails is the following lemma, which says that if the outer boundary of $\eta((-\infty,\zeta])$ is represented as a tail, then the outer boundary of $\eta((-\infty,\zeta'])$ is almost surely represented by a tail where $\zeta'$ is the first time after $\zeta$ that $\eta$ wraps around its starting point and intersects itself.  One example of such a stopping time $\zeta$ is the first time after a fixed time $T \in \R$ that $\eta$ makes a clean loop about $z$.  We will establish this through repeated applications of Lemma~\ref{lem::clean_tail_merge}.

\begin{lemma}
\label{lem::tail_decomposition}
Let $h$ be a whole-plane GFF defined up to a global multiple of $2\pi \chi$ and let $\eta$ be the flow line of $h$ starting at $z \in \C$.  Let $\zeta$ be any almost surely finite stopping time for $\eta$ such that the outer boundary of $\eta((-\infty,\zeta])$ can be represented as a tail of a flow line, as described just above.  Let $\zeta'$ be the first time $t \geq \zeta$ that $\eta(t) \in \eta([\zeta,t))$.  Then the outer boundary of $\eta((-\infty,\zeta'])$ is almost surely represented as a tail of a flow line.
\end{lemma}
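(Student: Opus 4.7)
The plan is to combine Lemma~\ref{lem::clean_tail_merge} with Proposition~\ref{prop::tail_interaction} and the uniqueness theory for boundary emanating flow lines. Let $\eta_w$ be the flow line (with rational starting point $w$ in a bounded complementary component of $\eta([0,\zeta])$) whose tail $\eta_w^\tau = \eta_w|_{[\sigma_w,\zeta_w]}$ traces the outer boundary of $\eta([0,\zeta])$; by hypothesis $\eta|_{[\sigma,\zeta]} \subseteq \eta_w^\tau$ where $\sigma = \sup\{s < \zeta : \eta(s) = \eta(\zeta)\}$. Fix a small $\epsilon > 0$, pick a non-self-intersection point $z_0 \in \eta((\sigma,\zeta - \epsilon))$ which lies on $\eta_w^\tau$ in the interior of a segment where $\eta_w$ and $\eta$ coincide, and choose a sequence $(w_k)$ of rational points in $\C \setminus \eta([0,\zeta_w])$ converging to $z_0$ from the unbounded side of $\eta_w^\tau$. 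Write $\eta_k$ for the flow line from $w_k$.

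By Lemma~\ref{lem::clean_tail_merge}, a.s.\ there exists a first index $N$ for which $\eta_N$ makes a clean merge with $\eta_w^\tau$ inside $B(z_0,\epsilon)$; let $\sigma_N, \zeta_N$ denote the corresponding start and end times of the resulting tail of $\eta_N$. The conditional law of $h$ given $(\eta_w|_{[0,\zeta_w]},\eta_N|_{[0,\sigma_N]})$ has flow line boundary conditions on both paths, so by Proposition~\ref{prop::tail_interaction} the tails $\eta_N|_{[\sigma_N,\cdot]}$ and $\eta_w^\tau$ merge at their first intersection and remain merged; in particular $\eta_N$ agrees with $\eta_w$ and hence with $\eta$ on a neighborhood of $z_0$ that includes the point $\eta(\zeta) = \eta(\sigma)$.

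The crux is to propagate this agreement past time $\zeta$. Apply Proposition~\ref{prop::tail_interaction} once more, this time to $\eta_w$ stopped at a time $\tau_w < \zeta_w$ slightly before it finishes tracing the outer boundary of $\eta([0,\zeta])$, and to $\eta_N$. The admissible height differences rule out $\eta_N$ crossing the tail $\eta_w|_{[\sigma_w,\tau_w]}$ from the unbounded side: the only possibilities consistent with the height difference $0$ forced by the clean merge are merging and bouncing. Consequently, for all times $t$ just after $\zeta_N$ when $\eta_N$ has separated from $\eta_w^\tau$, the path $\eta_N(t)$ is forced to lie in the unbounded complementary component of $\eta([0,\zeta])$. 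Combining this with the Markov property of $h$ along $\eta_N|_{[0,\zeta_N]} \cup \eta([0,\zeta])$ and mapping the unbounded component conformally, we see that the conditional laws of the continuation of $\eta$ past $\zeta$ and of the continuation of $\eta_N$ past $\zeta_N$ have the same boundary data. Absolute continuity together with \cite[Theorem~1.2]{MS_IMAG} then forces the two continuations to coincide up until they wrap around $z$ and hit themselves, which by construction occurs precisely at time $\zeta'$ for $\eta$. Thus $\eta_N$ contains a tail whose range includes all of $\eta([\sigma',\zeta'])$ where $\sigma'$ is the self-intersection partner of $\zeta'$, showing the outer boundary of $\eta([0,\zeta'])$ is represented by a tail of the flow line $\eta_N$ based at the rational point $w_N$.

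The main obstacle is the third step, namely propagating the agreement of $\eta$ and $\eta_N$ past the intersection with $\eta(\zeta)$. Bare absolute continuity is not enough, because the conditional field has singular behavior at $\eta(\zeta)$; one genuinely needs the non-crossing conclusion of Proposition~\ref{prop::tail_interaction} (applied to an initial segment of $\eta_w$ short of completing the outer boundary) to confine $\eta_N$ to the unbounded component, and only then does the uniqueness of GFF flow lines from \cite[Theorem~1.2]{MS_IMAG} pin $\eta_N$ to $\eta$ until the next self-intersection.
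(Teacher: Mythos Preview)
Your overall strategy mirrors the paper's: invoke Lemma~\ref{lem::clean_tail_merge} to produce a flow line $\eta_N$ that cleanly merges into the tail $\eta_w^\tau$, use Proposition~\ref{prop::tail_interaction} to prevent $\eta_N$ from crossing the old outer boundary, and then appeal to \cite[Theorem~1.2]{MS_IMAG} and absolute continuity to force $\eta_N$ to coincide with $\eta$ on $[\zeta,\zeta']$.  However, there is a genuine gap in your choice of where to seed the approximating flow lines.

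You take the $(w_k)$ on the \emph{unbounded} side of $\eta_w^\tau$, whereas the paper deliberately places them inside a \emph{bounded} complementary component $U$ of $\C\setminus\eta([0,\zeta])$ whose boundary contains $\eta(\zeta-\epsilon)$ (see Figure~\ref{fig::outer_boundary_tail_proof}).  This is not a cosmetic difference.  First, the definition of ``outer boundary represented by a tail'' requires $w_N$ to sit in a bounded complementary component of $\eta([0,\zeta'])$; with the paper's choice this is automatic, while with yours it needs a separate argument you do not supply.  Second, and more seriously, with $w_N$ on the unbounded side the initial stub $\eta_N|_{[0,\text{merge}]}$ lives in the \emph{same} region as $\eta|_{[\zeta,\zeta']}$.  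Nothing you prove excludes $\eta|_{[\zeta,\zeta']}$ (which equals $\eta_N$ after the merge) from hitting that stub as it winds around; if it does, the tail of $\eta_N$ terminates before it covers $\eta|_{[\sigma',\zeta']}$, and the conclusion fails.  The paper's placement makes this impossible: the stub sits in the bounded pocket $U$, while Proposition~\ref{prop::tail_interaction} confines the continuation to the unbounded component, so the two never meet.

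There is also a minor error in your height-difference computation.  At the clean merge the height gap is indeed $0$, but when $\eta_N$ comes back around to $\eta_w([\sigma_w,\tau_w])$ after passing through $\eta(\zeta)$ it has picked up one full winding, so the relevant height difference is $\pm 2\pi\chi$ (as the paper notes explicitly).  Your non-crossing conclusion survives, since $\pm 2\pi\chi$ lies outside the crossing window $(-\pi\chi,\pi\chi)$, but the reasoning ``height difference $0$ forces merging or bouncing'' is not the correct justification at that step.
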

\begin{proof}
Let $w,\sigma$ be as described just before the statement of the lemma and let $\eta_w$ be the flow line of $h$ starting at $w$ whose tail covers the outer boundary of $\eta|_{(-\infty,\zeta]}$ (see Figure~\ref{fig::outer_boundary_tail}).  Let $U$ be a bounded connected component of $\C \setminus  \eta((-\infty,\zeta])$ whose boundary contains $\eta(\zeta-\epsilon)$ and is traced by $\eta|_{[\zeta-2\epsilon,\zeta]}$ for some $\epsilon > 0$ small with $\zeta-2\epsilon > \sigma$ (see Figure~\ref{fig::outer_boundary_tail_proof} for an illustration).  Let $(w_k)$ be a sequence in $U$ with rational coordinates which converges to $\eta(\zeta-\epsilon)$ and, for each $k$, let $\eta_k =\eta_{w_k}$ be the flow line of $h$ starting from $w_k$.  Let $N$ be first integer $k \geq 1$ such that $\eta_k$ cleanly merges into $\eta_w$.  Then Lemma~\ref{lem::clean_tail_merge} implies that $N < \infty$ almost surely.  It follows from Proposition~\ref{prop::tail_interaction} that $\eta_N$ merges into and does not separate from $\eta_w$, at least up until both paths reach $\eta(\zeta)$ (this is when the starting point of at least one of the two tails is separated from $\infty$).  Consequently, $\eta_N$ also merges with $\eta$ and the two paths agree with each other, at least up until they both hit $\eta(\zeta)$.

Let $\zeta_N$ be the first time that $\eta_N$ hits $\eta(\zeta)$ and let $\zeta_N'$ be the first time after $\zeta_N$ that $\eta_N$ wraps around $z$ and hits itself.  We are now going to argue that $\eta_N|_{[\zeta_N,\zeta_N']}$ agrees with $\eta|_{[\zeta,\zeta']}$ up to reparameterization (see Figure~\ref{fig::outer_boundary_tail_proof2} for an illustration of the argument).  To see this, we will first argue that $\eta_N|_{[\zeta_N,\zeta_N']}$ is almost surely contained in the unbounded connected component of $\eta((-\infty,\zeta])$.  Let $\sigma_w,\zeta_w$ be the start and end times of the tail of $\eta_w$ which represents the outer boundary of $\eta((-\infty,\zeta])$.  Since $\eta_N$ cleanly merges into $\eta_w$, we know that the height difference of $h$ at $\eta_N(\zeta_N)$ and $\eta_w(\sigma_w)$, as made precise in Figure~\ref{fig::angle_difference}, is either $-2\pi \chi$ (if $\eta|_{[\sigma,\zeta]}$ is a clockwise loop, as in Figure~\ref{fig::outer_boundary_tail_proof2}, heights go down by $2\pi \chi$) or $2\pi \chi$ (if $\eta|_{[\sigma,\zeta]}$ is a counterclockwise loop, heights go up by $2\pi \chi$).  Consequently, it follows by applying Proposition~\ref{prop::tail_interaction} to $\eta_N$ and $\eta_w$ stopped at a time before $\eta_w$ merges into $\eta_N((-\infty,\zeta_N])$ that $\eta_N|_{[\zeta_N,\zeta_N']}$ does not cross $\eta_w([\sigma_w,\zeta_w])$ (see Remark~\ref{rem::tail_meas} below).  This proves our claim.  The proof that $\eta_N|_{[\zeta_N,\zeta_N']}$ is equal to $\eta|_{[\zeta,\zeta']}$ follows since boundary emanating flow lines are almost surely determined by the field \cite[Theorem~1.2]{MS_IMAG} and absolute continuity \cite[Proposition~3.4]{MS_IMAG}.  This is explained in more detail in the caption of Figure~\ref{fig::outer_boundary_tail_proof2}.
\end{proof}

\begin{remark}
\label{rem::tail_meas}
Although the starting point of $\eta_N$ is random and depends on the realization of $\eta$ (or $\eta_w$) in the proof of Lemma~\ref{lem::tail_decomposition} above, we can still apply Proposition~\ref{prop::tail_interaction}.  The reason is that the starting points of $\eta_N$ and $\eta_w$ were assumed to be rational and Proposition~\ref{prop::tail_interaction} describes almost surely the interaction of all tails of flow lines started at rational points simultaneously.  Consequently, we do not have to worry about the dependencies in the definitions of the flow lines in order to determine the manner in which their tails interact upon hitting.
\end{remark}

\begin{proof}[Proof of Proposition~\ref{prop::tail_decomposition}]
The result follows by repeatedly applying the previous lemma to get that each time $\eta$ wraps around $z$ and hits itself, it can be represented by a tail.
\end{proof}

\begin{proof}[Proof of Proposition~\ref{prop::boundary_tail_decomposition}]
It is easy to see from the proof of Proposition~\ref{prop::tail_interaction} that it generalizes to describe the interaction of a tail of a flow line of $h$ starting from an interior point of $D$ and a tail of $\eta$ (which we recall starts from $\partial D$).  (Depending on the boundary data of $h$, it may be $\eta$ can intersect itself.  This, for example, is the case in the setting of Figure~\ref{fig::diskstart}.)  Let $\tau$ be the first time that $\eta$ intersects itself.  Consequently, it is easy to see from Lemma~\ref{lem::path_close_hit} as well as the argument used to prove Lemma~\ref{lem::clean_tail_merge} that $\eta|_{(-\infty,\tau]}$ can be decomposed into a union of overlapping tails starting at points with rational coordinates.  That the same holds for $\eta|_{[\tau,\infty)}$ follows from the argument used to prove Proposition~\ref{prop::tail_decomposition}.
\end{proof}

\subsubsection{Flow line interaction}
\label{subsubsec::flow_line_interaction}

\begin{figure}[ht!]
\begin{center}
\includegraphics[scale=0.85]{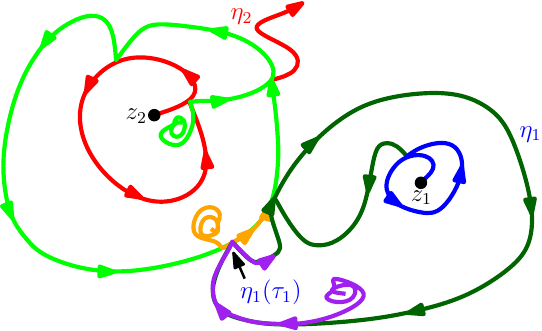}
\end{center}
\caption{\label{fig::interaction_proof} An illustration of the completion of the proof of Theorem~\ref{thm::flow_line_interaction}.  Suppose that $h$ is a whole-plane GFF defined up to a global multiple of $2\pi \chi$, $z_1,z_2 \in \C$ are distinct, $\theta_1,\theta_2 \in \R$, and let $\eta_i$ be the flow line of $h$ starting at $z_i$ with angle $\theta_i$ for $i=1,2$.  Let $\tau_1$ be a stopping time for the filtration $\CF_t = \sigma(\eta_1(s) : s \leq t,\ \ \eta_2)$ and assume throughout that we are working on the event $\eta_1(\tau_1) \in \eta_2$.  Let $\sigma_2$ be such that $\eta_1(\tau_1) = \eta_2(\sigma_2)$.  Then we can describe the interaction of $\eta_1$ near time $\tau_1$ and $\eta_2$ near time $\sigma_2$ in terms of tails.  Indeed, it follows from Proposition~\ref{prop::tail_decomposition} that there exists stopping times $\zeta_j < \zeta_{j+1}$ for $\eta_2$ such that $\sigma_2 \in [\zeta_j,\zeta_{j+1})$ and such that $\eta_2|_{[\zeta_j,\zeta_{j+1}]}$ is covered by a tail of a flow line (shown in light green in the illustration).  The same is likewise true for $\eta_1$ near $\eta_1(\tau_1)$ (shown in dark green in the illustration).  The reason that we can apply this result is that, as we remarked earlier, we can find arbitrarily small stopping times at which $\eta_i$ for $i=1,2$ makes a clean loop around $z_i$ before exiting the ball of radius $\tfrac{1}{2}|z_1-z_2|$.  Therefore we can apply Proposition~\ref{prop::tail_interaction} to describe the interaction of $\eta_1$ and $\eta_2$ near times $\tau_1$ and $\sigma_2$, respectively.  Note that it might be that one of the tail base points is separated from $\infty$ when $\eta_1$ and $\eta_2$ are interacting near these times (in the illustration, this is true for the tail corresponding to $\eta_1$).  We can circumvent this issue by applying Lemma~\ref{lem::clean_tail_merge} to further represent the tails of $\eta_1,\eta_2$ near their interaction times as tails of flow lines starting from points with rational coordinates in which the base point of the tails are not separated from $\infty$ when interacting with each other (this is shown in purple for $\eta_1$ and in orange for $\eta_2$ in the illustration).}
\end{figure}

Proposition~\ref{prop::tail_decomposition} allows us to extend the observations from Proposition~\ref{prop::tail_interaction} from tails to entire flow lines, since wherever two flow lines intersect (or one flow line intersects its past), locally it can be described by two flow line tails starting from points with rational coordinates intersecting.

\begin{proof}[Proof of Theorem~\ref{thm::flow_line_interaction} for ordinary GFF flow lines]
In the case that $\kappa \in (0,8/3]$, we know from Section~\ref{subsec::whole_plane_sle} and Lemma~\ref{lem::radial_critical_for_hitting} that $\SLE_\kappa$ flow lines of the GFF are non-self intersecting hence are themselves tails.  Consequently, in this case the result follows from Proposition~\ref{prop::tail_interaction} as well as absolute continuity if $D$ is a proper subdomain in $\C$ (Proposition~\ref{prop::local_set_whole_plane_bounded_compare}).  This leaves us to handle the case that $\kappa \in (8/3,4)$ which is in turn explained in the caption of Figure~\ref{fig::interaction_proof}.  We emphasize that it is important that all of the tails considered in the proof have initial points with rational coordinates so that we can use Proposition~\ref{prop::tail_interaction} to describe the interaction of all of these tails simultaneously almost surely.  The result for boundary emanating flow lines follows by a similar argument and Proposition~\ref{prop::boundary_tail_decomposition}.
\end{proof}

We will explain in Section~\ref{subsec::conical} after completing the proof of the existence component of Theorem~\ref{thm::alphabeta} how using absolute continuity, the version of Theorem~\ref{thm::flow_line_interaction} for ordinary GFF flow lines implies Theorem~\ref{thm::flow_line_interaction} as stated, in particular in the presence of a conical singularity.

\begin{figure}[ht!]
\begin{center}
\includegraphics[scale=0.85]{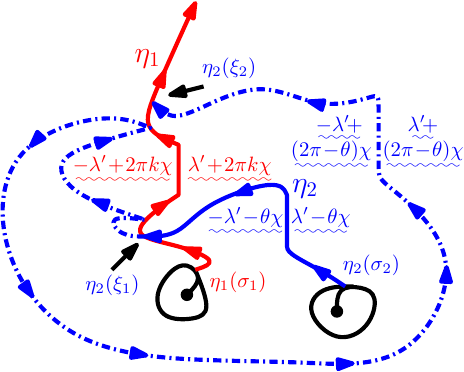}
\end{center}
\caption{\label{fig::tails_cross_once_intersect}
The proof that tails of ordinary GFF flow lines can cross each other at most once almost surely.  Suppose that $h$ is a GFF on $\C$ defined up to a global multiple of $2\pi \chi$ and that $z_1,z_2 \in \C$ are distinct.  Let $\eta_1, \eta_2$ be flow lines of $h$ starting at $z_1,z_2$, respectively.  Assume that $\eta_1$ has zero angle while $\eta_2$ has angle $\theta \in [0,2\pi)$.  For $i=1,2$, let $\tau_i$ be a stopping time for $\eta_i$ such that $\eta_i(\tau_i) \notin \eta_i((-\infty,\tau_i))$ and let $\eta_i^{\tau_i}$ be the corresponding tail.  The boundary data for the conditional law of $h$ given the segments of $\eta_1$ and $\eta_2$ is as illustrated where $k \in \Z$, modulo a global additive constant in $2\pi \chi \Z$.  Assume that $\eta_2^{\tau_2}$ hits and crosses $\eta_1^{\tau_1}$ from the right to the left side, say at time $\xi_1$.  Then the height difference $\CD = -(2\pi k + \theta) \chi$ of the paths upon intersecting is in $(-\pi \chi,0)$ by Theorem~\ref{thm::flow_line_interaction}.  In particular, $k = 0$.  Let $\sigma_1$ be the start time of the tail of $\eta_1^{\tau_1}$.  Let $\sigma_2$ be the start time of $\eta_2^{\tau_2}$.  Then Proposition~\ref{prop::tail_interaction} implies that $\eta_2^{\tau_2}$ cannot cross $\eta_1^{\tau_1}$ again before the paths separate one of $\eta_i(\sigma_i)$ from $\infty$ for $i=1,2$.  This can only happen if, after time $\xi_1$, $\eta_2$ wraps around and hits $\eta_1^{\tau_1}$ on its right side a second time, as illustrated above, say at time $\xi_2$.  The height difference of the paths upon intersecting this time is $\CD + 2\pi \chi \geq 0$, thus Theorem~\ref{thm::flow_line_interaction} implies that the paths cannot cross again.
}
\end{figure}

\begin{figure}[ht!!]
\begin{center}
\includegraphics[scale=0.85]{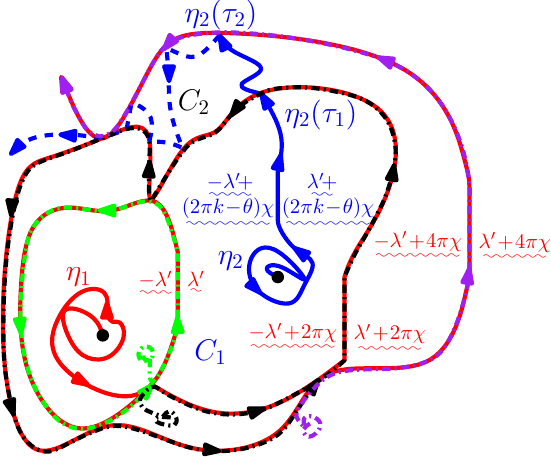}
\end{center}
\caption{\label{fig::cross_once_intersect}
{\small The proof that ordinary GFF flow lines can cross each other at most once for $\kappa \in (8/3,4)$, i.e.\ the regime in which the paths are self-intersecting.  Suppose that $h$ is a GFF on $\C$ defined up to a global multiple of $2\pi \chi$ and that $z_1,z_2 \in \C$ are distinct.  Let $\eta_1,\eta_2$ be the flow lines of $h$ starting at $z_1,z_2$, respectively.  Assume that $\eta_1$ has zero angle while $\eta_2$ has angle $\theta \in [0,2\pi)$.  Shown in the illustration are three flow line tails colored green, black, purple which represent the outer boundary of $\eta_1$ at successive times as it wraps around $z_1$.  Let $C_1$ be the component of $\C \setminus \eta_1$ containing $z_2$.  Then we can decompose $\partial C_1$ into an inner and outer part, each of which are represented by segments of tails (green and black in the illustration).  Let $\tau_1$ be the first time that $\eta_2$ exits $C_1$; assume that $\eta_2$ hits $\eta_1$ at time $\tau_1$ on its left side and in the black tail (the other cases are analogous)  The boundary data for the conditional law of $h$ given $\eta_1$ and $\eta_2|_{(-\infty,\tau_1]}$ is as depicted with $k \in \Z$, up to an additive constant in $2\pi \chi \Z$.  Consequently, the height difference $\CD$ of the paths upon intersecting is $(2\pi (k-1) - \theta) \chi$.  If $\eta_2$ crosses $\eta_1$ at time $\tau_1$, then $\CD \in (0,\pi \chi)$ hence $k=2$.  Thus if $\eta_2$ subsequently hits the purple tail, it does so with a height difference of $(2\pi (k-2) - \theta) \chi < 0$, hence does not cross.  Therefore, after crossing the black tail, $\eta_2$ follows the pockets of $\eta_1$ which lie between the black and purple tails in their natural order.  From this, we see that $\eta_2$ does not subsequently cross $\eta_1$.  This handles the case that the purple tail winds around $z_1$ with the same orientation as the black tail.  If the path switches direction, a similar analysis implies that $\eta_2$ cannot cross again.}
}
\end{figure}

\begin{figure}[ht!]
\begin{center}
\includegraphics[scale=0.85]{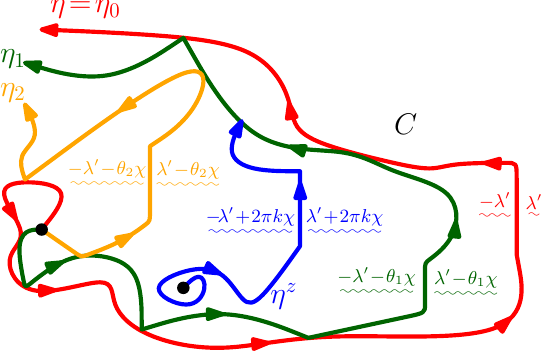}
\end{center}
\caption{{\label{fig::non_intersect_almost_surely_merge}  Fix $\kappa \in (0,8/3]$ and let $h$ be a whole-plane GFF  defined up to a global multiple of $2\pi \chi$.  Let $z \in \C \setminus \{0\}$ and $\eta,\eta^z$ be the flow lines of $h$ starting at $0,z$, respectively, both with zero angle.  Fix evenly spaced angles $0 \equiv \theta_0 < \theta_1 < \theta_2 < \cdots < \theta_n < 2\pi$ such that with $\eta_j$ the flow line of the \emph{conditional field} $h$ given $\eta$ on $\C \setminus \eta$ with angle $\theta_j$ starting at $0$, we have that $\eta_j$ almost surely intersects the left side of $\eta_{j-1}$ and the right side of $\eta_{j+1}$ for all $0 \leq j \leq n$; the indices are taken mod $n$ (\cite[Theorem~1.5]{MS_IMAG} implies that we can fix such angles).  By \cite[Theorem~1.5]{MS_IMAG}, we know that $\eta_j$ stays to the left of $\eta_{j-1}$ for all $0 \leq j \leq n$.  Moreover, since we can represent each of the $\eta_j$ using flow lines tails, Theorem~\ref{thm::flow_line_interaction} implies that $\eta^z$ and the $\eta_j$ obey the same flow line interaction rules.  Let $C$ be the connected component of $\C \setminus \cup_{j=0}^n \eta_j$ which contains $z$; then $C$ is almost surely bounded.  Since $\eta^z$ is unbounded, it follows that $\eta^z$ exits $C$ almost surely.  Theorem~\ref{thm::flow_line_interaction} implies that $\eta^z$ has to cross or merge into one of the two flow lines which generate $\partial C$ upon exiting $C$.   Indeed, illustrated above is the boundary data for $h$ given $\eta_0,\ldots,\eta_n$ and $\eta^z$ in the case that $\eta^z$ hits $\eta_1$.  If $k \geq 0$, then $2\pi k + \theta_1 > 0$ so that $\eta^z$ crosses $\eta_1$ upon hitting.  If $k < 0$, then $\eta^z$ bounces off $\eta_1$ upon hitting hence has to hit $\eta_2$.  Since $2\pi k + \theta_2 < 0$, it follows that $\eta^z$ crosses $\eta_2$ upon up hitting.  A similar analysis implies that $\eta^z$ crosses out or merges into the boundary of any pocket whose interior it intersects.  Proposition~\ref{prop::cross_merge} thus implies that $\eta^z$ can enter into the interior of at most a finite collection of components of $\C \setminus \cup_{j=0}^n \eta_j$, hence must eventually merge with one of the $\eta_j$.  Since $\theta_j \notin 2\pi \Z$ for all $1 \leq j \leq n$, it follows that $\eta^z$ cannot merge with $\eta_j$ for $1 \leq j \leq n$, hence must merge with $\eta$.}}
\end{figure}

\begin{figure}[ht!]
\begin{center}
\includegraphics[scale=0.85]{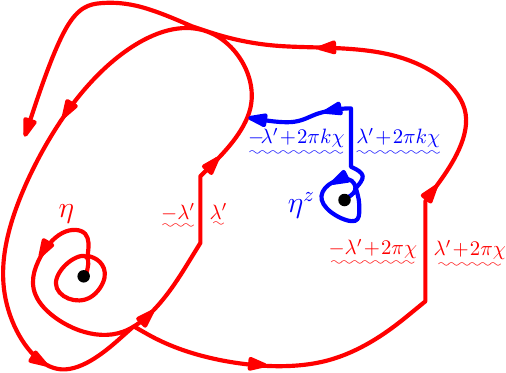}
\end{center}
\caption{\label{fig::inthepocket}
Fix $\kappa \in (8/3,4)$.  Suppose that $h$ is a whole-plane GFF defined up to a global multiple of $2\pi \chi$.  Let $\eta,\eta^z$ be the flow lines of $h$ starting at $0,z$, respectively, both with zero angle where $z \in \C \setminus \{0\}$.  Lemma~\ref{lem::radial_loop} implies that $z$ is almost surely contained in a bounded connected complementary component of $\eta$.  Upon hitting $\eta$, Theorem~\ref{thm::flow_line_interaction} implies that $\eta^z$ will either merge or bounce off $\eta$.  The reason that $\eta^z$ cannot cross $\eta$ is that the height difference of the paths upon intersecting takes the form $2\pi k \chi$ for $k \in \Z$, in particular cannot lie in $(-\pi \chi,0)$ (to cross from right to left) or $(0,\pi \chi)$ (to cross from left to right).
}
\end{figure}

\begin{figure}[ht!!]
\begin{center}
\includegraphics[scale=0.85]{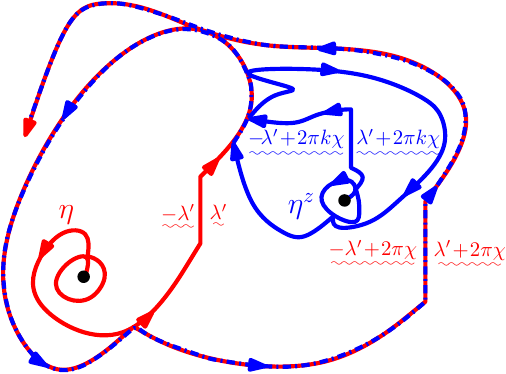}
\end{center}
\caption{\label{fig::inthepocket2}
(Continuation of Figure~\ref{fig::inthepocket}) After intersecting $\eta$, it may be that $\eta^z$ has to wind around $z$ several times before it reaches the correct height in order to merge with $\eta$ (in the illustration, $\eta^z$ winds around $z$ once after hitting $\eta$ before merging).  It is not possible for $\eta^z$ to bounce off the boundary of the complementary component of $\eta$ which contains $z$ and then exit without merging.
}
\end{figure}

Now that we have proved Theorem~\ref{thm::flow_line_interaction} for ordinary GFF flow lines, we turn to prove Theorem~\ref{thm::merge_cross} for ordinary GFF flow lines ($\alpha = 0$).

\begin{proposition}
\label{prop::cross_merge}
Suppose that $D \subseteq \C$ is a domain, $h$ is a GFF on $D$ defined up to a global multiple of $2\pi \chi$ if $D = \C$, $z_1,z_2 \in \ol{D}$, and $\theta_1,\theta_2 \in \R$.  For $i=1,2$, let $\eta_i$ be the flow line of $h$ with angle $\theta_i$, i.e.\ the flow line of $h+\theta_i \chi$, starting from $z_i$.  Then $\eta_1$ and $\eta_2$ almost surely cross each other at most once.  If $z_1=z_2$, then $\eta_1,\eta_2$ almost surely do not cross.
\end{proposition}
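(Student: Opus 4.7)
The plan is to reduce Proposition~\ref{prop::cross_merge} to a statement about tails of flow lines, where the height-difference analysis from Theorem~\ref{thm::flow_line_interaction} applies cleanly. The key arithmetic observation is that the height difference between the two flow lines at any intersection point is constrained to a single coset of $2\pi\chi\Z$ (determined by the angle difference $\theta_2-\theta_1$ and the winding between intersections), while the range of height differences permitting a crossing has total width $2\pi\chi$ (namely $(-\pi\chi,0)$ for a right-side crossing and $(0,\pi\chi)$ for a left-side crossing) and hence intersects any such coset in at most one point.

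I would first treat the case of two tails $\eta_1^{\tau_1}, \eta_2^{\tau_2}$ whose base points are not separated from $\infty$ during their interaction. By Proposition~\ref{prop::tail_interaction}, a crossing forces the height difference $\CD$ at that intersection to lie in $(-\pi\chi,0)\cup(0,\pi\chi)$; and $\CD$ always takes the form
\[
\CD = (2\pi k + \theta_2 - \theta_1)\chi, \qquad k\in\Z,
\]
with $k$ encoding the winding accumulated by the two paths between intersection points. Thus height differences at two distinct intersections of $\eta_1$ and $\eta_2$ differ by a nonzero integer multiple of $2\pi\chi$, and since the crossing range lies inside the open interval $(-\pi\chi,\pi\chi)$ of length $2\pi\chi$, at most one intersection can witness a crossing. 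This settles the tail-vs-tail case, cf.\ Figure~\ref{fig::tails_cross_once_intersect}.

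Next, I would extend this to the full paths. When $\kappa\in(0,8/3]$ the flow lines are simple (see Remark~\ref{rem::interior_rho_value} and Section~\ref{subsec::SLEoverview}), so each path is itself a tail and the previous step applies. When $\kappa\in(8/3,4)$ I would invoke Proposition~\ref{prop::tail_decomposition} to cover each $\eta_i$ by a countable collection of overlapping tails based at rational points, so that any crossing of $\eta_1$ by $\eta_2$ is locally a crossing of one tail of $\eta_1$ by one tail of $\eta_2$, cf.\ Figure~\ref{fig::cross_once_intersect}. The height-difference residue modulo $2\pi\chi$ is a global invariant of the two paths, so the coset argument above still rules out a second crossing once the local tail-vs-tail picture is in place.

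The main technical obstacle is that at a candidate second crossing, the base points of the tails supplied by Proposition~\ref{prop::tail_decomposition} may already be separated from $\infty$ by the earlier crossing, so Proposition~\ref{prop::tail_interaction} does not apply to those tails directly. I would handle this by using Lemma~\ref{lem::clean_tail_merge} to represent, on a small scale around the candidate crossing point, the relevant segments of $\eta_1$ and $\eta_2$ by fresh tails based at rational points whose base points are not separated from $\infty$ by the local interaction; the crossing is then governed by Proposition~\ref{prop::tail_interaction}, and the coset constraint rules it out. The extension from $D=\C$ to a proper subdomain $D\subsetneq\C$ is then immediate from absolute continuity (Proposition~\ref{prop::local_set_whole_plane_bounded_compare}).
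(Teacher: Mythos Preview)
Your treatment of the tail case and of the simple regime $\kappa\in(0,8/3]$ matches the paper's argument (and you cite the same figure). One phrasing issue: it is not true that ``height differences at two distinct intersections of $\eta_1$ and $\eta_2$ differ by a \emph{nonzero} integer multiple of $2\pi\chi$''---bouncing intersections occur repeatedly at the \emph{same} height. What is true is that all intersections lie in the single coset $(\theta_2-\theta_1)\chi + 2\pi\chi\Z$, and Proposition~\ref{prop::tail_interaction} already tells you a fixed pair of tails cannot cross twice at the same height; the winding argument then rules out a crossing at a \emph{different} height.

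The genuine gap is in the self-intersecting regime $\kappa\in(8/3,4)$. Your coset observation shows that every crossing must occur at one specific height difference $\CD$, but it does \emph{not} rule out two crossings at that same $\CD$ involving different tails of the self-intersecting path $\eta_1$. The ``fresh tails'' from Lemma~\ref{lem::clean_tail_merge} let you say that a second crossing would locally be a legitimate tail crossing, but they do not by themselves prevent it from happening. The paper closes this gap by tracking the pocket structure of $\eta_1$ rather than arguing abstractly: the successive outer boundaries of $\eta_1$ are nested tails whose heights differ by $2\pi\chi$ at each loop. After $\eta_2$ crosses one such tail (the ``black'' one in Figure~\ref{fig::cross_once_intersect}) with $\CD\in(0,\pi\chi)$, it lands in the region between that tail and the next one (``purple''); the height difference against purple is $\CD-2\pi\chi<0$, hence no crossing, and $\eta_2$ cannot re-cross black either, so it is forced to follow the intervening pockets in order out to infinity. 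This topological trapping argument is what your proposal is missing. You also do not address the case $z_1=z_2$, which the paper handles by running $\eta_1$ to a small positive stopping time and applying the distinct-point case.
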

\begin{proof}
We first suppose that $z_1,z_2$ are distinct.  The first step in the proof is to show that tails of ordinary GFF flow lines can cross each other at most once.  This is explained in the caption of Figure~\ref{fig::tails_cross_once_intersect}.  Since ordinary GFF flow lines are themselves tails when $\kappa \in (0,8/3]$, to complete the proof we just need to handle the case that $\kappa \in (8/3,4)$.  The proof of this is explained in the caption of Figure~\ref{fig::cross_once_intersect}.  This completes the proof in the case that $z_1,z_2$ are distinct.  Now suppose that $z_1=z_2$.  We can run, say $\eta_1$, up until an almost surely positive stopping time $\tau$ so that $\eta_1(\tau) \neq z_1$.  Then the same arguments imply that $\eta_1|_{[\tau,\infty)}$ almost surely crosses $\eta_2$ at most once.  Since the stopping time $\tau$ was arbitrary, it follows that $\eta_1$ and $\eta_2$ almost surely cross at most once.  In the case that $D=\C$, a trivial scaling argument implies that $\eta_1$ and $\eta_2$ almost surely do not cross.  That $\eta_1,\eta_2$ almost surely do not cross in the case that $D \neq \C$ follows from the $D = \C$ case and absolute continuity.
\end{proof}

\begin{proposition}
\label{prop::merge_almost_surely}
Suppose that $h$ is a whole-plane GFF defined up to a global multiple of $2\pi \chi$ and that $\eta_1,\eta_2$ are flow lines of $h$ starting at $z_1,z_2 \in \C$ distinct with the same angle.  Then $\eta_1$ and $\eta_2$ almost surely merge.
\end{proposition}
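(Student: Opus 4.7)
The plan is to split on whether $\kappa \in (8/3,4)$ (when interior flow lines are self-intersecting) or $\kappa \in (0,8/3]$ (when they are simple), as the geometric picture differs in the two cases. In both, the key input is Theorem~\ref{thm::flow_line_interaction}: since $\eta_1$ and $\eta_2$ share the same angle, the height difference $\CD$ at any intersection point takes the form $2\pi k \chi$ for some $k \in \Z$, determined by how the two paths have wound around their respective starting points before meeting. When $k = 0$ this is precisely the merging regime; when $k \neq 0$ we have $|2\pi k \chi| \geq 2\pi \chi > \pi \chi$, which lies outside the crossing intervals $(-\pi\chi,0)$ and $(0,\pi\chi)$ of Theorem~\ref{thm::flow_line_interaction}. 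Consequently $\eta_2$ can never cross $\eta_1$: every intersection is either a merge or a bounce.

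In the self-intersecting regime $\kappa \in (8/3,4)$, the flow line $\eta_1$ is a whole-plane $\SLE_\kappa(2-\kappa)$ process with $\rho = 2-\kappa \in (-2, \tfrac{\kappa}{2}-2)$, so by Lemma~\ref{lem::radial_loop} combined with the stationarity of the driving triple (Proposition~\ref{prop::sle_kappa_rho_stationary}) $\eta_1$ almost surely eventually makes a clean loop around $z_2$. Thus $z_2$ lies in a bounded complementary component $U$ of $\eta_1$. Since $\eta_2$ is unbounded by Lemma~\ref{lem::whole_plane_unbounded} and cannot cross $\eta_1$, it must exit $U$ through merging. The scenario is the one depicted in Figures~\ref{fig::inthepocket} and~\ref{fig::inthepocket2}: if the first hitting has $\CD = 0$ the merge is immediate, while otherwise $\eta_2$ bounces, remains trapped in $U$, and must continue to wind around $z_2$, shifting the accumulated $\CD$ at successive returns to $\eta_1$ by integer multiples of $2\pi \chi$ until the value $0$ is attained and the merge occurs.

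For $\kappa \in (0,8/3]$ the paths are simple, so the pocket argument above is unavailable. I would instead run the fan construction of Figure~\ref{fig::non_intersect_almost_surely_merge}: using \cite[Theorem~1.5]{MS_IMAG}, pick evenly spaced angles $0 = \theta_0 < \theta_1 < \cdots < \theta_n < 2\pi$ sufficiently close together that the conditional flow lines $\eta_0, \eta_1, \ldots, \eta_n$ of $h$ given $\eta_1$, all started at $z_1$, almost surely intersect consecutively. The resulting fan divides $\C$ into bounded pockets; let $C$ denote the one containing $z_2$. The height difference of $\eta_2$ (angle $0$) with any $\eta_j$ of angle $\theta_j$ at an intersection is $(2\pi k - \theta_j)\chi$, which cannot vanish when $\theta_j \in (0,2\pi)$; so merging with any non-zero-angle fan line is excluded. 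By Theorem~\ref{thm::flow_line_interaction} the unbounded path $\eta_2$ must therefore leave each pocket it enters by either merging with $\eta_0 = \eta_1$ or crossing a fan line. Proposition~\ref{prop::cross_merge} permits at most one crossing per pair $(\eta_2, \eta_j)$, so $\eta_2$ enters only finitely many pockets before becoming trapped; in the final pocket it must exit by merging, and the angle obstruction forces that merge to be with $\eta_1$.

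The main obstacle is ruling out the ``indefinite bouncing'' scenario in the $\kappa \in (8/3,4)$ regime, namely that $\eta_2$ could in principle keep bouncing off $\eta_1$ without ever achieving the zero height difference needed to merge. The cleanest way to control this is to combine the tail decomposition of Proposition~\ref{prop::tail_decomposition} with the observation that each full loop of $\eta_2$ around $z_2$ inside the trapping pocket advances its winding by a full turn, so successive values of $\CD$ at returns to $\eta_1$ must include $0$; verifying that this first merging time is almost surely finite reduces to the pairing of unboundedness with the no-crossing dichotomy already established from Theorem~\ref{thm::flow_line_interaction}.
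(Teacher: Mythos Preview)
Your proposal follows essentially the same strategy as the paper: split on whether $\kappa \in (8/3,4)$ or $\kappa \in (0,8/3]$, trap $\eta_2$ in a bounded pocket of $\eta_1$ in the self-intersecting case, and invoke the fan construction of Figure~\ref{fig::non_intersect_almost_surely_merge} in the simple case. Two remarks are worth making.

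First, in the self-intersecting regime your closing concern about ``indefinite bouncing'' is superfluous. The paper's argument is simply: $\eta_2$ cannot cross $\partial U_1$ (since the height difference $2\pi k\chi$ never lies in a crossing interval), the pocket $U_1$ is bounded, and $\eta_2$ is unbounded (Lemma~\ref{lem::whole_plane_unbounded}); hence $\eta_2$ must exit $\ol{U}_1$, and the only way to do so is by merging. No winding bookkeeping or tail decomposition is required. Your intermediate description of $\eta_2$ ``shifting $\CD$ by multiples of $2\pi\chi$ until $0$ is attained'' is a plausible heuristic picture (cf.\ Figures~\ref{fig::inthepocket}--\ref{fig::inthepocket2}) but is not needed for the proof.

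Second, in the simple regime you assert that the fan divides $\C$ into bounded pockets but do not justify this; the paper does so explicitly by observing that the conditional law of each $\eta_j$ given $\eta_{j-1}$ and $\eta_{j+1}$ is a chordal $\SLE_\kappa(\rho^L;\rho^R)$ with $\rho^L,\rho^R \in (-2,\tfrac{\kappa}{2}-2)$, which almost surely swallows any fixed point. (Also, you reuse the symbol $\eta_1$ for both the flow line from $z_1$ and a fan line; the paper avoids this by writing $\eta, \eta^z$ for the two original paths.)
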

\begin{proof}
We first consider the case that $\kappa \in (8/3,4)$.  Since whole-plane $\SLE_\kappa(\rho)$ processes are almost surely unbounded,  it follows from Lemma~\ref{lem::radial_loop} that $\eta_1$ almost surely surrounds and separates $z_2$ from $\infty$.  Let $U_1$ be the (necessarily bounded) connected component of $\C \setminus \eta_1$ which contains $z_2$.  Theorem~\ref{thm::flow_line_interaction} implies that $\eta_2$ cannot cross $\partial U_1$, hence can exit $\ol{U}_1$ only upon merging with $\eta_1$ (see also Figure~\ref{fig::inthepocket} and Figure~\ref{fig::inthepocket2}).  Since $\eta_2$ is also almost surely unbounded, $\eta_2$ exits $\ol{U}_1$ almost surely, from which the result in this case follows.  When $\kappa \in (0,8/3]$, $\eta_1$ does not intersect itself so the argument we gave for $\kappa \in (8/3,4]$ does not apply directly.  The appropriate modification is explained in the caption of Figure~\ref{fig::non_intersect_almost_surely_merge}.  One aspect of the proof which is not explained in the caption is why the connected components of $\C \setminus \cup_{j=0}^n \eta_j$ are almost surely bounded.  The reason is that the conditional law of $\eta_j$ given $\eta_{j-1}$ and $\eta_{j+1}$ for $1 \leq j \leq n$ and indices taken mod $n$ is that of an $\SLE_\kappa(\rho^L;\rho^R)$ process with $\rho^L,\rho^R \in (-2,\tfrac{\kappa}{2}-2)$ and such processes almost surely swallow any fixed point in finite time.
\end{proof}

\begin{remark}
\label{rem::not_nec_merge}
Suppose that we are in the setting of Proposition~\ref{prop::merge_almost_surely} with $h$ replaced by a GFF on a proper subdomain $D$ in $\C$.  Then it is not necessarily true that $\eta_1$ and $\eta_2$ merge with probability one.  The reason is that, depending on the boundary data, there is the possibility that $\eta_1$ and $\eta_2$ get stuck in the boundary (i.e., intersect the boundary with a height difference which does not allow the curve to bounce off the boundary) before intersecting each other with the appropriate height difference.
\end{remark}

\begin{remark}
\label{rem::inthepocket}
Assume that we are in the setting of Proposition~\ref{prop::merge_almost_surely} with $\kappa \in (8/3,4)$ so that $\eta_1$ almost surely surrounds $z_2$, except that $\eta_i$ has angle $\theta_i \in [0,2\pi)$ for $i=1,2$ and $\theta_1 \neq \theta_2$.  By Theorem~\ref{thm::flow_line_interaction}, once $\eta_2$ hits $\eta_1$, it either crosses $\eta_1$ immediately or bounces off $\eta_1$.  Just as in the case that $\theta_1 = \theta_2$, as in Figure~\ref{fig::inthepocket} and Figure~\ref{fig::inthepocket2}, it might be that $\eta_2$ has to wind around $z_2$ several times before ultimately leaving the complementary connected component (``pocket'') of $\eta_1$ which contains $z_2$.  Note that the pockets of $\eta_1$ are ordered according to the order in which $\eta_1$ traces their boundary and, after leaving the pocket which contains $z_2$, $\eta_2$ passes through the pockets of $\eta_1$ according to this ordering.
\end{remark}

\subsection{Conical singularities}
\label{subsec::conical}

\begin{figure}[ht!]
\begin{center}
\includegraphics[scale=0.85]{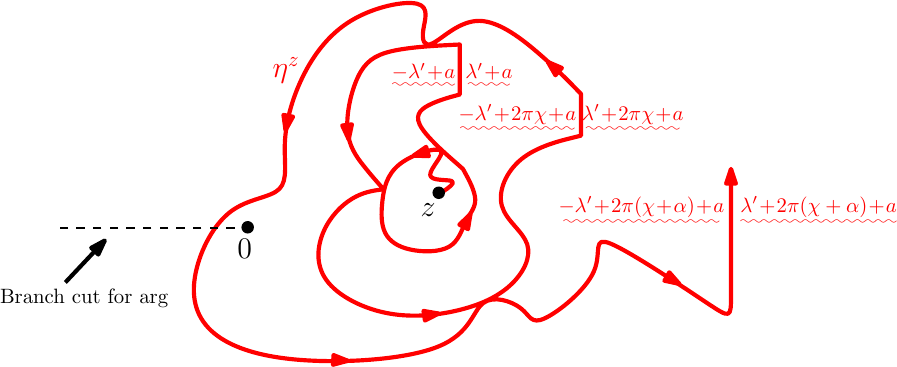}
\end{center}
\caption{\label{fig::conical_boundary_data}
Suppose that $h$ is a whole-plane GFF and fix constants $\alpha > -\chi$ and $\beta \in \R$.  Let $\eta^z$ be a flow line of $h_{\alpha \beta} = h-\alpha \arg(\cdot)-\beta\log|\cdot|$, defined up to a global multiple of $2\pi(\chi+\alpha)$, starting from $z \neq 0$.  If, in addition, we know $h_{\alpha \beta}$ up to a global multiple of $2\pi \chi$ then we can fix an angle for $\eta^z$ (see Remark~\ref{rem::flow_line_alpha_non_zero}).  Otherwise, the angle of $\eta^z$ is random (see Remark~\ref{rem::flow_line_random_angle}).  The boundary data for $h$ given $\eta$ is as shown, up to an additive constant in $2\pi(\chi+\alpha) \Z$; $a \in \R$.  Each time $\eta^z$ wraps around $z$ before wrapping around $0$, the boundary data of $h$ given $\eta^z$ along a vertical segment of $\eta^z$ increases (resp.\ decreases) by $2\pi\chi$ if the loop has a counterclockwise (resp.\ clockwise) orientation.  When $\eta^z$ wraps around $0$, the height along a vertical segment increases (resp.\ decreases) by $2\pi(\chi+\alpha)$ if the loop has a counterclockwise (resp.\ clockwise) orientation.  Since for every $\epsilon > 0$, the path winds around $z$ an infinite number of times by time $\epsilon$, it follows that observing the boundary data for the conditional field along $\eta$ only allows us to determine the angle of $\eta$ up to an additive constant in the additive subgroup $A$ of $\R$ generated by $2\pi \alpha$ and $2\pi \chi$ (recall Remark~\ref{rem::alpha_beta_determined}).  This is in contrast with the case in which $\eta$ starts from $0$ (or $\alpha$ is a non-negative integer multiple of $\chi$), in which case the boundary data of the conditional field is enough to determine the angle of the path.  Finally, this also implies that the height difference between two different flow lines starting at points different from $0$ with the same angle (up to an additive constant in $A$) is contained in $A$.}
\end{figure}

Let $h$ be a whole-plane GFF.  In this section, we are going to construct a coupling between $h_{\alpha\beta} = h-\alpha \arg(\cdot) - \beta \log|\cdot|$, viewed as a distribution up to a global multiple of $2\pi(\chi+\alpha)$, and a whole-plane $\SLE_\kappa^\beta(\rho)$ process where $\rho=2-\kappa+2\pi \alpha/\lambda$, provided $\alpha > -\chi$ (so that $\rho > -2$).  This will complete the proof of the existence part of Theorem~\ref{thm::alphabeta}.  In Section~\ref{subsec::uniqueness}, we will establish the uniqueness component of Theorem~\ref{thm::alphabeta}.  Recall from Remark~\ref{rem::alpha_coupling} that the proof of Theorem~\ref{thm::existence} goes through without modification in order to prove the existence of the coupling when $\beta = 0$ and $\alpha > -\chi$.  Thus we just need to extend the proof of Theorem~\ref{thm::existence} in order to get the result for $\beta \neq 0$, which is stated and proved just below.

\begin{proposition}
\label{prop::alphabeta_existence}
Fix constants $\alpha > -\chi$, $\beta \in \R$, and suppose that $h_{\alpha \beta} = h - \alpha \arg(\cdot) - \beta\log|\cdot|$ viewed as a distribution defined up to a global multiple of $2\pi(\chi+\alpha)$ where $h$ is a whole-plane GFF.  There exists a unique coupling $(h_{\alpha \beta},\eta)$ where $\eta$ is an $\SLE_\kappa^\beta(\rho)$ process with
\begin{align}
 \rho = 2-\kappa + \frac{2\pi \alpha}{\lambda} \label{eqn::rho_alpha_beta}
\end{align}
such that the following is true.  For every $\eta$-stopping time $\tau$, the conditional law of $h_{\alpha \beta}$ given $\eta|_{(-\infty,\tau]}$ is given by a GFF on $\C \setminus \eta((-\infty,\tau])$ with $\alpha$-flow line boundary conditions (as in Figure~\ref{fig::interior_path_bd2}) on $\eta((-\infty,\tau])$, a $2\pi \alpha$ gap along $(-\infty,0)$ viewed as a distribution defined up to a global multiple of $2\pi(\chi+\alpha)$, and the same boundary behavior at $\infty$ as $h_{\alpha \beta}$.
\end{proposition}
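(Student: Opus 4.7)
The plan is to repeat the approximation scheme used in the proof of Theorem~\ref{thm::existence}, but with the GFF replaced by $h_{\alpha\beta}$ and with careful attention to the extra $\beta \log|\cdot|$ term. Fix $\alpha > -\chi$ and $\beta \in \R$. For each $\epsilon > 0$, let $h_\epsilon$ be a GFF on $\C_\epsilon = \C \setminus (\epsilon \D)$, chosen so that $h_\epsilon - \alpha \arg(\cdot) - \beta \log|\cdot|$ has boundary data on $\partial(\epsilon \D)$ equal to flow line boundary conditions for a path starting at $i\epsilon$ (i.e., the analogue of the boundary data of Figure~\ref{fig::diskstart}, but shifted by the harmonic functions $-\alpha \arg(\cdot) - \beta \log|\cdot|$). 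By \cite[Theorem~1.1 and Proposition~3.2]{MS_IMAG} we may generate a flow line $\eta_\epsilon$ of $h_\epsilon - \alpha \arg(\cdot) - \beta \log|\cdot|$ starting at $i\epsilon$, and its conditional law given an initial segment satisfies the desired Markov property with $\alpha$-flow line boundary conditions on $\eta_\epsilon$ and the prescribed $2\pi \alpha$ jump across $(-\infty, 0)$.

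The next step is to identify the driving function. Pulling back via $\psi_\epsilon(z) = \epsilon/z$ using the transformation rule \eqref{eqn::ac_eq_rel}, the resulting field on $\D$ is a GFF on $\D$ plus a smooth harmonic piece plus a term of the form $\alpha' \arg(\cdot) + \beta' \log|\cdot|$ (with appropriate $\alpha', \beta'$ computed from $\alpha, \beta, \chi$). Exactly as in Proposition~\ref{prop::interior_force_point_coupling}, the $\arg$ contribution shifts the weight of the boundary force point and yields $\rho$ as in \eqref{eqn::rho_alpha_beta}. The $\log$ contribution is harmonic on $\D \setminus \{0\}$ and its effect on the driving function is to add a drift to the radial Loewner equation: by the Girsanov computation underlying the existence of solutions of \eqref{eqn::theta_equation} for $\mu \neq 0$ (the discussion just after that display), adding $-\beta \log|\cdot|$ to the field reweights the law of the driving function precisely by the Radon--Nikodym derivative that turns a radial $\SLE_\kappa(\rho)$ into a radial $\SLE_\kappa^\beta(\rho)$. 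Thus the whole-plane Loewner driving pair $(W^\epsilon, O^\epsilon)$ of $\eta_\epsilon$ solves the radial $\SLE_\kappa^\beta(\rho)$ SDE \eqref{eqn::sle_radial_equation}.

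We then let $\epsilon \to 0$. By Proposition~\ref{prop::sle_kappa_rho_stationary}, the pair $(W^\epsilon, O^\epsilon)$ converges weakly, locally uniformly on $\R$, to the unique time-stationary solution of \eqref{eqn::sle_radial_equation} with $\mu = \beta$ and weight $\rho$ as in \eqref{eqn::rho_alpha_beta}; this is by definition the driving function of whole-plane $\SLE_\kappa^\beta(\rho)$, and the corresponding paths converge as curves by Proposition~\ref{prop::whole_plane_continuity}. On the field side, Proposition~\ref{prop::gff_convergence}\eqref{prop::gff_convergence_fixed} (applied with the modulus $r = 2\pi(\chi+\alpha)$) gives that $h_\epsilon$ viewed modulo a global additive constant in $2\pi(\chi+\alpha)\Z$ converges in total variation on any bounded set to a whole-plane GFF modulo the same additive constant; subtracting the deterministic functions $\alpha \arg(\cdot)$ and $\beta \log|\cdot|$ converts this to convergence of $h_\epsilon - \alpha \arg - \beta \log|\cdot|$ to $h_{\alpha\beta}$. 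Taking limits in the approximate Markov property \eqref{eqn::epsilon_markov_property} (now with an extra $\beta \log|\cdot|$ summand inside the harmonic extension) gives the desired Markov property at deterministic times, and the argument based on \cite[Theorem~1.1 and Theorem~1.2]{MS_IMAG} then upgrades it to any $\eta$-stopping time $\tau$, exactly as in the last paragraph of the proof of Theorem~\ref{thm::existence}.

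Uniqueness of the coupling follows from the standard GFF/$\SLE$ uniqueness argument: any two couplings $(h_{\alpha\beta}, \eta)$, $(h_{\alpha\beta}, \tilde{\eta})$ satisfying the stated Markov property both have the property that $\eta|_{[0,\tau]}$ and $\tilde{\eta}|_{[0,\tau]}$ are local sets for $h_{\alpha\beta}$ (viewed modulo $2\pi(\chi+\alpha)$) with the same conditional mean, and the boundary emanating uniqueness \cite[Theorem~1.2]{MS_IMAG} applied after drawing any initial segment pins down the continuation; see the end of the proof of Theorem~\ref{thm::existence}. The main technical obstacle is the $\log$ singularity: the function $-\beta \log|\cdot|$ is harmonic but unbounded near $0$ and $\infty$, so the Girsanov argument relating the field reweighting to the drift $\mu = \beta$ in \eqref{eqn::sle_radial_equation} must be carried out away from the singularity and then passed to the limit $\epsilon \to 0$; absolute continuity of whole-plane $\SLE_\kappa^\beta(\rho)$ versus whole-plane $\SLE_\kappa(\rho)$ up to any fixed capacity time (noted just after \eqref{eqn::theta_equation}) is what allows this limit to be controlled uniformly in $\epsilon$.
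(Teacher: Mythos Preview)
Your overall scheme matches the paper's: approximate on $\C_\epsilon$, pull back to the disk via $\psi_\epsilon$, identify the radial law, and pass to the limit using Proposition~\ref{prop::sle_kappa_rho_stationary} and Proposition~\ref{prop::gff_convergence}. The gap is in the step where you write that ``by the Girsanov computation underlying the existence of solutions of \eqref{eqn::theta_equation} for $\mu\neq 0$ \ldots adding $-\beta\log|\cdot|$ to the field reweights the law of the driving function precisely by the Radon--Nikodym derivative that turns a radial $\SLE_\kappa(\rho)$ into a radial $\SLE_\kappa^\beta(\rho)$.'' The discussion after \eqref{eqn::theta_equation} only says that one can pass from $\mu=0$ to $\mu\neq 0$ on the SDE side by reweighting with $\exp(\mu\sqrt{\kappa}B_t-\tfrac12\mu^2\kappa t)$; it says nothing about how a $\log|\cdot|$ singularity in the \emph{field} interacts with the driving function, and in particular it does not show that the resulting drift parameter is exactly $\beta$ rather than some other multiple.

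This identification is the entire content of the proof in the paper. There, one computes the Radon--Nikodym derivative of the law of $\wh{h}+\beta\log|\cdot|$ against that of $\wh{h}$ on $\D$ away from the origin via the circle-average identity $(\wh{h},\xi_\delta)_\nabla=-\wh{h}_\delta(0)$, obtaining (after conditioning on $\wh{\eta}|_{[0,t]}$) the path-law reweighting $\exp(-\beta m_t-\tfrac12\beta^2 t)$ with $m_t=\E[\wh{h}(0)\mid\wh{\CF}_t]$. One then writes $m_t$ explicitly in terms of $(W_t,O_t)$, checks it is a standard Brownian motion, and computes the cross-variation $\langle m,\theta\rangle_t=-\sqrt{\kappa}\,t$; Girsanov then shows the driving pair solves \eqref{eqn::sle_radial_equation} with $\mu=\beta$. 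Without this computation (or an equivalent one) you have not established that the curve is $\SLE_\kappa^\beta(\rho)$ as opposed to $\SLE_\kappa^{c\beta}(\rho)$ for some unidentified constant~$c$, so the proposition is not proved.
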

\begin{proof}
We are going to give the proof when $\alpha = 0$ for simplicity since, just as in the proof of Theorem~\ref{thm::existence} as explained in Remark~\ref{rem::alpha_coupling}, the case for general $\alpha > -\chi$ follows from the same argument.  We consider the same setup used to prove Theorem~\ref{thm::existence} described in Section~\ref{subsec::existence}: we let $\C_\epsilon = \C \setminus (\epsilon \D)$, $h_\epsilon$ be a GFF on $\C_\epsilon$ such that $h_\epsilon-\beta\log|\cdot|$ has the boundary data as indicated in the left side of Figure~\ref{fig::diskstart}.  More concretely, this means that the boundary data for $h_\epsilon$ is $-\lambda'+\beta \log \epsilon$ near $-\epsilon$ on the left side of $\partial (\epsilon \D)$, $\lambda'+\beta\log\epsilon$ near $+\epsilon$ on the right side of $\partial (\epsilon \D)$, and changes by $\chi$ times the winding of the boundary otherwise.

By \cite[Theorem~1.1, Theorem~1.2, and Proposition~3.4]{MS_IMAG}, we know that there exists a well-defined flow line $\eta_\epsilon$ of $h_\epsilon - \beta \log|\cdot|$ starting from $i\epsilon$ (i.e., a coupling $(h_\epsilon - \beta \log|\cdot|,\eta_\epsilon)$ which satisfies the desired Markov property).  Let $\wt{h}_\epsilon = h_\epsilon \circ \psi_\epsilon^{-1} - \chi \arg (\psi_\epsilon^{-1})'$ where, as in the proof of Theorem~\ref{thm::existence}, $\psi_\epsilon(z) = \epsilon/z$.  Then $\wt{h}_\epsilon - \beta \log|\psi_\epsilon^{-1}(\cdot)|$ is equal in law to $\wh{h}^\beta \equiv \wt{h} + 2\chi \arg(\cdot) -F + \beta \log|\cdot|$ where $\wt{h}$ is a GFF on $\D$ with the boundary data as indicated in the right side of Figure~\ref{fig::diskstart} and $F$ is equal to the harmonic extension of $2 \chi \arg(\cdot)$ from $\partial \D$ to $\D$.  As before, the branch cut of $\arg$ is on the half-infinite line from $0$ through $i$.   It follows from \cite[Proposition~3.4]{MS_IMAG} and Proposition~\ref{prop::interior_force_point_coupling} that there exists a unique coupling of $\wh{h}^\beta$ with a continuous process $\wh{\eta}^\beta$ (equal in law to $\psi_\epsilon(\eta_\epsilon)$) whose law is mutually absolutely continuous with respect to that of a radial $\SLE_\kappa(2-\kappa)$ process in $\D$ targeted at $0$ with a single boundary force point at $i$, up until any finite time when parameterized by $\log$-conformal radius as viewed from $0$, such that the coupling $(\wh{h}^\beta,\wh{\eta}^\beta)$ satisfies the Markov property described in the statement of Proposition~\ref{prop::interior_force_point_coupling}.  To complete the proof of the proposition, we just need to determine the law of $\wh{\eta}^\beta$.  We are going to accomplish this by computing the Radon-Nikodym derivative $\wh{\rho}_t^\beta$ of the law of $\wh{\eta}^\beta|_{[0,t]}$ with respect to the law of $\wh{\eta}|_{[0,t]}$ where $\wh{\eta} \equiv \wh{\eta}^0$ for each $t > 0$.  Note that $\wh{\eta}$ is a radial $\SLE_\kappa(2-\kappa)$ process in $\D$ targeted at $0$ with a single force point located at $i$.

We begin by computing the Radon-Nikodym derivative of the law of $\wh{h}^\beta$ with respect to the law of $\wh{h} \equiv \wh{h}^0$ away from $0$.  For each $\delta > 0$ and $z \in \D$ such that $B(z,\delta) \subseteq \D$, we let $\wh{h}_\delta(z)$ denote the average of $\wh{h}$ about $\partial B(z,\delta)$ (see \cite[Section~3]{DS08} for a discussion of the construction and properties of the circle average process).  Let
\[ \xi_\delta(z) = \log \max(|z|,\delta)\]
and $\D_\delta = \D \setminus (\delta \D)$ be the annulus centered at $0$ with in-radius $\delta$ and out-radius $1$.  Note that $\big(\wh{h}+\beta \xi_\delta\big) |_{\D_\delta} \stackrel{d}{=} \wh{h}^\beta|_{\D_\delta}$.  The Radon-Nikodym derivative of the law of $\wh{h} + \beta \xi_\delta$ with respect to $\wh{h}$ is proportional to
\begin{equation}
\label{eqn::log_radon}
 \exp(\beta (\wh{h},\xi_\delta)_\nabla).
\end{equation}
This is in turn proportional to $\exp(-\beta \wh{h}_\delta(0))$ since $(\wh{h},\xi_\delta)_\nabla = -\wh{h}_\delta(0)$ (see the end of the proof of \cite[Proposition~3.1]{DS08}; the reason for the difference in sign is that the function $\xi$ in \cite{DS08} is $-1$ times the function $\xi$ used here).

Let $\wh{\CF}_t = \sigma(\wh{\eta}(s) : s \leq t)$.  Since $\wh{\eta}$ is parameterized by $\log$-conformal radius as viewed from $0$, by the Koebe-$1/4$ theorem \cite[Corollary 3.18]{LAW05} we know that $\wh{\eta}([0,t]) \subseteq \ol{\D}_\delta$ for all $t \leq \log \tfrac{4}{\delta}$.  By \cite[Proposition~3.2]{DS08} and the Markov property for the GFF, we know that the law of $\wh{h}_\delta(0)$ given $\wh{\CF}_t$ for $t \leq \log \tfrac{4}{\delta}$ is equal to the sum of $m_t = \E[\wh{h}(0)|\wh{\CF}_t]$ and a mean-zero Gaussian random variable $Z$ with variance $\log \tfrac{1}{\delta} - t$.  Moreover, $m_t$ and $Z$ are independent.  By combining this with~\eqref{eqn::log_radon}, it thus follows that
\begin{equation}
\label{eqn::log_radon_curve}
\wh{\rho}_t^\beta = \exp\left( -\beta m_t - \frac{\beta^2}{2} t \right)
\end{equation}
(note that this makes sense as a Radon-Nikodym derivative between laws on paths because $m_t$ is determined by $\wh{\eta}$).  The reason that we know that we have the correct normalization constant $\exp(-\beta^2 t /2)$ is that it follows from \cite[Proposition~6.5]{MS_IMAG} that $m_t$ evolves as a standard Brownian motion in $t$.

Let $(W,O)$ be the driving pair for $\wh{\eta}$.  In this case, $O_0 = i$.  By the Girsanov theorem \cite{KS98,RY04}, to complete the proof we just need to calculate the cross-variation of $m_t$ and the Brownian motion which drives $(W,O)$.  By conformally mapping and applying~\eqref{eqn::ac_eq_rel} (see Figure~\ref{fig::radial_bd}), we see that we can represent $m_t$ explicitly in terms of $W$ and $O$.  Let $\theta_t = \arg W_t - \arg O_t$.  Note that $\theta_t/2\pi$ (resp.\ $1-\theta_t/2\pi$) is equal to the harmonic measure of the counterclockwise segment of $\partial \D$ from $O_t$ to $W_t$ (resp.\ $W_t$ to $O_t$).  We have that
\begin{align*}
m_t
&= \lambda \left(1 - \frac{\theta_t}{2\pi}\right) -\lambda \frac{\theta_t}{2\pi} + \frac{\chi}{2\pi} \left(\int_{\theta_t}^{2\pi-\theta_t} s ds \right) + \left( \arg W_t + \frac{\pi}{2}\right) \chi \\
&= \lambda \left(1-\frac{\theta_t}{\pi}\right) + \chi \arg O_t + \frac{3}{2}\pi \chi\\
&= -\frac{\theta_t}{\sqrt{\kappa}} + \lambda + \chi \arg O_t + \frac{3}{2}\pi \chi.
\end{align*}
The integral in the first equality represents the contribution to the conditional mean from the winding terms in the boundary data.  This corresponds to integrating $\chi$ times the harmonic extension of the winding of $\partial \D$ to $\D$.  Since we are evaluating the harmonic extension at $0$, this is the same as computing the mean of $\chi$ times the winding of $\partial \D$.  Finally, the reason that the term $\big(\arg W_t+\tfrac{\pi}{2}\big)\chi$ appears is due to the coordinate change formula~\eqref{eqn::ac_eq_rel}. (Note that if $W_t = -i$ then the boundary data is $-\lambda$ immediately to the left of $-i$ and $\lambda$ immediately to the right.  Here, when we write $\arg W_t$ we are taking the branch of $\arg$ with values in $(-\pi,\pi)$ where the branch cut is on $(-\infty,0)$.  In particular, $\arg(-i) = -\tfrac{\pi}{2}$.)

Recall the form~\eqref{eqn::theta_equation} of the SDE for $\theta_t$.  We thus see that the cross-variation $\langle m, \theta \rangle_t$ of $m$ and $\theta$ is given by $-\sqrt{\kappa} t$.  Since the Brownian motion which drives $\theta$ is the same as that which drives $(W,O)$, we therefore have that the driving pair $(W^\beta, O^\beta)$ of $\eta^\beta$ solves the SDE~\eqref{eqn::sle_radial_equation} with $\mu = 0$ where the driving Brownian motion $B_t$ is replaced with $B_t + \beta \sqrt{\kappa} t$.  In other words, $(W^\beta,O^\beta)$ solves~\eqref{eqn::sle_radial_equation} with $\mu=\beta$, as desired.
\end{proof}

Now that we have established the existence component of Theorem~\ref{thm::alphabeta}, we can complete the proof of Theorem~\ref{thm::flow_line_interaction}.

\begin{proof}[Proof of Theorem~\ref{thm::flow_line_interaction}]
In Section~\ref{subsec::interaction}, we established Theorem~\ref{thm::flow_line_interaction} for ordinary GFF flow lines.  Since the law of $h_{\alpha \beta}$ away from $0$ is mutually absolutely continuous with respect to the law of the ordinary field on the same domain and GFF flow lines are local (i.e., a flow line started at a point in an open set $U$ stopped upon exiting $U$ depends only on the field in $U$), the interaction result for flow lines of $h_{\alpha \beta}$ follows from the interaction result for the ordinary GFF.
\end{proof}

\begin{remark}
\label{rem::flow_line_alpha_non_zero}
If we are given $h_{\alpha \beta}= h - \alpha \arg(\cdot) - \beta \log|\cdot|$ where $h$ is a GFF on a domain $D$ and we want to speak of ``the flow line started at $z \in D \setminus \{0\}$ with angle $\theta \in [0, 2\pi)$'' then, in order to decide how to get the flow line started, we have to know $h_{\alpha \beta}$ (or at least its restriction to a neighborhood of $z$) modulo an additive multiple of $2 \pi \chi$.  We also have to choose a branch of the argument function defined on a neighborhood of $z$ (or, alternatively, to define the argument on the universal cover of $\C \setminus \{0 \}$ and to let $z$ represent a fixed element of that universal cover).  However, once these two things are done, there is no problem in uniquely defining the flow line of angle $\theta$ beginning at $z$.

In order to define a flow line started from the origin of a fixed angle $\theta$ with $\theta \in [0,2\pi(1+\alpha/\chi))$, it is necessary to know $h_{\alpha \beta}$ modulo a global additive multiple of $2\pi(\chi+\alpha)$, at least in a neighborhood of $0$.  We can simultaneously draw a flow line from {\em both} an interior point $z \neq 0$ and from $0$ if we know the distribution both modulo a global additive multiple $2\pi(\chi+\alpha)$ {\em and} modulo a global additive multiple of $2\pi \chi$.  This is possible, for example, if $D$ is a bounded domain, and we know $h_{\alpha \beta}$ precisely (not up to additive constant).  It is also possible if $D = \C$ and the field is known modulo a global additive multiple of some constant which is an integer multiple of both $2\pi(\chi+\alpha)$ and $2\pi \chi$.
\end{remark}

\begin{remark}
\label{rem::flow_line_random_angle}
In the setting of Remark~\ref{rem::flow_line_alpha_non_zero}, it is still possible to draw a flow line $\eta$ starting from a point $z \neq 0$ even if we do not know $h_{\alpha\beta}$ in a neighborhood of $z$ up to a multiple of $2\pi \chi$.  This is accomplished by taking the ``angle'' of $\eta$ to be chosen at random.  This is explained in more detail in Remark~\ref{rem::alpha_beta_determined}.  
\end{remark}

\medskip
If we say that $\eta$ is a flow line of $h_{\alpha \beta}$ starting from $z \neq 0$, then we mean it is either generated in the sense of Remark~\ref{rem::flow_line_alpha_non_zero} if we know the the field up to a global multiple of $2\pi \chi$ or in the sense of Remark~\ref{rem::flow_line_random_angle} otherwise.  We end this subsection with the following, which combined with Proposition~\ref{prop::cross_merge} completes the proof of Theorem~\ref{thm::merge_cross}.

\begin{figure}[ht!]
\begin{center}
\includegraphics[scale=0.85]{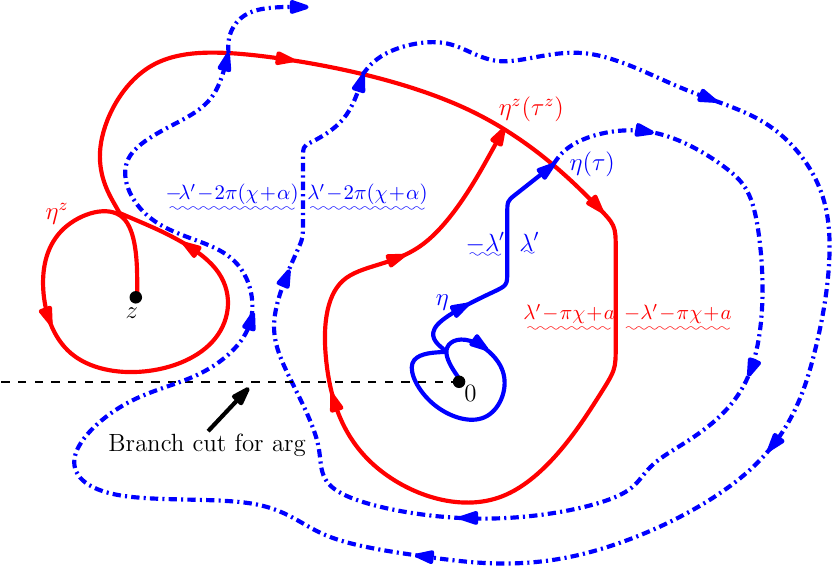}
\end{center}
\caption{\label{fig::alphacrossing_proof} 
Fix constants $\alpha > - \chi$, $\beta \in \R$, and suppose that $h_{\alpha \beta} = h - \alpha \arg(\cdot) - \beta \log|\cdot|$ where $h$ is a GFF on a domain $D \subseteq \C$, viewed as a distribution defined up to a global multiple of $2\pi(\chi+\alpha)$ if $D = \C$.  Let $\eta$ be the flow line of $h_{\alpha \beta}$ starting from $0$ and let $\eta^z$ be a flow line starting from $z \in D \setminus \{0\}$ (recall Remark~\ref{rem::flow_line_alpha_non_zero} and Remark~\ref{rem::flow_line_random_angle}).  Let $\tau^z$ be the first time that $\eta^z$ surrounds $0$ and let $\tau$ be the first time that $\eta$ hits $\eta^z((-\infty,\tau^z])$.  In between each time $\eta|_{(\tau,\infty)}$ crosses $\eta^z((-\infty,\tau^z])$, it must wind around $0$.  Indeed, its interaction with $\eta^z((-\infty,\tau^z])$ is absolutely continuous with respect to the case in which $\alpha = 0$ in between the times that it winds around $0$. In this case, Proposition~\ref{prop::cross_merge} implies that the paths can cross at most once.  Two such revolutions are shown.  Consequently, the height difference of the intersection at each crossing changes by $\pm 2\pi(\chi+\alpha)$, hence by Theorem~\ref{thm::flow_line_interaction} the number of times that $\eta|_{(\tau,\infty)}$ can cross $\eta^z((-\infty,\tau^z])$ is at most $\lfloor \pi \chi / (2\pi(\chi+\alpha)) \rfloor = \lfloor 1/(2(1+\alpha/\chi)) \rfloor$.
}
\end{figure}

\begin{proposition}
\label{prop::cross_finite}
Fix constants $\alpha > -\chi$, $\beta \in \R$, suppose that $D \subseteq \C$ is a domain, $h$ is a GFF on $D$, and $h_{\alpha \beta} = h - \alpha \arg(\cdot) - \beta \log|\cdot|$.  When $D = \C$, assume that $h$ is defined modulo a global multiple of $2\pi (\chi+\alpha)$.  Fix $z \in D$.  Let $\eta,\eta^z$ be flow lines of $h_{\alpha \beta}$ starting from $0$ (resp.\ $z$).  Assume that $\eta$ has zero angle.  In the case that $z \neq 0$, we assume that we either know $h$ up to a global multiple of $2\pi \chi$ or that $\eta^z$ has a random angle; see Remark~\ref{rem::flow_line_alpha_non_zero} and Remark~\ref{rem::flow_line_random_angle}.  There exists a constant $C(\alpha) < \infty$ such that $\eta$ and $\eta^z$ almost surely cross each other at most $C(\alpha)$ times.
Moreover, $\eta^z$ almost surely can cross itself at most the same constant $C(\alpha)$ times ($\eta$ does not cross itself).
\end{proposition}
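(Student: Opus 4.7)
The plan is to combine the crossing constraints on height differences from Theorem~\ref{thm::flow_line_interaction} with the absolute continuity of $h_{\alpha\beta}$ with respect to the $\alpha=\beta=0$ field away from the origin. First, by Proposition~\ref{prop::local_set_whole_plane_bounded_compare} we reduce to $D=\C$. Since Theorem~\ref{thm::alphabeta} asserts that $\eta$ lifts to a simple path on the universal cover of $\C\setminus\{0\}$, the path $\eta$ does not cross itself. It therefore suffices to bound (a) the number of mutual crossings of $\eta$ and $\eta^z$ and (b) the number of self-crossings of $\eta^z$.

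For (a), I will first bound the crossings of $\eta$ with the bounded-winding segment $\eta^z([0,\tau^z])$, where $\tau^z$ is the first time $\eta^z$ winds once around $0$. The key observation is that on any sub-interval of $\eta$ during which $\eta$ does not wind around $0$, the joint law of $(\eta,\eta^z([0,\tau^z]))$ is mutually absolutely continuous with respect to the $\alpha=\beta=0$ analogue, since $-\alpha\arg(\cdot)-\beta\log|\cdot|$ is smooth away from a neighborhood of $0$. Proposition~\ref{prop::cross_merge} then yields at most one crossing per such sub-interval, so between any two successive crossings the path $\eta$ must complete at least one full revolution around $0$. Each such revolution shifts the intersection height difference by $\pm 2\pi(\chi+\alpha)$ (by the boundary-data accounting in Figure~\ref{fig::interior_path_bd2} and Figure~\ref{fig::conical_boundary_data}), while Theorem~\ref{thm::flow_line_interaction} constrains each crossing height to the bounded interval $(-\pi\chi,\pi\chi)\setminus\{0\}$. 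It follows that the number of crossings of $\eta$ with $\eta^z([0,\tau^z])$ is at most a constant of order $\lceil\chi/(\chi+\alpha)\rceil$. To extend this to all of $\eta^z$, I will classify crossings according to the joint winding-class of $(\eta,\eta^z)$ about $0$ and about $z$; each allowed winding-class yields at most one crossing (by the same comparison with the $\alpha=0$ case within a domain where the paths have bounded winding), and the height constraint from Theorem~\ref{thm::flow_line_interaction} permits only finitely many such classes.

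For (b), I will condition $\eta^z$ on an initial segment $\eta^z|_{[0,\sigma]}$ and invoke the Markov property from Theorem~\ref{thm::alphabeta}: the conditional law of $\eta^z|_{[\sigma,\infty)}$ is that of a flow line in the complement with flow line boundary data on $\eta^z|_{[0,\sigma]}$. A self-crossing of $\eta^z$ is then a crossing of this ``new'' flow line with the fixed arc $\eta^z|_{[0,\sigma]}$, and the same height--winding argument from paragraph~2 gives the bound.

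The main obstacle will be making the absolute continuity comparison with the $\alpha=0$ regime uniform: one must show that on any sub-interval where the path has zero \emph{net} winding around $0$ but possibly large total winding, one can still invoke Proposition~\ref{prop::cross_merge} to obtain at most one crossing. The cleanest route is to pass to the universal cover of $\C\setminus\{0\}$, where $h_{\alpha\beta}$ becomes an ordinary GFF plus a deterministic harmonic function and all lifts of $\eta$ and $\eta^z$ are genuine flow lines that cross at most once pairwise by Proposition~\ref{prop::cross_merge}. Projecting back to $\C$, each crossing in the plane corresponds to an intersection of $\eta$ with some lift of $\eta^z$ (or, for (b), of two lifts of $\eta^z$) whose relative ``height shift'' lies in the admissible window $(-\pi\chi,\pi\chi)$; only finitely many such shifts are admissible, giving the finite bound $C(\alpha)$.
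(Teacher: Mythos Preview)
Your overall strategy matches the paper's: you correctly identify that between successive crossings $\eta$ must wind once around~$0$, that each such winding shifts the intersection height by $\pm 2\pi(\chi+\alpha)$, and that Theorem~\ref{thm::flow_line_interaction} confines crossing heights to an interval of length $\pi\chi$. Your first stage --- bounding crossings of $\eta$ with $\eta^z([0,\tau^z])$ via absolute continuity with the $\alpha=0$ case on each winding sub-interval, then invoking Proposition~\ref{prop::cross_merge} --- is exactly what the paper does in the caption of Figure~\ref{fig::alphacrossing_proof}.

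The gap is in your extension to crossings with $\eta^z|_{[\tau^z,\infty)}$. Your assertion that on the universal cover of $\C\setminus\{0\}$ the field $h_{\alpha\beta}$ ``becomes an ordinary GFF plus a deterministic harmonic function'' is false: the pullback of a whole-plane GFF under the covering map $w\mapsto e^w$ is $2\pi i$-periodic, hence not a GFF on the cover, and Proposition~\ref{prop::cross_merge} does not apply to its flow lines globally. Your fallback language of ``winding-classes'' does not by itself explain why the crossing heights are \emph{monotone} rather than oscillating within the admissible window, nor why a given class contributes at most one crossing once both paths are allowed to wind.

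The paper replaces this step with a concrete pocket-by-pocket induction. After $\eta^z$ has looped around $0$, $\eta$ lies in a pocket $P_k$ of $\eta^z|_{[\sigma^z,\infty)}$ bounded by two arcs $S_1,S_2$; Theorem~\ref{thm::flow_line_interaction} forbids recrossing the arc $S_1$ through which $\eta$ entered, so the next crossing must be through $S_2$ and is topologically forced to be right-to-left, raising the height difference by exactly $2\pi(\chi+\alpha)$. The delicate point --- which your sketch does not address --- is ruling out that $\eta$ exits $P_k$ at a self-intersection point of $\eta^z$ (where the pocket structure breaks down); the paper handles this via Lemmas~\ref{lem::sle_kappa_rho_boundary_intersection}--\ref{lem::whole_plane_self_intersection_harmonic_measure}. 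Your conditioning idea for self-crossings of $\eta^z$ is reasonable but would need the same pocket analysis to be made rigorous.
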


Before we prove Proposition~\ref{prop::cross_finite}, we first restate \cite[Lemma~7.16]{MS_IMAG}:

\begin{lemma}
\label{lem::sle_kappa_rho_boundary_intersection}
Suppose that $\kappa \in (0,4)$ and $\rho^L,\rho^R > -2$.  Let $\vartheta$ be an $\SLE_\kappa(\rho^L;\rho^R)$ process in $\h$ from $0$ to $\infty$ with force points located at $x^L \leq 0 \leq x^R$.  Then the Lebesgue measure of $\vartheta \cap \partial \h$ is almost surely zero.
\end{lemma}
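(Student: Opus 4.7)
The strategy is Fubini: by
\[ \E\bigl[\mathrm{Leb}(\vartheta \cap \partial \h)\bigr] = \int_{\R} \p[x \in \vartheta]\, dx, \]
the lemma reduces to showing $\p[x \in \vartheta] = 0$ for Lebesgue-a.e.\ fixed $x \in \R$.

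Fix such an $x$ with, by symmetry, $x > 0$ and $x \notin \{0, x^R, x^L\}$. Since $\kappa \in (0,4)$ and $\rho^L, \rho^R > -2$, the trace $\vartheta$ is almost surely a simple curve (by \cite[Theorem~1.3]{MS_IMAG}), so $\{x \in \vartheta\}$ is the event $\{Y_t \to 0\}$, where $Y_t := g_t(x) - W_t$ and the limit is taken as $t$ approaches the swallowing time $\tau_x$ of $x$. Combining the Loewner equation $\partial_t g_t(x) = 2/Y_t$ with the driving SDE \eqref{eqn::sle_kappa_rho_eqn} gives
\[ dY_t = \frac{2}{Y_t}\, dt - \sqrt{\kappa}\, dB_t - \sum_i \frac{\rho^i}{W_t - V_t^i}\, dt. \]
When the force-point drift terms are well-controlled (which, on short intervals and for $x$ well-separated from all $x^i$, is the generic situation), Girsanov's theorem identifies the law of $Y_t$ (up to absolute continuity) with that of $\sqrt{\kappa}$ times a Bessel process of dimension $d = 1 + 4/\kappa$. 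Since $\kappa < 4$ yields $d > 2$, this Bessel process almost surely never reaches zero, so for each stopping time $\sigma < \tau_x$ on which the drift terms stay bounded, $\p[\inf_{t \leq \sigma} Y_t = 0] = 0$.

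To upgrade this to $\p[Y_t \to 0 \text{ as } t \uparrow \tau_x] = 0$, I would introduce an increasing sequence of stopping times $\sigma_n \uparrow \tau_x$ on each of which the Girsanov comparison is valid, and apply a union bound. The main obstacle is controlling the singular drift $\rho^i/(W_t - V_t^i)$ at random times when $W_t$ approaches a force point $V_t^i$ (an event unrelated to $\tau_x$). The cleanest way around this is to use an explicit positive local martingale of hypergeometric type of the form
\[ M_t = Y_t^a \prod_i (W_t - V_t^i)^{b_i}, \]
with exponents $a < 0$ and $b_i \in \R$ chosen, via a quadratic algebraic condition in $\kappa$ and the $\rho^i$, so that the Itô drift vanishes; such exponents exist throughout the range $\kappa \in (0,4), \rho^i > -2$ (cf.\ the SLE martingale machinery of \cite{SCHRAMM_WILSON,DUB_PART}). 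The factors $(W_t - V_t^i)^{b_i}$ stay bounded away from $0$ and $\infty$ near times $Y_t \approx 0$ by Bessel-type estimates on each $W_t - V_t^i$ (using that $x$ is separated from the $x^i$), so $M_t \to \infty$ on $\{Y_t \to 0\}$. The martingale convergence theorem applied to $M$ on $[0,\tau_x)$ then forces this event to have probability zero, completing the proof.
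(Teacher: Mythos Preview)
The paper does not give an independent proof of this lemma; it is merely restated from \cite[Lemma~7.16]{MS_IMAG}, so there is no in-paper argument to compare against directly.

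Your Fubini reduction to $\p[x\in\vartheta]=0$ for fixed $x$ is correct, and the local Bessel/Girsanov heuristic is the right intuition. The martingale step, however, has a genuine gap. Your claim that the factors $(W_t - V_t^i)^{b_i}$ stay bounded away from $0$ and $\infty$ as $Y_t\to 0$ ``using that $x$ is separated from the $x^i$'' is false in the most important case. Take $x > x^R$ (in particular $x^R = 0^+$, so that every positive $x$ falls in this regime): for the path to approach $x$ it must first sweep across $(x^R,x)$, and throughout this phase one has $0 \le V_t^R - W_t \le g_t(x) - W_t = Y_t$, since the force-point image always lies between the driving point and $g_t(x)$. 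Hence $V_t^R - W_t \to 0$ together with $Y_t$, regardless of how far apart $x$ and $x^R$ were initially.

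Separately, the ansatz $M_t = Y_t^a \prod_i (W_t - V_t^i)^{b_i}$ is not the correct Schramm--Wilson/Dub\'edat local martingale: the martingale that reweights by an additional force point at $x$ involves the cross-terms $|g_t(x) - V_t^i|^{\rho_x\rho_i/(2\kappa)}$ and a derivative factor $g_t'(x)^{d}$. The latter also tends to $0$ as $t\uparrow\tau_x$ and competes with $Y_t^a$. Extracting $\p[x\in\vartheta]=0$ from such a martingale therefore requires a genuine analysis of the joint vanishing rates of $Y_t$, $g_t'(x)$, and (in the $x>x^R$ case) $V_t^R - W_t$, which you have not carried out. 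The overall strategy is sound, but these details are exactly where the work lies.
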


\begin{lemma}
\label{lem::sle_kappa_rho_harmonic}
Suppose that $\eta$ is an $\SLE_\kappa(\rho)$ process with $\kappa \in (0,4)$ and $\rho \in (-2,\tfrac{\kappa}{2}-2)$ in $\h$ from $0$ to $\infty$ with a single boundary force point located at $1$.  Let $\tau$ be the first time that $\eta$ hits $[1,\infty)$.  Then the law of $\eta(\tau)$ has a density with respect to Lebesgue measure on $[1,\infty)$.
\end{lemma}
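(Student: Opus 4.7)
The plan is to compute the distribution function $F(y) := \p[\eta(\tau) \leq y]$ for $y > 1$ via a standard $\SLE_\kappa(\rho)$ martingale observable and deduce the density by differentiation in $y$.

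First, I set up the driving process. Let $(W_t, V_t)$ be the $\SLE_\kappa(\rho)$ driving pair with $W_0 = 0$, $V_0 = 1$, and let $Z_t = V_t - W_t$. A short Itô calculation gives $dZ_t = \frac{2+\rho}{Z_t}\,dt - \sqrt{\kappa}\,dB_t$, so $Z_t/\sqrt{\kappa}$ is a Bessel process of dimension $\delta = 1 + \frac{2(2+\rho)}{\kappa} \in (1, 2)$ (using $\rho \in (-2, \tfrac{\kappa}{2}-2)$), and $\tau$ coincides with its a.s.\ finite first hitting time of $0$. For $y > 1$, write $X_t^y = g_t(y) - W_t$ and $U_t^y = g_t(y) - V_t = X_t^y - Z_t$; the event $\{\eta(\tau) > y\}$ is precisely $\{U_\tau^y > 0\}$, and a direct computation shows $U^y$ satisfies the pathwise ODE $dU_t^y = -\frac{2 U_t^y}{Z_t(U_t^y + Z_t)}\,dt$ given $Z$.

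Next, I would seek exponents $a, b, c \in \R$ (determined by $\kappa$ and $\rho$) so that
\begin{equation*}
M_t^y := (X_t^y)^a\, (U_t^y)^b\, Z_t^c
\end{equation*}
is a local martingale under the $\SLE_\kappa(\rho)$ measure; Itô's formula imposes algebraic constraints on $(a, b, c)$ admitting a one-parameter family of solutions. Choosing a solution so that $M_t^y$ is bounded on $\{\eta(\tau) > y\}$ and decays appropriately on the complementary event, optional stopping at $\tau \wedge \tau_n$ with $\tau_n = \inf\{t : Z_t \leq 1/n\}$, followed by dominated convergence as $n \to \infty$, should give $F(y)$ (or $1 - F(y)$) as the expectation of an explicit smooth functional of the driving process. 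Since this expression depends real-analytically on $y$ through the initial data $X_0^y = y$ and $U_0^y = y-1$, differentiation in $y$ yields a density for $\eta(\tau)$ on $(1, \infty)$.

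The main obstacle will be verifying uniform integrability of $M_t^y$ up to time $\tau$, since $Z_t \to 0$ there and the observable can blow up (depending on the signs of $b, c$). This is to be handled via the localization scheme above, combined with the Bessel structure of $Z$ near its hitting time of $0$ and the positive lower bound on $U_\tau^y$ on the survival event $\{\eta(\tau) > y\}$. As an alternative route that avoids martingales altogether, one may exploit the fact that, pathwise in $Z$, the map $y \mapsto U_\tau^y$ is continuous and strictly monotone with a unique sign change at $\eta(\tau) - 1$: expressing $\eta(\tau) - 1$ as this strictly monotone functional of the Bessel path and invoking absolute continuity of the Bessel law with respect to a reference measure yields the density as a Jacobian.
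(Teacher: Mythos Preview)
Your plan is a different and much heavier route than the paper takes, and several details would need repair before it could be completed. The event characterization is inverted: $\{\eta(\tau) > y\}$ is $\{U_\tau^y = 0\}$ (the point $y$ is swallowed when the curve lands to its right), not $\{U_\tau^y > 0\}$. The local-martingale constraints on $(X_t^y)^a (U_t^y)^b Z_t^c$ yield only a discrete set of exponents ($a \in \{0, 1-4/\kappa\}$ from the $X^{-2}$ coefficient, similarly for $c$, with $b$ then determined), not a one-parameter family, and none of the resulting $M$'s obviously takes distinct \emph{constant} values on the two events at time $\tau$; to extract a hitting distribution one typically needs a hypergeometric-type martingale in a cross-ratio rather than a bare monomial. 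The uniform-integrability issue you flag is real and not automatic for the exponents in play. Your Plan B is also incomplete as stated: monotonicity of $y \mapsto U_\tau^y$ alone does not furnish a density for its zero locus without a Jacobian computation and a reference measure for the Bessel path that makes that Jacobian tractable.

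The paper sidesteps all of this with a short conformal-Markov trick. Fix a small $t > 0$, let $E_t$ be the event that $1$ has not been swallowed by time $t$, and on $E_t$ let $\psi_t \colon \h \setminus \eta([0,t]) \to \h$ be the conformal map fixing $1$ and $\infty$ and sending $\eta(t)$ to $0$. By the conformal Markov property, the conditional law of $\eta(\tau)$ given $\eta|_{[0,t]}$ on $E_t$ is that of $\psi_t^{-1}(\wt\eta(\wt\tau))$ for an independent copy $\wt\eta$. For each fixed $x > 1$ the random variable $\psi_t(x)$ (and hence $\psi_t^{-1}(x)$) has a density simply because the driving pair is a nondegenerate diffusion; integrating over the law of $\wt\eta(\wt\tau)$ then gives a density for $\eta(\tau)$ on $E_t$, and $\p[E_t] \to 1$ as $t \downarrow 0$. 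No observable, optional stopping, or limit analysis at $\tau$ is required. Your approach would buy an explicit formula, which the paper does not need here.
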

\begin{proof}
For each $t > 0$, let $E_t$ be the event that $\eta$ has not swallowed $1$ by time $t$.  On $E_t$, we let $\psi_t$ be the unique conformal map $\h \setminus \eta([0,t]) \to \h$ which sends $\eta(t)$ to $0$ and fixes both $1$ and $\infty$.  Fix $x > 1$.  It is clear from the form of the driving function for chordal $\SLE_\kappa(\rho)$ that, on $E_t$, $\psi_t(x)$ has a density with respect to Lebesgue measure.  Consequently, the same is likewise true for $\psi_t^{-1}(x)$ (since $y \mapsto \psi_t(y)$ for $y \in [1,\infty)$ is smooth on $E_t$).  Let $\tau$ be the time that $\eta$ first hits $[1,\infty)$.  Let $\wt{\eta}$ be an independent copy of $\eta$ and let $\wt{\tau}$ be the first time that it hits $[1,\infty)$.  For a Borel set $A \subseteq [1,\infty)$, we have that
\begin{align*}
 \p[ \eta(\tau) \in A | E_t] &= \p[ \psi_t(\eta(\tau)) \in \psi_t(A) | E_t] = \p[ \wt{\eta}(\wt{\tau}) \in \psi_t(A) | E_t]\\
 &= \p[ \psi_t^{-1}(\wt{\eta}(\wt{\tau})) \in A | E_t].
 \end{align*}
This implies that, on $E_t$, the law of $\eta(\tau)$ has a density with respect to Lebesgue measure.  Since $\p[E_t] \to 1$ as $t \to 0$, it follows that $\eta(\tau)$ in fact has a density with respect to Lebesgue measure. 
\end{proof}

\begin{lemma}
\label{lem::exit_harmonic_measure}
Suppose that $h$ is a GFF on $\h$ with piecewise constant boundary data.  Let $\eta$ be the flow line of $h$ starting from $i$ and let $\tau$ be the first time that $\eta$ hits $\partial \h$.  Fix any open interval $I = (a,b) \subseteq \R$ on which the boundary data for $h$ is constant.  Assume that the probability of the event $E$ that $\eta$ exits $\h$ in $I$ is positive.  Conditional $E$, the law of $\eta(\tau)$ has a density with respect to Lebesgue measure on $I$. 
\end{lemma}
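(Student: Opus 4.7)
The plan is to combine the conformal Markov property for GFF flow lines with Lemma~\ref{lem::sle_kappa_rho_harmonic}, reducing to the standard chordal $\SLE_\kappa(\rho)$ setting via a suitable stopping time and conformal map. First I would reduce to the case where $I$ is disjoint from (and at positive distance from) every boundary discontinuity of $h$ and from the points $a$ and $b$: partition $I$ into countably many compact subintervals that meet this condition, and argue density on each separately.

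Next I would stop $\eta$ just before it hits $\partial \h$. For small $\epsilon > 0$, let $\sigma_\epsilon = \inf\{t \ge 0 : \mathrm{dist}(\eta(t), I) \le \epsilon\}$, restricted to the event $F_\epsilon$ that $\eta(\sigma_\epsilon) \in R_\epsilon$, where $R_\epsilon$ is a small rectangle whose base is a slight enlargement of $I$ and whose closure is disjoint from the other boundary discontinuities. By continuity of $\eta$, $E = \{\eta(\tau) \in I\}$ is covered (up to a null set) by $\bigcup_{\epsilon > 0} F_\epsilon$, so it suffices to establish density on each $F_\epsilon$. On $F_\epsilon$, the path $\eta([0,\sigma_\epsilon])$ is disjoint from $\partial \h$. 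Conditional on $\eta([0,\sigma_\epsilon])$, by Theorem~\ref{thm::existence} (and the boundary emanating theory recalled in Section~\ref{subsec::imaginary}), the continuation $\eta|_{[\sigma_\epsilon,\tau]}$ is the flow line of the conditional GFF in the unbounded complementary component $U$ of $\h \setminus \eta([0,\sigma_\epsilon])$, starting from the boundary point $\eta(\sigma_\epsilon)$. This flow line is a chordal $\SLE_\kappa(\ul\rho)$ process in $U$ targeted at $\infty$, with force points located at the discontinuities of the boundary data on $\partial U$.

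Now I would transport to a canonical picture. Let $\psi \colon U \to \h$ be the conformal map with $\psi(\eta(\sigma_\epsilon)) = 0$ and $\psi(\infty) = \infty$. Since $I$ is at positive distance from $\eta([0,\sigma_\epsilon])$ and from all force points, $\psi$ extends by Schwarz reflection to a conformal map of a neighborhood of $I$, and $J := \psi(I)$ is a smooth interval in $\R$. The image $\psi(\eta|_{[\sigma_\epsilon,\tau]})$ is a chordal $\SLE_\kappa(\ul\rho')$ process in $\h$ from $0$ to $\infty$, whose force points all lie strictly outside $J$ (say, inside $(-\infty, \alpha') \cup (\beta', \infty)$ for the appropriate endpoints of $J$). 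Since $\psi^{-1}$ is smooth on $J$, it transports densities to densities, so it is enough to show that the first hitting location of the image process on $J$ has a density with respect to Lebesgue measure on $J$.

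The last step is to upgrade Lemma~\ref{lem::sle_kappa_rho_harmonic} (which concerns a single boundary force point at $1$ and the interval $[1,\infty)$) to the present multi-force-point setting with a general interval $J$. This is the main obstacle, but it is essentially routine. Let $x_0$ be the left endpoint of $J$. Running the $\SLE_\kappa(\ul\rho')$ driving SDE up to the first time it swallows any force point strictly to the left of $x_0$, the cumulative drift contributions from the other force points are bounded and smooth, so by Girsanov the law is mutually absolutely continuous with respect to that of a chordal $\SLE_\kappa(\rho)$ with a single boundary force point placed at $x_0$. This reduces the question to precisely Lemma~\ref{lem::sle_kappa_rho_harmonic} (after translating $x_0$ to $1$), and the density transfers via the Radon--Nikodym derivative. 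Taking expectation over $\eta([0,\sigma_\epsilon])$ on $F_\epsilon$ yields the conditional density for $\eta(\tau)$ on $I$, completing the proof.
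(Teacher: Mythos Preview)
Your strategy is exactly the paper's (whose entire proof reads ``This follows from Lemma~\ref{lem::sle_kappa_rho_harmonic} and absolute continuity''): stop $\eta$ shortly before it reaches $\partial\h$, use the conformal Markov property so that the remainder is a boundary-emanating $\SLE_\kappa(\ul\rho)$ curve, and then feed the exit problem into Lemma~\ref{lem::sle_kappa_rho_harmonic}. The reductions you set up (compact subintervals of $I$, the stopping time $\sigma_\epsilon$, the map $\psi$) are all sound.

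The one real problem is the Girsanov step at the end. You claim that, up to the swallowing time you name, ``the cumulative drift contributions from the other force points are bounded and smooth''. This need not hold. After your map $\psi$ there is always at least one force point away from $J$ --- coming from $\psi(i)$ or from the last self-intersection of $\eta([0,\sigma_\epsilon])$ --- and for $\kappa\in(8/3,4)$ its partial-sum weight lies in $(-2,\tfrac{\kappa}{2}-2)$, so the driving function collides with it repeatedly before ever reaching $J$; the drift is then unbounded and the Girsanov density is not controlled. Even for $\kappa\le 8/3$ the driving function can approach the other force points arbitrarily closely, so ``bounded'' is still wrong. A second, related issue: in your setup $x_0$ is the endpoint of a compact subinterval of $I$, hence not a boundary discontinuity at all, so there is no force point at $x_0$ and the comparison process you propose is ordinary $\SLE_\kappa$, which does not hit $[x_0,\infty)$ --- so Lemma~\ref{lem::sle_kappa_rho_harmonic} does not apply to it as stated.

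The clean fix is to drop Girsanov entirely and observe that the \emph{proof} of Lemma~\ref{lem::sle_kappa_rho_harmonic} already works for any $\SLE_\kappa(\ul\rho)$ with finitely many boundary force points none of which lies in $J$: the only ingredient is that, for small $t>0$ on the event that $J$ has not yet been swallowed, the normalized image $\psi_t(x)$ has a density, and that step uses nothing beyond the nondegeneracy of the $\sqrt\kappa\,dB_t$ term in the driving SDE. This is presumably what the paper's phrase ``and absolute continuity'' is pointing to.
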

\begin{proof}
This follows from Lemma~\ref{lem::sle_kappa_rho_harmonic} and absolute continuity.
\end{proof}

\begin{lemma}
\label{lem::whole_plane_self_intersection_harmonic_measure}
Suppose that we have the same setup as Proposition~\ref{prop::cross_finite}.  Fix $w \in D \setminus \{z\}$ and let $P_w$ be the component of $D \setminus \eta^z$ which contains $w$.  Then the harmonic measure of the self-intersection points of $\eta^z$ which are contained in $\partial P_w$ as seen from $w$ is almost surely zero.
\end{lemma}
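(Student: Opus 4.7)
The plan is to conformally uniformize $P_w$ to the disk so that harmonic measure from $w$ becomes Lebesgue measure on $\partial \D$, and then show that the self-intersection set on $\partial P_w$ pulls back to a countable subset of $\partial \D$.  Let $\psi \colon P_w \to \D$ be a conformal map with $\psi(w) = 0$.  Since $\partial P_w \subseteq \eta^z$ is contained in the continuous image of a compact interval, it is locally connected, so by Carath\'eodory's theorem $\psi^{-1}$ extends continuously to $\overline{\D}$, and the push-forward of normalized Lebesgue measure on $\partial \D$ under this extension is exactly the harmonic measure on $\partial P_w$ seen from $w$.  It therefore suffices to show that the prime-end preimage $A \subseteq \partial \D$ of the self-intersection set $S$ has zero Lebesgue measure.

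The key geometric point is that each self-intersection point $p \in \partial P_w$ is a pinch point of $P_w$: by Proposition~\ref{prop::cross_merge} the flow line $\eta^z$ does not cross itself, so two distinct visits $\eta^z(s) = \eta^z(t) = p$ produce two non-crossing arcs accumulating at $p$ which divide a small punctured neighborhood of $p$ into sectors, some of which lie in $P_w$ and at least one of which lies outside; in particular $p$ corresponds to at least two distinct prime ends of $P_w$.  I would then use the tail decomposition of Proposition~\ref{prop::tail_decomposition} together with this non-crossing property to show that the set of such pinch points is at most countable: each tail of $\eta^z$ that touches $\partial P_w$ contributes only finitely many pinches on any compact subset of $\partial P_w$, and because the tails are indexed by a countable collection (the rational base points) via Proposition~\ref{prop::tail_decomposition}, the total count is countable.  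Since any countable subset of $\partial \D$ has Lebesgue measure zero, $A$ is null, which gives the claim.

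The main technical obstacle is making the ``finitely many pinches per tail'' step rigorous.  One concrete route is the following: for each pinch $p$ associate a bounded connected component $U_p$ of $\C \setminus \overline{P_w}$ adjacent to a sector at $p$, and then choose a rational point in $U_p$ to define an injection $S \hookrightarrow \Q^2$; injectivity follows by an induction on a countable enumeration of the tails using the nesting structure of loops formed by $\eta^z$ (and Proposition~\ref{prop::cross_finite} to control how a later tail can interact with an earlier pinch).  An alternative and perhaps cleaner route is to exploit the Markov property of Theorem~\ref{thm::alphabeta} together with absolute continuity (Proposition~\ref{prop::local_set_whole_plane_bounded_compare}) to reduce the question, for each positive rational $q$, to a statement about the intersection of an $\SLE_\kappa(\ul{\rho})$ process in a suitable component of $\C \setminus \eta^z([0,q])$ with a distinguished boundary arc, and then invoke Lemma~\ref{lem::sle_kappa_rho_boundary_intersection} together with a conformal transfer to convert the Lebesgue-null intersection on $\partial \h$ to a zero-harmonic-measure statement on $\partial P_w$.
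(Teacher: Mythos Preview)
Your primary approach---proving that the self-intersection set of $\eta^z$ on $\partial P_w$ is countable---does not work, because that set is typically uncountable. Concretely, suppose $\kappa \in (8/3,4)$ (equivalently $\rho = 2-\kappa \in (-2,\tfrac{\kappa}{2}-2)$). After $\eta^z$ has traced one side of the pocket $P_w$, the second pass (the tail forming the other side) bounces off the first pass in a set that is locally absolutely continuous with respect to the intersection of a chordal $\SLE_\kappa(\rho)$ path with $\partial \h$; this is a fractal of positive Hausdorff dimension, not a countable set. Moreover, these bounce points are generally \emph{not} pinch points of $P_w$: from inside $P_w$ you approach such a point from between the two arcs, and it corresponds to a single prime end. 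So your key geometric claim (``each self-intersection point on $\partial P_w$ is a pinch point'') fails, and even the countability of genuine pinch points would not cover the relevant set.

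Your alternative route is the correct one, and it is exactly what the paper does. The paper's proof is a two-line reduction: by the tail decomposition (Proposition~\ref{prop::tail_decomposition}) the self-intersections of $\eta^z$ on $\partial P_w$ are intersections of pairs of tails, and by the proof of Proposition~\ref{prop::tail_interaction} these are locally absolutely continuous with respect to intersections of boundary-emanating flow lines. One then conformally maps each such tail segment to $\partial \h$ and invokes Lemma~\ref{lem::sle_kappa_rho_boundary_intersection} to see that the intersection has zero Lebesgue measure there, hence zero harmonic measure from $w$ after transferring back. You had already identified this mechanism; the point is that it is the whole proof, not a fallback.
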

\begin{proof}
This follows from absolute continuity and Lemma~\ref{lem::sle_kappa_rho_boundary_intersection}.  In particular, the self-intersection set of $\eta^z$ which is contained in $\partial P_w$ can be described in terms of intersections of tails, which, by the proof of Proposition~\ref{prop::tail_interaction} can in turn be compared to the intersection of boundary emanating flow lines.
\end{proof}

\begin{proof}[Proof of Proposition~\ref{prop::cross_finite}]
The beginning of the proof in the case that $z \neq 0$ is explained in the caption of Figure~\ref{fig::alphacrossing_proof}.  We are left to bound the number of times that $\eta|_{[\tau,\infty)}$ can cross $\eta^z|_{[\tau^z,\infty)}$ (using the notation of the figure).  Let $\tau_0 = \tau$ and let $\tau_1 < \tau_2 < \cdots$ be the times at which $\eta|_{[\tau,\infty)}$ crosses $\eta^z|_{[\tau^z,\infty)}$.  For each $j$, let $\CD_j$ be the height difference of the paths when $\eta$ crosses at time $\tau_j$.  Assume (as is illustrated in Figure~\ref{fig::alphacrossing_proof}) that $\eta$ hits $\eta^z$ at time $\tau$ on its right side.  We are going to show by induction that, for each $j \geq 0$ for which $\tau_j < \infty$, that
\begin{enumerate}[(i)]
\item $\eta$ crosses from the right to the left of $\eta^z$ at time $\tau_j$,
\item $\eta$ almost surely does not hit a self-intersection point of $\eta^z$ at time $\tau_j$, and
\item the height difference $\CD_j$ of the paths upon crossing at this time satisfies
\begin{equation}
\label{eqn::height_difference_increase}
 \CD_j \geq \CD_{j-1} + 2\pi(\chi+\alpha).
\end{equation}
\end{enumerate}
Here, we take $\CD_{-1} = -\infty$ so that (iii) holds automatically for $j=0$.  That (ii) holds for $j=0$ follows from Lemma~\ref{lem::exit_harmonic_measure} and Lemma~\ref{lem::whole_plane_self_intersection_harmonic_measure}.  Finally, (i) holds by assumption. Upon completing the proof of the induction, Theorem~\ref{thm::flow_line_interaction} implies that $\CD_j \in (-\pi \chi,0)$ for each $j$, so~\eqref{eqn::height_difference_increase} implies that the largest $k$ for which $\tau_k < \infty$ is at most $\lfloor \pi / (2\pi(\chi+\alpha)) \rfloor = \lfloor 1/(2(\chi+\alpha)) \rfloor$.

Suppose that $k \geq 0$ and that (i)--(iii) hold for all $0 \leq j \leq k$.  We will show that (i)--(iii) hold for $j=k+1$.  Let $\sigma^z$ be the first time that $\eta^z$ hits $\eta^z(\tau^z)$ so that $\eta^z|_{[\sigma^z,\tau^z]}$ forms a loop around $0$.  That (ii) holds for $k$ implies that there exists a complementary pocket $P_k$ of $\eta^z|_{[\sigma_z,\infty)}$ and $\epsilon > 0$ such that $\eta([\tau_k,\tau_k+\epsilon]) \subseteq  \ol{P}_k$.  Let $S_1$ (resp.\ $S_2$) be the first (resp.\ second) segment of $\partial P_k$ which is traced by $\eta^z|_{[\sigma^z,\infty)}$.  Theorem~\ref{thm::flow_line_interaction} implies that $\eta|_{[\tau_k,\tau_{k+1}]}$ cannot cross $S_1$ (this is the side of $P_k$ that $\eta$ crossed into at time $\tau_k$).  Therefore $\eta|_{[\tau_k,\tau_{k+1}]}$ can exit $P_{k}$ only through $S_2$ or through the terminal point of $P_k$, i.e., the last point on $\partial P_k$ drawn by $\eta^z|_{[\sigma^z,\infty)}$.

Note that $\eta|_{[\tau_k,\tau_{k+1}]}$ is coupled with the conditional GFF $h|_{P_k}$ given $\eta^z$ starting from $\eta(\tau_k)$ as a flow line.  (Theorem~\ref{thm::flow_line_interaction} implies that the conditional mean of $h|_{P_k}$ given $\eta$ drawn up to any stopping time before exiting $\ol{P}_k$ and $\eta^z$ has flow line boundary conditions on $\eta$ and the arguments of \cite[Section~6]{MS_IMAG} imply that $\eta$ has a continuous Loewner driving function viewed as a path in $P_k$.)  In particular, the conditional law of $\eta|_{[\tau_k,\tau_{k+1}]}$ given $\eta^z$ is that of an $\SLE_\kappa(\ul{\rho})$ process.  Thus, in the case that $\eta|_{[\tau_k,\tau_{k+1}]}$ crosses out of $P_k$, that (ii) holds for $j=k+1$ follows from Lemma~\ref{lem::sle_kappa_rho_boundary_intersection}, Lemma~\ref{lem::whole_plane_self_intersection_harmonic_measure}, and absolute continuity.  In this case, it is also immediate from the setup that (i) and (iii) hold for $j=k+1$ (it is topologically impossible for the path to cross out of $P_k$ through $S_2$ from the left to the right).  It is not difficult to see that if $\eta$ does not cross out of $P_k$, i.e., exits $P_k$ at the last point on $\partial P_k$ drawn by $\eta^z$, then it does not subsequently cross $\eta^z$.
\end{proof}

\subsection{Flow lines are determined by the field}
\label{subsec::uniqueness}

We are now going to prove Theorem~\ref{thm::uniqueness}, that in the coupling $(h,\eta)$ of Theorem~\ref{thm::existence}, $\eta$ is almost surely determined by $h$, as well as the corresponding statement from Theorem~\ref{thm::alphabeta}.  (We will also complete our proof of the uniqueness of the law of the coupling in the case of flow lines started from interior points associated with a GFF on a proper subdomain $D$ of $\C$.)  This is the interior flow line analog of the uniqueness statement for boundary emanating flow lines from \cite[Theorem~1.2]{MS_IMAG}.  As we will explain just below, the result is a consequence of the following proposition.  We remind the reader of Remark~\ref{rem::multiple_paths_not_determined}.

\begin{figure}[ht!]
\begin{center}
\includegraphics[scale=0.85]{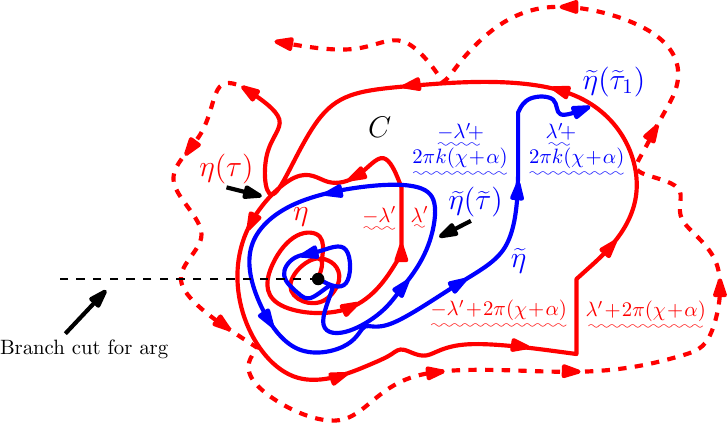}
\end{center}
\caption{\label{fig::interior_determined}  { Proof that paths are determined by the field in the self-intersecting regime.  Suppose $h$ is a whole-plane GFF, $\alpha \in (-\chi,\tfrac{3\sqrt{\kappa}}{4}-\tfrac{2}{\sqrt{\kappa}})$, $\beta \in \R$, and $h_{\alpha \beta} = h - \alpha \arg(\cdot) - \beta \log|\cdot|$, viewed as a distribution defined up to a global multiple of $2\pi (\chi + \alpha)$.  Let $\eta$ (red) and $\wt{\eta}$ (blue) be coupled with $h_{\alpha \beta}$ as in Theorem~\ref{thm::alphabeta}, conditionally independent given $h_{\alpha \beta}$, so that each is a flow line of $h_{\alpha \beta}$ starting from $0$.  Let $\wt{\tau}$ be a stopping time for $\CF_t = \sigma(\wt{\eta}(s) : s \leq t,\ \ \eta)$ such that either $\wt{\eta}(\wt{\tau}) \notin \eta$ or $\wt{\tau} = \infty$.  Lemma~\ref{lem::radial_loop} implies that all of the connected components of $\C \setminus \eta$ are bounded; on $\{\wt{\tau} < \infty\}$, let $C$ be the one which contains $\wt{\eta}(\wt{\tau})$.  We are going to argue that $\wt{\eta}|_{[\wt{\tau},\infty)}$ merges with $\eta$ upon exiting $C$; this completes the proof by scale invariance.  Moreover, by Lemma~\ref{lem::do_not_cross} it suffices to show that the only possibilities are for the paths eventually to cross or merge.  Let $\tau$ be the time that $\eta$ closes the pocket containing $\wt{\eta}(\wt{\tau})$.  Since $\wt{\eta}$ is almost surely unbounded, it must eventually exit $C$, hence intersect $\eta$ after time $\wt{\tau}$, say at time $\wt{\tau}_1$.  The boundary data of the conditional field $h_{\alpha \beta}$ given $\eta$ and $\wt{\eta}|_{(-\infty,\wt{\tau}_1]}$ is as illustrated where $k \in \Z$.  If $k < 1$, then Theorem~\ref{thm::flow_line_interaction} implies that $\wt{\eta}$ bounces off $\eta$ upon hitting at time $\wt{\tau}_1$.  Thus, in this case, $\wt{\eta}$ can exit $C$ only through $\eta(\tau)$, hence must hit the other boundary of $C$, which upon intersecting Theorem~\ref{thm::flow_line_interaction} implies that $\wt{\eta}$ either crosses $\eta$ to get out of $C$ or merges with $\eta$.  If $k =1$, then $\wt{\eta}$ merges with $\eta$ at time $\wt{\tau}_1$.  If $k > 1$, then $\wt{\eta}$ crosses $\eta$ at time $\wt{\tau}_1$.  The analysis when $\wt{\eta}$ hits the other part of the boundary of $C$ first is similar.}}
\end{figure}

\begin{figure}[ht!]
\begin{center}
\includegraphics[scale=0.85]{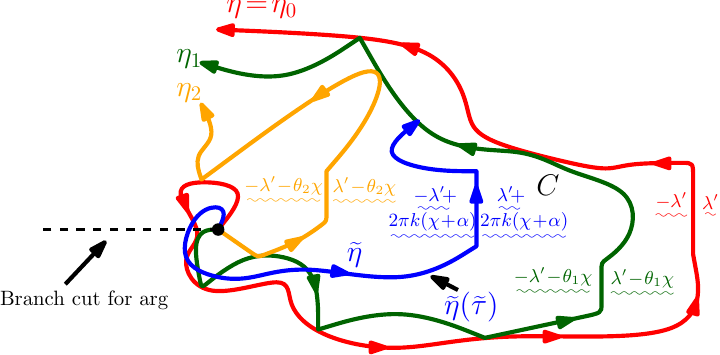}
\end{center}
\caption{\label{fig::interior_determined_cannot_intersect} {The proof that flow lines are determined by the field in the case that they do not intersect themselves.  Throughout, we use the notation of Figure~\ref{fig::interior_determined}; $\alpha \geq \tfrac{3\sqrt{\kappa}}{4}-\tfrac{2}{\sqrt{\kappa}}$.  As in Figure~\ref{fig::interior_determined}, we are going to show that $\wt{\eta}$ almost surely merges with $\eta$; this suffices by scale invariance.  The argument is similar to that of Figure~\ref{fig::non_intersect_almost_surely_merge}. Fix evenly spaced angles $0 < \theta_1 < \cdots < \theta_n$ and, conditionally on $\eta$, let $\eta_i$ be the flow line of the \emph{conditional field} $h_{\alpha \beta}$ on $\C \setminus \eta$ with angle $\theta_i$ starting from $0$.  The arguments of Section~\ref{subsec::interaction} imply that $\eta$ and the $\eta_i$ obey the interaction rules of Theorem~\ref{thm::flow_line_interaction}.  We assume that $n$ is large enough so that $\eta_i$ almost surely intersects both $\eta_{i-1}$ and $\eta_{i+1}$ where $\eta_0 = \eta$ and the indices are taken mod $n$ (that we can do this follows from \cite[Theorem~1.5]{MS_IMAG}).  Illustrated is the boundary data for $h_{\alpha \beta}$ given $\eta,\eta_1,\ldots,\eta_n$ modulo an additive constant in $2\pi(\chi+\alpha) \Z$.  Suppose that $\wt{\tau}$ is a stopping time for $\CF_t = \sigma(\wt{\eta}(s) : s \leq t,\ \eta_0,\ldots,\eta_n)$ such that either $\wt{\eta}(\wt{\tau}) \notin \cup_{j=0}^n \eta_j$ or $\wt{\tau}=\infty$.  Each of the connected components of $\C \setminus \cup_{i=0}^n \eta_i$ is bounded.  On $\{\wt{\tau} < \infty\}$, let $C$ be the one containing $\wt{\eta}(\wt{\tau})$; since $\wt{\eta}$ is unbounded, it must exit $C$.  Theorem~\ref{thm::flow_line_interaction} implies that $\wt{\eta}$ has to cross or merge into one of the two flow lines whose boundary generates $C$; this follows from an analysis which is analogous to that given in Figure~\ref{fig::non_intersect_almost_surely_merge}.  Therefore $\wt{\eta}$ crosses out of, hence by Lemma~\ref{lem::do_not_cross} cannot intersect the interior of, any pocket whose boundary does not contain a segment of $\eta$. If $\wt{\eta}$ is in a pocket part of whose boundary is given by a segment of $\eta$, again by Lemma~\ref{lem::do_not_cross}, since $\wt{\eta}$ cannot cross out, it must hit and merge with $\eta$ (if $\wt{\eta}$ is in such a pocket and does not hit $\eta$ with the correct height difference to merge, then it either crosses $\eta$ or crosses the flow line which forms the other boundary of the pocket). }
}
\end{figure}

\begin{proposition}
\label{prop::uniquely_determined}
Fix constants $\alpha > -\chi$, $\beta \in \R$, and let $h_{\alpha \beta} = h - \alpha \arg(\cdot) - \beta \log|\cdot|$, viewed as a distribution defined up to a global multiple of $2\pi(\chi+\alpha)$, where $h$ is a whole-plane GFF.  Suppose that $\eta$, $\wt{\eta}$ are coupled with $h_{\alpha \beta}$ as flow lines starting from the origin as in the statement of Theorem~\ref{thm::existence} ($\alpha = 0$) or Theorem~\ref{thm::alphabeta} ($\alpha \neq 0$) such that given $h_{\alpha\beta}$, $\eta$ and $\wt{\eta}$ are conditionally independent.  Then $\eta = \wt{\eta}$ almost surely.
\end{proposition}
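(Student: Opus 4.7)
The plan combines the non-crossing constraint of Theorem~\ref{thm::flow_line_interaction}, a bounded-pocket argument, and the scale invariance of $h_{\alpha\beta}$ modulo $2\pi(\chi+\alpha)\Z$. The first observation is that $\eta$ and $\wt\eta$ cannot cross: as flow lines of the same field with the same angle starting from the same point, any intersection has a height difference that is an integer multiple of $2\pi(\chi+\alpha)$, whereas Theorem~\ref{thm::flow_line_interaction} requires a crossing height difference to lie in $(-\pi\chi,0)\cup(0,\pi\chi)$, an interval that contains no nonzero such multiple. (This is the degenerate same-angle case of Proposition~\ref{prop::cross_finite}.) Moreover, Theorem~\ref{thm::flow_line_interaction}(ii) guarantees that once merged, the two paths stay merged for all subsequent time.

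The bounded-pocket argument, illustrated in Figure~\ref{fig::interior_determined} (self-intersecting case) and Figure~\ref{fig::interior_determined_cannot_intersect} (non-self-intersecting case), then shows that at any stopping time $\wt\tau$ of the filtration $\CF_t=\sigma(\wt\eta(s):s\leq t,\eta)$ with $\wt\eta(\wt\tau)\notin\eta$, the path $\wt\eta|_{[\wt\tau,\infty)}$ merges with $\eta$ at some finite later time almost surely. In the self-intersecting range $\alpha\in(-\chi,\tfrac{3\sqrt\kappa}{4}-\tfrac{2}{\sqrt\kappa})$, the connected component $C$ of $\C\setminus\eta$ containing $\wt\eta(\wt\tau)$ is almost surely bounded (by the extension of Lemma~\ref{lem::radial_loop} to flow lines of $h_{\alpha\beta}$), while $\wt\eta$ is almost surely unbounded by Lemma~\ref{lem::whole_plane_unbounded}; hence $\wt\eta$ must exit $C$, and a case analysis on the integer height-difference parameter $k\in\Z$ at the exit point combined with the non-crossing property forces the exit to be a merger, possibly after finitely many bounces. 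In the non-self-intersecting range $\alpha\geq\tfrac{3\sqrt\kappa}{4}-\tfrac{2}{\sqrt\kappa}$, I would augment $\eta$ with conditionally independent auxiliary flow lines $\eta_1,\ldots,\eta_n$ of $h_{\alpha\beta}$ from $0$ at evenly spaced angles, with $n$ large enough (using \cite[Theorem~1.5]{MS_IMAG}) that consecutive $\eta_j$ almost surely intersect; the union $\eta\cup\eta_1\cup\cdots\cup\eta_n$ partitions $\C$ into bounded pockets, and since each $\eta_j$ has a different angle from $\wt\eta$, the height-difference constraint of Theorem~\ref{thm::flow_line_interaction} permits a merger only with $\eta$.

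The final step uses scale invariance to close the loop. The law of $h_{\alpha\beta}$ modulo $2\pi(\chi+\alpha)\Z$ is invariant under every dilation $z\mapsto cz$, so the joint law of $(h_{\alpha\beta},\eta,\wt\eta)$ is scale invariant up to monotone reparameterization. Define the divergence radius $R_\ast=\sup\{r\geq 0:\eta\cap B(0,r)=\wt\eta\cap B(0,r)\text{ as sets}\}$. Under dilation by $c>0$ this transforms as $R_\ast\mapsto cR_\ast$, so $R_\ast$ has a scale-invariant law and hence $R_\ast\in\{0,\infty\}$ almost surely. On the event $\{R_\ast=0\}$, the paths diverge at arbitrarily small scales near $0$, producing via the bounded-pocket argument a finite re-merger radius $R_\ast'$; but $R_\ast'$ is also scale-invariant and strictly positive, so it would have to equal $+\infty$ almost surely, contradicting its finiteness from the bounded-pocket argument. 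Therefore $R_\ast=\infty$ almost surely, yielding $\wt\eta=\eta$ as sets, and since both paths are parameterized by capacity in the same coupling they agree up to monotone reparameterization.

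The main technical obstacle is making the scale-invariance argument rigorous in the presence of the conical and logarithmic singularities at $0$: the infinite winding of both paths and the multi-valuedness of $\arg(\cdot)$ make it delicate to track the integer $k$ and to define the radius $R_\ast$ in a way that is manifestly scale-invariant and measurable with respect to the coupling. This is handled by working with $h_{\alpha\beta}$ as a distribution defined modulo $2\pi(\chi+\alpha)\Z$ (so that the height differences in the case analysis are well-defined) and by invoking the absolute-continuity comparison with ordinary GFF flow lines away from $0$ developed in Section~\ref{subsec::conical}, which reduces the intersection and pocket analysis in the conical setting to the already-completed results of Section~\ref{subsec::interaction}.
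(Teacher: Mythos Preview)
Your overall strategy matches the paper's: establish non-crossing, run the bounded-pocket argument in the two regimes, and close with scale invariance. Two points need correction.

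First, your direct non-crossing argument fails when $\alpha \in (-\chi, -\tfrac{\chi}{2})$: in that range $2\pi(\chi+\alpha) < \pi\chi$, so $\pm 2\pi(\chi+\alpha)$ does lie in the crossing interval $(-\pi\chi, 0) \cup (0, \pi\chi)$, and Theorem~\ref{thm::flow_line_interaction} does not by itself rule out a crossing. Citing Proposition~\ref{prop::cross_finite} as ``the degenerate same-angle case'' only gives at most $C(\alpha)$ crossings, not zero. The paper closes this gap with Lemma~\ref{lem::do_not_cross}, which couples Proposition~\ref{prop::cross_finite} with a separate scale-invariance step (the distance from $0$ to the \emph{last} crossing has scale-invariant law and is finite, hence vanishes). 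You need that argument before invoking non-crossing in the pocket analysis.

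Second, in the non-self-intersecting regime you take the auxiliary rays $\eta_1,\ldots,\eta_n$ to be flow lines of $h_{\alpha\beta}$ itself. The paper deliberately takes them to be flow lines of the \emph{conditional} field $h_{\alpha\beta}$ given $\eta$ on $\C\setminus\eta$, and remarks explicitly on why: to know that consecutive interior rays of $h_{\alpha\beta}$ from $0$ almost surely intersect (so that the pockets of $\C\setminus\bigcup_j\eta_j$ are bounded) one needs the conditional-law computation of Proposition~\ref{prop::conditional_law}, which is proved only \emph{after} uniqueness. Your appeal to \cite[Theorem~1.5]{MS_IMAG} is circular here, since that result is about boundary-emanating flow lines and transferring it to interior rays from $0$ is exactly what Proposition~\ref{prop::conditional_law} does. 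By contrast, the rays of the conditional field given $\eta$ \emph{are} boundary-emanating in $\C\setminus\eta$, so \cite[Theorem~1.5]{MS_IMAG} applies directly; the second assertion of Lemma~\ref{lem::do_not_cross} then supplies the non-crossing between $\wt\eta$ and these conditional rays.

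Your scale-invariance closure via $R_\ast$ and $R_\ast'$ is correct but slightly roundabout; the paper uses a single quantity $R$, the modulus of the point where $\wt\eta$ first permanently merges into $\eta$, shows $R<\infty$ via the pocket argument, and concludes $R=0$ by scale invariance directly.
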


Before we prove the proposition, we record the following simple lemma.

\begin{lemma}
\label{lem::do_not_cross}
Fix constants $\alpha > -\chi$, $\beta \in \R$, and let $h_{\alpha \beta} = h - \alpha \arg(\cdot) - \beta \log|\cdot|$, viewed as a distribution defined up to a global multiple of $2\pi(\chi+\alpha)$, where $h$ is a whole-plane GFF.  Fix angles $\theta,\wt{\theta} \in [0,2\pi(\chi+\alpha))$ and let $\eta,\wt{\eta}$ be the flow lines of $h_{\alpha \beta}$ with angles $\theta,\wt{\theta}$ starting from $0$ taken to be conditionally independent given $h_{\alpha \beta}$.  Then $\eta$ and $\wt{\eta}$ almost surely do not cross.  Likewise, if $\wh{\theta} \in \R$ and $\wh{\eta}$ is the flow line of the conditioned field $h_{\alpha \beta}$ given $\eta$ with angle $\wh{\theta}$, then $\wh{\eta}$ and $\wt{\eta}$ almost surely do not cross.
\end{lemma}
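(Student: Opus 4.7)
The plan is to derive a contradiction from the assumption of a crossing by combining the tail decomposition of Section~\ref{subsec::interaction} with the height difference constraints of Theorem~\ref{thm::flow_line_interaction}, mirroring the proof of Proposition~\ref{prop::cross_finite} (see Figure~\ref{fig::alphacrossing_proof}) with the added constraint that the two paths share the common starting point $0$. Without loss of generality I would assume $\theta \leq \wt\theta$ and reduce to the putative existence of a first crossing point $w \neq 0$; the case $\theta = \wt\theta$ is already excluded from crossing because it forces height difference $0$, which Theorem~\ref{thm::flow_line_interaction} classifies as merging rather than crossing.

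Near $w$, Proposition~\ref{prop::tail_decomposition} allows us to represent the relevant portions of $\eta$ and $\wt\eta$ as tails of flow lines emanating from nearby rational points, so that Proposition~\ref{prop::tail_interaction} applies to give the height difference $\CD$ at $w$. Since both paths emanate from $0$ with angles $\theta$ and $\wt\theta$, $\CD$ can be computed directly from the angles together with the relative winding of the two paths around $0$: it takes the form $\CD = \pm(\wt\theta-\theta)\chi + 2\pi k(\chi+\alpha)$ for some $k \in \Z$, where the sign depends on which side of $\eta$ the intersection lies on and $k$ records the relative winding. Combined with $(\wt\theta-\theta)\chi \in [0,2\pi(\chi+\alpha))$ (from the given range of angles) and the crossing range $(-\pi\chi,0)$ on the right (resp.\ $(0,\pi\chi)$ on the left) from Theorem~\ref{thm::flow_line_interaction}, a case analysis on $k$ rules out $\CD$ lying in the crossing range. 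Intuitively this extends to the interior setting the fact, established in \cite[Theorem~1.5]{MS_IMAG} for boundary emanating flow lines, that two flow lines with different angles starting from the same point maintain their relative angular order and hence do not cross.

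The main obstacle will be the careful tracking of the winding number $k$ at the first crossing, since the paths may wind around $0$ many times before interacting; the case in which $\alpha$ is close to $-\chi$ (where the period $2\pi(\chi+\alpha)$ becomes small) needs particular care, and one must also verify that the sign $\pm$ above is forced by the local topology of the configuration near $0$ in a way consistent with the crossing side. The second assertion of the lemma, concerning $\wh\eta$ as a flow line of the field conditioned on $\eta$, follows from the same argument applied to the conditional field: given $\eta$, the field has $\alpha$ flow line boundary conditions on $\eta$, and both $\wt\eta$ and $\wh\eta$ are flow lines of this conditional field in each complementary component of $\C \setminus \eta$, where the same height-difference analysis precludes crossings.
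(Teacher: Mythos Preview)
Your approach has a genuine gap, and the paper's proof takes a completely different (and much shorter) route.

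The paper does not attempt any height-difference case analysis at the putative crossing. Instead, it invokes Proposition~\ref{prop::cross_finite} to conclude that $\eta$ and $\wt\eta$ cross at most $C(\alpha)<\infty$ times, lets $R$ be the distance from $0$ to the \emph{last} crossing (with $R=0$ if there is no crossing), and then observes that the coupling $(h_{\alpha\beta},\eta,\wt\eta)$ is scale invariant. Hence the law of $R$ is invariant under $R\mapsto cR$ for every $c>0$; since $R<\infty$ almost surely, this forces $R=0$ almost surely. The second assertion is handled by the same scaling argument applied to the triple $(h_{\alpha\beta},\eta,\wt\eta,\wh\eta)$, after noting (as in Proposition~\ref{prop::cross_finite}) that $\wt\eta$ and $\wh\eta$ also cross at most $C(\alpha)$ times.

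Your direct argument breaks exactly at the step you flagged as the ``main obstacle.'' The height difference at a crossing does take the form $\CD=\pm(\wt\theta-\theta)\chi+2\pi k(\chi+\alpha)$, but nothing in the setup pins down $k$: both paths wind around $0$ infinitely often near the origin, so there is no well-defined ``relative winding up to the first crossing'' that you can read off a priori. And the case analysis on $k$ does \emph{not} rule out the crossing range in general. When $\alpha$ is close to $-\chi$ the period $2\pi(\chi+\alpha)$ is much smaller than $\pi\chi$, so for many integers $k$ the quantity $\pm(\wt\theta-\theta)\chi+2\pi k(\chi+\alpha)$ lands in $(-\pi\chi,0)$. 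This is not a surprise: Proposition~\ref{prop::cross_finite} already shows that a flow line from $0$ and a flow line from $z\neq 0$ \emph{can} cross, so the admissible height differences genuinely include values in the crossing range. The only feature distinguishing the present situation is that both paths start at $0$, and that feature is encoded by scale invariance --- not by a tighter height-difference constraint. Indeed, in the proof of Proposition~\ref{prop::conditional_law} the paper uses Lemma~\ref{lem::do_not_cross} precisely to \emph{deduce} that $k=0$; you cannot reverse that implication.
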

\begin{proof}
We know from Proposition~\ref{prop::cross_finite} that $\eta$ and $\wt{\eta}$ cross each other at most a finite number of times.  Let $R$ be the distance to $0$ of the last crossing of $\eta$ and $\wt{\eta}$.  We take $R = 0$ if $\eta$ does not cross $\wt{\eta}$.  By the scale invariance of the coupling $(h_{\alpha \beta}, \eta,\wt{\eta})$, it follows that $R = 0$ almost surely, which proves the first assertion of the lemma.  The second assertion is proved similarly.  (In particular, the argument of Proposition~\ref{prop::cross_finite} implies that $\wt{\eta}$ and $\wh{\eta}$ can cross each other at most the same constant $C(\alpha)$ times.)
\end{proof}

\begin{proof}[Proof of Proposition~\ref{prop::uniquely_determined}]
Recall that if $\beta = 0$ then $\eta$ and $\wt{\eta}$ are distributed as whole-plane $\SLE_\kappa(\rho)$ processes with $\rho = 2-\kappa+2\pi \alpha / \lambda$.  Thus the paths are self-intersecting in this case when $\alpha \in (-\chi,3\sqrt{\kappa}/4-2/\sqrt{\kappa})$ (so that $\rho \in (-2,\kappa/2-2)$) and are simple if $\alpha \geq 3\sqrt{\kappa}/4 - 2/\sqrt{\kappa}$ (so that $\rho \geq \kappa/2-2$).  Moreover, for $\beta \neq 0$ these are the same ranges of $\alpha$ in which the paths are either self-intersecting or simple, respectively.  Indeed, these assertions follow from Lemma~\ref{lem::radial_critical_for_hitting}.  We will handle the two cases separately.

We first suppose that we are in the self-intersecting regime.
Suppose that $\wt{\tau}$ is a stopping time for the filtration $\CF_t = \sigma(\wt{\eta}(s) : s \leq t,\ \ \eta)$ such that $\wt{\eta}(\wt{\tau})$ is not contained in the range of $\eta$; on the event $\{\wt{\eta} = \eta\}$, we take $\wt{\tau} = \infty$.  It is explained in Figure~\ref{fig::interior_determined} that, on $\{\wt{\tau} < \infty\}$, $\wt{\eta}|_{[\wt{\tau},\infty)}$  almost surely merges into and does not separate from $\eta$, say at time $\wt{\sigma}$.  Let $R$ be the modulus of the point where $\wt{\eta}$ first merges into $\eta$.  Then what we have shown implies that $R$ is finite almost surely.  The scale invariance of the coupling $(h_{\alpha \beta}, \eta, \wt{\eta})$ implies that the law of $R$ is also scale invariant, hence $R = 0$ almost surely.  Therefore $\p[ \wt{\eta} \neq \eta] = 0$.

We now suppose that we are in the regime of $\alpha$ in which the paths are simple.  We condition on $\eta$ and fix evenly spaced angles $0 < \theta_1 < \cdots < \theta_n$ such that the following is true.  Let $\eta_i$ be the flow line of the \emph{conditional} GFF $h_{\alpha \beta}$ on $\C \setminus \eta$ given $\eta$ starting at $0$ with angle $\theta_i$.  We assume that the angles have been chosen so that $\eta_i$ almost surely intersects both $\eta_{i+1}$ and $\eta_{i-1}$ where the indices are taken mod $n$ and $\eta_0 = \eta$ (by \cite[Theorem~1.5]{MS_IMAG} we know that we can arrange this to be the case --- see also Figure~\ref{fig::non_intersect_almost_surely_merge} as well as Section~\ref{subsec::imaginary}).  Again by the scale invariance of the coupling $(h_{\alpha \beta}, \eta, \wt{\eta})$, it suffices as in the case that paths are self-intersecting to show that $\wt{\eta}|_{[\wt{\tau},\infty)}$ almost surely merges with $\eta$.  The argument is given in the caption of Figure~\ref{fig::interior_determined_cannot_intersect}.  We remark that the reason that we took the flow lines $\eta_1,\ldots,\eta_n$ in Figure~\ref{fig::interior_determined_cannot_intersect} to be for the conditional field $h_{\alpha \beta}$ given $\eta$ is that we have not yet shown that the rays of the field started at the same point with varying angle are monotonic.
\end{proof}

\begin{proof}[Proof of Theorem~\ref{thm::uniqueness} and Theorem~\ref{thm::alphabeta}]
Proposition~\ref{prop::uniquely_determined} implies Theorem~\ref{thm::uniqueness} and the uniqueness statement of Theorem~\ref{thm::alphabeta} in the case that the domain $D$ of the GFF is given by the whole-plane $\C$.  Thus to complete the proofs of these results, we just need to handle the setting that $D$ is a proper subdomain of $\C$.  This, however, follows from absolute continuity (Proposition~\ref{prop::local_set_whole_plane_bounded_compare}).
\end{proof}

\begin{proof}[Proof of Theorem~\ref{thm::existence} and Theorem~\ref{thm::alphabeta}, uniqueness for general domains]
We will first give the proof in the case that $\kappa \in (8/3,4)$ and $\alpha=0$.  Suppose that $h$ is a GFF on a general domain $D$ and that $\eta$ is a path coupled with $h$ with the property that for each $\eta$-stopping time $\tau$ we have that $\eta([0,\tau])$ is a local set for $h$ and the conditional law of $h$ given $\eta|_{[0,\tau]}$ is that of a GFF on $D \setminus \eta([0,\tau])$ with flow line boundary conditions on $\eta([0,\tau])$.  We assume without loss of generality that the starting point of $\eta$ is equal to $0$.  Suppose that $\wt{\eta}$ is a flow line of $h$ starting from $0$ whose law is induced from the whole-plane measure on flow lines as in the proof of Theorem~\ref{thm::existence}.  Then we know that the law of $\wt{\eta}$ is, by construction, absolutely continuous with respect to that of a whole-plane $\SLE_\kappa(2-\kappa)$, at least up until the first time that it hits the ball of radius $r = \min(1,\dist(0,\partial D))/2$.  As $\kappa \in (8/3,4)$, it therefore follows that for each $s \in (0,r)$ the number of components that $\wt{\eta}$ separates from $\partial D$ before hitting $\partial B(0,s)$ is infinite almost surely.  We know that the flow line interaction result applies to $\eta$, $\wt{\eta}$.  In particular, $\eta$ can cross $\wt{\eta}$ at most once and if $\eta$ intersects $\wt{\eta}$ with a height difference of $0$ then the two paths merge and do not subsequently separate.  If $\eta$ stopped upon hitting $\partial B(0,s)$ has not yet merged with  $\wt{\eta}$ then it would be forced to cross $\wt{\eta}$ an infinite number of times before hitting $\partial B(0,s)$.  This is a contradiction and therefore $\eta$ must merge with $\wt{\eta}$ before hitting $\partial B(0,s)$.  Since this holds for all $s \in (0,r)$, we conclude that $\eta$ and $\wt{\eta}$ coincide up until hitting $\partial B(0,r)$.  The flow line interaction result implies that the two paths cannot separate after hitting $\partial B(0,r)$ and therefore agree for all time.

The case that $\kappa \in (0,8/3]$ and $\alpha=0$ is proved in a similar manner except one uses a collection of flow lines $\wt{\eta}_1,\ldots,\wt{\eta}_n$ starting from $0$ with equally spaced angles, all induced from the whole-plane measure, in place of $\wt{\eta}$ as in the proof of Proposition~\ref{prop::uniquely_determined} given above.

The case for general values of $\alpha$ is handled similarly.
\end{proof}

The proof that the boundary emanating flow lines of the GFF are uniquely determined by the field established in \cite{DUB_PART} and extended in \cite{MS_IMAG} is based on $\SLE$ duality (a flow line can be realized as the outer boundary of a certain counterflow line).  There is also an $\SLE$ duality based approach to establishing the uniqueness of flow lines started at interior points which will be a consequence of the material in Section~\ref{subsec::duality}.  The duality approach to establishing these uniqueness results also gives an alternative proof of the merging phenomenon and the boundary emanating version of this is discussed in \cite[Section~7]{MS_IMAG}.

Now that we have proved Theorem~\ref{thm::uniqueness} , we can prove Theorem~\ref{thm::commutation}.

\begin{proof}[Proof of Theorem~\ref{thm::commutation}]
The statement that $\cup_{j=1}^N \eta_{\xi_j}((-\infty,\tau_j])$ is a local set for $h$ is a consequence of \cite[Proposition~6.2]{MS_IMAG} and Theorem~\ref{thm::uniqueness}.  Consequently, the statement regarding the conditional law of $h$ given $\eta_{\xi_1}|_{(-\infty,\tau_1]},\ldots,\eta_{\xi_N}|_{(-\infty,\tau_N]}$ follows since we know the conditional law of $h$ given $\eta_1,\ldots,\eta_N$ and Proposition~\ref{prop::cond_union_mean}.  The final statement of the theorem follows \cite[Theorem~1.2]{MS_IMAG}.
\end{proof}

Theorem~\ref{thm::alphabeta} implies that we can simultaneously construct the entire family of rays of the GFF starting from a countable, dense set of points each with the same angle as a deterministic function of the underlying field.  We will now prove Theorem~\ref{thm::field_determined_by_tree}, that the family of rays in fact determine the field.

\begin{proof}[Proof of Theorem~\ref{thm::field_determined_by_tree}]
Let $(z_n)$ be a countable, dense set of $D$ and fix $\theta \in [0,2\pi)$.  For each $n \in \N$, let $\eta_n$ be the flow line of $h$ starting from $z_n$ with angle $\theta$.  For each $N \in \N$, let $\CF_N$ be the $\sigma$-algebra generated by $\eta_1,\ldots,\eta_N$.  Fix $f \in C_0^\infty(D)$ and recall that $(h,f)$ denotes the $L^2$ inner product of $h$ and $f$.  We are going to complete the proof of the theorem by showing that
\begin{equation}
\label{eqn::field_determined_by_tree}
\lim_{N \to \infty} \E\big[ (h,f) \big| \CF_N \big] = (h,f) \quad\text{almost surely}.
\end{equation}
By the martingale convergence theorem, the limit on the left side of~\eqref{eqn::field_determined_by_tree} exists almost surely.  Consequently, to prove~\eqref{eqn::field_determined_by_tree}, it in turn suffices to show that
\begin{equation}
\label{eqn::conditional_variance_to_zero}
\lim_{N \to \infty} \var[(h,f)|\CF_N] = 0 \quad\text{almost surely}.
\end{equation}
For each $N$, let $G_N$ denote the Green's function of $D_N = D \setminus \cup_{j=1}^N \eta_j$.  We note that $G_N$ makes sense for each $N \in \N$ since $D_N$ almost surely has  harmonically non-trivial boundary.  Moreover, we have that $G_{N+1}(x,y) \leq G_N(x,y)$ for each fixed $x,y \in D_N$ distinct and $N \in \N$ (this can be seen from the stochastic representation of $G_N$).  Let $K$ be a compact set which contains the support of $f$ and let $K_N = K \cap D_N$.  Since
\begin{align*}
 \var[(h,f)|\CF_N]
 =& \int_{D_N} \int_{D_N} f(x) G_N(x,y) f(y) dx dy\\
 \leq& \| f \|_\infty^2 \int_{K_N} \int_{K_N}  G_N(x,y) dx dy
\end{align*}
and $G_1$ is integrable on $K_1 \supseteq K_N$, by the dominated convergence theorem it suffices to show that $\lim_{N \to \infty} G_N(x,y) = 0$ for each fixed $x,y \in D$ distinct.  This follows from the Beurling estimate \cite[Theorem~3.69]{LAW05} (by the stochastic representation of $G_N$) since there exists a subsequence $(z_{n_k})$ of $(z_n)$ with $\lim_{k \to \infty} z_{n_k} = x$.  This proves~\eqref{eqn::conditional_variance_to_zero}, hence the theorem.
\end{proof}

Theorem~\ref{thm::alphabeta} also implies that we can simultaneously construct the entire family of rays of the GFF starting from any single point (resp.\ the location of the conical singularity) if $\alpha = 0$ (resp.\ $\alpha \neq 0$) with varying angles as a deterministic functional of the underlying field, that is, the family of flow lines $\eta_\theta$ of $h_{\alpha \beta} + \theta \chi$ for $\theta \in [0,2\pi(\chi+\alpha))$.  We are now going to show that the rays are monotonic in their angle as well as compute the conditional law of one such path given another.  This is in analogy with \cite[Theorem~1.5]{MS_IMAG} for flow lines emanating from interior points.

\begin{figure}[ht!]
\begin{center}
\includegraphics[width=\textwidth]{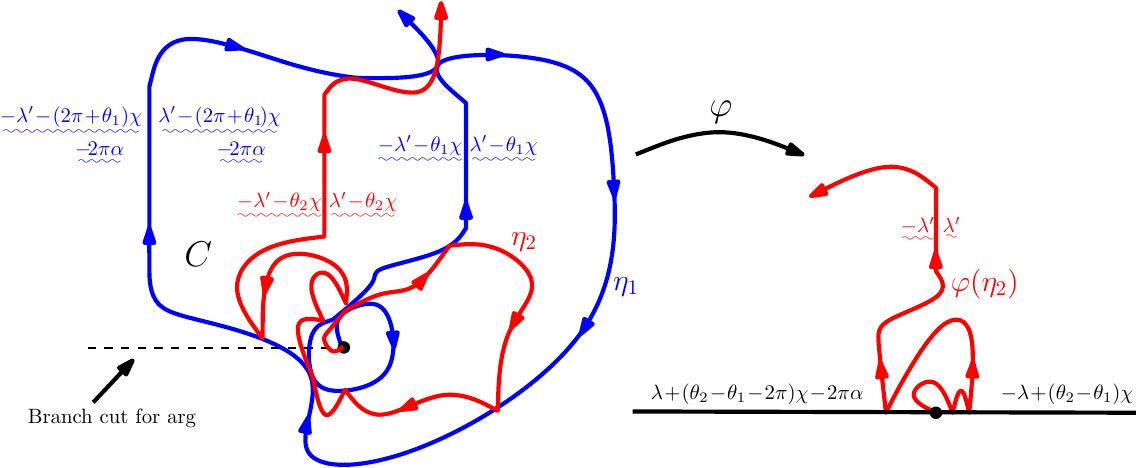}
\end{center}
\caption{\label{fig::conditional_law}
Suppose that $h$ is a whole-plane GFF, $\alpha > -\chi$, and $\beta \in \R$.  Let $h_{\alpha \beta} = h - \alpha \arg(\cdot) - \beta \log|\cdot|$, viewed as a distribution defined up to a global multiple of $2\pi(\chi+\alpha)$.  Fix $\theta_1,\theta_2 \in [0,2\pi(1+\alpha/\chi))$ with $\theta_1 < \theta_2$ and, for $i=1,2$, let $\eta_i$ be the flow line of $h_{\alpha \beta}$ starting at $0$ with angle $\theta_i$.  Then the conditional law of $\eta_2$ given $\eta_1$ is that of an $\SLE_\kappa(\rho^L;\rho^R)$ process independently in each of the connected components of $\C \setminus \eta_1$ where $\rho^L = (2\pi+\theta_1-\theta_2)\chi /\lambda + 2\pi \alpha/\lambda -2$ and $\rho^R = (\theta_2-\theta_1)\chi/\lambda -2$.  This can be seen by fixing a connected component $C$ of $\C \setminus \eta_1$ and letting $\varphi \colon C \to \h$ be a conformal map which takes the first point on $\partial C$ traced by $\eta_1$ to $0$ and the last to $\infty$.  Then the GFF $h_{\alpha \beta} \circ \varphi^{-1} - \chi (\arg \varphi^{-1})' + \theta_2 \chi$ on $\h$ has the boundary data depicted on the right side above, up to an additive constant in $2\pi (\chi+\alpha) \Z$, from which we can read off the conditional law of $\eta_2$ in $C$.}
\end{figure}

\begin{proposition}
\label{prop::conditional_law}
Fix constants $\alpha > -\chi$, $\beta \in \R$, and angles $\theta_1, \theta_2 \in [0,2\pi(1+\alpha/\chi))$ with $\theta_1 < \theta_2$.  Let $h$ be a whole-plane GFF and $h_{\alpha\beta} = h-\alpha \arg(\cdot) - \beta\log|\cdot|$, viewed as a distribution defined up to a global multiple of $2\pi(\chi+\alpha)$.  For $i=1,2$, let $\eta_i$ be the flow line of $h_{\alpha \beta}$ starting from $0$ with angle $\theta_i$.  Then $\eta_1$ almost surely does not cross (though may bounce off) $\eta_2$.  In particular, if $\eta_1$ is self-intersecting, then $\eta_2$ visits the components of $\C \setminus \eta_1$ according to their natural ordering (the order that their boundaries are drawn by $\eta_1$).  Moreover, the conditional law of $\eta_2$ given $\eta_1$ is independently that of a chordal $\SLE_\kappa(\rho^L;\rho^R)$ process in each of the components of $\C \setminus \eta_1$ with
\[ \rho^L = \frac{(2\pi +\theta_1 - \theta_2) \chi + 2\pi \alpha}{\lambda} - 2\quad\text{and}\quad\rho^R = \frac{(\theta_2-\theta_1) \chi}{\lambda} - 2.\]
If $\theta_3 \in [0,2\pi(1+\alpha/\chi))$ with $\theta_3 > \theta_2$ and $\eta_3$ is the flow line of $h_{\alpha \beta}$ starting from $0$ with angle $\theta_3$, then the conditional law of $\eta_2$ given $\eta_1$ and $\eta_3$ is independently that of a chordal $\SLE_\kappa(\rho^L;\rho^R)$ process in each of the components of $\C \setminus (\eta_1 \cup \eta_3)$ with
\[ \rho^L = \frac{(\theta_3 - \theta_2) \chi}{\lambda} - 2\quad\text{and}\quad\rho^R = \frac{(\theta_2 - \theta_1) \chi}{\lambda} - 2.\]
\end{proposition}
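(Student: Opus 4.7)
The plan is to apply the commutation result Theorem~\ref{thm::commutation} to express $\eta_2$ as a flow line of the field conditional on $\eta_1$, and then to read off its conditional law in each complementary component by a conformal coordinate change which reduces the problem to the boundary-emanating theory of Section~\ref{subsec::imaginary}.

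First I would show that $\eta_1$ and $\eta_2$ almost surely do not cross. By Proposition~\ref{prop::cross_finite} the number of crossings is a.s.\ finite, and the joint law of $(h_{\alpha\beta},\eta_1,\eta_2)$ modulo $2\pi(\chi+\alpha)$ is scale invariant: $h$ and $\arg(\cdot)$ are scale invariant in distribution, and $\beta(\log|sz|-\log|z|)=\beta\log s$ is absorbed into the $2\pi(\chi+\alpha)\Z$ ambiguity. The modulus of the outermost crossing is therefore scale invariant in law, hence equal to $0$ a.s., exactly as in the proof of Lemma~\ref{lem::do_not_cross}. Combined with Theorem~\ref{thm::flow_line_interaction} and the fact that $\eta_2$ lies locally to the left of $\eta_1$ near $0$ (since $\theta_2>\theta_1$), this shows that $\eta_2$ visits the components of $\C\setminus\eta_1$ in the natural order, entering each component $C$ at the first point on $\partial C$ drawn by $\eta_1$ and exiting at the last.

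Next, by Theorem~\ref{thm::commutation} the conditional law of $h_{\alpha\beta}$ given $\eta_1$ is that of a GFF on $\C\setminus\eta_1$ with $\alpha$ flow line boundary conditions of angle $\theta_1$ along $\eta_1$, the prescribed behaviour at $\infty$, and the $2\pi\alpha$ discontinuity on $(-\infty,0)$; moreover, the continuation of $\eta_2$ in any component $C$ is the angle-$\theta_2$ flow line of this conditional field. I would then fix $C$, let $\varphi\colon C\to\h$ be a conformal map sending the first (resp.\ last) boundary point drawn by $\eta_1$ to $0$ (resp.\ $\infty$), and define $\wt h:=h_{\alpha\beta}\circ\varphi^{-1}-\chi\arg(\varphi^{-1})'+\theta_2\chi$ via the coordinate change rule \eqref{eqn::ac_eq_rel}. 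After fixing a representative of the additive constant in $2\pi(\chi+\alpha)\Z$, the field $\wt h$ is a GFF on $\h$ with piecewise constant boundary data, and $\varphi(\eta_2\cap C)$ is its angle-$0$ flow line from $0$ to $\infty$; the boundary-emanating theory then identifies this image as a chordal $\SLE_\kappa(\rho^L;\rho^R)$ with weights determined by $\wt h|_{\R_\pm}$.

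Computing those boundary values is a direct bookkeeping in the notation of Figure~\ref{fig::winding} and Figure~\ref{fig::interior_path_bd2}, summarised in Figure~\ref{fig::conditional_law}: the right arc of $\partial C$ maps to $\R_+$ and, after the $+\theta_2\chi$ shift and the winding subtraction, contributes $\rho^R=(\theta_2-\theta_1)\chi/\lambda-2$; the left arc maps to $\R_-$ and carries an additional $2\pi\chi+2\pi\alpha$ coming from the net winding of $\eta_1$ around $0$ together with the $2\pi\alpha$ branch-cut jump, yielding $\rho^L=(2\pi+\theta_1-\theta_2)\chi/\lambda+2\pi\alpha/\lambda-2$. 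The three-path statement will follow by the identical argument with Theorem~\ref{thm::commutation} applied to the pair $\{\eta_1,\eta_3\}$: the relevant components $C$ of $\C\setminus(\eta_1\cup\eta_3)$ are sandwiched between the two paths, $\partial C$ does not wrap around $0$, and no $2\pi(\chi+\alpha)$ term appears, yielding the stated $\rho^L=(\theta_3-\theta_2)\chi/\lambda-2$ and $\rho^R=(\theta_2-\theta_1)\chi/\lambda-2$. The main technical step is the winding accounting: one must verify that in the two-path case every $\partial C$ has net winding number $\pm 1$ around $0$ between the first and last points drawn by $\eta_1$, and that no such wrap-around occurs in the three-path case, so that the branch-cut discontinuity contributes with the correct multiplicity in each formula.
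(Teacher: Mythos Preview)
Your approach is essentially the paper's: invoke Lemma~\ref{lem::do_not_cross} (which you effectively re-prove) for the non-crossing, then conformally map each component $C$ of $\C\setminus\eta_1$ to $\h$ as in Figure~\ref{fig::conditional_law} and read off the $\rho$ values from the boundary data.  Two points deserve comment.

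First, the step you flag as ``the main technical step'' --- verifying that the height difference across $\eta_1$ as seen from $\eta_2$ is exactly $(\theta_1-\theta_2)\chi$ rather than $(\theta_1-\theta_2)\chi + 2\pi(\chi+\alpha)k$ for some $k\ne 0$ --- is not just bookkeeping, and your proposal does not say how to resolve it.  The paper's argument is short and worth internalizing: if $\eta_2$ met the right side of $\eta_1$ with any other height difference, then by Theorem~\ref{thm::flow_line_interaction} the paths would have to cross, contradicting Lemma~\ref{lem::do_not_cross}.  This pins down $k=0$ and hence the boundary data, without any direct topological ``winding number of $\partial C$'' computation.  (Your justification of scale invariance for $\beta\ne 0$ is also slightly misstated: $\beta\log s$ is not in $2\pi(\chi+\alpha)\Z$ in general, but the whole-plane GFF modulo $2\pi(\chi+\alpha)$ already randomizes the additive constant uniformly, so the law is invariant under adding any real constant.)

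Second, you omit two verifications that the paper singles out before invoking \cite[Theorem~2.4 and Proposition~6.5]{MS_IMAG}: that the conditional mean of $h_{\alpha\beta}$ given both paths has no singularities at the intersection points of $\eta_1$ and $\eta_2$ (this comes from Theorem~\ref{thm::flow_line_interaction}), and that $\eta_2$, viewed as a path in each component of $\C\setminus\eta_1$, has a continuous Loewner driving function (this uses the arguments of \cite[Section~6.2]{MS_IMAG}).  Without these, the identification of $\varphi(\eta_2\cap C)$ as a chordal $\SLE_\kappa(\rho^L;\rho^R)$ process is not justified.
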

\begin{proof}
The first assertion of the proposition, that $\eta_1$ and $\eta_2$ almost surely do not cross, is a consequence of Lemma~\ref{lem::do_not_cross}.  The remainder of the proof is explained in the caption of Figure~\ref{fig::conditional_law}, except for three points.  First, we know that the conditional mean of $h$ given $\eta_1$ and $\eta_2$ up to any fixed pair of stopping times does not exhibit singularities at their intersection points by Theorem~\ref{thm::flow_line_interaction}.  Secondly, that $\eta_2$ viewed as a path in complementary connected component of $\C \setminus \eta_1$ has a continuous Loewner driving function.  This follows from the arguments of \cite[Section~6.2]{MS_IMAG}.  Thirdly, the reason that $\eta_1$ intersects $\eta_2$ on its right side with a height difference of $(\theta_1-\theta_2) \chi$ (as illustrated in Figure~\ref{fig::conditional_law}) rather than $(\theta_1-\theta_2) \chi + 2\pi (\chi + \alpha) k$ for some $k \in \Z \setminus \{0\}$ is that this would lead to the paths crossing, contradicting Lemma~\ref{lem::do_not_cross}.  The result then follows by invoking \cite[Theorem~2.4 and Proposition~6.5]{MS_IMAG}.  The proof of the final assertion follows from an analogous argument.
\end{proof}

Proposition~\ref{prop::conditional_law} gives the conditional law of one GFF flow line given another, both started at a common point.  We will study in Section~\ref{sec::timereversal} the extent to which this resampling property characterizes their joint law.  We also remark that it is possible to state and prove (using the same argument) an analog of Proposition~\ref{prop::conditional_law} which holds for GFF flow lines on a bounded domain, though it is slightly more complicated to describe the conditional law of one path given the other because one may have to deal with force points on the outer boundary.  (Moreover, when a path hits the boundary, it is sometimes possible to ``branch'' the path in two different directions and the way that this is to be done has to be specified.) We recall that some additional discussion about what can happen when single path hits the boundary appears in Section~\ref{subsubsec::flow_line_interaction} (which includes the proof of Theorem~\ref{thm::flow_line_interaction}).

\subsection{Transience and endpoint continuity}
\label{subsec::transience}

Suppose that $h$ is a whole-plane GFF, $\alpha > -\chi$, and $\beta \in \R$.  Let $\eta$ be the flow line of $h_{\alpha \beta} = h-\alpha \arg(\cdot) - \beta\log|\cdot|$, viewed as a distribution defined up to a global multiple of $2\pi(\chi + \alpha)$, starting from $0$.  In this section, we are going to prove that $\eta$ is transient, i.e.\ $\lim_{t \to \infty} \eta(t) = \infty$ almost surely.  This is equivalent to proving the transience of whole-plane $\SLE_\kappa^\mu(\rho)$ processes for $\kappa \in (0,4)$, $\rho > -2$, and $\mu \in \R$.  This in turn completes the proof of Theorem~\ref{thm::transience} for $\kappa \in (0,4)$; we will give the proof for $\kappa' > 4$ in Section~\ref{sec::duality_space_filling}.  Using the relationship between whole-plane and radial $\SLE_\kappa^\mu(\rho)$ processes described in Section~\ref{subsec::SLEoverview}, this is equivalent to proving the so-called ``endpoint continuity'' of radial $\SLE_\kappa^\mu(\rho)$ for $\kappa \in (0,4)$, $\rho > -2$, and $\mu \in \R$.  This means that if $\eta$ is a radial $\SLE_\kappa^\mu(\rho)$ process in $\D$ and targeted at $0$ then $\lim_{t \to \infty} \eta(t) = 0$ almost surely.  This was first proved by Lawler in \cite{LAW_ENDPOINT} for ordinary radial $\SLE_\kappa$ processes and the result we prove here extends this to general $\SLE_\kappa^\mu(\rho)$ processes, though the method of proof is different.   The main ingredient in the proof of this is the following lemma.

\begin{figure}[ht!]
\begin{center}
\includegraphics[scale=0.80]{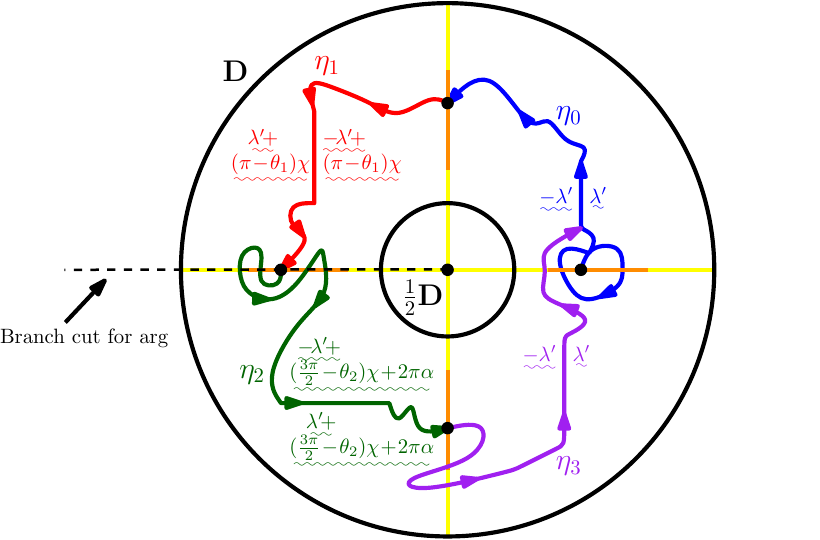}
\end{center}
\caption{\label{fig::endpoint_continuity}  {\small The proof of Lemma~\ref{lem::flow_line_ring}, the main input in the proof of the endpoint continuity (resp.\ transience) of radial (resp.\ whole-plane) $\SLE_\kappa^\mu(\rho)$ processes for $\kappa \in (0,4)$, $\rho > -2$, and $\mu \in \R$.  Suppose that $h$ is a whole-plane GFF, $\alpha > - \chi$, $\beta \in \R$, and $h_{\alpha \beta} = h - \alpha \arg(\cdot) - \beta \log|\cdot|$, viewed as a distribution defined up to a global multiple of $2\pi(\chi+\alpha)$.  Let $n = 2\lceil |1+\alpha/\chi| \rceil$; in the illustration, $n=3$.  We construct $\eta_0,\eta_1,\ldots,\eta_n$ as follows.  For $0 \leq j \leq n$, we let $\Delta = 2\pi (1+\alpha/\chi) / n \in [-\pi,\pi]$.  Let $\eta_0$ be a flow line of $h_{\alpha \beta}$ starting from $3/4$.  Let $E_0$ be the event that $\eta_0$ wraps around the origin with a counterclockwise orientation without leaving $\D \setminus (\D/2)$ before hitting the straight line with angle $2\pi/(n+1)$ starting from the origin, say at time $\tau_0$, and that $|\eta_0(\tau_0)| \in [2/3,3/4]$.  Then $\p[E_0] > 0$ by Lemma~\ref{lem::path_close} (extended to the case $\alpha,\beta \neq 0$).  We inductively define events $E_j$ for $1 \leq j \leq n-1$ as follows.  Given $E_{j-1}$, we let $\eta_j$ be the flow line of $h_{\alpha \beta}$ starting from $\eta_{j-1}(\tau_{j-1})$ with angle $\theta_j = j \Delta$ (relative to the angle of $\eta_0$) and let $E_j$ be the event that $\eta_j$ wraps counterclockwise around the origin without leaving $\D \setminus (\D/2)$ until hitting the line through the origin with angle $2 \pi(j+1)/(n+1)$, say at time $\tau_j$, with $|\eta_j(\tau_j)| \in [2/3,3/4]$.  By Lemma~\ref{lem::path_close}, $\p[E_j] > 0$.  On $E_{n-1}$, we let $F=E_n$ be the event that $\eta_n$, the flow line of $h_{\alpha \beta}$ starting at $\eta_{n-1}(\tau_{n-1})$ with angle $n \Delta$ hits and merges with $\eta_0$ without leaving $\D \setminus (\D/2)$.  Then $\p[F] > 0$ by Lemma~\ref{lem::path_close_hit} (extended to the case $\alpha,\beta \neq 0$).  The boundary data for $h_{\alpha \beta}$ given $\eta_0,\ldots,\eta_n$ is as illustrated up to an additive constant in $\R$.}
}
\end{figure}

\begin{lemma}
\label{lem::flow_line_ring}
Let $h$ be a GFF on $\C$, $\alpha > - \chi$, $\beta \in \R$, and let $h_{\alpha \beta} = h - \alpha \arg(\cdot) - \beta \log|\cdot|$, viewed as a distribution defined up to a global multiple of $2\pi(\chi+\alpha)$.  Let $n = 2\lceil |1+\alpha/\chi| \rceil$.  There exists $p > 0$ depending only on $\alpha$, $\beta$, and $\kappa$ such that the following is true.  Let $F_k$ be the event that there exists an angle varying flow line $\eta$ of $h_{\alpha \beta}$ which
\begin{enumerate}[(i)]
\item Starts from a point with rational coordinates,
\item Changes angles only upon hitting the straight lines with angles $\tfrac{2\pi(j+1)}{n+1}$ starting from $0$ for $j=0,\ldots,n-1$,
\item Travels with angles contained in $\tfrac{2\pi}{n} (1+\tfrac{\alpha}{\chi}) \Z$,
\item Stops upon exiting the annulus $A_k = 2^k \D \setminus (2^{k-1}\D )$, $k \in \Z$, and
\item Disconnects $0$ from $\infty$.
\end{enumerate}
Then $\p[F_k] \geq p$.
\end{lemma}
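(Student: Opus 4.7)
The plan is to carry out the explicit construction sketched in Figure~\ref{fig::endpoint_continuity}, building an angle-varying flow line that wraps once around $0$ and stays inside the annulus $A_0$, and then to transfer the estimate to all other annuli by scale invariance. Concretely, fix the rational starting point $w_0 = 3/4$, let $\Delta = 2\pi(1+\alpha/\chi)/n$, and let $\eta$ be the angle-varying flow line of $h_{\alpha\beta}$ beginning at $w_0$ whose angle at its $j$-th stage equals $j\Delta$ and which changes angle precisely upon first hitting the radial ray from $0$ of argument $2\pi j/(n+1)$, for $j=1,\ldots,n$. On the desired event, the $j$-th segment wraps counterclockwise from one ray to the next, stays inside $\D\setminus(\D/2)$, and terminates at a point of modulus in $[2/3,3/4]$; the final $(n+1)$-st segment then merges into the first one, so that the combined trace closes up to a simple loop contained in $A_0$ surrounding the origin.

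The positive-probability estimate for each stage is obtained by an $h_{\alpha\beta}$-analogue of Lemma~\ref{lem::path_close} and Lemma~\ref{lem::path_close_hit}. The conditional boundary data for $h_{\alpha\beta}$ given the already-drawn segments is determined by Theorem~\ref{thm::alphabeta} ($\alpha$-flow line boundary conditions with the appropriate angle along each segment). Once this is in place, the proofs of those lemmas carry over essentially unchanged: the subtraction of $\alpha\arg(\cdot)+\beta\log|\cdot|$ amounts to adding a bounded smooth function on any subdomain staying a positive distance from $0$ (and avoiding the branch cut of $\arg$), and the resulting law is mutually absolutely continuous with respect to that of an ordinary GFF there, by Proposition~\ref{prop::local_set_whole_plane_bounded_compare} and Proposition~\ref{prop::gff_radon_nikodym}. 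Applying this absolute continuity inductively and using the Markov property of the coupling then gives a uniform positive lower bound on the conditional probability of each successive stage.

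The main technical point is to verify that the final merging step really closes up the loop. Since $n\Delta = 2\pi(1+\alpha/\chi)$, the total angular increment is exactly the period of the angle parameter, so the $(n+1)$-st segment has the same angle as $\eta_0$ modulo $2\pi(1+\alpha/\chi)$; consequently, when it meets $\eta_0$ the height difference at the intersection is an integer multiple of $2\pi(\chi+\alpha)$, and on the positive-probability event where this multiple vanishes, Theorem~\ref{thm::flow_line_interaction} forces the two segments to merge rather than cross or bounce. The path-close lemma applied at this stage ensures that the merge occurs without the segment leaving the annulus. Throughout the construction, Theorem~\ref{thm::flow_line_interaction} applied to the angle gap $\Delta$ (which is at most $\pi$ in absolute value, by the choice $n = 2\lceil |1+\alpha/\chi|\rceil$) ensures that consecutive segments do not prematurely cross each other or earlier segments, so that the angle-varying flow line is well-defined up until the final merge.

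Finally, to transfer the lower bound from $k=0$ to general $k\in\Z$, I apply the scaling $z\mapsto 2^k z$. The whole-plane GFF $h$ is scale-invariant as a distribution modulo additive constant; since $\alpha\arg(2^k z) = \alpha\arg(z)$ and $\beta\log|2^k z| = \beta\log|z| + \beta k\log 2$, the field $h_{\alpha\beta}$ viewed as a distribution modulo $2\pi(\chi+\alpha)$ is also invariant in law under this scaling (the extra constant $\beta k\log 2$ is absorbed into the additive ambiguity). Since the event $F_k$ is the scaled image of $F_0$ under this map, we conclude $\p[F_k] = \p[F_0] =: p > 0$.
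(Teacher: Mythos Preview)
Your proof is correct and follows essentially the same construction as the paper: build the angle-varying loop inside $A_0$ segment by segment using the $\alpha,\beta$-analogues of Lemma~\ref{lem::path_close} and Lemma~\ref{lem::path_close_hit}, use $n\Delta = 2\pi(1+\alpha/\chi)$ to force the merge at the end, and transfer to general $k$ by scale invariance. One small imprecision: in the scale-invariance step you say that ``$\beta k\log 2$ is absorbed into the additive ambiguity,'' but $\beta k\log 2$ need not lie in $2\pi(\chi+\alpha)\Z$. The correct justification is that the whole-plane GFF modulo $r\Z$ has law invariant under addition of \emph{any} constant (because the normalizing variable $U$ is uniform on $[0,r)$), so $h_{\alpha\beta}$ modulo $2\pi(\chi+\alpha)$ is genuinely scale-invariant in law; your conclusion stands. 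Also, the paper's stated reason for requiring $|\Delta|\le\pi$ is that there is only a $2\pi$ range of admissible angles for a flow line started at a point of $\C\setminus\{0\}$, rather than the non-crossing reason you give; both considerations are relevant.
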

\begin{proof}
It is proved in the caption of Figure~\ref{fig::endpoint_continuity} that the event $F$ that there exists an angle varying flow line starting from $3/4$ with the properties described in the lemma statement for $k=0$ satisfies $\p[F] > 0$.  Consequently, the result follows by scale invariance.  We explain further a few points here.  First, the reason for our choice of $n$ is that it implies that the angle changes $\Delta$ as defined in the figure caption are contained in $[-\pi,\pi]$.  This is important because there is only a $2\pi$ range of angles at which we can draw flow lines from a given point in $\C \setminus \{0\}$.  Second, the reason that we have to construct an angle varying flow line is that, for some values of $\alpha$, it is not possible for a flow line to hit itself after wrapping around $0$.  Moreover, the reason that it is necessary to have multiple angle changes (as opposed to possibly just one), depending on the value of $\alpha$, is that with each angle change we can only change the boundary height adjacent to the path by at most $\pi \chi$ in absolute value while the height of a single path changes by $2\pi|\chi + \alpha|$ in absolute value upon winding once around $0$.
\end{proof}

\begin{proposition}
\label{prop::endpoint_continuity}
Suppose that $\eta$ is a whole-plane $\SLE_\kappa^\mu(\rho)$ process for $\kappa \in (0,4)$, $\rho > -2$, and $\mu \in \R$ starting at $0$.  Then $\lim_{t \to \infty} \eta(t) = \infty$ almost surely.  Moreover, if $\eta$ is a radial $\SLE_\kappa^\mu(\rho)$ process in $\D$ and targeted at $0$ for $\kappa \in (0,4)$, $\rho > -2$, and $\mu \in \R$, then $\lim_{t \to \infty} \eta(t) = 0$ almost surely.
\end{proposition}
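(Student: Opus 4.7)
The plan is to deduce transience from Lemma~\ref{lem::flow_line_ring} by producing barriers around the origin at infinitely many scales and arguing that $\eta$ is eventually outside each of them. First, by the relation between whole-plane and radial $\SLE_\kappa^\mu(\rho)$ described in Section~\ref{subsec::whole_plane_sle}, the radial statement is equivalent to the whole-plane statement ($\eta_\mathrm{wp}(t)\to\infty$ corresponds to $\eta_\mathrm{rad}(t)\to 0$), so it suffices to prove the whole-plane assertion. Pick $\alpha > -\chi$ and set $\beta = \mu$ so that $\rho = 2-\kappa + 2\pi\alpha/\lambda$. By Proposition~\ref{prop::alphabeta_existence}, there exists a coupling of $\eta$ with a field $h_{\alpha\beta} = h - \alpha\arg(\cdot) - \beta\log|\cdot|$ (with $h$ a whole-plane GFF modulo $2\pi(\chi+\alpha)$) in which $\eta$ is the flow line of $h_{\alpha\beta}$ from $0$ at angle $0$.

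For each $k \in \Z$, let $F_k$ be the event of Lemma~\ref{lem::flow_line_ring}, so that on $F_k$ there is an angle-varying flow line $\gamma_k$ of $h_{\alpha\beta}$, built from finitely many flow line segments emanating from rational points with angles in a fixed finite set, which is contained in $A_k = 2^k\D\setminus 2^{k-1}\D$ and disconnects $0$ from $\infty$. By scale invariance, $\p[F_k] \geq p > 0$ uniformly in $k$. I will then show that $F_k$ occurs for infinitely many $k\ge 0$ almost surely. Since each flow line is a local set determined by the field (Theorem~\ref{thm::uniqueness}) and the angle-varying barriers live in $A_k$, the event $F_k$ is measurable with respect to $h_{\alpha\beta}$ restricted to a bounded enlargement $\tilde A_k$ of $A_k$. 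Applying the Markov property of the whole-plane GFF (Proposition~\ref{prop::whole_plane_markov}) across widely separated annuli and using Proposition~\ref{prop::gff_radon_nikodym} to absorb the harmonic extensions from their boundaries, the events $F_{k_j}$ along a sufficiently sparse subsequence $k_1 < k_2 < \cdots$ are approximately independent, each with probability at least $p/2$. A Borel--Cantelli argument then yields that $F_{k_j}$ occurs for infinitely many $j$ almost surely.

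On $F_k$, I claim $|\eta(t)| \geq 2^{k-1}$ for all sufficiently large $t$. Each segment of $\gamma_k$ is a flow line of $h_{\alpha\beta}$ from a rational point, so by Theorem~\ref{thm::flow_line_interaction} and Proposition~\ref{prop::cross_finite}, $\eta$ crosses each such segment at most $C(\alpha)$ times, hence crosses $\gamma_k$ only finitely often in total. The angles in the construction of Lemma~\ref{lem::flow_line_ring} can be taken so that at any intersection of $\eta$ with $\gamma_k$ the height difference is never $0$ (exploiting that the admissible angle periods for flow lines from $0$ and from points away from $0$ differ, by Remark~\ref{rem::alpha_beta_determined}), ruling out merging. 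Since $\eta$ is unbounded by Lemma~\ref{lem::whole_plane_unbounded} and $\eta(0) = 0$ lies in the bounded component of $\C\setminus\gamma_k$, $\eta$ must cross $\gamma_k$ at least once; after its last crossing time $\tau_k$, $\eta$ remains in the unbounded component of $\C\setminus\gamma_k$, yielding the claimed bound. Since this holds along an infinite sequence $k\to\infty$, we conclude $|\eta(t)|\to\infty$.

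The main obstacle is the approximate-independence/Borel--Cantelli step in the second paragraph: one must decouple $F_{k_j}$'s across widely separated scales while controlling the harmonic extensions appearing in the Markov decomposition, and in particular verify that conditioning on the outside does not drive $\p[F_{k_j}\mid\text{outside}]$ to zero. A secondary technical point is the angle calibration in Lemma~\ref{lem::flow_line_ring} ensuring that $\eta$ does not merge with any segment of $\gamma_k$, so that ``only finitely many crossings'' translates to ``$\eta$ is eventually in the unbounded component of $\C\setminus\gamma_k$.''
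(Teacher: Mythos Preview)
Your proposal is correct and follows the paper's proof essentially step for step: reduce to the whole-plane statement, couple $\eta$ with $h_{\alpha\beta}$ via Proposition~\ref{prop::alphabeta_existence}, invoke the barrier events $F_k$ of Lemma~\ref{lem::flow_line_ring}, combine Proposition~\ref{prop::cross_finite} with $\limsup_{t\to\infty}|\eta(t)|=\infty$ to get $\liminf_{t\to\infty}|\eta(t)|\geq 2^{k-1}$ on $F_k$, and use the Markov property together with the total-variation decoupling of the GFF across scales (Section~\ref{subsec::gff_convergence}) to conclude that infinitely many $F_k$ occur almost surely.

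One remark on your ``secondary technical point'': the merging concern is unnecessary, and the proposed fix via angle calibration and Remark~\ref{rem::alpha_beta_determined} is not the right mechanism (the angles in Lemma~\ref{lem::flow_line_ring} are fixed by the construction, and that remark does not let you preclude height difference $0$). The paper does not rule out merging at all; the point is that Proposition~\ref{prop::cross_finite} bounds the number of \emph{crossings} of $\eta$ with each of the finitely many segments of $\gamma_k$, and since a merge can occur at most once per segment (after which $\eta$ coincides with that flow line forever), the total number of times $\eta$ switches between the bounded and unbounded components of $\C\setminus\gamma_k$ is still finite. Coupled with $\limsup|\eta(t)|=\infty$, this forces $\eta$ eventually into the unbounded component, merging or not.
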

\begin{proof}
From the relationship between whole-plane $\SLE_\kappa^\mu(\rho)$ and radial $\SLE_\kappa^\mu(\rho)$ described in Section~\ref{subsec::whole_plane_sle}, the two assertions of the proposition are equivalent.  Fix $\alpha > -\chi$, $\beta \in \R$, $h$ a whole-plane GFF, and let $h_{\alpha \beta} = h - \alpha \arg(\cdot) - \beta \log|\cdot|$, viewed as a distribution defined up to a global multiple of $2\pi(\chi+\alpha)$.  Let $\eta$ be the flow line of $h_{\alpha \beta}$ starting from $0$.  Then it suffices to show that $\lim_{t \to \infty} \eta(t) = \infty$ almost surely.  Let $A_k = 2^k \D \setminus (2^{k-1} \D)$ and let $F_k$ be the event as described in the statement of Lemma~\ref{lem::flow_line_ring} for the annulus $A_k$ and $\eta^k$ the corresponding angle-varying flow.  On $F_k$, it follows that $\liminf_{t \to \infty} |\eta(t)| \geq 2^k$ because Proposition~\ref{prop::cross_finite} implies that $\eta$ can cross each of the angle-varying flow lines $\eta^k$ hence $A_k$ a finite number of times and we know that $\limsup_{t \to \infty} |\eta(t)| = \infty$ almost surely (the capacity of the hull of $\eta((-\infty,t])$ is unbounded as $t \to \infty$).  Thus it suffices to show that, almost surely, infinitely many of the events $F_k$ occur.  This follows from the following two observations.  First, the Markov property implies that $h_{\alpha \beta}|_{\C \setminus (2^k \D)}$ is independent of $h_{\alpha \beta}|_{2^{k-1} \D}$ conditional on $h_{\alpha \beta}|_{A_k}$.  Moreover, the total variation distance between the law of $h_{\alpha \beta} |_{A_K}$ conditional on $h_{\alpha \beta}|_{A_k}$ and the law of $h_{\alpha \beta}|_{A_K}$ (unconditional) converges to zero almost surely as $K \to \infty$ (see Section~\ref{subsec::gff_convergence}).  Therefore the claim follows from Lemma~\ref{lem::flow_line_ring}.
\end{proof}

\begin{proof}[Proof of Theorem~\ref{thm::transience} for $\kappa \in (0,4)$]
This is a special case of Proposition~\ref{prop::endpoint_continuity}.
\end{proof}

\subsection{Critical angle and self-intersections}
\label{subsec::critical_angle}

The height gap $2\lambda'=2\lambda-\pi\chi$ divided by $\chi$ is called the {\bf critical angle} $\theta_c$.  Recalling the identities $\lambda' = \frac{\kappa}{4} \lambda$ and $2\pi \chi = (4 - \kappa)\lambda$ (see~\eqref{eqn::deflist}--\eqref{eqn::fullrevolutionrho}), we see that the critical angle can be written
\begin{equation}
\label{eqn::critical_angle}
\theta_c = \theta_c(\kappa) := \frac{2\lambda'}{\chi} = \frac{2 \frac{\kappa}{4} \lambda}{\chi} = \frac{\kappa}{2} \cdot \frac{\lambda}{\chi} = \frac{\tfrac{\kappa}{2} \cdot 2\pi}{4-\kappa} = \frac{\pi \kappa}{4-\kappa}.
\end{equation}
Note that $\theta_c(2) = \pi$, $\theta_c(8/3) = 2\pi$, $\theta_c(3) = 3\pi$, $\theta_c (16/5) = 4\pi$, and more generally
\begin{equation}
\label{eqn::critical_angle_integer_pi}
 \theta_c\left(\frac{4n}{n+1}\right) = n \pi.
\end{equation}
Recall from Figure~\ref{fig::boundary_flowline_interaction1} that in the boundary emanating setting, $\theta_c$ is the critical angle at which flow lines can intersect each other.  In the setting of interior flow lines, $\theta_c$ has the same interpretation as a consequence of Theorem~\ref{thm::flow_line_interaction} and Theorem~\ref{thm::conditional_law}.  Moreover, $\lfloor 2\pi/\theta_c \rfloor$ gives the maximum number of distinct ordinary GFF flow lines emanating from a single interior point that one can start which are non-intersecting.  In particular, $\kappa=8/3$ is the critical value for which an interior flow line does not intersect itself (recall also that for $\kappa=8/3$, we have $2-\kappa=\tfrac{\kappa}{2}-2$ as well as Lemma~\ref{lem::radial_critical_for_hitting}).  The value of $\kappa$ which solves $\theta_c(\kappa) = 2\pi/n$ is critical for being able to fit $n$ distinct non-intersecting interior GFF flow lines starting from a single point.  Explicitly, this value of $\kappa$ is given by
\begin{equation}
\label{eqn::critical_n}
 \kappa = \frac{8}{n+2}.
\end{equation}
If we draw $n$ distinct non-intersecting interior flow lines, starting at a common point, with angles evenly spaced on the circle $[0,2\pi)$, then these flow lines will intersect each other if and only if $\kappa > \frac{8}{2 + n}$.  The value of $\kappa$ which solves $\theta_c(\kappa) = 2\pi n$ is critical for a single flow line being able to visit the same point $n+1$ times (wrapping around the starting point of the path once between each visit --- so that the angle gap between the first and last visit is $2 \pi n$).  This allows us to determine whether a flow line almost surely has triple points, quadruple points, etc.  We record this formally in Proposition~\ref{prop::number_of_self_intersections} below.

If we consider flow lines of $h-\alpha \arg(\cdot) - \beta \log|\cdot|$, viewed as a distribution defined up to a global multiple of $2\pi(\chi+\alpha)$, where $h$ is a whole-plane GFF, $\alpha > -\chi$, and $\beta \in \R$ as in Theorem~\ref{thm::alphabeta}, the value of $\kappa
$ which is critical for a flow line starting at $0$ to be able to intersect itself is the non-negative solution to $\theta_c(\kappa) = 2\pi(1+\alpha/\chi)$.
Note that this in particular depends on $\alpha$ but not $\beta$ and that as $\alpha$ decreases to $-\chi$, the critical value of $\kappa$ decreases to $0$.  Similarly, the value of $\kappa$ which solves $\theta_c(\kappa) = 2\pi(1+\alpha/\chi)/n$ is critical for the intersection of flow lines whose angles differ by $2\pi/n$.


\begin{proposition}
\label{prop::number_of_self_intersections}
Suppose that $h_{\alpha \beta} = h - \alpha \arg(\cdot) - \beta \log|\cdot|$ where $h$ is a whole-plane GFF, $\alpha > -\chi$, and $\beta \in \R$, viewed as a distribution defined up to a global multiple of $2\pi(\chi+\alpha)$ and let $\eta$ be the flow line of $h_{\alpha \beta}$ starting from $0$.  Almost surely, the maximal number of times that $\eta$ hits any single point is (i.e., the maximal multiplicity)
\begin{equation}
\label{eqn::flow_line_maximal_self_intersect}
    \left\lceil \frac{8+4\alpha\sqrt{\kappa} - \kappa}{8+4\alpha\sqrt{\kappa}-2\kappa} \right\rceil -1.
\end{equation}
Equivalently, almost surely, the maximal number of times that a whole-plane or radial $\SLE_\kappa^\mu(\rho)$ process for $\kappa \in (0,4)$, $\rho > -2$, and $\mu \in \R$ hits any single point is given by
\begin{equation}
\label{eqn::whole_plane_sle_maximal_self_intersect}
 \left\lceil \frac{\kappa}{2(2+\rho)} \right\rceil.
\end{equation}
Finally, almost surely, the maximal number of times that a radial $\SLE_\kappa^\mu(\rho)$ process for $\kappa \in (0,4)$, $\rho > -2$, and $\mu \in \R$ can hit any point on the domain boundary (other than its starting point) is given by
\begin{equation}
\label{eqn::radial_sle_domain_boundary}
 \left\lceil \frac{\kappa}{2(2+\rho)} \right\rceil-1.
\end{equation}
\end{proposition}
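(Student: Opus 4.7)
The plan is to mirror the argument used for the interior bound \eqref{eqn::whole_plane_sle_maximal_self_intersect}, with the admissible height range for hitting changing from the two-sided interval $(-2\lambda',2\lambda')\setminus\{0\}$ (interior self-intersection) to the one-sided open interval $(0,2\lambda')$ for bouncing off the domain boundary; the ``$-1$'' in the final formula accounts for the loss of the height-zero level, which is occupied by the boundary itself.

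By Girsanov (as in Section~\ref{subsec::radial_sle}) it suffices to treat $\mu=0$, and then Proposition~\ref{prop::interior_force_point_coupling} couples $\eta$ with a GFF $h+\alpha\arg$ on $\D$ having the boundary data of Figure~\ref{fig::radial_bd}, where $\rho=\kappa-6+2\pi\alpha/\lambda$; in particular, $\rho>-2$ forces $\alpha>\chi$. By that proposition together with Figure~\ref{fig::interior_path_bd2}, the conditional field has $-\alpha$-flow line boundary conditions along $\eta$, so each complete wind of $\eta$ around $0$ shifts the height of the tail at $z$ by $\pm 2\pi(\alpha-\chi)$. Setting $g:=2\pi(\alpha-\chi)=\pi(2+\rho)/\sqrt{\kappa}$, a direct computation gives the key identity
\[
x \;:=\; \frac{\kappa}{2(2+\rho)} \;=\; \frac{2\lambda'}{g}.
\]

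Fix $z\in\partial\D\setminus\{W_0\}$ and suppose $\eta$ visits $z$ at distinct times $t_1<\cdots<t_n$. By Proposition~\ref{prop::tail_decomposition} and Lemma~\ref{lem::clean_tail_merge} (applied to flow lines starting at rational points in $\D$), to each $t_k$ I would associate a flow line tail of $\eta$ at $z$ with a well-defined height difference $H_k$ from the boundary data of $h+\alpha\arg$ at $z$, computed as in Figure~\ref{fig::angle_difference}. Between consecutive visits, $\eta$ makes an integer number of winds around $0$, so all $H_k$ lie on a single coset $H_\star+g\Z$; choosing the additive reference so that $H_\star=0$ (the boundary's height at $z$) gives $H_k\in g\Z$, and distinctness of visits forces the associated integers to be distinct. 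Theorem~\ref{thm::flow_line_interaction}, in the form used in the proof of Lemma~\ref{lem::path_close_hit}, then identifies the admissible $H_k$: since $\eta\subseteq\ol{\D}$ the tail hits $\partial\D$ from one fixed side only, so the relevant admissible interval is $(-\pi\chi,2\lambda')$; the crossing subinterval $(-\pi\chi,0)$ is topologically forbidden, and the merging point $\{0\}$ is excluded because for $\rho>-2$ the process has no continuation threshold on $\partial\D$ and is transient to $0$ by Proposition~\ref{prop::endpoint_continuity} rather than absorbed on the boundary. Hence $H_k\in g\Z\cap(0,2\lambda')=g\cdot\{1,2,\ldots,\lceil x\rceil-1\}$, giving $n\leq\lceil\kappa/(2(2+\rho))\rceil-1$.

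For the matching lower bound I would follow the construction of Lemma~\ref{lem::flow_line_ring}, repeatedly applying Lemma~\ref{lem::path_close_hit} to build with positive probability an angle-varying flow line that winds around $0$ the required number of times and hits a prescribed neighborhood of $z$ at each admissible wrap level; absolute continuity (Proposition~\ref{prop::local_set_whole_plane_bounded_compare}) and the scale invariance of the radial coupling then upgrade positive probability to almost sure occurrence arbitrarily near $z$. I expect the main obstacle to be justifying the exclusion of the height-zero level: the crossing exclusion is immediate from $\eta\subseteq\ol{\D}$, but ruling out ``merging'' with the boundary requires combining the non-absorption at $\{0,2\pi\}$ of the Bessel-type process in \eqref{eqn::theta_equation} (dimension $d(\rho,\kappa)>1$) with the endpoint continuity of Proposition~\ref{prop::endpoint_continuity}, together forcing $\eta$ to continue past each boundary hit rather than terminate on $\partial\D$.
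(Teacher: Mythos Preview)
Your approach is exactly the one the paper takes: the paper's entire proof of \eqref{eqn::radial_sle_domain_boundary} is the sentence ``A similar argument gives \eqref{eqn::radial_sle_domain_boundary}'', and you are expanding that analogy. Your identification of the winding increment $g=2\pi(\alpha-\chi)$ and the identity $2\lambda'/g=\kappa/(2(2+\rho))$ are correct, and using Theorem~\ref{thm::flow_line_interaction} to restrict the admissible heights is the right mechanism.

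The gap is in the sentence ``choosing the additive reference so that $H_\star=0$ (the boundary's height at $z$) gives $H_k\in g\Z$''. This is where the content of the argument sits, and as written it is circular: the height difference between a tail of $\eta$ at $z$ and the fixed boundary data of $h+\alpha\arg$ at $z$ is an intrinsic quantity determined by Figure~\ref{fig::radial_bd}; it lives in a single coset $H_0+g\Z$, but you cannot choose $H_0$. For a generic offset $H_0$ the set $(H_0+g\Z)\cap(0,2\lambda')$ can have $\lceil 2\lambda'/g\rceil$ elements rather than $\lceil 2\lambda'/g\rceil-1$, and your upper bound would fail by one.

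What forces $H_0\in g\Z$ is that the boundary data on $\partial\D$ in Figure~\ref{fig::radial_bd} is itself zero-angle flow line boundary data, so $\partial\D$ plays the role of an earlier segment of the \emph{same} flow line. This is transparent from the whole-plane/radial correspondence of Section~\ref{subsec::whole_plane_sle}: a boundary hit of the radial process at $z$ is precisely a self-intersection of the whole-plane flow line, with the first visit already accounted for by the boundary arc through $z$. Thus the $j$th radial boundary hit has height difference exactly $j\cdot g$, and the interior bound \eqref{eqn::whole_plane_sle_maximal_self_intersect} immediately gives \eqref{eqn::radial_sle_domain_boundary}${}=\eqref{eqn::whole_plane_sle_maximal_self_intersect}-1$. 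Once you argue this way, the separate exclusion of the height-zero level via Bessel dimension and endpoint continuity becomes unnecessary: height difference zero is simply the boundary's own visit, not a visit of $\eta$. (Your framing of the interior admissible range as $(-2\lambda',2\lambda')\setminus\{0\}$ is also not quite what the paper uses; the interior count is $1+\#\{j\geq 1: jg\in(0,2\lambda')\}$, which already matches the boundary count plus one.)
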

\begin{proof}
In between each successive time that $\eta$ hits any given point, it must wrap around its starting point.  Consequently, when $\eta$ hits a given point for the $j$th time, it intersects itself with a height difference of $2\pi(\chi+\alpha)(j-1)$.  That is, the intersection can be represented as the intersection of two flow line tails which intersect each other with the aforementioned height difference.  Theorem~\ref{thm::flow_line_interaction} implies that $2\pi(\chi+\alpha)(j-1) \in (0,2\lambda-\pi\chi)$ from which~\eqref{eqn::flow_line_maximal_self_intersect} and~\eqref{eqn::whole_plane_sle_maximal_self_intersect} follow.  A similar argument gives~\eqref{eqn::radial_sle_domain_boundary}.
\end{proof}

\begin{figure}[ht!]
\begin{center}
\includegraphics[scale=0.85]{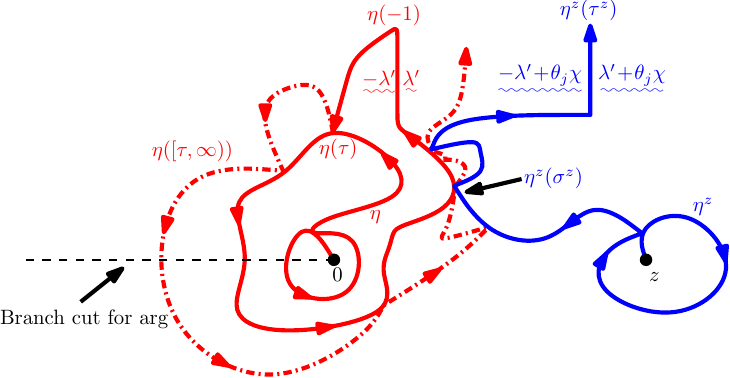}
\end{center}
\caption{\label{fig::intersection_dimension}
Setup for the proof of the lower bound of Proposition~\ref{prop::intersection_dimension}.  Suppose that $h_{\alpha \beta} = h - \alpha \arg(\cdot) - \beta\log|\cdot|$, $h$ a whole-plane GFF, viewed as a distribution defined up to a global multiple of $2\pi(\chi+\alpha)$.  Let $z=5$ and let $\eta,\eta^z$ be flow lines of $h_{\alpha\beta}$ starting from $0,z$, respectively.  We assume that $\eta$ (resp.\ $\eta^z$) has angle $0$ (resp.\ $\theta_j$ as in~\eqref{eqn::dimension_angle}) where the angle for $\eta^z$ is defined by first fixing an infinitesimal segment of $\eta$ (this allows us to determine the values of the remainder of the field up to a global multiple of $2\pi \chi$; recall Remark~\ref{rem::flow_line_alpha_non_zero}).  Let $\tau$ be the first time after capacity time $-1$ that $\eta$ makes a clean loop.  By Lemma~\ref{lem::radial_loop}, the event $E_1 = \{\tau < 0\}$ has positive probability.  Note that $z$ is contained in the unbounded connected component of $\C \setminus \eta((-\infty,\tau])$ on $E_1$ by \cite[Proposition~3.27]{LAW05}.  Let $\sigma^z$ be the first time that $\eta^z$ hits $\eta((-\infty,\tau])$.  By Lemma~\ref{lem::path_close_hit}, the event $E_2$ that $\eta^z$ hits $\eta((-\infty,\tau])$ at time $\sigma^z$  with a height difference of $\theta_j \chi$ occurs with positive conditional probability given $E_1$.  Let $\tau^z$ be a stopping time for $\eta^z$ given $\eta|_{(-\infty,\tau]}$ with $\tau^z > \sigma^z$ such that on $E_1 \cap E_2$, $\eta^z|_{[\sigma^z,\tau^z]}$ intersects $\eta|_{(-\infty,\tau]}$ only with a height difference of $\theta_j \chi$.  Given this, in each of the first $j-1$ times that $\eta|_{[\tau,\infty)}$ wraps around $0$ it has a positive chance of hitting $\eta^z((-\infty,\sigma^z])$ (hence itself) before capacity time $0$ by Lemma~\ref{lem::path_close_hit}.  After wrapping around $j-1$ times, $\eta|_{[\tau,\infty)}$ has a positive chance of making a clean a loop before time $0$ or intersecting itself in $\eta((-\infty,\tau])$.  On this event, the set of points that $\eta|_{(-\infty,0]}$ hits $j$ times contains $\eta((-\infty,\tau]) \cap \eta^z([\sigma^z,\tau^z])$.}
\end{figure}


The following proposition will be used in \cite{MW_INTERSECTION} to compute the almost sure Hausdorff dimension of the set of points that $\eta$ hits exactly $j$ times.  Let $\dim_\CH(A)$ denote the Hausdorff dimension of a set $A$.

\begin{proposition}
\label{prop::intersection_dimension}
Suppose that we have the same setup as Proposition~\ref{prop::number_of_self_intersections}.  For each $j \in \N$, let $\CI_j$ be the set of points that $\eta$ hits exactly $j$ times and
\begin{equation}
\label{eqn::dimension_angle}
 \theta_j = 2\pi (j-1) \left(1+\frac{\alpha}{\chi}\right) = 2\pi (j-1) \left(\frac{2+\rho}{4-\kappa} \right).
\end{equation}
Assume that, for each $\theta$, there exists a constant $d(\theta) \geq 0$ such that
\begin{equation}
\label{eqn::boundary_intersection_dimension}
 \p[ \dim_\CH(\eta_1 \cap \eta_2 \cap \h) = d(\theta) \ | \ \eta_1 \cap \eta_2 \cap \h \neq \emptyset] = 1
\end{equation}
where $\eta_1,\eta_2$ are flow lines of a GFF on $\h$ starting from $\partial \h$ with an angle difference of $\theta$ such that the event conditioned on in~\eqref{eqn::boundary_intersection_dimension} occurs with positive probability.  Then
\[ \p[ \dim_\CH(\CI_j) = d(\theta_j)] = 1.\]
\end{proposition}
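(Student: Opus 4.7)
The proposal is to prove matching upper and lower bounds on $\dim_\CH(\CI_j)$, each obtained by reducing locally to the intersection of two flow-line \emph{tails} whose height difference is exactly $\theta_j \chi$, and then invoking the hypothesized boundary-emanating dimension $d(\theta_j)$ via the absolute continuity results of Section~\ref{subsec::gff_convergence} and Section~\ref{subsec::local_sets} (notably Proposition~\ref{prop::whole_plane_abs_continuity} and Proposition~\ref{prop::local_set_whole_plane_bounded_compare}). The computation $(j-1) \cdot 2\pi(\chi+\alpha) = \theta_j \chi$, obtained from \eqref{eqn::dimension_angle}, is the arithmetic link between interior self-intersections and the boundary angle gap.

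\textbf{Lower bound.} The plan is to implement the construction sketched in Figure~\ref{fig::intersection_dimension}. Fix $z \neq 0$ and let $\eta^z$ be the flow line of $h_{\alpha\beta}$ started at $z$ with angle $\theta_j$ relative to $\eta$ (well-defined in the sense of Remark~\ref{rem::flow_line_alpha_non_zero} once an infinitesimal initial segment of $\eta$ is fixed). By Lemma~\ref{lem::radial_loop} the event $E_1$ that $\eta$ makes a clean loop around $0$ before capacity time $0$ has positive probability, and on $E_1$, by \cite[Proposition~3.27]{LAW05}, the point $z$ lies in the unbounded component of $\C \setminus \eta((-\infty,\tau])$. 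Conditional on $E_1$, Lemma~\ref{lem::path_close_hit} (which extends to $\alpha,\beta \neq 0$ by absolute continuity of $h_{\alpha\beta}$ with respect to $h$ on sets bounded away from $0$) provides a positive-probability event $E_2$ on which $\eta^z$ hits $\eta((-\infty,\tau])$ at height difference exactly $\theta_j \chi$. Apply Lemma~\ref{lem::path_close_hit} inductively to force $\eta|_{[\tau,0]}$ to wind around $0$ exactly $j-1$ further times, each winding striking $\eta^z([0,\sigma^z])$ and thereby striking itself. On the resulting positive-probability event, every point of $\eta((-\infty,\tau]) \cap \eta^z([\sigma^z,\tau^z])$ is visited by $\eta$ at least $j$ times. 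Absolute continuity (Proposition~\ref{prop::local_set_whole_plane_bounded_compare}) shows that, locally in any open set bounded away from $\{0,z\}$, the pair $(\eta,\eta^z)$ has law mutually absolutely continuous with respect to a pair of boundary-emanating flow lines of angle difference $\theta_j$; by hypothesis~\eqref{eqn::boundary_intersection_dimension} this intersection has Hausdorff dimension $d(\theta_j)$ almost surely on its event of non-emptiness. Hence $\p[\dim_\CH(\CI_j) \geq d(\theta_j) \mid \CI_j \neq \emptyset] > 0$, and a zero-one / scale-invariance argument (using that $(h_{\alpha\beta},\eta)$ is scale invariant and that the event $\{\dim_\CH \CI_j \geq d(\theta_j)\}$ is a tail event) upgrades this to probability one.

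\textbf{Upper bound.} The plan is to exhibit a countable cover of $\CI_j$ by intersections of pairs of flow-line tails with height gap $\theta_j \chi$. If $z \in \CI_j$ and $\eta$ visits $z$ at times $t_1 < \cdots < t_j$, then Proposition~\ref{prop::tail_decomposition} represents each $t_i$ as lying in a tail $\eta^{\tau_i}$, and the proof of Proposition~\ref{prop::number_of_self_intersections} shows that between two successive visits $\eta$ must wrap around $0$ exactly once, so the height difference between the tails $\eta^{\tau_i}$ and $\eta^{\tau_{i+1}}$ at $z$ equals $2\pi(\chi+\alpha)$, and hence between $\eta^{\tau_1}$ and $\eta^{\tau_j}$ equals $\theta_j \chi$. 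Lemma~\ref{lem::clean_tail_merge} then represents each such tail as (a segment of) a flow line started from a rational base point $w_m$, giving a countable collection of pairs $(\eta_{w_m},\eta_{w_n})$ whose union of pairwise intersections contains $\CI_j$. For each such pair, conditionally on initial segments and restricted to a small neighborhood of any intersection point (bounded away from $\{0,w_m,w_n\}$), the absolute continuity arguments of Section~\ref{subsec::interaction} (applied as in the proofs of Proposition~\ref{prop::tail_interaction} and Proposition~\ref{prop::cross_finite}) give that $\eta_{w_m} \cap \eta_{w_n}$ locally has the law of the intersection of two boundary-emanating flow lines of angle difference $\theta_j$, and so has Hausdorff dimension at most $d(\theta_j)$ by the hypothesis. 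Countable stability of Hausdorff dimension yields $\dim_\CH(\CI_j) \leq d(\theta_j)$ almost surely.

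\textbf{Main obstacle.} The principal delicacy is the upper bound, specifically the requirement that for every $z \in \CI_j$ we can exhibit a witnessing pair of rational-based tails whose height gap is \emph{exactly} $\theta_j \chi$ rather than $\theta_k \chi$ for some $k > j$. This necessitates a careful bookkeeping of the windings of $\eta$ around $0$ interleaved with self-intersections: we need to know that the only way $\eta$ can return to a point is by winding once around $0$ in between (ruling out ``hidden'' extra windings within a single tail), and that the tail-replacement procedure of Lemma~\ref{lem::clean_tail_merge} preserves the height gap. A secondary but essential point is that Lemma~\ref{lem::path_close_hit} must be promoted to the setting with a conical and logarithmic singularity; this follows from absolute continuity on sets bounded away from the singularity at $0$, but needs to be done uniformly enough to allow the iterated application used in the lower bound.
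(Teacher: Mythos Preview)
Your proposal is correct and follows essentially the same approach as the paper's proof: the upper bound via tail decomposition plus absolute continuity with the boundary-emanating setting, and the lower bound via the auxiliary flow line $\eta^z$ at angle $\theta_j$, iterated applications of Lemma~\ref{lem::path_close_hit} to force $j-1$ windings, and then the conformal Markov property (your scale-invariance/tail-event argument) to upgrade positive probability to probability one. Two small remarks: the paper inserts one additional step you omit---after the $j-1$ wraps, $\eta$ makes a further \emph{clean loop} (Lemma~\ref{lem::radial_loop}) before time $0$, which seals the intersection set inside the hull and is what guarantees those points lie in $\CI_j(0)$ with multiplicity \emph{exactly} $j$ rather than at least $j$; and what you flag as the ``main obstacle'' (exact height gap $\theta_j\chi$ in the cover) is in fact already settled by the argument of Proposition~\ref{prop::number_of_self_intersections}, so the paper disposes of the upper bound in two sentences and spends its effort on the lower bound construction.
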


We emphasize that the assumption~\eqref{eqn::boundary_intersection_dimension} in the statement of Proposition~\ref{prop::intersection_dimension} applies to any choice of boundary data for the GFF on $\h$ and starting points for $\eta_1,\eta_2$.  That is, the angle difference determines the almost sure dimension of the intersection points which are contained in (the interior of) $\h$ and nothing else.  (The dimension of $\eta_i \cap \partial \h$ for $i=1,2$, however, does depend on the boundary data for $h$.)  The value of $d(\theta_j)$ for $j \geq 2$ is computed in \cite{MW_INTERSECTION}.  The value of $d(\theta_1) = d(0)$ is the dimension of ordinary chordal $\SLE_\kappa$: $1+\tfrac{\kappa}{8}$ \cite{BEF_DIM}.  Note that when $\rho=\tfrac{\kappa}{2}-2$, $\theta_2$ is equal to the critical angle~\eqref{eqn::critical_angle} and $\theta_j$ exceeds the critical angle for larger values of $j$.

\begin{proof}[Proof of Proposition~\ref{prop::intersection_dimension}]
The set of points that $\eta$ hits exactly $j$ times can be covered by the set of intersections of pairs of flow line tails starting from points with rational coordinates which intersect with an angle gap of $\theta_j$.  By the proofs of Proposition~\ref{prop::tail_interaction} and Proposition~\ref{prop::tail_decomposition}, the law of the dimension of each of these intersections is absolutely continuous with respect to the intersection of two boundary emanating GFF flow lines with an angle gap of $\theta_j$.  This proves the upper bound.

We are now going to give the proof of the lower bound.  See Figure~\ref{fig::intersection_dimension} for an illustration.  Assume that $j$ is between $2$ and the (common) values in~\eqref{eqn::flow_line_maximal_self_intersect},~\eqref{eqn::whole_plane_sle_maximal_self_intersect} (for $j=1$ or values of $j$ which exceed~\eqref{eqn::flow_line_maximal_self_intersect},~\eqref{eqn::whole_plane_sle_maximal_self_intersect}, there is nothing to prove).  Assume that $\eta$ is parameterized by capacity.  For each $t \in \R$, let $\CI_j(t)$ be the set of points that $\eta$ hits exactly $j$ times by time $t$ and which are not contained in the boundary of the unbounded connected component of $\C \setminus \eta((-\infty,t])$.  By the conformal Markov property of whole-plane $\SLE_\kappa^\mu(\rho)$, it suffices to show that there exists $p_0 > 0$ such that
\begin{equation}
\label{eqn::self_intersect_hd_lbd}
\p[ \dim_\CH(\CI_j(0)) \geq d(\theta_j)] \geq p_0.
\end{equation}
To see that this is the case, we let $\tau$ be the first time after time $-1$ that $\eta$ makes a clean loop around $0$ and condition on $\eta|_{(-\infty,\tau]}$.  Lemma~\ref{lem::radial_loop} implies that $E_1 = \{\tau < 0\}$ occurs with positive probability.  Let $z=5$.  By \cite[Proposition~3.27]{LAW05}, $z$ is contained in the unbounded component of $\C \setminus \eta((-\infty,\tau])$ on $E_1$.  On $E_1$, let $\eta^z$ be the flow line of $h_{\alpha \beta}$ starting from $z$ with angle $\theta_j$.  (The reason we are able to set the angle for $\eta^z$ is that, after fixing an infinitesimal segment of $\eta$, we know the remainder of the field up to a multiple of $2\pi \chi$; see Remark~\ref{rem::flow_line_alpha_non_zero}.)  Let $\sigma^z$ be the first time $\eta^z$ hits $\eta((-\infty,\tau])$.  Lemma~\ref{lem::path_close_hit} implies that the event $E_2$ that $\eta^z$ hits $\eta((-\infty,\tau])$ at time $\sigma^z$ with an angle difference of $\theta_j$ occurs with positive conditional probability given $E_1$.  Let $\tau^z$ be a stopping time for $\eta^z$ given $\eta|_{(-\infty,\tau]}$ which is strictly larger than $\sigma^z$ such that, on $E_1 \cap E_2$, we have that $\eta|_{(-\infty,\tau]}$ and $\eta^z|_{[\sigma^z,\tau^z]}$ only intersect with an angle difference of $\theta_j$.  To finish the proof of~\eqref{eqn::self_intersect_hd_lbd}, it suffices to show that $\CI_j(0)$ contains $\eta((-\infty,\tau]) \cap \eta^z([\sigma^z,\tau^z])$ given $E_1 \cap E_2$ with positive conditional probability.  Iteratively applying Lemma~\ref{lem::path_close_hit} implies that, each of the first $j-1$ times that $\eta|_{[\tau,\infty)}$ wraps around $0$, it has a positive chance of hitting $\eta^z((-\infty,\sigma^z])$ (hence itself) and, moreover, this occurs before (capacity) time $0$ for $\eta$.  After wrapping around $j-1$ times, by Lemma~\ref{lem::radial_loop}, $\eta$ has a positive chance of making a clean a loop before intersecting itself again or time $0$.  Theorem~\ref{thm::flow_line_interaction} then implies that, on these events, the set of points that $\eta|_{[\tau,\infty)}$ hits in $\eta|_{(-\infty,\tau]}$ by the time it has wrapped around $j-1$ times contains $\eta((-\infty,\tau]) \cap \eta^z([\sigma^z,\tau^z])$.  This implies the desired result.
\end{proof}

\begin{proposition}
\label{prop::boundary_intersection_dimension}
Fix $\alpha > - \chi$, $\beta \in \R$, and let $h_{\alpha \beta} = h+\alpha \arg(\cdot) + \beta \log|\cdot|$ where $h$ is a GFF on $\D$ such that $h_{\alpha\beta}$ has the boundary data as illustrated in Figure~\ref{fig::radial_bd}.  Let $\eta$ be the flow line of $h_{\alpha \beta}$ starting from $W_0$ and assume that either $O_0 = W_0^+$ or $O_0 = W_0^-$.  For each $j \in \N$, let $\CJ_j$ be the set of points in $\partial \D$ that $\eta$ hits exactly $j$ times.  Assume that, for each $\theta$, there exists a constant $b(\theta) \geq 0$ such that
\begin{equation}
\label{eqn::boundary_intersection_dimension2}
 \p[ \dim_\CH(\eta_0 \cap \partial \h) = b(\theta)] = 1
\end{equation}
where $\eta_0$ is the flow line of a GFF on $\h$ starting from $0$ whose boundary data is such that $\eta_0$ almost surely intersects $\partial \h$ with an angle difference of $\theta$.
Then
\[ \p[ \dim_\CH(\CJ_j) = b(\theta_{j+1})\giv\CJ_j \neq \emptyset] = 1\]
provided $\p[\CJ_j \neq \emptyset] > 0$.
\end{proposition}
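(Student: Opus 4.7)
The plan is to follow the template of Proposition~\ref{prop::intersection_dimension}, replacing intersections of pairs of flow line tails by intersections of a single flow line tail with a boundary arc of $\partial \D$. Recall from Figure~\ref{fig::conical_boundary_data} that between any two successive visits to the same point, $\eta$ must wrap once around the conical singularity at $0$, and that each such wrap shifts the local height of the flow line by $\pm 2\pi(\chi+\alpha)$. Consequently, the $k$-th visit to a given boundary point by $\eta$ occurs at a height which differs from the first visit by $2\pi(k-1)(\chi+\alpha)$, and hence the height gap between the $k$-th hitting tail and the fixed boundary data on $\partial \D$ equals the base gap at the first visit shifted by this amount. When we read off the resulting angle gaps using the conventions of Figure~\ref{fig::angle_difference}, the $j$-th hit to a boundary point occurs with angle difference $\theta_{j+1}$ relative to $\partial\D$, the extra unit accounting for the offset already present between the flow line tail and the boundary at the first hit.

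For the upper bound, I would apply Proposition~\ref{prop::boundary_tail_decomposition} (adapted to the radial setting via Proposition~\ref{prop::local_set_whole_plane_bounded_compare}) to decompose $\eta$ into a countable collection of tails $(\eta_{w_i}^{\tau_i})_{i \in \N}$ based at points with rational coordinates. Any point in $\CJ_j$ is then contained in the intersection of one such tail with a boundary arc of $\partial \D$ not containing the images of $W_0$ or $O_0$. Away from those two points, Proposition~\ref{prop::local_set_whole_plane_bounded_compare} and the proof of Proposition~\ref{prop::tail_interaction} make the law of $h_{\alpha\beta}$ restricted to a small neighborhood of the intersection arc mutually absolutely continuous with the law of a GFF in a half-plane with boundary data for which the corresponding boundary-emanating flow line hits $\partial \h$ with angle gap $\theta_{j+1}$. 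Combined with hypothesis \eqref{eqn::boundary_intersection_dimension2}, this shows that each term in the countable decomposition has dimension at most $b(\theta_{j+1})$, and countable stability of Hausdorff dimension yields $\dim_\CH(\CJ_j) \leq b(\theta_{j+1})$ a.s.

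For the lower bound, the plan is, on the event $\CJ_j \neq \emptyset$, to exhibit an explicit piece of $\CJ_j$ which is absolutely continuous with a boundary-emanating intersection of the type appearing in \eqref{eqn::boundary_intersection_dimension2}. Since $\p[\CJ_j \neq \emptyset] > 0$, there exists (by a cover argument) an arc $I \subset \partial \D$, disjoint from $W_0$ and $O_0$, for which $\p[I \cap \CJ_j \neq \emptyset] > 0$. Starting from a stopping time at which $\eta$ has just completed its $(j-1)$-st visit to a neighborhood of $I$ with the correct cumulative height offset $2\pi(j-1)(\chi+\alpha)$ --- constructed by repeated application of Lemma~\ref{lem::path_close} and Lemma~\ref{lem::path_close_hit} together with the wrapping mechanism of Lemma~\ref{lem::flow_line_ring} --- one forces $\eta$ to make a $j$-th approach to $I$ in a specified pocket and then, again with positive conditional probability, to exit a neighborhood of $I$ without returning. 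Conformally mapping this last approach to $(\h,0)$, the conditional law of the terminal tail together with the relevant arc of $I$ is absolutely continuous with respect to the configuration in \eqref{eqn::boundary_intersection_dimension2} with angle difference $\theta_{j+1}$ by Proposition~\ref{prop::local_set_whole_plane_bounded_compare}. Hypothesis \eqref{eqn::boundary_intersection_dimension2} then yields that the exhibited portion of $\CJ_j$ has dimension $b(\theta_{j+1})$ with positive probability, and a zero-one argument based on the scale/conformal invariance of the underlying coupling (mirroring the use of scale invariance in Proposition~\ref{prop::intersection_dimension}) upgrades this to the a.s.\ statement on $\{\CJ_j \neq \emptyset\}$.

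The main technical obstacle, as in Proposition~\ref{prop::intersection_dimension}, is the construction for the lower bound: one must arrange the forcing so that the configuration produces points hit \emph{exactly} $j$ times and not more. This will rely on Theorem~\ref{thm::flow_line_interaction} (to rule out extra hits from incompatible height differences) and Proposition~\ref{prop::number_of_self_intersections} (to control the maximum possible multiplicity), combined with the conformal Markov property to cap off the configuration after the $j$-th visit so that $\eta$ leaves a neighborhood of $I$ and cannot return to the relevant arc.
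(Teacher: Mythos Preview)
Your proposal is correct and takes essentially the same approach as the paper: the paper's proof consists of the single sentence ``This is proved in a very similar manner to Proposition~\ref{prop::intersection_dimension},'' and what you have written is precisely a careful unpacking of that remark---the tail decomposition and absolute continuity for the upper bound, and the forced-wrapping construction plus conformal Markov property for the lower bound. One small comment: the zero-one step at the end should be phrased via the conformal Markov property of radial $\SLE_\kappa^\mu(\rho)$ (Proposition~\ref{prop::radial_conf_markov}) rather than scale invariance, since you are on $\D$ rather than $\C$; this is the direct analog of how Proposition~\ref{prop::intersection_dimension} uses the conformal Markov property of whole-plane $\SLE_\kappa^\mu(\rho)$.
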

Note that Proposition~\ref{prop::boundary_intersection_dimension} gives the dimension of the set of points that radial $\SLE_\kappa(\rho)$, $\kappa \in (0,4)$ and $\rho \in (-2,\tfrac{\kappa}{2}-2)$, hits the boundary $j$ times.
\begin{proof}[Proof of Proposition~\ref{prop::boundary_intersection_dimension}]
This is proved in a very similar manner to Proposition~\ref{prop::intersection_dimension}.
\end{proof}

\section{Light cone duality and space-filling SLE}
\label{sec::duality_space_filling}

\subsection{Defining branching $\SLE_\kappa(\rho)$ processes}
\label{subsec::branching_definitions}

\begin{figure}[ht!]
\begin{center}
\includegraphics[scale=0.85]{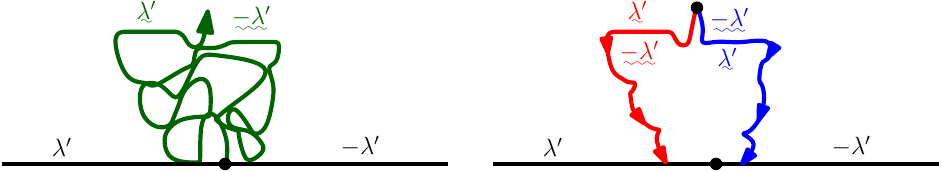}
\end{center}
\caption{\label{fig::counterflow}
Suppose that $h$ is a GFF on $\h$ with the boundary data shown.  Then the counterflow line $\eta'$ of $h$ starting from $0$ is a chordal $\SLE_{\kappa'}$ process from $0$ to $\infty$.  Let $\tau'$ be a stopping time for $\eta'$.  In Theorem~\ref{thm::lightcone}, we show that the outer boundary of $\eta'([0,\tau'])$ is, in a certain sense, given by the union of the flow lines of angle $\tfrac{\pi}{2}$ and $-\tfrac{\pi}{2}$ starting from $\eta'(\tau')$ and stopped upon hitting $\partial \h$.  The same result holds when the boundary data of $h$ is piecewise constant and changing only a finite number of times in which case $\eta'$ is an $\SLE_{\kappa'}(\ul{\rho})$ process.}
\end{figure}

As usual, we fix $\kappa \in (0,4)$ and $\kappa' =16/\kappa > 4$.  Consider a GFF on $\h$ with piecewise constant boundary conditions (and only finitely many pieces).  Recall that if the boundary conditions are constant and equal to $\lambda'$ on the negative real axis and constant and equal to $-\lambda'$ on the positive real axis, by \cite[Theorem~1.1]{MS_IMAG} one can draw a counterflow line from $0$ to $\infty$ whose law is that of ordinary $\SLE_{\kappa'}$, as in Figure~\ref{fig::counterflow}.  For each $z \in \ol{\h}$, let $\eta_z^L$ (resp.\ $\eta_z^R$) be the flow line of $h$ starting from $z$ with angle $\tfrac{\pi}{2}$ (resp.\ $-\tfrac{\pi}{2}$).  In this section, we will argue that the left and right boundaries of an initial segment of the counterflow line are in some sense described by $\eta_z^L$ and $\eta_z^R$ where $z$ is the tip of that segment, as illustrated in Figure~\ref{fig::counterflow}.  When $\kappa'\geq 8$ and the counterflow line is space-filling, we can describe $\eta_z^L$ and $\eta_z^R$ beginning at any point $z$ as the boundaries of a counterflow line stopped at the first time it hits $z$.  We will show in this section that when $\kappa' \in (4,8)$ it is still possible to construct a ``branching'' variant of $\SLE_{\kappa'}$ that has a branch that terminates at $z$ (and the law of this branch is the same as that of a certain radial $\SLE_{\kappa'}(\ul{\rho})$ process targeted at $z$).  The flow lines $\eta_z^L$ and $\eta_z^R$ will then be the left and right boundaries of this branch, just as in the case $\kappa' \geq 8$.  We will make sense of this construction for both boundary and interior points~$z$.  (When $z$ is an interior point, we will have to lift the $\SLE_{\kappa'}$ branch to the universal cover of $\C \setminus \{z \}$ in order to define its left and right boundaries.)

\begin{figure}[ht!]
\begin{center}
\includegraphics[scale=0.85]{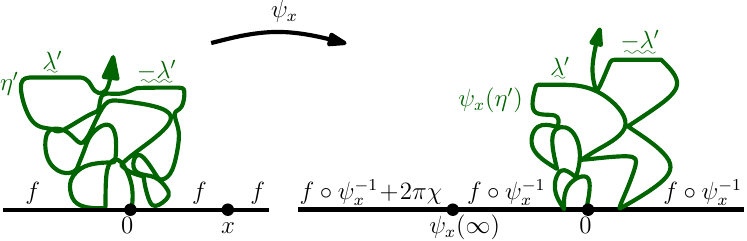}
\end{center}
\caption{\label{fig::branchable}
Suppose that $h$ is a GFF on $\h$ with the boundary data shown where $f$ is a piecewise constant function which changes values finitely many times.  Then the counterflow line $\eta'$ of $h$ starting from $0$ is a chordal $\SLE_{\kappa'}(\ul{\rho})$ process from $0$ to $\infty$.  If $f|_{\R_{-}} > -\lambda'$ and $f|_{\R_+} < \lambda'$, then $\eta'$ will reach $\infty$ almost surely \cite[Section~7]{MS_IMAG}.  Fix $x \in (0,\infty)$ and let $\psi_x \colon \h \to \h$ be a conformal map which takes $x$ to $\infty$ and fixes $0$.  Then $h \circ \psi_x^{-1} - \chi \arg (\psi_x^{-1})'$ has the boundary data shown on the right side.  In particular, $\psi_x(\eta')$ will reach $\infty$ almost surely if the boundary values on the right side are strictly larger than $-\lambda'$ on $\R_-$ and are strictly less than $\lambda'$ on $\R_+$.  These are the conditions which determine whether it is possible to draw a branch of $\eta'$ which is targeted at and almost surely reaches $x$.}
\end{figure}

We begin by constructing (for a certain range of boundary conditions) a counterflow line of $h$ that ``branches'' at boundary points.  Recall that we can draw a counterflow line from $0$ to $\infty$ provided that the boundary data is piecewise constant (with only finitely many pieces) and strictly greater than $-\lambda'$ on the negative real axis and strictly less than $\lambda'$ on the positive real axis \cite[Section~7]{MS_IMAG}.  (This is a generalization of the construction in Figure~\ref{fig::counterflow}.) Given this type of boundary data, the continuation threshold almost surely will not be reached before the path reaches $\infty$.  Indeed, it was shown in \cite[Section~7]{MS_IMAG} that if these constraints are satisfied then the counterflow line is almost surely continuous and almost surely tends to $\infty$ as (capacity) time tends to~$\infty$.

Now suppose we consider a real point $x >0$ and try to draw a path targeted at $x$.  By conformally mapping $x$ to $\infty$, we find that we avoid reaching the continuation threshold at a point on $[x, \infty)$ if each boundary value on that interval {\em plus} $2 \pi \chi = (4-\kappa)\lambda = (\kappa'-4)\lambda'$ is strictly greater than $-\lambda'$ (recall~\eqref{eqn::ac_eq_rel} and see Figure~\ref{fig::branchable}).  That is, the values on $[x, \infty)$ are strictly greater than $(3-\kappa')\lambda'$.  We can draw a counterflow line from $0$ to {\em every} point $x > 0$ if the boundary values on the positive real axis are in the interval $\bigl( (3-\kappa')\lambda', \lambda'\bigr)$.  Similarly, we can draw a counterflow line from $0$ to {\em every} point $x < 0$ if the boundary values on the negative real axis are in the interval $\bigl( -\lambda', (\kappa'-3)\lambda'\bigr)$.  We can draw the counterflow line to each fixed point in $\R \setminus \{0 \}$ if the boundary function is given by $f \colon \R \to \R$ which is piecewise constant, takes on only a finite number of values, and satisfies
\begin{equation}
\label{eqn::fullybranchable}
f(x) \in \begin{cases} \bigl( -\lambda', (\kappa'-3)\lambda'\bigr) &\quad\text{for}\quad\quad x < 0 \\ \bigl( (3-\kappa')\lambda', \lambda'\bigr) &\quad\text{for}\quad\quad x > 0 \end{cases}.
\end{equation}
(In the case of an $\SLE_{\kappa'}(\rho_1;\rho_2)$ process, this corresponds to each $\rho_i$ being in the interval $(-2, \kappa'-4)$.)   When these conditions hold, we say that the boundary data is {\bf fully branchable}, and we extend this definition to general domains via the coordinate change~\eqref{eqn::ac_eq_rel} (see also Figure~\ref{fig::coordinatechange}).  The reason we use this term is as follows.

If we consider distinct $x, y \in \R \setminus \{0\}$, then the path targeted at $x$ will agree with the path targeted at $y$ (up to time parameterization) until $\tau$, the first time~$t$ that either one of the two points is hit or the two points are ``separated,'' i.e., lie on the boundaries of different components of $\h \setminus \eta'([0,t])$.  Indeed, this follows because both paths (up until separating $x$ and $y$) are coupled with the field as described in \cite[Theorem~1.1]{MS_IMAG} and \cite[Theorem~1.2]{MS_IMAG} implies that there is a unique such path coupled with the field, so they must agree.  The two paths evolve independently after time $\tau$, so as explained in Figure~\ref{fig::boundarybranching} we can interpret the pair of paths as a single path that ``branches'' at time $\tau$ (with one of the branches being degenerate if a point is hit {\em at} time $\tau$).  We can apply the same interpretation when $x$ and $y$ are replaced by a set $\{x_1, x_2, \ldots, x_n \}$.  In that case, a branching occurs whenever two points are separated from each other for the first time.  At such a time, the number of distinct components containing at least one of the $x_i$ increases by one, so there will be $(n-1)$ branching times altogether.  If the boundary conditions are fully branchable, then we may fix a countable dense collection of $\R$, and consider the collection of all counterflow lines targeted at all of these points.  We refer to the entire collection as a {\bf boundary-branching counterflow line}.  Note that if we have constant boundary conditions on the left and right boundaries, so that the counterflow line targeted at $\infty$ is an $\SLE_{\kappa'}(\rho_1; \rho_2)$ process, then we can extend the path toward every $x \in \R$ provided that $\rho_i \in (-2, \kappa'-4)$ for $i \in \{1,2 \}$.

Similarly, we may consider a point $z$ in the interior of $\h$, and the (fully branchable) boundary conditions ensure that we may draw a radial counterflow line from $0$ targeted at $z$ which almost surely reaches $z$ before hitting the continuation threshold.  The branching construction can be extended to this setting, as explained in Figure~\ref{fig::interiorbranching}.  By considering a collection of counterflow lines targeted at a countable dense collection of such $z$, we obtain what we call an {\bf interior-branching counterflow line}.  This is a sort of space-filling tree whose branches are counterflow lines.  When the values of $h$ on $\partial \h$ are given by either
\begin{enumerate}[(i)]
\item\label{item::cle_left}  $-\lambda'+2\pi \chi$ on $\R_-$ and $-\lambda'$ on $\R_+$ or
\item\label{item::cle_right} $\lambda'$ on $\R_-$ and $\lambda'-2\pi \chi$ on $\R_+$,
\end{enumerate}
\noindent then this corresponds to the $\SLE_{\kappa'}(\kappa'-6)$ {\em exploration tree} rooted at the origin introduced in \cite{SHE_CLE}.  The boundary conditions in~\eqref{item::cle_left} correspond to having the $\kappa'-6$ force point lie to the left of $0$ and the boundary conditions in~\eqref{item::cle_right} correspond to having the $\kappa'-6$ force point lie to the right of $0$.

\begin{figure}[ht!]
\begin{center}
\includegraphics[scale=0.85]{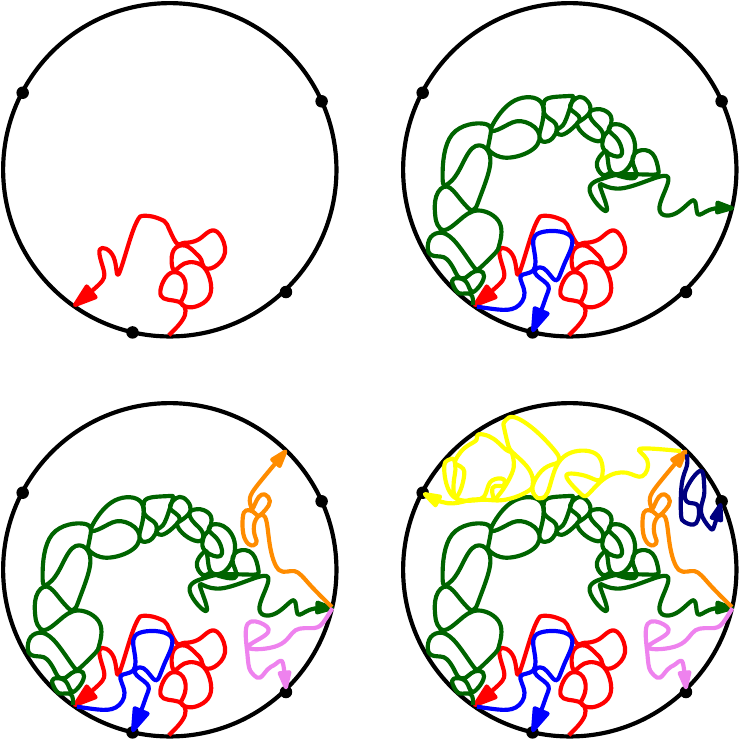}
\end{center}
\caption{\label{fig::boundarybranching}
Suppose that $h$ is a GFF on a Jordan domain $D$ and $x_1,\ldots,x_4 \in \partial D$.  If for each $i=1,\ldots,4$, $\eta_i'$ is the counterflow line of $h$ starting from a fixed boundary point and targeted at $x_i$, then any two $\eta_i'$ will almost surely agree up until the first time that their target points are separated (i.e., cease to lie in the same component of the complement of the path traced thus far).  We may therefore understand the union of the $\eta_i'$ as a single counterflow line that ``branches'' whenever any pair of points is disconnected, continuing in two distinct directions after that time, as shown.  (Whenever a curve branches a new color is assigned to each of the two branches.)
}
\end{figure}

\begin{figure}[ht!]
\begin{center}
\includegraphics[scale=0.85]{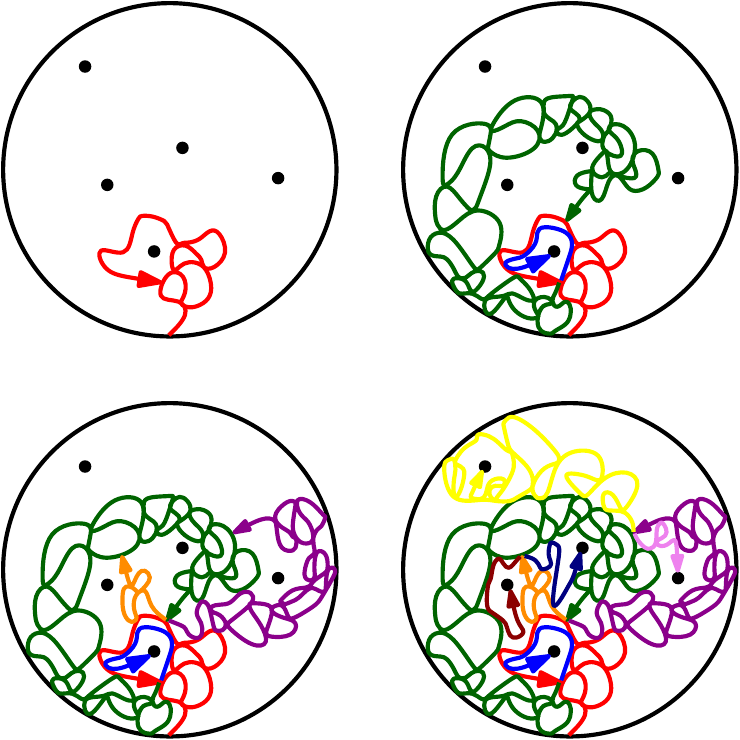}
\end{center}
\caption{\label{fig::interiorbranching}
Suppose that $h$ is a GFF on a Jordan domain $D$.  Fix interior points points $z_1, \ldots, z_5$ in $D$.  For each $i=1,\ldots,5$, let $\eta_i'$ be the counterflow line from a fixed boundary point to $z_i$.  Then any two $\eta_i'$ will almost surely agree up until the first time that their target points are separated (i.e., cease to lie in the same component of the complement of the path traced thus far).  We may therefore understand the union of the $\eta_i'$ as a single counterflow line that ``branches'' whenever any pair of points is disconnected, continuing in two distinct directions after that time, as shown.  (Whenever a curve branches a new color is assigned to each of the two branches.)
}
\end{figure}

\subsection{Duality and light cones}
\label{subsec::duality}

Figure~\ref{fig::lightcone_boundary_filling} is lifted from \cite[Section~7]{MS_IMAG}, which contains a general theorem about the boundaries of chordal $\SLE_{\kappa'}(\ul{\rho})$ processes.  Suppose that $D \subseteq \C$ is a Jordan domain.  The theorem states that if the boundary conditions on $\partial D$ are such that a counterflow line $\eta'$ can almost surely be drawn from $y \in \partial D$ to a point $x \in \partial D$ without hitting the continuation threshold, then the left and right boundaries of the counterflow line are respectively the flow lines of angle $\tfrac{\pi}{2}$ and $-\tfrac{\pi}{2}$ drawn from $x$ to $y$ (with the caveat that these flow lines may trace part of the boundary of the domain toward $y$ if they reach the continuation threshold before reaching $y$, as Figure~\ref{fig::lightcone_boundary_filling} illustrates --- this corresponds to the scenario in which the counterflow line fills an entire boundary arc, as explained in \cite[Section~7]{MS_IMAG}).  We will not repeat the full discussion of \cite[Section~7]{MS_IMAG} here, but we will explain how it can be extended to the setting where the boundary target  is replaced with an interior point.  (In this setting $\eta'$ can be understood as a branch of the interior-branching counterflow line, as discussed in the previous section.)  This is the content of Theorem~\ref{thm::lightcone}.

\begin{theorem}
\label{thm::lightcone}
Suppose that $D \subseteq \C$ is a Jordan domain, $x,y \in \partial D$ are distinct, and that $h$ is a GFF on $D$ whose boundary conditions are such that its counterflow line from $y$ to $x$ is fully branchable.  Fix $z \in D$ and let $\eta'$ be the counterflow line of $h$ starting from $y$ and targeted at $z$.  If we lift $\eta'$ to a path in the universal cover of $\C \setminus \{z\}$, then its left and right boundaries $\eta_z^L$ and $\eta_z^R$ are the flow lines of $h$ started at $z$ and targeted at $y$ with angles $\tfrac{\pi}{2}$ and $-\tfrac{\pi}{2}$, respectively (with the same caveat as described above, and in \cite[Section~7]{MS_IMAG}, in the chordal case: that these paths may trace boundary segments toward $y$ if they hit the continuation threshold before reaching $y$).  The conditional law of $\eta'$ given $\eta_z^L$ and $\eta_z^R$ in each of the connected components of $D \setminus (\eta_z^L \cup \eta_z^R)$ which are between $\eta_z^L, \eta_z^R$, consist of boundary segments of $\eta_z^L,\eta_z^R$ that do not trace $\partial D$, and are hit by $\eta'$ is independently that of a chordal $\SLE_{\kappa'}(\tfrac{\kappa'}{2}-4;\tfrac{\kappa'}{2}-4)$ process starting from the last point on the component boundary traced by $\eta_z^L$ and targeted at the first.
\end{theorem}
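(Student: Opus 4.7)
The plan is to reduce the interior-target statement to the chordal boundary-target light cone duality proved in \cite[Section~7]{MS_IMAG}, using the branching structure of counterflow lines set up in Section~\ref{subsec::branching_definitions} together with the radial Markov property. As a first step, I would construct $\eta_z^L$ and $\eta_z^R$ as flow lines of $h$ emanating from the interior point $z$ with angles $\tfrac{\pi}{2}$ and $-\tfrac{\pi}{2}$ via Theorem~\ref{thm::existence}, transferred from the whole-plane setting to the Jordan domain $D$ by Proposition~\ref{prop::local_set_whole_plane_bounded_compare}. These are well-defined up to the first moment at which they hit the continuation threshold, at which point we declare them to trace out the appropriate arc of $\partial D$ toward $y$, matching the convention of the statement and mirroring the boundary emanating treatment in \cite[Section~7]{MS_IMAG}.

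Next, I would stop $\eta'$ at the first time $\tau_\delta'$ it enters $B(z,\delta)$ and work inside the connected component $D_\delta$ of $D \setminus \eta'([0,\tau_\delta'])$ containing $z$. Since the boundary data is fully branchable, for any boundary point $w$ of $D$ in the same component as $z$ at time $\tau_\delta'$, the counterflow line from $y$ targeted at $w$ agrees with $\eta'|_{[0,\tau_\delta']}$ up to reparameterization. Applying the chordal duality theorem of \cite[Section~7]{MS_IMAG} along such an auxiliary target (and letting $w$ tend to $z$ within $D_\delta$) identifies the left and right boundaries of $\eta'([0,\tau_\delta'])$ inside $D_\delta$ with the flow lines of angle $\pm\tfrac{\pi}{2}$ emanating from $\eta'(\tau_\delta')$, run until they hit $\partial D_\delta$. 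The merging and interaction results (Theorem~\ref{thm::flow_line_interaction}, Theorem~\ref{thm::commutation}) then imply that, when extended back toward $z$, these finite boundary flow lines merge into the initial segments (lifted to the universal cover of $\C \setminus \{z\}$) of $\eta_z^L$ and $\eta_z^R$. Sending $\delta \to 0$ and invoking endpoint continuity for the radial $\SLE_{\kappa'}(\underline{\rho})$ target at $z$ (the $\kappa' > 4$ analog of Proposition~\ref{prop::endpoint_continuity}, which will be established in this section) yields the asserted description of the left and right boundaries of the lifted $\eta'$.

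For the conditional law, I would apply Theorem~\ref{thm::commutation} to show that the conditional field given $(\eta_z^L, \eta_z^R)$ has flow line boundary conditions with angles $\pm\tfrac{\pi}{2}$ along the paths. In each qualifying complementary component $C$ of $D \setminus (\eta_z^L \cup \eta_z^R)$, the entry and exit points on $\partial C$ are determined by the order in which $\eta_z^L$ and $\eta_z^R$ trace $\partial C$, and the boundary values read off from Figure~\ref{fig::interior_path_bd} are exactly the data under which the chordal duality theorem of \cite[Section~7]{MS_IMAG} identifies $\eta'|_C$ as an $\SLE_{\kappa'}(\tfrac{\kappa'}{2}-4;\tfrac{\kappa'}{2}-4)$ from exit to entry. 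Independence across components is immediate from the chordal Markov property applied successively to the pockets in the order $\eta'$ visits them.

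The main obstacle will be the bookkeeping on the universal cover of $\C \setminus \{z\}$. Since $\eta'$ may wind around $z$ infinitely often (particularly in the regime $\kappa' \in (4,8)$, where $z$ is only absorbed in the limit), identifying successive excursions of the left/right boundaries of $\eta'$ with segments of $\eta_z^L$ and $\eta_z^R$ requires keeping track of the precise multiple of $2\pi\chi$ by which the boundary heights have shifted, and verifying that this multiple matches the one selected by starting the flow line at $z$ with a given angle. The uniqueness of such a flow line given the field modulo $2\pi\chi$ (Theorem~\ref{thm::uniqueness}) together with the angle-monotonicity provided by Proposition~\ref{prop::conditional_law} should make this bookkeeping work, but it is the point at which interior targets genuinely differ from the boundary case treated in \cite{MS_IMAG} and will require the most care.
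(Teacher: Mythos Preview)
Your overall strategy matches the paper's: reduce to the chordal duality of \cite[Section~7]{MS_IMAG} and use the flow line interaction/merging theory of Section~\ref{sec::interior_flowlines} to glue the pieces together. However, your specific stopping scheme has a genuine gap.

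You stop $\eta'$ at the first time $\tau_\delta'$ it enters $B(z,\delta)$ and then look for a point $w \in \partial D$ still in the same component as $z$. When $\kappa' \in (4,8)$, the counterflow line $\eta'$ almost surely makes loops around $z$ (Lemma~\ref{lem::radial_loop}) before reaching any prescribed neighborhood of $z$; once it has done so, the component $D_\delta$ containing $z$ no longer meets $\partial D$ at all, so no such $w$ exists and you cannot invoke chordal duality targeted at $w$. The paper avoids this with a different iterative scheme (Figure~\ref{fig::interior_duality}): inductively choose targets $x_k$ on the boundary of the \emph{current} component $U_{k-1}$ (not on $\partial D$), let $\tau_k$ be the time $x_k$ is separated from $z$, and apply chordal duality to each segment $\eta'|_{[\tau_{k-1},\tau_k]}$ inside $U_{k-1}$. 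This handles the winding automatically and shows the concatenated segments accumulate at $z$.

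There is a second subtlety you gloss over. Chordal duality only identifies the boundaries of $\eta'|_{[\tau_{k-1},\tau_k]}$ as flow lines of the \emph{conditional} field given $\eta'|_{[0,\tau_{k-1}]}$, starting from $x_k$. The paper then proves by induction on $k$ (using the merging argument of Lemma~\ref{lem::clean_tail_merge} and Theorem~\ref{thm::flow_line_interaction}) that these are contained in flow lines of the \emph{unconditioned} field $h$ starting from rational points. Only after this is established can one apply Theorem~\ref{thm::flow_line_interaction} to conclude that $\eta_z^L$ and $\eta_z^R$ merge into the boundaries. Your proposal jumps to the merging without this bridge.

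Finally, the paper splits the identification of $\eta_z^L,\eta_z^R$ with the outer boundary into two cases. For $\kappa' \in (4,8)$ one uses that $\eta'$ makes arbitrarily small loops of both orientations around $z$ and that $\eta_z^L$ (resp.\ $\eta_z^R$) must merge into the left (resp.\ right) boundary of any such clockwise (resp.\ counterclockwise) loop before exiting it (recall Figures~\ref{fig::inthepocket}, \ref{fig::inthepocket2}). For $\kappa' \geq 8$ there are no loops and an entirely separate argument (Figure~\ref{fig::duality_no_loops}) is required: one shows the zero-angle flow line from $z$ cannot cross the outer boundary of $\eta'([0,\tau'])$, so that $\eta'$ is also the counterflow line of the conditional field given that flow line, and then applies chordal duality directly. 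Your $\delta \to 0$ limit does not cover either mechanism.
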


\begin{remark}
\label{rem::boundary_tracing_conditional_law}
Suppose that $C$ is a connected component of $D \setminus (\eta_z^L \cup \eta_z^R)$ which lies between $\eta_z^L$ and $\eta_z^R$ part of whose boundary \emph{is} drawn by a segment of either $\eta_z^L$ or $\eta_z^R$ which \emph{does} trace part of $\partial D$.  Then it is also possible to compute the conditional law of $\eta'$ given $\eta_z^L$ and $\eta_z^R$ inside of $C$.  It is that of an $\SLE_{\kappa'}(\ul{\rho})$ process where the weights $\ul{\rho}$ depend on the boundary data of $h$ on the segments of $\partial C$ which trace $\partial D$.
\end{remark}

\begin{remark}
\label{rem::interior_paths_duality}
Since $\eta'$ is almost surely determined by the GFF \cite[Theorem~1.2]{MS_IMAG} (see also \cite{DUB_PART}), this gives us a different way to construct $\eta_z^L$ and $\eta_z^R$.  In fact, taking a branching counterflow line gives us a way to construct simultaneously all of the flow lines beginning at points in a fixed countable dense subset of $D$.  It follows from \cite[Theorem~1.2]{MS_IMAG} that the branching counterflow line is almost surely determined by the GFF, so this also gives an alternative approach to proving Theorem~\ref{thm::uniqueness}, that GFF flow lines starting from interior points are almost surely determined by the field.
\end{remark}

\begin{remark}
\label{rem::interior_light_cone}
Both $\eta_z^L$ and $\eta_z^R$ can be projected to $D$ itself (from the universal cover) and interpreted as random subsets of $D$.  Once we condition on $\eta_z^L$ and $\eta_z^R$, the conditional law of $\eta'$ within each component of $D \setminus (\eta_z^L \cup \eta_z^R)$ (that does not contain an interval of $\partial D$ on its boundary) is given by an independent (boundary-filling) chordal $\SLE_{\kappa'}(\frac{\kappa'}{2}-4; \frac{\kappa'}{2}-4)$ curve from the first to the last endpoint of $\eta'$ within that component.   This is explained in more detail in the beginning of \cite{MS_IMAG3} and the end of \cite{MS_IMAG}, albeit in a slightly different context.  It follows from this and the arguments in \cite{MS_IMAG} that we can interpret the set of points in the range of $\eta'$ as a {\bf light cone} beginning at $z$.  Roughly speaking, this means that the range of $\eta'$ is equal to the set of points reachable by starting at $z$ and following angle-varying flow lines whose angles all belong to the width-$\pi$ interval of angles that lie between the angles of $\eta_z^L$ and $\eta_z^R$.  The analogous statement that applies when $z$ is contained in the boundary is explained in detail in \cite{MS_IMAG}.  We refrain from a more detailed discussion here, because the present context is only slightly different.
\end{remark}

\begin{remark}
\label{rem::radial}
In the case that $D = \h$ and the boundary conditions for $h$ are given by $\lambda'$ (resp.\ $-\lambda'$) on $\R_-$ (resp.\ $\R_+$), as shown in Figure~\ref{fig::counterflow}, the branch of an interior branching counterflow line targeted at a given interior point $z$ is distributed as a radial $\SLE_{\kappa'}(\kappa'-6)$ process.  Recall Figure~\ref{fig::radial_bd_cfl}.
\end{remark}

\begin{figure}[ht!]
\begin{center}
\subfigure[]{\includegraphics[height=0.31\textheight]{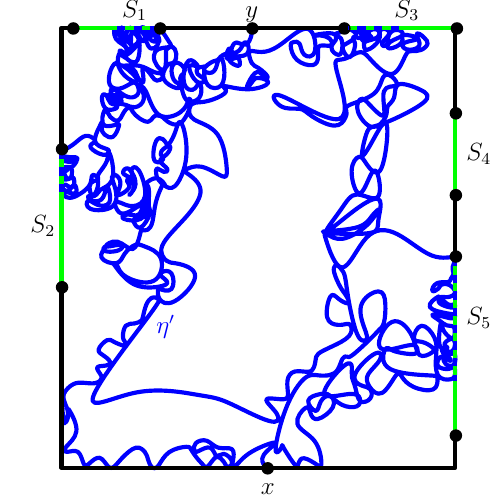}}
\subfigure[]{\includegraphics[height=0.31\textheight]{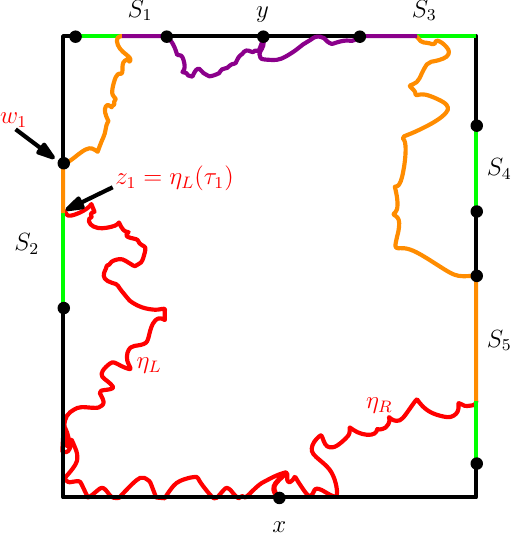}}
\end{center}
\caption{\label{fig::lightcone_boundary_filling} {\small Suppose that $h$ is a GFF on a Jordan domain $D$ and $x,y \in \partial D$ are distinct.  Let $\eta'$ be the counterflow line of $h$ starting at $y$ aimed at $x$.  Let $K = K^L \cup K^R$ be the outer boundary of $\eta'$, $K^L$ and $K^R$ its left and right sides, respectively, and let $I$ be the interior of $K^L  \cap \partial D$.  We suppose that the event $E = \{I \neq \emptyset\}$ that $\eta'$ fills a segment of the left side of $\partial D$ has positive probability, though we emphasize that this does not mean that $\eta'$ {\em traces} a segment of $\partial D$---which would yield a discontinuous Loewner driving function---with positive probability.  In the illustrations above, $\eta'$ fills parts of $S_1,\ldots,S_5$ with positive probability (but with positive probability does not hit any of $S_1,\ldots,S_5$).  The connected component of $K^L \setminus I$ which contains $x$ is given by the flow line $\eta^L$ of $h$ with angle $\tfrac{\pi}{2}$ starting at $x$ (left panel).  On $E$, $\eta^L$ hits the continuation threshold before hitting $y$ (in the illustration above, this happens when $\eta^L$ hits $S_2$).  On $E$ it is possible to describe $K^L$ completely in terms of flow lines using the following algorithm.  First, we flow along $\eta^L$ starting at $x$ until the continuation threshold is reached, say at time $\tau_1$, and let $z_1 = \eta^L(\tau_1)$.  Second, we trace along $\partial D$ in the clockwise direction until the first point $w_1$ where it is possible to flow starting at $w_1$ with angle $\tfrac{\pi}{2}$ without immediately hitting the continuation threshold.  Third, we flow from $w_1$ until the continuation threshold is hit again.  We then repeat this until $y$ is eventually hit.  This is depicted in the right panel above, where three iterations of this algorithm are needed to reach $y$ and are indicated by the colors red, orange, and purple, respectively.}
}
\end{figure}

\begin{figure}[ht!]
\begin{center}
\includegraphics[scale=0.85]{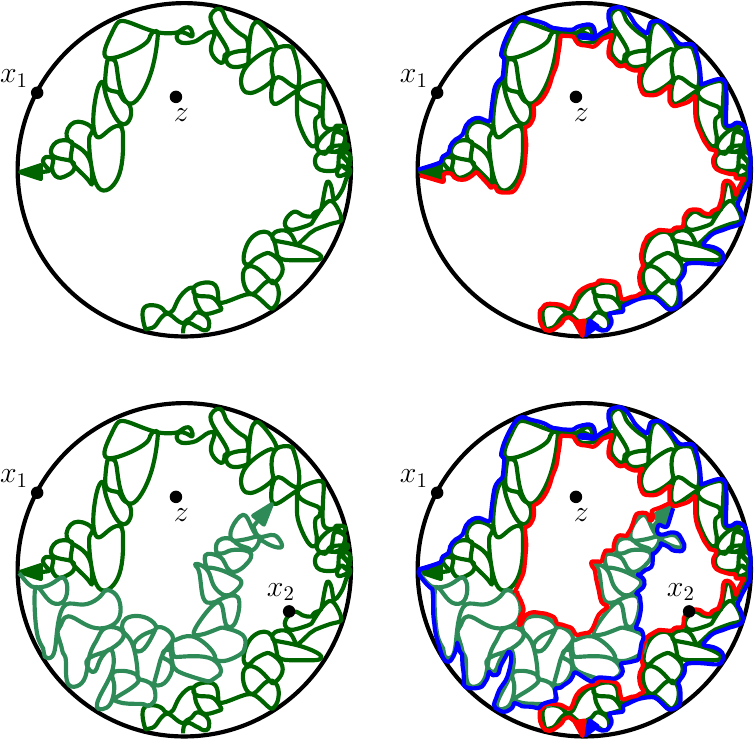}
\end{center}
\caption{\label{fig::interior_duality} {\small An interior point $z$ and boundary point $x_1$ are fixed and a counterflow line $\eta'$ drawn until the first time $T_1$ that $x_1$ is separated from $z$ (upper left).  By applying \cite[Section~7.4.3]{MS_IMAG} to a countable set of possible choices (including points arbitrarily close to $\eta'(T_1)$) we obtain that the left and right boundaries of $\eta'([0,T_1])$ are given by flow lines with angles $\tfrac{\pi}{2}$ and $-\tfrac{\pi}{2}$ (shown upper right).  We can then pick another point $x_2$ on the boundary of $D \setminus \eta'([0,T_1])$ and grow $\eta'$ until the first time $T_2$ that $x_2$ is disconnected from $z$, and the same argument implies that the left and right boundaries of $\eta'([0,T_2])$ (lifted to the universal cover of $D \setminus \{z \}$) are (when projected back to $D \setminus \{z\}$ itself) given by the flow lines of angles $\tfrac{\pi}{2}$ and $-\tfrac{\pi}{2}$ shown.  Note that it is possible for the blue and red paths to hit each other (as in the figure) but (depending on $\kappa$) it may also be possible for these flow lines to hit themselves (although their liftings to the universal cover of $D \setminus \{z \}$ are necessarily simple paths).}
}
\end{figure}

\begin{figure}[ht!]
\begin{center}
\includegraphics[scale=0.85]{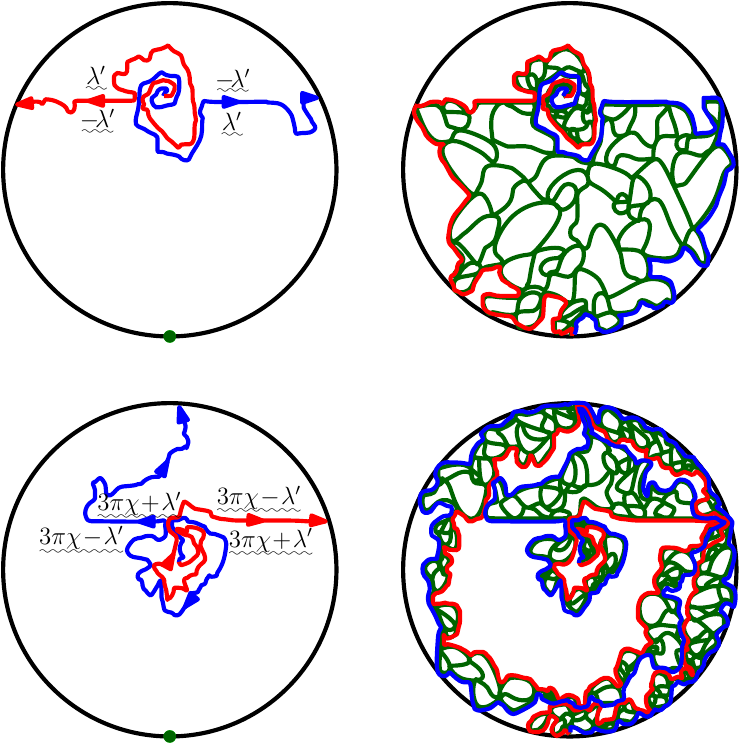}
\end{center}
\caption{\label{fig::first_hit_winding} {\small Consider the counterflow line $\eta'$ toward $z$ from Figure~\ref{fig::interior_duality}, whose left and right boundaries $\eta^L$ and $\eta^R$ are understood as flow lines with angles $\tfrac{\pi}{2}$ and $-\tfrac{\pi}{2}$ from $z$ to $\eta'(0)$.  The left figures above show two possible instances of initial segments of $\eta^L$ and $\eta^R$: segments started at $z$ and stopped at the first time they reach $\partial D$.  The GFF heights along these segments determine the number of times (and the direction) that $\eta^L$ and $\eta^R$ will wind around their initial segments before terminating at $\eta'(0)$.  In the lower figure, the counterflow line spirals counterclockwise and inward toward $z$, while the flow lines of angles $\tfrac{\pi}{2}$ and $-\tfrac{\pi}{2}$ spiral counterclockwise and outward.  Although their liftings to the universal cover of $D \setminus \{z \}$ are simple, the paths $\eta^L$ and $\eta^R$ themselves are self-intersecting at some points, including points on $\partial D$ where multiple strands coincide.}
}
\end{figure}

\begin{figure}[ht!]
\begin{center}
\subfigure[]{\includegraphics[scale=0.85]{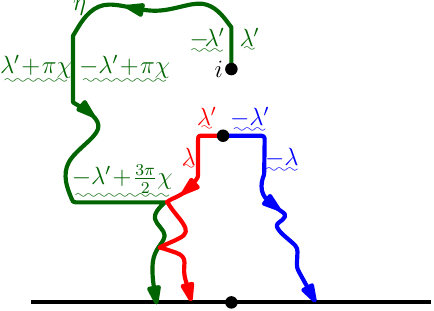}}
\subfigure[]{\includegraphics[scale=0.85]{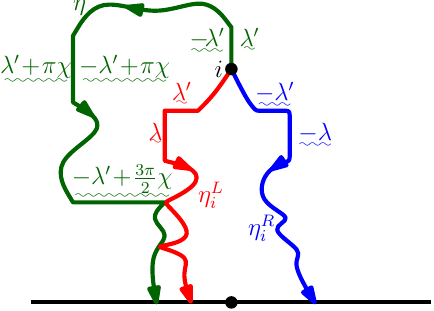}}
\end{center}
\caption{\label{fig::duality_no_loops}
Suppose that $h$ is a GFF on $\h$ with boundary data such that its counterflow line $\eta'$ starting from $0$ is fully branchable.  Assume that $\kappa' \geq 8$ hence $\kappa \in (0,2]$ so that flow lines of $h$ with an angle gap of $\pi$ starting from the same point do not intersect each other (apply~\eqref{eqn::critical_n} with $n=2$).  Let $\tau'$ be a stopping time for $\eta'$ such that $\eta'|_{[0,\tau']}$ almost surely does not swallow $i$.  Then we know that the left and right boundaries of $\eta'([0,\tau'])$ are contained in the union of flow lines of $h$ with angles $\tfrac{\pi}{2}$ and $-\tfrac{\pi}{2}$, respectively, starting from points with rational coordinates (first part of the proof of Theorem~\ref{thm::lightcone}).  This is depicted in the left panel above and allows us to use Theorem~\ref{thm::flow_line_interaction} to determine how the left and right boundaries of $\eta'([0,\tau'])$ interact with the flow lines of $h$.   In particular, the flow line $\eta$ of $h$ starting from $i$ almost surely cannot cross either the left or the right boundary of $\eta'([0,\tau'])$.  Indeed, as illustrated, if $\eta$ intersects the left boundary of $\eta'([0,\tau'])$ then it does so with a height difference contained in $\tfrac{\pi}{2} \chi + 2\pi \chi \Z$ and this set does not intersect the range $(-\pi \chi,0)$ for crossing.  The same is true if $\eta$ hits the right boundary of $\eta'([0,\tau'])$.  Since this holds almost surely for all stopping times $\tau'$ for $\eta'$, it follows that $\eta'$ is almost surely equal to the counterflow line of the conditional GFF $h$ given $\eta$.  Indeed, both processes satisfy the same conformal Markov property when coupled with $h$ given $\eta$, hence the claim follows from \cite[Theorem~1.2]{MS_IMAG}.  Consequently, it follows from chordal duality \cite[Section~7.4.3]{MS_IMAG} that the left and right boundaries of $\eta'$ stopped upon hitting $i$ are given by the flow lines $\eta_i^L$ and $\eta_i^R$ of $h$ starting from $i$ with angles $\tfrac{\pi}{2}$~and~$-\tfrac{\pi}{2}$.  This is depicted in the right panel.
}
\end{figure}

\begin{proof}[Proof of Theorem~\ref{thm::lightcone}]
The result essentially follows from the chordal duality in \cite[Section~7.4.3]{MS_IMAG} (which the reader may wish to consult before reading the argument here) together with the merging arguments of Section~\ref{subsec::uniqueness} (such as the proof of Proposition~\ref{prop::merge_almost_surely}).  The argument is sketched in Figure~\ref{fig::interior_duality}, which extends Figure~\ref{fig::lightcone_boundary_filling} to the case of an interior point.  It is somewhat cumbersome to repeat all of the arguments of \cite[Section~7.4.3]{MS_IMAG} here, so we instead give a brief sketch of the necessary modifications.  We inductively define points $x_k$ as follows.  We let $\psi_1 \colon D \to \D$ be the unique conformal map with $\psi_1(z) = 0$ and $\psi_1(x) = 1$ and then take $x_1 = \psi_1^{-1}(-1)$.  We then let $\tau_1$ be the first time $t$ for which $x_1$ fails to lie on the boundary of the connected component of $D \setminus \eta'([0,t])$ containing $z$.  Given that the stopping time $\tau_k$ has been defined for $k \in \N$, let $\psi_{k+1}$ be the unique conformal map which takes the connected component $U_k$ of $D \setminus \eta'([0,\tau_k])$ which contains $z$ to $\D$ with $\psi_{k+1}(z) = 0$ and $\psi_{k+1}(\eta'(\tau_k)) = 1$.  We then take $x_{k+1} = \psi_{k+1}^{-1}(-1)$ and let $\tau_{k+1}$ be the first time $t \geq \tau_k$ that $x_{k+1}$ fails to lie on the boundary of the component of $D \setminus \eta'([0,t])$ containing $z$.  It is easy to see that there exists constants $p_0 > 0$ and $a \in (0,1)$ such that the probability that the conformal radius of $U_{k+1}$ (as viewed from $z$) is at most $a$ times the conformal radius of $U_k$ (as viewed from $z$) is at least $p_0$.  By basic distortion estimates (e.g., \cite[Theorem~3.21]{LAW05}), one can make the analogous statement for the ordinary radius (e.g., $\dist(z,\partial U_k)$).  This implies that the concatenated sequence of counterflow line path segments illustrated in Figure~\ref{fig::interior_duality} contains $z$ as a limit point.  

For each $k$, it follows from \cite[Theorem~1.4]{MS_IMAG} that the left and right boundaries of $\eta'([\tau_{k-1},\tau_k])$ (where we take $\tau_0 = 0$) are contained in the flow lines $\eta_k^L$ and $\eta_k^R$ with angles $\tfrac{\pi}{2}$ and $-\tfrac{\pi}{2}$, respectively, of the \emph{conditional} GFF $h$ given $\eta'|_{[\tau_{j-1},\tau_j]}$ for $j=1,\ldots,k-1$ starting from $x_k$.  That is, we a priori have to observe $\eta'|_{[\tau_{j-1},\tau_j]}$ for $j=1,\ldots,k-1$ in order to draw the aforementioned flow lines.  We are going to show by induction on $k$ that the left and right boundaries of $\eta'([0,\tau_k])$ are contained in the union of all of the flow lines of $h$ itself with angles $\tfrac{\pi}{2}$ and $-\tfrac{\pi}{2}$, respectively, starting at points with rational coordinates.  (In other words, it is not necessary to observe $\eta'([\tau_{j-1},\tau_j])$ for $j=1,\ldots,k-1$ to draw these paths.)  
This, in turn, will allow us to use Theorem~\ref{thm::flow_line_interaction} to determine how the left and right boundaries of $\eta'([0,\tau_k])$ interact with the flow lines of $h$.  The claim for $k=1$ follows from \cite[Theorem~1.4]{MS_IMAG}.

Suppose that the claim holds for some fixed $k \in \N$.  Fix $\theta \in \R$, $w \in U_k$ with rational coordinates, and let $\eta$ be the flow line starting from $w$ with angle $\theta$.  Theorem~\ref{thm::flow_line_interaction} determines the manner in which $\eta$ interacts with the flow lines of both $h$ itself as well as those of $h$ given $\eta'|_{[\tau_{j-1},\tau_j]}$ for $j=1,\ldots,k$, at least up until the paths hit $\partial U_k$.  Therefore it follows from the arguments of Lemma~\ref{lem::clean_tail_merge} that the segments of $\eta_{k+1}^L \setminus \partial U_k$ are contained in the union of flow lines of $h$ starting from rational coordinates with angle $\tfrac{\pi}{2}$.  If $\theta=\tfrac{\pi}{2}$ and $\eta$ merges with $\eta_{k+1}^L$, then after hitting and bouncing off $\partial U_k$, both paths will continue to agree.  The reason is that both paths reflect off $\partial U_k$ in the same direction and satisfy the same conformal Markov property viewed as flow lines of the GFF $h$ given $\eta'|_{[\tau_{j-1},\tau_j]}$ for $j=1,\ldots,k$ as well as the segments of $\eta_{k+1}^L$ and $\eta$ drawn up until first hitting $\partial U_k$.  See the proof of Lemma~\ref{lem::tail_decomposition} for a similar argument. Consequently, the entire range of $\eta_{k+1}^L$ is contained in the union of flow lines of $h$ with angle $\tfrac{\pi}{2}$ starting from points with rational coordinates.  The same likewise holds with $\eta_{k+1}^R$ and $-\tfrac{\pi}{2}$ in place of $\eta_{k+1}^L$ and $\tfrac{\pi}{2}$.  This proves the claim.

We will now show that $\eta_z^L$ and $\eta_z^R$ give the left and right boundaries of $\eta'$.
Suppose that $\kappa' \in (4,8)$.  In this case, $\eta'$ makes loops around $z$ since the path is not space-filling.
Suppose that $\tau'$ is a stopping time for $\eta'$ at which it has made a clockwise loop around $z$.  Theorem~\ref{thm::flow_line_interaction} (recall Figure~\ref{fig::inthepocket} and Figure~\ref{fig::inthepocket2}) then implies that $\eta_z^L$ almost surely merges into the left boundary of $\eta'([0,\tau'])$ before leaving the closure of the component of $D \setminus \eta'([0,\tau'])$ which contains $z$.  The same likewise holds when we replace $\eta_z^L$ with $\eta_z^R$ and $\tau'$ is a stopping time at which $\eta'$ has made a counterclockwise loop.  The result follows because, as we mentioned earlier, $\eta'$ almost surely tends to $z$ and, by (the argument of) Lemma~\ref{lem::radial_loop}, almost surely makes arbitrarily small loops around $z$ with both orientations.  The argument for $\kappa' \geq 8$ so that $\eta'$ does not make loops around $z$ is explained in Figure~\ref{fig::duality_no_loops}.

It is left to prove the statement regarding the conditional law of $\eta'$ given $\eta_z^L$ and $\eta_z^R$.  Since the left and right boundaries of $\eta'$ are given by $\eta_z^L$ and $\eta_z^R$, respectively, it follows that $\eta'$ is almost surely equal to the concatenation of the counterflow lines of the conditional GFF $h$ given $\eta_z^L$ and $\eta_z^R$  in each of the components of $D \setminus (\eta_z^L \cup \eta_z^R)$ which are visited by $\eta'$.  Indeed, the restriction of $\eta'$ to each of these components satisfies the same conformal Markov property as each of the corresponding counterflow lines of the conditional GFF.  Thus, the claim follows from \cite[Theorem~1.2]{MS_IMAG}.  By reading off the boundary data of $h$ in each of these components, we see that if such a component does not intersect $\partial D$ in an interval, then the conditional law of $\eta'$ is independently that of an $\SLE_{\kappa'}(\tfrac{\kappa'}{2}-4;\tfrac{\kappa'}{2}-4)$, as desired.
\end{proof}

We now extend Theorem~\ref{thm::lightcone} to the setting in which we add a conical singularity to the GFF.


\begin{theorem}
\label{thm::lightcone_alpha}
The statements of Theorem~\ref{thm::lightcone} hold if we replace the GFF with $h_{\alpha \beta} = h+\alpha \arg(\cdot-z) + \beta \log|\cdot-z|$ and $\alpha \leq \tfrac{\chi}{2}$ where $h$ is a GFF on $D$ such that the counterflow line $\eta'$ of $h_{\alpha \beta}$ starting from $y \in \partial D$ is fully branchable.
\end{theorem}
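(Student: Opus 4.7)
The plan is to follow the proof of Theorem~\ref{thm::lightcone} step by step and verify that each ingredient extends to the field $h_{\alpha\beta}$. The key point is that away from the conical singularity $z$, the field $h_{\alpha\beta}$ is locally absolutely continuous with respect to an ordinary GFF (by Proposition~\ref{prop::gff_radon_nikodym}, after absorbing $\alpha\arg(\cdot-z)+\beta\log|\cdot-z|$ into an explicit harmonic function), so the boundary emanating machinery of \cite{MS_IMAG} together with the interior flow line results of Section~\ref{sec::interior_flowlines} --- which have already been stated and proved for $h_{\alpha\beta}$ (Theorem~\ref{thm::alphabeta}, Theorem~\ref{thm::alphabeta_counterflow}, Theorem~\ref{thm::flow_line_interaction}) --- apply directly.

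First I would set up exactly the same inductive construction as in the proof of Theorem~\ref{thm::lightcone}: define conformal maps $\psi_k \colon U_{k-1} \to \D$ with $\psi_k(z)=0$ and $\psi_k(\eta'(\tau_{k-1}))=1$, take $x_k = \psi_k^{-1}(-1)$, and let $\tau_k$ be the first time $\eta'$ separates $x_k$ from $z$. The purely geometric distortion estimates giving $\mathrm{diam}(U_k)\to 0$ carry over without change. Each segment $\eta'|_{[\tau_{k-1},\tau_k]}$ lives in a domain which stays bounded away from $z$, so chordal duality \cite[\S7.4.3]{MS_IMAG} applies to the conditional field there and identifies the left and right boundaries of the segment as flow lines $\eta_k^L$, $\eta_k^R$ of $h_{\alpha\beta}$ started from $x_k$ with angles $\pi/2$ and $-\pi/2$. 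The tail decomposition (Proposition~\ref{prop::tail_decomposition}, Proposition~\ref{prop::boundary_tail_decomposition}) and Lemma~\ref{lem::clean_tail_merge}, as used in the proof of Theorem~\ref{thm::lightcone}, then realize $\eta_k^L \cup \eta_k^R$ as a subset of the union of flow lines of $h_{\alpha\beta}$ of angles $\pm\pi/2$ starting from a countable dense set of rational points.

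Next I would argue that the flow lines $\eta_z^L$, $\eta_z^R$ of $h_{\alpha\beta}$ starting from $z$ with angles $\pi/2$ and $-\pi/2$ (constructed by Theorem~\ref{thm::alphabeta} and lifted to the universal cover of $D\setminus\{z\}$) agree, when projected back to $D$, with the left and right boundaries of $\eta'$. The restriction $\alpha \le \chi/2$ is precisely the (sign-flipped) analog of the $\alpha \ge -\chi/2$ hypothesis of Theorem~\ref{thm::whole_plane_duality}, and it is exactly the condition ensuring that $\eta_z^L$ and $\eta_z^R$ are distinct paths in the universal cover, so that a genuine light-cone description is available. The merging step is the same as in Theorem~\ref{thm::lightcone}: once the angle-$\pi/2$ (resp.\ $-\pi/2$) path from $z$ comes close to a clean loop of $\eta'$ around $z$ of the correct orientation, Theorem~\ref{thm::flow_line_interaction} (applied to $h_{\alpha\beta}$, using the height-bookkeeping of Figure~\ref{fig::conical_boundary_data}) forces a merge. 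For $\kappa'\in(4,8)$, the existence of arbitrarily small loops of both orientations near $z$ follows from the radial-$\SLE_{\kappa'}(\rho)$ analog of Lemma~\ref{lem::radial_loop}, applied to the branch of the interior-branching counterflow line targeted at $z$; for $\kappa'\ge 8$ one instead repeats the argument of Figure~\ref{fig::duality_no_loops}, using that the angle-$\pm\pi/2$ flow lines from nearby points cannot cross the counterflow-line boundary. The requisite endpoint continuity of the branch of $\eta'$ is supplied by (the $\kappa'>4$ version of) Proposition~\ref{prop::endpoint_continuity}. The conditional-law statement then follows exactly as in Theorem~\ref{thm::lightcone}: in each complementary component of $\eta_z^L\cup\eta_z^R$ bounded away from $z$, the conditional field is an ordinary GFF (the conical singularity contributes only a harmonic function there) and the conformal Markov property of $\eta'$ identifies its law, via \cite[Theorem~1.2]{MS_IMAG}, as a chordal $\SLE_{\kappa'}(\tfrac{\kappa'}{2}-4;\tfrac{\kappa'}{2}-4)$.

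The main obstacle is the bookkeeping of heights along $\eta_z^L$ and $\eta_z^R$ as they wind toward $z$: each revolution contributes a height jump of $\pm 2\pi(\chi+\alpha)$ rather than $\pm 2\pi\chi$ (cf.\ Figure~\ref{fig::interior_path_bd2} and Figure~\ref{fig::conical_boundary_data}), so the quantitative hypotheses of Theorem~\ref{thm::flow_line_interaction} used to distinguish ``merge'' from ``cross'' must be re-verified with this $\alpha$-dependent gap. This is precisely where the threshold $\alpha \le \chi/2$ is used: above it, the angle-$\pi/2$ and angle-$-\pi/2$ flow lines from $z$ have height difference $\pi\chi - 2\pi\alpha \le 0$ upon their first attempt to intersect, the two paths coincide in $D$, and $\eta'$ becomes boundary-filling in the sense of Theorem~\ref{thm::alphabeta_counterflow}, which is incompatible with the light-cone description being proved.
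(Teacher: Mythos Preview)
Your overall strategy---follow the proof of Theorem~\ref{thm::lightcone} verbatim, using absolute continuity away from $z$ and the interior flow line machinery already proved for $h_{\alpha\beta}$---is exactly what the paper does. The inductive construction, the tail/merging arguments, and the conditional law computation all transfer just as you say.

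The gap is in your case split. You divide according to whether $\kappa'\in(4,8)$ or $\kappa'\ge 8$, but the relevant dichotomy is whether the branch of $\eta'$ targeted at $z$ can hit its own outer boundary as it winds around $z$, and this is governed by $\alpha$, not by $\kappa'$ alone. Near $z$ the path looks like a radial $\SLE_{\kappa'}(\rho)$ with $\rho=\kappa'-6-2\pi\alpha/\lambda'$ (Remark~\ref{rem::interior_alpha_boundary_branching}), and by Lemma~\ref{lem::radial_critical_for_hitting} it makes loops around $z$ exactly when $\rho<\tfrac{\kappa'}{2}-2$, i.e.\ when $\alpha>\tfrac{\chi}{2}-\tfrac{1}{\sqrt{\kappa'}}$. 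So:
\begin{itemize}
\item For $\kappa'\in(4,8)$ and $\alpha\le \tfrac{\chi}{2}-\tfrac{1}{\sqrt{\kappa'}}$ (a negative threshold, but the theorem has no lower bound on $\alpha$), the counterflow line does \emph{not} make arbitrarily small loops about $z$, and your appeal to the analog of Lemma~\ref{lem::radial_loop} fails.
\item For $\kappa'\ge 8$ and $\alpha\in(\tfrac{\chi}{2}-\tfrac{1}{\sqrt{\kappa'}},\tfrac{\chi}{2}]$, the counterflow line \emph{does} loop around $z$, and the Figure~\ref{fig::duality_no_loops} argument (which relies on the flow line from the target point escaping without being trapped) is not the right tool.
\end{itemize}
The paper therefore splits along $\alpha$: when $\alpha\in(\tfrac{\chi}{2}-\tfrac{1}{\sqrt{\kappa'}},\tfrac{\chi}{2}]$ run the loop-merging argument (as in the $\kappa'\in(4,8)$ proof of Theorem~\ref{thm::lightcone}); when $\alpha\le \tfrac{\chi}{2}-\tfrac{1}{\sqrt{\kappa'}}$ run the argument of Figure~\ref{fig::duality_no_loops}. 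Once you reorganize your two cases this way, the rest of your sketch is correct. Note also that at the endpoint $\alpha=\tfrac{\chi}{2}$ one has $\eta_z^L=\eta_z^R$ almost surely, which is consistent with (rather than in conflict with) the statement: the left and right boundaries coincide but the light-cone description is still valid.
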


\begin{remark}
\label{rem::boundary_filling_value}
The value $\tfrac{\chi}{2}$ is significant because it is the critical value for $\alpha$ at which $\eta'$ fills its own outer boundary.  That is, if $\alpha \geq \tfrac{\chi}{2}$ then $\eta'$ fills its own outer boundary and if $\alpha < \tfrac{\chi}{2}$ then $\eta'$ does not fill its own outer boundary.  This corresponds to a $\rho$ value of $\tfrac{\kappa'}{2}-4$ (recall Figure~\ref{fig::radial_bd_cfl}).
\end{remark}

\begin{remark}
\label{rem::interior_alpha_boundary_branching}
If $D = \h$ and the boundary data of $h_{\alpha \beta}$ is given by $\lambda'$ (resp.\ $-\lambda'$) on $\R_-$ (resp.\ $\R_+$), then $\eta'$ is a radial $\SLE_{\kappa'}^\beta(\rho)$ process where $\rho = \kappa'-6-2\pi \alpha/\lambda'$ (recall Figure~\ref{fig::radial_bd_cfl}).
\end{remark}

\begin{proof}[Proof of Theorem~\ref{thm::lightcone_alpha}]
The argument is the same as the proof of Theorem~\ref{thm::lightcone_alpha}.  In particular, we break the proof into the two cases where
\begin{enumerate}
\item $\alpha \in (\tfrac{\chi}{2}-\tfrac{1}{\sqrt{\kappa'}},\tfrac{\chi}{2}]$ so that $\eta'$ can hit its own outer boundary as it wraps around $z$ and
\item $\alpha \leq \tfrac{\chi}{2}-\tfrac{1}{\sqrt{\kappa'}}$ so that $\eta'$ cannot hit its own outer boundary as it wraps around $z$.
\end{enumerate}
The proof in the first case is analogous to the proof of Theorem~\ref{thm::lightcone} for $\kappa' \in (4,8)$ and the second is analogous to Theorem~\ref{thm::lightcone} with $\kappa' \geq 8$.  Note that for $\alpha=\tfrac{\chi}{2}$, $\eta_z^L = \eta_z^R$ almost surely.
\end{proof}

We can now complete the proofs of Theorem~\ref{thm::alphabeta_counterflow} and Theorem~\ref{thm::whole_plane_duality}.

\begin{proof}[Proof of Theorem~\ref{thm::alphabeta_counterflow} and Theorem~\ref{thm::whole_plane_duality}]
Assume that $h_{\alpha \beta}$, viewed as a distribution modulo a global multiple of $2\pi(\chi+\alpha)$, is defined on all of $\C$.  The construction of the coupling as described in Theorem~\ref{thm::alphabeta_counterflow} follows from the same argument used to construct the coupling in Theorem~\ref{thm::existence} and Theorem~\ref{thm::alphabeta}; see Remark~\ref{rem::alpha_coupling}.  The statement of Theorem~\ref{thm::whole_plane_duality} follows due to the way that the coupling is generated as well as Theorem~\ref{thm::lightcone} and Theorem~\ref{thm::lightcone_alpha}.  Finally, that the path $\eta'$ is almost surely determined by $h_{\alpha \beta}$ follows because Theorem~\ref{thm::uniqueness} and Theorem~\ref{thm::alphabeta} tell us that its left and right boundaries $\eta^L$ and $\eta^R$, respectively, are almost surely determined by $h_{\alpha \beta}$.  Moreover, once we condition on $\eta^L$ and $\eta^R$, we know that $\eta'$ is coupled with $h_{\alpha \beta}$ independently as a chordal counterflow line in each of the components of $\C \setminus (\eta^L \cup \eta^R)$ which are visited by $\eta'$.  This allows us to invoke the boundary emanating uniqueness theory \cite[Theorem~1.2]{MS_IMAG}.  The case that $D \neq \C$ follows by absolute continuity (Proposition~\ref{prop::local_set_whole_plane_bounded_compare}).  The other assertions of Theorem~\ref{thm::whole_plane_duality} follow similarly.
\end{proof}

\begin{proof}[Proof of Theorem~\ref{thm::transience}]
We have already given the proof for $\kappa \in (0,4)$ in Section~\ref{subsec::transience}.  The result for $\kappa' > 4$ and $\rho \in (-2,\tfrac{\kappa'}{2}-2)$ follows from Lemma~\ref{lem::radial_loop}.  The result for $\kappa' >4$ and $\rho \geq \tfrac{\kappa'}{2}-2$ follows by invoking Theorem~\ref{thm::lightcone_alpha}.  In particular, if a radial $\SLE_{\kappa'}(\rho)$ process $\eta'$ in $\D$ did not satisfy $\lim_{t \to \infty} \eta'(t) = 0$, then its left and right boundaries would not be continuous near the origin.
\end{proof}

We finish this subsection by computing the maximal number of times that a counterflow line can hit a given point or the domain boundary and then relate the dimension of the various types of self-intersection points of counterflow lines to the dimension of the intersection of GFF flow lines starting from the boundary.


\begin{proposition}
\label{prop::counterflow_maximal_intersection}
Fix $\alpha > -\tfrac{\chi}{2}$ and $\beta \in \R$.  Suppose that $h$ is a GFF on $\C$ and $h_{\alpha \beta} = h-\alpha \arg(\cdot) - \beta\log|\cdot|$, viewed as a distribution defined up to a global multiple of $2\pi(\chi+\alpha)$.  Let $\eta'$ be the counterflow line of $h_{\alpha \beta}$ starting from $\infty$.  Let 
\[ I(\kappa') = 
	\begin{cases}
		2 &\quad\text{if}\quad\quad \kappa' \in (4,8),\\
		3 &\quad\text{if}\quad\quad \kappa' \geq 8.
	\end{cases}\]
The maximal number of times that $\eta'$ can hit any given point (i.e., maximal multiplicity) is given by 
\begin{equation}
\label{eqn::counterflow_intersection_alpha}
 \max\left( \left\lceil \frac{\kappa'}{2(\kappa'+2\alpha \sqrt{\kappa'}-4)} \right\rceil , I(\kappa') \right) \quad\text{almost surely.}
\end{equation}
In particular, the maximal number of times that a radial or whole-plane $\SLE_{\kappa'}^\mu(\rho)$ process with $\rho > \tfrac{\kappa'}{2}-4$ and $\mu \in \R$ can hit any given point is given by
\begin{equation}
\label{eqn::counterflow_intersection_rho}
 \max\left( \left\lceil \frac{\kappa'}{2(2+\rho)} \right\rceil , I(\kappa') \right) \quad\text{almost surely.}
\end{equation}
Finally, the maximal number of times that a radial $\SLE_{\kappa'}^\mu(\rho)$ process with $\rho > \tfrac{\kappa'}{2}-4$ and $\mu \in \R$ process can hit the domain boundary is given by
\begin{equation}
\label{eqn::counterflow_boundary_intersection_rho}
\max\left( \left\lceil \frac{\kappa'}{2(2+\rho)} \right\rceil -1, \left\lceil \frac{\kappa'-4}{2(2+\rho)} \right\rceil, I(\kappa')-1 \right).
\end{equation}
\end{proposition}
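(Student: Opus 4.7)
The plan is to prove \eqref{eqn::counterflow_boundary_intersection_rho} by reducing the boundary-hitting count to a count of admissible heights at which the flow-line boundaries of $\eta'$ can hit a fixed boundary point, and then translating the answer from $(\alpha, \kappa')$ to $(\rho, \kappa')$ via the relation $\rho = \kappa' - 6 - 2\pi\alpha/\lambda'$ of Figure~\ref{fig::radial_bd_cfl}. This parallels the analysis used to prove \eqref{eqn::flow_line_maximal_self_intersect} and \eqref{eqn::whole_plane_sle_maximal_self_intersect} in Proposition~\ref{prop::number_of_self_intersections}, with the key input being the light-cone duality of Theorem~\ref{thm::lightcone_alpha}.

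First, I would pass to the coupling of Theorem~\ref{thm::alphabeta_counterflow}, so that the radial $\SLE_{\kappa'}^\mu(\rho)$ path is realized as the counterflow line $\eta'$ of $h_{\alpha\beta}$ for some $\alpha < \tfrac{\chi}{2}$. Theorem~\ref{thm::lightcone_alpha} identifies the outer boundary of any initial segment of $\eta'$, lifted to the universal cover of $\D \setminus \{0\}$, with the pair of flow lines $\eta^L$, $\eta^R$ of angles $\tfrac{\pi}{2}$, $-\tfrac{\pi}{2}$. Each hit of $\eta'$ on a fixed point $w \in \partial \D \setminus \{\eta'(0)\}$ corresponds to one of these flow lines passing through $w$ at a specific height. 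Between two consecutive hits at $w$, the path must either wind once around the interior target $0$ --- producing a height change of $\pm 2\pi(\chi+\alpha)$ along the relevant boundary flow line --- or make a loop not enclosing $0$, producing a height change of $\pm 2\pi\chi$. By Theorem~\ref{thm::flow_line_interaction}, the admissible height differences at which $\eta^L$ or $\eta^R$ can hit the boundary at $w$ form an arithmetic progression contained in an interval of width $2\lambda - \pi\chi$ (shifted so that the first hit sits at one end).

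Second, I would count the number of admissible heights. Dividing $2\lambda - \pi\chi$ by the two possible per-step height changes and applying the identities $2\pi\chi = (\kappa'-4)\lambda'$ (from \eqref{eqn::fullrevolutionrho}) and $\rho = \kappa' - 6 - 2\pi\alpha/\lambda'$ yields the first two ceiling terms: the term $\lceil \kappa'/(2(2+\rho))\rceil - 1$ counts the returns that proceed by windings around $0$ (step size $2\pi(\chi+\alpha)$), with the $-1$ accounting for the fact that the initial visit is not itself preceded by a winding, while $\lceil (\kappa'-4)/(2(2+\rho))\rceil$ counts the returns achievable by loops not enclosing $0$ (step size $2\pi\chi$, and so producing the factor $\kappa'-4$ in the numerator). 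The third term $I(\kappa')-1$ encodes the minimal topological count: for $\kappa' \in (4,8)$ at least one hit is always possible, while for $\kappa' \geq 8$ the ``sausage'' geometry around a hit point forces two visits irrespective of $\rho$ and $\alpha$.

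Third, I would prove sharpness by explicit construction: following the template of Proposition~\ref{prop::intersection_dimension} and Proposition~\ref{prop::boundary_intersection_dimension}, one iterates Lemma~\ref{lem::path_close_hit} (extended to $\alpha, \beta \neq 0$) to force the flow lines $\eta^L, \eta^R$ to return to the boundary point $w$ at each of the admissible heights with positive probability, and then upgrades to an almost sure statement using the scale/shift invariance of the coupling and the conformal Markov property of Proposition~\ref{prop::radial_conf_markov}. The main obstacle will be the clean separation of the two distinct per-winding height changes $2\pi(\chi+\alpha)$ and $2\pi\chi$ --- which are responsible, respectively, for the first and second ceiling terms --- and the verification that each of these return modes is maximally realizable; absolute continuity (Proposition~\ref{prop::local_set_whole_plane_bounded_compare}) combined with the hitting lemmas of Section~\ref{subsec::interaction} should handle the technical point that the flow lines of angle $\pm \tfrac{\pi}{2}$ can reach $w$ in the presence of the conical singularity at the origin.
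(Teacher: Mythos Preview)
Your account of the two ceiling terms in \eqref{eqn::counterflow_boundary_intersection_rho} is where the argument goes wrong. You claim that $\lceil \kappa'/(2(2+\rho))\rceil - 1$ and $\lceil (\kappa'-4)/(2(2+\rho))\rceil$ arise from two \emph{different} per-return height increments, namely $2\pi(\chi+\alpha)$ for windings around $0$ and $2\pi\chi$ for loops not enclosing $0$. But in the radial picture a loop that does not enclose the target point $0$ disconnects the relevant boundary arc from the remainder of the process, so the path cannot revisit a fixed $w\in\partial\D$ by that mechanism; the only way to return to $w$ is to wind once more around $0$, which always changes the height by the \emph{same} amount $2\pi(\chi-\alpha)=(2+\rho)\lambda'$ (in the $h+\alpha\arg$ convention of Figure~\ref{fig::radial_bd_cfl}). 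A quick sanity check exposes the problem: with step size $2\pi\chi=(\kappa'-4)\lambda'$ and admissible window $2\lambda-\pi\chi=2\lambda'$, the count would be $\sim 2/(\kappa'-4)$, not $(\kappa'-4)/(2(2+\rho))$.

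The correct source of the two terms is two different \emph{initial offsets}, not two different step sizes. In the coupling of Figure~\ref{fig::radial_bd_cfl} with $O_0=W_0^-$, the height difference at the $j$th hit of a boundary point is $2\pi(\chi-\alpha)(j-1)-2\pi\alpha+\pi\chi$ if the first intersection lies to the \emph{left} of the force point and $2\pi(\chi-\alpha)(j-1)+2\lambda'-\pi\chi$ if it lies to the \emph{right}. Solving each for the largest $j$ with height below $2\lambda-\pi\chi=2\lambda'$ (Theorem~\ref{thm::flow_line_interaction}) gives exactly the first and second ceilings, respectively. The term $I(\kappa')-1$ is not a ``topological minimum'' in the sense you describe; it accounts for the boundary interactions that can occur \emph{before} the process passes through the force point (and guarantees the bound is at least $1$ when the winding count degenerates). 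Separately, your proposal does not address \eqref{eqn::counterflow_intersection_alpha}--\eqref{eqn::counterflow_intersection_rho}: there one must first argue that the two mechanisms for interior self-intersection --- the chordal double/triple points inside each pocket and the intersections of $\eta^L$ with $\eta^R$ --- occur on disjoint sets (their overlap is countable because chordal $\SLE_{\kappa'}(\tfrac{\kappa'}{2}-4;\tfrac{\kappa'}{2}-4)$ is boundary-filling), so that the multiplicities can be taken as a maximum rather than a sum.
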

The value $I(\kappa')$ in the statement of Proposition~\ref{prop::counterflow_maximal_intersection} gives the maximal number of times that a chordal $\SLE_{\kappa'}$ or $\SLE_{\kappa'}(\ul{\rho})$ process can hit an interior point.  In particular, chordal $\SLE_{\kappa'}$ and $\SLE_{\kappa'}(\ul{\rho})$ processes have double but not triple points or higher order self-intersections for $\kappa' \in (4,8)$ and have triple points but not higher order self-intersections for $\kappa' \geq 8$.  The other expressions in the maximum function in~\eqref{eqn::counterflow_intersection_alpha},~\eqref{eqn::counterflow_intersection_rho} give the number of intersections which can arise from the path winding around its target point and then hitting itself.  In particular, note that the first expression in the maximum in~\eqref{eqn::counterflow_intersection_rho} is identically equal to $1$ for $\rho \geq \tfrac{\kappa'}{2}-2$, i.e.\ $\rho$ is at least the critical value for such a process to have this type of self-intersection (Lemma~\ref{lem::radial_critical_for_hitting}).  Similarly, the first expression in the maximum in~\eqref{eqn::counterflow_boundary_intersection_rho} vanishes for $\rho \geq \tfrac{\kappa'}{2}-2$.

\begin{proof}[Proof of Proposition~\ref{prop::counterflow_maximal_intersection}]
As we explained before the proof, there are two sources of self-intersections in the interior of the domain:
\begin{enumerate}
\item The double ($\kappa' > 4$) and triple ($\kappa' \geq 8$) points which arise from the segments of $\eta'$ restricted to each of the complementary components of the left and right boundaries of $\eta'$ that it visits, and
\item The intersections of the left and right boundaries of $\eta'$.
\end{enumerate}
Since chordal $\SLE_{\kappa'}(\tfrac{\kappa'}{2}-4;\tfrac{\kappa'}{2}-4)$ is a boundary-filling, continuous process, it is easy to see that the set of double or triple points of the first type mentioned above which are also contained in the left and right boundaries of $\eta'$ is countable.  (If $z$ is in the left or right boundary of $\eta'$, $\tau_z^1$ is the first time that $\eta'$ hits $z$ and $\tau_z^2$ is the second, then the interval $[\tau_z^1,\tau_z^2]$ must contain a rational.)  Consequently, the two-self intersection sets are almost surely disjoint as $\rho > \tfrac{\kappa'}{2}-4$.  Moreover, we note that the self-intersections which are contained in the left and right boundaries arise by the path either making a series of clockwise or counterclockwise loops.  In $j-1$ such successive loops, the points that $\eta'$ hits $j$ times correspond to the points where the left (resp.\ right) side of $\eta'$ hits the right (resp.\ left) side $j-1$ times.  Theorem~\ref{thm::whole_plane_duality} then tells us that the height difference of the aforementioned intersection set is given by $2\pi(j-1)(\chi+\alpha) - \pi \chi$.  Then~\eqref{eqn::counterflow_intersection_alpha},~\eqref{eqn::counterflow_boundary_intersection_rho} follow by solving for the value of $j$ that makes this equal to $2\lambda-\pi \chi$ and applying Theorem~\ref{thm::flow_line_interaction}.

We will now prove~\eqref{eqn::counterflow_boundary_intersection_rho}.  For concreteness, we will assume that $\eta'$ is a radial $\SLE_{\kappa'}^\beta(\rho)$ process in $\D$ starting from $-i$ with a single boundary force point of weight $\rho$ located at $(-i)^-$, i.e.\ immediately to the left of $-i$.  We can think of $\eta'$ as the counterflow line of $h+\alpha \arg(\cdot) + \beta \log|\cdot|$ where $h$ is a GFF on $\D$ so that the sum has the boundary data illustrated in Figure~\ref{fig::radial_bd_cfl}.  Then $\eta'$ wraps around and hits a boundary point for the $j$th time for $j \geq 1$ with either a height difference of $2\pi(\chi-\alpha)(j-1) - 2\pi \alpha + \pi \chi$ (if the first intersection occurs to the left of the force point) or with a height difference of $2\pi(\chi-\alpha)(j-1) + 2\lambda'- \pi \chi$ otherwise (see Figure~\ref{fig::radial_bd_cfl}).  (The reason that we now have $\chi-\alpha$ in place of $\chi + \alpha$ as in the previous paragraph is because here we have added $\alpha \arg(\cdot)$ rather than subtracted it.)  Solving for the value of $j$ that makes this equal to $2\lambda-\pi \chi$ and then applying Theorem~\ref{thm::flow_line_interaction} yields the first two expressions in the maximum.  The final expression in the maximum comes from the multiple boundary intersections that can occur when the process interacts with the boundary before passing through the force point (as well as handles the possibility that the process can only hit the boundary one time after passing through the force point).
\end{proof}

\begin{proposition}
\label{prop::counterflow_intersection_dimension}
Suppose that we have the same setup as in Proposition~\ref{prop::counterflow_maximal_intersection} and assume that the assumption~\eqref{eqn::boundary_intersection_dimension} of Proposition~\ref{prop::intersection_dimension} holds.  For each $j \in \N$, let $\CI_j'$ be the set of interior points that $\eta'$ hits exactly $j$ times which are contained in the left and right boundaries of $\eta'$ and let
\begin{equation}
\label{eqn::counterflow_selfintersection_dimension_angle}
 \theta_j' =  2\pi (j-1) \left(1+\frac{\alpha}{\chi}\right) - \pi = \frac{\pi(2j(2+\rho)-2\rho -\kappa')}{\kappa'-4}.
\end{equation}
Then
\[ \p[\dim_\CH(\CI_j') = d(\theta_j')] = 1 \quad\text{for each}\quad j \geq 2\]
where $d(\theta_j')$ is the dimension of the intersection of boundary emanating GFF flow lines with an angle gap of $\theta_j'$.
\end{proposition}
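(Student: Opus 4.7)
\medbreak\noindent\textbf{Proof proposal for Proposition~\ref{prop::counterflow_intersection_dimension}.}  The plan is to follow the same two-sided (cover for the upper bound, construct for the lower bound) strategy used in the proof of Proposition~\ref{prop::intersection_dimension}, with the left/right boundaries $\eta^L, \eta^R$ of $\eta'$ (identified in Theorem~\ref{thm::whole_plane_duality} as flow lines of angle $\tfrac{\pi}{2}$ and $-\tfrac{\pi}{2}$) playing the role of the flow line $\eta$.  By Theorem~\ref{thm::whole_plane_duality} the left and right boundaries of $\eta'$ are exactly $\eta^L$ and $\eta^R$, so every point of $\CI_j'$ lies in $\eta^L \cap \eta^R$.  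The key height-difference computation, already carried out in the proof of Proposition~\ref{prop::counterflow_maximal_intersection}, shows that when $\eta'$ visits a point $z \in \CI_j'$ it must wrap $(j-1)$ times around $z$ between successive visits, so that $\eta^L$ and $\eta^R$ meet at $z$ with height difference $2\pi(j-1)(\chi+\alpha) - \pi\chi$; dividing by $\chi$ yields the claimed angle $\theta_j'$ in \eqref{eqn::counterflow_selfintersection_dimension_angle}.

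For the upper bound, I would decompose $\eta^L$ and $\eta^R$ into countable unions of overlapping tails of flow lines with rational starting points via Proposition~\ref{prop::tail_decomposition} and Proposition~\ref{prop::boundary_tail_decomposition} (and their analogs in the presence of the conical singularity, which follow by absolute continuity away from $0$ as in Section~\ref{subsec::conical}).  Each point of $\CI_j'$ then lies in a pairwise intersection of two such tails with the fixed height difference above, hence angle gap $\theta_j'$.  Exactly as in the proof of Proposition~\ref{prop::tail_interaction}, one maps to $\h$ and applies absolute continuity together with \eqref{eqn::boundary_intersection_dimension} to conclude that each such intersection a.s.\ has Hausdorff dimension $d(\theta_j')$; countable stability of Hausdorff dimension yields $\dim_{\CH}(\CI_j') \le d(\theta_j')$ a.s.

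For the lower bound, I would adapt the construction used in Proposition~\ref{prop::intersection_dimension} (see Figure~\ref{fig::intersection_dimension}).  Fix $z \ne 0$.  Using Lemma~\ref{lem::radial_loop} and Lemma~\ref{lem::path_close_hit} (in the forms already extended to $\alpha,\beta\ne 0$ in Section~\ref{subsec::conical}), I would produce a positive-probability event on which $\eta'$ starts by making a clean counterclockwise loop around $0$, after which the flow lines $\eta^L$ and $\eta^R$ from $z$ are forced to wrap $j-1$ times around $0$ before meeting in a prescribed neighborhood, meeting with precisely the angle gap $\theta_j'$.  On this event, the intersection of $\eta^L$ and $\eta^R$ in that neighborhood is contained in $\CI_j'$.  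By absolute continuity (Proposition~\ref{prop::local_set_whole_plane_bounded_compare}) with respect to a pair of boundary-emanating flow lines meeting at angle $\theta_j'$, hypothesis \eqref{eqn::boundary_intersection_dimension} gives that this intersection has dimension $d(\theta_j')$ on a positive-probability event, and then the scale invariance of the law of $h_{\alpha\beta}$ (together with the zero/one type argument used in the proof of Lemma~\ref{lem::do_not_cross} and Proposition~\ref{prop::intersection_dimension}) boosts this to an almost sure statement.

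The main obstacle I expect is bookkeeping: one must verify that the particular tails of $\eta^L$ and $\eta^R$ one produces at a $j$-multiple visit of $\eta'$ correspond to an angle difference of \emph{exactly} $\theta_j'$ (and not $\theta_j' + 2\pi(\chi+\alpha)k/\chi$ for some $k\in\Z$), and that the pathological case in which an intersection point of $\eta^L \cap \eta^R$ is also a self-intersection of one of the two boundary curves contributes nothing to the dimension.  The first is handled by the height-difference analysis of Proposition~\ref{prop::counterflow_maximal_intersection} together with Theorem~\ref{thm::flow_line_interaction}, which rules out all other values of $k$.  The second is a harmonic-measure zero statement analogous to Lemma~\ref{lem::whole_plane_self_intersection_harmonic_measure} and follows from the same absolute-continuity/Lemma~\ref{lem::sle_kappa_rho_boundary_intersection} argument used there.
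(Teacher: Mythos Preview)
Your proposal is correct and follows essentially the same approach as the paper, which simply writes that the result ``follows from the argument of Proposition~\ref{prop::counterflow_maximal_intersection} as well as from the argument of Proposition~\ref{prop::intersection_dimension}.''  You have correctly unpacked what that means: the height-difference/angle-gap bookkeeping from Proposition~\ref{prop::counterflow_maximal_intersection} identifies $\theta_j'$, and the upper/lower bound scheme of Proposition~\ref{prop::intersection_dimension} (tail covering plus a positive-probability construction upgraded by conformal Markov/scale invariance) yields the dimension.  One small slip: in your lower-bound sketch you refer to ``the flow lines $\eta^L$ and $\eta^R$ from $z$,'' but by Theorem~\ref{thm::whole_plane_duality} these start from $0$; the auxiliary point $z$ enters, as in Figure~\ref{fig::intersection_dimension}, only to help force the prescribed winding/angle-gap event with positive probability.
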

Note that the angle $\theta_j'$ in~\eqref{eqn::counterflow_selfintersection_dimension_angle} is equal to the critical angle~\eqref{eqn::critical_angle} when $\rho=\tfrac{\kappa'}{2}-2$ and $j=2$ and exceeds the critical angle for larger values of $j$.
\begin{proof}[Proof of Proposition~\ref{prop::counterflow_intersection_dimension}]
This follows from the argument of Proposition~\ref{prop::counterflow_maximal_intersection} as well as from the argument of Proposition~\ref{prop::intersection_dimension}.
\end{proof}

\begin{proposition}
\label{prop::counterflow_boundary_dimension}
Fix $\alpha < \tfrac{\chi}{2}$ and $\beta \in \R$.  Suppose that $h_{\alpha \beta} = h+\alpha \arg(\cdot) + \beta\log|\cdot|$ where $h$ is a GFF on $\D$ so that $h_{\alpha \beta}$ has the boundary data depicted in Figure~\ref{fig::radial_bd_cfl} with $O_0 = W_0^-$ (i.e., $O_0$ is immediately to the left of $W_0$).  Let $\eta'$ be the counterflow line of $h_{\alpha \beta}$ starting from $W_0$ so that $\eta' \sim \SLE_{\kappa'}^\beta(\rho)$ with $\rho = \kappa'-6-2\pi \alpha/\lambda' > \tfrac{\kappa'}{2}-4$.  Assume that assumption~\eqref{eqn::boundary_intersection_dimension2} of Proposition~\ref{prop::boundary_intersection_dimension} holds.  Let $\CJ_{j,L}'$ (resp.\ $\CJ_{j,R}'$) be the set of points that $\eta'$ hits on $\partial \D$ exactly $j$ times where the first intersection occurs to the left (resp.\ right) of the force point.  For each $j \in \N$, let
\begin{align*}
\phi_{j,L}
&= 2\pi(j-1)\left(1-\frac{\alpha}{\chi}\right)-\frac{2\pi \alpha}{\chi}+\pi
= \frac{\pi(4-\kappa'+2j(2+\rho))}{\kappa'-4} \quad\text{and}\\
\phi_{j,R}
&= 2\pi(j-1)\left(1-\frac{\alpha}{\chi}\right)+\frac{2\lambda'}{\chi}-\pi
 = \frac{\pi(4-\kappa'-2\rho+2j(2+\rho))}{\kappa'-4}.
\end{align*}
For each $j \in \N$, we have
\begin{align*}
 \p[\dim_\CH(\CJ_{j,L}') = b(\phi_{j,L})\giv\CJ_{j,L}' \neq\emptyset ] &= 1 \quad\text{and}\\
 \p[\dim_\CH(\CJ_{j,R}') = b(\phi_{j,R})\giv\CJ_{j,R}' \neq \emptyset ] &= 1
\end{align*}
where $b(\theta)$ is the dimension of the intersection of a boundary emanating flow line in $\h$ with $\partial \h$ where the path hits with an angle gap of $\theta$.
\end{proposition}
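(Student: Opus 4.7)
The argument follows the same template as Proposition~\ref{prop::counterflow_intersection_dimension}, but with boundary intersections in place of interior self-intersections, so we invoke hypothesis \eqref{eqn::boundary_intersection_dimension2} rather than \eqref{eqn::boundary_intersection_dimension}. The two main ingredients are the light cone/boundary description of $\eta'$ from Theorem~\ref{thm::lightcone_alpha}, which identifies segments of the left and right boundaries of $\eta'$ as flow lines of $h_{\alpha\beta}$, together with the tail decomposition and interaction machinery of Section~\ref{subsec::interaction}.

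\textbf{Upper bound.} Suppose $z \in \mathcal{J}'_{j,L}$ and let $\tau_1 < \cdots < \tau_j$ be the times at which $\eta'(\tau_k) = z$. Between consecutive hits $\eta'$ must wrap once around $0$, and at each $\tau_k$ the adjacent left or right side of $\eta'$ near $z$ is a flow line tail of $h_{\alpha\beta}$ by Theorem~\ref{thm::lightcone_alpha}. Tracking the boundary data through each wrap around $0$ as in the proof of Proposition~\ref{prop::counterflow_maximal_intersection}, the flow-line boundary conditions shift by $2\pi\chi$ per wrap while the conical singularity contributes $\pm 2\pi\alpha$ per wrap, so that at the $j$-th hit the angle gap between the relevant tail and the arc of $\partial\D$ it is hitting is $\phi_{j,L}$ (when the first hit is to the left of the force point) or $\phi_{j,R}$ (when it is to the right). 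The $(j-1)$-fold wrap accumulation produces the $2\pi(1-\alpha/\chi)(j-1)$ term, and the remaining constants $-2\pi\alpha/\chi+\pi$ (respectively $2\lambda'/\chi-\pi$) record the initial configuration of $\eta'$ relative to the force point in the $L$-case (respectively $R$-case). By Proposition~\ref{prop::boundary_tail_decomposition} each such tail is contained in a countable union of flow lines of $h_{\alpha\beta}$ started at rational points, so $\mathcal{J}'_{j,L}$ is contained in a countable union of intersections $\eta_{w,\theta} \cap \partial\D$ where each $\eta_{w,\theta}$ has boundary-data configuration matching a boundary-emanating flow line that hits $\partial\h$ with angle gap $\phi_{j,L}$. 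Proposition~\ref{prop::local_set_whole_plane_bounded_compare} then gives that each such restricted intersection has law absolutely continuous with respect to the boundary-emanating case, so \eqref{eqn::boundary_intersection_dimension2} yields $\dim_{\mathcal{H}} \leq b(\phi_{j,L})$, and countable unions preserve this bound.

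\textbf{Lower bound.} By a standard conformal/absolute continuity argument (as in the conclusion of the proof of Proposition~\ref{prop::intersection_dimension}) it suffices to exhibit, with positive probability conditional on $\{\mathcal{J}'_{j,L} \neq \emptyset\}$, a subset of $\mathcal{J}'_{j,L}$ of Hausdorff dimension at least $b(\phi_{j,L})$. We iterate the counterflow line/conical-singularity version of Lemma~\ref{lem::path_close_hit}: we condition on an initial positive-probability event on which $\eta'$ wraps around $0$ and cleanly hits a fixed boundary arc $I$ at $j-1$ well-separated points, and on which the continuation is set up to hit a fresh subarc $I_0 \subseteq I$ with the correct angle gap $\phi_{j,L}$. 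On this event Proposition~\ref{prop::local_set_whole_plane_bounded_compare} compares the behavior of $\eta'$ near $I_0$ with that of a boundary-emanating flow line in $\h$ hitting $\partial\h$ at angle $\phi_{j,L}$, and \eqref{eqn::boundary_intersection_dimension2} produces the required subset of $\mathcal{J}'_{j,L}$ of dimension $b(\phi_{j,L})$. The $R$-case is identical modulo the single bookkeeping shift explained above.

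\textbf{Main obstacle.} The essential work is the bookkeeping of the accumulated flow-line height gap after $j-1$ wraps, combined with checking that in between wraps $\eta'$ can be arranged to cleanly re-hit the same boundary arc without producing self-intersection pathologies that would contaminate the final absolute-continuity comparison; here the hypothesis $\alpha < \chi/2$ (so that $\eta'$ does not fill its own outer boundary, by Remark~\ref{rem::boundary_filling_value}) enters in an essential way, as does the explicit form of the boundary data in Figure~\ref{fig::radial_bd_cfl}, which governs which heights are admissible for boundary hits in the $L$ and $R$ regimes.
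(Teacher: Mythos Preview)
Your proposal is correct and follows essentially the same route as the paper. The paper's proof is a two-line reference back to the height-difference bookkeeping at the end of Proposition~\ref{prop::counterflow_maximal_intersection} (for the $\phi_{j,L},\phi_{j,R}$ computation through successive wraps) together with the upper/lower bound argument of Proposition~\ref{prop::counterflow_intersection_dimension} (which in turn unwinds to the tail-covering and positive-probability construction of Proposition~\ref{prop::intersection_dimension}); your write-up is exactly an expansion of those referenced arguments, with the additional explicit remark that the $j=1$ angle gaps are read off directly from the boundary data of Figure~\ref{fig::radial_bd_cfl}.
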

\begin{proof}
The case for $j \geq 2$ follows from the argument at the end of the proof of Proposition~\ref{prop::counterflow_maximal_intersection} as well as the argument of the proof of Proposition~\ref{prop::counterflow_intersection_dimension}.  The angle gaps for $j=1$ are computed similarly and can be read off from Figure~\ref{fig::radial_bd_cfl}.
\end{proof}

\subsection{Space-filling $\SLE_{\kappa'}$}

\begin{figure}[htb!]
\begin{center}
\includegraphics[scale=0.85]{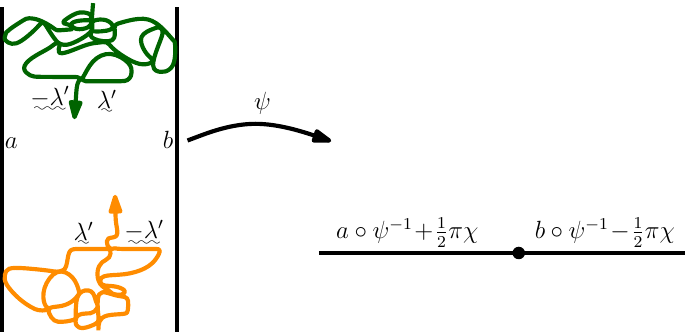}
\end{center}
\caption{\label{fig::spacefillingboundarydata}
Consider a GFF $h$ on the infinite vertical strip $\vstrip = [-1,1] \times \R$ whose boundary values are given by some function $a$ on the left side and some function $b$ on the right side.  We assume that $a,b$ are piecewise constant and change values only a finite number of times. Provided that $\|a\|_\infty < \lambda' + \tfrac{\pi \chi}{2}=\lambda$ and $\|b\|_\infty < \lambda' + \tfrac{\pi \chi}{2}=\lambda$ we can draw the orange counterflow line from the bottom to the top of $\vstrip$ with the boundary conditions shown.  This can be seen by conformally mapping to the upper half-plane (via the coordinate change in Figure~\ref{fig::coordinatechange}) as shown and noting that the corresponding boundary values are strictly greater than $-\lambda'$ on the negative real axis and strictly less than $+\lambda'$ on the positive real axis.  A symmetric argument shows that we can also draw the green counterflow line from the top of $\vstrip$ to the bottom.  Note that, as illustrated, the angles of the flow lines which describe the outer boundary of the orange and green counterflow lines are the same.  This is the observation that leads to the reversibility of space-filling $\SLE_{\kappa'}(\rho_1;\rho_2)$.}
\end{figure}

\begin{figure}[htb!]
\begin{center}
\includegraphics[scale=0.85]{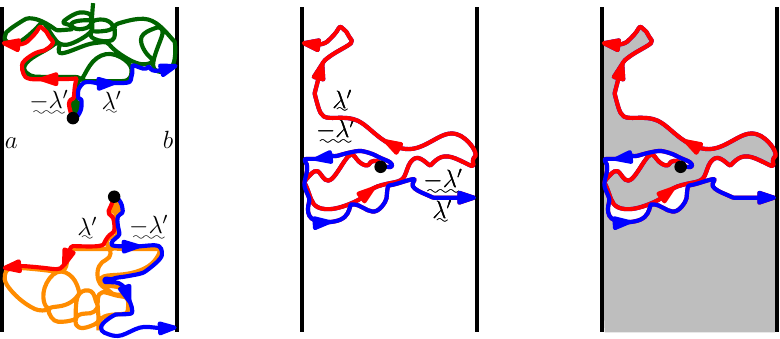}
\end{center}
\caption{\label{fig::spacefillingboundarydata2}
Same setup as in Figure~\ref{fig::spacefillingboundarydata}.  The first figure illustrates the left and right boundary paths of the orange and green curves, shown as red and blue lines stopped when they first hit $\partial \vstrip$.  At the hitting point, the boundary data is as shown ($\pm \lambda'$ on the two sides of the paths as they approach the boundaries horizontally, appropriately modified by winding).  The middle figure shows the flow lines with angles $\tfrac{\pi}{2}$ and $-\tfrac{\pi}{2}$ from a generic point $z$.  The red (resp.\ blue) path is stopped the first time it hits the left (resp.\ right) boundary with the appropriate $\pm \lambda'$ boundary conditions (as opposed to these values plus an integer multiple of $2 \pi \chi$).  In the third figure, we consider the complement of the red and blue paths.  We color gray the points in components of this complement whose boundaries are formed by the left side of a red segment or the right side of a blue segment.  The remaining points are left white.  Fix a countable dense set $(z_i)$ in $\vstrip$ including $z=z_0$, and consider the gray points ``after'' and the white points ``before $z$.''  By drawing the figure for all $z$, we almost surely obtain a total ordering of these $z_i$.  Up to monotone reparameterization, there is a unique continuous curve $\eta$ that hits the $(z_i)$ in order and has the property that $\eta^{-1} (z_i)$ is a dense set of times.  This curve is called the {\bf space-filling counterflow line} from~$+\infty$~to~$-\infty$.}
\end{figure}

In this section, we will prove the existence and continuity of the space-filling $\SLE_{\kappa'}$ processes for $\kappa' > 4$, thus proving Theorem~\ref{thm::space_filling_sle_existence}.  Recall that these processes are defined in terms of an induced ordering on a dense set as described in Section~\ref{subsubsec::branching_space_filling}; see also Figure~\ref{fig::space_filling_ordering} as well as Figure~\ref{fig::spacefillingboundarydata} and Figure~\ref{fig::spacefillingboundarydata2}. As we will explain momentarily, Theorem~\ref{thm::space_filling_reversibility} follows immediately from Theorem~\ref{thm::space_filling_sle_existence} since the definition of space-filling $\SLE_{\kappa'}(\rho_1;\rho_2)$ has time-reversal symmetry built in.  Theorem~\ref{thm::space_filling_reversibility} in turn implies Theorem~\ref{thm::bigger_than_8_reversibility}.  For the convenience of the reader, we restate these results in the following theorem.  We remind the reader that the range of $\rho_i$ values considered in Theorem~\ref{thm::space_filling_reversibility2} is summarized in Figure~\ref{fig::sfrd} and Figure~\ref{fig::sfrd2}.

\begin{theorem}
\label{thm::space_filling_reversibility2}
Let $D$ be a Jordan domain and fix $x,y \in \partial D$ distinct.  Suppose that $\kappa' > 4$ and $\rho_1, \rho_2 \in (-2, \tfrac{\kappa'}{2} - 2)$ and let $h$ be a GFF on $D$ whose counterflow line from $y$ to $x$ is an $\SLE_{\kappa'}(\rho_1;\rho_2)$ process (which is fully branchable).  Then space-filling $\SLE_{\kappa'}(\rho_1;\rho_2)$ in $D$ from $y$ to $x$ exists, is well-defined, and almost surely is a continuous path when parameterized by area.  The path is almost surely determined by $h$ and the path almost surely determines $h$.  When $\kappa' \geq 8$ and $\rho_1,\rho_2 \in (-2, \tfrac{\kappa'}{2}-4]$, space-filling $\SLE_{\kappa'}(\rho_1;\rho_2)$ describes the same law as chordal $\SLE_{\kappa'}(\rho_1;\rho_2)$.  Moreover, the time-reversal of a space-filling $\SLE_{\kappa'}(\rho_1;\rho_2)$ is a space-filling $\SLE_{\kappa'}(\wt{\rho}_1;\wt{\rho}_2)$ in $D$ from $x$ to $y$ where $\wt{\rho}_i$ is the reflection of $\rho_i$ about the $\tfrac{\kappa'}{4}-2$ line.  In particular, space-filling $\SLE_{\kappa'}(\tfrac{\kappa'}{4}-2;\tfrac{\kappa'}{4}-2)$ has time-reversal symmetry.
\end{theorem}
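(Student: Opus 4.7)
The plan is to construct the curve from the pair of intertwined flow-line trees of angles $\tfrac{\pi}{2}$ and $-\tfrac{\pi}{2}$, then identify it with an area-parameterized extension of the branching counterflow line, and finally deduce reversibility from the symmetry of the strip boundary data in Figure~\ref{fig::spacefillingboundarydata}. To begin, I would work in the strip $\vstrip$ and fix a countable dense set $(z_i)$. By Theorem~\ref{thm::existence} and Theorem~\ref{thm::alphabeta}, each pair $\eta_i^L,\eta_i^R$ of flow lines of $h$ from $z_i$ with angles $\pm\tfrac{\pi}{2}$ is well defined up to the first hit of the appropriate arc of $\partial \vstrip$ (with the specified winding, as in Figure~\ref{fig::spacefillingboundarydata2}). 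Theorem~\ref{thm::merge_cross} shows that same-angle flow lines from different $z_i$ merge on meeting and never cross, so $\{\eta_i^L\}$ and $\{\eta_i^R\}$ form intertwined trees, and Theorem~\ref{thm::flow_line_interaction} together with the rule of Figure~\ref{fig::space_filling_ordering} assigns a well-defined, transitive total order to $(z_i)$, independent of the choice of dense set.

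Next I would realize this order as a continuous area-parameterized curve $\eta'$. The key step is to identify, for each fixed $z \in \vstrip$, the curve $\eta'$ stopped at the first time its tip becomes harmonically exposed to $z$ with the branching counterflow line of $h$ targeted at $z$: by Theorem~\ref{thm::lightcone} (and Theorem~\ref{thm::lightcone_alpha} for interior targets) the left and right boundaries of that branch are precisely $\eta_z^L$ and $\eta_z^R$, which forces the set of points visited up to this time to be exactly those lying in the ``before $z$'' class of our order. Continuity of the radial $\SLE_{\kappa'}(\rho)$ branches (Proposition~\ref{prop::radial_continuity} / Proposition~\ref{prop::whole_plane_continuity}) together with the endpoint-continuity of Proposition~\ref{prop::endpoint_continuity} and a uniform-shrinking estimate on the ``active'' complementary components (combining Lemma~\ref{lem::radial_loop} with the Beurling estimate to show conformal radii shrink along the branch) then allow one to splice the branches into a continuous space-filling $\eta'$ parameterized by area. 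Measurability of $\eta'$ given $h$ is immediate from Theorem~\ref{thm::uniqueness}; the converse measurability is exactly Theorem~\ref{thm::field_determined_by_tree} applied to the pair of trees.

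For the identification with chordal $\SLE_{\kappa'}(\rho_1;\rho_2)$ when $\kappa' \geq 8$ and $\rho_i \in (-2,\tfrac{\kappa'}{2}-4]$, I would note that in this range ordinary chordal $\SLE_{\kappa'}(\rho_1;\rho_2)$ is both space-filling and boundary filling, and that Theorem~\ref{thm::lightcone} together with Remark~\ref{rem::boundary_tracing_conditional_law} identifies its left and right trees with the angle-$\pm\tfrac{\pi}{2}$ flow-line trees of $h$. Thus the two space-filling curves visit $(z_i)$ in the same order almost surely, so they agree (up to time-change) by the uniqueness of the area-parameterized extension constructed above.

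Reversibility is the real payoff of the strip construction, and the point is that the boundary data in Figure~\ref{fig::spacefillingboundarydata} is manifestly symmetric under the involution that flips the strip upside down and sends $\rho_i \mapsto \wt\rho_i = \tfrac{\kappa'}{2}-4-\rho_i$: a direct reading of the boundary heights shows that the flipped boundary data are those of a space-filling $\SLE_{\kappa'}(\wt\rho_2;\wt\rho_1)$ from the top of $\vstrip$ to its bottom, run with respect to the \emph{same} GFF $h$. Since the angle-$\pm\tfrac{\pi}{2}$ flow lines depend only on $h$ and not on the direction of the space-filling curve, both constructions use literally the same trees $\{\eta_i^L,\eta_i^R\}$, and by inspection the induced total orders on $(z_i)$ are reverses of each other. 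This yields Theorem~\ref{thm::space_filling_reversibility2}; the symmetric case $\rho_1=\rho_2=\tfrac{\kappa'}{4}-2$ is the unique fixed point of $\rho_i \mapsto \wt\rho_i$, giving genuine time-reversal symmetry. The main technical obstacle I expect is the continuity/splicing step in paragraph two: one must produce a quantitative shrinking estimate on complementary pockets ensuring that the area parameterization yields a continuous path, and one must verify that across branching times the spliced curve remains continuous, which requires a careful combination of conformal-radius estimates and the boundary-emanating continuity theory of \cite{MS_IMAG}.
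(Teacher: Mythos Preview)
Your overall architecture---defining the ordering via the angle-$\pm\tfrac{\pi}{2}$ flow-line trees and reading off reversibility from the strip symmetry of Figure~\ref{fig::spacefillingboundarydata} and \eqref{eqn::forward_counterflow_rho}--\eqref{eqn::reverse_counterflow_rho}---matches the paper exactly, as does the measurability argument via Theorem~\ref{thm::uniqueness} and Theorem~\ref{thm::field_determined_by_tree}. The gap is precisely where you locate it: the continuity step. Your proposed route to it is not the paper's and does not obviously close.

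The paper does \emph{not} deduce continuity of the space-filling curve from continuity of the branching counterflow line. Continuity (and endpoint continuity) of each radial branch tells you that the regions swallowed along \emph{that one branch} shrink, but gives no uniformity across the countable family of targets; splicing into a single area-parameterized curve requires exactly that uniformity, namely \eqref{eqn::pocket_diameter_to_zero}. The paper proves \eqref{eqn::pocket_diameter_to_zero} directly via a quantitative flow-line merging estimate in the whole plane (Proposition~\ref{prop::merge_into_net}): for a whole-plane GFF, the probability that the flow line from $z_0$ fails to merge with some flow line started in a surrounding $\epsilon$-grid before travelling distance $Kn\epsilon$ is at most $e^{-Cn}$. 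This is obtained by iterating one-step lemmas (Lemmas~\ref{lem::make_good_pockets_ideal}--\ref{lem::good_pocket_merge}) that build, with uniformly positive conditional probability, a pocket in which a clean merge occurs. The whole-plane estimate is then transferred to bounded domains by a three-stage bootstrap: first $\rho_1=\rho_2=\tfrac{\kappa'}{2}-4$, realized inside a whole-plane pocket via Lemma~\ref{lem::make_space_filling_pocket}; next all $\rho_i\in(-2,\tfrac{\kappa'}{2}-4]$ by conditioning on auxiliary boundary flow lines as in Figure~\ref{fig::conditioning_trick} and rescaling; finally the full range $(-2,\tfrac{\kappa'}{2}-2)$ by invoking the reversibility already obtained in the second stage to cover $[0,\tfrac{\kappa'}{2}-2)$, then conditioning once more. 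Your proposed combination of Lemma~\ref{lem::radial_loop} with a Beurling estimate controls conformal radii along a single branch but does not yield this uniform-in-targets pocket bound, and without it the Cauchy property of the approximating curves $\eta_n'$ is unproved.
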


We are now going to explain how to extract Theorem~\ref{thm::cle_locally_finite} from Theorem~\ref{thm::space_filling_reversibility2}.

\begin{proof}[Proof of Theorem~\ref{thm::cle_locally_finite}]
Fix $\kappa' \in (4,8)$.  Suppose that $h$ is a GFF on a Jordan domain $D$ such that its counterflow line $\eta'$ growing from a point $y \in \partial D$ is an $\SLE_{\kappa'}(\kappa'-6)$ process with a single force point located at $y^-$.  Then the branching counterflow line $\eta'$ of $h$ starting from $y$ targeted at a countable dense set of interior points describes the same coupling of radial $\SLE_{\kappa'}(\kappa'-6)$ processes used to generate the $\CLE_{\kappa'}$ exploration tree in $D$ rooted at $y$ \cite{SHE_CLE}.  It follows from the construction of space-filling $\SLE_{\kappa'}(\kappa'-6)$ that the branch of $\eta'$ targeted at a given interior point $z$ agrees with the space-filling $\SLE_{\kappa'}(\kappa'-6)$ process coupled with $h$ starting from $y$ parameterized by $\log$ conformal radius as seen from $z$.  Consequently, the result follows from the continuity of space-filling $\SLE_{\kappa'}(\kappa'-6)$ proved in Theorem~\ref{thm::space_filling_reversibility2}.  Indeed, if $\CLE_{\kappa'}$ was not almost surely locally finite, then there would exist $\epsilon > 0$ such that the probability that there are an infinite number of loops with diameter at least $\epsilon > 0$ is positive.  Since the space-filling $\SLE_{\kappa'}(\kappa'-6)$ process traces the boundary of each of these loops in a disjoint time interval, it would follow that there would be an infinite number of pairwise disjoint time intervals $I_j$ whose image under the space-filling $\SLE_{\kappa'}(\kappa'-6)$ have diameter at least $\epsilon$.  Since the total area of the domain was assumed to be finite and space-filling $\SLE$ is parameterized by area, it follows that the interval of time on which it is defined is finite.  Therefore there must exist a sequence $(j_k)$ in $\N$ such that the length of the intervals $I_{j_k}$ tends to $0$ as $k \to \infty$.  This contradicts the almost sure continuity of space-filling $\SLE$, which proves the result.
\end{proof}

\begin{remark}
\label{rem::space_filling_general_boundary_data}
 One can extend the definition of the space-filling $\SLE_{\kappa'}(\rho_1;\rho_2)$ processes to the setting of many boundary force points.  These processes make sense and are defined in the same way provided the underlying GFF has fully branchable boundary data.  Moreover, there are analogous continuity and reversibility results, though we will not establish these here.  The former can be proved by generalizing our treatment of the two force point case given below using arguments which are very similar to those used to establish the almost sure continuity of the chordal $\SLE_{\kappa'}(\ul{\rho}^L;\ul{\rho}^R)$ processes in \cite[Section~7]{MS_IMAG} and the reversibility statement is immediate from the definition once continuity has been proved.  In this more general setting, the time-reversal of a space-filling $\SLE_{\kappa'}(\ul{\rho}^L;\ul{\rho}^R)$ process is a space-filling $\SLE_{\kappa'}(\wt{\ul{\rho}}^L;\wt{\ul{\rho}}^R)$ process where the vectors of weights $\wt{\ul{\rho}}^L,\wt{\ul{\rho}}^R$ are chosen so that, for each $k$ and $q \in \{L,R\}$, $\sum_{j=1}^k \wt{\rho}^{j,q}$ is the reflection of $\sum_{j=1}^{n^q-k+1} \rho^{j,q}$ about the $\tfrac{\kappa'}{4}-2$ line where $n^q = |\ul{\rho}^q| = |\wt{\ul{\rho}}^q|$.
\end{remark}

We remind the reader that the range of $\rho_i$ values considered in Theorem~\ref{thm::space_filling_reversibility2} is summarized in Figure~\ref{fig::sfrd} and Figure~\ref{fig::sfrd2}.  We will now explain how to derive the time-reversal component of Theorem~\ref{thm::space_filling_reversibility2} (see also Figure~\ref{fig::space_filling_ordering} as well as Figure~\ref{fig::spacefillingboundarydata} and Figure~\ref{fig::spacefillingboundarydata2}).  Consider a GFF $h$ on a vertical strip $\vstrip = [-1,1] \times \R$ in $\C$ and assume that the boundary value function $f$ for $h$ is piecewise constant (with finitely many discontinuities) and satisfies
\begin{equation}
\label{eqn::sidebound}
\| f \|_\infty < \lambda'+\frac{\pi\chi}{2} = \lambda.
\end{equation}
These boundary conditions ensure that we can draw a counterflow line from the bottom to the top of $\vstrip$ as well as from the top to the bottom, as illustrated in Figure~\ref{fig::spacefillingboundarydata}.  In each case the counterflow line is an $\SLE_{\kappa'}(\ul{\rho})$ process where $\sum_{j=1}^k \rho^{j,q} \in (-2,\tfrac{\kappa'}{2}-2)$ for $1 \leq k \leq |\ul{\rho}^q|$ with $q \in \{L,R\}$.  When the boundary conditions are equal to a constant $a$ (resp.\ $b$) on the left (resp.\ right) side of $\vstrip$, the counterflow line starting from the bottom of the strip is an $\SLE_{\kappa'}(\rho_1;\rho_2)$ process with
\begin{equation}
\label{eqn::forward_counterflow_rho}
 \rho_1= \frac{a}{\lambda'} + \left(\frac{\kappa'}{4}-2\right) \quad\text{and}\quad \rho_2 = -\frac{b}{\lambda'} + \left(\frac{\kappa'}{4}-2\right)
\end{equation}
(see Figure~\ref{fig::spacefillingboundarydata}).
In this case, the restriction~\eqref{eqn::sidebound} is equivalent to $\rho_1, \rho_2 \in (-2, \tfrac{\kappa'}{2} - 2)$.  The counterflow line from the top to the bottom of $\vstrip$ is an $\SLE_{\kappa'}(\wt{\rho}_1;\wt{\rho}_2)$ where
\begin{equation}
\label{eqn::reverse_counterflow_rho}
\wt{\rho}_1 = -\frac{a}{\lambda'} + \left(\frac{\kappa'}{4}-2\right) \quad\text{and}\quad \wt{\rho}_2 = \frac{b}{\lambda'} + \left(\frac{\kappa'}{4}-2\right).
\end{equation}
That is, $\wt{\rho}_i$ for $i=1,2$ is the reflection of $\rho_i$ about the $\tfrac{\kappa'}{4}-2$ line and $\wt{\rho}_1,\wt{\rho}_2 \in (-2,\tfrac{\kappa'}{2}-2)$.  These boundary conditions are fully branchable and both the counterflow line of the field from the top to the bottom and for the counterflow line of the field from the bottom to the top of $\vstrip$ hit both sides of $\vstrip$.  We note that the order in which the two counterflow lines hit points is as described in Figure~\ref{fig::space_filling_ordering}.  Moreover,~\eqref{eqn::forward_counterflow_rho} and~\eqref{eqn::reverse_counterflow_rho} together imply that Theorem~\ref{thm::space_filling_reversibility} follows once we prove Theorem~\ref{thm::space_filling_sle_existence}.

What remains to be shown is the almost sure continuity of space-filling $\SLE_{\kappa'}(\rho_1;\rho_2)$ and that the process is well-defined (i.e., the resulting curve does not depend on the choice of countable dense set).  Recall that the ordering which defines space-filling $\SLE_{\kappa'}(\rho_1;\rho_2)$ was described in Section~\ref{subsubsec::branching_space_filling}; see also Figure~\ref{fig::space_filling_ordering} as well as Figure~\ref{fig::spacefillingboundarydata} and Figure~\ref{fig::spacefillingboundarydata2}. By applying a conformal transformation, we may assume without loss of generality that we are working on a bounded Jordan domain $D$.  We then fix a countable dense set $(z_k)$ of $D$ (where we take $z_0 = x$) and, for each $k$, let $\eta_k^L$ (resp.\ $\eta_k^R$) be the flow line of $h$ starting from $z_k$ with angle $\tfrac{\pi}{2}$ (resp.\ $-\tfrac{\pi}{2}$).  For distinct indices $i,j \in \N$, we say that $z_i$ comes before $z_j$ if $z_i$ lies in a connected component of $D \setminus (\eta_j^L \cup \eta_j^R)$ part of whose boundary is traced by either the right side of $\eta_j^L$ or the left side of $\eta_j^R$.  For each $n \in \N$, the sets $\eta_1^L \cup \eta_1^R,\ldots, \eta_n^L \cup \eta_n^R$ divide $D$ into $n+1$ pockets $P_0^n,\ldots,P_n^n$.  For each $1 \leq k \leq n$, $P_k^n$ consists of those points in connected components of $D \setminus \bigcup_{j=1}^n (\eta_j^L \cup \eta_j^R)$ part of whose boundary is traced by a non-trivial segment of either the right side of $\eta_k^L$ or the left side of $\eta_k^R$ (see Figure~\ref{fig::spacefillingboundarydata2} for an illustration) before either path merges into some $\eta_j^L$ or $\eta_j^R$ for $j \neq k$ and $P_0^n$ consists of those points in $D \setminus \cup_{j=1}^n P_j^n$.  For each $0 \leq k \leq n$, let $\sigma_n(k)$ denote the index of the $k$th point in $\{z_0,\ldots,z_n\}$ in the induced order.   We then take $\eta_n'$ to be the piecewise linear path connecting $z_{\sigma_n(0)},\ldots,z_{\sigma_n(n)}$ where the amount of time it takes $\eta_n'$ to travel from $z_{\sigma_n(k)}$ to $z_{\sigma_n(k+1)}$ is equal to the area of $P_{\sigma_n(k)}^n$.  Let $d_k^n$ be the diameter of $P_k^n$.  Then to prove that the approximations $\eta_n'$ to space-filling $\SLE_{\kappa'}(\rho_1;\rho_2)$ are almost surely Cauchy with respect to the metric of uniform convergence of paths on the interval whose length is equal to the area of $D$, it suffices to show that
\begin{equation}
\label{eqn::pocket_diameter_to_zero}
 \max_{0 \leq k \leq n} d_k^n \to 0 \quad\text{as}\quad n \to \infty.
\end{equation}
Moreover, this implies that the limiting curve is well-defined because if $(\wt{z}_k)$ is another countable dense set and $(w_k)$ is the sequence with $w_{2k} = z_k$ and $w_{2k+1} = \wt{z}_k$ then it is clear from~\eqref{eqn::pocket_diameter_to_zero} that the limiting curve associated with $(w_k)$ is the same as the corresponding curve for both $(z_k)$ and $(\wt{z}_k)$.

For each $1 \leq k \leq n$ and $q \in \{L,R\}$, we let $d_k^{n,q}$ denote the diameter of $\eta_k^q$ stopped at the first time that it either merges with one of $\eta_j^q$ for $1 \leq j \leq n$ with $j \neq k$ or hits $\partial D$ with the appropriate height difference (as described in Figure~\ref{fig::spacefillingboundarydata2}).  We let $d_0^{n,L}$ (resp.\ $d_0^{n,R}$) denote the diameter of the counterclockwise (resp.\ clockwise) segment of $\partial D$ starting from $y$ towards $x$ that stops the first time it hits one of the $\eta_j^L$ (resp.\ $\eta_j^R$) for $1 \leq j \leq n$ at a point where the path terminates in $\partial D$.  Finally, we define $d_n^{n+1,L}$ and $d_n^{n+1,R}$ analogously except starting from $x$ with the former segment traveling in the clockwise direction and the latter counterclockwise direction.  Then it follows that
\begin{equation}
\label{eqn::pocket_diameter_bound}
 \max_{0 \leq k \leq n} d_k^n \leq 4 \max_{0 \leq k \leq n+1} \max_{q \in \{L,R\}} d_k^{n,q}.
\end{equation}
Consequently, to prove~\eqref{eqn::pocket_diameter_to_zero} it suffices to show that the right side of~\eqref{eqn::pocket_diameter_bound} almost surely converges to $0$ as $n \to \infty$.  We are going to prove this by first showing that an analog of this statement holds in the setting of the whole-plane (Proposition~\ref{prop::merge_into_net} in Section~\ref{subsub::pocket_diameter}) and then use a series of conditioning arguments to transfer our whole-plane statements to the setting of a bounded Jordan domain.  This approach is similar in spirit to our proof of the almost sure continuity of chordal $\SLE_{\kappa'}(\ul{\rho})$ processes given in \cite[Section~7]{MS_IMAG} and is carried out in Section~\ref{subsub::conditioning}.  As we mentioned in Remark~\ref{rem::space_filling_general_boundary_data}, this approach can be extended to establish the almost sure continuity of the many-force-point space-filling $\SLE_{\kappa'}(\ul{\rho}^L;\ul{\rho}^R)$ processes by reusing more ideas from \cite[Section~7]{MS_IMAG}, though we will not provide a treatment here.

\subsubsection{Pocket diameter estimates in the whole-plane}
\label{subsub::pocket_diameter}

Let $h$ be a whole-plane GFF viewed as a distribution defined up to a global multiple of $2\pi \chi$.  We will now work towards proving an analog of the statement that the right side of~\eqref{eqn::pocket_diameter_bound} converges to $0$ as $n \to \infty$ which holds for the flow lines of $h$ started in a fine grid of points.   Specifically, we fix $\epsilon > 0$ and let $\CD_\epsilon$ be the grid of points in $\epsilon \Z^2$ which are entirely contained in $[-2,2]^2$.  For each $z \in \C$, let $\eta_z$ be the flow line of $h$ starting from $z$.  Fix $K \geq 5$; we will eventually take $K$ to be large (though not changing with $\epsilon$).  Fix $z_0 \in [-1,1]^2$ and let $\eta = \eta_{z_0}$.  For each $z \in \C$ and $n \in \N$, let $\tau_z^n$ be the first time that $\eta_z$ leaves $B(z_0,K n\epsilon)$.  Note that $\tau_z^n = 0$ if $z \notin B(z_0,K n \epsilon)$.  Let $\tau^n = \tau_{z_0}^n$.

\begin{proposition}
\label{prop::merge_into_net}
There exists constants $C > 0$ and $K_0 \geq 5$ such that for every $K \geq K_0$ and $n \in \N$ with $n \leq (K\epsilon)^{-1}$ the following is true.  The probability that $\eta|_{[0,\tau^n]}$ does not merge with any of $\eta_z|_{[0,\tau_z^n]}$ for $z \in \CD_\epsilon \cap B(z_0,K n \epsilon)$ is at most $e^{-C n}$.
\end{proposition}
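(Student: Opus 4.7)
The plan is to partition $B(z_0, Kn\epsilon)$ into $n$ concentric annular shells of width $K\epsilon$ and prove that, with a uniform positive probability $p_0 = p_0(K) > 0$, the flow line $\eta$ merges with some grid flow line inside each shell. Stacking these events using the near-independence coming from the Markov property of the whole-plane GFF will then yield the $e^{-Cn}$ bound. Concretely, set $R_k = Kk\epsilon$, let $\xi_k = \inf\{t \geq 0 : \eta(t) \notin B(z_0, R_k)\}$ (so $\xi_n = \tau^n$), and let $E_k$ be the event that $\eta|_{[\xi_{k-1},\xi_k]}$ merges with some $\eta_z|_{[0,\tau_z^n]}$ for $z \in \CD_\epsilon$ lying in the annulus $\CA_k = B(z_0, R_k) \setminus B(z_0, R_{k-1})$. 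The failure event in the proposition is contained in $\cap_{k=1}^n E_k^c$, so it suffices to show that $\p[E_k \mid \CF_{k-1}] \geq p_0$ almost surely, where $\CF_{k-1}$ is generated by $\eta|_{[0, \xi_{k-1}]}$ together with all $\eta_z|_{[0,\tau_z^n]}$ for $z \in \CD_\epsilon \cap B(z_0, R_{k-1})$.

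Next I carry out the single-shell estimate. By the scale invariance of the whole-plane GFF modulo $2\pi\chi$, it is equivalent to rescale by $K\epsilon$ and work in a fixed-shape annulus of unit inner radius and slightly larger outer radius. At $\xi_{k-1}$, the path $\eta$ sits at a known point on the inner boundary of the rescaled shell, and the rescaled grid $\CD_\epsilon \cap \CA_k$ is a $1/K$-dense subset of the shell, so for $K$ sufficiently large there are many candidate grid points available. Using Lemma~\ref{lem::clean_tail_merge}, together with its underpinnings in Lemma~\ref{lem::clean_merge} and Lemma~\ref{lem::path_close_hit}, I can exhibit a candidate $z \in \CD_\epsilon \cap \CA_k$ and a clean-merge scenario in which $\eta_z$ meets $\eta$ before $\eta$ exits $B(z_0, R_k)$ and before either path reaches $\partial B(z_0, R_n)$ (using $n \leq (K\epsilon)^{-1}$ to ensure there is ample room on the rescaled picture). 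Proposition~\ref{prop::tail_interaction} then implies that the paths actually merge on this event, with a probability bounded below by a constant depending only on $K$.

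The decoupling across shells comes from the Markov property of the whole-plane GFF (Proposition~\ref{prop::whole_plane_markov}) combined with Proposition~\ref{prop::gff_radon_nikodym}. The conditional law of $h$ in a slight enlargement of $\CA_k$, given the field outside that enlargement, differs from a reference scale-invariant law only by a Radon-Nikodym derivative that is uniformly bounded (once $K$ is large enough to provide the necessary conformal separation). Since $\CF_{k-1}$ is determined by the portion of $h$ outside this enlargement together with the already-drawn path segments, the single-shell estimate applies under the conditional law as well, giving $\p[E_k \mid \CF_{k-1}] \geq p_0$ uniformly in $k$. Iterating yields $\p[\cap_k E_k^c] \leq (1-p_0)^n \leq e^{-Cn}$ with $C = -\log(1-p_0) > 0$.

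The main obstacle is the single-shell estimate, specifically handling two complications: (i) when $\kappa \in (8/3, 4)$ the past segment $\eta|_{[0, \xi_{k-1}]}$ is self-intersecting, so $\C \setminus \eta([0,\xi_{k-1}])$ is not a Jordan domain; and (ii) the previously drawn grid flow lines cluttering the shell. Difficulty (i) is resolved by the tail decomposition of Proposition~\ref{prop::tail_decomposition}: the local picture of $\eta$ near $\eta(\xi_{k-1})$ is captured by a flow line tail emanating from a rational point, and Proposition~\ref{prop::tail_interaction} then governs its interaction with a candidate $\eta_z$ through the standard flow line interaction rules. Difficulty (ii) is handled by choosing $K$ large enough that the rescaled annulus contains a definite-sized corridor of grid points that are well-separated from all previously drawn paths; by the Beurling-type harmonic measure estimates already used in the proofs of Lemma~\ref{lem::clean_merge} and Lemma~\ref{lem::clean_tail_merge}, at least one such candidate supports a positive-probability merging attempt, producing the desired uniform $p_0 > 0$.
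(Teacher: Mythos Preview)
Your decoupling step has a genuine gap. You define $\CF_{k-1}$ to include the grid flow lines $\eta_z|_{[0,\tau_z^n]}$ for $z\in\CD_\epsilon\cap B(z_0,R_{k-1})$, but $\tau_z^n$ is the exit time from the \emph{large} ball $B(z_0,R_n)$, so every one of these paths traverses the shell $\CA_k$ and depends on the field there. Thus $\CF_{k-1}$ is not determined by ``the portion of $h$ outside an enlargement of $\CA_k$,'' and neither Proposition~\ref{prop::whole_plane_markov} nor Proposition~\ref{prop::gff_radon_nikodym} yields the bounded Radon--Nikodym derivative you claim: conditioning on a flow line through $\CA_k$ imposes flow line boundary conditions along that path inside $\CA_k$, which is not an absolutely continuous perturbation of the unconditioned field. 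Moreover, there is no a~priori control on how many surviving strands of the $\eta_z$ cross $\CA_k$ or how close they come to the tip $\eta(\xi_{k-1})$, so the ``definite-sized corridor'' you invoke in the last paragraph is unjustified.

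The paper sidesteps this by never putting any zero-angle grid flow line into the filtration. Instead, at each scale it draws a short auxiliary flow line $\gamma_n$ from $\eta(\tau^n)$ with angle $\theta_0=\lambda'/\chi$ (half the critical angle), stopped on exiting $B(z_0,(Kn+4)\epsilon)$, and sets $\CF_n=\sigma(\eta|_{[0,\tau^n]},\gamma_1,\ldots,\gamma_{n-1})$. The event $E_n$ is that $\eta([0,\tau^{n+1}])\cup\gamma_n$ encloses a nearby grid point $w_n$ in a pocket $P_n$ with good harmonic measure on the $\eta$-side. Because each $\gamma_j$ is short and contained in $B(z_0,(Kj+4)\epsilon)$, conformally mapping the unbounded complementary component to $\h$ and applying Beurling pushes all previous $\gamma_j$ far from the image of the tip, and Lemma~\ref{lem::make_good_pockets_ideal} gives $\p[E_n\mid\CF_{n-1}]\geq p_1$ uniformly. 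Only \emph{after} conditioning on all of $\CF=\sigma(\CF_n:n\in\N)$ are the zero-angle flow lines $\eta_{w_n}$ drawn; on $E_n$, Lemma~\ref{lem::clean_merge} gives $\p[\eta_{w_n}\text{ merges with }\eta\text{ before exiting }P_n\mid\CF]\geq p_2$. This two-stage structure---build pockets with auxiliary-angle paths first, then sample the grid flow lines into those pockets---is exactly what makes the filtration argument work without the clutter your one-stage approach runs into.
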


For each $n \in \N$, we let $w_n \in \CD_\epsilon \setminus B(z_0,(K n+1) \epsilon )$ be a point such that $|\eta(\tau^n) - w_n| \leq 2\epsilon$ (where we break ties according to some fixed but unspecified convention).  Let $\theta_0 = \lambda - \tfrac{\pi}{2}\chi = \lambda'$, i.e., half of the critical angle.  By Theorem~\ref{thm::flow_line_interaction}, a flow line with angle $\theta_0$ almost surely hits a flow line of zero angle started at the same point on its left side.  For each $n \in \N$, let $\gamma_n$ be the flow line of $h$ starting from $\eta(\tau^n)$ with angle $\theta_0$ and let $\sigma^n$ be the first time that $\gamma_n$ leaves $B(z_0,(K n+4)\epsilon)$.  Let $\CF_n$ be the $\sigma$-algebra generated by $\eta|_{[0,\tau^n]}$ as well as $\gamma_i|_{[0,\sigma^i]}$ for $i=1,\ldots,n-1$.  Let $E_n$ be the event that $A_n = \eta([0,\tau^{n+1}]) \cup \gamma_n([0,\sigma^n])$ separates $w_n$ from $\infty$ and that the harmonic measure of the left side of $\eta([0,\tau^{n+1}])$ as seen from $w_n$ in the connected component $P_n$ of $\C \setminus A_n$ which contains $w_n$ is at least~$\tfrac{1}{4}$.  See Figure~\ref{fig::pocket_generation2} for an illustration of the setup as well as the event $E_n$.  We will make use of the following lemma in order to show that it is exponentially unlikely that fewer than a linear number of the events $E_n$ occur for $1 \leq n \leq (K\epsilon)^{-1}$.

\begin{lemma}
\label{lem::make_good_pockets_ideal}
Fix $x^L \leq 0 \leq x^R$ and $\rho^L,\rho^R > -2$.  Suppose that $h$ is a GFF on $\h$ with boundary data so that its flow line $\eta$ starting from $0$ is an $\SLE_\kappa(\rho^L;\rho^R)$ process with $\kappa \in (0,4)$ and force points located at $x^L,x^R$.  Fix $\theta > 0$ such that the flow line $\eta_\theta$ of $h$ starting from $0$ with angle $\theta > 0$ almost surely does not hit the continuation threshold and almost surely intersects $\eta$.  Let $\tau$ (resp.\ $\tau_\theta$) be the first time that $\eta$ (resp.\ $\eta_\theta$) leaves $B(0,2)$.  Let $E$ be the event that $A = \eta([0,\tau]) \cup \eta_\theta([0,\tau_\theta])$ separates $i$ from $\infty$ and that the harmonic measure of the left side of $\eta$ as seen from $i$ in $\h \setminus A$ is at least $\tfrac{1}{4}$.  There exists $p_0 > 0$ depending only on $\kappa$, $\rho^L$, $\rho^R$, and $\theta$ (but not $x^L$ and $x^R$) such that $\p[E] \geq p_0$.
\end{lemma}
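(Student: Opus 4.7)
The plan is to construct an explicit ``good pocket'' event by forcing $\eta$ and $\eta_\theta$ to stay close to prescribed deterministic reference curves, and then promote the resulting pointwise positive probability to a uniform lower bound via a compactness argument over the force-point locations $(x^L, x^R)$.

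First I would fix simple reference curves $\gamma, \gamma_\theta \colon [0,1] \to \ol{B(0,3/2)} \cap \ol{\h}$, both starting from $0$, with $\gamma$ passing to the right of $i$ and exiting $B(0,3/2)$ near its top--right arc, and with $\gamma_\theta$ going to the left of $i$, looping counterclockwise above $i$, and terminating by hitting $\gamma$ at a point $p$ with $\Im(p) > 1$. A generic choice can be made so that the bounded component of $B(0,2) \setminus (\gamma \cup \gamma_\theta)$ contains $i$ and the harmonic measure of the right side of $\gamma$ from $i$ in this component is, say, at least $1/2$. Using the boundary-emanating analogue of Lemma~\ref{lem::path_close} together with a Lemma~\ref{lem::path_close_hit}-type argument, I would argue that for any fixed $(x^L, x^R)$ and any $\varepsilon > 0$ small enough, with positive probability $\eta|_{[0,\tau]}$ is contained in an $\varepsilon$-neighborhood of $\gamma$ and, conditionally on this, $\eta_\theta|_{[0,\tau_\theta]}$ is contained in an $\varepsilon$-neighborhood of $\gamma_\theta$ and first intersects $\eta$ near $p$ with a height difference in the bouncing range $(0, 2\lambda - \pi\chi)$ of Theorem~\ref{thm::flow_line_interaction}. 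On this event the bounce rule of Theorem~\ref{thm::flow_line_interaction} applies, so the two traces together separate $i$ from $\infty$ inside $B(0,2)$, and continuity of harmonic measure under Carath\'eodory convergence (for $\varepsilon$ small enough) yields the required $1/4$ lower bound. This gives $\p[E] > 0$ for each fixed $(x^L, x^R)$.

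The main step is the uniformity in $(x^L, x^R)$. Since $E$ is measurable with respect to $h|_{B(0,2)}$, I would view $p(x^L, x^R) := \p[E]$ as a function on the compactification $[-\infty, 0] \times [0, +\infty]$ and show that it is lower semi-continuous there; compactness then gives $p_0 := \inf p > 0$. Away from the compactification boundary, continuity of the $\SLE_\kappa(\rho^L;\rho^R)$ driving SDE \eqref{eqn::sle_kappa_rho_eqn} in the force-point positions up to the stopping time $\tau$ --- which happens strictly before either force point is swallowed, as $\tau$ is the exit time from $B(0,2)$ and the force points lie on $\R$ --- together with the coupling construction of $\eta_\theta$ as the flow line of the conditional GFF given $\eta$, yields joint continuity of the pair $(\eta|_{[0,\tau]}, \eta_\theta|_{[0,\tau_\theta]})$. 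For the degenerate limits $x^L \to -\infty$ and $x^R \to +\infty$, the contribution of the boundary data outside $B(0,2)$ to the harmonic extension on $B(0,2)$ has a continuous limit, so by the Markov property and Proposition~\ref{prop::gff_radon_nikodym} the law of $h|_{B(0,2)}$ converges in total variation; the limiting case is an $\SLE_\kappa$ with one less boundary force point, which is itself covered by Step~1.

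The most delicate point is the positive-probability estimate, specifically arranging that $\eta_\theta$ first hits $\eta$ at a height difference which triggers a bounce rather than a crossing or a failure to meet. The technique of Lemma~\ref{lem::path_close_hit} adapts: enclose a neighborhood of $\gamma_\theta$ by an auxiliary simply connected subdomain of $\h \setminus \eta([0,\tau])$ whose added boundary arcs are assigned flow-line boundary conditions with angles chosen so that the corresponding GFF flow line of angle $\theta$ from $0$ is forced to terminate on $\eta$ near $p$ at exactly the desired height, and then transfer this back to the original field by Proposition~3.2 of \cite{MS_IMAG}. Once this building block is established the pointwise bound and the compactness/continuity argument above combine to give $p_0 > 0$ depending only on $\kappa$, $\rho^L$, $\rho^R$, and $\theta$, as required.
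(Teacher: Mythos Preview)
Your overall structure matches the paper's proof: first obtain $\p[E]>0$ for fixed $(x^L,x^R)$ via the path--steering Lemmas~\ref{lem::path_close} and~\ref{lem::path_close_hit}, then promote this to a uniform bound using continuity in the force-point locations. There are, however, two points worth flagging.

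First, your justification that ``$\tau$ happens strictly before either force point is swallowed, as $\tau$ is the exit time from $B(0,2)$ and the force points lie on $\R$'' is false: for $\rho^q\in(-2,\tfrac{\kappa}{2}-2)$ the path almost surely swallows the corresponding force point, and this typically happens long before $\eta$ exits $B(0,2)$ (indeed, $x^L=0^-,x^R=0^+$ is allowed). The conclusion you need---continuity of the $\SLE_\kappa(\rho^L;\rho^R)$ law in $(x^L,x^R)$---is still true, but it comes from the global well-posedness of the driving SDE in \cite[Section~2]{MS_IMAG}, not from avoiding the force points.

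Second, and more substantively, your uniformity argument via compactification of $[-\infty,0]\times[0,+\infty]$ differs from the paper's. The paper splits into two regimes: on the compact box $[-4,0^-]\times[0^+,4]$ it uses exactly your $\SLE$--continuity argument, but for $|x^L|>4$ or $|x^R|>4$ it does \emph{not} pass to a limit. Instead it writes down an explicit harmonic function $f$ (supported on the far boundary interval) such that $h+f$ has the boundary data corresponding to dropping that force point, multiplies by a fixed cutoff $g$ supported near $B(0,2)$, and observes that $\|fg\|_\nabla$ is bounded uniformly in $x^L,x^R$ once they are outside $[-4,4]$. By \cite[Remark~3.3]{MS_IMAG} this gives a uniform Radon--Nikodym bound between the law of $(\eta,\eta_\theta)|_{B(0,2)}$ and the law with one (or both) force points removed, and the latter case is already covered. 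This route sidesteps two technicalities your compactification argument would otherwise need: showing that $E$ is a continuity set for the limiting path laws (the ``harmonic measure $\ge\tfrac14$'' and ``separates'' conditions are not manifestly open), and verifying joint continuity of the pair $(\eta,\eta_\theta)$ rather than just $\eta$. Your total-variation argument at the boundary of the compactification is correct and would handle these, but the paper's direct absolute-continuity bound is shorter and avoids the issue entirely.
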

\begin{proof}
That $\p[E] > 0$ for a fixed choice of $x^L \leq 0 \leq x^R$ follows from the analogies of Lemma~\ref{lem::path_close} and Lemma~\ref{lem::path_close_hit} which are applicable for boundary emanating flow lines.  That $\p[E]$ is uniformly positive for all $x^L \in [-4,0^-]$ and $x^R \in [0^+,4]$ follows since the law of an $\SLE_\kappa(\ul{\rho})$ process is continuous in the location of its force points \cite[Section~2]{MS_IMAG}.  When either $x^L \notin [-4,0^-]$ or $x^R \notin [0^+,4]$, we can use absolute continuity to compare to the case that either $\rho^L = 0$, $\rho^R = 0$, or both.  Indeed, suppose for example that $x^L < -4$ and $x^R > 4$.  Let $f$ be the function which is harmonic in $\h$ and whose boundary values are given by
\[ \begin{cases}
   \lambda \rho^L \quad&\text{in}\quad(-\infty,x^L],\\
   0\quad&\text{in}\quad (x^L,x^R], \quad\quad \text{and}\\
   -\lambda \rho^R \quad&\text{in}\quad (x^R,\infty).
   \end{cases}\]
Then $h+f$ is a GFF on $\h$ whose boundary values are $-\lambda$ (resp.\ $\lambda$) on $(-\infty,0)$ (resp.\ $(0,\infty)$) so that its flow line starting from $0$ is an ordinary $\SLE_\kappa$ process.  Moreover, if $g \in C^\infty$ with $g|_{B(0,2)} \equiv 1$ and $g|_{\C \setminus B(0,3)} \equiv 0$ then $\| f g\|_\nabla$ is uniformly bounded in $x^L < -4$ and $x^R > 4$ and the flow line of $h+fg$ stopped upon exiting $B(0,2)$ is equal to the corresponding flow line of $h+f$.  Therefore the claim that we get a uniform lower bound on $\p[E]$ as $x^L < -4$ and $x^R > 4$ vary follows from \cite[Remark~3.5]{MS_IMAG}.  The other possibilities follow from a similar argument.
\end{proof}

\begin{figure}
\begin{center}
\includegraphics[scale=0.85]{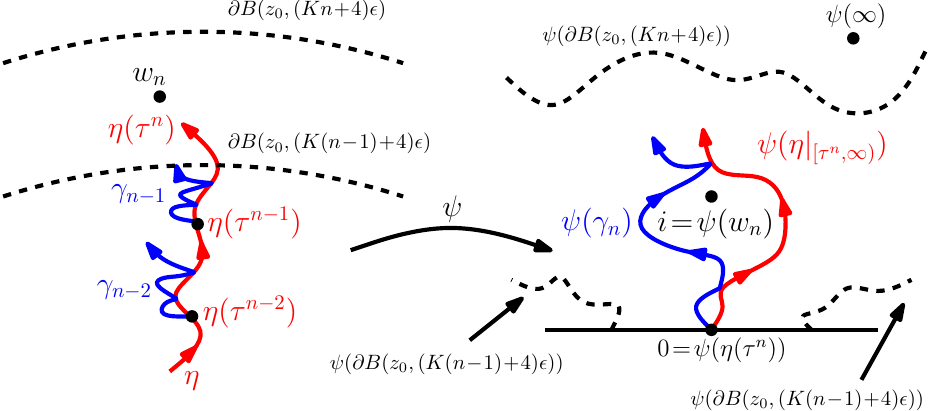}
\end{center}
\caption{\label{fig::pocket_generation2}
Setup for the proof of Lemma~\ref{lem::make_good_pockets}.  The right panel illustrates the event $E_n$.}
\end{figure}

\begin{lemma}
\label{lem::make_good_pockets}
There exists constants $K_0 \geq 5$ and $p_1 > 0$ such that for every $n \in \N$ and $K \geq K_0$ we have
\[ \p[ E_n | \CF_{n-1}]  \geq p_1.\]
\end{lemma}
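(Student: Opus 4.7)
The plan is to reduce the estimate to the boundary-emanating analogue in Lemma~\ref{lem::make_good_pockets_ideal} by conditioning, conformally mapping to $\h$, and using the uniform-in-force-point bound provided by that lemma together with conformal distortion estimates. By the scale invariance of the whole-plane GFF, we may rescale by $1/\epsilon$ and henceforth assume $\epsilon=1$, so that $w_n$ lies within distance $2$ of $\eta(\tau^n)$ while the paths $\eta|_{[\tau^n,\tau^{n+1}]}$ and $\gamma_n|_{[0,\sigma^n]}$ have a ``budget'' of order $K$ before being stopped.

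First, I will use the Markov property of the field. By Proposition~\ref{prop::cond_union_local}, Proposition~\ref{prop::cond_union_mean}, and Theorem~\ref{thm::commutation}, the conditional law of $h$ on the unbounded component $U$ of $\C\setminus(\eta([0,\tau^n])\cup\bigcup_{i<n}\gamma_i([0,\sigma^i]))$, given $\CF_{n-1}$, is that of a GFF on $U$ with flow-line boundary conditions of appropriate angles ($0$ along $\eta$ and $\theta_0$ along each $\gamma_i$) up to the global additive constant in $2\pi\chi\Z$. Let $\psi\colon U\to\h$ be a conformal map with $\psi(\eta(\tau^n))=0$ and $\psi(\infty)=\infty$, and set $\widetilde h = h\circ\psi^{-1}-\chi\arg(\psi^{-1})'$. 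By \eqref{eqn::ac_eq_rel}, $\widetilde h$ is a GFF on $\h$ whose boundary data, in a neighborhood of $0\in\R$, is exactly $-\lambda$ on the left and $+\lambda$ on the right (the standard setup of Lemma~\ref{lem::make_good_pockets_ideal}), because $\pm\lambda'$ plus winding on the two sides of $\eta$ at its tip transforms to $\pm\lambda$ after the $-\chi\arg(\psi^{-1})'$ correction. The image $\widetilde\eta=\psi\circ\eta|_{[\tau^n,\infty)}$ is then a chordal $\SLE_\kappa(\underline\rho)$ process from $0$ to $\infty$ in $\h$, and $\widetilde\gamma_n=\psi\circ\gamma_n$ is its angle-$\theta_0$ flow line partner.

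Next, I will translate the event $E_n$ to the image. The event $E_n$ asserts that $\widetilde A_n=\widetilde\eta([0,\widetilde\tau^{n+1}])\cup\widetilde\gamma_n([0,\widetilde\sigma^n])$ separates $\widetilde w_n:=\psi(w_n)$ from $\infty$ in $\h$ and that the harmonic measure of the left side of $\widetilde\eta$ from $\widetilde w_n$ in the separated pocket is at least $\tfrac14$ (harmonic measure is conformally invariant). The key distortion step is to show that $K$ can be chosen so that $\widetilde w_n$ lies within a compact subset of $\h$ (e.g.\ is comparable to $i$), and so that the pre-images of $B(0,2)$ are contained in the stopping regions for $\widetilde\tau^{n+1}$ and $\widetilde\sigma^n$. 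This uses Koebe's $1/4$ theorem and the standard growth/distortion bounds: the past hull is contained in $B(z_0,Kn)$ and so lies at distance at least a positive constant (times $\epsilon$) from $w_n$ when $K\geq K_0$; combining this with $|w_n-\eta(\tau^n)|\leq 2$ gives two-sided control on $|\psi'|$ near $\eta(\tau^n)$, hence on $|\widetilde w_n|$, independently of the past configuration.

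Finally, the image configuration is precisely the setting of Lemma~\ref{lem::make_good_pockets_ideal}: a chordal $\SLE_\kappa(\underline\rho)$ and its angle-$\theta_0$ flow line, starting from $0$, with the target point $\widetilde w_n$ at a bounded location. The only subtlety is that $\underline\rho$ may have many force points rather than just the two force points considered in Lemma~\ref{lem::make_good_pockets_ideal}, coming from the many pieces of past boundary; but since these force points all lie outside a neighborhood of $0$ of definite size in $\h$ (again by distortion and the fact that the past hull is separated from $w_n$), an absolute-continuity argument in the spirit of the last paragraph of the proof of Lemma~\ref{lem::make_good_pockets_ideal} (using Proposition~\ref{prop::gff_radon_nikodym} and \cite[Remark~3.3]{MS_IMAG}) reduces matters to the two-force-point case with uniform Radon-Nikodym derivative bounds. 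This produces the uniform lower bound $\p[E_n\mid\CF_{n-1}]\geq p_1$. The main obstacle is the uniform-in-$n$ distortion estimate for $\psi$: the past hull is a fractal SLE-type curve that can approach the tip in complicated ways, but the insulating annulus of inner radius $\approx 2$ and outer radius $\approx K$ around $w_n$ (after rescaling) gives the required control once $K_0$ is taken sufficiently large.
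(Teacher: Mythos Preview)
Your approach is the same as the paper's: conformally map the complement of the past to $\h$ with the tip at $0$, then invoke Lemma~\ref{lem::make_good_pockets_ideal} together with the absolute-continuity trick at the end of its proof. Two technical points deserve tightening.

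First, your normalization $\psi(\eta(\tau^n))=0$, $\psi(\infty)=\infty$ does not fix the scale, so the claim that $\widetilde w_n$ lies in a fixed compact subset of $\h$ has no content until a third condition is imposed; the paper simply sets $\psi(w_n)=i$. With that choice, the required control is not a Koebe distortion statement (the tip is a boundary point, so interior distortion bounds do not apply directly) but a harmonic-measure statement: since $\dist(w_n,\eta([0,\tau^n]))\leq 2$ while the $\gamma_j$'s (and the point $\infty$) lie at distance $\gtrsim K$ from $w_n$, the Beurling estimate and conformal invariance of Brownian motion show that the $\psi$-images of $\bigcup_j\gamma_j([0,\sigma^j])$ and of $\infty$ lie outside $B(0,100)$ once $K$ is large.

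Second, your assertion that \emph{all} extra force points lie outside a fixed neighborhood of $0$ in $\h$ is not correct: the most recent self-intersection point of $\eta$ (which supplies the weight-$(2-\kappa)$ force point for the continuation of $\eta$) can be arbitrarily close to the tip, hence its image can be arbitrarily close to $0$. This is not fatal --- it is exactly why Lemma~\ref{lem::make_good_pockets_ideal} is stated uniformly over force-point locations $x^L,x^R$ --- but you should invoke that uniformity for this nearby force point and reserve the absolute-continuity argument for the far-away $\gamma_j$-boundary pieces, as the paper does.
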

\begin{proof}
Let $\psi$ be the conformal map which takes the unbounded connected component $U$ of $\C \setminus  (\eta([0,\tau^n]) \cup \bigcup_{j=1}^{n-1} \gamma_j([0,\sigma^j]))$ to $\h$ with $\psi(\eta(\tau^n)) = 0$ and $\psi(w_n) = i$.  The Beurling estimate \cite[Theorem~3.69]{LAW05} and the conformal invariance of Brownian motion together imply that if we take $K \geq 5$ sufficiently large then the images of $\cup_{j=1}^{n-1} \gamma_j([0,\sigma^j])$ and $\infty$ under $\psi$ lie outside of $B(0,100)$.  Consequently, the law of $\wt{h} = h \circ \psi^{-1} - \chi \arg (\psi^{-1})'$ restricted to $B(0,50)$ is mutually absolutely continuous with respect to the law of a GFF on $\h$ restricted to $B(0,50)$ whose boundary data is chosen so that its flow line starting from $0$ is a chordal $\SLE_\kappa(2-\kappa)$ process with a single force point located at the image under $\psi$ of the most recent intersection of $\eta|_{[0,\tau^n]}$ with itself (or just a chordal $\SLE_\kappa$ process if there is no such self-intersection point which lies in $\psi^{-1}(B(0,50))$).  The result then follows from Lemma~\ref{lem::make_good_pockets_ideal} (and the argument at the end of the proof of Lemma~\ref{lem::make_good_pockets_ideal} implies that we get a lower bound which is uniform in the location of the images $\psi(\gamma_j)$).
\end{proof}

On $E_n$, let $F_n$ be the event that $\eta_{w_n}$ merges with $\eta$ upon exiting $P_n$.  Let $\CF = \sigma(\CF_n : n \in \N)$.

\begin{lemma}
\label{lem::good_pocket_merge}
There exists $p_2 > 0$ and $K_0 \geq 5$ such that for every $K \geq K_0$ and $n \in \N$ we have
\[ \p[ F_n | \CF] \one_{E_n} \geq p_2 \one_{E_n}.\]
\end{lemma}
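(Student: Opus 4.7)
The plan is to work conditionally on $\CF$ and, on the event $E_n$, exhibit a uniform lower bound on $\p[F_n \mid \CF]$ by conformally mapping the pocket $P_n$ to $\h$ and comparing to the ``clean merge'' set-up of Lemma~\ref{lem::clean_merge}. On $E_n$, by Proposition~\ref{prop::cond_union_local} and Proposition~\ref{prop::cond_union_mean} the conditional law of $h|_{P_n}$ given $\CF$ is that of a GFF on $P_n$ with flow-line boundary conditions of angle $0$ along the $\eta$-arc of $\partial P_n$ and of angle $\theta_0$ along the $\gamma_n$-arc, modulo the usual $2\pi\chi\Z$ ambiguity. I would let $\psi \colon P_n \to \h$ be a conformal map with $\psi(w_n) = i$, normalized so that the image of the $\eta$-arc contains $0$. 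Since $E_n$ forces the harmonic measure of the $\eta$-arc from $w_n$ to be at least $\tfrac14$, its image contains a fixed interval $I \supseteq [-c, c]$ of harmonic measure at least $\tfrac14$ from $i$; together with the Beurling estimate \cite[Theorem~3.69]{LAW05} applied in $P_n$ and in its image, choosing $K$ sufficiently large pushes the images of the $\gamma_n$-arc and of every self-intersection point of $\eta|_{[0,\tau^{n+1}]}$ lying on $\partial P_n$ (the locations where the flow-line boundary height of $\eta$ jumps by $\pm 2\pi\chi$) outside a large ball $B(0,R)$.

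I would then compare $\wt h := h \circ \psi^{-1} - \chi \arg(\psi^{-1})'$ restricted to $B(0,R/2)$ with a reference GFF $\wh h$ on $\h$ having constant boundary data $\pm\lambda$ on $I$, chosen so that the canonical flow line of $\wh h$ starting at $i$ is precisely in the setting of Lemma~\ref{lem::clean_merge}. Because the singular pieces of the boundary data of $\wt h$ have been pushed outside $B(0,R)$, the conditional mean of $\wt h - \wh h$ is harmonic in $B(0,R/2)$ with boundary data bounded by a constant depending only on $R$, so by Proposition~\ref{prop::gff_radon_nikodym} the laws of $\wt h$ and $\wh h$ restricted to $B(0,R/2)$ are mutually absolutely continuous with Radon--Nikodym derivative bounded above and below by a constant $C = C(R)$. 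Lemma~\ref{lem::clean_merge} then furnishes a probability at least $\rho_0 > 0$ that the flow line of $\wh h$ starting at $i$ hits $\partial \h \cap I$ inside $B(0,2)$ with height difference $0$; transporting this event through the comparison and back through $\psi^{-1}$, and invoking Theorem~\ref{thm::flow_line_interaction} to guarantee that the merge persists once it has occurred, yields the event $F_n$ with conditional probability at least $p_2 := \rho_0 / C > 0$.

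The main obstacle is establishing that the comparison constant $C$ can be chosen uniformly in $n$, $\epsilon$, and the realization conditioned on in $\CF$. This is exactly where the harmonic-measure lower bound baked into $E_n$ is essential: combined with the Beurling estimate it forces the bad boundary features---the $\gamma_n$-arc, the $\pm 2\pi\chi$ height jumps at self-intersections of $\eta$, and the unbounded end of $\eta$---out of $B(0, R)$ in the conformal image, which is what lets a single reference field $\wh h$ dominate all conditional configurations. A second, milder subtlety is the $2\pi\chi\Z$ ambiguity in heights: since all height jumps of the boundary data lie outside $B(0,R)$, within that ball one may fix the branch of $\wt h$ so that the zero-height-difference event of Lemma~\ref{lem::clean_merge} really corresponds to $\eta_{w_n}$ and $\eta$ merging as flow lines of $h$, rather than to a merely ``parallel'' hit with a non-trivial winding offset.
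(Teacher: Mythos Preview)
Your overall strategy---conformally map $P_n$ to $\h$ with $w_n\mapsto i$, compare the conditional GFF to a reference field with constant boundary data, and invoke Lemma~\ref{lem::clean_merge}---is exactly what the paper intends, and its own proof is nothing more than a pointer to Lemma~\ref{lem::clean_merge} together with ``the absolute continuity argument given in the proof of Lemma~\ref{lem::make_good_pockets_ideal}.''

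There is, however, a genuine gap in the step where you obtain uniformity. You claim that taking $K$ large, together with the Beurling estimate, pushes the image of the $\gamma_n$-arc (and of every $\pm 2\pi\chi$ height jump on $\partial P_n$) outside a fixed ball $B(0,R)$. This is not correct: $\gamma_n$ is anchored at $\eta(\tau^n)$, which lies within $2\epsilon$ of $w_n$ regardless of $K$, so the junction between the $\eta$-arc and the $\gamma_n$-arc can carry harmonic measure bounded away from zero as seen from $w_n$. In the conformal image this junction may sit well inside your ball $B(0,R)$, and the harmonic difference between $\wt h$ and the reference field then has a boundary jump there; the Dirichlet energy of the cut-off difference is not uniformly bounded, and your Radon--Nikodym comparison fails. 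The same objection applies to self-intersection points of $\eta$ that happen to lie on $\partial P_n$ near $\eta(\tau^n)$.

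The fix is precisely what the paper's pointer to Lemma~\ref{lem::make_good_pockets_ideal} is meant to convey. In that lemma the force points $x^L,x^R$ are handled by a dichotomy: when they lie in $[-4,4]$ one does \emph{not} use absolute continuity, but rather the continuity of the $\SLE_\kappa(\rho^L;\rho^R)$ law in its force-point locations (which immediately gives a uniform lower bound over a compact set); only when $x^L,x^R$ lie outside $[-4,4]$ does one use the $\|fg\|_\nabla$ bound. You need the same split here: on configurations where the boundary transitions land near $0$ in the $\h$-picture, compare to an $\SLE_\kappa(\rho^L;\rho^R)$ with those transition points as force points and appeal to compactness/continuity; on configurations where they land far away, use your Radon--Nikodym bound. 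The $1/4$ harmonic-measure hypothesis in $E_n$ is what restricts the relevant transition points to a compact family in the first regime.
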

\begin{proof}
This follows from Lemma~\ref{lem::clean_merge} as well as by the absolute continuity argument given in the proof of Lemma~\ref{lem::make_good_pockets_ideal}.
\end{proof}

\begin{proof}[Proof of Proposition~\ref{prop::merge_into_net}]
Assume that $K_0$ has been chosen sufficiently large so that Lemma~\ref{lem::make_good_pockets} and Lemma~\ref{lem::good_pocket_merge} both apply.  Lemma~\ref{lem::make_good_pockets} implies that it is exponentially unlikely that we have fewer than $\tfrac{1}{2} p_1 n$ of the events $E_j$ occur and Lemma~\ref{lem::good_pocket_merge} implies that it is exponentially unlikely that we have fewer than a $\tfrac{1}{2} p_2$ fraction of these in which $F_j$ occurs.
\end{proof}

\subsubsection{Conditioning arguments}
\label{subsub::conditioning}

\begin{figure}[ht!]
\begin{center}
\includegraphics[scale=0.85]{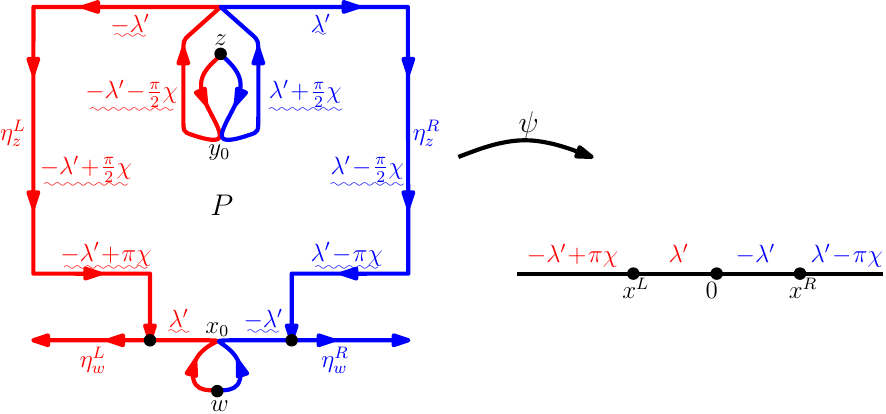}
\end{center}
\caption{\label{fig::whole_plane_to_finite} 
Suppose that $h$ is a whole-plane GFF viewed as a distribution defined up to a global multiple of $2\pi \chi$.  Fix $z,w \in \C$ distinct and let $\eta_u^L$ (resp.\ $\eta_u^R$) be the flow line of $h$ starting from $u$ with angle $\tfrac{\pi}{2}$ (resp.\ $-\tfrac{\pi}{2}$) for $u \in \{z,w\}$.  By Theorem~\ref{thm::flow_line_interaction} and Theorem~\ref{thm::merge_cross}, the flow lines $\eta_u^q$ for $u \in \{z,w\}$ and $q \in \{L,R\}$ form a pocket $P$, as illustrated above.  (It need not be true that $z,w \in \partial P$.)  The boundary data for the conditional law of $h$ in $P$ is as shown, up to a global additive constant in $2\pi \chi \Z$.  (The $\lambda'$ and $-\lambda'$ on the bottom of the figure indicate the heights along the horizontal segments of $\eta_w^L$ and $\eta_w^R$, respectively.)  The opening (resp.\ closing) point of $P$ is the first point on $\partial P$ traced by the right (resp.\ left) side of one of $\eta_u^L$ for $u \in \{z,w\}$, as indicated by $x_0$ (resp.\ $y_0$) in the illustration.  These points can be defined similarly in terms of $\eta_u^R$ for $u \in \{z,w\}$.  We also note that, in general, $\eta_z^q$ for $q \in \{L,R\}$ may have to wind around $z$ several times after first hitting $\eta_w^q$ before the paths merge.  Let $\psi \colon P \to \h$ be a conformal map with $\psi(x_0) = 0$ and $\psi(y_0) = \infty$.  Then the counterflow line of the GFF $h \circ \psi^{-1} - \chi \arg (\psi^{-1})'$ in $\h$ from $0$ to $\infty$ is an $\SLE_{\kappa'}(\tfrac{\kappa'}{2}-4;\tfrac{\kappa'}{2}-4)$ process where the force points $x^L$ and $x^R$ are located at the images under $\psi$ of the points where $\eta_z^L,\eta_w^L$ and $\eta_z^R, \eta_w^R$ merge, respectively.
}
\end{figure}

\begin{figure}[ht!]
\begin{center}
\includegraphics[scale=0.85]{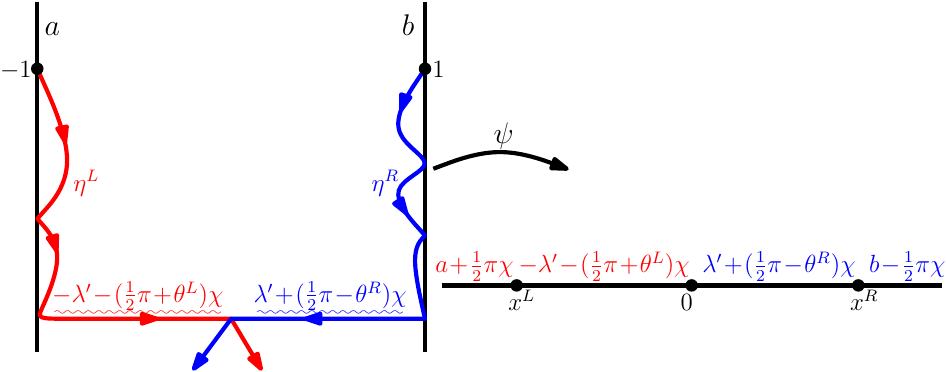}
\end{center}
\caption{\label{fig::conditioning_trick}  Suppose that $h$ is a GFF on $\vstrip = [-1,1] \times \R$ with boundary conditions given by a constant $a$ (resp.\ $b$) on the left (resp.\ right) side of $\partial \vstrip$.  Then $h$ is compatible with a coupling with a space-filling $\SLE_{\kappa'}(\rho_1;\rho_2)$ process $\eta'$ from the bottom to the top of $\vstrip$ with $\rho_1,\rho_2$ determined by $a,b$ as in~\eqref{eqn::forward_counterflow_rho}.  Taking $a,b$ so that $\rho_1=\rho_2=\tfrac{\kappa'}{2}-4$ we have that $\eta'$ is almost surely continuous by Step 1 in the proof of Theorem~\ref{thm::space_filling_reversibility2}.  By conditioning on flow lines $\eta^L,\eta^R$ of angles $\theta^L,\theta^R$ starting from $-1,1$, respectively, we can deduce the almost sure continuity of space-filling $\SLE_{\kappa'}(\rho_1;\rho_2)$ processes for $\rho_1,\rho_2 \in (-2,\tfrac{\kappa'}{2}-4]$ provided we choose $\theta^L,\theta^R$ appropriately.  Since the time-reversal of a space-filling $\SLE_{\kappa'}(\rho_1;\rho_2)$ process is a space-filling $\SLE_{\kappa'}(\wt{\rho}_1;\wt{\rho}_2)$ process where $\wt{\rho}_i$ is the reflection of $\rho_i$ about the $\tfrac{\kappa'}{4}-2$ line, we get the almost sure continuity when $\rho_1,\rho_2 \in [0,\tfrac{\kappa'}{2}-2)$.  By using this argument a second time except with $a,b$ chosen so that the corresponding space-filling $\SLE_{\kappa'}(\wh{\rho}_1;\wh{\rho}_2)$ process has weights $\wh{\rho}_i = \max(\rho_i,0)$, we get the almost sure continuity for all $\rho_1,\rho_2 \in (-2,\tfrac{\kappa'}{2}-2)$.
}
\end{figure}

In this section, we will reduce the continuity of space-filling $\SLE_{\kappa'}(\rho_1;\rho_2)$ processes to the statement given in Proposition~\ref{prop::merge_into_net} thus completing the proof of Theorem~\ref{thm::space_filling_reversibility2}.  The first step is the following lemma.

\begin{lemma}
\label{lem::make_space_filling_pocket}
Suppose that $h$ is a whole-plane GFF viewed as a distribution defined up to a global multiple of $2\pi \chi$.  Fix $z,w \in (-1,1)^2$ distinct.  Let $\eta_z^L$ (resp.\ $\eta_z^R$) be the flow line of $h$ starting from $z$ with angle $\tfrac{\pi}{2}$ (resp.\ $-\tfrac{\pi}{2}$) and define $\eta_w^L,\eta_w^R$ analogously.  Then the probability that the pocket $P$ formed by these flow lines (as described in Figure~\ref{fig::whole_plane_to_finite}) is contained in $[-1,1]^2$ is positive.
\end{lemma}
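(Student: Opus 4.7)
The plan is to exhibit a positive-probability event on which each of the four flow lines $\eta_z^L, \eta_w^L, \eta_z^R, \eta_w^R$ closely tracks a prescribed simple arc contained in $(-1,1)^2$ and, moreover, $\eta_w^L$ merges into $\eta_z^L$ and $\eta_w^R$ merges into $\eta_z^R$ before either flow line exits $(-1,1)^2$. On such an event all four flow lines, up through the merging times, lie in $(-1,1)^2$; the pocket $P$ is the bounded Jordan region cut out by the corresponding arcs, so $P \subseteq \overline{(-1,1)^2} \subseteq [-1,1]^2$ follows at once.

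Concretely, fix $\epsilon > 0$ small and choose pairwise disjoint simple paths $\gamma_z^L, \gamma_w^L, \gamma_z^R, \gamma_w^R$ in $(-1,1)^2$ whose closed $\epsilon$-neighborhoods are pairwise disjoint except at common endpoints $p^L$ (where $\gamma_z^L$ and $\gamma_w^L$ meet) and $p^R$ (where $\gamma_z^R$ and $\gamma_w^R$ meet), and whose $\epsilon$-neighborhoods remain inside $(-1,1)^2$. Orient them so that near $p^L$, $\gamma_w^L$ approaches $\gamma_z^L$ from the side consistent with the relative orientation of $\eta_w^L$ and $\eta_z^L$ demanded in Figure~\ref{fig::whole_plane_to_finite}, and symmetrically near $p^R$. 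Now proceed in four stages, drawing the flow lines sequentially in the order $\eta_z^L, \eta_w^L, \eta_z^R, \eta_w^R$ and using Lemma~\ref{lem::path_close} at each stage to produce the positive-probability event that the next flow line stays within $\epsilon$ of its target arc until it reaches the $\epsilon$-ball around the terminal endpoint. At the stage where $\eta_w^L$ approaches $p^L$ (and symmetrically for $\eta_w^R$), invoke a merging variant of Lemma~\ref{lem::path_close_hit} in the spirit of Lemma~\ref{lem::clean_tail_merge}, which yields positive conditional probability that $\eta_w^L$ first meets $\eta_z^L$ inside the $\epsilon$-ball with height difference zero; by Proposition~\ref{prop::tail_interaction} the two same-angle flow lines then merge and remain together forever after.

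The main obstacle is making the ``clean merge'' step rigorous inside the conditioning. Because $\eta_z^L$ is an already-drawn flow line rather than a piece of domain boundary, applying Lemma~\ref{lem::clean_merge} requires using that in the conditional law of the field given $\eta_z^L$ the boundary data along $\eta_z^L$ is exactly the flow-line boundary data with angle $\pi/2$ (as in Figure~\ref{fig::interior_path_bd}); the height-difference accounting then matches that of Lemma~\ref{lem::clean_merge}, and the local absolute continuity argument that makes Lemma~\ref{lem::path_close_hit} work transfers verbatim. A secondary point to verify is that Lemma~\ref{lem::path_close} itself carries over to the conditional field (a GFF off the previously drawn flow lines with flow-line boundary conditions): its proof proceeds by comparison to an auxiliary GFF on a simply-connected subdomain with suitably chosen boundary data, and this comparison is insensitive to the precise nature of the field outside that subdomain, so the argument applies without change. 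Once these verifications are in place, the four stages combine to give a positive-probability event on which the pocket $P$ is contained in $[-1,1]^2$.
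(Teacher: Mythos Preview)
Your approach is correct and essentially identical to the paper's: the paper's proof is the terse statement ``first applying Lemma~\ref{lem::path_close} and then applying Lemma~\ref{lem::path_close_hit} three times'' (with a pointer to the argument of Lemma~\ref{lem::clean_merge}), which is exactly your four-stage construction of guiding each flow line along a tube inside $(-1,1)^2$ and forcing the same-angle pairs to merge there. The only cosmetic difference is bookkeeping---the paper counts each of the three conditionally drawn flow lines as an instance of Lemma~\ref{lem::path_close_hit}, whereas you separate the tube-following (your conditional-field version of Lemma~\ref{lem::path_close}) from the two explicit merges.
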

\begin{proof}
This follows by first applying Lemma~\ref{lem::path_close} and then applying Lemma~\ref{lem::path_close_hit} three times.  See also the argument of Lemma~\ref{lem::clean_merge}.
\end{proof}

\begin{proof}[Proof of Theorem~\ref{thm::space_filling_reversibility2}]
We are going to prove the almost sure continuity of space-filling $\SLE_{\kappa'}(\rho_1;\rho_2)$ in three steps.  In particular, we will first establish the result for $\rho_1,\rho_2 = \tfrac{\kappa'}{2}-4$, then extend to the case that $\rho_1,\rho_2 \in (-2,\tfrac{\kappa'}{2}-4]$, and then finally extend to the most general case that $\rho_1,\rho_2 \in (-2,\tfrac{\kappa'}{2}-2)$.
$\ $\newline

\noindent{\bf Step 1.} $\rho_1 = \rho_2 = \tfrac{\kappa'}{2}-4$. 
Let $h$ be a whole-plane GFF viewed as a distribution defined up to a global multiple of $2\pi \chi$.
Fix $z,w \in (-1,1)^2$ distinct and let $P$ be the pocket formed by the flow lines of $h$ with angles $\tfrac{\pi}{2}$ and $-\tfrac{\pi}{2}$ starting from $z,w$, as described in Lemma~\ref{lem::make_space_filling_pocket} (the particular choice is not important).  Let $\psi$ be a conformal map which takes $P$ to $\D$ with the opening (resp.\ closing) point of $P$ taken to $-i$ (resp.\ $i$).  Note that $\wt{h} = h \circ \psi^{-1} - \chi \arg (\psi^{-1})'$ is a GFF on $\D$ whose boundary data is such that its counterflow line starting from $-i$ is an $\SLE_{\kappa'}(\tfrac{\kappa'}{2}-4;\tfrac{\kappa'}{2}-4)$ process with force points located at the images $x^L,x^R$ of the points where $\eta_z^L,\eta_w^L$ and $\eta_z^R,\eta_w^R$ merge; see Figure~\ref{fig::pocket_generation2}.  Note that $x^L$ (resp.\ $x^R$) is contained in the clockwise (resp.\ counterclockwise) segment of $\partial \D$ from $-i$ to $i$.

Proposition~\ref{prop::merge_into_net} and a union bound implies that the following is true.  The maximal diameter of those flow lines started in $[-1,1]^2 \cap (\epsilon \Z)^2$ with angles $\tfrac{\pi}{2}$ and $-\tfrac{\pi}{2}$ stopped upon merging with a flow line of the same angle started in $[-2,2]^2 \cap (\epsilon \Z)^2$ goes to zero almost surely as $\epsilon \to 0$.  Consequently, conditionally on the positive probability event that $P \subseteq [-1,1]^2$, the maximal diameter of the pockets formed by the flow lines with angles $\tfrac{\pi}{2}$ and $-\tfrac{\pi}{2}$ starting from the points in $\psi(\CD_\epsilon)$ tends to zero with (conditional) probability one.  Indeed, this follows because we have shown that the right side of~\eqref{eqn::pocket_diameter_bound} tends to zero with (conditional) probability one.  This implies that there exists $x^L$ (resp.\ $x^R$) in the clockwise (resp.\ counterclockwise) segment of $\partial \D$ from $-i$ to $i$ and a countable, dense set $\CD$ of $\D$ such that space-filling $\SLE_{\kappa'}(\tfrac{\kappa'}{2}-4;\tfrac{\kappa'}{2}-4)$ in $\D$ from $-i$ to $i$ with force points located at $x^L,x^R \in \partial \D$ generated from flow lines starting at points in $\CD$ is almost surely continuous.  Once we have shown the continuity for one fixed choice of countable dense set, it follows for all countable dense sets by the merging arguments of Section~\ref{sec::interior_flowlines}.  (Recall, in particular, Lemma~\ref{lem::clean_tail_merge}.)  Moreover, we can in fact take $x^L = (-i)^-$ and $x^R = (-i)^+$ by further conditioning on the flow lines of $\wt{h}$ starting from $x^L$ and $x^R$ with angle $-\tfrac{3\pi}{2}$ and $\tfrac{3\pi}{2}$, respectively, which are reflected towards $-i$ and then restricting the path to the complementary component which contains $i$; see also Figure~\ref{fig::conditioning_trick}.  (Equivalently, in the setting of Figure~\ref{fig::whole_plane_to_finite}, we can reflect the flow lines $\eta_z^L$ and $\eta_z^R$ towards the opening point of the pocket $P$.)
$\ $\newline

\noindent{\bf Step 2.} $\rho_1,\rho_2 \in (-2,\tfrac{\kappa'}{2}-4]$.  Suppose that $h$ is a GFF on $\vstrip$ as in Figure~\ref{fig::conditioning_trick} where we take the boundary conditions to be $a=\tfrac{1}{4}\lambda' (\kappa'-8)$ and $b=-\tfrac{1}{4}\lambda'(\kappa'-8) = -a$.  By~\eqref{eqn::forward_counterflow_rho}, $h$ is compatible with a coupling with a space-filling $\SLE_{\kappa'}(\tfrac{\kappa'}{2}-4;\tfrac{\kappa'}{2}-4)$ process $\eta'$ from $-\infty$ to $+\infty$.  Let $\eta^L$ (resp.\ $\eta^R$) be the flow line of $h$ starting from $-1$ (resp.\ $1$) with angle $\theta^L \in (-\tfrac{3\pi}{2},-\tfrac{\pi}{2})$ (resp.\ $\theta^R \in (\tfrac{\pi}{2},\tfrac{3\pi}{2})$).  Note that as $\theta^L \downarrow -\tfrac{3\pi}{2}$, $\eta^L$ converges to the half-infinite vertical line starting starting from $-1$ to $-\infty$ and that when $\theta^L \uparrow -\tfrac{\pi}{2}$, $\eta^L$ ``merges'' with the right side of $\partial \vstrip$.  The angles $\tfrac{3\pi}{2}$ and $\tfrac{\pi}{2}$ have analogous interpretations for $\theta^R$.  Let $U$ be the unbounded connected component of $\vstrip \setminus (\eta^L \cup \eta^R)$ whose boundary contains $+\infty$ and let $\psi \colon U \to \h$ be a conformal map which takes the first intersection point of $\eta^L$ and $\eta^R$ to $0$ and sends $+\infty$ to $\infty$.  Then $\psi(\eta')$ is a space-filling $\SLE_{\kappa'}(\rho^{1,L},\rho^{2,L};\rho^{1,R},\rho^{2,R})$ process in $\h$ (recall Remark~\ref{rem::space_filling_general_boundary_data}) where
\begin{align*}
\rho^{1,L} &= \left(\frac{\kappa'}{2}-2 \right)\left(-\frac{1}{2} - \frac{\theta^L}{\pi} \right) - 2,\quad\quad \rho^{1,L} + \rho^{2,L} = \frac{\kappa'}{2}-4,\\
\rho^{1,R} &= \left(\frac{\kappa'}{2}-2 \right)\left(-\frac{1}{2} + \frac{\theta^R}{\pi} \right) - 2,\quad\quad \rho^{1,R} + \rho^{2,R} = \frac{\kappa'}{2}-4. \notag
\end{align*}
Moreover, the force points associated with $\rho^{1,L}$ and $\rho^{1,R}$ are immediately to the left and to the right of the initial point of the path.  For each $r > 0$, let $\psi_r = r \psi$.  Fix $R > 0$.  Since the law of $h \circ \psi_r^{-1} - \chi \arg (\psi_r^{-1})'$ restricted to $B(0,R) \cap \h$ converges in total variation as $r \to \infty$ to the law of a GFF on $\h$ whose boundary conditions are compatible with a coupling with space-filling $\SLE_{\kappa'}(\rho^{1,L};\rho^{1,R})$ process, also restricted to $B(0,R)$, we get the almost sure continuity of the latter process stopped the first time it exits $B(0,R)$.  By adjusting the angles $\theta^L$ and $\theta^R$, we can take $\rho^{1,L}, \rho^{1,R}$ to be any pair of values in $(-2,\tfrac{\kappa'}{2}-4]$.  This proves the almost sure continuity of space-filling $\SLE_{\kappa'}(\rho_1;\rho_2)$ for $\rho_1,\rho_2 \in (-2,\tfrac{\kappa'}{2}-4]$ from $0$ to $\infty$ in $\h$ stopped upon exiting $\partial B(0,R)$ for each $R > 0$.  To complete the proof of this step, we just need to show that if $\eta'$ is a space-filling $\SLE_{\kappa'}(\rho_1;\rho_2)$ process with $\rho_1,\rho_2 \in (-2,\tfrac{\kappa'}{2}-4]$ then $\eta'$ is almost surely transient: that is, $\lim_{t \to \infty} \eta'(t) = \infty$ almost surely.  This can be seen by observing that, almost surely, infinitely many of the flow line pairs $A_k$ starting from $2^k i$ for $k \in \N$ with angles $\tfrac{\pi}{2}$ and $-\tfrac{\pi}{2}$ stay inside of the annulus $B(0,2^{k+1}) \setminus B(0,2^{k-1})$ (the range of $\eta'$ after hitting $2^k i$ is almost surely contained in the closure of the unbounded connected component of $\h \setminus A_k$).  Indeed, this follows from Lemma~\ref{lem::path_close_hit} and that the total variation distance between the law of $h|_{\h \setminus B(0,s)}$ given $A_1,\ldots,A_k$ and that of $h|_{\h \setminus B(0,s)}$ (unconditionally) converges to zero when $k$ is fixed and $s \to \infty$.  (See the proof of Lemma~\ref{lem::flow_line_ring} for a similar argument.)
$\ $\newline

\noindent{\bf Step 3.}  $\rho_1,\rho_2 \in (-2,\tfrac{\kappa'}{2}-2)$.  By time-reversal, Step 2 implies the almost sure continuity of space-filling $\SLE_{\kappa'}(\wt{\rho}_1;\wt{\rho}_2)$ where $\wt{\rho}_i$ is the reflection of $\rho_i \in (-2,\tfrac{\kappa'}{2}-4]$ about the $\tfrac{\kappa'}{4}-2$ line.  In particular, we have the almost sure continuity of space-filling $\SLE_{\kappa'}(\rho_1;\rho_2)$ for all $\rho_1,\rho_2 \in [0,\tfrac{\kappa'}{2}-2)$.  We are now going to complete the proof by repeating the argument of Step 2.  Fix $\rho_1,\rho_2 \in (-2,\tfrac{\kappa'}{2}-2)$.  Suppose that $\wh{h}$ is a GFF on $\vstrip$ whose boundary conditions are chosen so that the associated space-filling $\SLE_{\kappa'}(\wh{\rho}_1;\wh{\rho}_2)$ process $\eta'$ from $-\infty$ to $+\infty$ satisfies $\wh{\rho}_i \geq \max(\rho_i,0)$ for $i=1,2$.  Explicitly, this means that the boundary data for $\wh{h}$ is equal to some constant $\wh{a}$ (resp.\ $\wh{b}$) on the left (resp.\ right) side of $\vstrip$ with
\begin{align*}
   \wh{a} = \lambda' \left( \wh{\rho}_1 - \left(\frac{\kappa'}{4}-2\right)\right)\quad\text{and}\quad \wh{b} = \lambda'\left(- \wh{\rho}_2+  \left(\frac{\kappa'}{4}-2\right)\right).
\end{align*}
We let $\wh{\eta}^L$ (resp.\ $\wh{\eta}^R$) be the flow line of $h$ starting from $-1$ (resp.\ $1$) with angle $\theta^L \in ( -(\lambda'+\wh{a})/\chi-\pi, -\tfrac{\pi}{2})$ (resp.\ $\theta^R \in (\tfrac{\pi}{2},(\lambda'-\wh{b})/\chi)+\pi$).  The range of angles is such that the flow line $\eta^L$ (resp.\ $\eta^R$) is almost surely defined and terminates upon hitting the right (resp.\ left) side of $\partial \vstrip$ or $-\infty$.  In particular, neither path tends to $+\infty$.  The conditional law of the space-filling $\SLE_{\kappa'}(\wh{\rho}_1;\wh{\rho}_2)$ process $\wh{\eta}'$ associated with $\wh{h}$ in the unbounded connected component of $\vstrip \setminus (\wh{\eta}^L \cup \wh{\eta}^R)$ with $+\infty$ on its boundary is a space-filling $\SLE_{\kappa'}(\rho^{1,L},\rho^{2,L};\rho^{1,R},\rho^{2,R})$ process with
\begin{align*}
\rho^{1,L} &= \left(\frac{\kappa'}{2}-2 \right)\left(-\frac{1}{2} - \frac{\theta^L}{\pi}\right) - 2,\quad\quad \rho^{1,L} + \rho^{2,L} = \wh{\rho}_1,\\
\rho^{1,R} &= \left(\frac{\kappa'}{2}-2 \right)\left(-\frac{1}{2} + \frac{\theta^R}{\pi}\right) - 2,\quad\quad \rho^{1,R} + \rho^{2,R} = \wh{\rho}_2.
\end{align*}
In particular, when $\theta^L$ (resp.\ $\theta^R$) takes on its minimal (resp.\ maximal) value, $\rho^{1,L} = \wh{\rho}_1$ (resp.\ $\rho^{1,R} = \wh{\rho}_2$).  When $\theta^L$ (resp.\ $\theta^R$) takes on its maximal (resp.\ minimal) value, $\rho^{1,L} = -2$ (resp.\ $\rho^{1,R} = -2$).  By adjusting $\theta^L$ and $\theta^R$ (as in Step 2), we can arrange it so that $\rho^{1,L} = \rho_1$ and $\rho^{1,R} = \rho_2$.  The proof is completed by using the scaling and transience argument at the end of Step~2.
\end{proof}

\section{Whole-plane time-reversal symmetries}
\label{sec::timereversal}

In this section we will prove Theorem~\ref{thm::whole_plane_reversibility}.  The proof is based on related arguments that appeared in \cite[Section~4]{MS_IMAG2}.  In \cite[Section~4]{MS_IMAG2}, the authors considered a pair of chordal flow lines $\eta_1$ and $\eta_2$ in a domain $D$.  The starting and ending points for the $\eta_i$ are the same but the paths have different angles.  It was observed that when $\eta_1$ is given, the conditional law of $\eta_2$ is that of a certain type of $\SLE_\kappa(\ul{\rho})$ process in the appropriate component of $D \setminus \eta_1$.  A similar statement holds with the roles of $\eta_1$ and $\eta_2$ reversed.  It is proved in \cite{MS_IMAG2} that these {\em conditional} laws (of one $\eta_i$ given the other) actually determine the overall {\em joint} law of the pair $(\eta_1, \eta_2)$.  We will derive Theorem~\ref{thm::whole_plane_reversibility} as a consequence of the following analog of the result from \cite[Section~4]{MS_IMAG2}.  (Note that the first half is just a restatement of Theorem~\ref{thm::conditional_law} and Theorem~\ref{thm::transience}.  The uniqueness statement in the final sentence is the new part.)

\begin{theorem}
\label{thm::unique_law_for_two_whole_plane_paths}
Suppose that $h$ is a whole-plane GFF, $\alpha > -\chi$, $\beta \in \R$, and let $h_{\alpha \beta} = h - \alpha \arg(\cdot) - \beta \log|\cdot|$, viewed as a distribution defined up to a global multiple of $2\pi(\chi+\alpha)$.  Fix $\theta \in (0,2\pi(1 + \alpha/\chi))$.  Let $\eta_1$ (resp.\ $\eta_2$) be the flow line of $h_{\alpha \beta}$ with angle $0$ (resp.\ $\theta$) starting from $0$.  Let $\mu_{\alpha \beta}$ be the joint law of the pair $(\eta_1, \eta_2)$ defined in this way.  The pair $(\eta_1,\eta_2)$ has the following properties:
\begin{enumerate}[(i)]
\item\label{unique::prop::lifting} Almost surely, $\eta_1$ and $\eta_2$ have liftings to the universal cover of $\C \setminus \{0 \}$ that are simple curves which do not cross each other (though, depending on $\alpha$ and $\theta$, they may intersect each other).  Moreover, almost surely neither curve traces the other (i.e., neither curve intersects the other for any entire open interval of time).
\item\label{unique::prop::transience} The $\eta_i$ are transient: $\lim_{t \to \infty} \eta_i(t) = \infty$ almost surely.
\item\label{unique::prop::conditional} Given $\eta_2$, the conditional law of the portion of $\eta_1$ intersecting each component $S$ of $\C \setminus \eta_2$ is given by an independent chordal $\SLE_\kappa(\rho_1;\rho_2)$ process in $S$ (starting at the first point of $\partial S$ hit by $\eta_2$ and ending at the last point of $\partial S$ hit by $\eta_2$) with
\begin{equation}
\label{eqn::conditional_rho_values}
 \rho_1 = \frac{\theta \chi}{\lambda} -2\quad\text{and}\quad \rho_2 = \frac{(2\pi - \theta) \chi + 2\pi \alpha}{\lambda}-2.
\end{equation}
A symmetric statement holds with the roles of $\eta_1$ and $\eta_2$ reversed.
\end{enumerate}
Every probability measure $\mu$ on path pairs $(\eta_1,\eta_2)$ satisfying~\eqref{unique::prop::lifting}--\eqref{unique::prop::conditional} can be expressed uniquely as
\begin{equation}
\label{eqn::ergodic_decomposition}
 \mu = \int_{\R} \mu_{\alpha \beta} d\nu(\beta)
\end{equation}
where $\nu$ is a probability measure on $\R$.
\end{theorem}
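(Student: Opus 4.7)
The first half of the theorem is essentially recorded in earlier results: property (i) for $\mu_{\alpha\beta}$ follows from Theorem~\ref{thm::alphabeta} together with the non-crossing part of Theorem~\ref{thm::flow_line_interaction}, property (ii) is Theorem~\ref{thm::transience}, and property (iii) is Theorem~\ref{thm::conditional_law}. By linearity of conditional probabilities, any mixture $\int \mu_{\alpha\beta}\, d\nu(\beta)$ again satisfies (i)--(iii), so the content of the last sentence is the converse: any such $\mu$ is a unique such mixture. My plan is to adapt the bi-chordal resampling characterization from \cite[Section 4]{MS_IMAG2} to the annular/whole-plane setting.

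The first observation is that (iii) already pins down $\alpha$ and $\theta$: from \eqref{eqn::conditional_rho_values} we read off $\theta = (\rho_1+2)\lambda/\chi$ and $\alpha = (\rho_1+\rho_2+4)\lambda/(2\pi) - \chi$. So throughout, $(\alpha,\theta)$ is a fixed pair and the only remaining freedom is $\beta$. Under $\mu_{\alpha\beta}$, the parameter $\beta$ appears as a constant drift $\beta$ in the driving function of $\eta_1$ (Proposition~\ref{prop::alphabeta_existence}), and this drift has an almost sure pathwise manifestation as an asymptotic spiraling rate of $\eta_1$ about $0$. Writing $T\mapsto \Theta_T$ for the accumulated argument of the driving function of $\eta_1$ viewed as a radial SLE targeted at either endpoint up to capacity time $T$, one has $T^{-1}\Theta_T \to w(\beta)$ almost surely under $\mu_{\alpha\beta}$ for an explicit affine bijection $w\colon \R\to\R$. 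I take $W$ to be this almost sure limit, interpreted as a tail functional of $(\eta_1,\eta_2)$.

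The second step is to disintegrate $\mu$ along $W$. Consider the Markov chain on path pairs obtained by alternately resampling $\eta_1$ from the conditional law prescribed by (iii) given $\eta_2$, then $\eta_2$ given the new $\eta_1$, and iterating. Condition (iii) says $\mu$ is stationary for this chain. Each resampling step modifies only finite-capacity pieces of either path, so the tail functional $W$ is invariant under the chain (for any starting law that realizes $W$ almost surely). Hence $W$ is $\mu$-almost surely well-defined, and disintegrating $\mu$ against $W$ produces a probability measure $\nu$ on $\R$ (via the pushforward under $w^{-1}$) and candidate conditional measures $\mu^\beta := \mu(\,\cdot\mid W=w(\beta))$, each of which still satisfies (i)--(iii).

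The main obstacle is the identification $\mu^\beta = \mu_{\alpha\beta}$. The plan is to construct, for each $\beta$, a Markovian coupling of $\mu^\beta$ and $\mu_{\alpha\beta}$ that coalesces on every compact subset of $\C\setminus\{0\}$. Given realizations of $\eta_1$ under the two laws, property (iii) provides identical conditional laws for $\eta_2$ in each complementary pocket; after lifting to the universal cover of $\C\setminus\{0\}$, property (i) says each such pocket is a simply connected chordal domain, and so the bi-chordal coupling of \cite[Section~4]{MS_IMAG2} can be applied pocket by pocket to couple $\eta_2$ across the two laws conditionally on $\eta_1$. Reversing the roles and iterating, one gets a resampling coupling. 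The new input beyond \cite{MS_IMAG2} is that coupling must succeed globally on $\C\setminus\{0\}$: here I would combine the merging phenomenon from Proposition~\ref{prop::merge_almost_surely} (so that path segments started near each other in an annulus $\{r\le |z|\le R\}$ coalesce), the crossing bound from Proposition~\ref{prop::cross_finite} (which controls the combinatorial complexity of how $\eta_1$ and $\eta_2$ interleave), and the conditioning on $W$ (which kills the only ergodic-component parameter that would otherwise obstruct coupling). Uniqueness of $\nu$ in \eqref{eqn::ergodic_decomposition} is then automatic from the injectivity of $\beta\mapsto w(\beta)$, since $\nu$ must equal the pushforward of $\mu\circ W^{-1}$ under $w^{-1}$.
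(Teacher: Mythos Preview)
Your reduction of properties (i)--(iii) to earlier results is fine, and you correctly note that \eqref{eqn::conditional_rho_values} pins down $(\alpha,\theta)$. But the coupling step in your identification $\mu^\beta = \mu_{\alpha\beta}$ does not work as written. When you propose to ``apply the bi-chordal coupling of \cite[Section~4]{MS_IMAG2} pocket by pocket to couple $\eta_2$ across the two laws conditionally on $\eta_1$,'' the two realizations of $\eta_1$ (one sampled from $\mu^\beta$, one from $\mu_{\alpha\beta}$) are \emph{different} curves with \emph{different} pocket decompositions: different simply connected domains, different opening and closing points. The bi-chordal argument of \cite{MS_IMAG2} operates in a single fixed domain with fixed endpoints and gives no mechanism for coupling $\SLE_\kappa(\rho_1;\rho_2)$ processes run in two unrelated collections of domains. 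Invoking Proposition~\ref{prop::merge_almost_surely} does not help, since that merging statement concerns flow lines of a common ambient GFF, which you do not have here. A secondary problem is your claim that $W$ exists $\mu$-almost surely: under a general $\mu$ satisfying only (i)--(iii) there is no SDE description of $\eta_i$ and hence no direct reason for $T^{-1}\Theta_T$ to converge; and ``each resampling step modifies only finite-capacity pieces'' is false, since resampling $\eta_1$ given $\eta_2$ replaces all of $\eta_1$ at once.

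The paper's route is to first manufacture a common arena in which the mixing of \cite{MS_IMAG2} can be run. Starting only from the resampling property (iii), it places a GFF on $\C\setminus(\eta_1\cup\eta_2)$ and grows auxiliary flow lines $\gamma_1,\gamma_2$ from a distant point $z$ (Lemmas~\ref{lem::gamma_behavior}--\ref{lem::gamma_one_gff}); these carve out a pocket $U$ around $0$ whose boundary has at most one arc of each $\gamma_i$ and which exhausts $\C$ as $|z|\to\infty$ (Lemma~\ref{lem::gamma_component_containing_zero}). Conditioning on $\gamma$, on initial stubs of the $\eta_i$, and on the boundary heights then puts both laws into a \emph{fixed} annular region $A$. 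Because the paths in $A$ can wind around the inner boundary and touch themselves and each other in complicated ways, one needs the $m$-tangle connectivity of Lemma~\ref{lem::gmconnected} to show any two admissible configurations in $A$ are joined by finitely many single-path moves; only then does the bi-chordal mixing of \cite{MS_IMAG2} apply and give Lemma~\ref{lem::conditional_law_given_gamma_and_stubs}. The parameter $\beta$ finally enters not via a tail functional on the paths but as the slope of the harmonic part of the annulus GFF (which is affine in $\log|\cdot|$ after sending $|z|\to\infty$ and the stub radius to $0$).
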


As we will explain in Section~\ref{subsec::whole_plane_reversibility_proof}, Theorem~\ref{thm::whole_plane_reversibility} is almost an immediate consequence of Theorem~\ref{thm::unique_law_for_two_whole_plane_paths}.  The goal of the rest of this section is to prove Theorem~\ref{thm::unique_law_for_two_whole_plane_paths}.  The most obvious approach would be as follows: imagine that we fix some given initial pair $(\eta_1, \eta_2)$.  Then ``resample'' $\eta_1$ from the conditional law given $\eta_2$.  Then ``resample'' $\eta_2$ from its conditional law given $\eta_1$.  Repeat this procedure indefinitely, and show that regardless of the initial values of $(\eta_1, \eta_2)$, the law of the pair of paths after $n$ resamplings ``mixes'', i.e., converges to some stationary distribution, as $n \to \infty$.

And indeed, the proof of the analogous result in \cite[Section~4]{MS_IMAG2} is based on this idea.  Through a series of arguments, it was shown to be sufficient to consider the mixing problem in a slightly modified context in which the endpoints of $\eta_1$ were near to but distinct from the endpoints of $\eta_2$.  The crucial step in solving the modified mixing problem was to show that if we consider an arbitrary initial pair $(\eta_1, \eta_2)$ and a distinct pair $(\wt \eta_1, \wt \eta_2)$, then we can always couple together the resampling procedures so that after a finite number of resamplings, there is a positive probability that the two pairs are the same.

In this section, we will extend the bi-chordal mixing arguments from \cite[Section~4]{MS_IMAG2} to a whole-plane setting.  In the whole-plane setting, we consider a pair $(\eta_1, \eta_2)$ of flow lines from $0$ to $\infty$ in $\C$ of different angles of $h_{\alpha \beta} = h -\alpha \arg(\cdot) - \beta \log|\cdot|$, viewed as a distribution defined up to a global multiple of $2\pi (\chi+\alpha)$, starting from the origin with given values of $\alpha > - \chi$ and $\beta \in \R$, as described in Theorem~\ref{thm::unique_law_for_two_whole_plane_paths}.  We will show that these paths are characterized (up to the $\beta$ term) by the nature of the conditional law of one of the $\eta_i$ given the other (in addition to some mild technical assumptions).  Since this form of the conditional law of one path given the other has time-reversal symmetry (i.e., is symmetric under a conformal inversion of $\C$ that swaps $0$ and $\infty$), this characterization will imply that the joint law of $(\eta_1, \eta_2)$ has time-reversal symmetry.   (The fact that this characterization implies time-reversal symmetry was already observed in \cite{MS_IMAG2}.)

The reader who has not done so will probably want to read (or at least look over) \cite[Section~4]{MS_IMAG2} before reading this section.  Some of the bi-chordal mixing arguments in \cite[Section~4]{MS_IMAG2} carry through to the current setting with little modification.  However, there are some topological complications arising from the fact that paths can wind around and hit themselves and each other in complicated ways.  Section~\ref{subsec::untangling} will derive the topological results needed to push through the arguments in \cite{MS_IMAG2}.  Also, \cite[Section~4]{MS_IMAG2} is able to reduce the problem to a situation in which the starting and ending points of $\eta_1$ and $\eta_2$ are distinct --- the reduction involves first fixing $\eta_1$ and $\eta_2$ up to some stopping time and then fixing a segment $\eta'$ of a counterflow line starting from the terminal point, so that $\eta_1$ and $\eta_2$ exit $D \setminus \eta'$ at different places.  This is a little more complicated in the current setting, because drawing a counterflow line from infinity may not be possible in the usual sense (if the angle $2\pi(1 + \alpha/\chi)$ is too small) and one has to keep track of the effect of the height changes after successive windings around the origin, due to the argument singularity.  Section~\ref{subsec::annulusmixing} will provide an alternative construction that works in this setting.

The constructions and figures in this section may seem complicated, but they should be understood as our attempt to give the simplest (or at least one reasonably simple) answer to the following question: ``What modifications are necessary and natural for extending the methods of \cite[Section~4]{MS_IMAG2} to the context of Theorem~\ref{thm::unique_law_for_two_whole_plane_paths}?''

\subsection{Untangling path ensembles in an annulus}
\label{subsec::untangling}

\begin{figure}[ht!]
\begin{center}
\includegraphics[scale = 0.85]{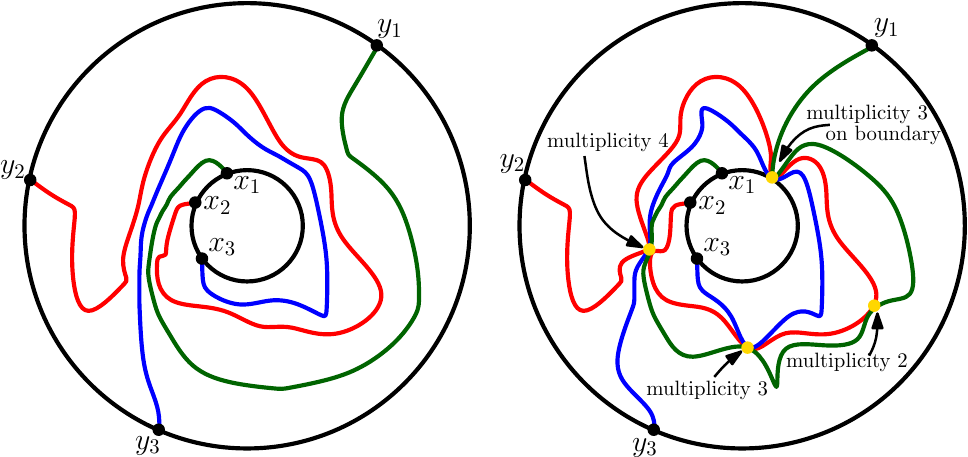}
\end{center}
\caption{\label{fig::1-tangle}
A $(3,-1,1)$-tangle (left) and a $(3,-1,4)$-tangle (right).  The $(3,-1,4)$-tangle has several points of higher multiplicity (shown as gold dots), including a point of multiplicity $4$ in the interior of the annulus $A$ and a point of multiplicity $3$ on the boundary $\partial A$.}
\end{figure}

In order to get the mixing argument to work, we need to show that the relevant space of paths is in some sense connected.  The difficulty which is present in the setting we consider here in contrast to that of \cite[Section~4]{MS_IMAG2} is the paths may wrap around the origin and intersect themselves and each other many times.

Fix a closed annulus $A$ with distinct points $x_1, \ldots, x_k$ (ordered counterclockwise) in the interior annulus boundary and distinct points $y_1, \ldots, y_k$ (ordered counterclockwise) on the exterior annulus boundary and an integer~$\ell$.  Define an {\bf $(k,\ell,m)$-tangle} (as illustrated in Figure~\ref{fig::1-tangle}) to be a collection of~$k$ distinct continuous curves $\gamma_1, \ldots, \gamma_k$ in $A$ such that
\begin{enumerate}
\item Each $\gamma_i \colon [0,1] \to A$ starts at $x_i$ and ends at $y_i$.
\item The lifting of each $\gamma_i$ to the universal cover of $A$ is a simple curve (i.e., a continuous path that does not intersect itself).
\item The (necessarily closed) set of times $t$ for which $\gamma_i$ intersects another curve (or intersects its own past/future) is a set with empty interior and no $\gamma_i$ {\em crosses} itself or any distinct $\gamma_j$.
\item The lifting of $\gamma_1$ to the universal cover of $A$ winds around the interior annulus boundary a net $\ell$ times (rounded down to the nearest integer).  This fixes a homotopy class for $\gamma_1$ (and by extension for all of the $\gamma_j$).
\item Every $a \in A$ has {\em multiplicity} at most $m$, where the {\bf multiplicity} of $a$ is the number of pairs $(i,t)$ for which $\gamma_i(t) = a$.
\item Each $a \in \partial A$ has multiplicity at most $m-1$ and no path hits one of the endpoints of another path.
\end{enumerate}

Let $G_{k,\ell,m}$ be the graph whose vertex set is the set of all $(k,\ell,m)$-tangles and in which two $(k,\ell,m)$-tangles $(\gamma_1, \ldots, \gamma_k)$ and $(\eta_1, \ldots, \eta_k)$ are adjacent if and only if for some $i$ we have
\begin{enumerate}
\item $\gamma_j = \eta_j$ (up to monotone reparameterization) for all $j \not = i$
\item  There is one component $C$ of $$A \setminus \cup_{j \not = i} \gamma_j([0,1])$$ and a pair of times $s,t \in [0,1]$ (after monotone reparameterization if necessary) such that $\gamma_i(s) = \eta_i(s) \in C$, $\gamma_i(t) = \eta_i(t) \in C$, and $\gamma_i$ and $\eta_i$ agree outside of $(s,t)$.
\end{enumerate}

\begin{lemma}
\label{lem::gmconnected}
The graph $G_{k,\ell,m}$ is connected.  In other words, we can get from any $(k,\ell,m)$-tangle $(\gamma_1, \ldots, \gamma_k)$ to any other $(k,\ell,m)$-tangle $(\eta_1, \ldots, \eta_k)$ by finitely many steps in~$G_{k,\ell,m}$.
\end{lemma}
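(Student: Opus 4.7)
The plan is to prove Lemma \ref{lem::gmconnected} by fixing a convenient reference $m$-tangle $T^* = (\gamma_1^*, \ldots, \gamma_k^*)$ (depending only on the boundary data $(x_i), (y_i), m, \ell$) and showing that every $m$-tangle $T = (\gamma_1, \ldots, \gamma_k)$ can be transformed into $T^*$ by a finite sequence of $G_m$-adjacent moves. Since $G_m$-adjacency is symmetric, this immediately gives connectivity of $G_m$.

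First I would construct $T^*$ as follows. Lift each pair $(x_i, y_i)$ to the universal cover $\wt A$ of the annulus (using the winding data $\ell$ to pin down the lift of $x_1$ and $y_1$, and using the non-crossing requirement to pin down the lifts of the remaining endpoints), and connect the lifts by geodesic arcs in a fixed hyperbolic metric on $\wt A$. Projecting back gives a collection of piecewise smooth curves which, after a small generic perturbation that does not change endpoints or homotopy classes, have only finitely many transverse self-intersections and pairwise intersections and have maximum degree at most $m$ in the interior and at most $m-1$ on $\partial A$. Next, I would perform a preliminary reduction of $T$ to a piecewise smooth tangle with finitely many transverse intersections: within each component $C$ of the complement of the other $k-1$ paths, an arc of a single path can be replaced by a small piecewise smooth approximation inside $C$, and this is an elementary $G_m$-move.

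The main body of the argument is an induction on a combinatorial complexity measure $\Phi(T, T^*)$, such as the total number of transverse intersection points of $T \cup T^*$ away from the places where corresponding paths of $T$ and $T^*$ already coincide (equivalently, the number of ``bigons'' bounded by corresponding strands of $\gamma_i$ and $\gamma_i^*$). The inductive step is to produce an elementary move that strictly decreases $\Phi$. I would locate an innermost bigon $B$ bounded by an arc $\alpha$ of some $\gamma_i$ and the corresponding arc $\alpha^*$ of $\gamma_i^*$, whose interior meets no other arc of $T$ or $T^*$. By innermostness, $B$ lies in a single component $C$ of $A \setminus \bigcup_{j \neq i} \gamma_j$; replacing $\alpha$ with a curve inside $C$ that closely follows $\alpha^*$ yields a new tangle $G_m$-adjacent to $T$ with strictly smaller $\Phi$. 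Iterating, after finitely many steps $T$ becomes equal (up to reparameterization) to $T^*$.

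The hard part, I expect, will be verifying that each such local replacement preserves the $m$-tangle condition at every stage, in particular the multiplicity bounds and the simplicity of the lifts. If an innermost bigon happens to touch a point of maximal degree $m$ in $T$, one must choose the replacement arc to avoid raising that degree, and this may require a sub-induction on the maximum multiplicity attained in $T$, beginning with a preliminary simplification pass that uses single-component moves to push all degree-$m$ points into a small collar of $\partial A$. A second technical issue is the prohibition on paths hitting endpoints of other paths: near $\partial A$ the bigon-reduction move may be blocked, so one first applies a finite number of $G_m$-moves that shrink all of the ``non-trivial'' structure of $T$ into a sub-annulus $A' \subset A$ bounded away from the marked points, reducing the problem to an interior bigon reduction. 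Once these technical points are handled, the induction closes and Lemma \ref{lem::gmconnected} follows.
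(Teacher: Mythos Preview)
Your outline has a genuine gap at the very first reduction step. You write that one can ``perform a preliminary reduction of $T$ to a piecewise smooth tangle with finitely many transverse intersections'' by replacing, within each component $C$ of $A \setminus \bigcup_{j\neq i}\gamma_j$, the arc of $\gamma_i$ in $C$ by a smooth approximation, and that each such replacement is a single $G_m$-move. But for $m \geq 2$ an $m$-tangle can have \emph{infinitely} many self- and mutual-intersection points (the definition only forces the intersection-time set to be nowhere dense), and correspondingly there can be infinitely many components $C$, and infinitely many arcs of $\gamma_i$ passing through a given $C$. So your ``preliminary reduction'' is an infinite sequence of $G_m$-moves, not a finite one, and the subsequent bigon-counting argument never gets off the ground. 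The same issue resurfaces in the bigon step itself: with infinitely many intersection points between $T$ and $T^*$, the existence of an \emph{innermost} bigon whose interior meets no other arc is not automatic.

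This is exactly the difficulty the paper isolates and resolves first. Rather than comparing $T$ to a fixed target, the paper shows that any $m$-tangle can be reduced in finitely many $G_m$-steps to an $(m-1)$-tangle: although there may be infinitely many pockets, a compactness argument shows that only \emph{finitely} many pockets have an interior boundary point of maximal degree $m$ (roughly, near any accumulation of degree-$m$ points there are at most two pockets, $P_a^L$ and $P_a^R$, that can see a degree-$m$ interior boundary point), and in each such pocket one boundary arc can be pushed off in a single move without creating new degree-$m$ points. Iterating drops the multiplicity to $1$. Only once both tangles are $1$-tangles (disjoint simple curves) does one invoke the isotopy/discretization argument you describe. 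In short, the missing idea in your proposal is the finiteness-of-bad-pockets step that makes the multiplicity reduction a finite procedure; your bigon scheme would work after that reduction, but not before.
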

\begin{proof}
We will first argue that regardless of the value of $m$, we can get from any $(k,\ell,m)$-tangle to some $(k,\ell,1)$-tangle in finitely many steps.  We refer to the connected components of $A \setminus \cup_{j=1}^k \gamma_j([0,1])$ as {\bf pockets} and observe that for topological reasons the boundary of each pocket is comprised of at most two path segments (and perhaps parts of the boundary of $A$).  The (at most two) {\em endpoints} of the pocket are those points common to these two segments.  Boundary points of a pocket that are not endpoints are called {\bf interior boundary points} of the pocket.

\begin{figure}[ht!]
\begin{center}
\includegraphics[scale = 0.85]{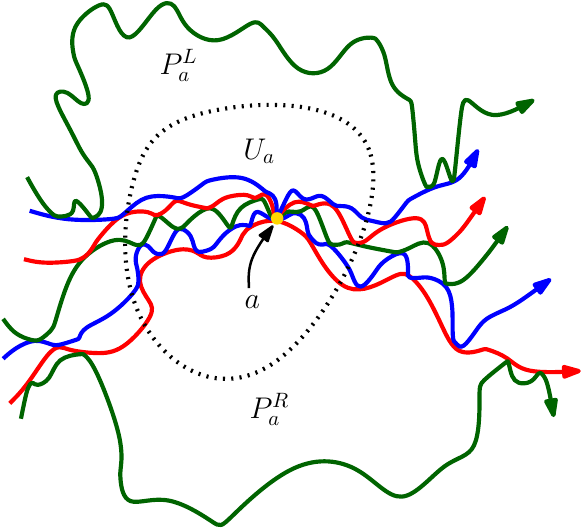}
\end{center}
\caption{\label{fig::ua}
A portion of an $m$-tangle (with $m=5$) is shown above.  The interior point $a$ has multiplicity five.  The neighborhood $U_a$ (boundary shown with dotted lines) contains five intersecting directed path segments, all of which pass through $a$.  The pocket left of this bundle of path segments is denoted $P^L_a$, while the pocket right of this bundle of path segments is denoted $P^R_a$.  (Note that in general it is possible to have $P^L_a = P^R_a$.)  There may be infinitely many pockets contained in $U_a$.  Indeed, it is possible that there are infinitely many small pockets in every neighborhood of $a$.  However, $P^L_a$ and $P^R_a$ are the only pockets that lie either left of all or right of all $5$ path segments through $U_a$ that pass through $a$.  This implies that $P^L_a$ and $P^R_a$ are the only pockets that have interior boundary points (within $U_a$) of multiplicity $m=5$.}
\end{figure}

For every multiplicity $m$ point $a$, we can find a small neighborhood $U_a$ of $a$ whose pre-image in $\{1,2,\ldots,k\} \times [0,1]$ consists of $m$ disjoint open intervals.  The images of these segments in $A$ are simple path segments that do not cross each other, and that all come together at $a$.  The leftmost such segment is part of the right boundary of a single pocket $P_a^L$ and the rightmost such segment is part of the left boundary of a single pocket $P_a^R$, as illustrated in Figure~\ref{fig::ua}.

We now claim that at most finitely many pockets have a multiplicity-$m$ boundary point that is {\em not} one of the two endpoints of the pocket.  If there were infinitely many such pockets, then (by compactness of $A$) we could find a sequence $a_1, a_2, \ldots$ of corresponding multiplicity $m$ points (each a non-endpoint boundary of a {\em different} pocket) converging to a point $a \in A$.  Clearly $a$ has multiplicity at least $m$ so it must be an interior point in $A$ and we can construct the neighborhood $U_a$ containing $a$ as described above; but the only two pockets within this neighborhood that can have interior multiplicity-$m$ boundary points are the pockets $P_a^L$ and $P_a^R$, as illustrated in Figure~\ref{fig::ua}.  Since only two pockets intersecting $U_a$ have multiplicity-$m$ interior boundary points, there cannot be an infinite sequence of such pockets intersecting $U_a$ and converging to $a$, so we have established a contradiction and verified the claim.

Now, within each pocket, we can move the left or right boundary away (getting rid of all multiplicity $m$ points on its boundary) in single step without introducing any new multiplicity $m$ points.  Repeating this for each of the pockets with multiplicity $m$ boundary points allows us to remove all multiplicity $m$ points in finitely many steps.

A similar procedure allows us to remove any multiplicity $m-1$ boundary points from the boundary of $A$.  (A multiplicity $m-1$ boundary point on $\partial A$ is essentially a multiplicity $m$ boundary point if we interpret a boundary arc of $\partial A$ as one of the paths.  Every such point is on the interior boundary of exactly one pocket and the same argument as above shows that there are only finitely pockets with interior.)

We have now shown that we can move from any $(k,\ell,m)$-tangle to an $(k,\ell,m-1)$-tangle in finitely many steps.  Repeating this procedure allows us to get to a $(k,\ell,1)$-tangle in finitely many steps.  In a $(k,\ell,1)$-tangle all of the $k$ paths are disjoint and they intersect $\partial A$ only at their endpoints.  For the remainder of the proof, it suffices to show that one can get from any $(k,\ell,1)$-tangle $(\gamma_1, \ldots, \gamma_k)$ to any other $(k,\ell,1)$-tangle $(\eta_1, \ldots, \eta_k)$ with finitely many steps in $G_{k,\ell,1}$.  It is easy to see that one can continuously deform $\gamma_1$ to $\eta_1$ since the two are homotopically equivalent.  Similarly one can continuously deform all of $A$ in such a way that $\gamma_1$ gets deformed to $\eta_1$ and the other paths get mapped to continuous paths.  One can then fix the first path and deform the domain to take the second path to $\eta_2$, and so forth.  Ultimately we obtain a continuous deformation of $(\gamma_1, \ldots, \gamma_k)$ to $(\eta_1, \ldots, \eta_k)$ within the space of $(k,\ell,1)$-tangles.  By compactness, the minimal distance that one path gets from another during this deformation is greater than some $\delta>0$. We can now write the continuous deformation as a finite sequence of steps such that each path moves by at most $\delta/2$ (in Hausdorff distance) during each step. Thus we can move the paths one at a time through these steps without their interfering with each other.  Repeating this for each step, we can get from $(\gamma_1, \ldots, \gamma_k)$ to $(\eta_1, \ldots, \eta_k)$ with finitely many moves in $G_{k,\ell,1}$.
\end{proof}

\subsection{Bi-chordal annulus mixing}
\label{subsec::annulusmixing}

\begin{figure}[ht!]
\begin{center}
\includegraphics[scale=.85]{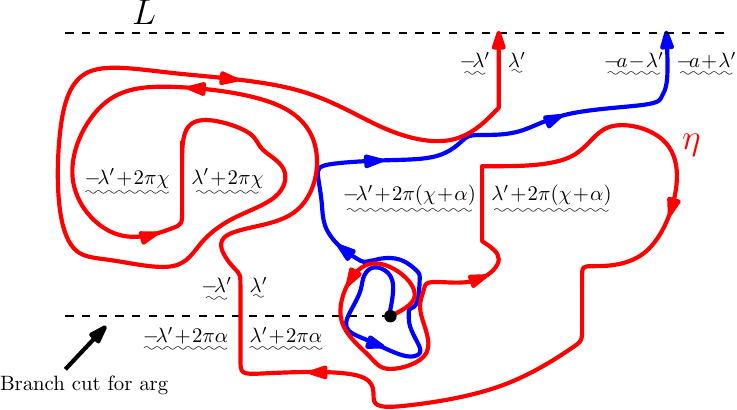}
\end{center}
\caption{\label{fig::interior_path_pair}
Suppose that $h_{\alpha \beta} = h - \alpha \arg(\cdot) - \beta \log|\cdot|$, viewed as a distribution defined up to a global multiple of $2\pi(\chi+\alpha)$, where $h$ is a whole-plane GFF.  The blue and red paths are flow lines of $h_{\alpha\beta}$ starting from the origin with angles $0$ and $a/\chi$, respectively (recall Figure~\ref{fig::interior_path_bd2}).}
\end{figure}

\begin{figure}[ht!]
\begin{center}
\includegraphics[scale=.85]{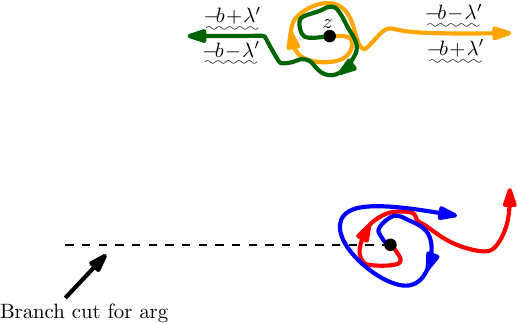}
\end{center}
\caption{\label{fig::interior_pair_with_z}
The purpose of this figure is to motivate the construction for the bi-chordal mixing argument used to prove Theorem~\ref{thm::unique_law_for_two_whole_plane_paths}.  Red and blue paths are as in Figure~\ref{fig::interior_path_pair}, except that these paths are stopped at some positive and finite stopping time.  A second pair of orange and green flow lines is drawn starting from a far away point $z$: the initial angle of the green path is chosen uniformly at random from $[0,2\pi)$, while the angle of the second is opposite that of the first.  Because of the randomness in the green/orange angles, the conditional law of the green/orange pair, given $h_{\alpha\beta}$, depends only on the choice of $h_{\alpha\beta}$ {\em modulo additive constant} (recall Remark~\ref{rem::alpha_beta_determined}).  If we draw the green/orange paths out to $\infty$, then the conditional law of $h_{\alpha \beta}$ in the complementary connected component containing origin is that of a GFF with $\alpha$-flow line boundary conditions.  The first step in the proof of Theorem~\ref{thm::unique_law_for_two_whole_plane_paths} is to construct paths $\gamma_1,\gamma_2$ that play the roles of the green/orange pair, though this will be done indirectly since there is not an ambient GFF defined on all of $\C$ in the setting of the second part of Theorem~\ref{thm::unique_law_for_two_whole_plane_paths}.}
\end{figure}

In this section, we will complete the proof of Theorem~\ref{thm::unique_law_for_two_whole_plane_paths}.  Throughout, we shall assume that $\eta_1,\eta_2$ are paths satisfying~\eqref{unique::prop::lifting}--\eqref{unique::prop::conditional} of Theorem~\ref{thm::unique_law_for_two_whole_plane_paths}.  In order to get the argument of \cite[Section~4]{MS_IMAG2} to work, we will need to condition on an initial segment of each of $\eta_1$ and $\eta_2$ as well as some additional paths which start far from~$0$.  This conditioning serves to separate the initial and terminal points of $\eta_1$ and $\eta_2$.  The idea is to imagine that $\eta_1,\eta_2$ are coupled with an ambient GFF $h$ on $\C$.  We then pick some point $z$ which is far away from zero and draw flow lines $\gamma_1$, $\gamma_2$ of $h$ starting from $z$ where the angle of $\gamma_1$ is chosen uniformly from $[0,2\pi)$ (recall Remark~\ref{rem::alpha_beta_determined}) and $\gamma_2$ points in the opposite direction of $\gamma_1$, i.e.\ the angle of $\gamma_2$ is $\pi$ relative to that of $\gamma_1$ (see Figure~\ref{fig::interior_pair_with_z}).  Conditioning on $\gamma_1$ and $\gamma_2$ as well as initial segments of $\eta_1$ and $\eta_2$ then puts us into the setting of Section~\ref{subsec::untangling}.

We cannot carry this out directly because \emph{a priori} (assuming only the setup of the second part of Theorem~\ref{thm::unique_law_for_two_whole_plane_paths}) we do not have a coupling of $\eta_1,\eta_2$ with a GFF on $\C$.  We circumvent this difficulty as follows.  Conditioned on the pair $\eta = (\eta_1,\eta_2)$, we let $h$ be an instance of the GFF on $\C \setminus (\eta_1 \cup \eta_2)$.  We let $h$ have $\alpha$-flow line boundary conditions on $\eta_1$ and $\eta_2$ where the value of $\alpha$ and the angles on each are determined so that if $\eta_1,\eta_2$ \emph{were} flow lines of a GFF with an $\alpha \arg$ singularity and these angles then they would have the same resampling property, as in the statement of Theorem~\ref{thm::unique_law_for_two_whole_plane_paths}.  (Note that $\rho_1,\rho_2$ as in~\eqref{eqn::conditional_rho_values} determine $\alpha$ and $\theta$.)

We then use the GFF $h$ to determine the law of $\gamma = (\gamma_1,\gamma_2)$, at least until one hits $\eta_1$ or $\eta_2$.  As in Figure~\ref{fig::interior_pair_with_z_mapped_over}, we would ultimately like to continue $\gamma_1$ and $\gamma_2$ all the way to $\infty$.  We accomplish this by following the rule that when one of the $\gamma_i$ hits one of the $\eta_j$, it either reflects off $\eta_j$ or immediately crosses $\eta_j$, depending on what it would do if $\eta_j$ \emph{were} in fact a GFF flow line hit at the same angle (as described in Theorem~\ref{thm::flow_line_interaction}).  To describe the construction more precisely, we will first need the following lemma which serves to rule out the possibility that $\gamma_i$ hits one of the $\eta_j$ at a self-intersection point of $\eta_j$ or at a point in $\eta_1 \cap \eta_2$.  

\begin{lemma}
\label{lem::pocket_harmonic_measure_intersection}
Suppose that $P$ is a pocket of $\C \setminus (\eta_1 \cup \eta_2)$.  Then the harmonic measure of each of the sets
\begin{enumerate}[(i)]
\item\label{item::self_intersection} the self-intersection of points of each $\eta_j$,
\item\label{item::intersection} $\eta_1 \cap \eta_2$
\end{enumerate}
as seen from any point in $P$ is almost surely zero.
\end{lemma}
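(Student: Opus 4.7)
The strategy is to condition on one of the two paths and apply property (iii) of the theorem statement to describe the other as a chordal $\SLE_\kappa(\rho_1;\rho_2)$ process, then invoke the boundary-intersection estimate of Lemma~\ref{lem::sle_kappa_rho_boundary_intersection}. A direct check using \eqref{eqn::conditional_rho_values} and the hypothesis $\theta \in (0, 2\pi(1+\alpha/\chi))$ shows $\rho_1, \rho_2 > -2$, so the lemma applies.

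I plan to establish part (ii) first. Conditioning on $\eta_2$, let $S$ be the unique component of $\C \setminus \eta_2$ containing $P$ and let $\phi \colon S \to \h$ be a conformal map sending the first and last points of $\partial S$ visited by $\eta_1$ to $0$ and $\infty$. By property (iii), $\wt \eta_1 := \phi(\eta_1 \cap S)$ is a chordal $\SLE_\kappa(\rho_1;\rho_2)$ from $0$ to $\infty$ in $\h$, so Lemma~\ref{lem::sle_kappa_rho_boundary_intersection} gives that $\wt\eta_1 \cap \R$ has Lebesgue measure zero. Since $\eta_1 \cap \eta_2 \cap \partial P \subseteq \eta_1 \cap \partial S$, its image under $\phi$ lies in $\wt\eta_1 \cap \R$ and therefore also has Lebesgue measure zero on $\partial \h$. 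The harmonic measure on $\partial\phi(P)$ from a point $w \in \phi(P)$, restricted to $\partial \h \cap \partial \phi(P)$, is absolutely continuous with respect to Lebesgue measure on $\partial \h$ (via a Poisson-kernel comparison in $\h$), so the harmonic measure of $\eta_1 \cap \eta_2 \cap \partial P$ from any point of $P$ vanishes by conformal invariance. The same argument with the roles of $\eta_1$ and $\eta_2$ swapped gives the analogous statement.

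For part (i), I then plan to reduce to part (ii). Since $\kappa \in (0,4)$ and $\rho_1, \rho_2 > -2$, the chordal $\SLE_\kappa(\rho_1;\rho_2)$ process that describes the conditional law of $\eta_1 \cap S$ given $\eta_2$ is almost surely a simple curve in $S$. Thus any self-intersection of $\eta_1$ at some point $z$ cannot have both visits occurring in the same component of $\C \setminus \eta_2$; consequently $z \in \eta_2$. Restricting to $\partial P$, the self-intersection set of $\eta_1$ is contained in $\eta_1 \cap \eta_2 \cap \partial P$, which has harmonic measure zero from $P$ by part (ii). The symmetric argument, obtained by conditioning on $\eta_1$ rather than $\eta_2$, handles self-intersections of $\eta_2$.

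The main obstacle I anticipate is largely bookkeeping rather than a genuine analytic hurdle: one must verify that a single component of $\C \setminus \eta_2$ does contain $P$, that $\eta_1 \cap S$ is a single chord in $S$ (so that property (iii) applies as stated), and that the absolute continuity of harmonic measure with respect to Lebesgue on $\partial \h$ transfers faithfully to $\partial P$ under $\phi$ despite the possibly fractal nature of $\partial P$. All of these are routine given property~(iii) and standard Poisson-kernel estimates, so the reduction of (i) to (ii) and of (ii) to Lemma~\ref{lem::sle_kappa_rho_boundary_intersection} is the heart of the argument.
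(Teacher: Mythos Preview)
Your proposal is correct and follows essentially the same approach as the paper: both condition on one path, use property~(iii) to identify the other as a chordal $\SLE_\kappa(\rho_1;\rho_2)$ in the ambient pocket, and invoke Lemma~\ref{lem::sle_kappa_rho_boundary_intersection} to conclude that its boundary intersection has zero Lebesgue (hence zero harmonic) measure, then swap roles. The only difference is organizational---the paper handles (i) and (ii) simultaneously by observing that all bad points on the $\eta_2$-arc of $\partial P$ lie in $\eta_2|_{\ol P_1}\cap\partial P_1$, whereas you first establish (ii) and then reduce (i) to (ii) via the simplicity of the chordal SLE segment; the underlying argument is the same.
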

\begin{proof}
This follows from the resampling property and Lemma~\ref{lem::sle_kappa_rho_boundary_intersection}.  Indeed, fix $z \in \C \setminus \{0\}$ and let $P$ be the pocket of $\C \setminus (\eta_1 \cup \eta_2)$ which contains $z$.  Then it suffices to show that the statement of the lemma holds for $P$ since every pocket contains a point with rational coordinates.  Let $P_1$ be pocket of $\C \setminus \eta_1$ which contains $z$.  Then the resampling property implies that the conditional law of the segment of $\eta_2$ which traverses $P_1$ is that of a chordal $\SLE_\kappa(\rho^L;\rho^R)$ process with $\rho^L,\rho^R > -2$.  Lemma~\ref{lem::sle_kappa_rho_boundary_intersection} thus implies that the harmonic measure of the points in $\partial P$ as described in~\eqref{item::self_intersection} and~\eqref{item::intersection} which are contained in the segment traced by $\eta_2$ is almost surely zero as seen from any point in $P$ because these points are in particular contained in the intersection of the restriction of $\eta_2$ to $\ol{P}_1$ with $\partial P_1$.  Swapping the roles of $\eta_1$ and $\eta_2$ thus implies the lemma.
\end{proof}

We will now give a precise construction of $\gamma = (\gamma_1,\gamma_2)$.  Recall that $h$ is a GFF on $\C \setminus (\eta_1 \cup \eta_2)$ with $\alpha$-flow line boundary conditions with angles and the value of $\alpha$ determined by the resampling property of $\eta_1,\eta_2$.  For each $i=1,2$ we inductively define path segments $\gamma_{i,k}$ and stopping times $\tau_{i,k}$ as follows.  We let $\gamma_{i,1}$ be the flow line of $h$ starting from $z$ with uniformly chosen angle in $[0,2\pi(1+\alpha/\chi))$.  Let $\tau_{i,1}$ be the first time that $\gamma_{i,1}$ hits $\eta_1 \cup \eta_2$ with a height difference such that $\gamma_{i,1}$ would cross either $\eta_1$ or $\eta_2$ at $\gamma_{i,1}(\tau_{i,1})$ if the boundary segment \emph{were} a GFF flow line (recall Theorem~\ref{thm::flow_line_interaction}).  If there is no such time, then we take $\tau_{i,1} = \infty$.  On the event $\{\tau_{i,1} < \infty\}$, Lemma~\ref{lem::pocket_harmonic_measure_intersection} and Lemma~\ref{lem::exit_harmonic_measure} imply that $\gamma_{i,1}(\tau_{i,1})$ is almost surely not a self-intersection point of one of the $\eta_j$'s and is not in $\eta_1 \cap \eta_2$.  Suppose that $\gamma_{i,1},\ldots,\gamma_{i,k}$ and $\tau_{i,1},\ldots,\tau_{i,k}$ have been defined.  Assume that we are working on the event that the latter are all finite and $\gamma_{i,k}(\tau_{i,k})$ is not a self-intersection point of one of the $\eta_j$'s and is not in $\eta_1 \cap \eta_2$.  Then $\gamma_{i,k}(\tau_{i,k})$ is almost surely contained in the boundary of precisely two pockets of $\C \setminus (\eta_1 \cup \eta_2)$, say $P$ and $Q$ and the range of $\gamma_{i,k}$ just before time $\tau_{i,k}$ is contained in one of the pockets, say $P$.  We then let $\gamma_{i,k+1}$ be the flow line of $h$ in $Q$ starting from $\gamma_{i,k}(\tau_{i,k})$ with the angle determined by the intersection of $\gamma_{i,k}$ with $\eta_1 \cup \eta_2$ at time $\tau_{i,k}$.  We also let $\tau_{i,k+1}$ be the first time that $\gamma_{i,k+1}$ intersects $\eta_1 \cup \eta_2$ at a height at which it can cross.  If $\gamma_{i,k+1}$ does not intersect $\eta_1 \cup \eta_2$ with such a height difference, we take $\tau_{i,k+1} = \infty$.  On $\{\tau_{i,k+1} < \infty\}$, Lemma~\ref{lem::pocket_harmonic_measure_intersection} and Lemma~\ref{lem::exit_harmonic_measure} imply that $\gamma_{i,k+1}(\tau_{i,k+1})$ is almost surely not contained in either a self-intersection point of one of the $\eta_j$'s or $\eta_1 \cap \eta_2$.

Let $k_0$ be the first index $k$ such that $\tau_{i,k_0} = \infty$ and let $P$ be the connected component of $\C \setminus (\eta_1 \cup \eta_2)$ whose closure contains $\eta_{i,k_0}$.  Let $x$ (resp.\ $y$) be the first (resp.\ last) point of $\partial P$ drawn by $\eta_1,\eta_2$ as they trace $\partial P$.  Then $\gamma_{i,k_0}$ terminates in $\ol{P}$ at $y$.  We then take $\gamma_{i,k_0+1}$ to be the concatenation of the flow lines of $h$ with the appropriate angle in the pockets of $\C \setminus (\eta_1 \cup \eta_2)$ which lie after $P$ in their natural ordering; we will show in Lemma~\ref{lem::gamma_behavior} that this yields an almost surely continuous path.  Finally, we let $\gamma$ be the concatenation of $\gamma_{i,1},\ldots,\gamma_{i,k_0+1}$.  As in the case of flow lines defined using a GFF on all of $\C$, it is not hard to see that each $\gamma_i$ almost surely crosses each $\eta_j$ at most finitely many times (recall Theorem~\ref{thm::merge_cross}):

\begin{lemma}
\label{lem::gamma_behavior}
Each $\gamma_i$ as defined above is an almost surely continuous path which crosses each $\eta_j$ at most finitely many times after which it visits the connected components of $\C \setminus (\eta_1 \cup \eta_2)$ according to their natural order, i.e.\ the order that their boundaries are drawn by $\eta_1$ and $\eta_2$.  Similarly, each $\gamma_i$ crosses any given flow line of $h$ at most a finite number of times.
\end{lemma}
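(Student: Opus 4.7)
The plan is to leverage the resampling property in assumption~\eqref{unique::prop::conditional} of Theorem~\ref{thm::unique_law_for_two_whole_plane_paths} to reduce everything to a local GFF flow line interaction problem, and then to apply Theorem~\ref{thm::flow_line_interaction} together with the height-difference/winding argument that was used in Proposition~\ref{prop::cross_finite}. Concretely, by the resampling property, in each connected component $C$ of $\C \setminus (\eta_1 \cup \eta_2)$ the conditional law of the missing $\eta_i \cap C$ given everything else is that of a chordal $\SLE_\kappa(\rho_1;\rho_2)$ process, so the conditional law of $h$ inside $C$ (given $\eta_1, \eta_2$) really is a GFF with $\alpha$ flow line boundary conditions matching the appropriate angles on each arc of $\partial C$. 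Inside $C$ the path $\gamma_i$ is, by construction, an honest GFF flow line of this conditional field, so Proposition~\ref{prop::whole_plane_continuity} (via absolute continuity, Proposition~\ref{prop::local_set_whole_plane_bounded_compare}) applies to give almost sure continuity on the open time interval spent in $C$.

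For continuity at the transition times $\tau_{i,k}$, the construction only allows $\gamma_i$ to switch pockets at a point $\gamma_{i,k}(\tau_{i,k})$ which, by Lemma~\ref{lem::pocket_harmonic_measure_intersection} together with Lemma~\ref{lem::exit_harmonic_measure}, is almost surely neither a self-intersection of some $\eta_j$ nor a point of $\eta_1 \cap \eta_2$; hence it lies on the boundary of exactly two pockets and the boundary behavior of the flow line of the conditional GFF at that point is continuous in the standard sense (joining the two flow line segments along a common smooth boundary point). Concatenating countably many such pieces gives an a.s.\ continuous path provided the concatenation does not accumulate prematurely, which is precisely what the finite-crossings statement will rule out.

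The heart of the argument is the bound on crossings, modeled on Proposition~\ref{prop::cross_finite} (see Figure~\ref{fig::alphacrossing_proof}). Fix $j \in \{1,2\}$ and let $\tau_0 < \tau_1 < \cdots$ be the successive times at which $\gamma_i$ crosses $\eta_j$; let $\CD_k$ denote the height difference between $\gamma_i$ and $\eta_j$ at time $\tau_k$, as in Figure~\ref{fig::angle_difference}. Theorem~\ref{thm::flow_line_interaction} forces $\CD_k \in (-\pi\chi, 2\lambda - \pi\chi)$ for every $k$. Between two successive crossings one of the two paths must wind around $0$ (since tails of ordinary GFF flow lines cross each other at most once by Proposition~\ref{prop::cross_merge}), and each such winding shifts the height difference by exactly $\pm 2\pi(\chi+\alpha)$; moreover, by exactly the induction argument of Proposition~\ref{prop::cross_finite} (using Lemma~\ref{lem::sle_kappa_rho_boundary_intersection}, Lemma~\ref{lem::whole_plane_self_intersection_harmonic_measure}, and the fact that after each crossing we are back to the setup of conditional chordal $\SLE_\kappa(\rho^L;\rho^R)$ in a new pocket), the shifts are monotone in $k$. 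Since the window $(-\pi\chi, 2\lambda - \pi\chi)$ has length $2\lambda$, this gives the explicit bound on the total number of crossings by $\lceil 2\lambda / (2\pi(\chi+\alpha)) \rceil = \lceil 1/(1 + \alpha/\chi) \rceil$, and in particular it is finite.

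Once all crossings are exhausted, any further intersection of $\gamma_i$ with $\eta_j$ is a bounce and $\gamma_i$ remains on the same side of $\eta_j$; the standard pocket-ordering argument (compare Remark~\ref{rem::inthepocket}) then shows that $\gamma_i$ must thereafter visit the complementary pockets of $\C \setminus (\eta_1 \cup \eta_2)$ in the order in which their boundaries are traced by $\eta_1$ and $\eta_2$, which also rules out any accumulation of the countable concatenation and so completes the continuity proof. The final sentence, concerning an arbitrary flow line $\vartheta$ of $h$, is obtained by running the same height-difference/winding argument with $\vartheta$ in place of $\eta_j$, using that inside each pocket $\vartheta$ is an ordinary GFF flow line and Theorem~\ref{thm::flow_line_interaction} applies directly; the main obstacle in all of this will be keeping careful track of the extra $2\pi\alpha$ jumps along the argument branch cut and ensuring that the pocket-by-pocket bookkeeping of heights is consistent with the global constraint that $h_{\alpha\beta}$ is only defined up to a multiple of $2\pi(\chi+\alpha)$.
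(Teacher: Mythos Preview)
Your treatment of the crossing bound is essentially the paper's approach: both defer to the height-difference/winding mechanism behind Theorem~\ref{thm::merge_cross} (Proposition~\ref{prop::cross_finite}), and this part is fine apart from a minor arithmetic slip (the window of admissible height differences for a \emph{crossing} has length $\pi\chi$, not $2\lambda$, so your displayed bound is off, though finiteness survives).

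The genuine gap is in your continuity argument. You write that ``concatenating countably many such pieces gives an a.s.\ continuous path provided the concatenation does not accumulate prematurely, which is precisely what the finite-crossings statement will rule out,'' and later that the pocket-ordering ``also rules out any accumulation of the countable concatenation.'' But the finite-crossings statement only tells you that $k_0$ is finite; it says nothing about $\gamma_{i,k_0+1}$, which by construction is an \emph{infinite} concatenation of flow-line segments through the ordered pockets of $\C \setminus (\eta_1 \cup \eta_2)$. Since $\eta_1$ and $\eta_2$ can bounce off one another infinitely often in a bounded region (for the relevant range of $\rho$ values), infinitely many pockets may accumulate at a finite point, and the fact that they are visited in their natural order does not by itself guarantee that the concatenated curve is continuous there: you still need to know that the diameter of the $\gamma_i$-segment inside a pocket tends to zero with the diameter of the pocket.

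The paper supplies exactly this missing step, by a different route than you sketch. For fixed $T>0$, continuity of $\eta_1,\eta_2$ implies only finitely many bounded components of $\C \setminus (\eta_1([0,T]) \cup \eta_2([0,T]))$ have diameter at least $\tfrac{1}{n}$. One then builds an approximation $\gamma_{i,n,T}$ that keeps the $\gamma_i$-segments through the large pockets but replaces each small-pocket segment by the arc of $\eta_1$ along that pocket's boundary; this is manifestly continuous (it is $\eta_1$ with finitely many disjoint time intervals replaced by continuous paths with matching endpoints), and the sequence $(\gamma_{i,n,T})_n$ is Cauchy in the uniform metric because both the $\gamma_i$-segment and the $\eta_1$-arc in a pocket of diameter $<\tfrac{1}{n}$ are contained in that pocket. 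Your proposal needs an argument of this kind; the pocket-ordering alone does not close the gap.
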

\begin{proof}
The assertion regarding the number of times that the $\gamma_i$ cross the $\eta_j$ or any given flow line of $h$ is immediate from the construction and the same argument used to prove Theorem~\ref{thm::merge_cross}.  To see that $\gamma_i$ is continuous, we note that we can write $\gamma_i$ as a local uniform limit of curves as follows.  Fix $T > 0$.  For each $n \in \N$, we note that there are only a finite number of bounded connected components of $\C \setminus (\eta_1([0,T]) \cup \eta_2([0,T]))$ whose diameter is at least $\tfrac{1}{n}$ by the continuity of $\eta_1,\eta_2$.  We thus let $\gamma_{i,n,T}$ be the concatenation of $\gamma_{i,1},\ldots,\gamma_{i,k_0}$ along with the segments of $\gamma_{i,k_0+1}$ which traverse bounded pockets of $\C \setminus (\eta_1([0,T]) \cup \eta_2([0,T]))$ whose diameter is at least $\tfrac{1}{n}$ and the segments which traverse pockets of diameter less than $\tfrac{1}{n}$ are replaced by the part of $\eta_1$ which traces the side of the pocket that it visits first.  Then $\gamma_{i,n,T}$ is a continuous path since it can be thought of as concatenating a finite collection of continuous paths with the path which arises by taking $\eta_1$ and then replacing a finite collection of disjoint time intervals $[s_1,t_1],\ldots,[s_k,t_k]$ with other continuous paths which connect $\eta_1(s_i)$ to $\eta_1(t_i)$.  Moreover, it is immediate from the definition that the sequence $(\gamma_{i,n,T} : n \in \N)$ is Cauchy in the space of continuous paths $[0,1] \to \C$ defined modulo reparameterization with respect to the $L^\infty$ metric.  Therefore $\gamma_i$ stopped upon entering the unbounded connected component of $\C \setminus (\eta_1([0,T]) \cup \eta_2([0,T]))$ is almost surely continuous.  Since this holds for each $T > 0$, this completes the proof in the case that $\C \setminus (\eta_1 \cup \eta_2)$ does not have an unbounded connected component.  If there is an unbounded component, then we have proved the continuity of~$\gamma_i$ up until it first enters such a component, say~$P$.  If $\gamma_i$ did not cross into $P$, then the continuity follows since its law in $P$ is given by that of an $\SLE_\kappa(\rho^L;\rho^R)$ process with $\rho^L,\rho^R > -2$.  The analysis is similar if $\gamma_i$ crossed into~$P$, which completes the proof.
\end{proof}

Once we have fixed one of the $\eta_i$, we can sample $\eta_j$ for $j \neq i$ by fixing a GFF $h_i$ on $\C \setminus \eta_i$ with $\alpha$-flow line boundary conditions on $\eta_i$ and then taking $\eta_j$ to be a flow line of $h_i$ starting from the origin with the value of $\alpha$ and the angle of $\eta_j$ determined by the resampling property of $\eta_1,\eta_2$.  (In the case that $\eta_i$ is self-intersecting, we take $\eta_j$ to be a concatenation of flow lines of $h_i$ starting at the pocket opening points with the appropriate angle.)  We let $\gamma^i = (\gamma_1^i,\gamma_2^i)$ be the pair constructed using the same rules to construct $\gamma$ described above using the GFF $h_i$ in place of $h$.  In the following lemma, we will show that $\gamma = \gamma^i$ almost surely.  This is useful for the mixing argument because it tells us that the conditional law of $\eta_j$ given \emph{both} $\eta_i$ and $\gamma$ can be described in terms of a GFF flow line.  We will keep the proof rather brief because it is similar to some of the arguments in Section~\ref{sec::interior_flowlines}.

\begin{lemma}
\label{lem::gamma_one_gff}
Fix $i \in \{1,2\}$ and assume that the GFFs $h$ and $h_i$ described above have been coupled together so that $h = h_i$ on $\C \setminus (\eta_1 \cup \eta_2)$.  Then $\gamma^i = \gamma$ almost surely.  In particular, the conditional law of $\eta_1$ given $\eta_2$, $\gamma$ and the heights of $h_2$ along $\gamma$ is given by a flow line of a GFF on $\C \setminus (\eta_2 \cup \gamma_1 \cup \gamma_2)$ whose boundary data agrees with that of $h_2$ conditional on $\eta_2,\gamma$.  A symmetric statement holds when the roles of $\eta_1$ and $\eta_2$ are swapped.
\end{lemma}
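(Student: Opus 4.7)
My plan is to couple $h$ and $h_i$ so that they agree on $\C \setminus (\eta_1 \cup \eta_2)$ (with the independent uniform angle choices that launch $\gamma^i$ coupled to match those that launch $\gamma$), and then show pocket-by-pocket that $\gamma = \gamma^i$. For the coupling: since $\eta_j$ (with $j \neq i$) is, by the construction of $h_i$, the flow line of $h_i$ from $0$ with the prescribed angle, Theorem~\ref{thm::alphabeta} implies that $h_i$ conditioned on $(\eta_i,\eta_j)$ has the law of a GFF on $\C \setminus (\eta_1 \cup \eta_2)$ with $\alpha$-flow line boundary conditions of the correct angles on each of $\eta_1,\eta_2$. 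This is the same law as that of $h$ given $(\eta_1,\eta_2)$, so we may take the two distributions to be equal on $\C \setminus (\eta_1 \cup \eta_2)$.

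With this coupling in hand, I compare the two constructions segment by segment. Let $P_0$ be the pocket of $\C \setminus (\eta_1 \cup \eta_2)$ containing $z$. The first segment $\gamma_{j,1}$ of $\gamma$ is by definition the flow line of $h|_{P_0}$ from $z$ with the chosen angle. The corresponding initial portion of $\gamma_{j,1}^i$ is the flow line of $h_i$ (defined on $\C \setminus \eta_i$) from $z$ with the same angle, which, restricted to $P_0$, coincides with the flow line of $h_i|_{P_0}$ from $z$: indeed, $h_i|_{\C \setminus (\eta_1 \cup \eta_2)} = h|_{\C \setminus (\eta_1 \cup \eta_2)}$ by our coupling, and along $\partial P_0 \cap \eta_{j'}$ (for $j'\neq i$) the boundary data of $h_i$ is the $\alpha$-flow line conditions of the correct angle, the same as for $h$. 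By the uniqueness of flow lines (Theorem~\ref{thm::alphabeta}), the two agree inside $P_0$.

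The next step is to handle the transition across $\partial P_0$ at a crossing-height exit. When the common flow line exits $P_0$ through $\eta_{j'}$ at such a height, Theorem~\ref{thm::flow_line_interaction} tells us that, had the $h_i$-flow line been allowed to continue, it would emerge on the far side of $\eta_{j'}$ as a flow line with a specific angle; this angle is precisely the restart angle prescribed by the $\gamma$-construction, which was designed to implement exactly the crossing rule of Theorem~\ref{thm::flow_line_interaction}. Hence the next segments $\gamma_{j,2}$ and $\gamma_{j,2}^i$ start from the same point with the same angle and, by the same pocket comparison applied in the neighboring pocket, agree there. The same argument handles exits through $\eta_i$ (where the $h_i$-flow line must stop at its domain boundary and, per the construction, a fresh flow line of $h_i$ is launched in the adjacent pocket of $\C \setminus (\eta_1 \cup \eta_2)$, which still lies in $\C \setminus \eta_i$). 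By Lemma~\ref{lem::gamma_behavior} there are almost surely only finitely many crossings, after which both paths fill in the remaining pockets of $\C \setminus (\eta_1 \cup \eta_2)$ in the natural order, so finitely many iterations yield $\gamma = \gamma^i$ almost surely. The main subtlety in this step is the angle-matching at each crossing, which is resolved directly by Theorem~\ref{thm::flow_line_interaction}.

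For the ``in particular'' assertion with $i = 2$, the identification $\gamma = \gamma^2$ expresses $\gamma$ as an almost sure function of $(\eta_2,h_2)$ together with the independent uniform angle randomness. Conditioning on $\eta_2$, $\gamma$, and the heights of $h_2$ along $\gamma$, the Markov property for the GFF applied to the local set $\eta_2 \cup \gamma$ (cf.~Proposition~\ref{prop::cond_union_mean}) yields that $h_2$ restricted to $\C \setminus (\eta_2 \cup \gamma_1 \cup \gamma_2)$ is a GFF with $\alpha$-flow line boundary conditions on each of $\eta_2,\gamma_1,\gamma_2$ determined by the conditioned heights. By construction $\eta_1$ is the flow line of $h_2$ from $0$ with angle $0$, and this flow line is almost surely determined by $h_2$ (Theorem~\ref{thm::uniqueness}); it is therefore also the flow line, in the extended sense allowed by Theorem~\ref{thm::flow_line_interaction} (i.e.\ crossing $\gamma_1,\gamma_2$ at heights dictated by the conditioned boundary data), of this conditional GFF. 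The case $i = 1$ is symmetric.
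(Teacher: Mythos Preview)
Your argument is correct and follows essentially the same route as the paper's own proof: couple $h$ and $h_i$ to agree on $\C \setminus (\eta_1 \cup \eta_2)$, invoke flow-line uniqueness (Theorem~\ref{thm::uniqueness}/Theorem~\ref{thm::alphabeta} for the initial segment, boundary-emanating uniqueness thereafter) to match the paths pocket-by-pocket, and use Theorem~\ref{thm::flow_line_interaction} to see that the crossing/restart rules coincide. The paper's proof is deliberately brief (it notes the similarity to arguments in Section~\ref{sec::interior_flowlines}), so your more detailed write-up is a faithful expansion of the same idea.
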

\begin{proof}
That $\gamma_k^i$ agrees with $\gamma_k$ until its first crossing with one of $\eta_1,\eta_2$ follows from Theorem~\ref{thm::uniqueness}.  In particular, $\gamma_k^i$ almost surely does not cross at a self-intersection point of either of the $\eta_j$'s or a point in $\eta_1 \cap \eta_2$.  Whenever $\gamma_k^i$ crosses into a new pocket of $\C \setminus (\eta_1 \cup \eta_2)$, it satisfies the same coupling rules with the GFF, so that the paths continue to agree follows from the uniqueness theory for boundary emanating GFF flow lines \cite[Theorem~1.2]{MS_IMAG}.  The same is likewise true once $\gamma_k^i$ (resp.\ $\gamma_k$) starts to follow the pockets of $\C \setminus (\eta_1 \cup \eta_2)$ in order, which completes the proof of the lemma.
\end{proof}

\begin{figure}[ht!!]
\begin{center}
\includegraphics[scale=.85]{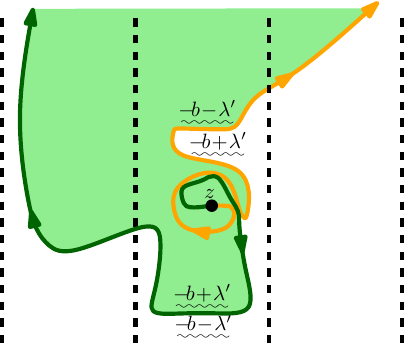}
\end{center}
\caption{\label{fig::lifted_pair_with_z_one_branch} A pair of opposite-going paths, as in Figure~\ref{fig::interior_pair_with_z}, lifted to the universal cover of $\C \setminus \{0\}$ and conformally mapped to $\C$ via the map $z \to i \log z$ (so that each copy of $\C$ maps to one vertical strip, with the origin mapping to the bottom of the strip and $\infty$ mapping to the top of the strip).  In this lifting, the paths cannot cross one another (though they may still touch one another as shown; a further lifting to the universal cover of the complement of $z$ would make the paths simple).  The region cut off from $-\infty$ by the pair of paths is shaded in light green.}
\end{figure}

\begin{figure}[ht!!]
\begin{center}
\includegraphics[scale=.85]{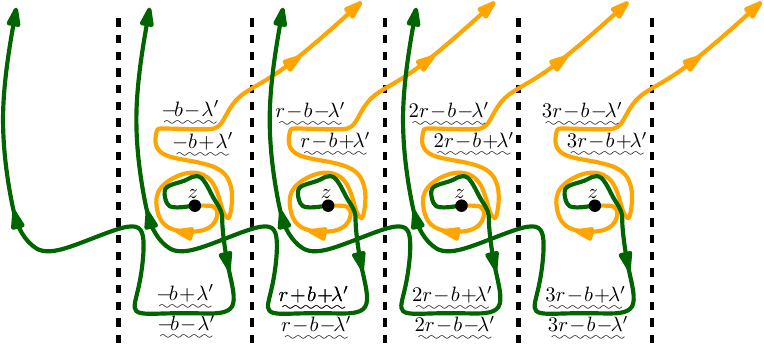}
\end{center}
\caption{\label{fig::lifted_pair_with_z}  This figure is the same as Figure~\ref{fig::lifted_pair_with_z_one_branch} except that all of possible the liftings of the path to the universal cover are shown.  We claim that the boundary of the complementary connected component of the paths which contains the origin (reached as an infinite limit in the down direction) can be expressed as a disjoint union consisting of at most one segment from each path.  Observe that if one follows the trajectory of a single (say green) path, once it crosses one of the green/yellow pairs, it never recrosses it.  We may consider the transformed image of $h$ (under the usual conformal coordinate transformation) to be a single-valued (generalized) function that increases by $r= 2\pi(\chi + \alpha)$ as one moves from one strip to the next.}
\end{figure}

\begin{figure}[ht!]
\begin{center}
\includegraphics[scale=.85]{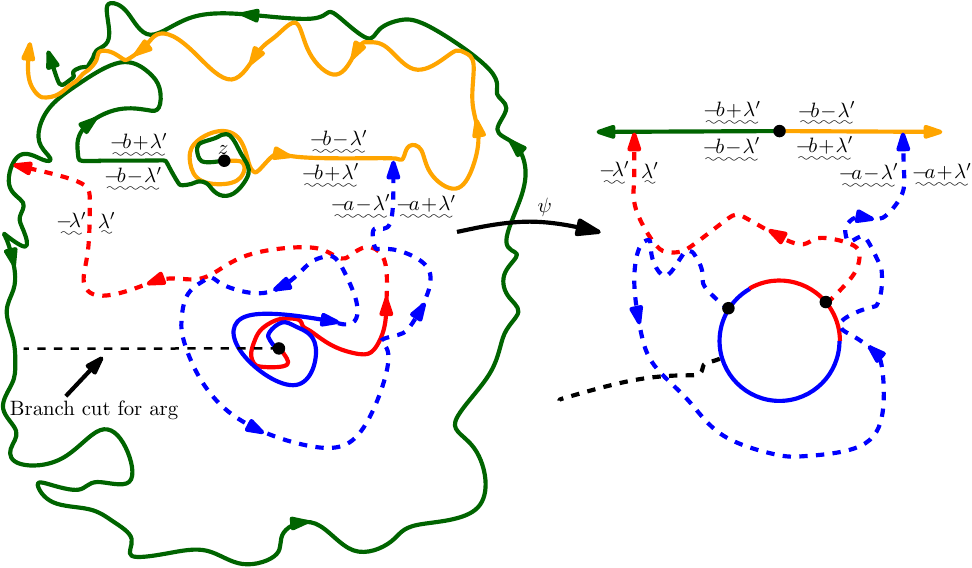}
\end{center}
\caption{\label{fig::interior_pair_with_z_mapped_over}
The left is the same as Figure~\ref{fig::interior_pair_with_z} except that the orange and green paths from $z$ (which we call $\gamma_1$ and $\gamma_2$) are continued to $\infty$.  The analysis of Figure~\ref{fig::lifted_pair_with_z} shows that $\C \setminus (\gamma_1 \cup \gamma_2)$ has a simply connected component containing $0$ and the boundary of this component has (one or) two arcs: a single side (left or right) of an arc of $\gamma_1$, and a single side of an arc of $\gamma_2$.   Take $|z|$ large and draw the solid red and blue curves up to some small stopping times before they hit the orange/green curves.  Let $\psi$ conformally map the annular region (the component of the complement of the four solid curves whose boundary intersects all four) to $-\h \setminus \ol D$, for some closed disk $\ol D$, in such a way that the orange and green boundary segments are mapped to complementary semi-infinite intervals of $\R$, both paths directed toward $\infty$.  In the figure shown, the dotted red (resp.\ blue) path may {\em cross} $\R$ where it intersects if and only if $|b| < \tfrac{\pi \chi}{2}$ (resp.\ $|a-b| < \tfrac{\pi \chi}{2}$).  If the dotted blue and red paths cannot cross upon hitting $\R$ as shown, they may wind around $\ol D$  one or more times (picking up multiples of $r = 2\pi(\chi + \alpha)$ as they go) before crossing.  A crossing after some number of windings is possible for the red (resp.\ blue) curve if and only if $-b+ r \Z$ (resp.\ $b- a+ r \Z$) contains a point in $(-\tfrac{\pi\chi}{2}, \tfrac{\pi \chi}{2})$.  Otherwise, the red (blue) curve reaches $\infty$ without crossing $\R$.}
\end{figure}

\begin{figure}[ht!]
\begin{center}
\includegraphics[scale=.85]{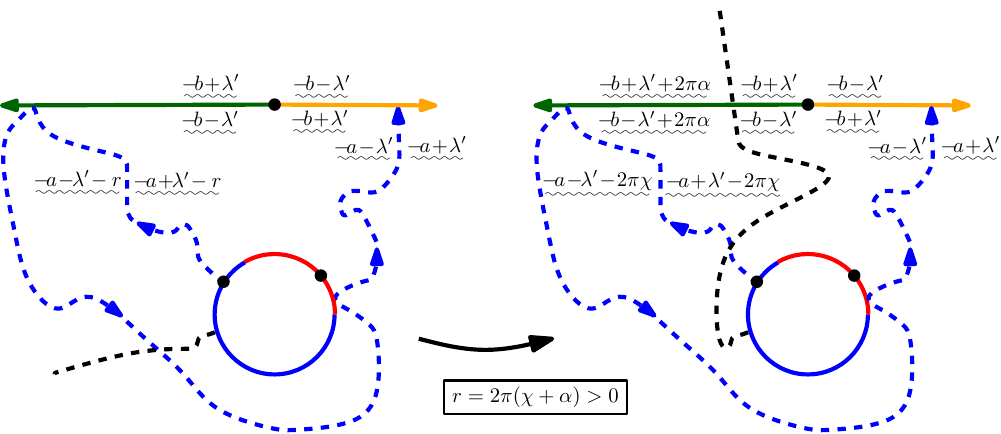}
\end{center}
\caption{\label{fig::interior_blue_reaching_endpoint}  The left figure is the same as Figure~\ref{fig::interior_pair_with_z_mapped_over}, but in the right figure we change the position of the branch cut for the argument (adjusting the heights accordingly so that the paths remain the same).  We see that the blue path accumulates at $\infty$ with positive probability (without merging into or crossing the green and orange lines, and without crossing the branch cut on the right an additional time) when $b-a > \tfrac{\pi}{2}\chi$ and $b-a-r < - \tfrac{\pi}{2}\chi$.  Thus, this can happen with positive probability after some number of windings if and only if $(b-a) + r \Z$ fails to intersect $[-\tfrac{\pi}{2}\chi, \tfrac{\pi}{2}\chi]$.  In fact, if $(b-a) + r \Z$ fails to intersect $[-\tfrac{\pi}{2}\chi, \tfrac{\pi}{2}\chi]$, then the path cannot cross/merge after any number of windings, so it must almost surely accumulate at $\infty$.  Conversely, if $(b-a)+r \Z$ {\em does} intersect $[-\tfrac{\pi}{2}\chi, \tfrac{\pi}{2}\chi]$, then the blue path almost surely merges into or crosses $\R$ after some (not necessarily deterministic) number of windings around $D$.}
\end{figure}

\begin{lemma}
\label{lem::gamma_component_containing_zero}
Let $U$ be the connected component of $\C \setminus (\gamma_1 \cup \gamma_2)$ which contains~$0$.  Then
\begin{enumerate}[(i)]
\item $U$ can be expressed as a (possibly degenerate) disjoint union of one segment of $\gamma_1$ and one segment of $\gamma_2$; see Figure~\ref{fig::interior_pair_with_z_mapped_over}.
\item Almost surely, $\dist(\partial U,0) \to \infty$ as $|z| \to \infty$ (where $z$ is the starting point of $\gamma_1,\gamma_2$).
\end{enumerate}
\end{lemma}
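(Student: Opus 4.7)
The plan is to establish (i) by a topological analysis of the path pair lifted to the universal cover of $\C \setminus \{0\}$, along the lines already sketched in Figures~\ref{fig::lifted_pair_with_z_one_branch} and \ref{fig::lifted_pair_with_z}, and then to establish (ii) by a scale-invariance plus absolute-continuity argument that reduces the problem to a capacity estimate on flow lines started near a distant reference point.

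For part (i), my first task is to record the non-crossing and non-self-crossing properties of the $\gamma_i$. The path $\gamma_i$ is by construction a concatenation of GFF flow lines in successive pockets of $\C \setminus (\eta_1 \cup \eta_2)$, where the angle of each segment is determined by the height difference recorded upon each crossing, so Theorem~\ref{thm::flow_line_interaction} applies in each pocket. Because $\gamma_1$ and $\gamma_2$ start from the common point $z$ with angle difference $\pi$, the relevant height difference upon intersection is $\pi \chi$ (modulo multiples of $2\pi(\chi+\alpha)$), which Theorem~\ref{thm::flow_line_interaction} places outside the crossing range, so $\gamma_1$ and $\gamma_2$ cannot cross one another. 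A similar analysis using Theorem~\ref{thm::merge_cross} shows that neither $\gamma_i$ can cross itself. Next I lift $\C \setminus \{0\}$ to its universal cover via $w \mapsto i \log w$; in this picture each copy of $\C$ becomes a vertical strip, the origin is realized as the horizontal line at $-\infty$, and $\infty$ is realized as the horizontal line at $+\infty$. The liftings of the $\gamma_i$ are now simple and pairwise non-crossing in each strip. The key combinatorial observation is that once a lift of $\gamma_1$ passes from one side of a lift of $\gamma_2$ to the other it never returns, since a return would produce a second crossing of the original pair in $\C$, contradicting what was just established. Therefore the set $\C \setminus (\gamma_1 \cup \gamma_2)$ has a unique connected component $U$ whose lift is unbounded below in every strip, and the boundary of this component is, as claimed, a disjoint union of (at most) one arc of $\gamma_1$ and one arc of $\gamma_2$.

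For part (ii), the natural approach is to prove that for every fixed $R>0$,
\[
\p\big[ (\gamma_1 \cup \gamma_2) \cap \ol{B(0,R)} \neq \emptyset \big] \longrightarrow 0 \quad \text{as } |z| \to \infty.
\]
By Lemma~\ref{lem::gamma_one_gff}, once we condition on $\eta_1$ (say), each $\gamma_i$ is built from flow lines of a GFF on $\C \setminus \eta_1$ with $\alpha$-flow line boundary conditions on $\eta_1$, and the local laws of these flow lines away from $\eta_1$ are mutually absolutely continuous with respect to those of flow lines of an ambient whole-plane GFF with a conical singularity at $0$ (Proposition~\ref{prop::local_set_whole_plane_bounded_compare}). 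It therefore suffices to bound, for a genuine whole-plane flow line $\wt\gamma$ started at $z$ with a given angle, the probability that $\wt\gamma$ enters $\ol{B(0,R)}$. By scale invariance, rescaling by $|z|^{-1}$ reduces this to the probability that a flow line of (a conformally rescaled version of) $h_{\alpha\beta}$ started at a unit-modulus point enters $B(0, R/|z|)$, which tends to $0$ as $|z| \to \infty$ since the path almost surely avoids the single point $0$ by Theorem~\ref{thm::alphabeta} and Lemma~\ref{lem::sle_kappa_rho_boundary_intersection} (applied after mapping a neighborhood of $0$ to a half-plane neighborhood of a boundary point). A Beurling-type harmonic measure estimate, together with the continuity of the flow line's law in its starting point, then gives a quantitative decay and completes the proof.

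The main obstacle is actually in part (i): the construction of $\gamma_i$ allows it to rewind arbitrarily many times around $0$ before crossing a given path, and the height bookkeeping across successive windings needs to be tracked carefully to rule out pathological configurations in which, say, a lift of $\gamma_1$ re-enters the region cut off by a lift of $\gamma_2$ after a large number of turns. Handling this cleanly will require combining the interaction rules from Theorem~\ref{thm::flow_line_interaction} with the cyclic monodromy analysis illustrated in Figure~\ref{fig::interior_pair_with_z_mapped_over} and Figure~\ref{fig::interior_blue_reaching_endpoint}, keeping in mind that the relevant ``crossing gap'' $r = 2\pi(\chi+\alpha)$ may or may not intersect the allowed crossing interval $(-\tfrac{\pi}{2}\chi, \tfrac{\pi}{2}\chi)$. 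Part (ii) is likely routine once part (i) is in place, since it reduces to a standard estimate on whole-plane flow lines near a conical singularity.
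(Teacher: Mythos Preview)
Your treatment of part~(i) is essentially the same as the paper's: both invoke Lemma~\ref{lem::gamma_behavior} together with the universal-cover picture of Figures~\ref{fig::lifted_pair_with_z_one_branch} and~\ref{fig::lifted_pair_with_z}. One caveat: you assert that $\gamma_1$ and $\gamma_2$ cannot cross by citing a height difference of $\pi\chi$ modulo $2\pi(\chi+\alpha)$, but the conical singularity sits at $0$, not at the starting point $z$, so the monodromy bookkeeping you describe is not quite the right mechanism. The non-crossing is more directly a consequence of the flow line interaction rules applied pocket by pocket, as encoded in Lemma~\ref{lem::gamma_behavior}.

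Part~(ii) has a genuine gap, and contrary to your closing remark it is not the routine half of the lemma. Your argument relies on two ingredients that are unavailable here. First, you invoke scale invariance to rescale by $|z|^{-1}$, but at this point in the argument the pair $(\eta_1,\eta_2)$ is an \emph{arbitrary} law satisfying~(i)--(iii) of Theorem~\ref{thm::unique_law_for_two_whole_plane_paths}; nothing yet tells you it is one of the scale-invariant $\mu_{\alpha\beta}$, so no rescaling is permitted. Second, you propose to compare $\gamma_i$ via absolute continuity to a whole-plane flow line $\wt\gamma$ started at $z$, and then bound the probability that $\wt\gamma$ enters $B(0,R)$. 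This comparison breaks down because the relevant behaviour of $\gamma_i$ is precisely its interaction with $\eta_1$ and $\eta_2$: after a bounded number of crossings $\gamma_i$ begins to traverse the pockets of $\C\setminus(\eta_1\cup\eta_2)$ in order, and these pockets stretch from near $z$ all the way back towards $0$ along $\eta_1\cup\eta_2$. There is no local absolute-continuity statement that controls this global pocket-following behaviour. The paper's argument is quite different: it conditions on $\eta_1$, distinguishes the case where $\eta_1$ has self-intersections of arbitrarily large modulus (so its pockets near $z$ are far from $0$ by transience) from the case where it eventually becomes simple (handled by conformally mapping the unbounded component to $\h$ and introducing an auxiliary fan of boundary-emanating flow lines, as in \cite[Proposition~7.33]{MS_IMAG}), and in both cases uses the finite-crossing property of Lemma~\ref{lem::gamma_behavior} to trap $\gamma$ in a union of pockets whose distance to $0$ tends to infinity with $|z|$.
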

\begin{proof}
The first assertion follows from Lemma~\ref{lem::gamma_behavior} as well as the argument described in Figure~\ref{fig::lifted_pair_with_z_one_branch} and Figure~\ref{fig::lifted_pair_with_z}.  To see the second assertion of the lemma, we first condition on $\eta_1$ and then consider two possibilities.  Either:
\begin{enumerate}
\item The range of $\eta_1$ contains self-intersection points with arbitrarily large modulus.
\item The range of $\eta_1$ does not contain self-intersection points with arbitrarily large modulus.
\end{enumerate}

In the former case, the transience of $\eta_1$ implies that for every $r > 0$ there exists $R > r$ such that if $|z| \geq R$ then the distance of the connected component $P_z$ of $\C \setminus \eta_1$ containing $z$ to $0$ is at least $r$.  By increasing $R > 0$ if necessary the same is also true for all of the pockets of $\C \setminus \eta_1$ which come after $P_z$ in the order given by the order in which $\eta_1$ traces part of the boundary of such a pocket.  Say that two pockets $P,Q$ of $\C \setminus \eta_1$ are adjacent if the intersection of their boundaries contains the image under $\eta_1$ of a non-trivial interval.  Fix $k \in \N$ and let $P_z^k$ denote the union of the pockets of $\C \setminus \eta_1$ that can be reached from $P_z$ by jumping to adjacent pockets at most $k$ times.  By the same argument, there exists $R > r$ such that if $|z| > R$ then $\dist(P_z^k,0) \geq r$.  The same likewise holds for the pockets which come after those which make up $P_z^k$.  Combining this with the first assertion of the lemma implies the second assertion in this case.

The argument for the latter case is similar to the proof of \cite[Proposition~7.33]{MS_IMAG}.  We let $V$ be the unbounded connected component of $\C \setminus \eta_1$ and $\psi \colon V \to \h$ be a conformal map which fixes $\infty$ and sends the final self-intersection point of $\eta_1$ to $0$.  We let $\wt{h}_1 = h_1 \circ \psi^{-1} - \chi \arg(\psi^{-1})'$ where $h_1$ is the GFF on $\C \setminus \eta_1$ used to define $\eta_2$ and $\gamma = \gamma^1$ as in Lemma~\ref{lem::gamma_behavior}.  We note that the boundary conditions of $\wt{h}_1$ are piecewise constant, changing only once at $0$.  Let $\wt{\gamma} = \psi(\gamma)$.  Then it suffices to show that the diameter of the connected component $\wt{U}$ of $\h \setminus \wt{\gamma}$ which contains $0$ becomes unbounded as $\wt{z} = \psi(z)$ tends to $\infty$ in $\h$.  To see this, we let $\wt{\eta}_1,\ldots,\wt{\eta}_{k_0}$ be flow lines of $\wt{h}$ starting from $0$ with equally spaced angles such that $\wt{\eta}_j$ almost surely intersects both $\wt{\eta}_{j-1}$ and $\wt{\eta}_{j+1}$ for each $1 \leq j \leq k_0$.  Here, we take $\wt{\eta}_0 = (-\infty,0)$ and $\wt{\eta}_{k_0+1} = (0,\infty)$.  By Lemma~\ref{lem::gamma_behavior} there exists $m_0$ such that $\wt{\gamma}$ can cross each of the $\wt{\eta}_j$ at most $m_0$ times.  Let $n_0 = k_0 m_0$.

Say that two connected components $\wt{P}, \wt{Q}$ of $\h \setminus \cup_{j=1}^{k_0} \wt{\eta}_j$ are adjacent if the intersection of the boundaries of $\psi^{-1}(\wt{P})$ and $\psi^{-1}(\wt{Q})$ contains the image of a non-trivial interval of $\psi^{-1}(\wt{\eta}_j)$ for some $0 \leq j \leq k+1$.  We also say that $\wt{Q}$ comes after $\wt{P}$ if there exists $0 \leq j \leq k_0$ such that both $\wt{P}$ and $\wt{Q}$ lie between $\wt{\eta}_j$ and $\wt{\eta}_{j+1}$ and the boundary of $\wt{Q}$ is traced by $\wt{\eta}_j$ after it traces the boundary of $\wt{P}$.  Let $\wt{P}_{\wt{z}}$ be the connected component of $\h \setminus \cup_{j=1}^{k_0} \wt{\eta}_j$ which contains $\wt{z}$ and let $\wt{\CP}_{\wt{z}}$ be the closure of the union of the connected components which can be reached in at most $n_0$ steps starting from $\wt{P}_{\wt{z}}$ or comes after such a component.  By Lemma~\ref{lem::gamma_behavior}, $\wt{\gamma} \subseteq \wt{\CP}_{\wt{z}}$, so it suffices to show that
\begin{equation}
\label{eqn::pockets_to_infinity}
\dist(\wt{\CP}_{\wt{z}},0) \to \infty \quad\text{as}\quad|\wt{z}| \to \infty\quad\text{almost surely}.
\end{equation}
Since each $\wt{\eta}_j$ is almost surely transient as a chordal $\SLE_\kappa(\rho^L;\rho^R)$ process in $\h$ from $0$ to $\infty$ with $\rho^L,\rho^R \in (-2,\tfrac{\kappa}{2}-2)$ (where the weights depend on $j$) and almost surely has intersections with both of its neighbors with arbitrarily large modulus, it follows that \begin{equation}
\dist(\wt{P}_{\wt{z}},0) \to \infty\quad\text{as}\quad |\wt{z}| \to \infty\quad\text{almost surely.}
\end{equation}
The same is likewise true for all of the pockets which can be reached from $\wt{P}_{\wt{z}}$ in at most $n_0$ steps as well as for the pockets which come after these.  This proves~\eqref{eqn::pockets_to_infinity}, hence the second assertion of the lemma.
\end{proof}

By Lemma~\ref{lem::gamma_one_gff}, we know how to resample $\eta_1$ given $(\gamma,\eta_2)$.  Similarly, we know how to resample $\eta_2$ given $(\gamma,\eta_1)$.  Indeed, in each case $\eta_i$ is given by a flow line of a GFF on $\C \setminus (\gamma_1 \cup \gamma_2 \cup \eta_j)$ for $j \neq i$.  We will now argue that given $\gamma$ as well as initial segments of $\eta_1$ and $\eta_2$, the conditional law of $\eta_1,\eta_2$ until crossing $\gamma$ is uniquely determined by the resampling property and can be described in terms of GFF flow lines (see Figure~\ref{fig::interior_pair_with_z_mapped_over}).

\begin{center}
\begin{tabular} {|l|l|}
\hline
{\bf Range of values for $b-a$} & {\bf Behavior of blue path in Figure~\ref{fig::interior_pair_with_z_mapped_over}} \\ & {\bf \hspace{.2in} if it hits $\R$ as shown} \\
\hline
 $b-a \leq -\tfrac{\pi}{2}\chi - 2\lambda'$ & Cannot hit $\R$ (without going around the disk). \\
\hline
 $b-a \in (- \tfrac{\pi}{2}\chi-2\lambda', -\tfrac{\pi}{2}\chi)$ & Can hit green only, reflects left afterward. \\
\hline
 $b-a = -\tfrac{\pi}{2}\chi$ & Can hit green only, merges with green. \\
\hline
 $b-a \in (-\tfrac{\pi}{2}\chi, \tfrac{\pi}{2}\chi)$ & Can hit either color, crosses $\R$ afterward.\\
\hline
$b-a = \tfrac{\pi}{2}\chi$ & Can hit orange only, merges with orange. \\
\hline
$b-a \in (\tfrac{\pi}{2}\chi, \tfrac{\pi}{2}\chi+2\lambda')$ & Can hit orange only, reflects right. \\
\hline
$b-a \geq \tfrac{\pi}{2}\chi+2\lambda'$ & Cannot hit $\R$ (without going around the disk). \\
\hline
\end{tabular}
\captionof{table}{\label{tab::paths_reach_zero}}
\end{center}

\begin{lemma}
\label{lem::conditional_law_given_gamma_and_stubs}
Suppose that $\tau_i$ for $i=1,2$ is an almost surely positive and finite stopping time for $\eta_i$.  Let $E$ be the event that $\cup_{i=1}^2 \eta_i([0,\tau_i])$ is contained in the connected component $U$ of $\C \setminus (\gamma_1 \cup \gamma_2)$ which contains $0$.  On $E$, let $A$ be the connected component of $U \setminus \cup_{i=1}^2 \eta_i([0,\tau_i])$ whose boundary intersects $\gamma$.  Then the conditional law of $\eta_1,\eta_2$ stopped upon exiting $\ol{U}$ given $E$, $A$, $h|_{\partial A}$ (where $h$ is the GFF on $\C \setminus (\eta_1 \cup \eta_2)$ used to generated $\gamma$) is that of a pair of flow lines of a GFF on $A$ whose boundary behavior agrees with that of $h|_{\partial A}$ and with angles as implied by the resampling property for $\eta_1$ and $\eta_2$.
\end{lemma}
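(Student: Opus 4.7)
The plan is to reduce the statement to the bi-chordal mixing/uniqueness result established in \cite[Section~4]{MS_IMAG2}, using Lemma~\ref{lem::gamma_one_gff} and Lemma~\ref{lem::gamma_component_containing_zero} to put ourselves in the appropriate simply connected bi-chordal setting, and then invoking the resampling property \eqref{unique::prop::conditional} of Theorem~\ref{thm::unique_law_for_two_whole_plane_paths} to describe the conditional laws of one path given the other inside $A$.

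First, by Lemma~\ref{lem::gamma_component_containing_zero}, $U$ is a simply connected domain whose boundary consists of at most one arc of $\gamma_1$ and at most one arc of $\gamma_2$; therefore the complementary component $A$ described in the statement (the one whose boundary meets $\gamma$) is also simply connected, and its boundary is a concatenation of a segment of $\eta_1|_{[0,\tau_1]}$, a segment of $\eta_2|_{[0,\tau_2]}$, and two arcs of $\gamma$ (one of each path), joined at the tips $\eta_i(\tau_i)$ and at the two crossing/merging points of $\eta_1,\eta_2$ with $\gamma$ on $\partial U$.  I would then apply a conformal map $\varphi$ taking $A$ to the upper half-plane $\h$, sending $\eta_1(\tau_1)\mapsto -1$ and $\eta_2(\tau_2)\mapsto 1$, so that the images of the continuations of $\eta_1$ and $\eta_2$ in $A$ are a pair of chordal paths in $\h$ from $\pm 1$ to some pair of common target points on $\R$ (the images of the two intersections with $\gamma$ on $\partial U$).

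Next, I would use Lemma~\ref{lem::gamma_one_gff} to identify the two conditional laws.  Conditioning on $\eta_2$, $\gamma$, and the heights of $h_2$ along $\gamma$ (equivalently, conditioning on $h|_{\partial A}$ together with $\eta_2|_{[0,\tau_2]}$), Lemma~\ref{lem::gamma_one_gff} identifies the continuation of $\eta_1$ inside $A$ with a flow line of a GFF on $A\setminus\eta_2|_{[0,\tau_2]}$ with the prescribed boundary values; combined with the resampling property \eqref{unique::prop::conditional}, this implies (after conformally mapping via $\varphi$ and applying \eqref{eqn::ac_eq_rel}) that the conditional law of $\varphi(\eta_1)$ given $\varphi(\eta_2)$ is a chordal $\SLE_\kappa(\rho_1;\rho_2)$ process with the weights in \eqref{eqn::conditional_rho_values}, run until it exits $\h$ at the appropriate target point.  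The symmetric statement with $\eta_1$ and $\eta_2$ interchanged follows in the same way.  The two weights $\rho_1,\rho_2$ in each direction correspond precisely to the angle gap $\theta$ (respectively $2\pi(1+\alpha/\chi)-\theta$) between the two paths, and they sum to $\kappa-4$; this is the matching condition needed to apply the bi-chordal characterization.

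Finally, the pair $(\varphi(\eta_1),\varphi(\eta_2))$ in $\h$ has been shown to satisfy exactly the resampling property used in \cite[Section~4]{MS_IMAG2}: given one path the other is a chordal $\SLE_\kappa(\rho_1;\rho_2)$ process of the indicated type.  I would then appeal to the bi-chordal uniqueness theorem of \cite[Section~4]{MS_IMAG2}, which asserts that any probability measure on such pairs obeying both resampling properties (together with the mild non-crossing/continuity hypotheses built into Theorem~\ref{thm::unique_law_for_two_whole_plane_paths}\eqref{unique::prop::lifting}--\eqref{unique::prop::transience}) is uniquely determined.  Since the pair of GFF flow lines from the endpoints of the $\eta_i$-segments in $A$, with the boundary data specified by $h|_{\partial A}$, is one law satisfying these resampling properties (by the boundary-emanating theory of \cite{MS_IMAG} applied inside the simply connected domain $A$), uniqueness forces the conditional law to agree with it.  The main obstacle is verifying that the argument of \cite[Section~4]{MS_IMAG2} carries over verbatim in the present annular setting; this is precisely where the topological input of Section~\ref{subsec::untangling} (Lemma~\ref{lem::gmconnected}) is needed, to show that the Markov chain obtained by alternately resampling $\eta_1$ and $\eta_2$ is irreducible on the appropriate configuration space, so that the coupling-from-the-past argument driving the uniqueness proof in \cite[Section~4]{MS_IMAG2} continues to apply.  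Once this is in hand, the lemma follows.
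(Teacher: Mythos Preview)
Your proposal contains a topological error in the first paragraph that undermines the reduction you attempt. You assert that $A$ is simply connected and describe $\partial A$ as a single Jordan curve formed by concatenating segments of the $\eta_i$-stubs with arcs of $\gamma$, ``joined at \ldots\ the two crossing/merging points of $\eta_1,\eta_2$ with $\gamma$ on $\partial U$.'' But on the event $E$ the stubs $\eta_i([0,\tau_i])$ lie in the \emph{interior} of $U$ and do not meet $\gamma$ at all. Removing a compact connected interior set from the simply connected domain $U$ yields a doubly connected (annular) region: $\partial A$ has two components, the outer boundary $\partial U$ (arcs of $\gamma_1,\gamma_2$) and the inner boundary given by both sides of the stubs. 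There is therefore no conformal map $A\to\h$, and the direct appeal to the simply connected bi-chordal uniqueness of \cite[Section~4]{MS_IMAG2} is not available. (You seem to sense this in your final paragraph, where you switch to speaking of ``the present annular setting,'' but that is inconsistent with the map $\varphi\colon A\to\h$ you set up earlier.)

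The paper works directly in the annulus. One introduces a GFF $\wh h$ on $A$ with the boundary data dictated by $h|_{\partial A}$, lets $\wh\eta=(\wh\eta_1,\wh\eta_2)$ be its flow line pair from the tips $\eta_i(\tau_i)$, and notes that both $(\eta_1,\eta_2)|_A$ and $\wh\eta$ are stationary for the same resampling Markov chain on pairs of non-crossing paths from the inner to the outer boundary of $A$. An ergodic decomposition (via \cite[Theorem~14.10]{GEO_GIBBS}) reduces the problem to showing there is a unique ergodic stationary measure supported on the correct homotopy class; the boundary data of $\wh h$ pins down the net winding around the inner boundary, so $\wh\eta$ and $\eta$ are automatically homotopic. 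To couple two candidate ergodic measures, the paper first deals with the possibility that both strands exit $\ol U$ at the common closing point $y_0$ of the pocket by drawing a short counterflow line segment from $y_0$ to separate the exit points --- a step absent from your outline. Only then does Lemma~\ref{lem::gmconnected} (connectedness of the $m$-tangle graph in an annulus), combined with Lemma~\ref{lem::path_close} and Lemma~\ref{lem::path_close_hit}, allow one to drive two configurations to nearby $1$-tangles with positive probability, after which the absolute-continuity coupling of \cite[Lemma~4.2]{MS_IMAG2} yields $\vartheta=\wt\vartheta$ with positive probability and hence equality of the ergodic measures.
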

\begin{proof}
The proof is similar to that of \cite[Theorem~4.1]{MS_IMAG2}.
Suppose that $\wh{h}$ is a GFF on $A$ whose boundary conditions are as described in the statement of the lemma (given $E$) and let $\wh{\eta} = (\wh{\eta}_1,\wh{\eta}_2)$ be the flow lines of $\wh{h}$ starting from $\eta_i(\tau_i)$ with the same angles as (are implied for) $\eta_1,\eta_2$.  Then for $i,j=1,2$ and $j \neq i$, we know that the conditional law of $\wh{\eta}_i$ given $(\wh{\eta}_j,\gamma)$ and $\wh{h}|_{\partial A}$ for $j \neq i$ is the same as that of $\eta_i$ given $(\eta_j,\gamma)$ and $h|_{\partial A}$.  Moreover, $\wh{\eta}$ is homotopic to $\eta$ since the boundary conditions for $\wh{h}$ force the net winding of $\wh{\eta}_1,\wh{\eta}_2$ around the inner boundary of $A$ to be the same as as that of $\eta_1,\eta_2$ (where both pairs are stopped upon exiting $\ol{U}$).

The resampling property for $(\wh{\eta}_1,\wh{\eta}_2)$ implies that it is a stationary distribution for the following Markov chain. Its state space consists of pairs of continuous, non-crossing  paths $(\vartheta_1,\vartheta_2)$ in $A$ where $\vartheta_i$ connects $\eta_i(\tau_i)$ to $\partial U$ for $i=1,2$.  The transition kernel is given by first picking $i \in \{1,2\}$ uniformly and then resampling $\vartheta_i$ by:
\begin{enumerate}
\item Picking a GFF on $A \setminus \vartheta_j$ for $j \in \{1,2\}$ distinct from $i$ with boundary data agrees with $\wh{h}|_{\partial A}$ and has $\alpha$-flow line boundary conditions with the same angle as (is implied for) $\eta_j$ along $\vartheta_j$.
\item Taking the flow line starting from $\eta_i(\tau_i)$ with the same angle as (is implied for) $\eta_i$ stopped upon first exiting $\ol{U}$.
\end{enumerate}

As explained in \cite[Section~4]{MS_IMAG2}, any such ergodic measure $\nu$ which arises in the ergodic decomposition of either the law of $\eta$ or $\wh{\eta}$ must be supported on path pairs which are:
\begin{enumerate}
\item homotopic to $(\eta_1,\eta_2)$ in $A$
\item there exists $m < \infty$ (possibly random) such that the number of times a path hits any point in $\ol{A}$ is almost surely at most $m$.
\end{enumerate}
Indeed, as we mentioned earlier, $\wh{\eta}$ almost surely satisfies the first property due to the boundary data of $\wh{h}$.  The second property is satisfied for $\eta$ by transience and continuity.  The discussion in Section~\ref{subsec::critical_angle} implies that $\wh{\eta}$ also satisfies the second property.  To complete the proof it suffices to show that there is only one such ergodic measure.  Suppose that $\nu,\wt{\nu}$ are such ergodic measures and that $\vartheta = (\vartheta_1,\vartheta_2)$ (resp.\ $\wt{\vartheta} = (\wt{\vartheta}_1,\wt{\vartheta}_2)$) is distributed according to $\nu$ (resp.\ $\wt{\nu}$).  Then it suffices to show that we can construct a coupling $(\vartheta,\wt{\vartheta})$ such that $\p[\vartheta = \wt{\vartheta}] > 0$ since this implies that $\nu$ and $\wt{\nu}$ are not be mutually singular, hence equal by ergodicity.

As explained in Figure~\ref{fig::interior_pair_with_z_mapped_over} and Figure~\ref{fig::interior_blue_reaching_endpoint} as well as in Table~\ref{tab::paths_reach_zero}, it might be that the strands of $\vartheta$ (resp.\ $\wt{\vartheta}$) exit $\ol{U}$ the same point or distinct points, depending on the boundary data along $\gamma$.  Moreover, in the former case the strands exit at the point $y_0$ on $\partial U$ which is last drawn by the strands of $\gamma$ (the ``closing point'' of the pocket; in the right side of Figure~\ref{fig::interior_pair_with_z_mapped_over} this point corresponds to $\infty$ in $-\h$).  Thus by possibly drawing a segment of a counterflow line starting from $y_0$, we may assume without loss of generality that we are in the latter setting.  Indeed, this is similar to the trick used in \cite[Section~4]{MS_IMAG2}.

Recall that we can describe the conditional law of $\vartheta_i$ given $(\gamma,\vartheta_j)$ (and the boundary heights) in $A$ in terms of a flow line of a GFF on $A \setminus \vartheta_j$ and the same is likewise true with $\wt{\vartheta}_1,\wt{\vartheta}_2$ in place of $\vartheta_1,\vartheta_2$.  Thus by Lemma~\ref{lem::gmconnected}, Lemma~\ref{lem::path_close}, and Lemma~\ref{lem::path_close_hit}, it follows that we can couple $\vartheta$ and $\wt{\vartheta}$ together such that there exists a positive probability event $F$ on which each is a $(k,\ell,1)$-tangle in $A$ and $\wt{\vartheta}_i$ is much closer to $\vartheta_i$ for $i=1,2$ than $\vartheta_i$ is to $\vartheta_j$, $j \neq i$.    Thus by working on $F$ and first resampling $\vartheta_1,\wt{\vartheta}_1$ given $\vartheta_2,\wt{\vartheta}_2$, absolute continuity for the GFF implies that we can recouple the paths together so that $\vartheta_1 = \wt{\vartheta}_1$ with positive probability (see \cite[Lemma~4.2]{MS_IMAG2}).  On this event, the resampling property for $\vartheta_2$ (resp.\ $\wt{\vartheta}_2$) given $\vartheta_1$ (resp.\ $\wt{\vartheta}_1$) implies that we can couple the laws together so that $\wt{\vartheta} = \vartheta$ with positive conditional probability.  This proves the existence of the desired coupling, which completes the proof.
\end{proof}

We can now complete the proof of Theorem~\ref{thm::unique_law_for_two_whole_plane_paths}.

\begin{proof}[Proof of Theorem~\ref{thm::unique_law_for_two_whole_plane_paths}]
Fix $\epsilon > 0$ and let $\tau_i^\epsilon$ for $i=1,2$ be the first time that $\eta_i$ hits $\partial B(0,\epsilon)$.  Fix $R > 0$ very large and $z \in \C$ with $|z| \geq R$ sufficiently large so that (by Lemma~\ref{lem::gamma_component_containing_zero}) it is unlikely that the connected component $U$ of $\C \setminus (\gamma_1 \cup \gamma_2)$ containing $0$ intersects $B(0,R)$.  Let $h$ be the GFF on $\C \setminus (\eta_1 \cup \eta_2)$ used to generate $\gamma$.  Let $A$ and $E$ be as in Lemma~\ref{lem::conditional_law_given_gamma_and_stubs} where we take the stopping times for $\eta_i$ as above.  By Lemma~\ref{lem::conditional_law_given_gamma_and_stubs}, we know that the conditional law of $\eta_1,\eta_2$ given $\eta_i|_{[0,\tau_i^\epsilon]}$ for $i=1,2$, $\gamma$, $h|_{\partial A}$, and $E$ is described in terms of a pair of flow lines of a GFF $\wt{h}$ on $A$.  Let $\psi$ be a conformal transformation which takes $A$ to an annulus and let $\wh{h} = \wt{h} \circ \psi^{-1} - \chi \arg (\psi^{-1})'$.  Then we can write $\wh{h} = \wh{h}_0 - \alpha \arg(\cdot) + \wh{f}_0$ where $\wh{h}_0$ is a zero-boundary GFF and $\wh{f}_0$ is a harmonic function.  The boundary conditions for $\wh{f}_0$ are given by $a_\epsilon$ (resp.\ $b_R$) on the inner (resp.\ outer) boundary of the annulus, up to a bounded additive error which does not depend on $\epsilon > 0$ or $R > 0$.  (The error comes from $\chi$ times the winding of the two annulus boundaries, additive terms of $\pm \lambda'$ depending on whether the boundary segment is the image of the left or the right side of one of the $\eta_i$ or $\gamma_i$, and finally from the angles of the different segments.)  The value of $\alpha$ is determined by the resampling property for $\eta_1,\eta_2$.  In particular, away from the annulus boundary it is clear that $\wh{f}_0$ is well-approximated by an affine transformation of the $\log$ function.  Indeed, this follows because $\wh{f}_0$ is well-approximated by the function which is harmonic in the annulus with boundary values on the annulus boundaries given by the corresponding average of $\wh{f}_0$ and the functions which are harmonic in an annulus and take on a constant value on the inner and outer annulus boundaries (i.e., radially symmetric) are exactly the affine transformations of the $\log$ function.  Thus by sending $R \to \infty$ and $\epsilon \to 0$, we see that $\wh{f}_0$ converges to a multiple of the $\log$ function (modulo additive constant).  The measure $\nu$ in the statement of the theorem is exactly given by the law of the multiple of the $\log$ function.
\end{proof}

\subsection{Proof of Theorem~\ref{thm::whole_plane_reversibility}}
\label{subsec::whole_plane_reversibility_proof}
Fix $\kappa \in (0,4)$, $\alpha > -\chi$, and let $\rho = 2-\kappa+2\pi\alpha / \lambda$.  By adjusting the value of $\alpha$, we note that $\rho$ can take on any value in $(-2,\infty)$.
Let $h$ be a whole-plane GFF and $h_\alpha = h-\alpha \arg(\cdot)$, viewed as a distribution defined up to a global multiple of $2\pi(\chi+\alpha)$. By Theorem~\ref{thm::alphabeta}, the flow line $\eta$ of $h_\alpha$ starting from $0$ with zero angle is a whole-plane $\SLE_\kappa(\rho)$ process.  Let $\eta_1 = \eta$ and let $\eta_2$ be the flow line of $h_\alpha$ starting from $0$ with angle $\theta = \pi(1 + \tfrac{\alpha}{\chi})$.  Note that this choice of $\theta$ lies exactly in the middle of the available range.  For $i=1,2$, let $\CR(\eta_i)$ denote the time-reversal of $\eta_i$.  By Theorem~\ref{thm::unique_law_for_two_whole_plane_paths} and the main result of \cite{MS_IMAG2}, we know that the conditional law of $\eta_1$ given $\eta_2$ is the same as that of $\CR(\eta_1)$ given $\CR(\eta_2)$ and the same also holds when the roles of $\eta_1$ and $\eta_2$ are swapped.  Consequently, it follows that the joint law of the image of the pair of paths $(\CR(\eta_1),\CR(\eta_2))$ under $z \mapsto 1/z$ is described (up to reparameterization) by
\[ \int_{\R} \mu_{\alpha \beta} d\nu(\beta)\]
where $\nu$ is a probability measure on $\R$ and $\mu_{\alpha \beta}$ is as defined in Theorem~\ref{thm::unique_law_for_two_whole_plane_paths}.  In order to complete the proof of Theorem~\ref{thm::whole_plane_reversibility} for $\kappa \in (0,4)$, we need to show that $\nu(\{0\}) = 1$.  This in turn is a consequence of the following proposition.

\begin{proposition}
\label{prop::beta_determined}
Suppose that $\kappa \in (0,4)$, $\rho > -2$, $\beta \in \R$, and that $\vartheta$ is a whole-plane $\SLE_\kappa^\beta(\rho)$ process.  For each $k \in \N$, let $\tau_k$ (resp.\ $\sigma_k$) be the first (resp.\ last) time that $\vartheta$ hits $\partial(k\D)$.  For each $j, k \in \N$ with $j < k$, let $N_{j,k}$ be the number of times that $\vartheta|_{[\sigma_j,\tau_k]}$ winds around $0$ (rounded down to the nearest integer).  For each $j \in \N$ we almost surely have that
\[ \beta = 2\pi\big(\chi + \alpha\big) \left(\lim_{k \to \infty} \frac{N_{j,k}}{\log k} \right).\]
In particular, the value of $\beta$ is almost surely determined by $\vartheta$ and is invariant under time-reversal/inversion.
\end{proposition}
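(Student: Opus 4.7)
The plan is to realize $\vartheta$ as a GFF flow line and read off $\beta$ from the way the explicit $\log$-singularity in the field interacts with the winding-controlled boundary data along the path. By Proposition~\ref{prop::alphabeta_existence} (cf.\ Remark~\ref{rem::alpha_coupling}), I would couple $\vartheta$ with a whole-plane GFF $h$ (viewed modulo a global additive constant in $2\pi(\chi+\alpha)\Z$) so that $\vartheta$ is the zero-angle flow line of $h_{\alpha\beta} = h - \alpha\arg(\cdot) - \beta\log|\cdot|$ starting from the origin, where $\alpha > -\chi$ is the unique value satisfying $\rho = 2-\kappa + 2\pi\alpha/\lambda$; equivalently, $\rho+2 = 2\sqrt{\kappa}(\chi+\alpha)$.

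The key observation is that the $\alpha$-flow line boundary conditions of Figure~\ref{fig::interior_path_bd2} imply that, as $\vartheta$ completes one counterclockwise winding about the origin, the (lifted) boundary value of $h_{\alpha\beta}$ along the right side of $\vartheta$ picks up exactly $2\pi(\chi+\alpha)$: the tangent turning contributes $2\pi\chi$ and the crossing of the branch cut of $\arg(\cdot)$ contributes $2\pi\alpha$. On the other hand, the $-\beta\log|\cdot|$ term contributes $-\beta\log k + O(1)$ to the value at $\vartheta(\tau_k)$, while the $-\alpha\arg(\cdot)$ term gives only a bounded contribution since we work with the single-valued branch of $\arg$. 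Equating the two computations of the boundary value at $\vartheta(\tau_k)$, after fixing a consistent lift of the representative of $h_{\alpha\beta}$ at $\vartheta(\tau_1)$ and expressing the accumulated winding contribution as $2\pi(\chi+\alpha)\,N_{1,k}$, yields the identity
\[
2\pi(\chi+\alpha)\, N_{1,k} \;=\; \beta\log k \;+\; \bigl(\CC_h(\vartheta(\tau_k)) - \CC_h(\vartheta(\tau_1))\bigr) \;+\; O(1),
\]
where $\CC_h$ denotes the conditional mean of the pure GFF piece $h$ given $\vartheta$.

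To conclude, one shows $\CC_h(\vartheta(\tau_k))-\CC_h(\vartheta(\tau_1)) = o(\log k)$ almost surely, using the transience of $\vartheta$ (Theorem~\ref{thm::transience}) together with Gaussian concentration for the GFF and Beurling-type harmonic-measure estimates in $\C \setminus \vartheta$. Dividing by $\log k$ and sending $k \to \infty$ gives $N_{1,k}/\log k \to \beta/(2\pi(\chi+\alpha))$ almost surely (with the sign matched by the orientation convention of Figure~\ref{fig::spiral_fan}); the same limit for any $j \in \N$ then follows from $N_{j,k} = N_{1,k} - N_{1,j}$. The $\sigma(\vartheta)$-measurability of $\beta$ and its invariance under time-reversal and the inversion $z \mapsto 1/z$ are immediate consequences of the formula.

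The principal technical obstacle is the estimate $|\CC_h(\vartheta(\tau_k))-\CC_h(\vartheta(\tau_1))| = o(\log k)$: making this rigorous amounts to showing that the conditional mean of a whole-plane GFF given its transient flow line grows strictly slower than any genuinely $\log$-singular harmonic function, which reduces via the Markov property of the GFF to a quantitative harmonic-measure estimate in $\C \setminus \vartheta$ near $\vartheta(\tau_k)$. A natural alternative route would invoke the ergodic theorem for the stationary driving pair $(W_t,O_t)$ (Proposition~\ref{prop::sle_kappa_rho_stationary}) to compute $\lim_{t\to\infty} \phi_t/t$ and then relate this drift geometrically to the winding rate of $\vartheta$; this would however shift the difficulty to tracking the conformal distortion at the tip of $\vartheta$.
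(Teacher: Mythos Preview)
Your strategy---couple $\vartheta$ with $h_{\alpha\beta}$ via Proposition~\ref{prop::alphabeta_existence} and read off $\beta$ by comparing the $2\pi(\chi+\alpha)$-per-winding growth of the flow-line boundary data against the explicit $-\beta\log|\cdot|$ term---is exactly the idea the paper uses.  However, your displayed identity hides a genuine gap.  The $\alpha$-flow line boundary data of Figure~\ref{fig::interior_path_bd2} changes by $\chi$ times the \emph{tangent} winding of $\vartheta$ plus $2\pi\alpha$ per branch-cut crossing.  The latter is indeed $N_{1,k}$ up to $O(1)$, but there is no a priori reason the former should equal $2\pi N_{1,k}+O(1)$: for a fractal curve the tangent is not pointwise defined, and the correct substitute is the \emph{twisting} $\arg\bigl((g_{\tau_k}^{-1})'(0)\bigr)$ of the radial Loewner map.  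Showing that the twisting and $2\pi N_{1,k}$ agree up to a uniform additive constant is a separate deterministic lemma (Lemma~\ref{lem::winding_twisting} in the paper), and your argument tacitly assumes it.  Relatedly, the quantity $\CC_h(\vartheta(\tau_k))$ is ill-defined at the fractal tip, where the harmonic conditional mean has a jump.

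The paper resolves both issues by (i) proving the winding--twisting lemma directly, and (ii) replacing tip evaluation by \emph{circle averages} of $h_{\alpha\beta}$.  The circle average $Y_k$ is a bona fide Gaussian variable: conditionally on $\vartheta|_{[0,\tau_k]}$ its mean equals $(\chi+\alpha)$ times the twisting plus $O(1)$, while unconditionally $Y_k-Y_j$ has mean $-\beta\log(k/j)+O(1)$ and variance $O(\log(k/j))$, since the circle average of the pure whole-plane GFF is a Brownian motion in $\log r$.  The last fact makes your ``principal technical obstacle'' immediate---no Beurling or harmonic-measure estimates are needed---and combining these three ingredients gives the stated limit.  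Your proposed alternative via the ergodic theorem for $(W_t,O_t)$ is viable in principle, but the paper's circle-average route is shorter precisely because it bypasses the conformal-distortion bookkeeping you anticipate.
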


The statement of Proposition~\ref{prop::beta_determined} is natural in view of Theorem~\ref{thm::alphabeta} and Proposition~\ref{prop::alphabeta_existence}.  We emphasize that the winding is counted positively (resp.\ negatively) when $\vartheta$ travels around the origin in the counterclockwise (resp.\ clockwise) direction.  The main step in the proof of Proposition~\ref{prop::beta_determined} is the following lemma, which states that the harmonic extension of the winding of a curve upon getting close to (and evaluated at) a given point is well approximated by the winding number at this point.

\begin{lemma}
\label{lem::winding_twisting}
There exists a constant $C > 0$ such that the following is true.  Suppose that $\vartheta$ is a continuous curve in $\ol{\D}$ connecting $\partial \D$ to $0$ with continuous radial Loewner driving function $W$.  Fix $\epsilon \in (0,\tfrac{1}{2})$, let $\tau_\epsilon = \inf\{t \geq 0 : |\vartheta(t)| = \epsilon\}$, and let $N_\epsilon$ be the number of times that $\vartheta|_{[0,\tau_\epsilon]}$ winds around $0$ (rounded down to the nearest integer).  We have that
\[ |2\pi N_\epsilon - \arg(W_{\tau_\epsilon})| \leq C.\]
\end{lemma}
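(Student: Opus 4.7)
The bound concerns a topological invariant (the winding of $\vartheta$ around $0$) and a conformal-analytic quantity ($\arg((g_{\tau_\epsilon}^{-1})'(0))$). The plan is to interpret the latter as a continuous real-valued lifting tracking the cumulative ``twist'' of the inverse radial Loewner map $g_t^{-1}$ as $t$ runs from $0$ to $\tau_\epsilon$, and then to compare it to the geometric winding $2\pi N_\epsilon$ up to a universal additive error.

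First I would pass to the universal cover of $\D \setminus \{0\}$ via the logarithm $z \mapsto \log z$. Under this lift, $\D \setminus \{0\}$ becomes the left half-plane $\{w : \re w < 0\}$, and the winding of $\vartheta$ around $0$ up to time $\tau_\epsilon$ becomes, up to $O(1)$, the net vertical displacement of a lift $\wt{\vartheta}$ of $\vartheta$. The radial Loewner evolution $(g_t)$ similarly lifts to a family of conformal maps of half-plane-like domains, and the continuous lifting of $\arg((g_t^{-1})'(0))$ coincides with the vertical translation picked up by the lift of $g_t^{-1}$ at the appropriately chosen preimage of $0$. Thus the lemma reduces to comparing two vertical displacements, one computed geometrically from $\wt{\vartheta}$ and one computed conformally from the lift of $g_{\tau_\epsilon}^{-1}$.

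Second, I would estimate the discrepancy between these two displacements. The Koebe $1/4$-theorem gives $\tau_\epsilon = \log(1/\epsilon) + O(1)$ together with comparability of the conformal radius of $\D \setminus K_{\tau_\epsilon}$ as seen from $0$ with the Euclidean distance $\epsilon = |\vartheta(\tau_\epsilon)|$. With this, the two vertical displacements differ only by the ``angular distortion'' of $g_{\tau_\epsilon}^{-1}$ restricted to a small disk around $0$, which by standard distortion theorems (the Koebe distortion bounds for conformal maps normalized at an interior point) is controlled by a universal constant independent of $\vartheta$ and $\epsilon$. I would also use the Beurling projection estimate to bound the harmonic measure of $K_{\tau_\epsilon}$ from $0$, ensuring that the error contributed by any complicated boundary behavior of $\vartheta$ near $\partial B(0, \epsilon)$ does not grow with $\epsilon$.

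The main obstacle will be making the argument robust enough to apply to any continuous curve $\vartheta$ with continuous Loewner driving function, in particular allowing self-intersections (so that ``winding number'' must be defined topologically via the complement of the curve) and remaining uniform in $\epsilon \in (0, 1/2)$. Once the identification of the lifted $\arg$-quantity with the vertical translation of the lift of $g_t^{-1}$ is correctly set up, however, the rest of the argument reduces to a universal distortion estimate, and the constant $C$ of the lemma is essentially the Koebe distortion constant on the annulus $B(0,1/2) \setminus B(0,\epsilon)$, which is indeed $\epsilon$-independent.
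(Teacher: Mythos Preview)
Your reduction in the first step is essentially correct: under $z \mapsto \log z$, the conformal map $g_{\tau_\epsilon}^{-1}$ lifts to a periodic map of left half-planes whose asymptotic vertical shift as $\re w \to -\infty$ equals $\arg\bigl((g_{\tau_\epsilon}^{-1})'(0)\bigr)$, and the winding $N_\epsilon$ is (up to $O(1)$) the vertical displacement of the lifted curve at the first time it reaches $\re w = \log \epsilon$.  So the problem really is to compare these two vertical quantities.

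The gap is in your second step.  What you call the ``angular distortion of $g_{\tau_\epsilon}^{-1}$ restricted to a small disk around $0$'' is not what separates the two quantities, and Koebe-type distortion bounds (which control $\arg\bigl(f'(z)/f'(0)\bigr)$ for $z$ at a fixed hyperbolic distance from $0$) do not, by themselves, relate $\arg\bigl((g_{\tau_\epsilon}^{-1})'(0)\bigr)$ to the geometric winding of the \emph{boundary} curve.  The value at $0$ is the harmonic extension of the boundary values of $\arg\bigl((g_{\tau_\epsilon}^{-1})'\bigr)$ on $\partial \D$, and those boundary values are \emph{not} uniformly close to $2\pi N_\epsilon$: on the arc of $\partial \D$ mapping back to $\partial \D$ they are $O(1)$, and along the preimages of the two sides of $\vartheta$ they interpolate from $O(1)$ near the root to roughly $2\pi N_\epsilon$ near the tip.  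Controlling the weighted average therefore requires knowing how the harmonic measure from $0$ is distributed along the curve, and Koebe distortion gives you nothing here.  Your invocation of Beurling points in the right direction (it bounds the harmonic measure of $\partial \D$ from $0$), but it still leaves uncontrolled the contribution of the portions of $\vartheta$ far from the tip, which could carry boundary values anywhere in $[0,2\pi N_\epsilon]$.

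The paper's proof avoids this difficulty by a different, concrete device: it \emph{extends} $\vartheta|_{[0,\tau_\epsilon]}$ by a straight segment toward $0$ followed by a full traversal of $\partial(\tfrac{\epsilon}{2}\D)$, producing a modified curve $\wt\vartheta$ whose complementary domain containing $0$ is exactly $\tfrac{\epsilon}{2}\D$.  For this modified curve every boundary value of $\arg\bigl((\wt g_{\wt\tau_\epsilon}^{-1})'\bigr)$ is within $O(1)$ of $2\pi \wt N_\epsilon$, so harmonicity gives the estimate at $0$ immediately.  The remaining work is to bound the twisting of the composition $\wt\varphi_\epsilon = \wt g_{\wt\tau_\epsilon} \circ g_{\tau_\epsilon}^{-1}$, which the paper does by showing (via Koebe and a harmonic-measure argument) that the image of the appended segment cannot wind around $0$ more than $O(1)$ times.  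This extension trick is the substantive idea your outline is missing.
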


The quantity $\arg(W_{\tau_\epsilon})$ in the statement of Lemma~\ref{lem::winding_twisting} is called the {\bf twisting} of $\vartheta$ upon hitting $\partial (\epsilon \D)$.  An estimate very similar to Lemma~\ref{lem::winding_twisting} was proved in an unpublished work of Schramm and Wilson \cite{SW_TWISTING}.

\begin{proof}[Proof of Lemma~\ref{lem::winding_twisting}]
Let $\wt{\vartheta}$ be the concatenation of $\vartheta|_{[0,\tau_\epsilon]}$ with the curve that travels along the straight line segment starting at $\vartheta(\tau_\epsilon)$ towards $0$ until hitting $\partial(\tfrac{\epsilon}{2} \D)$ and then traces (all of) $\partial(\tfrac{\epsilon}{2} \D)$ in the counterclockwise direction.  Let $\wt{\tau}_\epsilon$ be the time that $\wt{\vartheta}$ finishes tracing $\partial(\tfrac{\epsilon}{2} \D)$ and let $\wt{N}_\epsilon$ be the number of times that $\wt{\vartheta}|_{[0,\wt{\tau}_\epsilon]}$ winds around $0$.  Then
\begin{equation}
\label{eqn::winding_twisting1}
 |N_\epsilon - \wt{N}_\epsilon| \leq 1.
\end{equation}
Let $(\wt{g}_t)$ be the radial Loewner evolution associated with $\wt{\vartheta}$, $\wt{W}$ its radial Loewner driving function, and $\wt{f}_t = \wt{W}_t^{-1} \wt{g}_t$.  Note that
\[  \arg (\wt{f}_t'(0)) = -\arg (\wt{W}_t) \quad\text{for each} \quad t \geq 0.\]
That is, $\arg(\wt{W}_t)$ is equal to the value of the harmonic function $z \mapsto -\arg ( \wt{f}_t'(z) )$ evaluated at $z = 0$.  We claim that there exists a constant $C_1 > 0$ such that
\begin{equation}
\label{eqn::winding_twisting2}
|2\pi \wt{N}_\epsilon - \arg(\wt{W}_{\wt{\tau}_\epsilon})| \leq C_1.
\end{equation}
If $\wt{\vartheta}$ is a piecewise smooth curve then the boundary values of $\arg(\wt{f}_{\wt{\tau}_\epsilon}')$ along $\partial (\tfrac{\epsilon}{2} \D)$ differ from $-2\pi \wt{N}_\epsilon$ by at most a constant $C_0 > 0$.  Thus the claim follows in this case since $\arg(\wt{f}_{\wt{\tau}_\epsilon}')$ is harmonic in $\tfrac{\epsilon}{2} \D$.  The claim for general continuous curves follows by approximation and \cite[Proposition~4.43]{LAW05}.  Observe that 
\begin{equation}
\label{eqn::winding_twisting3}
 \arg(\wt{W}_{\wt{\tau}_{\epsilon}})  = 2\pi + \arg(W_{\tau_\epsilon}).
\end{equation}
Combining~\eqref{eqn::winding_twisting1},~\eqref{eqn::winding_twisting2}, and~\eqref{eqn::winding_twisting3} gives the result.
\end{proof}

\begin{proof}[Proof of Proposition~\ref{prop::beta_determined}]
The second assertion of the proposition is an immediate consequence of the first, so we will focus our attention on the latter.  

We begin by letting $\wt{N}_{j,k}$ be the number of times that $\vartheta|_{[\tau_j,\tau_k]}$ winds around $0$ (rounded down to the nearest integer).  By the scale invariance of whole-plane $\SLE_\kappa^\beta(\rho)$, the law of the number of times that $\vartheta|_{[\sigma_j,\tau_j]}$ winds around $0$ does not depend on $j$.  Moreover, by the transience of whole-plane $\SLE_\kappa^\beta(\rho)$ (Theorem~\ref{thm::transience}) we have that this quantity is finite almost surely.  Consequently, it is not difficult to see that
\[ \lim_{k \to \infty} \frac{N_{j,k} - \wt{N}_{j,k}}{\log k} = 0\]
almost surely.  In particular, it suffices to prove the result with $\wt{N}_{j,k}$ in place of $N_{j,k}$.

Suppose that $\wh{h}_{\alpha \beta} = \wh{h} + \alpha \arg(\cdot) + \beta \log|\cdot|$ where $\wh{h}$ is a GFF on $\D$ such that $\wh{h}_{\alpha \beta}$ has the same boundary values as illustrated in the left side of Figure~\ref{fig::radial_bd} where we take $W_0 = -i$.  Let $\wh{\vartheta}$ be the flow line of $\wh{h}_{\alpha \beta}$ starting from $-i$ and $\psi_\epsilon(z) = \epsilon/z$.  As explained in the proof of Proposition~\ref{prop::alphabeta_existence}, the random curve $\psi_\epsilon(\wh{\vartheta})$ converges to a whole-plane $\SLE_\kappa^\beta(\rho)$ process as $\epsilon \to 0$.  Consequently, it suffices to prove the result with $\wh{\vartheta}$ in place of $\vartheta$ and the hitting times $\wh{\tau}_j = \inf\{t \geq 0 : |\wh{\vartheta}(t)| = \tfrac{1}{j}\}$ in place of $\tau_j$.  Let $\wh{W}$ denote the radial Loewner driving function associated with $\wh{\vartheta}$ and, for each $j \in \N$, let $X_j = \arg(\wh{W}_{\wh{\tau}_j})$.  By Lemma~\ref{lem::winding_twisting}, it suffices to show that
\[ \beta = -\big(\chi+\alpha\big)\left( \lim_{k \to \infty} \frac{X_k - X_j}{\log k} \right) \quad\text{for every}\quad j \in \N\quad\text{almost surely}\]
(the reason for the sign difference from the statement of Proposition~\ref{prop::beta_determined} is that the inversion $z \mapsto z^{-1}$ reverses the direction in which the path winds).  For each $j \in \N$ let $Y_j$ denote the average of $\wh{h}_{\alpha \beta}$ on $\partial (\tfrac{1}{j} \D)$.  The conditional law of $Y_j$ given $\wh{\vartheta}|_{[0,\wh{\tau}_j]}$ is that of a Gaussian random variable with mean $(\chi+\alpha) X_j + O(1)$ and bounded variance (see \cite[Proposition~3.2]{DS08}).  Consequently, it suffices to show that
\[ \beta = -\lim_{k \to \infty} \frac{Y_k -Y_j}{\log k} \quad\text{for every}\quad j \in \N\quad\text{almost surely.}\]
This follows because for each $k > j$, $Y_k - Y_j$ is equal in law to a Gaussian random variable with mean $-\beta \log(k/j) + O(1)$ and variance $O(\log(k/j))$ (see \cite[Proposition~3.2]{DS08}).
\end{proof}

We will now complete the proof of Theorem~\ref{thm::whole_plane_reversibility} for $\kappa' \in (4,8]$.  Suppose that $\eta'$ is a whole-plane $\SLE_{\kappa'}(\rho)$ process for $\kappa' \in (4,8]$ and $\rho > \tfrac{\kappa'}{2}-4$.
Theorem~\ref{thm::whole_plane_duality} implies that the outer boundary of $\eta'$ is described by a pair of whole-plane GFF flow lines, say $\eta^L$ and $\eta^R$ with angle gap $\pi$.  Consequently, it follows from Theorem~\ref{thm::whole_plane_reversibility} applied for $\kappa=16/\kappa' \in [2,4)$ that we can construct a coupling of $\eta'$ with a whole-plane $\SLE_{\kappa'}(\rho)$ process $\wt{\eta}'$ from $\infty$ to $0$ such that the left and right boundaries of $\wt{\eta}'$ are almost surely equal to $\eta^L,\eta^R$.  Theorem~\ref{thm::whole_plane_duality} implies that the conditional law of $\eta'$ given $\eta^L$ and $\eta^R$ in each of the connected components of $\C \setminus (\eta^L \cup \eta^R)$ which lie between $\eta^L$ and $\eta^R$ is independently that of a chordal $\SLE_{\kappa'}(\tfrac{\kappa'}{2}-4;\tfrac{\kappa'}{2}-4)$ process going from the first point on the component boundary drawn by $\eta^L$ and $\eta^R$ to the last.  The same is also true for $\wt{\eta}'$ but with the roles of the first and last points swapped.  Consequently, it follows from the main result of \cite{MS_IMAG3} that we can couple $\eta'$ and $\wt{\eta}'$ together so that $\wt{\eta}'$ is almost surely the time-reversal of $\eta'$.  This completes the proof for $\kappa \in (4,8]$ for $\rho > \tfrac{\kappa'}{2}-4$.  The result for $\rho = \tfrac{\kappa'}{2}-4$ follows by taking a limit $\rho \downarrow \tfrac{\kappa'}{2}-4$, which completes the proof of Theorem~\ref{thm::whole_plane_reversibility}.
\qed

\begin{remark}
\label{rem::reversible_beta}
The same proof applies if we add a multiple of $\beta \log |z|$.  It implies that the whole-plane SLE path drawn from~$0$ to~$\infty$ with non-zero~$\beta$ drift (and possibly non-zero~$\alpha$) has a law that is preserved when we reverse time (up to monotone parameterization) and map the plane to itself via $z \to 1/\bar z$.
\end{remark}

\bibliographystyle{hmralphaabbrv}
\addcontentsline{toc}{section}{References}
\bibliography{sle_kappa_rho}

\bigskip
\filbreak
\begingroup
\small
\parindent=0pt

\bigskip
\vtop{
\hsize=5.3in
Microsoft Research\\
One Microsoft Way\\
Redmond, WA, USA }

\bigskip
\vtop{
\hsize=5.3in
Department of Mathematics\\
Massachusetts Institute of Technology\\
Cambridge, MA, USA } \endgroup \filbreak
\end{document}